\newcounter{nameOfYourChoice}
\tikzset{>=latex}
\newcolumntype{L}{>{\arraybackslash}X}
\theoremstyle{plain}
\newtheorem{theorem}{Theorem}[section]
\theoremstyle{remark}
\newtheorem{remark}[theorem]{Remark}
\newtheorem{example}[theorem]{Example}
\theoremstyle{plain}
\newtheorem{corollary}[theorem]{Corollary}
\newtheorem{lemma}[theorem]{Lemma}
\newtheorem{proposition}[theorem]{Proposition}
\newtheorem{definition}[theorem]{Definition}
\newtheorem{assumption}[theorem]{Assumption}
\newtheorem{problem}{Problem}
\numberwithin{equation}{section}
\def\N{{\mathbb N}}
\def\Z{{\mathbb Z}}
\def\R{{\mathbb R}}
\def\C{{\mathbb C}}
\newcommand{\Di}{\mathcal{D}}
\newcommand{\E}{{\mathbf E}}
\renewcommand{\P}{{\mathbf P}}
\newcommand{\pr}{\mathbb{P}}
\newcommand{\F}{{\mathscr F}}
\newcommand{\Filtr}{\mathbb{F}}
\newcommand{\g}{\gamma}
\newcommand{\om}{\omega}
\renewcommand{\O}{\Omega}
\renewcommand{\a}{\kappa}
\newcommand{\Aop}{A}
\newcommand{\angH}{\omega_{H^{\infty}}}
\newcommand{\Dom}{\mathcal{O}}
\newcommand{\V}{\mathcal{V}}
\newcommand{\I}{I}
\newcommand{\Tor}{\mathbb{T}}
\newcommand{\T}{\mathbb{T}}
\newcommand{\loc}{{\rm loc}}
\newcommand{\tr}{\mathrm{tr}}
\newcommand{\calL}{{\mathscr L}}
\newcommand{\BIP}{\mathrm{BIP}}
\newcommand{\hz}{\prescript{}{0}{H}}
\newcommand{\Sz}{\prescript{}{0}{\mathrm{MR}}_X}
\newcommand{\Wz}{\prescript{}{0}{W}}
\newcommand{\Ls}{\mathbb{L}}
\newcommand{\Gs}{\mathbb{G}}
\newcommand{\Hs}{\mathbb{H}}
\newcommand{\HD}{\prescript{}{D}{H}}
\newcommand{\Dd}{\prescript{}{D}{\Delta}}
\newcommand{\WD}{\prescript{}{D}{W}}
\newcommand{\BD}{\prescript{}{D}{B}}
\newcommand{\Bs}{\mathbb{B}}
\newcommand{\Do}{\mathsf{D}}
\newcommand{\wt}{\widetilde}
\newcommand{\Tr}{\mathrm{Tr}}
\newcommand{\one}{{{\bf 1}}}
\newcommand{\embed}{\hookrightarrow}
\newcommand{\s}{\delta}
\newcommand{\lb}{\langle}
\newcommand{\rb}{\rangle}
\newcommand{\dps}{\displaystyle}
\renewcommand{\div}{\normalfont{\text{div}}}
\renewcommand{\l}{\langle}
\renewcommand{\r}{\rangle}
\newcommand{\reg}{\delta}
\newcommand{\norm}[1]{{\left\vert\kern-0.25ex\left\vert\kern-0.25ex\left\vert #1
    \right\vert\kern-0.25ex\right\vert\kern-0.25ex\right\vert}}
\renewcommand{\emptyset}{\varnothing}
\newcommand{\Progress}{\mathscr{P}}
\newcommand{\wh}{\widehat}
\newcommand{\rnoise}{g}
\newcommand{\btwod}{b}
\newcommand{\bm}{b}
\newcommand{\Borel}{\mathscr{B}}
\renewcommand{\S}{\mathcal{S}}
\def\XXint#1#2#3{{\setbox0=\hbox{$#1{#2#3}{\int}$ }
\vcenter{\hbox{$#2#3$ }}\kern-.6\wd0}}
\newcommand{\dd}{\mathrm{d}}
\newcommand{\Ext}{\mathrm{Ext}}
\begin{document}

\author{Antonio Agresti}
\address{Delft Institute of Applied Mathematics\\
Delft University of Technology \\ P.O. Box 5031\\ 2600 GA Delft\\The
Netherlands}
\curraddr{Department of Mathematics Guido Castelnuovo, Sapienza University of Rome,
P.le Aldo Moro 5, 00185 Rome, Italy}
\email{agresti.agresti92@gmail.com}

\author{Mark Veraar}
\address{Delft Institute of Applied Mathematics\\
Delft University of Technology \\ P.O. Box 5031\\ 2600 GA Delft\\The
Netherlands} \email{M.C.Veraar@tudelft.nl}

\dedicatory{Dedicated to Professor Da Prato, in admiration of his outstanding mathematical work.}

\thanks{The authors have received funding from the VICI subsidy VI.C.212.027 of the Netherlands Organisation for Scientific Research (NWO), and the first author  is a member of GNAMPA (IN$\delta$AM)}

\date\today


\title[Nonlinear SPDE\lowercase{s} and Maximal Regularity]{Nonlinear SPDE\lowercase{s} and Maximal Regularity:\\ An Extended Survey}

\keywords{stochastic maximal regularity, stochastic evolution equations, critical spaces, parabolic equations, stochastic partial differential equations, blow-up criteria, regularization, local and global well-posedness, variational setting, Allen--Cahn equations, Cahn--Hilliard equation, fluid dynamic models, quasi-geostrophic equations, reaction-diffusion equations, Navier-Stokes equations, Serrin criteria}

\subjclass[2010]{Primary: 60H15, Secondary: 35A01, 35B65, 35K57, 35K59, 35K90, 35R60, 42B37, 47D06, 58D25, 76M35}

\begin{abstract}
In this survey, we provide an in-depth exposition of our recent results on the well-posedness theory for stochastic evolution equations, employing maximal regularity techniques. The core of our approach is an abstract notion of critical spaces, which, when applied to nonlinear SPDEs, coincides with the concept of scaling-invariant spaces. This framework leads to several sharp blow-up criteria and enables one to obtain instantaneous regularization results.
Additionally, we refine and unify our previous results, while also presenting several new contributions.

In the second part of the survey, we apply the abstract results to several concrete SPDEs. In particular, we give applications to stochastic perturbations of quasi-geostrophic equations, Navier-Stokes equations, and reaction-diffusion systems (including Allen--Cahn, Cahn--Hilliard and Lotka--Volterra models). Moreover, for the Navier--Stokes equations, we establish new Serrin-type blow-up criteria. While some applications are addressed using $L^2$-theory, many require a more general $L^p(L^q)$-framework. In the final section, we outline several open problems, covering both abstract aspects of stochastic evolution equations, and concrete questions in the study of linear and nonlinear SPDEs.
\end{abstract}

\maketitle
\tableofcontents

\section{Introduction}

In this survey, we give an exposition of the recent developments in \cite{AV19_QSEE_1, AV19_QSEE_2}, where we build a comprehensive framework for establishing local and global well-posedness for a class of It\^o stochastic parabolic evolution equations of the form (the reader is referred to Section \ref{ss:results_intro} below for the unexplained notation):
\begin{equation}
\label{eq:SEEintro}
\left\{
\begin{aligned}
&\dd u + A u \,\dd t = F(u)\, \dd t + (B u +G(u))\, \dd W,\\
&u(0)=u_{0}.
\end{aligned}
\right.
\end{equation}
Here, the term \emph{parabolic} refers to the property that the leading operators have suitable smoothing properties, which will be encoded in a \emph{maximal regularity} assumption. Notably, maximal regularity estimates are available for a large class of operators, extending beyond standard second-order heat-type operators to include many others, such as those arising in fluid dynamics.

To maintain clarity and simplicity, this survey focuses exclusively on the {\em semilinear} case, specifically equations of the form \eqref{eq:SEEintro}. This already includes a wide range of highly nontrivial models (e.g.\ Navier--Stokes and reaction-diffusion equations). However, it is worth noting that the references cited above also address the more general quasilinear setting, and the case where the coefficients depend on $(t,\omega)$ as well.

Additionally, this survey introduces several new abstract results and demonstrates the application of our framework to specific SPDEs. Finally, we provide a list of open problems to guide future research efforts.

\smallskip

The study of parabolic stochastic evolution equations has a rich history. While it is not feasible to provide a comprehensive overview of the literature, we will highlight several influential approaches that have significantly shaped our framework. References to additional approaches can be found in Subsection \ref{ss:literature_discussion}.

The semigroup approach is thoroughly explored in the monograph by Da Prato and Zabczyk \cite{DPZ} and the references therein. For the variational setting, key works include those of Pardoux \cite{Par75} and Krylov-Rozovskii \cite{KR79}, as well as the monograph by Liu and Röckner \cite{LR15}. Finally, the $L^p$-theory of Krylov \cite{Kry} is distributed across several papers, many of which are discussed in Subsection \ref{ss:furtherSMR}.
Our framework provides a bridge between, and in several cases, an extension of these foundational approaches and viewpoints to stochastic PDEs when restricted to the semilinear equation \eqref{eq:SEEintro}. This framework has led to significant new results for a variety of SPDEs, including:
\begin{itemize}
\item Reaction-diffusion such as Allen--Cahn, Cahn--Hilliard, Lotka--Volterra, and Gray-Scott.
\item Stochastic fluid dynamics, including the Navier--Stokes and primitive equations.
\end{itemize}
The main novelty and the key advantage of our framework lies in its ability to allow and capture \emph{critical} nonlinearities and data. Criticality is a well-established concept in PDE theory and mathematical physics, though its precise definition often depends on additional context. From a PDE point of view, a critical space and/or setting can be identified whenever the PDE under consideration admits scaling invariance. In this context, a space and/or setting is called critical if it respects the scaling invariance of the underlying equation. As discussed in Subsection \ref{ss:scaling_intro}, scaling invariance determines a family of invariant spaces rather than a single setting. For example, in the case of the 3D Navier--Stokes equations, both the Lebesgue space $L^3$ and Besov spaces $\dot{B}^{3/q-1}_{q,p}$ are scaling-invariant.

In addition to its natural connection with a given PDE, working with critical spaces offers several advantages. As a general principle, the critical setting provides the optimal framework for studying parabolic problems. More importantly, it ensures well-posedness for a wide range of data and offers sharp criteria for the explosion or blow-up of solutions to the corresponding PDE in finite time.
Blow-up criteria are particularly crucial in studying the global well-posedness of PDEs, especially when limited information is available about the solution's behaviour (such as energy estimates).

In our approach to stochastic evolution equations \eqref{eq:SEEintro}, we use a (relatively) abstract setting.
This has the benefit of clearly identifying the requirements for proving well-posedness results for parabolic SPDEs.
Additionally, within the context of critical spaces, such an abstract framework is valuable for addressing problems where global scaling invariance is not present
(e.g.\ SPDEs on domains and/or $x$-dependent coefficients). Further details will be provided in Subsection \ref{ss:scaling_intro}.

\smallskip

The rest of this section is structured as follows. First, we provide a simplified overview of the results from \cite{AV19_QSEE_1, AV19_QSEE_2} in the case of semilinear stochastic evolution equations \eqref{eq:SEEintro}. In Subsection \ref{ss:scaling_intro}, we demonstrate how the abstract results relate to scaling invariance or critical spaces for 3D Navier--Stokes equations with transport noise. Subsection \ref{ss:literature_discussion} offers a discussion of relevant literature for deterministic and stochastic evolution equations. Finally, in Subsection \ref{ss:historic_part_intro}, we discuss some historical context and alternative approaches to SPDEs. A complete overview of the current manuscript is presented in Subsection \ref{ss:overview}. Applications to specific SPDEs are discussed in Sections \ref{sec:appl} and \ref{sec:LpLq}.

\subsection{A glimpse into our framework}
\label{ss:results_intro}

In the study of parabolic equations, it is well-established that abstract methods can be highly effective.
When seeking local well-posedness and regularity results for both existing and new classes of SPDEs, verifying the assumptions of our framework often proves more fruitful than attempting to develop ad hoc methods for analyzing each SPDE individually. Moreover, the abstract conditions for global well-posedness derived through our approach frequently lead to significantly stronger results than those obtained through ad hoc techniques.

Before presenting the main result, we provide a more detailed description of the terms in \eqref{eq:SEEintro}. Throughout this discussion, we will use several concepts from operator theory and maximal regularity. For further details, the reader is referred to Sections \ref{sec:preliminaries} and \ref{sec:SMR}, respectively.

\subsubsection*{The setting}
The operator $A$ is assumed to be sectorial on a UMD Banach space $X_0$ of type 2 (e.g.\ $X_0 = L^q$ with $q\in [2, \infty)$) with domain $X_1=\Do(A)$ (with the graph norm). The linear part $(A,B)$  of \eqref{eq:SEEintro} is assumed to have so-called {\em stochastic maximal regularity}.
In most of the examples we have in mind, $A$ is a linear $2m$-th order differential operator (possibly an operator matrix), $F$ depends on the derivatives of $u$ up to the $(2m-1)$-th order, the operator $B$ is a linear $m$-th order differential operator, and $G$ depends on the derivatives of $u$ up to the $(m-1)$-th order.
The noise $W$ is modelled as a cylindrical Brownian motion on a Hilbert space $\mathcal{U}$, and it is coloured through the processes $B u$ and $G(u)$.

The nonlinearities $F$ and $G$ are assumed to be defined on an interpolation space $X_{\beta}$, which lies between $X_0$ and $X_1$. These nonlinearities are typically assumed to grow polynomially at a rate $\rho+1$, with a restriction on the pair $(\beta,\rho)$, and where $\beta\in (1/2,1)$. Typically, we assume the existence of a constant $C$ such that for all $u,v\in X_{\beta}$
\begin{align}\label{eq:Fsubcriintro}
\|F(u) - F(v)\|_{X_0}+
\|G(u) - G(v)\|_{\g(\mathcal{U},X_{1/2})}
\leq C(1+\|u\|_{X_{\beta}}^{\rho} + \|v\|_{X_{\beta}}^{\rho}) \|u-v\|_{X_{\beta}},
\end{align}
where $\rho\geq 0$. Here the space $\g(\mathcal{U},X_{1/2})$ is the set of $\g$-radonifying operators from $\mathcal{U}$ to $X_{1/2}$, and it coincides with the Hilbert-Schmidt operators when $X_{1/2}$ is a Hilbert space. The natural occurrence of $\g$-spaces in the context of stochastic analysis in a non-Hilbert space setting is discussed in Subsections \ref{subsec:gamma} and \ref{subsec:stochint}.

We are interested in solutions to \eqref{eq:SEEintro} with paths belonging to the weighted space
\begin{equation}
\label{eq:path_regularity_intro}
L^p_{\loc}([0,\sigma),t^\a\,\dd t ;X_1)\cap C([0,\sigma);X_{1-\frac{1+\a}{p},p}),
\end{equation}
where $p\geq 2$, $\a\in [0,\frac{p}{2}-1)\cup\{0\}$, $\sigma$ is a stopping time, and $X_{1-\frac{1+\a}{p},p}:=(X_0,X_1)_{1-\frac{1+\a}{p},p}$ is the real interpolation space.
The solution space \eqref{eq:path_regularity_intro} is natural when considering stochastic maximal $L^p$-regularity (see Subsection \ref{ss:stoch_max_reg}). Moreover, the space $X_{1-\frac{1+\a}{p},p}$ is optimal for handling pointwise evaluations (or traces) of solutions (see Subsection \ref{subsec:interp}). The role of the time weight $\a \geq 0 $ will become clear when addressing explosion criteria and the regularization properties of solutions to \eqref{eq:Fsubcriintro} (see Theorem \ref{thm:mainintro}).
We impose $\beta\in (1-\frac{1+\a}{p},1)$ to ensure that the nonlinearities $F$ and $G$ are \emph{rougher} than the trace space $X_{1-\frac{1+\a}{p},p}$. Note that this restriction is only technical: if \eqref{eq:Fsubcriintro} holds for some $\beta$, then it also holds for any $\beta'>\beta$.

On the parameters $p,\a,\beta$ and $\rho$, we impose the following condition:
\begin{align}\label{eq:subcriticalintro}
\frac{1+\kappa}{p}\leq \frac{(1+\rho)(1-\beta)}{\rho}.
\end{align}
The condition \eqref{eq:subcriticalintro} is a central assumption in our framework. If equality holds in \eqref{eq:subcriticalintro}, we will refer to the corresponding setting or space for the initial data as \emph{critical}.

As previously mentioned, the parameter $\rho$ is determined by the growth of the nonlinearity, while $\beta$ is typically derived from both the growth of the nonlinearity and the choice of underlying function spaces, in conjunction with Sobolev embedding. It is important to note that the inequality $1/p\leq \frac{1+\kappa}{p}\leq 1/2$ always holds. By choosing $p$ sufficiently large, the left-hand side of \eqref{eq:subcriticalintro} can be made arbitrarily small. The pair $(p,\kappa)$ is, in principle, flexible in applications; however, in Theorem \ref{thm:mainintro} below we require that $(A,B)$ possesses stochastic maximal $L^p$-regularity with weight $t^{\kappa}\,\dd t$.

\subsubsection*{Bird's-eye view of the framework}
Below we provide an overview of our framework. A conceptual map is presented in Figure \ref{fig:diagram_abstract}. We limit ourselves to providing a loose form of the local well-posedness, blow-up criteria, and instantaneous regularization for \eqref{eq:SEEintro}, which will be detailed in Sections \ref{sec:loc-well-posed} and \ref{sec:blowup}, see Theorems \ref{thm:localwellposed}, \ref{thm:criticalblowup} and \ref{thm:subcriticalblowup} there.

\begin{theorem}\label{thm:mainintro}
Let $X_1\hookrightarrow X_0$ be as above. Suppose that the linear part $(A,B)$ has stochastic maximal $L^p$-regularity with weight $t^{\kappa}\,\dd t$.
Suppose that the nonlinear part $(F,G)$ is as in \eqref{eq:Fsubcriintro} and that \eqref{eq:subcriticalintro} holds. Assume that $u_0\in L^0_{\F_0}(\Omega;X_{1-\frac{1+\kappa}{p},p})$. Then the following hold:
\begin{enumerate}[{\rm(1)}]
\item\label{it1:mainintro} {\rm (Local well-posedness)}
 \eqref{eq:SEEintro} has a unique maximal solution $u$ with lifetime $\sigma>0$ a.s.\ and
\[u\in L^p_{\loc}([0,\sigma),t^\a \,\dd t;X_1)\cap C([0,\sigma);X_{1-\frac{1+\kappa}{p},p})\cap  C((0,\sigma);X_{1-\frac{1}{p},p})\text{ a.s.}\]
\item\label{it2:mainintro} {\rm (Blow-up criteria)} The following hold a.s.\ on $\{\sigma<\infty\}$:
\vspace{0.1cm}
\begin{itemize}
\item $\displaystyle{\lim_{t\uparrow \sigma} u(t) \text{ does not exists in }X_{1-\frac{1+\a}{p},p}\,;}$
\vspace{0.1cm}
\item $\displaystyle{\sup_{t\in [0,\sigma)}\|u(t)\|_{X_{1-\frac{1+\kappa}{p},p}}+ \|u\|_{L^p(0,\sigma;X_{1-\frac{\kappa}{p}})}=+\infty\,;}$
\vspace{0.1cm}
\item $\displaystyle{\sup_{t\in [0,\sigma)}\|u(t)\|_{X_{1-\frac{1+\kappa}{p},p}}=+\infty}$ in the subcritical case.
\end{itemize}
\item\label{it3:mainintro} {\rm (Regularization)} Under relatively weak assumptions, but still keeping $u_0\in X_{1-\frac{1+\kappa}{p},p}$ a.s.,
\[
u \in C^{\theta-\varepsilon}_{\rm loc}((0,\sigma);X_{1-\theta}) \text{ a.s.\ for all } \theta\in (0,1/2), \varepsilon\in (0,\theta).
\]
\end{enumerate}
\end{theorem}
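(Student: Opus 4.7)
The plan is to reduce everything to the linear theory via stochastic maximal $L^p$-regularity (SMR) and close a fixed-point argument in the weighted path space of \eqref{eq:path_regularity_intro}, then bootstrap using the instantaneous regularization afforded by the weight $t^{\kappa}$. First, for \ref{it1:mainintro} I would invoke the SMR assumption to identify the linear solution operator $(f,g,v_0)\mapsto v$ as an isomorphism between the natural data space and the path space. I would then truncate $F$ and $G$ by a smooth cutoff of the $X_{1-\frac{1+\kappa}{p},p}$-norm at level $R$, and set up the fixed-point operator $\Phi_R$ that maps $u$ to the linear SMR solution with inhomogeneities $F_R(u),G_R(u)$. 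The key analytic ingredient is the mixed-derivative embedding
\[
L^p(0,T,t^{\kappa};X_1)\cap C([0,T];X_{1-\frac{1+\kappa}{p},p})\ \hookrightarrow\ L^{p(1+\rho)}(0,T,t^{\kappa};X_{\beta}),
\]
whose underlying numerology is exactly \eqref{eq:subcriticalintro}. Combined with \eqref{eq:Fsubcriintro}, this yields a contraction constant that can be made small by shrinking $T$ in the subcritical case (or, in the critical case, by smallness of the initial datum in $X_{1-\frac{1+\kappa}{p},p}$ together with the absolute continuity of $t^{\kappa}\,\dd t$ near $0$). A standard concatenation/maximal-stopping argument then delivers the maximal solution with lifetime $\sigma$.

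For the blow-up criteria \ref{it2:mainintro} I would argue by reconstruction: on $\{\sigma<\infty\}$, if any of the listed norms of $u$ remains finite on a set of positive measure, one localizes, invokes SMR on intervals $[\tau,\tau+\delta]$ with constants uniform in the stopping time $\tau$ (the progressive version of SMR is essential here), and solves \eqref{eq:SEEintro} past $\sigma$, contradicting maximality. The first criterion is immediate since a convergent trajectory in the trace space serves as a new initial datum; the second follows because the full norm of the path space controls the contraction estimate used in \ref{it1:mainintro}; the third, subcritical, criterion exploits the strict slack in \eqref{eq:subcriticalintro} to bound $\|u\|_{L^p(0,\sigma;X_{1-\kappa/p})}$ by a power of $\sup\|u\|_{X_{1-\frac{1+\kappa}{p},p}}$ via interpolation between $X_1$ and the trace space, absorbing the surplus factor into the pathwise supremum.

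Finally, for \ref{it3:mainintro} I would exploit that the path class automatically contains $C((0,\sigma);X_{1-\frac{1}{p},p})$, so that for every $t_0\in(0,\sigma)$ the value $u(t_0)$ lies in the \emph{unweighted} trace space $X_{1-\frac{1}{p},p}$ a.s. Restarting \eqref{eq:SEEintro} at $t_0$ with weight parameter $0$ and invoking uniqueness shows that $u$ lies in the unweighted SMR class on $[t_0,\sigma)$; iterating with $p$ taken arbitrarily large and appealing to the Sobolev-type embeddings of anisotropic SMR spaces into $C^{\theta-\varepsilon}([t_0,t_1];X_{1-\theta})$ yields the stated Hölder regularity on compact subintervals of $(0,\sigma)$. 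I expect the main obstacle to be the contraction step at the critical scaling: when equality holds in \eqref{eq:subcriticalintro}, smallness of $T$ no longer suffices and one must combine smallness of the initial datum in the trace space with a careful density/approximation argument, tracking dependence of all SMR constants on stopping times to make the restart mechanism in \ref{it2:mainintro} and \ref{it3:mainintro} genuinely uniform.
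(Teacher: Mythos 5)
Your plan for item (1) is essentially the paper's: a Banach fixed-point argument on a truncated problem, closed via stochastic maximal regularity and the critical interpolation estimate of Lemma~\ref{lem:funnyXembedding} (which encodes \eqref{eq:subcriticalintro}). The picture for items (2) and (3), however, has a genuine gap.

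For item (2), the restart-past-$\sigma$ mechanism you describe proves only the \emph{basic} criterion \eqref{eq:blow_up_intermediate}, which requires finiteness of \emph{both} $\sup_t\|u(t)\|_{X_{1-\frac{1+\kappa}{p},p}}$ and $\|u\|_{L^p(0,\sigma,w_\kappa;X_1)}$: the latter integrability is exactly what lets you glue a restarted solution to $u$ and contradict maximality. Your statement that ``the first criterion is immediate since a convergent trajectory in the trace space serves as a new initial datum'' is wrong: knowing only that $\lim_{t\uparrow\sigma}u(t)$ exists in the trace space does \emph{not} tell you that $u\in L^p(0,\sigma,w_\kappa;X_1)$, so you cannot form the concatenated local solution, since a localizing sequence for the extended solution would have to pass $\sigma$ and therefore needs the full $L^p(X_1)$-integrability on $[0,\sigma)$. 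This is precisely why the paper flags the first criterion as new: its proof (Subsection~\ref{sss:proof_blow_up_criteria_lim}) is not a restart but an a~priori estimate. One shows, via an extension-operator approximation $u_{\sigma'_n}=\Ext(u(\sigma'_n))$ together with Egorov's theorem to get uniform smallness of $\|u-u_{\sigma'_n}\|_{L^\infty(\sigma'_n,\sigma;X_{1-\frac{1+\kappa}{p},p})}$, that the nonlinear contribution can be absorbed in the SMR estimate, yielding $\|u\|_{L^p(0,\sigma,w_\kappa;X_1)}<\infty$; only then does \eqref{eq:blow_up_intermediate} apply. The second and third criteria likewise reduce to \eqref{eq:blow_up_intermediate} via Lemmas~\ref{lem:funnyXembedding} and \ref{lem:interpolationineqMR0} rather than by a direct restart. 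Your subcritical argument is the one that is genuinely close to the paper's (that is the only case where the sup alone controls the $L^p(X_1)$ term by absorption).

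For item (3), restarting at $t_0>0$ in a better $(r,\alpha)$-setting and then ``invoking uniqueness'' is not sufficient: uniqueness only gives agreement where both solutions are defined, and the two maximal solutions could a~priori have different lifetimes. The paper's proofs of Theorems~\ref{thm:parabreg} and \ref{thm:parabreg2} need exactly the blow-up criterion of Theorem~\ref{thm:criticalblowup}\eqref{it1:criticalblowup} to show $\mathbb{P}(\sigma>\tau)=0$ for the restarted solution $(\,\widehat v,\widehat\tau\,)$, i.e.\ the new lifetime coincides with $\sigma$. Also, your proposal restarts ``with weight parameter $0$,'' but for $\kappa>0$ the argument uses a range of nontrivial weights $\alpha$ with $1/p<(1+\alpha)/r<(1+\kappa)/p$, and the case $\kappa=0$ requires an extra growth hypothesis \eqref{eq:condFcextrakap0} that you do not mention. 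So the restart idea is right in spirit, but the key closing step is the blow-up criterion, not a density or approximation argument.
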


The result in \eqref{it1:mainintro} ensures the well-posedness of \eqref{eq:SEEintro}. Furthermore, when $\a>0$, it also shows instantaneous regularization of solutions to \eqref{eq:SEEintro}. Indeed, in this case, the solution belongs to $X_{1-\frac{1}{p},p}$, while the initial data is in $X_{1-\frac{1+\a}{p},p}$. This indicates an immediate improvement in the regularity of the solution, as the inclusion $X_{1-\frac{1}{p},p}\subseteq X_{1-\frac{1+\a}{p},p}$ is strict in the case $A$ is an unbounded operator (e.g.\ a differential operator).
The instantaneous regularization result in \eqref{it3:mainintro} extends this further, including the important case where $\a=0$. Its proof relies on the above observation of the instantaneous regularization when a non-trivial time weight is present. A detailed proof is provided in Subsection \ref{subsec:reg}. Finally, \eqref{it2:mainintro} gives several criteria for the explosion of solutions to \eqref{eq:SEEintro}, which can be used to establish the \emph{global well-posedness} of \eqref{eq:SEEintro} whenever (sufficiently strong) a priori estimates for its solution are available. Let us point out that the first criterion in \eqref{it2:mainintro} is new.
It is important to observe that by choosing $\a$ as large as possible, the conditions in \eqref{it2:mainintro} become easier to check.
For a detailed discussion, the reader is referred to Subsection \ref{subsec:blow_up}.

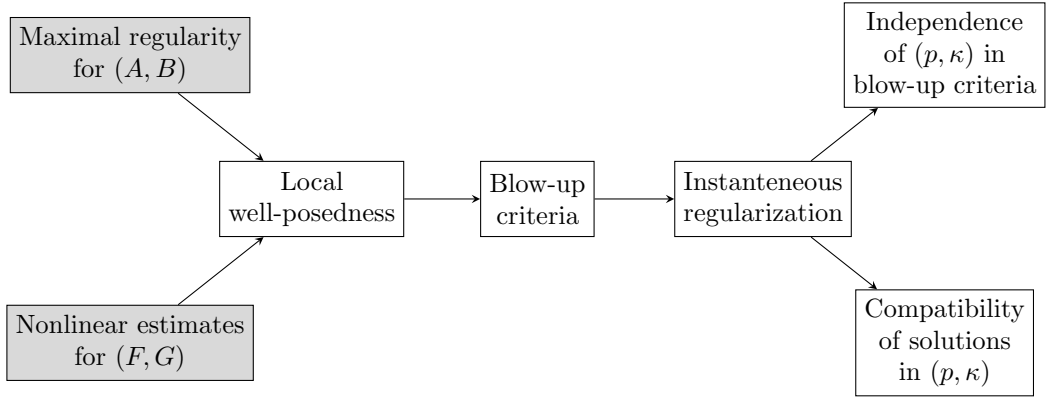
\begin{figure}[ht]
\centering
\begin{tikzpicture}[node distance=2cm, auto, >=latex',
    every node/.style={rectangle, draw, fill=white, minimum size=1cm, align=center}, 
    every edge/.style={draw, ->,>=stealth} 
]
    \node (A1) [fill=gray!30] {Maximal regularity \\ for $(A,B)$}; 
    \node (B) [below right of=A1, xshift=1cm, yshift = -0.5cm] {Local\\ well-posedness};
    \node (A2) [fill=gray!30, below left of=B, xshift=-1cm, yshift = -0.5cm] {Nonlinear estimates \\ for $(F,G)$}; 
    \node (C) [right of=B, xshift=1cm] {Blow-up\\ criteria};
    \node (D) [right of=C, xshift=1cm] {Instanteneous \\ regularization};
    \node (E1) [above right of=D, xshift=1cm, yshift=0.5cm] {Independence \\ of $(p,\kappa)$ in \\ blow-up criteria};
    \node (E2) [below right of=D, xshift=1cm, yshift=-0.5cm] {Compatibility \\ of solutions \\ in $(p,\kappa)$};

    \draw[->,>=stealth] (A1) -- (B);
    \draw[->,>=stealth] (A2) -- (B);
    \draw[->,>=stealth] (B) -- (C);
    \draw[->,>=stealth] (C) -- (D);
    \draw[->,>=stealth] (D) -- (E1);
    \draw[->,>=stealth] (D) -- (E2);

\end{tikzpicture}
\caption{Diagram describing our results. The grey boxes represent the assumptions, while the other boxes indicate the outputs of our framework. The arrows show the flow of implications in which one proves the corresponding result. The three central boxes correspond to Theorem \ref{thm:mainintro}, while the two on the right correspond to Corollaries \ref{cor:transfblowup} and \ref{cor:comp} presented later in the manuscript.}
\label{fig:diagram_abstract}
\end{figure}

\subsubsection*{Applications of our framework to SPDEs}
As discussed below \eqref{eq:SEEintro}, the framework developed in \cite{AV19_QSEE_1,AV19_QSEE_2} has been successfully applied to various concrete SPDEs. Some applications were presented in \cite[Sections 5-7]{AV19_QSEE_1} and \cite[Section 7]{AV19_QSEE_2}. Below, we provide additional references to the places where our framework has been applied, though it is worth noting that further applications to SPDEs are still being prepared. A list of open problems is presented at the end of the manuscript.

For reaction-diffusion equations (e.g.\ Allen--Cahn, Lotka--Volterra, and Gray-Scott equations), local and global well-posedness results can be found in \cite{AVreaction-local,AVreaction-global}. Regarding stochastic fluid dynamic models, \cite{AV20_NS} proves the well-posedness and sharp blow-up criteria of Navier--Stokes equations arising in the study of turbulence. For the 3D stochastic primitive equations, commonly used in atmospheric and oceanic dynamics, \emph{global} well-posedness under various assumptions is established in \cite{Primitive1,Primitive2,agresti2023primitive}. In \cite{AVvar}, we enhanced the variational setting and subsequently applied it to models such as the Cahn--Hilliard equation, tamed 3D Navier--Stokes equations, and more. This approach was further extended to the L\'evy setting with more flexibility in the conditions on $F$ in \cite{BGV}. The results from \cite{AVvar} were employed in \cite{BH24_stability} to study the stability of travelling waves in reaction-diffusion equations. Our results have also been applied to the thin-film equation, which is a quasilinear model (see \cite{AgrSau}).

Typical improvements in our results include: the ability to handle rough initial data, the derivation of optimal blow-up criteria for solutions, the handling of rougher noise (e.g.\ rough Kraichnan noise), the inclusion of superlinear diffusion, and the provision of instantaneous high-order regularity. It is important to note that in the context of stochastic reaction-diffusion equations and fluid dynamics, the roughness of the noise has significant physical relevance, while the sharpness of the blow-up criteria is often essential for proving global well-posedness.
Finally, it is worth mentioning that our framework has proven useful in the context of \emph{regularization by noise}, where stochastic perturbations improve the global well-posedness theory, as demonstrated in \cite{A24_delayed, agresti2024global}.

\smallskip

As Theorem \ref{thm:mainintro} and Figure \ref{fig:diagram_abstract} show, there are two essential ingredients for applying our results: maximal regularity estimates for $(A,B)$ and the nonlinear estimate \eqref{eq:Fsubcriintro}  for $(F,G)$. Typically, \eqref{eq:Fsubcriintro} is a relatively easy consequence of Sobolev embedding and H\"older's inequality. In contrast, the maximal regularity assumption on $(A,B)$ is more technical. However, there is now a well-established body of literature on maximal regularity, with corresponding estimates available for a broad range of situations, particularly when $B= 0$. For further details, the reader is referred to Section \ref{sec:SMR}. Some open problems on maximal regularity are also discussed at the end of this survey.

Finally, we mention that the diagram in Figure \ref{fig:diagram_abstract} can be further extended. Indeed, if one has sufficiently strong energy bounds for the solution that can be connected to the blow-up criteria, then global existence and uniqueness can also be established. This will be demonstrated in several concrete examples throughout the manuscript.

\subsection{Scaling and criticality for stochastic Navier--Stokes equations}
\label{ss:scaling_intro}
We now return to the concept of criticality in the PDE sense and try to connect it to the abstract setting outlined in the previous subsection in the special case of the Navier--Stokes equations with transport noise, which arise in the study of turbulent flows \cite{BCF91,DP24_two_scale,FL23_book,K68,MiRo04}.
In this discussion, we partly follow the approach outlined in \cite[Subsection 1.1]{AV20_NS}. The scaling arguments presented here can also be applied to other SPDEs such as stochastic reaction-diffusion equations, as discussed in Subsection \ref{sss:tracecriticalF_AC}.

Consider the following Navier--Stokes equations with transport noise on $\R^d$ for the unknown velocity field $u$ and pressures $P$ and $\wt{P}_n$,
\begin{equation}
\label{eq:NS_intro}
\textstyle  \dd u  =[\Delta u -\nabla P +(u\cdot\nabla) u]\, \dd t +\sum_{n\geq 1} \big[(b_n\cdot \nabla) u -\nabla \wt{P}_n\big]\,\dd W^n_t, \qquad \nabla \cdot u=0.
\end{equation}
Here, $(W^n)_{n\geq 1}$ is a family of standard independent Brownian motions and $(b_n)_{n\geq 1}\in \ell^2(\N_{\geq 1};\R^d)$.
As is well-known, the pressures $P$ and $\wt{P}_n$ are uniquely determined by $u$ through the divergence-free conditions, so we focus on the velocity field $u$ in the following analysis.
It is important to note that the above model is a simplified version of the physically relevant Navier--Stokes equations with transport noise, where the noise coefficients $(b_n)_{n\geq 1}$ in \eqref{eq:NS_intro}, are typically $x$-dependent. We will come back to this point below.

Next, we discuss the invariance of (local smooth) solutions to stochastic Navier-Stokes equations \eqref{eq:NS_intro} under the map $u\mapsto u_{\lambda}$, where $\lambda >0$ and
\begin{equation}
\label{eq:NS_scaling_map}
u_{\lambda}(t,x):=\lambda^{1/2}u(\lambda t,\lambda^{1/2} x),\qquad
 (t,x)\in \R_+\times\R^d.
\end{equation}
In the deterministic setting (e.g.\ $b_n\equiv 0$), the invariance of Navier-Stokes equations under the above mapping is well known.
In the PDE literature (see e.g.\ \cite{Can04,LePi,PW18,Trie13}), Banach spaces of functions (locally) invariant under the induced map on the initial data, i.e.\
$$
u_0\mapsto u_{0,\lambda}, \quad \text{ where } \quad u_{0,\lambda} :=\lambda^{1/2} u_{0}(\lambda^{1/2}\cdot ),
$$
are referred to as \textit{critical} for \eqref{eq:NS_intro}. Examples of such critical spaces include the Besov space
$
\dot{B}^{d/q-1}_{q,p}(\R^d;\R^d)
$
for $1<q,p<\infty$
and the Lebesgue space
$
L^d(\R^d;\R^d)
$. Indeed, these spaces satisfy the scaling
$$
\|u_{0,\lambda}\|_{\dot{B}^{d/q-1}_{q,p}(\R^d;\R^d)}\eqsim \|u_0\|_{\dot{B}^{d/q-1}_{q,p}(\R^d;\R^d)}, \qquad
\|u_{0,\lambda}\|_{L^d(\R^d;\R^d)}\eqsim \|u_0\|_{L^d(\R^d;\R^d)},
$$
where the implicit constants do not depend on $\lambda>0$.

For the stochastic Navier--Stokes equations \eqref{eq:NS_intro}, similar behaviour appears. More precisely, one can verify that if $u$ is a (local smooth) solution to \eqref{eq:NS_intro} on $\R^d$, then $u_{\lambda}$ is a (local smooth) solution to \eqref{eq:NS_intro} on $\R^d$, where the noise $(W^n)_{n\geq 1}$ is replaced by the \emph{scaled} noise $(\beta_{\cdot,\lambda}^n)_{n\geq 1}$, defined by $\beta_{t,\lambda}^n:=\lambda^{-1/2}W_{\lambda t}^n$ for $t\geq 0$ and $n\geq 1$.
Indeed, $\beta_{t,\lambda}^n$ are independent standard Brownian motions again, and for all $n\geq 1$, we have
\begin{align}
\label{eq:scaling_noise}
\textstyle \int_0^{t/\lambda} (b_n\cdot \nabla)u_{\lambda}(s,x) \,\dd  \beta_{s,\lambda}^n
&= \textstyle \lambda
\int_0^{t/\lambda}  (b_n\cdot \nabla)u(\lambda s,\lambda^{1/2} x)
\,\dd \beta_{s,\lambda}^n \\
\nonumber
&= \textstyle \lambda^{1/2}
\int_0^{t} (b_n\cdot \nabla)u(s,\lambda^{1/2} x)\, \dd W_{s}^n,
\end{align}
which matches the scaling of the deterministic nonlinearity:
\begin{align*}
\textstyle \int_0^{t/\lambda}\big(u_{\lambda}(s,x)\cdot \nabla\big)u_{\lambda}(s,x)\, \dd s
&= \textstyle \lambda^{1/2}\int_0^{t} \big(u(s,\lambda^{1/2}x)\cdot \nabla \big)u\big(s,\lambda^{1/2}x) \,\dd s.
\end{align*}
A similar scaling argument also applies to the other deterministic integrals.

Note that the above argument can also be applied in the important case where the $b_n$ are $x$-dependent. Indeed, if \eqref{eq:NS_intro} holds only on a ball $B_r(x_0)$, one can replace the rescaling in \eqref{eq:NS_scaling_map} by $u_{\lambda}(t,x)=\lambda^{1/2}u(\lambda t,x_0+ \lambda^{1/2}x)$ which results in a solution on $B_{r/\lambda^{1/2}}(0)$ of the Navier--Stokes equations. If we now let $\lambda\to \infty$ and we assume that $b_n(x)\approx b_n(x_0)$ if $x\approx x_0$ (which is the case for Kraichnan noise, see e.g.\ \cite[Proposition 2.1]{agresti2023primitive}), then the scaling argument above can be applied analogously.
Similar results hold if $\R^d$ is replaced by other domains.
\smallskip

In this survey, results on the Navier--Stokes equations with transport noise can be found in Subsections \ref{ss:SNS} and \ref{subsec:SNSRd} for the two-dimensional and three-dimensional cases, respectively. A partial discussion of the results proven in \cite{AV20_NS} in the case of periodic boundary conditions, is provided below Theorem \ref{t:NS_Rd_local}.
At this point, the reader may find it unclear how our framework captures the scaling invariance mentioned earlier. However, the key idea is that the assumptions in our main results, - specifically, the maximal regularity of $(A,B)$ and the mapping properties of $(F,G)$ - preserve the scaling (see Figure \ref{fig:diagram_abstract}), and our framework preserves this property. Further details are provided in Subsection \ref{subsec:SNSRd}.

Finally, it is worth mentioning that the $1/2$-scaling loss in the noise (see \eqref{eq:scaling_noise}), is responsible for the $1/2$-loss of smoothness in the maximal regularity for SPDEs, which will be discussed in Subsection \ref{sec:SMR}. This also motivates the appearance of $X_{1/2}$ for the $B$- and $G$-parts, see Subsection \ref{ss:stoch_max_reg} and \eqref{eq:Fsubcriintro}.

\subsection{Previous work on nonlinear equations through maximal $L^p$-regularity}
\label{ss:literature_discussion}
This subsection provides an overview of the literature on nonlinear evolution equations using the maximal $L^p$-regularity approach.
While we cannot give a comprehensive review here, we focus on works that are directly related to this framework for nonlinear evolution equations. Additional references for specific SPDEs discussed earlier will be found in the main body of the survey.

\subsubsection{Deterministic evolution equations}\label{subsec:detEE}
The well-posedness theory for quasilinear evolution equations has been an active area of research for several decades. The terminology surrounding the concepts of ``semi'', ``quasi'', and ``fully'' nonlinear equations varies across different research communities. In the discussion, we follow the terminology used in the standard references for deterministic evolution equations and PDEs \cite{Am, Fried, Henry,  Krylovnonlinear, LSU, Lun, pruss2016moving}.

Initially, quasilinear evolution equations were studied using the theory of evolution systems for non-autonomous settings (see e.g.\ \cite{Amann86, DPGsum, DPGeq, Sob66}). Inspired by \cite{DP86}, a more direct approach utilizing linearization techniques was introduced in \cite{ClLi}, and is based on maximal $L^p$-regularity (see also \cite{pruss2002maximal} for an overview on these topics). By this time maximal $L^p$-regularity for evolution equations was already known in various contexts \cite{DPGsum, DoreVenni, Lamberton, PrSo}.

A significant observation made in \cite{PruSim04} was that maximal $L^p$-regularity could also be considered with power weights $t^{\kappa}\,\dd t$ with $\kappa\in [0,p-1)$. This insight led to several variants of the theory. In \cite{PrussWeight1} it was used to broaden the class of admissible initial values and derive weaker abstract blow-up criteria for global well-posedness.
Building on this, \cite{PrussWeight2} introduced the more flexible condition \eqref{eq:Fsubcriintro} for the deterministic nonlinearity $F$. However, this came with the restriction that the condition \eqref{eq:subcriticalintro} holds with strict inequality, thus leading to a {\em subcritical} setting. Shortly thereafter, in \cite{addendum}, it was recognized that equality in \eqref{eq:subcriticalintro} could be allowed, thereby extending the framework to include the critical setting.

Applications to Navier--Stokes equations with Navier boundary conditions were presented in \cite{PW18}. Further results and applications can be found in \cite{CriticalQuasilinear}, where critical spaces have been identified for various concrete equations. Recently, in \cite{MatRobWal}, further progress has been made in a slightly different direction, and, in particular, no maximal regularity assumptions are needed there.

For a comprehensive overview of these topics, the reader is referred to the monographs \cite{pruss2016moving} and \cite{Analysis3}, as well as the survey \cite{Wilkesurvey}. Notably, in \cite[Chapters 17 and 18]{Analysis3}, the additional conditions on $A$ (BIP) and $X_0$ (UMD) that appeared in earlier works \cite{addendum,CriticalQuasilinear, Wilkesurvey} have been removed. The revised approach now relies solely on criticality and maximal $L^p$-regularity of the linear mapping $v\mapsto A(u_0)v$, where $u_0$ is the initial value.

\subsubsection{Stochastic evolution equations}
The paper \cite{MaximalLpregularity} and its extension to nonzero $B$ in \cite{VP18} have played a central role in finding the right definition of stochastic maximal $L^p$-regularity, a cornerstone assumption in Theorem \ref{thm:mainintro}. Both papers provide general classes of operators that satisfy stochastic maximal $L^p$-regularity.
Early contributions such as \cite{NVW11eq} for semilinear equations and \cite{Hornung} for quasilinear equations, played a vital role in shaping the formulation of Theorem \ref{thm:mainintro}. However, at the time these papers were written, several key aspects of the (deterministic) theory were not yet developed. Notably, criticality had not yet been formulated in an abstract framework, the operator $B$ was restricted to be zero or small, and no weighted theory had been established. Ultimately, the advancements in deterministic results discussed in Subsection \ref{subsec:detEE} were indispensable for the discovery and formulation of a critical stochastic framework. Without these developments, the progression to the current state of the theory would not have been possible.

\subsection{Other approaches to well-posedness for nonlinear SPDE}
\label{ss:historic_part_intro}
There exist many different approaches and viewpoints on SPDEs. Therefore, we cannot provide a complete overview here. However, we will at least present a selection of approaches and viewpoints. Where applicable, we connect these to our framework and highlight key differences.

\subsubsection{The semigroup approach to nonlinear stochastic evolution equations}
For a collection of references on this approach, the reader is referred to the monograph \cite{DPZ}.
Many papers in this approach focus on stochastic evolution equations in Hilbert spaces. While this framework is sufficient for many applications, it introduces limitations in the case of higher dimensions or rapidly growing nonlinearities. For example in the case of cubic nonlinearities in dimension three, an $L^2$-setting often becomes inadequate. The paper \cite{Brz2} extends the semigroup approach to $L^q$-spaces or Sobolev spaces with $q\in [2, \infty)$, thereby enabling far-reaching applications to nonlinear SPDEs.

\subsubsection{Weak solutions in probabilistic sense}

A powerful method for establishing the existence of solutions to SPDEs is the use of compactness techniques. In the deterministic setting, these methods are well-established and include the Schauder and Schaefer fixed point theorems (see \cite{EvansPDE}). However, these results do not guarantee uniqueness.
In the stochastic setting, compactness-based methods can be adapted to prove the existence of weak solutions. Compactness arguments imply tightness of the laws of approximate solutions, and Skorokhod's theorem is then used to construct a new probability space where a solution exists. For more details, see the monograph \cite{BFH}.

Usually, compactness is created through Sobolev embedding and parabolic regularization theory. Besides the examples in the above monograph, some far-reaching examples can be found in \cite{DHV16, RSZ}. In \cite{BecVer}, it is shown that in the variational framework, some compactness comes for free.

Ensuring strong existence (solutions on the original probability space) is often challenging. In certain cases, this can be achieved through uniqueness results using Yamada-Watanabe-type theorems in infinite-dimensional settings (see \cite{Kunze, Ondrejat04, rockner2008yamada, theewis2025yamada}).

\subsubsection{Random field approach}

The random field approach, originating from \cite{Walsh}, enables pointwise arguments and the use of scalar-valued stochastic calculus. As discussed in \cite{DalangQS}, this method is ultimately equivalent to the semigroup approach. Combining insights from both approaches often provides the best results.

\subsubsection{Rough path approach to PDEs}

Brownian motion can be viewed as a special case of rough paths, allowing certain SPDEs to be solved pathwise. This approach was first introduced in the context of ODEs by \cite{Lyons98}, and a detailed overview is provided in the lecture notes \cite{FrizHairer}. These notes also explore rough PDEs and provide references on this rapidly developing field.

Unlike traditional SPDE frameworks, the rough path approach does not require a semimartingale structure, making it applicable to a broader class of driving processes, such as fractional Brownian motion. A key advantage of this pathwise viewpoint is its ability to handle SPDEs that are classically ill-posed \cite{HairCom}. In some cases, renormalization is required to give a rigorous meaning to the equation \cite{HaiKPZ, HaiReg, hairer2014singular}.

Recent advancements have shown that even supercritical SPDEs can be included through suitable approximation methods (see \cite{CET, CGT} and the lecture notes \cite{cannizzaro2024lecture}). Another significant benefit of the rough path approach, even in ODE settings, is its direct construction of random dynamical systems for the nonlinear solution operator. For infinite-dimensional settings, see \cite{HEsseNeam, HesseNeam20, HesNeam22, HocHes}, and for applications in stability theory, refer to \cite{KueNea}.

While the rough path framework shares some connections with parabolic regularity theory, it primarily relies on Schauder estimates in space-time H\"older spaces. Since Brownian motion paths belong to $C^{1/2-\varepsilon}$ for any $\varepsilon>0$, there is often a loss in regularity. This can present challenges in obtaining classical solutions, especially in the critical settings considered in our framework.

\subsection{Overview of the results in the survey}
\label{ss:overview}

Our goal in this survey is to present the key ideas of our work within a simpler framework that remains sufficiently general to cover a wide range of applications. We hope these results will serve as a valuable resource for researchers working on SPDEs. We acknowledge that extracting the main insights from our earlier works can be challenging, as those papers were presented in their most general forms. Below, we outline the primary contributions of this survey and what readers can expect.

\smallskip
\noindent
\underline{Stochastic maximal regularity}. In Section \ref{sec:SMR}, we provide an overview of maximal $L^p$-regularity both in the deterministic and stochastic setting. In Proposition \ref{prop:localizationSMR}, we show that for linear equations, only the path properties of inhomogeneities influence the regularity. Furthermore, in Theorem \ref{thm:SMRHS} it is proved that in the Hilbert space setting $(A,0)$ always exhibits stochastic maximal $L^p$-regularity. This result generalizes the $p=2$ case of \cite{DPZ82} (see also \cite[Theorem 6.12(2)]{DPZ}), and builds on the deterministic case due to \cite{DeSimon} (see also \cite[Corollary 17.3.8]{Analysis3}).

\smallskip
\noindent
\underline{Local well-posedness, blow-up criteria, and regularity.}
In Sections \ref{sec:loc-well-posed} and \ref{sec:blowup}, we present a slightly more general version of Theorem \ref{thm:mainintro}. We relax the conditions on $F$ and $G$ in \eqref{eq:Fsubcriintro} by replacing the spaces $X_{\beta}$ by $X_{\beta,1}$. Moreover, Theorem \ref{thm:criticalblowup}\eqref{it1:criticalblowup} introduces a new blow-up criterion. In Subsection \ref{subsec:reg}, we provide self-contained proofs of three parabolic regularization theorems in the time variable. Applying these regularization theorems typically removes the critical nature of the problem, paving the way for the application of classical bootstrap methods. When combined with stochastic maximal $L^p$-regularity, this approach facilitates enhanced spatial integrability and regularity.

\smallskip
\noindent
\underline{Critical variational setting and its applications.}
Sections \ref{sec:Var} and \ref{sec:appl} focus on the variational setting with $p=2$ and are based on the results from \cite{AVvar}. We include additional results on higher-order smoothness and moments.  Section \ref{ss:Fluid} demonstrates that a broad class of fluid dynamical models fits within this framework, with detailed applications to two-dimensional Navier--Stokes, Boussinesq, and quasi-geostrophic equations.

\smallskip
\noindent
\underline{SPDEs in the $L^p(L^q)$-setting with either $p>2$ or $q>2$.}
Subsection \ref{ss:AllenCahnLpLq} provides a self-contained analysis of the Allen--Cahn equation on domains $\Dom\subseteq \R^3$. We explain why the $L^p(L^q)$-framework is necessary, and demonstrate how our abstract results yield global existence and uniqueness. This example serves as a blueprint for applying our techniques to other problems.

In Subsection \ref{ss:reaction}, we summarize several results on reaction-diffusion equations with transport noise and periodic boundary conditions. Moreover, we include applications to both coercive systems and non-coercive systems. In particular, we explain in detail the global well-posedness of the Lotka--Volterra model, which is non-coercive.

In Subsection \ref{ss:quasiGSLp}, we consider the so-called quasi-geostrophic equation used in fluid dynamics. Here we partly demonstrate the power of our setting. We identify the critical spaces and combine temporal regularization and classical bootstrap techniques to obtain higher-order smoothness and integrability. While the results here may be new, our primary aim is to demonstrate the practical implementation of our techniques in a relatively straightforward context.

Finally, Subsection \ref{subsec:SNSRd} discusses local well-posedness and regularity results for the stochastic Navier--Stokes equations with transport noise on $\R^d$. Moreover, for $d=2$, we connect these local results to the global ones already obtained in Subsection \ref{ss:SNS}.

\subsection{Notation}
\label{ss:notation_terminology}

Below we collect some of the notation which often appears in the paper.

\begin{flushleft}
$w^a_\kappa(t) = (t-a)^{\kappa}$, and $w_\kappa(t) = t^{\kappa}$, weights, see Subsection \ref{subsec:interp}.

$X_0$, $X_1$ Banach spaces such that $X_1\hookrightarrow X_0$ densely and continuously.

$X_{\theta,p} = (X_0, X_1)_{\theta,p}$ real interpolation, $X_{\theta} = [X_0, X_1]$ complex interpolation, see Subsection \ref{ss:interp}.

$L^p_{\rm loc}(\Dom)$: all $f:\Dom\to \R$ such that for all compact subsets $K\subseteq \Dom$, $f|_{K}\in L^p(K)$. The same notation will be used for other (weighted) function spaces.

$H^{s,q}$ Bessel potential, $B^{s}_{q,p}$ Besov, and $W^{k,p}$ Sobolev spaces, see Subsections \ref{subsec:interp} and \ref{appendix:extra}.

$\BD, \HD, \WD$, see Example \ref{ex:extrapolated_Laplace_dirichlet}.

$W^{k,p}_0(\Dom)$ closure of $C^\infty_c(\Dom)$ in $W^{k,p}(\Dom)$.

$\Wz^{1,p},\hz^{\theta,p}$ see Subsection \ref{subsec:interp}.

$C(\Dom)$ continuous functions, $C^{s}(\Dom)$ H\"older continuous functions.

$C_b(\Dom)$, $C^s_b(\Dom)$: their subspaces of bounded functions.

$C^{\theta_1,\theta_2}(I \times \Dom)= C^{\theta_1}(I; C(\overline{\Dom})) \cap C(\overline{I}; C^{\theta_2}(\Dom))$ parabolic H\"older spaces.

$f\in C^{\theta_1,\theta_2}_{\loc}(I \times \Dom)$ if $f|_{J\times K}\in C^{\theta_1,\theta_2}(J\times K)$ for all compact subsets $J\subseteq I$ and $K\subseteq \Dom$.

$\gamma(\mathcal{U},X)$ $\gamma$-radonifying operators, $\calL_2(\mathcal{U},K)$ Hilbert-Schmidt operators, see Subsection \ref{subsec:gamma}.

$L^0(S;X)$ strongly measurable functions on a measure space $(S,\mathcal{A}, \mu)$.

$W$ cylindrical Brownian motion on $\mathcal{U}$, see Subsection \ref{subsec:stochint}.

$\mathcal{U}$ real separable Hilbert space.

$(V, H, V^*)$ Gelfand triple of Hilbert spaces, see Subsection \ref{ss:settVHV}.

$(\cdot, \cdot)$ inner product; $\lb  \cdot, \cdot\rb$ duality.

$(\O,\mathcal{A},\P)$ probability spaces, $\E$ expectation, $\F_t$ filtration, $\Progress$ progressive $\sigma$-algebra.

$A\in \calL(X_1, X_0)$, $B\in \calL(X_1, \calL_2(\mathcal{U}, X_{1/2}))$ leading operators in deterministic and stochastic part, see Subsections \ref{ss:stoch_max_reg} and \ref{ss:setting}.

$F$ deterministic nonlinearity, $G$ stochastic nonlinearity, see Subsection \ref{ss:setting}.

$p$ integrability in time, $q$ integrability in space.

$\beta_j$ smoothness parameter, $\rho_j$ power in nonlinearity, see Subsection \ref{ss:setting}.

$P$, $P_n$ pressure term in fluid dynamics, $\pr$ Helmholtz projection.

$\Bs$, $\Hs$, $\Ls$: divergence free subspaces, see Subsections \ref{sss:Helm} and \ref{subsec:SNSRd}.
\end{flushleft}

\subsubsection*{Acknowledgements}
{The authors thank Udo B\"ohm, Foivos Evangelopoulos-Ntemiris, Fabian Germ, Daniel Goodair, Katharina Klioba, Theresa Lange, Emiel Lorist, Jan van Neerven, Martin Meyries, Floris Roo\-denburg, Esmée Theewis and the anonymous referee for helpful comments.}

\section{Preliminaries}
\label{sec:preliminaries}

\subsection{The $H^\infty$-calculus for sectorial operators}\label{ss:Hinfty}
The $H^\infty$-calculus is a powerful modern tool for analyzing evolution equations.
Monographs on the $H^\infty$-calculus include \cite{Haase:2, KuWe, Analysis2, Analysis3}. After a very brief introduction, we briefly mention the main concepts.

A well-known functional calculus is the one established for self-adjoint operators (or normal operators) in Hilbert spaces. However, many differential operators with nonconstant coefficients are not self-adjoint or normal. Furthermore, in a Banach space, the concepts of self-adjointness and normality are not defined. This has led to the development of other calculi over the past century. Motivated by an open problem of Kato, the $H^\infty$-calculus was introduced in \cite{McY}. Key results from this work were extended to Banach spaces in the influential paper \cite{CDMY}.

Let $A:\Do(A)\subseteq X\to X$ be a closed operator on a Banach space $X$. An operator $A$ is called {\em sectorial} if the domain and the range of $A$ are dense in $X$ and there exists $\phi\in (0,\pi)$ such that $\sigma(A)\subseteq \overline{\Sigma_{\phi}}$, where $ \Sigma_{\phi}:=\{z\in \C\,:\,|\arg z|< \phi\}$, and there exists $C>0$ such that
\begin{equation}
\label{eq:sectorial}
|\lambda|\|(\lambda-A)^{-1}\|_{\calL(X)}\leq C, \qquad \forall \lambda \in \C\setminus \overline{\Sigma_{\phi}}.
\end{equation}
The quantity $\om(A):=\inf\{\phi\in (0,\pi)\,:\,\eqref{eq:sectorial}\,\,\text{holds for some}\; C>0\}$ is called the {\em angle of sectoriality} of $A$.

For $\phi\in (0,\pi)$, the space $H^{\infty}_0(\Sigma_{\phi})$ consists of all holomorphic functions $f:\Sigma_{\phi}\to \C$ such that $|f(z)|\lesssim \min\{|z|^{\varepsilon},|z|^{-\varepsilon}\}$ for some $\varepsilon>0$. For $\phi>\om(A)$ and $f\in H^{\infty}_0(\Sigma_{\phi})$, we define $f(A)\in \calL(X)$ through a contour integral over $\Gamma:=\partial \Sigma_{\psi}$, where $\psi\in (\omega(A), \phi)$:
$$
\textstyle f(A):=\frac{1}{2\pi i} \int_{\Gamma} f(z)(z-A)^{-1}\,\dd z.
$$
The operator $A$ is said to have {\em a bounded $H^{\infty}(\Sigma_{\phi})$-calculus} if there exists $C>0$ such that
\begin{equation}
\label{eq:H_infinite_calculus}
\|f(A)\|_{\calL(X)}\leq C\|f\|_{H^{\infty}(\Sigma_{\phi})},\qquad \forall f \in H^{\infty}_0(\Sigma_{\phi}),
\end{equation}
where $\|f\|_{H^{\infty}(\Sigma_{\phi})}:=\sup_{z\in \Sigma_{\phi}}|f(z)|$.
The angle of the $H^{\infty}$-calculus of $A$ is defined as \[\angH(A):=\inf\{\phi\in (0,\pi)\,:\,\eqref{eq:H_infinite_calculus}\text{ holds for some }C>0\}.\]

Without imposing conditions beyond sectoriality, it is possible to construct an extended calculus that incorporates additional operators such as $A^{z}$ for $z\in \C$ (see \cite{Haase:2, Analysis3}). If an operator $A$ admits a bounded $H^\infty$-calculus with angle $\phi\in (0,\pi)$, then $A^{it}\in \calL(X)$ for $t\in \R$. Operators $A$ satisfying this property are said to have {\em bounded imaginary powers (BIP)}. On Hilbert spaces, the converse holds as well.

The $H^\infty$-calculus is a powerful abstract tool in the study of evolution equations, including applications to (stochastic) PDEs. Knowing that an operator $A$ has a bounded $H^\infty$-calculus, ensures the boundedness of many singular integral operators.

In particular, many sectorial operators $A$ encountered in applications and studied on $L^q(\Dom)$ possess a bounded $H^\infty$-calculus (or at least $\lambda+A$ does for sufficiently large $\lambda$). Examples include uniformly elliptic operators with Dirichlet or Neumann boundary conditions, or more generally Lopatinskii-Shapiro boundary conditions, as well as the Stokes operator. Typically, one only needs some regularity properties of the underlying domain $\Dom$ and (H\"older)-regularity of the coefficients. For further examples and detailed references, see the notes of \cite[Chapter 10]{Analysis2}.

\subsection{Complex and real interpolation}\label{ss:interp}
In the theory of evolution equations, both the complex and real interpolation methods play a significant role. For a comprehensive treatment of general interpolation theory, the reader is referred to the monographs \cite{BeLo, Tri95}. A concise introduction to complex and real interpolation can be found in \cite[Appendix C]{Analysis1}. Additionally, \cite{LiLo24} offers a novel perspective on interpolation theory, unifying many classical and non-classical methods under a single framework. Interestingly, the terms "complex" and "real" in these methods are largely historical and bear little connection to the respective scalar fields.

A pair of Banach spaces $(X_0, X_1)$ is called a {\em Banach couple} if both $X_0$ and $X_1$ embed continuously into a Hausdorff topological
vector space $\mathcal{H}$. This set-up allows one to define the intersection $X_0\cap X_1$ and sum $X_0+X_1$, both of which are Banach spaces as well.

The space $\mathcal{H}$ is always kept fixed, and all of the definitions (intersection, sum, interpolation, etc.) depend on it.
In many situations, there is a rather natural choice of $\mathcal{H}$. For our purposes, one can take it to be the space of measurable functions or the space of distributions, possibly vector-valued.

\subsubsection{Complex interpolation}
The complex interpolation method, introduced by Calder\'on \cite{Cal64}, relies on the theory of holomorphic functions on strips taking values in $X_0+X_1$ (see \cite{LiLo24} for an alternative definition). The resulting complex interpolation space is denoted by $[X_0, X_1]_{\theta}$ with $\theta\in [0,1]$. Here, $[X_0, X_1]_{0} = X_0$ and $[X_0, X_1]_{1} = X_1$. For simplicity, we use the shorthand notation $X_{\theta}:=[X_0, X_1]_{\theta}$. It is important to note that $[X_0, X_1]_{\theta} = [X_1, X_0]_{1-\theta}$.

The complex interpolation spaces are crucial in evolution equations and PDEs for several reasons (see \cite{TayPDE1,TayPDE2,TayPDE3,Ya}). One application is that fractional Sobolev spaces, also known as Bessel potential spaces (see \eqref{eq:Bessel}), can equivalently be defined by complex interpolation. More generally, if $A$ has (BIP), domains of fractional powers of sectorial operators can be identified with complex interpolation spaces in the following way. Suppose that $A\in \calL(X_1, X_0)$ is sectorial on $X_0$, and $X_1 = \Do(A)$. Then for $\alpha>0$ and $\theta\in (0,1)$, \[\Do(A^{\alpha \theta}) = [X_0, \Do(A^{\alpha})]_{\theta}\]
if $I+A$ has BIP (see \cite[Theorem 6.6.9]{Haase:2}, \cite[Theorem 15.3.9]{Analysis3}, \cite[1.15.3]{Tri95}). This result holds particularly if $A$ has a bounded $H^\infty$-calculus. It is well-known that on Hilbert spaces the above identification of the domains of fractional powers is equivalent to the boundedness of the $H^\infty$-calculus, and to BIP (see \cite[Theorem 7.3.1]{Haase:2}).

\subsubsection{Real interpolation}
Real interpolation was introduced by both Lions and Peetre in a series of papers predating the discovery of the complex interpolation method. The primary motivation arose from its applications to PDEs. For historical insights, see \cite{BeLo, Tri95}, and for PDE applications, consult \cite{DPGsum, DPGeq, Lun}. Real interpolation can be introduced in various equivalent ways, many of which are unified under the framework in \cite{LiLo24}.

The real interpolation space is denoted as $(X_0, X_1)_{\theta,p}$ for $\theta\in (0,1)$ and $p\in [1, \infty]$, or simply as $X_{\theta,p}$. The space $X_{\theta,p}$ becomes larger as $p$ increases. For many choices of a space $E$, the following embedding holds:
\begin{align}\label{eq:sandwichinterp}
X_{\theta,1} \hookrightarrow  E \hookrightarrow X_{\theta,\infty}.
\end{align}
This applies, for instance, to $E = X_{\theta,p}$ and $E = X_{\theta}$. It also applies if $X_1= \Do(A)$ and $E = \Do(A^{\theta})$ (see \cite[Sections 1.3.3, 1.10 and 1.15.2]{Tri95}). Furthermore, if $(X_0,X_1)$  are Hilbert spaces, then (see \cite[Corollary C.4.2]{Analysis1})
\begin{align}\label{eq:sandwichinterpHS}
X_{\theta,2} = X_{\theta}.
\end{align}

One key application of real interpolation is in describing the regularity of orbits of analytic semigroups (see \cite[Section 2.2.1]{Lun} or \cite[Theorem L.2.4]{Analysis3}): for $p\in (1, \infty)$ and $\kappa\in [0,p-1)$, one has
$t\mapsto t^{\kappa/p} A e^{-tA} x \in L^p(0,T;X)$ if and only if $x\in (X,\Do(A))_{1-\frac{1+\kappa}{p},p}$. Moreover, the following norm equivalence holds:
\begin{align}\label{eq:equivorbitreal}
\|x\|_X+\|t\mapsto t^{\kappa/p} A e^{-tA} x\|_{L^p(0,T;X)} \eqsim \|x\|_{(X,\Do(A))_{1-\frac{1+\kappa}{p},p}}.
\end{align}

\subsection{Function spaces and interpolation}\label{subsec:interp}

In stochastic evolution equations, fractional smoothness in both space and time is often required. While H\"older smoothness is very useful, working with optimal $L^p$-estimates necessitates $L^p$-variants of H\"older smoothness. A suitable class of function spaces for this purpose is the Bessel potential spaces, denoted by $H^{s, p}(0,T;X)$.
These spaces are vector-valued, reflecting the fact that, in the evolution equation approach to PDEs, the spatial variable is represented by the function space $X$. For a comprehensive discussion of these spaces and their connections to evolution equations, we refer the reader to the monographs \cite{Am, AmannII, Analysis1, Analysis3, pruss2016moving}.

To handle rough initial data and obtain regularization results, weighted versions of the Bessel potential spaces are required. The standard class of weights for defining such spaces is the Muckenhoupt $A_p$-class  for $p\in (1, \infty)$ (see \cite[Appendix J.2]{Analysis2} and \cite[Chapter 7]{Grafakos1}). In our work, we specifically consider power weights of the form $w_\kappa^{a}(t) = (t-a)^{\kappa}$ for $a\in \R$ and $\kappa\in \R$. It is known that $w_{\kappa}^a\in A_p$ if and only if $\kappa\in (-1,p-1)$. In the stochastic setting,
$L^p$-theory often necessitates working with the more restrictive $A_{p/2}$-weights due to the roughness of Brownian paths. However, since power weights $w_{\kappa}^a$ are primarily used in evolution equations to temper the roughness of initial data, we only need $\kappa\geq 0$. Consequently, the relevant range of $\kappa$ in this manuscript is $[0,p/2-1)\cup\{0\}$.

Let $I\subseteq \R$ be an open interval. For a measurable function $w:I\to [0,\infty)$ and strongly measurable function $v$, let
\begin{equation*}%
\textstyle \|v\|_{L^p(I,w;X)} := \big(\int_{I} \|v(t)\|^p w(t)\, \dd t\big)^{1/p}.
\end{equation*}
For $w\in A_p$ one has that $L^p(I,w;X)\subseteq L^1_{\rm loc}(\overline{I};X)$, meaning every $u\in L^p(I,w;X)$ is integrable on compact subsets of $\overline{I}$. Similarly, for $p\in (2, \infty)$ and $w\in A_{p/2}$, $L^p(I,w;X)\subseteq L^2_{\rm loc}(\overline{I};X)$. If $w=1$, we simply write $L^p(I;X)$.

The Bessel potential spaces are introduced through complex interpolation as in \cite{MS12} and \cite[Section 3.4.5]{pruss2016moving}. It is possible to give an equivalent definition through fractional powers and restrictions under the additional assumptions that $X$ is UMD (see Subsection \ref{ss:UMD_definition} for the definition).

To give the definition through complex interpolation let $-\infty\leq a<b\leq \infty$, $\I=(a,b)$, $p\in (1,\infty)$, $w\in A_p$, and $\theta \in (0,1)$. Define the {\em vector-valued Bessel potential spaces} by
\begin{align}\label{eq:Bessel}
H^{\theta,p}(\I,w;X)&:=[L^p(\I,w;X),W^{1,p}(\I,w;X)]_{\theta},
\end{align}
where $W^{1,p}(\I,w;X)$ is the Sobolev space of functions which satisfy  $u,u'\in L^p(\I,w;X)$. If $w=1$, we omit it from the notation.
If there exist constants $c,C>0$ such that $c\leq w(t)\leq C$ for all $t\in I$,  the weighted space $H^{\theta,p}(\I,w;X)$ and the unweighted space $H^{\theta,p}(\I;X)$ are isomorphic.

In \cite{AV19_QSEE_1, AV19_QSEE_2} the spaces $\hz^{\theta,p}(\I,w;X)$ are extensively used to obtain estimates with constants independent of the interval $I$. These spaces will also be used at various points in the current manuscript. They are defined  analogously to $H^{\theta,p}$ but with $W^{1,p}$ replaced by $\Wz^{1,p} = \{u\in W^{1,p}: u(0) = 0\}$. For details, we refer to the aforementioned references.

We will frequently rely on the following embedding results in which time and space are mixed.
\begin{proposition}[Trace embedding with fractional smoothness]
\label{prop:tracespace}
Assume that $p\in (1,\infty)$, $\a\in [0,p-1)$, $\theta\in (0,1]$ and let $0\leq a<b< \infty$. Let $X_0,X_1$ be Banach spaces such that $X_1\hookrightarrow X_0$. Then the following hold:
\begin{enumerate}[{\rm(1)}]
\item\label{it:trace_with_weights_Xap} If $\theta>(1+\a)/p$, then
$$
H^{\theta,p}(a,b,w_{\a}^a;X_{1-\theta})\cap L^p(a,b,w_{\a}^a;X_1)\hookrightarrow
C([a,b];X_{1-\frac{1+\a}{p},p});
$$
\item\label{it:trace_without_weights_Xp} If $\theta>1/p$, then
\begin{align*}
H^{\theta,p}(a,b,w_{\a}^a;X_{1-\theta})\cap L^p(a,b,w_{\a}^a;X_1)&\subseteq
H^{\theta,p}_{\rm loc}((a,b];X_{1-\theta})\cap L^p_{\rm loc}((a,b];X_1)
\subseteq
C((a,b];X_{1-\frac{1}{p},p}).
\end{align*}
\end{enumerate}
\end{proposition}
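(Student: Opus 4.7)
For \eqref{it:trace_with_weights_Xap}, I would first reduce to $a=0$ by translation, so that the weight $w_\a^a$ becomes $w_\a(t)=t^\a$, and both the intersection space and the target interpolation space are translation invariant. Next, I extend $u$ from $(0,b)$ to $\R_+$ via a bounded extension operator (e.g.\ of Seeley type) that preserves simultaneously both the $H^{\theta,p}(\cdot,w_\a;X_{1-\theta})$ and the $L^p(\cdot,w_\a;X_1)$ norms. Such an operator is available on the Muckenhoupt scale $w_\a\in A_p$ (with $\a\in [0,p-1)$), so the problem reduces to the trace embedding on the half line.

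The analytic core is then the weighted trace theorem for Bessel potential spaces: if $\theta>(1+\a)/p$, the map $u\mapsto u(0)$ extends to a bounded operator
\[
\tr_0: H^{\theta,p}(\R_+,w_\a;X_{1-\theta})\cap L^p(\R_+,w_\a;X_1)\to X_{1-\frac{1+\a}{p},p},
\]
see \cite{MS12} or \cite[Section~3.4]{pruss2016moving}. The standard route to this bound is through the $K$-functional characterization of real interpolation: for smooth $u$ and $t>0$ one splits
\[
K(t,u(0);X_0,X_1)\lesssim \|u(t)-u(0)\|_{X_0}+t\,\|u(t)\|_{X_1},
\]
estimates the first term by fractional smoothness (exploiting $\theta>(1+\a)/p$ to absorb the singularity of the weight at $0$) and the second by the weighted $L^p$-norm of $\|u\|_{X_1}$, then computes $\int_0^\infty(t^{-\eta}K(t,u(0)))^p\,\frac{\dd t}{t}$ with $\eta=1-(1+\a)/p$ to recover exactly the right interpolation exponent. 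Continuity $[0,b]\ni t\mapsto u(t)\in X_{1-\frac{1+\a}{p},p}$ then follows by a density argument: the embedding is bounded on the intersection space, and smooth functions (for which continuity at every point is immediate) are dense in it.

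For \eqref{it:trace_without_weights_Xp}, pick any $a'\in (a,b)$. On $[a',b]$ the weight $w_\a^a(t)=(t-a)^\a$ is bounded between two positive constants, so the restriction of $u$ to $(a',b)$ lies in the \emph{unweighted} space $H^{\theta,p}(a',b;X_{1-\theta})\cap L^p(a',b;X_1)$. This already yields the first inclusion in \eqref{it:trace_without_weights_Xp}. Now apply part \eqref{it:trace_with_weights_Xap} with $\a=0$ to this restriction—which is legitimate since $\theta>1/p$—to deduce $u\in C([a',b];X_{1-\frac{1}{p},p})$. As $a'\in(a,b)$ is arbitrary, continuity on $(a,b]$ follows.

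The main obstacle is the first step of (1): the weighted trace theorem and the existence of a weighted extension operator bounded on the fractional Bessel potential scale with an $A_p$-weight singular at the boundary. These are precisely the technical tools developed in \cite{MS12} and \cite[Section~3.4]{pruss2016moving}, on which the plan relies; the role of $\theta>(1+\a)/p$ is exactly to make the $K$-functional integral convergent against the weight.
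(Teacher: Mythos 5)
Your overall plan is sound, and for part \eqref{it:trace_without_weights_Xp} your argument (localize away from $a$, where $w_\a^a$ is comparable to a constant, and invoke part \eqref{it:trace_with_weights_Xap} with $\a=0$) is exactly the intended route. Note, however, that the paper does not actually prove part \eqref{it:trace_with_weights_Xap}: it only cites \cite{MV14,AV19} (for the case $X_1=\Do(A)$ with $A$ sectorial) and \cite[Theorem 1.2]{ALV21} for the general Banach couple. You instead appeal to \cite{MS12} and \cite[Section~3.4]{pruss2016moving}, which cover overlapping but not identical ground; this is a legitimate alternative source for the weighted mixed-derivative trace theorem, and in fact your extension-operator set-up aims directly at the general couple without going through the sectorial detour the paper mentions.

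The one place where your outline is thinner than it looks is the $K$-functional estimate. Splitting $u(0)=(u(0)-u(t))+u(t)$ and choosing $\tau=t$ does produce the target interpolation exponent $\eta=1-\frac{1+\a}{p}$ once you integrate, and the second term does reduce cleanly to $\|u\|_{L^p(w_\a;X_1)}$. But the first term requires bounding
$\int_0^\infty t^{\a-p}\|u(0)-u(t)\|_{X_0}^p\,\dd t$
by $\|u\|_{H^{\theta,p}(w_\a;X_{1-\theta})}$, which for $\theta=1$ is a weighted Hardy inequality but for fractional $\theta\in(0,1)$ is not an elementary pointwise manipulation: one needs either the difference-quotient characterization of $H^{\theta,p}$ with an $A_p$-weight, a Littlewood--Paley/retraction argument, or the mixed-derivative theorem, i.e.\ precisely the technical content of \cite{MS12} that you defer to. You do flag this as the ``main obstacle'', which is fair; just be aware that this is where the condition $\theta>\frac{1+\a}{p}$ does real work and where the argument cannot be closed by hand in a few lines. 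Finally, the density of smooth $X_1$-valued functions in the weighted intersection space, which underpins your continuity claim, is standard but deserves a citation as well.
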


The above result follows from \cite{MV14,AV19} provided $X_1=\Do(A)$ and $A$ is a sectorial operator on $X_0$. The more general case follows from \cite[Theorem 1.2]{ALV21}. The result is optimal in the sense that in many cases, there exists a right-inverse for the trace map
\[\Tr:H^{\theta,p}(a,b,w_{\a}^a;X_{1-\theta})\cap L^p(a,b,w_{\a}^a;X_1)\to X_{1-\frac{1+\a}{p},p}\]
defined by $\Tr\, u = u(a)$. Related results can be found in the recent work \cite{choi2023trace}.

We will also need several other classes of function spaces on domains $\mathcal{O}\subseteq \R^d$ which could be either $\R^d$, the flat torus $\T^d$, a Lipschitz domain, or smoother domains. In some cases, no assumptions on $\mathcal{O}$ beyond openness will be required. We need the Bessel potential spaces $H^{s,q}(\Dom)$ which form a complex interpolation scale in the sense that \[H^{s, q}(\Dom) = [H^{s_0, q}(\Dom),H^{s_1, q}(\Dom)]_{\theta},\]
where $s_0, s_1\in \R$, $s=  (1-\theta) s_0 + \theta s_1$ and $q\in (1, \infty)$. For the Fourier analytic definition of the Bessel potential spaces $H^{s,q}(\R^d)$, see \cite{Tri83}. On domains $\Dom$ as above, $H^{s,q}(\Dom)$ is defined by restriction. For $s\in \N_0$ and $q\in (1, \infty)$,  $H^{s,q}(\Dom)$ coincides with the classical Sobolev space  $W^{s, q}(\Dom)$.

To describe the space of initial data, we will often employ Besov spaces $B^{s}_{q,p}(\Dom)$ for $s\in \R$ and $q,p\in [1, \infty]$. These spaces can also be defined Fourier analytically on $\R^d$ and by restriction for $\Dom\subseteq \R^d$. For $\theta\in (0,1)$ it holds that
\[B^{s}_{q,p}(\Dom) = (H^{s_0, q}(\Dom),H^{s_1, q}(\Dom))_{\theta,p}\]
where $s_0, s_1\in \R$, $s=  (1-\theta) s_0 + \theta s_1$, $q\in (1, \infty)$ and $p\in [1, \infty]$. By \eqref{eq:equivorbitreal} and Proposition \ref{prop:tracespace}, this explains the necessity of Besov spaces. Note that $B^{s}_{2,2}(\Dom) = H^{s,2}(\Dom)$, which is often denoted by $H^s(\Dom)$.

We will make extensive use of Sobolev embedding results:
\begin{align*}
H^{s_0,q_0}(\Dom)&\hookrightarrow H^{s_1,q_1}(\Dom) \ \text{for} \ q_0, q_1\in (1, \infty), \ \ \text{and}  \ \
B^{s_0}_{q_0,p}(\Dom)&\hookrightarrow B^{s_1}_{q_1,p}(\Dom)\ \text{for} \ p,q_0, q_1\in [1, \infty],
\end{align*}
which hold under the condition $s_0-\frac{d}{q_0}\geq s_1-\frac{d}{q_1}$ and $q_0\leq q_1$. No conditions on $\Dom$ are needed here since the spaces are defined by restriction from the $\R^d$-case. Similarly,
\begin{align*}
H^{s_0,q_0}(\Dom)\hookrightarrow C_b^{s_1}(\overline{\Dom}) \  \text{for} \ q_0\in (1, \infty), \ \ \text{and}  \ \  B^{s_0}_{q_0,p}(\Dom)\hookrightarrow C^{s_1}_b(\overline{\Dom})\  \text{for} \ p,q_0\in [1, \infty],
\end{align*}
if $s_0-\frac{d}{q_0}\geq s_1$ and $s_1\in (0,\infty)\setminus\N$. Here, $C_b^{s}(\overline{\Dom})$ denotes the space of bounded $\lfloor s \rfloor$-times continuously differentiable functions with all derivatives bounded and $(s-\lfloor s \rfloor)$-H\"older continuous.

All the results extend naturally to the $X$-valued setting. To prove the equivalence of the definitions through Fourier analysis and complex interpolation, $X$ must be a UMD space (see \cite[Proposition 2.7]{AV19_QSEE_1} for details and references), as discussed in the next subsection.

\subsection{UMD spaces}
\label{ss:UMD_definition}
For an introduction to UMD spaces and their historical development, the reader is referred to \cite[Chapter 4]{Analysis1} and \cite{Pisier16}.
The abbreviation {\em UMD} stands for unconditional martingale differences and describes a condition on a Banach space $X$. UMD spaces have nontrivial types and cotypes and are reflexive. Conversely, many classical reflexive spaces have UMD. Examples include $L^p$, Sobolev space $W^{s,p}$, Besov spaces $B^{s}_{p,q}$ etc.\ for $p,q\in (1, \infty)$ and $s\in \R$.

Although UMD is a probabilistic notion, it admits geometric and analytical characterizations (see \cite{Bu1} and \cite[Chapter 5]{Analysis1}). In particular, if $X$ has UMD, then many results from harmonic analysis extend to the vector-valued setting. For instance, the boundedness of the Hilbert transform, Mikhlin multiplier theorem, and Littlewood-Paley theorem hold in this context. Conversely, the boundedness of the Hilbert transform on $L^p(\R;X)$ already implies that $X$ is UMD.

The theory of Fourier multipliers plays a central role in function space theory, making UMD a frequently encountered condition in the study of vector-valued function spaces. This is one reason why UMD spaces are important for our analysis. Another reason is that if $X$ has UMD, there exist two-sided estimates for stochastic integrals in terms of $\gamma$-radonifying operators. These estimates are instrumental in obtaining sharp regularity bounds for solutions to SPDEs. Before going into detail on stochastic integration in Subsection \ref{subsec:stochint}, we first discuss this special class of operators.

\subsection{The space $\gamma(\mathcal{U},X)$}\label{subsec:gamma}
In the theory of stochastic evolution equations, it is often necessary to consider noise that takes values in an infinite-dimensional Hilbert space. An example of such a noise is space-time white noise. However, in many models, the noise has to be coloured to get the well-posedness of the equation in a classical sense.

The space on which the noise is modelled is the Hilbert space $\mathcal{U}$, which is often taken to be $\ell^2$. The space $X$ (or later $X_0$) represents the space in which the evolution equation is analyzed. If $X$ is a Hilbert space, one typically assumes processes take values in the space of {\em Hilbert-Schmidt operators} $\calL_2(\mathcal{U},X)$ (see \cite[Appendix D]{Analysis1}). In the Banach space setting, a different space of operators is required. This is the space of {\em $\g$-radonifying operators}, denoted by $\gamma(\mathcal{U},X)$. For a detailed discussion and an extensive historical overview of this concept, the reader is referred to \cite[Chapter 9]{Analysis2}. Below, we summarize the aspects of the theory that will be relevant for our purposes.

Let $(\g_i)_{i\geq 1}$ be a sequence of independent standard normal random variables on a probability space $(\O,\P)$ and $(u_i)_{i\geq 1}$ an orthonormal basis for $\mathcal{U}$. A bounded linear operator $R:\mathcal{U}\to X$ belongs to $\g(\mathcal{U},X)$ if $\sum_{i=1}^{\infty} \g_i Ru_i$ converges in $L^2(\Omega;X)$. In this case, the $\gamma$-radonifying norm of $R$ is defined  as
$$
\textstyle \|R\|_{\g(\mathcal{U},X)}:=\Big\|\sum_{i=1}^{\infty} \g_i Ru_i\Big\|_{L^2(\Omega;X)}.
$$
The space $\g(\mathcal{U},X)$ is an operator ideal. If $X$ is a Hilbert space, then $\g(\mathcal{U},X)$ coincides with the space of Hilbert-Schmidt operators $\calL_2(\mathcal{U},X)$ (see \cite[Proposition 9.1.9]{Analysis2}).

If $X = L^q(S)$ with $q\in [1, \infty)$, where $(S,\Sigma,\mu)$ is a measure space, then $\gamma(\mathcal{U},X) = L^q(S;\mathcal{U})$ (see \cite[Proposition 9.3.2]{Analysis2}).
Similarly, if $X = H^{s,q}(\Dom)$ for $s\in \R$ and $q\in (1, \infty)$, then $\gamma(\mathcal{U},X) = H^{s,q}(\Dom;\mathcal{U})$.

\subsection{Stochastic integration}\label{subsec:stochint}

Let $(\O,\mathcal{A},\P)$ denote a probability space with filtration $\Filtr=(\F_t)_{t\geq 0}$. A process $\phi:[0,T]\times\Omega \to X$ is called {\em strongly progressively measurable} if for all $t\in [0,T]$, $\phi|_{[0,t]}$ is strongly $\Borel([0,t])\otimes \F_t$-measurable (where $\Borel$ denotes the Borel $\sigma$-algebra). The $\sigma$-algebra generated by the strongly progressively measurable processes is denoted by $\Progress$ and is a subset of $\Borel([0,\infty))\otimes \F_{\infty}$.

To model the noise, we use a cylindrical Brownian motion, which is an isonormal Gaussian process on the Hilbert space $L^2(\R_+;\mathcal{U})$ (see \cite{Kal}).
\begin{definition}
\label{def:Cylindrical_BM}
Let $\mathcal{U}$ be a Hilbert space. A bounded linear operator $W_\mathcal{U}:L^2(\R_+;\mathcal{U})\rightarrow L^2(\Omega)$ is said to be a {\em cylindrical Brownian motion} in $\mathcal{U}$ if the following are satisfied:
\begin{itemize}
\item for all $f\in L^2(\R_+;\mathcal{U})$ the random variable $W_\mathcal{U}(f)$ is centered Gaussian;
\item for all $t\in \R_+$ and $f\in L^2(\R_+;\mathcal{U})$ with support in $[0,t]$, $W_\mathcal{U}(f)$ is $\F_t$-measurable;
\item for all $t\in \R_+$ and $f\in L^2(\R_+;\mathcal{U})$ with support in $[t,\infty]$, $W_\mathcal{U}(f)$ is independent of $\F_t$;
\item for all $f_1,f_2\in L^2(\R_+;\mathcal{U})$ we have $\E(W_\mathcal{U}(f_1)W_\mathcal{U}(f_2))=(f_1,f_2)_{L^2(\R_+;\mathcal{U})}$.
\end{itemize}
\end{definition}
Given $W_{\mathcal{U}}$, the process $t\mapsto W(t)h:=W_\mathcal{U}(\one_{(0,t]} h)$ is a Brownian motion for each $h\in \mathcal{U}$.

\begin{example}
\label{ex:BM}
Let $(W^n)_{n\geq 1}$ be independent standard Brownian motions. Then
\[\textstyle  W_{\ell^2} (f) = \sum_{n\geq 1} \int_{\R_+} \lb f, e_n\rb \, \dd W^n \ \ \ \text{in $L^2(\Omega)$}\]
defines a cylindrical Brownian motion in $\ell^2$, where $(e_n)_{n\geq 1}$ is the standard basis of $\ell^2$.
\end{example}

For $0\leq a<b\leq T$ and $A\in \F_a$, the stochastic integral of $\one_{(a,b]\times A}  h\otimes x$ is defined by
\begin{equation}
\textstyle \int_0^{T} \one_{(a,b]\times A}(r)  h\otimes x \,\dd W(r):= \one_{A} (W(b)h-W(a)h)\,x\,,
\end{equation}
and extended to adapted step processes of finite rank by linearity. Here, the tensor notation $h\otimes x$ is short-hand notation for the operator from $\mathcal{U}$ into $X$ given by $u\mapsto (u, h)_\mathcal{U} x$.

The space $L^p_{\Progress}((0,T)\times\Omega;\g(\mathcal{U},X))$ denotes the progressively measurable subspace of $L^p((0,T)\times\Omega;\g(\mathcal{U},X))$. It can be shown that this coincides with the closure of the adapted step processes of finite rank (see \cite[Proposition 2.10]{NVW1}). Moreover, the latter paper provides a characterization of stochastic integrability along with two-sided $L^p$-estimates under the assumption that $X$ is a UMD space. For details the reader is referred to \cite[(5.5)]{NVW13}. These sharp estimates for stochastic integrals play a crucial role in proving the maximal $L^p$-regularity result of Theorem \ref{thm:SMRHinfty} below.

The following proposition provides a simple sufficient condition for stochastic integrability. For the definition of (Rademacher) type $2$, see \cite[Chapter 7]{Analysis2}. Spaces of type $2$ include $L^p$, $W^{s,p}$, and $B^{s}_{p,q}$ etc.\ for $p,q\in [2, \infty)$ and $s\in \R$.
\begin{proposition}
\label{prop:Ito}
Let $T>0$ and let $X$ be a UMD Banach space with type $2$. Then for every $p\in [0,\infty)$, the mapping $G\mapsto \int_0^\cdot G\,\dd W$ extends to a continuous linear operator from $L^p_{\Progress}(\Omega;L^2(0,T;\g(\mathcal{U},X)))$ into $L^p(\Omega;C([0,T];X))$. Moreover, for $p\in(0,\infty)$ there exists a constant $C_{p,X}$ such that for all $G\in L^p_{\Progress}(\Omega;L^2(0,T;\g(\mathcal{U},X)))$, the following estimate holds
\begin{equation*}
\textstyle \E\sup_{0\leq  t\leq T}\Big\|\int_0^t G(s)\,\dd W(s)\Big\|_{X}^p \leq C_{p,X}^p \E\|G\|_{L^2(0,T;\g(\mathcal{U},X))}^p.
\end{equation*}
\end{proposition}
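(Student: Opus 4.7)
The standard route is to establish the one-sided estimate on an explicitly integrable dense subclass and then extend by continuity. I would first consider an adapted step process $G$ of finite rank: here $\int_0^\cdot G\,\dd W$ is defined pointwise by the elementary formula preceding the proposition and has continuous paths by inspection. The key analytic input is the sharp two-sided $L^p$-estimate available in UMD spaces due to van Neerven--Veraar--Weis \cite{NVW1} (see also \cite[(5.5)]{NVW13}), which gives, for every $p \in [1,\infty)$,
\[
\E \sup_{t \in [0,T]} \Big\|\int_0^t G(s)\,\dd W(s)\Big\|_X^p \,\leq\, C_{p,X}^p\, \E\,\|G\|_{\gamma(L^2(0,T);\gamma(\mathcal{U},X))}^p.
\]
To replace the $\gamma$-norm in time by the simpler $L^2$-norm, I would invoke the type-$2$ property of $X$: the space $\gamma(\mathcal{U},X)$ inherits type $2$ as an operator ideal, and for any type-$2$ Banach space $Y$ one has the continuous embedding $L^2(S;Y)\hookrightarrow \gamma(L^2(S);Y)$ (cf.\ \cite[Chapter 9]{Analysis2}). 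Applying this with $S=(0,T)$ and $Y=\gamma(\mathcal{U},X)$ yields the stated bound on step processes for all $p\in[1,\infty)$.

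Next I would pass to general $G$ by density. By \cite[Proposition 2.10]{NVW1}, the adapted step processes of finite rank are dense in $L^p_{\Progress}(\Omega;L^2(0,T;\gamma(\mathcal{U},X)))$, and the preceding Lipschitz estimate therefore extends the integration map uniquely to a bounded linear operator into the complete space $L^p(\Omega;C([0,T];X))$; path continuity is preserved because it is closed under uniform limits on $[0,T]$.

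For $p\in(0,1)$ I would reduce to the case $p=1$ via a Lenglart-type argument: with the stopping times $\tau_n:=\inf\{t\,:\,\|G\|_{L^2(0,t;\gamma(\mathcal{U},X))}\geq n\}$, the $p=1$ estimate applied to $G\,\one_{[0,\tau_n]}$ produces a weak-$L^1$ bound on the running supremum in terms of $\|G\|_{L^2(0,T;\gamma(\mathcal{U},X))}$, from which the $L^p$-estimate follows by standard tail integration; continuity in probability (the $p=0$ case) is then immediate from Markov's inequality. The only genuinely deep step is the first one: the sharp UMD stochastic-integral inequality, which ultimately rests on the Garling--Burkholder decoupling inequality. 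Everything else---the type-$2$ replacement, the density extension, and the $p<1$ localization---is routine.
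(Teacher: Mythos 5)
The paper does not actually give a proof of Proposition~\ref{prop:Ito}; it is stated as a known result with citations to \cite[Theorem 4.7]{NVW13}, \cite{Ondrejat04}, and (for the sharp constant) \cite{Seid10}. So there is no in-paper argument to compare against, and I judge your sketch on its own merits. Your overall strategy---reduce to adapted finite-rank step processes, invoke the sharp maximal $L^p$-estimate for stochastic integrals in UMD spaces, use type~$2$ to replace the $\gamma$-radonifying norm in time by an $L^2$-norm, extend by density, and handle $p\in(0,1)$ via a Lenglart/stopping argument---is exactly the standard route and is conceptually correct.

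There is, however, a technical slip in the central step. The displayed inequality that you attribute to \cite{NVW1} is written with the \emph{nested} norm $\|G\|_{\gamma(L^2(0,T);\gamma(\mathcal{U},X))}$, but the van Neerven--Veraar--Weis estimate is formulated with the $\gamma$-norm over the tensor-product Hilbert space, $\|G\|_{\gamma(L^2(0,T;\mathcal{U}),X)}$. These two spaces are not canonically identified for a general Banach space $X$: the natural maps between $\gamma(H_1\otimes H_2,X)$ and $\gamma(H_1,\gamma(H_2,X))$ are isomorphisms precisely when $X$ has Pisier's (Gaussian) property~$(\alpha)$, and UMD together with type~$2$ does not imply property~$(\alpha)$ (Schatten classes $S^q$ with $q\in(2,\infty)$ give UMD type-$2$ spaces without it). Consequently, the type-$2$ step you take, namely $L^2(0,T;\gamma(\mathcal{U},X))\hookrightarrow\gamma(L^2(0,T);\gamma(\mathcal{U},X))$ obtained by applying type $2$ of the ideal $\gamma(\mathcal{U},X)$, lands in a space that is not the one controlled by the cited theorem. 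The repair is short: state the NVW maximal inequality with $\gamma(L^2(0,T;\mathcal{U}),X)$ and combine it with the \emph{direct} type-$2$ embedding $L^2(0,T;\gamma(\mathcal{U},X))\hookrightarrow\gamma(L^2(0,T;\mathcal{U}),X)$, which holds for any type-$2$ space $X$ without invoking property~$(\alpha)$ (see e.g.\ \cite[Chapter 9]{Analysis2}). With this substitution your density extension and the reduction from $p=1$ to $p\in(0,1)$ go through unchanged.
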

The above result is also valid for the larger class of martingale type $2$ spaces (see the survey \cite[Theorem 4.7]{NVW13} and \cite{Ondrejat04}). In \cite{Seid10} it was shown that the constant $C_{p,X}$ grows as $C \sqrt{p}$ for $p\to \infty$. An alternative proof based on exponential tail estimates can be found in \cite[Theorem 3.1]{cox2024sharp}.

It can be shown that if $\mathcal{U}$ is separable and $(u_n)_{n\geq 1}$ is an orthonormal basis for $\mathcal{U}$, then for all $p\in [0,\infty)$ and $G\in L^p_{\Progress}(\Omega;L^2(0,T;\g(\mathcal{U},X)))$, the following series representation holds with convergence in $L^p(\Omega;C([0,T];X))$:
\[\textstyle \int_0^\cdot G(s)\,\dd W(s) = \sum_{n\geq 1} \int_0^\cdot G(s) u_n \,\dd W(s) u_n.\]
For further details, the reader is referred to the survey \cite{NVW13}.
Many standard properties of the stochastic integral can be found there as well. For It\^o's formula, the reader is directed to \cite{BNVW08}, which holds even in the UMD setting. Further extensions of the above stochastic integration theory and alternative proofs can be found in \cite{Yaros20}.

It is important to note that some standard results that hold when $X$ is a Hilbert space do not extend to the setting of Proposition \ref{prop:Ito}. In particular, the martingale representation theorem requires the precise characterizations provided in \cite{NVW1}.

\section{Stochastic maximal regularity}\label{sec:SMR}
Regularity estimates for linear equations play a crucial role in the theory of nonlinear (S)PDE. In fact, the local well-posedness of nonlinear SPDEs can often be obtained through linearization procedures.

However, before going into stochastic maximal
$L^p$-regularity, we will first discuss some basic
$L^p$-theory for elliptic and parabolic partial differential equations (PDEs). This will serve to illustrate some of the techniques and difficulties that one can expect in the study of maximal regularity. Additionally, in Subsection \ref{subsub:MR}, we will focus on the concept of maximal
$L^p$-regularity in the deterministic setting.

\subsection{The deterministic case}
We start our discussion with a concrete example of $L^p$-theory in the context of elliptic equations. Similar results can be obtained in the scale of H\"older spaces and this leads to Schauder theory. For details on both settings, the reader is referred to the monographs \cite{GT83, KryHolder, Krylov2008_book}.

\subsubsection{The Laplace equation}
On $\R^d$, consider the following elliptic equation
\begin{equation}\label{eq:Laplace}
u - \Delta u = f,
\end{equation}
where $f\in L^p(\R^d)$ is a given function and $u$ is the unknown. The goal is to show $u\in W^{2,p}(\R^d)$ and estimate its norm. It is clear that if $u\in W^{2,p}(\R^d)$, then the left-hand side $u- \Delta u \in L^p(\R^d)$. However, the inverse problem - solving \eqref{eq:Laplace} given $f\in L^p(\R^d)$ - is much more delicate when $d\geq 2$. Indeed, it is unclear how to derive information on the mixed derivatives such as $\partial_1\partial_2 u$. The unique solvability is equivalent to having the estimate
\begin{equation}\label{eq:ellipticest}
\|u\|_{W^{2,p}(\R^d)} \leq C \|f\|_{L^p(\R^d)}
\end{equation}
whenever $u$ is a solution to \eqref{eq:Laplace}. Furthermore, the closedness of the operator $u\mapsto \Delta u$ on $L^p(\R^d)$ with domain $D(\Delta) = W^{2,p}(\R^d)$ is also equivalent to \eqref{eq:ellipticest}.

For $p=2$, the estimate \eqref{eq:ellipticest} can be proved through the isometric property of the Fourier transform. For $p=1$ and $p=\infty$, the estimate \eqref{eq:ellipticest} fails when $d\geq 2$, as the second order Riesz transform is unbounded on $L^1$ and $L^\infty$ (see \cite[p.\ 98]{Krylov2008_book} for a simple example in $d=2$). For $p\in (1, \infty)$, the estimate \eqref{eq:ellipticest} can be proved through Calder\'on--Zygmund theory for singular integral operators or Fourier multiplier theory. In fact, $L^p$-theory for \eqref{eq:Laplace} motivated the development of these theories.

More recent extensions of the above results involve variants of the equation \eqref{eq:Laplace} where $\Delta$ is replaced by a second-order operator with rough coefficients in either divergence or nondivergence form.

\subsubsection{Heat equation}
As a next step, we consider the zero-initial value problem for the heat equation on $\R^d$ over a time interval $(0,T)$, where $T\in (0,\infty)$ is fixed:
\begin{equation}
\label{eq:heat}
\partial_t u - \Delta u = f \ \  \text{with initial condition} \ \ u(0,\cdot)=0.
\end{equation}
In an $L^p$-setting, this problem allows us to investigate various estimates similar to the elliptic case we discussed earlier. However, in this parabolic setting, time regularity becomes equally important. Below we choose the spaces that are well-suited for the sequel.

Initially, we focus on the case where $f\in L^p((0,T)\times \R^d)$,
but it will later prove useful to have more flexibility, considering different integrability conditions in both time and space for the inhomogeneity $f$, specifically $f\in L^p(0,T;L^q(\R^d))$.

It is clear that $\partial_t - \Delta$ maps from $\prescript{}{0}{W}^{1,p}(0,T;L^q(\R^d))\cap L^p(0,T;W^{2,q}(\R^d))$ into $L^{p}(0,T;L^q(\R^d))$. The inverse problem - solving for $u$ in this space and proving the estimate
\begin{equation}\label{eq:parabest}
\|u\|_{W^{1,p}(0,T;L^q(\R^d))}+ \|u\|_{L^p(0,T;W^{2,q}(\R^d))} \leq C \|f\|_{L^p(0,T;L^q(\R^d))}
\end{equation}
is much harder. A possible approach is to consider the operator-theoretic viewpoint. Let $X_0 = L^q(\R^d)$ and $X_1= \Do(\Delta) = W^{2,q}(\R^d)$ and use the variation of constants formula for the solution $u:[0,T]\to X$ to \eqref{eq:heat} as
\[\textstyle  u(t) = \int_0^t e^{(t-s)\Delta} f(s)\,\dd s.\]
Since $\sup_{s\in [0,T]}\|e^{s\Delta}\|_{\calL(X_0)}\leq 1<\infty$, we can estimate
$\|u\|_{L^p(0,T;X_0)} \leq C \|f\|_{L^p(0,T;X_0)}$.
If $f\in C^1([0,T];X_1)$, then one can check that \eqref{eq:heat} holds in an a.e.\ sense. Therefore, to prove \eqref{eq:parabest}, by \eqref{eq:ellipticest} it suffices to show
\begin{equation}\label{eq:parabest2}
\|\Delta u\|_{L^p(0,T;X_0)} \leq C \|f\|_{L^p(0,T;X_0)}.
\end{equation}
Indeed, the missing estimate for $\partial_t u$ follows from $\partial_t u = \Delta u + f$ and the triangle inequality. Moreover, the result for general $f\in L^p(0,T;X_0)$ can be obtained by a density argument.
The estimate \eqref{eq:parabest2} can be interpreted as a singular integral operator with an operator-valued kernel, where $\|\Delta e^{t\Delta}\|_{\calL(X_0)}\leq Ct^{-1}$.
To prove the estimate \eqref{eq:parabest2}, one can employ vector-valued (i.e.\ with values in a Banach space) Fourier multiplier theory. For the latter, the reader is referred to \cite{ABK}, \cite[Section 5.6c]{Analysis1} and \cite[Section 13.3]{Analysis3} and here the UMD condition of $L^q$ plays an important role again.

\subsubsection{Abstract setting}\label{subsub:MR}
The framework used for the heat equation can be generalized to any linear operator $A$ acting on a Banach space $X_0$ with domain $\Do(A)\subseteq X_0$. Consider
\begin{equation}
\label{eq:heatabstract}
u' + A u = f \ \ \text{with initial condition} \ \ u(0,\cdot)=0.
\end{equation}
The operator $A$ is said to have {\em maximal $L^p$-regularity on $(0,T)$} if for every $f\in L^p(0,T;X_0)$, there exists a unique $u\in L^p(0,T;\Do(A))$ such that for all $s\in [0,T]$,
\[\textstyle  u(t) + \int_0^t A u(s) \, \dd s = \int_0^t f(s) \, \dd s,\]
and there is a constant $C$ independent of $f$ such that
\begin{equation}\label{eq:LpMRest}
\|u\|_{L^p(0,T;\Do(A))}\leq C\|f\|_{L^p(0,T;X_0)}.
\end{equation}
A function $u$ that satisfies the above-integrated form of the equation \eqref{eq:heatabstract} is called a {\em strong solution}. From this, we immediately obtain an estimate for $u'$, namely $u' = f - A u$.

Next, we formulate several permanence properties. For a more detailed discussion and further results, the reader is referred to \cite{Dore} (see also \cite[Section 17.2]{Analysis3}).
\begin{theorem}\label{thm:Dore}
Let $p\in [1, \infty)$. Let $A$ be a linear operator with domain $X_1 = \Do(A)\subseteq X$. Suppose that $A$ has maximal $L^p$-regularity on $(0,T)$. Then the following assertions hold:
\begin{enumerate}[{\rm(1)}]
\item $A$ is closed, i.e.\ $\Do(A)$ is a Banach space with its graph norm;
\item the mapping $f\mapsto u$ defines an isomorphism $\mathcal{M}:L^p(0,T;X_0)\to \prescript{}{0}{W}^{1,p}(0,T;X_0)\cap L^p(0,T;X_1)$;
\item for every $\lambda\in \C$, $\lambda+A$ has maximal $L^p$-regularity on $(0,T)$;
\item $-A$ generates an analytic semigroup;
\item $A$ has maximal $L^p$-regularity on $(0,T')$ for any $T'\in (0,\infty)$;
\item $A$ has maximal $L^q$-regularity on $(0,T)$ for all $q\in (1, \infty)$.
\end{enumerate}
\end{theorem}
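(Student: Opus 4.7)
The plan is to treat items \textup{(1)}--\textup{(3)} as essentially formal consequences of the maximal regularity axiom, to derive \textup{(4)} and \textup{(5)} from concatenation and resolvent extraction, and to appeal to Dore's theorem for the $p$-independence statement \textup{(6)}.

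For \textup{(1)}, I would prove closedness of $A$ by a test-function argument. Fix $\eta\in C^1([0,T])$ with $\eta(0)=0$ and $\eta\not\equiv 0$; given $x_n\in\Do(A)$ with $x_n\to x$ and $Ax_n\to y$ in $X_0$, the functions $u_n(t):=\eta(t)x_n$ are strong solutions with data $f_n:=\eta'x_n+\eta Ax_n\to \eta' x+\eta y$ in $L^p(0,T;X_0)$. The maximal regularity estimate applied to $u_n-u_m$ shows that $\{u_n\}$ is Cauchy in $L^p(0,T;X_1)$; its limit must agree with $\eta x$ pointwise a.e., and picking any $t$ in the positive-measure set $\{\eta\neq 0\}$ at which the limit lies in $X_1$ forces $x\in\Do(A)$, while $Au_n=\eta A x_n\to \eta y$ in $L^p(0,T;X_0)$ combined with continuity of $A\colon X_1\to X_0$ forces $Ax=y$. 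For \textup{(2)}, the map $u\mapsto u'+Au$ is bounded from $\prescript{}{0}{W}^{1,p}(0,T;X_0)\cap L^p(0,T;X_1)$ to $L^p(0,T;X_0)$ with norm $\leq 1+\|A\|_{\calL(X_1,X_0)}$, and by uniqueness it is the inverse of $\mathcal{M}$; boundedness of $\mathcal{M}$ follows from the maximal regularity estimate together with $\|u'\|_{L^p(X_0)}\leq\|f\|_{L^p(X_0)}+\|Au\|_{L^p(X_0)}$. For \textup{(3)}, the substitution $v(t):=e^{-\lambda t}u(t)$ turns $u'+Au=f$ into $v'+(\lambda+A)v=e^{-\lambda t}f$; since $t\mapsto e^{\pm\lambda t}$ acts as a bounded isomorphism on $L^p(0,T;X_0)$, the maximal regularity transfers verbatim.

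For \textup{(4)} and \textup{(5)}, I would proceed simultaneously. The case $T'<T$ of \textup{(5)} is immediate by extension of $f$ by zero and restriction of the solution. The crux is sectoriality of $A$: once this is established, \textup{(4)} follows from standard semigroup theory, and \textup{(5)} for $T'>T$ follows by iteration on intervals of length $T$. At each iteration step the initial datum lies in the trace space $(X_0,X_1)_{1-1/p,p}$ by Proposition~\ref{prop:tracespace}, so I would subtract the orbit $t\mapsto e^{-tA}u(kT)$, whose norm in the solution space is controlled precisely by \eqref{eq:equivorbitreal}, to reduce to a zero-initial-value problem on an interval of length $T$ already covered by the hypothesis. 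To extract sectoriality I would combine \textup{(3)}, which shifts $A$ by an arbitrary constant, with a Fourier/Laplace-type argument: testing the MR estimate against oscillatory data $f(t)=\chi(t)e^{i\omega t}x$ for a fixed smooth cutoff $\chi\in C_c^\infty(0,T)$, one extracts in the limit $|\omega|\to\infty$ the resolvent bound $\|(\lambda+A)^{-1}\|_{\calL(X_0)}\lesssim|\lambda|^{-1}$ on a right half plane.

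The main obstacle is \textup{(6)}, which I would not prove from scratch but quote from \cite{Dore}. Its content is that maximal $L^p$-regularity on an interval is independent of $p\in(1,\infty)$, and the proof recasts the boundedness of $\mathcal{M}$ as an operator-valued Fourier multiplier estimate for the symbol $\xi\mapsto A(i\xi+A)^{-1}$ on $\R$, upgrading $L^p$- to $L^q$-boundedness by extrapolation theory for singular integrals with operator-valued kernels. What makes \textup{(6)} genuinely nontrivial is the operator-valued nature of the multiplier: the scalar Marcinkiewicz/Mikhlin theorems do not apply, and the sharp criterion is $R$-boundedness of the resolvent family (Weis's characterization in the UMD case), while the general Banach space case requires exploiting the special convolution structure of the solution operator. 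This is the step I expect to be the main obstacle in any self-contained development.
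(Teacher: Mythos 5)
The paper itself does not prove Theorem~\ref{thm:Dore}; it is stated with a pointer to \cite{Dore} and \cite[Section~17.2]{Analysis3}, so there is no internal proof to compare with. Your items (1)--(3) and the appeal to \cite{Dore} for (6) are essentially correct, modulo a small point in (1): $L^p(0,T;\Do(A))$ is a priori only a normed space, so a Cauchy sequence there need not converge. The clean fix is to first invoke the hypothesis to produce the unique strong solution $\tilde u$ for the limiting datum $\eta'x+\eta y$, and then use the MR estimate to identify $\tilde u$ with the $L^p(0,T;X_0)$-limit $\eta x$ of your $u_n$; this is what actually shows $\eta(t)x\in\Do(A)$ for a.e.\ $t$ and $\eta Ax=\eta y$.

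The genuine gap is the sectoriality step behind (4). Pairing the equation against $\chi(t)e^{-i\omega t}$ for the data $f_\omega(t)=\chi(t)e^{i\omega t}x$ gives $(i\omega+A)w_\omega=\|\chi\|_{L^2}^2x+r_\omega$ with $w_\omega$, $Aw_\omega$, $r_\omega$ uniformly $O(\|x\|_{X_0})$, from which one can derive the a priori bound $\|z\|_{X_0}\lesssim|\omega|^{-1}\|(i\omega+A)z\|_{X_0}$ for $z\in\Do(A)$ and $|\omega|$ large --- that is, injectivity, closed range and the right decay for a left inverse --- but this does \emph{not} give surjectivity of $\lambda+A$, so invertibility is not yet established. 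Surjectivity is typically obtained from a separate Laplace-transform identity with constant data $f\equiv x$ (integrating the equation against $e^{-\lambda t}$ gives $(\lambda+A)\int_0^Te^{-\lambda t}u_x\,\dd t=\tfrac{1-e^{-\lambda T}}{\lambda}x-e^{-\lambda T}u_x(T)$, whence dense range as $\operatorname{Re}\lambda\to\infty$); and note that estimating this integral naively by H\"older only yields $\|(\lambda+A)^{-1}\|\lesssim(\operatorname{Re}\lambda)^{-1/p'}$, short of the required $|\lambda|^{-1}$ decay. The full argument (see \cite[Theorem~17.2.15]{Analysis3}) has to combine both sources of information, and your one-line ``extract in the limit $|\omega|\to\infty$'' does not indicate how. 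Two ordering remarks: your proof of (5) in both directions ($T'>T$ via \eqref{eq:equivorbitreal}, $T'<T$ via uniqueness for the homogeneous Cauchy problem on the shorter interval) already uses $\rho(A)\neq\emptyset$, hence (4); so (4) must be settled before, not alongside, (5). And Proposition~\ref{prop:tracespace} is stated only for $p\in(1,\infty)$ while the theorem nominally allows $p=1$; this is a degenerate case (Baillon's phenomenon, which the paper mentions), but it deserves a sentence.
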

Due to the independence of the time interval, we can and will simply write $A$ has maximal $L^p$-regularity.

\begin{remark}
Maximal $L^p$-regularity can also be extended to the case $T = \infty$. However, since we do not require this extension and since there are several options, the interested reader is referred to \cite[Section 17.2]{Analysis3} for details on this case.

It is important to note that the endpoint cases $p=1$ and $p=\infty$ present complications. Specifically, unbounded operators $A$ fail to have maximal $L^1$- or $L^\infty$-regularity on reflexive spaces $X_0$ as shown in a result by Baillon (see \cite[Section 17.4]{Analysis3}).
\end{remark}

If $-A$ generates an analytic semigroup, a strong solution of \eqref{eq:heatabstract} can be expressed as a {\em mild solution}, i.e.\ it can be written in the form
\[\textstyle u(t) = \int_0^t e^{-(t-s)A} f(s) \,\dd s.\]
Conversely, a mild solution for which $A u\in L^1(0,T;X_0)$ can be shown to be a strong solution. These results are discussed in \cite[Proposition 17.1.3]{Analysis3}.

Thus, when $\Do(A)$ is dense in $X_0$, proving maximal $L^p$-regularity for $p\in [1, \infty)$ reduces to showing the boundedness of the integral operator
\[\textstyle  A u(t) = J_Af(t) := \int_0^t A e^{-(t-s)A} f(s)\,\dd s,\]
for $f\in L^p(0,T;\Do(A))$, as a mapping from $L^p(0,T;X_0)$ into itself. The kernel $A e^{-tA}\in \calL(X_0)$ has a singularity of order $t^{-1}$ as $t\to 0$. As mentioned below \eqref{eq:parabest2}, one can use operator-theoretic tools and harmonic analysis to study the boundedness of the above integral operator.

The following result is due to \cite{DeSimon} (see also \cite[Corollary 17.3.8]{Analysis3}) and implies that analytic semigroup generators on  Hilbert spaces always lead to maximal $L^p$-regularity.
\begin{theorem}\label{thm:deSimon}
Let $X_0$ be a Hilbert space and suppose that $-A$ generates a strongly continuous analytic semigroup on $X_0$ and let $X_1= \Do(A)$. Then $A$ has maximal $L^p$-regularity for all $p\in (1, \infty)$.
\end{theorem}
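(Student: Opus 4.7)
My plan is to prove maximal $L^2$-regularity directly via Plancherel's theorem, which is available on $L^2(\R;X_0)$ precisely because $X_0$ is a Hilbert space, and then invoke Theorem \ref{thm:Dore}(6) to extrapolate to every $p\in(1,\infty)$. This sidesteps the need for any vector-valued Fourier multiplier theory beyond the isometric property of the Fourier transform on a Hilbert-valued $L^2$-space.

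\textbf{Reduction to a convenient sectorial operator.} Since $-A$ generates an analytic semigroup, there exists $\omega\ge 0$ such that $\omega+A$ is sectorial with angle $<\pi/2$ and $0\in\rho(\omega+A)$; in particular the resolvent estimate $\|i\xi(i\xi+\omega+A)^{-1}\|_{\calL(X_0)}\le C$ holds uniformly in $\xi\in\R$, and the semigroup decays exponentially. By Theorem \ref{thm:Dore}(3) maximal $L^p$-regularity is stable under additive shifts by scalars, so after replacing $A$ by $A+\omega$ I may assume $A$ itself is invertible and sectorial of angle strictly less than $\pi/2$.

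\textbf{The $L^2$-case via Plancherel.} For $f\in L^2(0,T;X_0)$, extend by zero to $\tilde f\in L^2(\R;X_0)$ and set $\tilde u = k*\tilde f$ with kernel $k(t)=Ae^{-tA}\one_{t>0}$. Working first on the dense class of Schwartz functions $f$ with values in $\Do(A)$ and compact support in $(0,\infty)$, the identity
\begin{equation*}
\int_0^{\infty} e^{-it\xi} Ae^{-tA}x\,\dd t = A(i\xi+A)^{-1}x, \qquad x\in X_0,\ \xi\in\R,
\end{equation*}
(justified by exponential decay together with $A(i\xi+A)^{-1}=I-i\xi(i\xi+A)^{-1}$) yields $\widehat{A\tilde u}(\xi)=m(\xi)\,\hat{\tilde f}(\xi)$ with symbol $m(\xi)=A(i\xi+A)^{-1}$. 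The identity $m(\xi)=I-i\xi(i\xi+A)^{-1}$ and sectoriality of $A$ give $\sup_{\xi\in\R}\|m(\xi)\|_{\calL(X_0)}<\infty$. Because $X_0$ is a Hilbert space, Plancherel's theorem applies to $L^2(\R;X_0)$ and turns this uniform bound into
\begin{equation*}
\|A\tilde u\|_{L^2(\R;X_0)} \le \Big(\sup_{\xi\in\R}\|m(\xi)\|\Big)\|\tilde f\|_{L^2(\R;X_0)}.
\end{equation*}
Restricting to $(0,T)$ and extending by density to all $f\in L^2(0,T;X_0)$ gives $\|Au\|_{L^2(0,T;X_0)}\lesssim\|f\|_{L^2(0,T;X_0)}$ for the mild solution $u$. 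Together with $u'=f-Au$ and $u(0)=0$, this promotes $u$ to a strong solution and proves maximal $L^2$-regularity. Theorem \ref{thm:Dore}(6) then extends this to all $p\in(1,\infty)$.

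\textbf{Main obstacle.} The principal technical difficulty is the Fourier-analytic identification of the symbol: the kernel $k(t)=Ae^{-tA}$ is not Bochner integrable near $t=0$ since $\|Ae^{-tA}\|_{\calL(X_0)}\sim 1/t$, so $\hat k(\xi)$ cannot be computed by a naive integral. I would handle this by truncating the kernel to $k_{\varepsilon}(t)=Ae^{-tA}\one_{t>\varepsilon}$, verifying the multiplier identity at the truncated level where Fubini applies, and passing $\varepsilon\downarrow 0$ using the exponential decay of the semigroup obtained in the reduction step together with the dominated convergence afforded by the uniform symbol bound; alternatively one may integrate by parts once using $Ae^{-tA}=-\tfrac{\dd}{\dd t}e^{-tA}$ on the dense subclass of $f$ with values in $\Do(A)$. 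Every other step is fairly routine once Plancherel on $L^2(\R;X_0)$ is in hand.
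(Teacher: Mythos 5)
Your proof is correct and follows essentially the same route the paper sketches: reduce to the exponentially stable case via a scalar shift and Theorem \ref{thm:Dore}, establish the $p=2$ case by Plancherel on $L^2(\R;X_0)$ together with the uniform bound on the symbol $A(i\xi+A)^{-1}$, and then extrapolate to all $p\in(1,\infty)$ by Theorem \ref{thm:Dore}(6). (There is a small notational slip — having defined $\tilde u = k*\tilde f$ with $k(t)=Ae^{-tA}\one_{t>0}$, the quantity appearing in the Plancherel estimate should be $\tilde u$ rather than $A\tilde u$ — but the argument is otherwise sound.)
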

Let us sketch the proof. One can equivalently consider $\lambda+A$ for sufficiently large $\lambda\geq 0$, to reduce to the exponentially stable case (see Theorem \ref{thm:Dore}). After that one can prove the result for $p=2$ by using the $L^2$-isometric property of the Fourier transform. Finally, the case $p\in (1, \infty)$ follows from Theorem \ref{thm:Dore} again.

As we will see later on, weights in time play an important role in the theory.
For the definition of maximal $L^p_{\kappa}$-regularity, one has to replace $L^p(0,T;X_0)$ in \eqref{eq:LpMRest} by $L^p(0,T, t^\a\,\dd t;X_0)$ on both sides.

\begin{theorem}[Weighted maximal $L^p$-regularity]\label{thm:MRweight}
Let $X_0$ be a Banach space and suppose that $-A$ generates a strongly continuous analytic semigroup. Let $p\in (1, \infty)$ and $\kappa\in (-1,p-1)$. Then $A$ has maximal $L^p$-regularity if and only if $A$ has maximal $L^p_{\kappa}$-regularity.
\end{theorem}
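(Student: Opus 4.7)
The plan is to re-express both forms of maximal regularity as boundedness of a single Volterra-type singular integral on the appropriate $L^p$-space, and then transport the boundedness between weighted and unweighted settings using weighted singular integral theory. First I would use Theorem~\ref{thm:Dore}\,(3) (together with its straightforward weighted analogue, which is proved identically) to shift the spectrum: replacing $A$ by $\lambda+A$ for $\lambda$ large enough reduces matters to the case of an exponentially stable analytic semigroup, and preserves both implications. In this regime the mild solution formula yields $u(t)=\int_0^t e^{-(t-s)A}f(s)\,ds$, so maximal $L^p$-regularity (respectively $L^p_\kappa$) is equivalent to the boundedness of
\[
J_A f(t) := \int_0^t A\,e^{-(t-s)A} f(s)\,ds
\]
on $L^p(0,T;X_0)$ (respectively on $L^p(0,T,t^{\kappa}\,dt;X_0)$). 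Extending inputs by zero, $J_A$ becomes convolution on $\mathbb{R}$ with the $\calL(X_0)$-valued kernel $K(t)=Ae^{-tA}\mathbf{1}_{(0,\infty)}(t)$, which by analyticity of the semigroup satisfies the Calder\'on-Zygmund pointwise bounds $\|K(t)\|_{\calL(X_0)}\leq C/|t|$ and $\|K'(t)\|_{\calL(X_0)}\leq C/|t|^2$, the latter coming from $\tfrac{d}{dt}(Ae^{-tA})=-A^2e^{-tA}$.

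Next, the power weight $t\mapsto |t|^{\kappa}$ belongs to the Muckenhoupt class $A_p(\mathbb{R})$ exactly for $\kappa\in(-1,p-1)$, which is precisely the range in the hypothesis. For the implication $L^p\Rightarrow L^p_\kappa$, I would invoke the operator-valued extension of the classical result that Calder\'on-Zygmund operators bounded on $L^p(\mathbb{R};X_0)$ extend boundedly to $L^p(\mathbb{R},w;X_0)$ for every $w\in A_p$. For the converse $L^p_\kappa\Rightarrow L^p$, the Rubio de Francia extrapolation principle applies: boundedness on a single $L^p(w)$ with $w\in A_p$ self-improves to boundedness on $L^q(w')$ for every $q\in(1,\infty)$ and $w'\in A_{q}$, in particular to the unweighted case $w'\equiv 1$.

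The main obstacle is that in the purely abstract Banach space setting, Calder\'on-Zygmund kernel estimates alone do not in general suffice to transfer $L^p$-boundedness of operator-valued singular integrals to the weighted scale; typically one needs additional structure (such as an $R$-boundedness hypothesis on $\{tK(t):t>0\}$, which is automatic in the Hilbert space case but is an extra assumption in general). To bypass this delicacy I would use a more elementary, direct splitting of $J_A$ across the diagonal $s=t/2$. Via the isometry $f\mapsto g$, $g(s)=s^{\kappa/p}f(s)$, between $L^p_\kappa$ and $L^p$, the weighted bound on $J_A$ is equivalent to an unweighted bound on $f\mapsto \int_0^t (t/s)^{\kappa/p}Ae^{-(t-s)A}f(s)\,ds$. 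On the near-diagonal piece $s\in(t/2,t)$ the factor $(t/s)^{\kappa/p}$ is uniformly bounded by $2^{|\kappa|/p}$, so this part is controlled by $J_A$ itself and the unweighted hypothesis. On the far-diagonal piece $s\in(0,t/2)$ one has $\|Ae^{-(t-s)A}\|_{\calL(X_0)}\leq C/t$, and the resulting Hardy-type convolution $\int_0^{t/2}t^{\kappa/p-1}s^{-\kappa/p}\|f(s)\|\,ds$ is bounded on $L^p$ exactly when $\kappa\in(-1,p-1)$. Running the same splitting in reverse yields the converse implication, completing the equivalence.
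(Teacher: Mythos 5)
Your overall strategy is the standard elementary proof: reduce to an exponentially stable semigroup, characterize maximal regularity as $L^p$-boundedness of $J_A$, transport between $L^p_\kappa$ and $L^p$ via the isometry $f\mapsto s^{\kappa/p}f(s)$, split the resulting kernel at $s=t/2$, and use Hardy's inequality off the diagonal. You also correctly identify that the Calder\'on--Zygmund / extrapolation route is delicate for operator-valued kernels in a general Banach space and sensibly avoid it. (The paper does not give its own proof of Theorem~\ref{thm:MRweight}; it cites \cite{PruSim04} and \cite[Section 17.2.e]{Analysis3}.)

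However, the near-diagonal estimate as you state it has a genuine gap. From the mere fact that $(t/s)^{\kappa/p}$ is uniformly bounded on $\{t/2<s<t\}$ you cannot conclude that $\int_{t/2}^t (t/s)^{\kappa/p}Ae^{-(t-s)A}f(s)\,\dd s$ is controlled by $J_A f(t)$: the kernel $Ae^{-(t-s)A}$ has a non-integrable singularity of order $(t-s)^{-1}$ at $s=t$ and carries no sign, so the $L^p$-boundedness of $J_A$ rests on cancellation that an arbitrary nonconstant bounded multiplier inside the integral can destroy. The repair is standard: write $(t/s)^{\kappa/p}=1+\bigl[(t/s)^{\kappa/p}-1\bigr]$. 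The ``$1$'' term reproduces $J_A f(t)$ up to the far-diagonal piece $\int_0^{t/2}Ae^{-(t-s)A}f(s)\,\dd s$, which is bounded on $L^p$ by the same Hardy estimate you already have. For the remainder, use the Lipschitz bound $\bigl|(t/s)^{\kappa/p}-1\bigr|\lesssim (t-s)/t$ on $\{t/2<s<t\}$; this cancels the $(t-s)^{-1}$ singularity of $\|Ae^{-(t-s)A}\|_{\calL(X_0)}$ and reduces that piece to the averaging operator $t^{-1}\int_{t/2}^t \|f(s)\|_{X_0}\,\dd s$, which is $L^p$-bounded by Hardy for $p>1$. The same decomposition with $\kappa$ replaced by $-\kappa$ gives the reverse implication; the far-diagonal Hardy step then requires $\kappa>1-p$, which together with $\kappa<p-1$ covers the stated range $\kappa\in(-1,p-1)$.
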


The above result is due to \cite{PruSim04}.
An alternative proof can be found in \cite[Section 17.2.e]{Analysis1}, where the cases $p=1$ and $p=\infty$ are also discussed. Moreover, it is shown there that maximal $L^p$-regularity can be obtained with arbitrary $A_p$-weights (i.e.\ $L^p(0,T;X_0)$ in \eqref{eq:LpMRest} is replaced by $L^p(0,T, w;X_0)$ with $w\in A_p$ on both sides). For further results in this direction, the reader is referred to \cite{ChillFio}.

For a long time, it was an open question whether being the generator of an analytic semigroup on, say, an $L^q$-space with $q\in (1, \infty)\setminus\{2\}$, is sufficient to guarantee maximal $L^p$-regularity for $p\in (1, \infty)$, a question often referred to as Brezis' question.  Some sufficient conditions for maximal $L^p$-regularity were found in \cite{DPGsum, DoreVenni, Lamberton, PrSo}. However, in general, the question was shown to have a negative answer (see \cite{KL00, KL02}),  although all known counterexamples are highly theoretical and not closely related to differential operators. Further extensions of these counterexamples were later found in \cite{Fack14} (see also \cite[Section 17.4.c]{Analysis3}). Around the same time that the first counterexamples appeared, a characterization of maximal $L^p$-regularity was established in \cite{Weis-maxreg,We} (see also \cite[Sections 17.3a]{Analysis3}) in terms of $R$-sectoriality.

After this discussion, we can return to the heat equation \eqref{eq:heat}. Using the above it can be shown that $-\Delta$ has maximal $L^p$-regularity on $L^q(\R^d)$ if and only if $p,q\in (1, \infty)$ (see \cite[Section 17.4bc]{Analysis3}).

\begin{remark}
In the introduction, we mentioned the work by Da Prato and Grisvard \cite{DPGsum}, which provides a maximal $L^p$-regularity result when $X_0$ is replaced by a real interpolation space $(X, \Do(A))_{\theta,p}$, with no conditions on $A$ other than it being the generator of an analytic semigroup. Their method is a ``sum of operators approach''. The result in \cite{Weis-maxreg,We} can also be proved by using a ``sum of operators'' approach, which was done in \cite{KWcalc} (see also \cite[Sections 17.3c]{Analysis3}).
\end{remark}

\subsection{Definition and basic properties in the stochastic setting}
\label{ss:stoch_max_reg}

In this subsection, we consider the following linear variant of the stochastic evolution equation \eqref{eq:SEEintro} on $[a,\tau]\subseteq [0,T]$
\begin{equation}
\label{eq:SEElin}
\left\{
\begin{aligned}
&\dd u + A u \,\dd t = f\,\dd t + (B u +g) \,\dd W,\\
&u(a)=u_a,
\end{aligned}
\right.
\end{equation}
and we define the notion of stochastic maximal regularity for the pair $(A,B)$. Here, $a\in [0,\infty)$ and $\tau$ is a stopping time. As in the deterministic case, it is primarily expressed through an a priori estimate, which plays a crucial role in establishing both local and global well-posedness results. The definition of stochastic maximal regularity will be more technical compared to the previous cases, but it is precisely what can be proved in many applications.

\smallskip

\noindent
\underline{Standing assumption for stochastic maximal regularity.}
Unless stated otherwise, here and below $X_0$ and $X_1$ are Banach spaces with UMD and type $2$, $X_1\hookrightarrow X_0$ and $A\in \calL(X_1, X_0)$ and $B\in \calL(X_1,\gamma(\mathcal{U},X_{1/2}))$. The equation \eqref{eq:SEElin} is to be understood in the It\^o sense.

\smallskip

Similar to deterministic maximal regularity, we are given the inhomogeneities $f$ and $g$, and aim to find necessary and sufficient conditions for the well-posedness of a strong solution $u$.
The concept of stochastic maximal regularity is a property of the pair $(A,B)$, as both are leading order operators, due to the irregularity of $W$. In applications, $A$ might represent a second-order elliptic operator, while $B$ could be of order one. A joint parabolicity condition is typically required to ensure well-posedness.

For instance, consider the following prototype example: for $u\in X_1=W^{2,q}(\R^d)$ (with $q\in [2, \infty)$),
\begin{equation}
\label{eq:AB_basic_example}
A u =- \Delta u , \qquad \text{ and }\qquad B u = ((b_n\cdot\nabla) u)_{n\geq 1},
\end{equation}
where $b\in\ell^2:=\ell^2(\N_{\geq 1})$ and $\mathcal{U}=\ell^2$. As we noted in Subsection \ref{ss:scaling_intro}, the transport term $(b_n\cdot\nabla) u$ has the same scaling as the Laplace operator in the deterministic part due to the scaling of the Brownian motion. To ensure stochastic maximal $L^2$-regularity with $X_0=L^2(\R^d)$ an additional assumption is required: there exists $\nu\in (0,2)$ such that for all $\xi\in \R^d$,
\begin{equation}
\label{eq:stochastic_parabolicity_example}
\textstyle  \sum_{n\geq 1} (b_n\cdot \xi)^2\leq \nu|\xi|^2.
\end{equation}
This condition, often referred to as  \emph{stochastic parabolicity}, arises from the energy estimate (see Theorem \ref{thm:varlin} below for an abstract version). It is also sufficient for obtaining $L^p(L^q)$-estimates in the case $p>2$ and $q\in [2, \infty)$, and in the presence of temporal weights. For more details, the reader is referred to Subsection \ref{ss:sufficient_B_non_zero}.

There are also many important examples where $B=0$ or $B$ is relatively small. While this case is simpler, it remains significant and nontrivial.
In case the reader is willing to use the definition of stochastic maximal $L^p$-regularity below as a black box, they could already go to Section \ref{sec:loc-well-posed}, where we start with the local existence and uniqueness theory for \eqref{eq:SEEintro}. In particular, if $B=0$ or $B$ is relatively small, it is possible to use the well-established $H^\infty$-calculus as a black box. Indeed, by Theorem \ref{thm:SMRHinfty} for every $A$ on a suitable space, having a bounded $H^\infty$-calculus implies that $(A,0)$ has stochastic maximal $L^p$-regularity in the sense of Definitions \ref{def:SMR} and \ref{def:SMRbullet} below.

\subsubsection{Strong solutions}

Before continuing, let us first define our solution concept.
\begin{definition}[Strong solutions]\label{def:strong}
Let $f\in L^1(a,\tau;X_0)$ a.s.\ and $g\in L^2(a,\tau;\gamma(\mathcal{U},X_{1/2}))$ a.s.\ both be progressively measurable.
A progressively measurable process $u:[a,\tau]\times \Omega\to X_0$ is called a {\em strong solution} to \eqref{eq:SEElin} on $[a,\tau]$ if $u\in C([a,\tau];X_0)\cap L^2(a,\tau;X_1)$ a.s., and a.s. for all $t\in [a,\tau]$,
\[\textstyle  u(t) - u_a + \int_a^t A u(s) \,\dd s = \int_a^t f(s)\,\dd s + \int_a^t (B u(s) +g(s)) \,\dd W(s).\]
\end{definition}
In principle, we could also allow $B u$ and $g$ to take values in $\gamma(U,X_0)$, but later on, we will only need the above setting. The stochastic integral exists in $X_0$ (and also in $X_{1/2}$) in the sense of Proposition \ref{prop:Ito}.

\subsubsection{Stochastic maximal $L^p$-regularity}
The definition of stochastic maximal $L^p$-regularity will be given in a weighted setting, as this framework leads to the most robust results for applications. Recall that for $a,\kappa\in \R$, $w^a_\kappa(t) = (t-a)^{\kappa}$ and $w_{\kappa}(t)= t^\kappa$.
In the context of weighted stochastic maximal $L^p$-regularity, the different scaling behaviour of Brownian motion (as discussed in Subsection \ref{ss:scaling_intro}), imposes additional restrictions compared to the deterministic setting as seen in Theorem \ref{thm:MRweight}. In particular, we again lose $1/2$-scaling, and therefore we consider only power weights in the $A_{p/2}$ class. For $p\in [2,\infty)$, this means we focus on $\a\in [0,\frac{p}{2}-1)\cup\{0\}$, where $\{0\}$ is added for the case $p=2$.
By H\"older's inequality, we have the embedding $L^p(a,b, w^{a}_{\kappa};Y)\hookrightarrow L^2(a,b;Y)$ for all $p\in [2, \infty)$ and $\kappa\in [0,p/2-1)\cup\{0\}$, which ensures that stochastic integrals with $g\in L^p(0,T,w_{\a};\g(\mathcal{U},Y))$ are well-defined, see Proposition \ref{prop:Ito}.

As will be discussed in Subsection \ref{ss:nonzeroIC}, it will be enough to consider zero initial data in \eqref{eq:SEElin}, i.e.\
\begin{equation}
\label{eq:SEElin0}
\left\{
\begin{aligned}
&\dd u + A u \,\dd t = f\,\dd t + (B u +g) \,\dd W,\\
&u(a)=0.
\end{aligned}
\right.
\end{equation}

After these preparations, we are ready to give the two central definitions.
\begin{definition}\label{def:SMR}
Let $p\in [2, \infty)$ and $\kappa\in [0,p/2-1)\cup \{0\}$.
We say $(A,B)\in \mathcal{SMR}_{p,\kappa}$ if for all $T\in (0,\infty)$ there exists a constant $C_T$ such that for all $a\in [0,T]$ and every stopping time $\tau:\Omega\to [a,T]$, every progressively measurable $f\in L^p(\O;L^p(a,\tau,w_{\kappa}^a;X_0))$ and $g\in  L^p(\O;L^p(a,\tau,w_{\kappa}^a;\g(\mathcal{U},X_{1/2})))$, there exists a unique strong solution $u$ to \eqref{eq:SEElin} on $[a,\tau]$ such that $u\in L^p(\Omega;L^p(a,\tau,w_{\a}^a;X_1))$, and moreover the following estimate holds
\begin{align}\label{eq:SMRest}
\|u\|_{L^p(\Omega;L^p(a,\tau,w_{\kappa}^a;X_1))} \leq C_T \|f\|_{L^p(\O;L^p(a,\tau,w_{\kappa}^a;X_0))} + C_T \|g\|_{L^p(\O;L^p(a,\tau,w_{\kappa}^a;\g(\mathcal{U},X_{1/2})))}.
\end{align}
\end{definition}

In the deterministic situation, estimates for $u'$ can be obtained directly from the equation. In the stochastic case, this seems not possible in general, and the time regularity needs to be studied separately. Time regularity estimates play an important role in nonlinear equations. For technical reasons, we distinguish between the cases $p>2$ and $p=2$.
\begin{definition}\label{def:SMRbullet}
\
\begin{itemize}
\item For $p\in (2, \infty)$ and $\kappa\in [0,p/2-1)$, we say $(A,B)\in \mathcal{SMR}_{p,\kappa}^{\bullet}$ if $(A,B)\in \mathcal{SMR}_{p,\kappa}$ and for all $\theta\in (0,1/2)$ for all $T\in (0,\infty)$ there exists a constant $C_{T,\theta}$ such that for all $[a,b]\subseteq [0,T]$, for every progressively measurable $f\in L^p(\O;L^p(a,b,w_{\kappa}^a;X_0))$ and $g\in  L^p(\O;L^p(a,b,w_{\kappa}^a;\g(\mathcal{U},X_{1/2})))$, the strong solution $u$ to \eqref{eq:SEElin0} on $[a,b]$ satisfies
\begin{align*}
\|u\|_{L^p(\Omega;H^{\theta,p}(a,b,w_{\kappa}^a;X_{1-\theta}))} \leq C_{T,\theta} \|f\|_{L^p(\O;L^p(a,b,w_{\kappa}^a;X_0))} + C_{T,\theta} \|g\|_{L^p(\O;L^p(a,b,w_{\kappa}^a;\g(\mathcal{U},X_{1/2})))}.
\end{align*}

\item We say $(A,B)\in \mathcal{SMR}_{2,0}^{\bullet}$ if $(A,B)\in \mathcal{SMR}_{2,0}$ and
for all $T\in (0,\infty)$ there exists a constant $C_{T}$ such that for all $[a,b]\subseteq [0,T]$, for every progressively measurable $f\in L^2(\O;L^2(a,b;X_0))$ and $g\in  L^2(\O;L^2(a,b;\g(\mathcal{U},X_{1/2})))$, the strong solution $u$ to \eqref{eq:SEElin0} on $[a,b]$ satisfies
\begin{align*}
\|u\|_{L^2(\Omega;C([a,b];X_{1/2}))} \leq C_{T} \|f\|_{L^2(\O;L^2(a,b;X_0))} + C_T \|g\|_{L^2(\O;L^2(a,b;\g(\mathcal{U},X_{1/2})))}.
\end{align*}
\end{itemize}
In case $(A,B)\in \mathcal{SMR}_{p,\kappa}^{\bullet}$ we will say that $(A,B)$ has stochastic maximal $L^p_{\kappa}$-regularity.
\end{definition}

\begin{remark}
Definitions \ref{def:SMR} and \ref{def:SMRbullet} are slightly more restrictive than the ones in \cite{AV19_QSEE_1}, as we require that the constant $C_T$ be chosen uniformly over all $a$ and $b$. However, this condition does not present any issues in our applications. An alternative approach would be to fix $a=0$ and quantify over all possible cylindrical Brownian motions on the given probability space. To see this, it suffices to translate the problem back to one that starts at zero.
\end{remark}

In the following remark, we explain how to reduce to $f=0$.
\begin{remark}\label{rem:MRf=0}
Clearly $(A,0)\in \mathcal{SMR}_{p,\kappa}$ implies that $A$ has maximal $L^p$-regularity. Indeed, this follows by taking $g=0$ and applying Theorems \ref{thm:Dore} and \ref{thm:MRweight}.

Conversely, if $A$ is known to have maximal $L^p$-regularity, then to check stochastic maximal $L^p_{\kappa}$-regularity for $(A,B)$ it suffices to consider the case where $f=0$. Uniqueness is clear. For existence, recall that maximal $L^p$-regularity implies the weighted variant maximal $L^p_{\kappa}$-regularity (see Theorem \ref{thm:MRweight}). Now, let $v\in L^p(a,b,w_{\kappa}^a;X_1)$ be the strong solution of $v'+Av = f$ with $v(a) = 0$. Let $z\in L^p(\Omega;L^p(a,b,w_{\a}^a;X_1))$ be the strong solution of $\dd z+Az\,\dd t = (Bz + Bv+ g)\,\dd W$ with $z(a)=0$, where $Bv + g$ serves as the stochastic inhomogeneity. Then it can be shown that $u = v+z$ is the desired strong solution to \eqref{eq:SEElin0}. To obtain the required estimate for $u$, it suffices to consider the estimate for $v$. By translation and restriction, it suffices to consider the problem on $(0,c)$ with $c=b-a\leq T$. By maximal $L^p_{\kappa}$-regularity, we obtain the estimate
\begin{align*}
\|v\|_{W^{1,p}(0,c,w_{\kappa};X_0)} + \|v\|_{L^p(0,c,w_{\kappa};X_1)}\leq C_T \|f\|_{L^p(0,c,w_{\kappa};X_0)},
\end{align*}
where $C_T$ does not depend on $c$. Therefore, for $p\in (1, \infty)$ following the argument in \cite[Proposition 2.8]{AV19_QSEE_1}, we obtain that for all $\theta\in (0,1)$,
\begin{equation}
\label{eq:interptrickHthetaMR}
\|v\|_{H^{\theta,p}(0,c,w_{\kappa};X_{1-\theta})}  \leq \|v\|_{\hz^{\theta,p}(0,c,w_{\kappa};X_{1-\theta})}  \leq C_E \|v\|_{W^{1,p}(0,c,w_{\kappa};X_0)} + C_E \|v\|_{L^p(0,c,w_{\kappa};X_1)},
\end{equation}
where the constant $C_E$ comes from the extension operator of \cite[Proposition 2.5]{AV19_QSEE_1} and is also independent of $c$. This completes the proof for $p>2$. For $p=2$, it remains to apply  Proposition \ref{prop:tracespace} with $\theta>1/2$, where the constant can again be chosen independent of $c$ by using the spaces $\hz^{\theta,p}(0,c,w_{\kappa};X_{1-\theta})$ instead (see \cite[Proposition 2.10]{AV19_QSEE_1}).
\end{remark}

\subsection{Nonzero initial values and inhomogeneities with moments}\label{ss:nonzeroIC}

Assuming stochastic maximal $L^p$-regularity, one can derive regularity for equations with nonzero initial data, as in \eqref{eq:SEElin}, even when $f$ and $g$ are not integrable over $\Omega$.
This will be frequently applied to establish the regularity of solutions to nonlinear equations.

\begin{proposition}[$\O$-Localization of inhomogeneities and initial data]\label{prop:localizationSMR}
Suppose $(A,B)\in \mathcal{SMR}_{p,\kappa}^{\bullet}$, where $-A$ is the generator of a strongly continuous analytic semigroup. Suppose that $u_a\in L^0_{\F_a}(\Omega;X_{1-\frac{1+\kappa}{p},p})$. Let $\tau$ be a stopping time with values in $[a,T]$. Let $f\in L^p(a,\tau,w_{\kappa}^a;X_0)$ a.s.\ and $g\in L^p(a,\tau,w_{\kappa}^a;\gamma(\mathcal{U},X_{1/2}))$ a.s.\ be strongly progressively measurable. Then there exists a unique strong solution $u$ to \eqref{eq:SEElin} on $[a,\tau]$ such that $u\in L^p(a,\tau,w_{\kappa}^a;X_1)$ a.s. Moreover, if $p>2$, then the following additional regularity holds a.s.\ for all $\theta\in [0,1/2)$:
\begin{align*}
u& \in H^{\theta,p}(a,\tau,w_{\kappa}^a;X_{1-\theta})\subseteq H^{\theta,p}_{\rm loc}((a,\tau];X_{1-\theta}),
\\ u& \in C([a,\tau];X_{1-\frac{1+\kappa}{p},p}), \ \ \text{and} \ \ u\in C((a,\tau];X_{1-\frac{1}{p},p}).
\end{align*}
If $p=2$, then one has $u \in C([a,\tau];X_{1/2})$ a.s.
\end{proposition}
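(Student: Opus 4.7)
The plan is to reduce the problem to one that fits directly into the definition of $\mathcal{SMR}^{\bullet}_{p,\kappa}$ by (i) absorbing the initial datum $u_a$ into the inhomogeneity via the analytic semigroup, and (ii) localizing on $\Omega$ using stopping times so that the inhomogeneities land in $L^p(\Omega; \cdot)$ rather than just $L^0(\Omega; \cdot)$.

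First, since $-A$ generates a strongly continuous analytic semigroup $(e^{-tA})_{t \geq 0}$, define $v(t) := e^{-(t-a)A} u_a$ for $t \in [a,T]$. Using \eqref{eq:equivorbitreal} pathwise and the hypothesis $u_a \in X_{1-\frac{1+\kappa}{p},p}$ a.s., we obtain $v \in L^p(a,T, w_{\kappa}^a; X_1) \cap C([a,T]; X_{1-\frac{1+\kappa}{p},p})$ a.s., and by analyticity also $v \in C((a,T]; X_{1-1/p,p})$ a.s. (fix $s>a$, then $v(s)\in\Do(A^\infty)\hookrightarrow X_{1-1/p,p}$, and $v(t) = e^{-(t-s)A}v(s)$ for $t\geq s$ is continuous into any interpolation space). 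Setting $\wt u := u - v$, the original problem \eqref{eq:SEElin} is then equivalent to the zero-initial-value problem
\begin{equation*}
\dd \wt u + A \wt u\,\dd t = f\,\dd t + (B\wt u + Bv + g)\,\dd W, \qquad \wt u(a) = 0,
\end{equation*}
with modified stochastic inhomogeneity $\widehat g := Bv + g \in L^p(a,\tau, w_{\kappa}^a; \g(\mathcal{U}, X_{1/2}))$ a.s., where we used $B \in \calL(X_1, \g(\mathcal{U}, X_{1/2}))$.

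Next, the localization. Define the stopping times
\begin{equation*}
\tau_n := \inf\Bigl\{t \in [a,\tau] \,:\, \|f\|_{L^p(a,t, w_{\kappa}^a; X_0)} + \|\widehat g\|_{L^p(a,t, w_{\kappa}^a; \g(\mathcal{U},X_{1/2}))} \geq n\Bigr\} \wedge \tau,
\end{equation*}
with the convention $\inf\varnothing = \tau$. Then $\tau_n \uparrow \tau$ a.s., and the truncations $f_n := f \one_{[a,\tau_n]}$ and $\widehat g_n := \widehat g \one_{[a,\tau_n]}$ belong to $L^\infty(\Omega; L^p(a,T, w_{\kappa}^a;\cdot)) \subseteq L^p(\Omega; L^p(a,T,w_{\kappa}^a;\cdot))$. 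Applying $(A,B) \in \mathcal{SMR}^{\bullet}_{p,\kappa}$ to the zero-initial-value problem with inhomogeneities $(f_n,\widehat g_n)$ on $[a,T]$ produces a unique strong solution $\wt u_n \in L^p(\Omega; L^p(a,T, w_{\kappa}^a; X_1))$. By the uniqueness part of the $\mathcal{SMR}^{\bullet}_{p,\kappa}$ property, applied on $[a, \tau_n]$ together with the fact that $f_n, \widehat g_n$ vanish on $(\tau_n, T]$, one checks that $\wt u_n = \wt u_m$ on $[a, \tau_n]$ for $n \leq m$. Gluing produces $\wt u$ on $[a, \tau)$ which, for each fixed $\omega$, satisfies $\wt u|_{[a, \tau_n(\omega)]} = \wt u_n|_{[a, \tau_n(\omega)]}$ and hence $\wt u \in L^p(a,\tau, w_{\kappa}^a; X_1)$ a.s.

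Finally, the regularity statements follow by combining the bullet part of $\mathcal{SMR}^{\bullet}_{p,\kappa}$ with the trace embedding in Proposition \ref{prop:tracespace}, applied pathwise on $[a,\tau_n]$ and then passing to $\tau$. For $p>2$ and any $\theta \in (0,1/2)$, the bullet definition gives $\wt u_n \in H^{\theta,p}(a,T, w_{\kappa}^a; X_{1-\theta})$ a.s.; picking $\theta \in ((1+\kappa)/p, 1/2)$ (possible since $\kappa<p/2-1$) and using Proposition \ref{prop:tracespace}\eqref{it:trace_with_weights_Xap} yields the continuity into $X_{1-\frac{1+\kappa}{p},p}$, while Proposition \ref{prop:tracespace}\eqref{it:trace_without_weights_Xp} (with any $\theta>1/p$) yields the instantaneous regularization into $C((a,\tau];X_{1-1/p,p})$. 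For $p=2$, the bullet definition directly produces $\wt u \in C([a,\tau]; X_{1/2})$. Adding back $v$ preserves each of these regularities. The main technical point is the consistency $\wt u_n = \wt u_m$ on $[a,\tau_n]$ under the $\mathcal{SMR}^{\bullet}_{p,\kappa}$ uniqueness, which requires the standard fact that if $(\wt u, f, \widehat g)$ solves the linear SPDE on $[a,T]$ and the data vanish after a stopping time $\sigma$, then so does the solution past $\sigma$; this is the place where the progressive measurability assumptions enter crucially.
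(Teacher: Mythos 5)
Your proposal is correct and follows the same localization scheme as the paper's proof: absorb the initial datum via the orbit $v(t)=e^{-(t-a)A}u_a$, localize on $\Omega$ with stopping times built from the running $L^p$-norms of the inhomogeneities, solve the zero-initial-value problem via $\mathcal{SMR}^\bullet_{p,\kappa}$, glue by uniqueness, and read off the regularity from the bullet estimates and Proposition \ref{prop:tracespace}. There is, however, one place where you are \emph{more} careful than the paper's written proof, and it is worth flagging. The paper first solves the zero-initial-value SPDE with the unmodified pair $(f,g)$, obtaining $u$, then defines $v(t)=e^{-(t-a)A}u_a$ (a strong solution of the deterministic problem $v'+Av=0$) and asserts, ``by linearity,'' that $u+v$ solves \eqref{eq:SEElin}. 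As written this cannot be right: summing the two identities gives $\dd(u+v)+A(u+v)\,\dd t = f\,\dd t + (Bu+g)\,\dd W$, and the stochastic coefficient should be $B(u+v)+g$, not $Bu+g$, so a term $Bv\,\dd W$ is unaccounted for (it vanishes only if $Bv\equiv 0$). You close this gap by decomposing $u=\widetilde u+v$ up front and observing that $\widetilde u$ must solve the zero-initial-value problem with modified stochastic inhomogeneity $\widehat g = Bv+g$, and you then localize in terms of $\widehat g$ rather than $g$. This is precisely the device the paper itself uses in Remark \ref{rem:MRf=0}, and it is the argument that makes the final ``add back $v$'' step legitimate. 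One small addendum: you should also record that $v\in H^{\theta,p}(a,T,w^a_\kappa;X_{1-\theta})$ (via \eqref{eq:equivorbitreal} for $\|v\|$ and $\|v'\|=\|Av\|$ together with \eqref{eq:interptrickHthetaMR}), so that the $H^{\theta,p}$-regularity of $u=\widetilde u + v$ follows; your proof asserts the regularities for $v$ in $L^p(X_1)$ and the $C$-spaces but leaves the $H^{\theta,p}$ membership implicit.
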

\begin{proof}
The uniqueness assertion follows from Definition \ref{def:SMR} and the linearity of the problem. It remains to prove the existence and establish the stated regularity. We first consider $p>2$. For each $n\geq 1$, define a stopping time by
\[\tau_n = \inf\{t\in [a,\tau]: \|f\|_{L^p(a,t,w_{\kappa}^a;X_0)} + \|g\|_{L^p(a,t,w_{\kappa}^a;\gamma(\mathcal{U},X_{1/2}))}\geq n\},\]
where we set $\inf\emptyset =\tau$.
Let $f_n:=\one_{[a,\tau_n]} f\in L^p(\O;L^p((a,T),w_{\kappa}^a;X_0))$ and $g_n:=g\one_{[a,\tau_n]}\in L^p(\O;L^p((a,T),w_{\kappa}^a;\gamma(\mathcal{U},X_{1/2})))$.
By the assumption $(A,B)\in \mathcal{SMR}_{p,\kappa}^{\bullet}$, there exists a unique strong solution $u_n$ to \eqref{eq:SEElin0} with $(f,g)$ replaced by  $(f_n, g_n)$, such that $u_n\in L^p(\Omega;L^p(a,T,w_{\kappa}^a;X_1))$. Moreover, for all $\theta\in [0,1/2)$,
\begin{align*}
\|u_n\|_{L^p(\Omega;H^{\theta,p}(a,T,w_{\kappa}^a;X_{1-\theta}))} \leq C_{T,\theta} \|f_n\|_{L^p(\O;L^p((a,T),w_{\kappa}^a;X_0))} + C_T \|g_n\|_{L^p(\O;L^p(a,T,w_{\kappa}^a;\g(\mathcal{U},X_{1/2})))}.
\end{align*}
Since for $n\geq m$, $u_n$ also is a strong solution to \eqref{eq:SEElin0} on $[a,\tau_m]$ with $(f,g)$ replaced by  $(f_m, g_m)$, by uniqueness it follows that $u_n = u_m$ on $[a,\tau_m]$. Since for a.e. $\omega\in \O$, there exists an $n\geq 1$ such that $\tau_n(\omega)=\tau(\omega)$, it follows that $u := \lim_{n\to \infty} u_n$ a.s.\ exists, and a.s.\ for all $\theta\in [0,1/2)$, one has $u\in H^{\theta,p}(a,\tau,w_{\kappa}^a;X_{1-\theta})$. The remaining regularity assertions follow from Proposition \ref{prop:tracespace}.

To add the initial value, let $v(t) = e^{-(t-a)A} u_a$. Then on the interval $[a,T]$, $v$ is a strong solution to $v' + A v = 0$ with $v(a) = u_a$. Moreover, from \eqref{eq:equivorbitreal} it follows that
\begin{align*}
\|v\|_{L^p(a,T,w_{\kappa}^a;X_1)} \leq C_T \|u_a\|_{X_{1-\frac{1+\kappa}{p},p}}.
\end{align*}
The same estimate holds for $v'$ by $v' = -Av$. Therefore, from \eqref{eq:interptrickHthetaMR}, we conclude that $v\in H^{\theta,p}(a,T,w_{\kappa}^a;X_{1-\theta})$. By linearity, it follows that $u+v$ is a strong solution to \eqref{eq:SEElin} and it has the desired regularity.

The proof for $p=2$ follows analogously, with the only modification being the use of the fact that $e^{-tA}$ is strongly continuous on $X_{1/2}$.
\end{proof}

The initial value was added directly using the analytic semigroup generated by $A$. A different argument can be used when $A$ depends on $(t,\omega)$ as discussed in \cite[Propositions 3.10 and 3.12]{AV19_QSEE_1}.

\subsection{The case $B=0$}

In this subsection, we discuss several results in the case $B = 0$. By considering $g=0$, one sees that $(A,0)\in \mathcal{SMR}_{p,\kappa}$ implies that $A$ has maximal $L^p$-regularity. As mentioned in Remark \ref{rem:MRf=0}, if maximal $L^p$-regularity holds for the deterministic setting, we can reduce the problem to $f=0$. If, in addition, $-A$ generates a strongly continuous semigroup, then it is standard (see \cite[Section 7.5]{veraar2006stochastic}) to express the strong solution on $[a,b]$ as
\[\textstyle  u(t) = \int_a^t e^{-(t-s)A} g(s) \,\dd W(s), \ \ t\in [a,b],\]
which in turn establishes uniqueness. Thus, in order to prove $(A,0)\in \mathcal{SMR}_{p,\kappa}$, by translation (after applying the vector-valued Burkholder-Davis-Gundy inequality \cite[(5.5)]{NVW13}) and considering $\one_{(a,\tau)}g$, it suffices to prove the corresponding estimate on $(0,T)$. By density, it is enough to show that
there exists a constant $C_T$ such that for all $g\in L^p(\O;L^p(0,T,w_{\kappa};\g(\mathcal{U},X_{1})))$
\begin{align}\label{eq:SMRsimple}
\textstyle  \Big\|t\mapsto \int_0^t e^{-(t-s)A} g(s) \,\dd W(s)\Big\|_{L^p(\O;L^p(0,T,w_{\kappa};X_1))} \leq C \|g\|_{L^p(\O;L^p(0,T,w_{\kappa};\g(\mathcal{U},X_{1/2})))}.
\end{align}

The estimate \eqref{eq:SMRsimple} can be seen as a stochastic integral operator of convolution type, with kernel $e^{-tA}\in \calL(X_1, X_{1/2})$, which has a singularity of order $t^{-1/2}$ for $t\to 0$. The following extrapolation theorem can be proved in the same way as \cite[Theorem 8.2]{LoVer} by Lorist and the second named author, where a stochastic (and $A_{p/2}$-weighted) version of Calder\'on--Zygmund theory with operator-valued kernels was developed.
\begin{theorem}[Extrapolation]\label{thm:extrapol}
Let $X_0$ be a UMD space with type $2$. Let $p\in [2, \infty)$. Let $-A$ be the generator of a strongly continuous analytic semigroup with $\Do(A) = X_1$. If \eqref{eq:SMRsimple} holds for some $p\in [2, \infty)$ and $\kappa=0$, then \eqref{eq:SMRsimple} holds for all $p\in (2, \infty)$ and $\kappa\in [0,p/2-1)$.
\end{theorem}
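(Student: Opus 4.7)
The plan is to follow the blueprint of \cite[Theorem 8.2]{LoVer} and view the operator
\[
\textstyle (Tg)(t) := \int_0^t e^{-(t-s)A} g(s)\,\dd W(s)
\]
as a stochastic singular integral of convolution type with operator-valued kernel $K(t,s) := e^{-(t-s)A}\one_{\{s<t\}}$, sending integrands taking values in $\gamma(\mathcal{U},X_{1/2})$ to outputs in $X_1$. The standing UMD and type-$2$ hypotheses on $X_0$ pass to $X_{1/2}$ and $X_1$, so the Banach-geometric framework required by the stochastic Calder\'on--Zygmund theory of \cite{LoVer} is available; by the ideal property of $\gamma(\mathcal{U},\cdot)$, composition with $K(t,s)$ from $\gamma(\mathcal{U},X_{1/2})$ into $\gamma(\mathcal{U},X_1)$ has the same operator norm as $K(t,s)\in\calL(X_{1/2},X_1)$, so the kernel may be treated throughout as $\calL(X_{1/2},X_1)$-valued.

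The first step is to verify the kernel estimates. Since $-A$ generates a bounded analytic semigroup on $X_0$ (after replacing $A$ by $\lambda+A$ for $\lambda$ large, which is harmless in view of Theorem~\ref{thm:Dore}) and $X_{1/2}$ is an interpolation space between $X_0$ and $X_1=\Do(A)$, one has the standard semigroup estimates
\[
\|e^{-rA}\|_{\calL(X_{1/2},X_1)} \leq C r^{-1/2}, \qquad \|A e^{-rA}\|_{\calL(X_{1/2},X_1)} \leq C r^{-3/2}, \qquad r>0.
\]
Writing $K(t,s)-K(t,s')=\int_{s'}^{s} A e^{-(t-r)A}\,\dd r$ for $s'<s$ and integrating yields the H\"ormander-type estimate
\[
\|K(t,s)-K(t,s')\|_{\calL(X_{1/2},X_1)} \leq C\,\frac{|s-s'|}{(t-s)^{3/2}} \quad \text{whenever } |s-s'|\leq \tfrac12(t-s),
\]
and the analogous bound in the $t$-variable follows from $\partial_t e^{-(t-s)A}=-Ae^{-(t-s)A}$. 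These are exactly the size and Dini/standard-kernel conditions demanded by the stochastic Calder\'on--Zygmund extrapolation of \cite{LoVer}.

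The second step is to invoke that extrapolation. By Remark~\ref{rem:MRf=0} and the reduction just above the statement, the hypothesis that \eqref{eq:SMRsimple} holds for some $p_0\in[2,\infty)$ and $\kappa=0$ is precisely the unweighted $L^{p_0}$-bound for $T$ from $L^{p_0}(\Omega\times(0,T);\gamma(\mathcal{U},X_{1/2}))$ to $L^{p_0}(\Omega\times(0,T);X_1)$. Combining this single-exponent input with the kernel bounds above, \cite[Theorem 8.2]{LoVer} upgrades the estimate to an $A_{p/2}$-weighted bound
\[
\|Tg\|_{L^p(\Omega\times(0,T),w;X_1)} \leq C \|g\|_{L^p(\Omega\times(0,T),w;\gamma(\mathcal{U},X_{1/2}))}
\]
for every $p\in(2,\infty)$ and every $w\in A_{p/2}$. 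Since $w_\kappa(t)=t^\kappa$ lies in $A_{p/2}$ precisely when $\kappa\in(-1,p/2-1)$, this covers the range $\kappa\in[0,p/2-1)$ and produces exactly \eqref{eq:SMRsimple}.

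The main technical obstacle is not the kernel computations, which are routine consequences of analyticity, but in matching the precise framework of \cite[Theorem 8.2]{LoVer}: one must check that adaptedness and progressive measurability are preserved throughout the stochastic good-$\lambda$ and Rubio de Francia extrapolation arguments, and that the two-sided $\gamma$-norm estimates for stochastic integrals on UMD type-$2$ spaces (Proposition~\ref{prop:Ito} and \cite[(5.5)]{NVW13}) correctly play the role of the scalar Burkholder--Davis--Gundy inequality inside that machinery. Once these compatibility checks are in place, the weighted bound follows automatically.
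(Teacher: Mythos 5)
Your proposal is correct and is essentially the same approach the paper indicates: the paper does not spell out a proof but simply states that the result ``can be proved in the same way as \cite[Theorem 8.2]{LoVer}'', and your outline — viewing the stochastic convolution as a stochastic singular integral with $\calL(X_{1/2},X_1)$-valued kernel $e^{-(t-s)A}\one_{\{s<t\}}$, verifying the size and H\"ormander estimates via analyticity and interpolation, and then invoking the weighted stochastic Calder\'on--Zygmund extrapolation of Lorist--Veraar — is precisely that route. The kernel bounds and the reduction to $\lambda+A$ (harmless on finite intervals via the substitution $u\mapsto e^{-\lambda t}u$) are carried out correctly.
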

The special case of power weights with fixed $p$ was also considered in \cite[Section 7]{AV19} and can be seen as a stochastic analogue of Theorem \ref{thm:MRweight}.

\subsubsection{The Hilbert space case}
The following result can be viewed as a stochastic version of Theorem \ref{thm:deSimon} on analytic generators on Hilbert spaces. In particular, it extends the work of Da Prato \cite{DPZ82} (see also \cite[Theorem 6.12(2)]{DPZ}) where it is shown that $(A,0)\in \mathcal{SMR}_{2,0}$.
\begin{theorem}\label{thm:SMRHS}
Suppose that $X_0$ is a Hilbert space, $X_1 = \Do(A)$, and $-A$ generates a strongly continuous analytic semigroup on $X_0$. Then for all $p\in [2, \infty)$ and $\kappa\in [0,p/2-1)\cup\{0\}$, one has $(A,0)\in \mathcal{SMR}_{p,\kappa}^{\bullet}$.
\end{theorem}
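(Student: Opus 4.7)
The plan is to reduce the full statement to one Hilbert-space It\^o-isometry, using the machinery already developed in the excerpt.

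\emph{Reduction to $f=0$ and to $(p,\kappa)=(2,0)$.} Since $X_0$ is Hilbert, de Simon's Theorem \ref{thm:deSimon} gives that $A$ has deterministic maximal $L^p$-regularity for every $p\in(1,\infty)$, and Theorem \ref{thm:MRweight} upgrades this to the weighted setting. Hence, by the splitting argument of Remark \ref{rem:MRf=0}, it suffices to prove the stochastic estimate \eqref{eq:SMRsimple} (with $f=0$). Moreover, a Hilbert space is UMD of type $2$, so the extrapolation Theorem \ref{thm:extrapol} reduces the remaining task to the single base case $p=2$, $\kappa=0$.

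\emph{The base case $p=2$, $\kappa=0$.} By It\^o's isometry, Fubini and expansion in an orthonormal basis $(u_n)$ of $\mathcal{U}$ (setting $g_n(s):=g(s)u_n$, so that $\|g(s)\|_{\calL_2(\mathcal{U},X_{1/2})}^2=\sum_n\|g_n(s)\|_{X_{1/2}}^2$),
\begin{align*}
\E\int_0^T\|Au(t)\|_{X_0}^2\,dt = \E\int_0^T\sum_{n\ge 1}\int_0^{T-s}\|Ae^{-\tau A} g_n(s)\|_{X_0}^2\,d\tau\,ds.
\end{align*}
On the Hilbert space $X_0$ the identity \eqref{eq:sandwichinterpHS} gives $X_{1/2}=(X_0,X_1)_{1/2,2}$, so the orbit characterisation \eqref{eq:equivorbitreal} applied with $(p,\kappa)=(2,0)$ yields
\[\|\tau\mapsto Ae^{-\tau A}h\|_{L^2(0,T;X_0)}\le C_T\|h\|_{X_{1/2}},\qquad h\in X_{1/2}.\]
Applying this pointwise with $h=g_n(s)$, summing in $n$ and integrating in $s$ and $\omega$ yields \eqref{eq:SMRsimple} in the base case.

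\emph{The $\bullet$-property.} For $p=2$, $\kappa=0$, the path continuity $u\in C([a,b];X_{1/2})$ is obtained by the Da Prato--Kwapie\'n--Zabczyk factorisation method: for any $\alpha\in(0,1/2)$ write
\[u(t)=\tfrac{\sin\pi\alpha}{\pi}\!\int_0^t\!(t-s)^{\alpha-1}e^{-(t-s)A}y(s)\,ds,\quad y(s):=\!\int_0^s\!(s-r)^{-\alpha}e^{-(s-r)A}g(r)\,dW(r).\]
A direct It\^o computation on $X_{1/2}$ shows $y\in L^2(0,T;X_{1/2})$ a.s., and the deterministic convolution with kernel $(t-s)^{\alpha-1}e^{-(t-s)A}$ maps $L^2(0,T;X_{1/2})$ into $C([0,T];X_{1/2})$ by standard analytic-semigroup bounds. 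For $p>2$ and $\kappa\in[0,p/2-1)$, the required $H^{\theta,p}(a,b,w_\kappa^a;X_{1-\theta})$ estimate is inherited from the $L^p(X_1)$ bound combined with the sharp regularity theory for stochastic convolutions against analytic semigroups, in the spirit of the arguments in \cite{AV19_QSEE_1, NVW11eq}.

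The principal obstacle is the base case: one needs an $L^2$-square-function-type control of $\tau\mapsto Ae^{-\tau A}h$ by $\|h\|_{X_{1/2}}$ \emph{without} any $H^\infty$-calculus hypothesis on $A$. What makes this available is exactly the identity $X_{1/2}=(X_0,X_1)_{1/2,2}$ coming from \eqref{eq:sandwichinterpHS}, valid precisely because $X_0$ is a Hilbert space; this is the stochastic counterpart of the Hilbert-space miracle underlying Theorem \ref{thm:deSimon}.
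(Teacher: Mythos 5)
Your route is genuinely different from the paper's. The paper proves Theorem \ref{thm:SMRHS} by first constructing an auxiliary invertible positive self-adjoint operator $A_0$ with $\Do(A_0)=X_1$ (existence from \cite[Proposition 8.1.10]{TayPDE2}), invoking its bounded $H^\infty$-calculus so that Theorem \ref{thm:SMRHinfty} gives $(A_0,0)\in\mathcal{SMR}_{p,\kappa}^{\bullet}$ \emph{with the $H^{\theta,p}$ estimates built in}, and then transferring to $A$ by solving the deterministic perturbation problem $z'+Az=(A_0-A)v$ and applying maximal $L^p_\kappa$-regularity together with \eqref{eq:interptrickHthetaMR}. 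Your proof instead wants to obtain the $L^p(X_1)$ bound via the It\^o isometry at $p=2$ plus extrapolation (Theorem \ref{thm:extrapol}), and then upgrade to the $\bullet$-property afterwards. The $L^2$ It\^o calculation itself is correct — it is exactly the Da Prato computation the paper reproduces for context — and the reduction to $f=0$ via Remark \ref{rem:MRf=0} is fine. The problem is that this route is precisely the one the paper explicitly rejects: immediately after Da Prato's argument the text says ``one could apply Theorem \ref{thm:extrapol} to extrapolate the above result to (weighted) $L^p$-spaces with $p>2$. However, since we want to prove time-regularity as well (see Definition \ref{def:SMRbullet}), we argue differently.'' Extrapolation only yields $\mathcal{SMR}_{p,\kappa}$, not $\mathcal{SMR}_{p,\kappa}^{\bullet}$.

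Your attempt to close the $\bullet$-gap does not work. For $p>2$ you assert that the $H^{\theta,p}(a,b,w_\kappa^a;X_{1-\theta})$ estimate ``is inherited from the $L^p(X_1)$ bound combined with the sharp regularity theory for stochastic convolutions,'' but no such inheritance is available without further structure on $A$ (this is exactly why the paper routes through $A_0$ and Theorem \ref{thm:SMRHinfty}, or equivalently Proposition \ref{prop:transferenceSMRbullet}). The fractional time-regularity of a stochastic convolution with a general analytic generator on a Hilbert space is not a corollary of the $L^p(X_1)$-bound; it is the content of the deeper $H^\infty$-calculus result \cite{MaximalLpregularity}, which is why the auxiliary self-adjoint $A_0$ is introduced. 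For $p=2$ the factorisation argument as you state it is internally inconsistent: to make the convolution kernel $(t-s)^{\alpha-1}e^{-(t-s)A}$ map $L^2(0,T;X_{1/2})$ into $C([0,T];X_{1/2})$ you need $\alpha>1/2$, whereas the It\^o estimate on $y(s)=\int_0^s(s-r)^{-\alpha}e^{-(s-r)A}g(r)\,dW(r)$ in $L^2(\Omega;L^2(0,T;X_{1/2}))$ requires $\int_0^T r^{-2\alpha}\,dr<\infty$, i.e.\ $\alpha<1/2$. These constraints are disjoint, so the classical Da Prato--Kwapie\'n--Zabczyk factorisation is not applicable at $p=2$ without higher moments or additional smoothing. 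The $p=2$ pathwise-continuity bound in $L^2(\Omega;C([a,b];X_{1/2}))$ therefore remains unproved in your scheme. Both gaps are repaired simultaneously by the paper's device: once $A_0$ is available, Theorem \ref{thm:SMRHinfty} delivers the full $\bullet$-estimates (including $p=2$ and the $H^{\theta,p}$-scale), and the perturbation to $A$ is purely deterministic. Reconsider whether you can avoid introducing such an auxiliary operator; as things stand, you cannot.
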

Before we turn to the general setting, we first present Da Prato's proof of $(A,0)\in \mathcal{SMR}_{2,0}$ in the case $e^{-tA}$ is exponentially stable, in which case the result even holds for $T=\infty$. By Theorem \ref{thm:deSimon} and Remark \ref{rem:MRf=0}, it suffices to consider $f=0$. Let $(u_n)_{n\geq 1}$ be an orthonormal basis for $\mathcal{U}$ (which can be assumed to be separable). Then by the It\^o isometry,
\begin{align*}
\textstyle  \Big\|t\mapsto \int_0^t A e^{-(t-s)A} g(s) \,\dd W(s)\Big\|_{L^2(\O;L^2(\R_+;X_0))}^2 &= \textstyle \E \sum_{n\geq 1}\int_{\R_+} \int_0^t \|A e^{-(t-s)A} g(s)u_n\|^2_{X_0} \,\dd s \,\dd t
\\ & = \textstyle \E \sum_{n\geq 1}\int_{\R_+} \int_{\R_+} \|A e^{-tA} g(s)u_n\|^2_{X_0} \,\dd t\,\dd s
\\ & \textstyle \stackrel{\eqref{eq:equivorbitreal}}{\eqsim} \E \sum_{n\geq 1}\int_{\R_+} \|g(s)u_n\|^2_{X_{1/2, 2}} \,\dd s
\\ & = \textstyle  \|g\|_{L^2(\O;L^2(\R_+;\calL_2(\mathcal{U},X_{1/2,2})))}^2
\\ & \textstyle \stackrel{\eqref{eq:sandwichinterpHS}}{\eqsim}
\|g\|_{L^2(\O;L^2(\R_+;\calL_2(\mathcal{U},X_{1/2})))}^2.
\end{align*}
Next, one could apply Theorem \ref{thm:extrapol} to extrapolate the above result to (weighted) $L^p$-spaces with $p>2$. However, since we want to prove time-regularity as well (see Definition \ref{def:SMRbullet}), we argue differently (see below Theorem  \ref{thm:SMRHinfty}).

\subsubsection{The role of the $H^\infty$-calculus}
Below, we present one of the cornerstone results in the theory of stochastic maximal $L^p$-regularity. The result provides a sufficient condition for stochastic maximal $L^p$-regularity in terms of the $H^\infty$-(functional) calculus of $A$ (see Subsection \ref{ss:Hinfty}).
The unweighted case of the next result was obtained by the second named author together with van Neerven and Weis in \cite{MaximalLpregularity}, and through a different method in \cite{NVW13}. The extension to the weighted setting is derived using a perturbation argument in \cite[Section 7]{AV19}.
\begin{theorem}\label{thm:SMRHinfty}
Let $(\mathcal{O}, \Sigma,\mu)$ be a $\sigma$-finite measure space and let $q\in [2, \infty)$. Suppose that $X_0 = L^q(\mathcal{O})$ or $X_0$ is isomorphic to a closed subspace of $L^q(\mathcal{O})$. If there exists a $\lambda\geq 0$ such that $\lambda+A$ has a bounded $H^\infty$-calculus of angle $<\pi/2$, then $(A,0)\in \mathcal{SMR}_{p,\kappa}^{\bullet}$  for all $p\in (2, \infty)$ and $\kappa\in [0,p/2 - 1)$. Moreover, if $q=2$, then additionally $(A,0)\in \mathcal{SMR}_{2,0}^{\bullet}$.
\end{theorem}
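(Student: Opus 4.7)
The plan is to combine two-sided UMD stochastic integral estimates (Subsection \ref{subsec:stochint}) with the square function characterization of the $H^\infty$-calculus (McIntosh's theorem) and the Kalton--Weis identification $\gamma(L^2;L^q)\cong L^q(L^2)$ for $q\in [1,\infty)$. First, by Remark \ref{rem:MRf=0} I reduce to $f=0$: bounded $H^\infty$-calculus of $\lambda+A$ implies $A$ has maximal $L^p$-regularity, which extends to the weighted setting by Theorem \ref{thm:MRweight}. Translating $A\mapsto \lambda+A$, I may assume $A$ itself has a bounded $H^\infty$-calculus of angle $<\pi/2$ and that $-A$ generates an exponentially stable analytic semigroup. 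By Theorem \ref{thm:extrapol} it suffices to establish \eqref{eq:SMRsimple} for a single value of $p\in[2,\infty)$ with $\kappa=0$: for $q=2$ I take $p=2$ (the resulting $(A,0)\in\mathcal{SMR}_{2,0}^{\bullet}$ also follows from Theorem \ref{thm:SMRHS}), while for $q>2$ I fix any $p\in(2,\infty)$.

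Setting $\widetilde g:=A^{1/2}g\in X_0$, which is legitimate because $\|\cdot\|_{X_{1/2}}\eqsim\|A^{1/2}\cdot\|_{X_0}$ by the bounded $H^\infty$-calculus, the factorization $Ae^{-(t-s)A}g(s)=A^{1/2}e^{-(t-s)A}\widetilde g(s)$ reduces \eqref{eq:SMRsimple} to
\begin{equation*}
\Big\|t\mapsto \int_0^t A^{1/2}e^{-(t-s)A}\widetilde g(s)\,\dd W(s)\Big\|_{L^p(\Omega;L^p(0,T;X_0))}\lesssim \|\widetilde g\|_{L^p(\Omega;L^p(0,T;\gamma(\mathcal U,X_0)))}.
\end{equation*}
The two-sided UMD stochastic integral estimate rewrites the left-hand side as an $L^p(\Omega\times(0,T))$-norm of the $\gamma(L^2(0,T;\mathcal U),X_0)$-norm of the map $s\mapsto A^{1/2}e^{-(t-s)A}\widetilde g(s)\one_{\{s<t\}}$. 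Since $X_0$ embeds isometrically into $L^q(\mathcal O)$ with $q\geq 2$, the $\gamma$-norm is identified with a genuine $L^q$-valued square function through $\gamma(L^2;L^q)\cong L^q(L^2)$, and $\gamma$-Fubini allows one to swap the order of the $\gamma$-norm and the spatial $L^q$-norm. Applying McIntosh's square function theorem to $\psi(z):=z^{1/2}e^{-z}\in H_0^\infty(\Sigma_\sigma)$ gives
\begin{equation*}
\Big\|t\mapsto A^{1/2}e^{-tA}x\Big\|_{\gamma(L^2(\mathbb R_+),X_0)}\eqsim \|x\|_{X_0},
\end{equation*}
and combined with the convolutional structure of the kernel, the desired bound is obtained.

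For the $\bullet$-strengthening (fractional time regularity in Definition \ref{def:SMRbullet}), I use that $A^{1-\theta}$ inherits bounded $H^\infty$-calculus of angle $(1-\theta)\cdot\angH(A)<\pi/2$, so the same argument applied with kernel $A^{1-\theta}e^{-tA}$ in place of $A^{1/2}e^{-tA}$ yields the $L^p(X_{1-\theta})$-bound; the extrapolation inequality \eqref{eq:interptrickHthetaMR}, combined with the $\hz^{\theta,p}$-spaces of \cite[Propositions 2.5 and 2.10]{AV19_QSEE_1} (whose extension operator has norm independent of the interval length), then delivers the $H^{\theta,p}(a,b,w_\kappa^a;X_{1-\theta})$-estimate uniformly in $[a,b]\subseteq[0,T]$. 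Translation together with stationary independent increments of $W$ removes the dependence on $a$, and the perturbation argument of \cite[Section 7]{AV19} (through Theorem \ref{thm:extrapol}) extrapolates from $\kappa=0$ to every $\kappa\in[0,p/2-1)$. The main obstacle is the bookkeeping required to realize the $\gamma$-norm as an $L^q$ square function when $X_0$ is only a closed subspace of $L^q(\mathcal O)$: this is resolved by observing that $\gamma(\mathcal U,X_0)\hookrightarrow \gamma(\mathcal U,L^q(\mathcal O))$ isometrically and that the stochastic integral into $X_0$ agrees with the one into the ambient $L^q$-space, so all square function manipulations can be performed in $L^q$ and transferred back.
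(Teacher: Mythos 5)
The paper does not prove this theorem; it is stated as a black box with references to \cite{MaximalLpregularity} (unweighted case), \cite{NVW13} (an alternative method), and \cite[Section~7]{AV19} (weighted extrapolation). So your attempt cannot be checked against a proof in the paper; it must be judged as a reconstruction of the argument in those references. Your outline is directionally the right one --- UMD two-sided stochastic integral estimates, $\gamma$-Fubini to identify $\gamma(L^2;L^q)\cong L^q(L^2)$, McIntosh's square function theorem, and the weighted/$p$-extrapolation from Theorem~\ref{thm:extrapol} --- but the heart of the theorem is hidden in a single clause that does not actually close the argument.

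The step you dismiss as ``combined with the convolutional structure of the kernel, the desired bound is obtained'' is, for $q>2$, the entire content of \cite{MaximalLpregularity}, not a detail. McIntosh's theorem gives the estimate
$\|A^{1/2}e^{-\cdot A}x\|_{\gamma(L^2(\R_+),X_0)}\eqsim\|x\|_{X_0}$ \emph{for a fixed vector} $x\in X_0$. What you need, after the UMD and $\gamma$-Fubini reductions, is the boundedness of the operator
\[
\tilde g\;\longmapsto\;\Big(t\mapsto\Big(\int_0^t\|A^{1/2}e^{-(t-s)A}\tilde g(s)\|_{\mathcal U}^2\,\dd s\Big)^{1/2}\Big)
\]
from $L^p(\Omega\times(0,T);L^q(\mathcal O;\mathcal U))$ into $L^p(\Omega\times(0,T);L^q(\mathcal O))$, i.e.\ an operator-valued singular integral estimate in which the inhomogeneity $\tilde g(s)$ moves with $s$. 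When $q=2$ (and $p=2$) this does follow from McIntosh by Fubini and the It\^o isometry --- that is exactly Da Prato's argument presented just below Theorem~\ref{thm:SMRHS}, and the paper explicitly remarks that one could then extrapolate via Theorem~\ref{thm:extrapol}. When $q>2$, Fubini no longer decouples the integrals, and the bound requires the $\gamma$-boundedness (equivalently $R$-boundedness) of the $H^\infty$-calculus in the sense of Kalton--Weis, followed by a genuine singular-integral / Littlewood--Paley--Stein argument on $L^p(\Omega\times\R;L^q(\mathcal O))$. Without naming and using that machinery, your reduction terminates at a statement that is not implied by the square function theorem alone. A secondary, smaller gap of the same flavour appears in the $\bullet$-strengthening: the estimate \eqref{eq:interptrickHthetaMR} is a statement about solutions of the \emph{deterministic} problem $v'+Av=f$, so it cannot simply be ``combined'' with the $A^{1-\theta}e^{-tA}$ kernel to deliver the $H^{\theta,p}(a,b,w_\kappa^a;X_{1-\theta})$-estimate for the stochastic convolution; that step again requires an operator-valued singular-integral argument (carried out in \cite{MaximalLpregularity,AV19}) rather than an interpolation trick.
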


\begin{remark}
Theorem \ref{thm:SMRHinfty} is only formulated for the setting where $X_0$ is isomorphic to a closed subset of an $L^q$-space (e.g.\ a fractional Sobolev space). However, by \cite{NVW11} it is possible to extend the result to a broader class of spaces. It is an open problem to extend the result to arbitrary spaces $X_0$ with UMD and type $2$. In particular, we do not know whether the result holds for spaces such as $X_0 = L^2(\R;L^q(\R))$ with $q\in (2, \infty)$. On the one hand, this space is not so important for evolution equations. On the other hand, for a space like $X_0 = L^q(\R;L^2(\R))$ with $q\in  (2,\infty)$, it is possible to obtain stochastic maximal $L^p$-regularity from \cite{MaximalLpregularity,NVW13} and an extension of this observation was used by the first named author to study 3D primitive equations with rough transport noise, see \cite[Appendix B]{agresti2023primitive}.
It also remains unclear whether the assumption that $A$ has a bounded $H^\infty$-calculus can be relaxed. Some evidence that this might be possible is provided by Theorem \ref{thm:SMRHS} where no additional conditions are required in the Hilbert space case.
\end{remark}

After this preparation, we can now give the proof of the Hilbert space result of Theorem \ref{thm:SMRHS}.
\begin{proof}[Proof of Theorem \ref{thm:SMRHS}]
By Theorem \ref{thm:deSimon} and Remark \ref{rem:MRf=0} it suffices to consider $f=0$.

Let $A_0$ be an invertible positive self-adjoint operator on $X_0$ with $\Do(A_0) = X_1$. The existence of such an operator follows from \cite[Proposition 8.1.10]{TayPDE2}. By \cite[Proposition 10.2.23]{Analysis2}, $A_0$ has a bounded $H^\infty$-calculus of angle zero.

By translation and extending $g$ by zero, it remains to prove
\begin{align}\label{eq:SMRsimple2}
\|u\|_{L^p(\O;H^{\theta,p}(0,T,w_{\kappa};X_{1-\theta}))} \leq C_{T,\theta} \|g\|_{L^p(\O;L^p(0,T,w_{\kappa};\g(\mathcal{U},X_{1/2})))},
\end{align}
where $u$ is the solution to $\dd u + Au\, \,\dd t = g \,\dd W$ with $u(0) = 0$.
Now, we argue as in \cite[Theorem 3.9]{VP18}. Let $v\in L^p(\O;L^p(0,T,w_{\kappa};X_1))$ be the strong solution to $\dd  v + A_0 v\,\dd t = g \,\dd  W$ with $v(0) = 0$. Then, by Theorem \ref{thm:SMRHinfty}, for all $\theta\in [0,1/2)$,
\[\|v\|_{L^p(\O;H^{\theta,p}(0,T,w_{\kappa};X_{1-\theta}))} \leq C_{T,\theta}\|g\|_{L^p(\O;L^p(0,T,w_{\kappa};\g(\mathcal{U},X_{1/2})))}.\]
Since $A$ has maximal $L^p_{\kappa}$-regularity by Theorems \ref{thm:deSimon} and \ref{thm:MRweight}, there is a unique $z\in L^p(\O;L^p(0,T,w_{\kappa};X_1))$ which is a strong solution to $z' + A z = (A_0 - A)v$ with $z(0) = 0$,
and the following estimate holds
\begin{align*}
\|z\|_{L^p(\O;L^p(0,T,w_{\kappa};X_{1}))} \leq C
\|(A - A_0)v\|_{L^p(\O;L^p(0,T,w_{\kappa};X_0))}&\leq C\|v\|_{L^p(\O;L^p(0,T,w_{\kappa};X_1))}
\\ & \leq C\|g\|_{L^p(\O;L^p(0,T,w_{\kappa};\g(\mathcal{U},X_{1/2})))}.
\end{align*}
Here, the process $z$ can be shown to be progressively measurable.
Moreover, as in \eqref{eq:interptrickHthetaMR} we see that the same type of estimate holds for $\|z\|_{L^p(\O;H^{\theta,p}(0,T,w_{\kappa};X_{1-\theta}))}$.

It follows that $u = v+z$ is in $L^p(\O;L^p(0,T,w_{\kappa};X_1))$ and is the required strong solution. The estimate \eqref{eq:SMRsimple2} follows by combining the estimates for $v$ and $z$.
\end{proof}

\begin{remark}
The proof technique of Theorem \ref{thm:SMRHS} can be extended to operators $A$ that depend on time and $\Omega$. However, it is crucial to assume that $A$ has {\em deterministic} maximal $L^p$-regularity, which is often a nontrivial condition. The technique can even be extended to the Banach space setting if one knows that there exists an operator $A_0\in \calL(X_1, X_0)$ such that $A_0$ has a bounded $H^\infty$-calculus of angle $<\pi/2$ (for details, see the transference result \cite[Theorem 3.9]{VP18}).
\end{remark}

\subsection{Sufficient conditions in case $B\neq 0$}
\label{ss:sufficient_B_non_zero}
If $B\neq 0$, determining whether $(A,B)\in \mathcal{SMR}_{p,\kappa}^{\bullet}$ can be quite complicated. An exception is the variational setting if $p=2$ and $\kappa=0$. In this case the classical coercivity condition on $(A,B)$ provides a sufficient condition for $(A,B)\in \mathcal{SMR}_{2,0}^{\bullet}$:

\begin{theorem}[Variational setting]\label{thm:varlin}
Let $(V,H,V^*)$ be a Gelfand triple as in Section \ref{sec:Var}. Let $X_0=V^*$ and $X_1=V$. Suppose that there exist $\theta,M>0$ such that, for all $v\in V$,
$$
\langle A v,v \rangle -\tfrac{1}{2}\|B v\|_{\calL_2(\mathcal{U},X_{1/2})}\geq  \theta \|v\|^2_{V}-M\|v\|_H^2.
$$
Then $(A,B)\in \mathcal{SMR}_{2,0}^{\bullet}$.
\end{theorem}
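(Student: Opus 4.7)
Since $(V,H,V^*)$ is a Gelfand triple, the interpolation identity $X_{1/2}=[V^*,V]_{1/2}=H$ (isomorphically) lets us identify $\calL_2(\mathcal{U},X_{1/2})$ with $\calL_2(\mathcal{U},H)$. Reading the coercivity hypothesis with the (necessary, for dimensional reasons) squared norm $\tfrac{1}{2}\|Bv\|^2_{\calL_2(\mathcal{U},H)}$, the goal is to produce, for every progressively measurable $f\in L^2(\O;L^2(a,b;V^*))$ and $g\in L^2(\O;L^2(a,b;\calL_2(\mathcal{U},H)))$ with zero initial datum at $a$, a unique strong solution $u$ of \eqref{eq:SEElin0} on $[a,b]$ obeying
\[
\|u\|_{L^2(\O;L^2(a,b;V))}+\|u\|_{L^2(\O;C([a,b];H))}
\leq C_T\bigl(\|f\|_{L^2(\O;L^2(a,b;V^*))}+\|g\|_{L^2(\O;L^2(a,b;\calL_2(\mathcal{U},H)))}\bigr),
\]
with $C_T$ uniform in $[a,b]\subseteq[0,T]$.

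The first step is to construct a candidate solution via a Galerkin scheme: pick finite-dimensional subspaces $V_n\subseteq V$ with $\bigcup_n V_n$ dense in $V$ (hence in $H$), let $P_n$ be the $H$-orthogonal projection onto $V_n$, and solve the associated finite-dimensional It\^o SDE (globally well-posed by classical theory). The crux is the a priori estimate: the classical It\^o formula applied to $\|u_n(t)\|_H^2$ gives
\[
d\|u_n\|_H^2=\bigl(-2\lb Au_n,u_n\rb+2\lb f,u_n\rb+\|Bu_n+P_ng\|_{\calL_2(\mathcal{U},H)}^2\bigr)\,dt+2(u_n,(Bu_n+P_ng)\,dW)_H.
\]
Invoking the coercivity hypothesis yields
$-2\lb Au_n,u_n\rb+\|Bu_n\|_{\calL_2(\mathcal{U},H)}^2\leq -2\theta\|u_n\|_V^2+2M\|u_n\|_H^2$,
after which Young's inequality handles the cross-term $2(Bu_n,P_ng)_{\calL_2(\mathcal{U},H)}$ and the duality pairing $2\lb f,u_n\rb$, absorbing a fraction of $\|u_n\|_V^2$ by means of the boundedness of $B$ from $V$ into $\calL_2(\mathcal{U},H)$. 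Taking expectations and applying Gronwall's lemma produces a uniform-in-$n$ bound on $\E\int_a^b\|u_n\|_V^2\,dt+\sup_{t\in[a,b]}\E\|u_n(t)\|_H^2$ of the desired form, and the Burkholder--Davis--Gundy inequality then upgrades this to a bound on $\E\sup_{t\in[a,b]}\|u_n(t)\|_H^2$.

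Passage to the limit is standard in the Pardoux--Krylov--Rozovskii framework. Weak compactness in $L^2(\O\times(a,b);V)$ together with a linearity/monotonicity argument identifies any weak limit $u$ as a strong variational solution of \eqref{eq:SEElin0}, and the a priori bounds pass to the limit by weak lower semicontinuity. Since $u\in L^2(a,b;V)$ and $du\in L^2(a,b;V^*)+(\text{It\^o noise})$ a.s., the variational It\^o formula applies to $\|u\|_H^2$ and yields the continuity $u\in C([a,b];H)$ together with the quantitative $\mathcal{SMR}_{2,0}^{\bullet}$ control. Uniqueness is obtained by running the same It\^o computation on the difference $v=u_1-u_2$ of two solutions with $f=g=0$: coercivity gives $\frac{d}{dt}\E\|v\|_H^2\leq 2M\E\|v\|_H^2$, whence $v\equiv 0$ by Gronwall. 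The principal technical obstacle is verifying that the variational It\^o formula is available in the limit, which in turn reduces to the uniform-in-$n$ $V$-regularity coming from the coercivity estimate; the correction $-\tfrac{1}{2}\|Bv\|^2$ (i.e.\ the stochastic parabolicity built into the hypothesis) is exactly what compensates for the It\^o quadratic-variation term and prevents the energy from blowing up.
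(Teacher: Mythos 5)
Your proposal is correct and follows exactly the classical Galerkin/energy-estimate route that the paper itself invokes (it defers the proof to Liu–Röckner \cite[Chapter~4]{LR15} and \cite[Lemma~4.1]{AVvar}), including the key observations that $X_{1/2}=H$ for a Gelfand triple and that the coercivity (stochastic parabolicity) term is precisely what absorbs the It\^o correction. One small notational slip: the Galerkin diffusion coefficient is $P_n(Bu_n+g)$, not $Bu_n+P_n g$, since $Bu_n$ need not lie in $V_n$; this does not affect the estimate because $\|P_n(Bu_n+g)\|_{\calL_2(\mathcal{U},H)}\le\|Bu_n+g\|_{\calL_2(\mathcal{U},H)}$, which is the bound you in fact use.
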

It is well-known that the coercivity condition implies that $-A$ generates a strongly continuous analytic semigroup on $V^*$ (and then also on $X_{1/2}$). For instance, this can be deduced from Theorem \ref{thm:Dore}.

Note that the condition of the above theorem leads to \eqref{eq:stochastic_parabolicity_example} when $(A,B)$ is as in \eqref{eq:AB_basic_example} with $V=W^{1,2}(\R^d)$ and $H=L^2(\R^d)$.
For the standard proof the reader is referred to \cite[Chapter 4]{LR15} and \cite[Lemma 4.1]{AVvar}. Further details on the variational setting are given in Section \ref{sec:Var}.

There are several concrete situations in which $(A,B) \in \mathcal{SMR}_{p,\kappa}$, and the reader may consult a selection of them in Subsection \ref{ss:furtherSMR}. The first results in an $L^p$-setting with $p>2$ were obtained by Krylov in \cite{Kry96,Kry}, where $A$ is a second order elliptic differential operator on $\R^d$, and $B$ a first order operator. In Subsection \ref{ss:secondorderconcrete}, we discuss the joint parabolicity condition (stochastic parabolicity) on the pair $(A,B)$ in this concrete setting.

Using the following simple result (see \cite[Proposition 3.8]{AV19_QSEE_1} for the proof), one can transfer $(A,B) \in \mathcal{SMR}_{p,\kappa}$ to $(A,B) \in \mathcal{SMR}_{p,\kappa}^{\bullet}$.
\begin{proposition}[Transference]\label{prop:transferenceSMRbullet}
Suppose that $(A,B) \in \mathcal{SMR}_{p,\kappa}$. If there is a closed operator $A_0$ such that $\Do(A_0) = X_1$ and $(A_0, 0)\in \mathcal{SMR}_{p,\kappa}^{\bullet}$, then $(A,B) \in \mathcal{SMR}_{p,\kappa}^{\bullet}$.
\end{proposition}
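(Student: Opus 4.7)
The plan is to rewrite \eqref{eq:SEElin0} with leading pair $(A,B)$ as a reference equation driven by $(A_0, 0)$ with perturbed inhomogeneities, and then invoke the bullet regularity of $(A_0, 0)$. The key observation is that $(A,B) \in \mathcal{SMR}_{p,\kappa}$ already forces the strong solution to lie in $L^p(\Omega; L^p(a,b,w_\kappa^a; X_1))$, and it is precisely this a priori bound that lets us absorb the perturbation terms $(A_0-A)u$ and $Bu$ into the right-hand side without losing any smoothness.

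More precisely, I would fix $[a,b] \subseteq [0,T]$ and progressively measurable inhomogeneities $f \in L^p(\Omega; L^p(a,b,w_\kappa^a; X_0))$ and $g \in L^p(\Omega; L^p(a,b,w_\kappa^a; \gamma(\mathcal{U}, X_{1/2})))$, and let $u$ be the unique strong solution of \eqref{eq:SEElin0} supplied by Definition~\ref{def:SMR} applied to $(A,B)$, together with the a priori bound \eqref{eq:SMRest}. Adding and subtracting $A_0 u$ in \eqref{eq:SEElin0}, the same process $u$ is then a strong solution of
\[
\dd u + A_0 u \, \dd t = \tilde f \, \dd t + \tilde g \, \dd W, \qquad u(a) = 0,
\]
with the progressively measurable inhomogeneities $\tilde f := (A_0 - A)u + f$ and $\tilde g := B u + g$. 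Since $A_0, A \in \calL(X_1, X_0)$ and $B \in \calL(X_1, \gamma(\mathcal{U}, X_{1/2}))$, these perturbed data automatically lie in the correct $L^p(\Omega; L^p(a,b,w_\kappa^a; \cdot))$ spaces, with
\[
\|\tilde f\|_{L^p(X_0)} + \|\tilde g\|_{L^p(\gamma(\mathcal{U}, X_{1/2}))} \lesssim \|u\|_{L^p(X_1)} + \|f\|_{L^p(X_0)} + \|g\|_{L^p(\gamma(\mathcal{U}, X_{1/2}))}.
\]

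Uniqueness of strong solutions from $(A_0, 0) \in \mathcal{SMR}_{p,\kappa}^{\bullet} \subseteq \mathcal{SMR}_{p,\kappa}$ identifies $u$ with the solution of the reference equation produced by the hypothesis on $(A_0, 0)$, so the bullet estimate may be applied to $u$ directly. For $p > 2$ and $\theta \in (0, 1/2)$ this yields
\[
\|u\|_{L^p(\Omega; H^{\theta,p}(a,b,w_\kappa^a; X_{1-\theta}))} \leq C_{T,\theta} \big( \|\tilde f\|_{L^p(X_0)} + \|\tilde g\|_{L^p(\gamma(\mathcal{U}, X_{1/2}))}\big),
\]
and chaining this with the preceding display and with \eqref{eq:SMRest} produces the required $\mathcal{SMR}_{p,\kappa}^{\bullet}$ bound for $(A,B)$. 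The case $p = 2$ is handled in exactly the same way, with $L^2(\Omega; C([a,b]; X_{1/2}))$ replacing $L^p(\Omega; H^{\theta,p}(\cdot; X_{1-\theta}))$ on the left-hand side. No real obstacle appears once the perturbation decomposition is in place; the only delicate point is the absorption of $(A_0 - A)u$ and $Bu$ into the right-hand side of the reference equation, which relies crucially on the a priori $L^p(\Omega; L^p(w_\kappa^a; X_1))$ bound coming from $(A,B) \in \mathcal{SMR}_{p,\kappa}$.
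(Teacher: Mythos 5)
Your proof is correct and takes the standard perturbation/transference route that is surely the one in the cited reference \cite[Proposition 3.8]{AV19_QSEE_1}: use the a priori $L^p(\Omega;L^p(a,b,w_\kappa^a;X_1))$ bound from $(A,B)\in\mathcal{SMR}_{p,\kappa}$ to view the solution as a solution of the $(A_0,0)$ problem with inhomogeneities $\tilde f=(A_0-A)u+f$, $\tilde g=Bu+g$, identify it via uniqueness, and then read off the $\mathcal{SMR}^{\bullet}_{p,\kappa}$ bound for $(A_0,0)$ followed by chaining with \eqref{eq:SMRest}. The only point worth keeping in mind is that the constants in both hypotheses are assumed uniform over $[a,b]\subseteq[0,T]$, so the composed constant is too.
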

Often one chooses $A_0$ to be an operator to which Theorem \ref{thm:SMRHinfty} is applicable. For example, if $A$ is a second-order differential operator with space-dependent coefficients and boundary conditions, one could take $A_0=-\Delta$ with the same boundary conditions, and then show that $A_0$ has a bounded $H^\infty$-calculus of angle $<\pi/2$. A similar argument was used in the proof of Theorem \ref{thm:SMRHS}, where $A_0$ was constructed more abstractly.

\smallskip

Thus, in many cases, to obtain $(A,B)\in \mathcal{SMR}^\bullet_{p,\kappa}$, it suffices, by Proposition \ref{prop:transferenceSMRbullet} to check that $(A,B)\in \mathcal{SMR}_{p,\kappa}$. For this, one typically requires tools from harmonic analysis, stochastic calculus, PDE theory, and perturbation theory (see \cite[Theorem 3.2]{AV21_SMR_torus}). A stochastic method of continuity (see \cite[Proposition 3.13]{AV19_QSEE_2}) ensures that it is enough to assume that $u$ satisfies \eqref{eq:SEElin0}, and to prove the a priori estimate \eqref{eq:SMRest}. For a concrete differential operator on a domain $\Dom\subseteq \R^d$ one typically follows the roadmap outlined below.
\begin{itemize}
\item Find a starting point $(A_0, 0)$ for the stochastic method of continuity, reducing the problem to the a priori estimate \eqref{eq:SMRest};
\item By localization it is enough to consider the half-space and whole-space cases;
\item Reduce to the case of constant coefficients using a freezing argument leveraging the smoothness of the coefficients;
\item After that there are two options:
\begin{itemize}
  \item Transform $(A,B)$ into $(\wt{A}, 0)$ via a Doss-Sussman argument (see \cite[Section 3.5]{VP18}). Show that the resulting problem $(\wt{A}, 0)$ has stochastic maximal regularity via the transference result \cite[Theorem 3.9]{VP18} and deterministic maximal regularity.
  \item Show stochastic maximal regularity using stochastic calculus.
\end{itemize}
\end{itemize}
An example where this roadmap was applied can be found in \cite{AV21_SMR_torus}.
We derived stochastic maximal $L^p$-regularity in an $L^q$-setting for $(A,B)$ where $A$ is a second-order operator and $B$ is a first-order operator.
A similar method was used for the Stokes operator in \cite{AV20_NS} and a fourth-order operator in \cite{AgrSau}.

\subsection{Further references}\label{ss:furtherSMR}
In this subsection, we provide a list of references on concrete situations in which stochastic maximal regularity holds (or does not hold) and a selection of related works. We focus solely on Gaussian noise and the $(t,\omega)$-independent setting as in the rest of the paper. However, these results can typically be extended to cases where the dependency in $(t,\omega)$ is progressively measurable. We will not consider lower-order terms, since they do not add anything to the discussion below and they can be often added afterwards via perturbation \cite[Theorem 3.2]{AV21_SMR_torus}.
We restrict ourselves to the real-valued case, though some remarks on the system case can be found in Subsections \ref{ss:secondorderconcrete}, \ref{eq:SPDEdomain} and \ref{ss:counter}.
This subsection is not intended to be a comprehensive survey of the rapidly growing literature on this topic. Since Theorems \ref{thm:SMRHS} and \ref{thm:SMRHinfty} provide general conditions for $(A,0)\in \mathcal{SMR}_{p,\kappa}^{\bullet}$, we will not revisit that case. However, further nontrivial results in this setting are discussed later in Subsections \ref{sssfurther:B=0} and \ref{sss:SMRdiff}.

From this point onward, we focus on the case where $B\neq 0$. The case where $B$ is relatively small is easy. Indeed, if there exists a $\delta>0$ small enough and $C_{\delta}$ such that for all $u\in X_1$,
$\|B u\|_{\gamma(\mathcal{U},X_{1/2})}\leq \delta \|u\|_{X_1} + C_{\delta} \|u\|_{X_0}$, then $(A,B)\in \mathcal{SMR}_{p,\kappa}^{\bullet}$ follows from a perturbation argument, see the above-mentioned perturbation result or \cite{Brz1}, \cite[Theorem 6.24]{DPZ}, \cite{Fla90} and \cite[Theorem 4.5]{NVW11eq}.

\subsubsection{Second order operators on $\R^d$ with $B\neq 0$}\label{ss:secondorderconcrete}
Consider a second order operator $A$ and first order operator $B$, where $A$ is in {\em nondivergence form}:
\[\textstyle  A u = \sum_{i,j=1}^d a_{i,j} \partial_i \partial_j u \ \ \text{and} \ \ (B
u)_n = \sum_{i=1}^d b_{n}^i \partial_i u,\]
where $a_{i,j} = a_{j,i}$ and $b^i_n$ are real-valued functions defined on $\R^d$. In this context, the stochastic parabolicity condition (see \eqref{eq:AB_basic_example}) has a similar form. Indeed, letting $\sigma_{i,j} = \sum_{n\geq 1} b_{n}^i b^j_{n}$, the {\em stochastic parabolicity condition} for the above pair of operators reads as follows:

There exists a $\theta>0$ such that
\begin{align}\label{eq:stochpara}
\textstyle \sum_{i,j=1}^d [a_{i,j}(x) -\frac12\sigma_{i,j}(x)]\xi_i \xi_j
\geq \theta |\xi|^2\quad \text{ for all }x,\xi\in \R^d.
\end{align}
In the case where $A$ is in {\em divergence form}, i.e.\ $A u(x) = \sum_{i,j=1}^d \partial_i [a_{i,j} \partial_j u]$ the same condition \eqref{eq:stochpara} applies, and most of the results below hold in both situations.

Under suitable boundedness conditions on the coefficients and \eqref{eq:stochpara}, one can easily check that for the divergence form case, the coercivity condition of Theorem \ref{thm:varlin} holds if $V$ is a closed subspace of $H^1(\R^d)$, and $H = L^2(\R^d)$. Therefore, $(A,B)\in \mathcal{SMR}_{2,0}^{\bullet}$ for the choice $X_1 = V$ and $X_0 = V^*$.

For the case $\Dom = \R^d$, it was shown in a series of papers by Krylov \cite{Kry94a,Kry96,Kry,Krylov-div} that under suitable regularity conditions on the coefficients, the stochastic parabolicity condition \eqref{eq:stochpara} implies that for $X_0 = H^{s,q}(\R^d)$ and $X_1 = H^{s+2,q}(\R^d)$ with $s\in \R$, one has $(A,B)\in \mathcal{SMR}_{p,0}$ if $p=q\in [2, \infty)$. For $x$-independent coefficients, a similar result was established for $p\geq q\geq 2$ in \cite{Kry00} (note that we switched roles of $p$ and $q$). When $p=q$, extensions to systems with certain diagonal structures were considered in \cite{KimLeesystems,MiRo}, and for the stochastic Stokes system (also referred to as ``turbulent Stokes system'') in \cite{M02}.

Under similar regularity conditions, and an additional continuity condition at $\infty$, it was shown in \cite{AV21_SMR_torus} that one has $(A,B)\in \mathcal{SMR}_{p,\kappa}^{\bullet}$ for all $p\in (2, \infty)$, $q\in [2, \infty)$, and $\kappa\in [0,p/2-1)$. Moreover, the case $p=q=2$ and $\kappa=0$ is also included. Both divergence and nondivergence form systems are covered in the previous work. These results also hold on $\T^d$ and can be extended to smooth manifolds without boundary. Similar results for the stochastic Stokes system can be found in \cite{AV20_NS}.

\subsubsection{Second order operators on domains with $B\neq 0$}\label{eq:SPDEdomain}
In case $\Dom$ is either the half space $\R^d_+ = \R_+\times \R^{d-1}$ or a bounded smooth domain, sufficient conditions for $(A,B)\in \mathcal{SMR}_{p,0}$ have been established in a series of papers.
In \cite{KryW22theory}, it was shown that weighted Sobolev spaces $W^{n,2}$ can be used to handle the blow-up of derivatives near the boundary, thereby avoiding the need for compatibility conditions on the data, which had appeared in earlier works such as \cite{Brz1, Brz2, Fla90}. In this context, the stochastic parabolicity condition \eqref{eq:stochpara} remains suitable for achieving higher-order regularity for linear equations. Further extensions of these results were obtained in \cite{KimLeeC1}.

Under similar conditions, $L^p$-theory on the half-line and half-space was developed in \cite{KryLothalfline,KryLot} for coefficients independent of $x$. The half space case with VMO (vanishing mean oscillation) coefficients was addressed in \cite{Kry-divhalf}. Extensions to $C^1$ and Lipschitz domains and weighted Sobolev spaces were provided in \cite{Kim04a,Kim04b,Kim05,Kim08,KimLeeC1}. In these papers, the space integrability is taken as $q=p$. The form of the regularity estimates differs slightly from our definition of $\mathcal{SMR}_{p,0}$. The reader is referred to \cite[Lemma 6.11]{AV19_QSEE_1} and \cite{lindemulder2024functional, LV18} for an explanation of how the two formulations can be connected.

Finally, we note that in \cite{Du2018}, under the geometric condition $b\cdot n = 0$ on $\partial \Dom$, it was shown that certain results valid in the full space setting can be extended to domains with boundaries.

\subsubsection{Further related results and counterexamples for $B\neq 0$}\label{ss:counter}
The paper \cite[Theorem 5.3]{Krylov03}, discusses restrictions on weighted function spaces for considering SPDEs on domains with boundary conditions.

Extending the stochastic parabolicity condition \eqref{eq:stochpara} to systems or higher-order equations is not straightforward.
In \cite{KimLeesystems}, a system variant of $(A,B)$ is constructed (and extended in \cite{DuLiuZhang}) for which the $L^2$-theory holds, but the $L^p$-theory breaks down if $p$ becomes large. Similar behaviour was observed for a different class of examples in \cite{BrzVer11}. In the latter paper, the first order derivative in $B$ is replaced by a scalar multiple of $(-\Delta)^{1/2}$ on the torus. A similar phenomenon is expected for higher-order equations.
Some of the above-mentioned issues are caused by the lack of integrability in $\Omega$ of the solution.

In the variational setting, some abstract theory on higher order moments was developed in \cite{NeeSis} and further improved in \cite{GHV}. The latter paper also unifies results on systems \cite{DuLiuZhang} and higher order equations \cite{wang2019schauder}. We should note that the emphasis in the last two papers is on Schauder's theory for SPDEs.
Finally, for measurable coefficients satisfying \eqref{eq:stochpara}, an $L^\infty$-bound was obtained in \cite{DG15_boundedness} using Moser iterations.

\subsubsection{The case $B = 0$}\label{sssfurther:B=0}
The $H^\infty$-calculus of Theorem \ref{thm:SMRHinfty} provides a general framework for having $\mathcal{SMR}_{p,\kappa}$. For a comprehensive list of examples of operators with a bounded $H^\infty$-calculus, the reader is referred to \cite{Analysis2}. In particular, the following class is included: all positive contraction semigroups on $L^q$ which have a bounded analytic extension to a sector. The positivity and contractivity can be relaxed to regular contractivity \cite[Theorem 4.2.21]{fackler2015regularity}.

However, the $H^\infty$-calculus cannot be directly applied if $A$ depends on $t$ or even $(t,\omega)$. Fortunately, there is a simple trick to reduce to the time-independent situation provided that $A$ has deterministic maximal $L^p$-regularity (see \cite[Theorem 3.9]{VP18}).
On the other hand, Theorem \ref{thm:SMRHS} indicates that the $H^\infty$-calculus is not strictly necessary, suggesting that there are aspects of the theory that are yet to be fully understood. Additionally, the condition on $X_0$ in Theorem \ref{thm:SMRHinfty} might not hold in some cases. Fortunately, it can be replaced with a more flexible condition as shown in \cite{NVW11} and the proofs in \cite{MaximalLpregularity, NVW13}. This allows $X_0$ to be a closed subspace of Besov spaces $B^{s}_{q,r}$ or Triebel-Lizorkin spaces $F^{s}_{q,r}$ with $q,r\in (2, \infty)$.

Motivated by the above discussion, there is room left for improvement in the theory. It is also worth mentioning that for concrete examples, the $H^\infty$-calculus is not established yet.

In the case where $B = 0$, various forms of (weighted) extrapolation results for stochastic maximal regularity can be found in \cite{KimKimExtr,lorist2021vector, LoVer}. When combining these results, one can obtain $L^p(L^q)$-theory from $L^2$-theory. The results in \cite{LoVer} in particular imply the general extrapolation result of Theorem \ref{thm:extrapol}.
In the setting of time-dependent operators $A$ such an extrapolation result does not hold. Indeed, even for elliptic operators in divergence form with time-dependent coefficients in the deterministic setting on $L^2$, there are counterexamples to maximal $L^p$-regularity for $p>2$ (see \cite{BMV}).

For domains with lower regularity (polygonal, wedges) sufficient conditions for $(A,0)\in \mathcal{SMR}_{p,\kappa}$ have been obtained in \cite{cioica2020lp, CioicaKimLeeLindner18, CioicaKimLee18,KimLeeSeo,LoVer}.

\subsubsection{Stochastic maximal regularity in different scales}\label{sss:SMRdiff}
As in the deterministic setting, regularity estimates can be considered across various scales, each offering a unique insight into the possible solution space for a given (S)PDE. Two examples of such scales, which have been extensively studied and discussed above, are the H\"older and $L^p$-scales. In all of the references below, we assume $B=0$ or that it is sufficiently small.

In the $L^p$-framework, one can also consider real interpolation spaces instead of the complex ones we considered. The reader is referred to \cite{Brz1,BH09,DL98} and \cite{LoVer} for more information.

Another extension of $L^2$-theory can be given in terms of the $\gamma$-spaces $\gamma(L^2(0,T;H),X_0)$ as shown in \cite{NVWgamma}. The approach provides different information about the solution and includes $X_0 = L^q(\Dom)$ with $q$ in the full reflexive range $(1, \infty)$. In this case, one obtains $L^q(\Dom;L^2(0,T))$-estimates of the solution and its derivatives. This framework was extended to $L^q(\Dom;L^p(0,T))$-estimates in \cite{antoni2017regular}, and can also be obtained through the extrapolation theory of \cite{lorist2021vector}. Estimates in $L^q(\Dom\times(0,T);L^p(\Omega))$ were obtained in \cite{MR4385410}.

Another scale of interest is the parabolic tent space, with several results on second-order SPDEs obtained in \cite{AusNeePorTent, AusPorTent, VP18}. One key advantage of this setting is that it only requires measurability in the $x$-variable. In some cases, the $H^\infty$-calculus can also provide results in this direction in $L^q$-spaces.

\subsubsection{Volterra equations}
For stochastic maximal regularity results on Volterra equations and nonlocal operators, the reader is directed to \cite{choi2024sobolev,DeLo09,DL13,KimKimLim,KimParkRyu,LoVer}  and references therein. Notably, \cite{KimKimLim} also incorporates a transport noise term.

\section{Local existence, uniqueness and regularity}\label{sec:loc-well-posed}
In this section, we discuss local existence and uniqueness for the following stochastic evolution equation:
\begin{equation}
\label{eq:SEE}
\left\{
\begin{aligned}
&\dd u + A u \,\dd t = F(u)\,\dd  t + (B u +G(u)) \,\dd  W,\\
&u(0)=u_{0}.
\end{aligned}
\right.
\end{equation}
The nonlinearities $F$ and $G$ are assumed to be locally Lipschitz on certain intermediate spaces with their local Lipschitz constants exhibiting controlled growth. The interplay between the smoothness of these intermediate spaces and the growth of the Lipschitz constants determines the classification of the problem into sub-critical, critical, and super-critical regimes.

For the leading operators $A$ and $B$, we assume that the pair $(A,B)$ satisfies stochastic maximal $L^p$-regularity. Consequently, the natural path space in which we seek a solution to \eqref{eq:SEE} is:
\begin{equation}
\label{eq:max_reg_space}
\textstyle \bigcap_{\theta\in [0,1/2)}H^{\theta,p}(0,T,w_{\a};X_{1-\theta}) \subseteq L^p(0,T,w_{\a};X_1)\cap C([0,T];X_{1-\frac{1+\a}{p},p}),
\end{equation}
where $w_{\kappa}(t) = t^{\kappa}$ and $\kappa\in [0,\frac{p}{2}-1)$ in case $p>2$. For $p=2$, we instead use the space $L^2(0,T;X_1)\cap C([0,T];X_{\frac{1}{2}})$.

In most situations, it suffices to work with the right-hand side of \eqref{eq:max_reg_space}. However, in certain cases, achieving optimal space-time regularity, as provided by the left-hand side of \eqref{eq:max_reg_space}, becomes essential. According to Proposition \ref{prop:tracespace}, the so-called trace space $X_{1-\frac{1+\kappa}{p},p}$ is the optimal space for the initial data $u_0$ when searching for solutions with paths in the LHS of \eqref{eq:max_reg_space}.
The introduction of time weights $\kappa$ adds flexibility, enabling the analysis of qualitative properties of solutions to \eqref{eq:SEE} such as blow-up behaviour and regularity. These aspects are explored further in Section \ref{sec:blowup} below.

\subsection{Main assumptions}\label{ss:setting}
Recall that $X_{\theta}=[X_0,X_1]_{\theta}$ denotes the complex interpolation space, while $X_{\theta,r}= (X_0, X_1)_{\theta,r}$ denotes the real interpolation space for $\theta\in (0,1)$ and $r\in [1, \infty]$. Furthermore, assume $A\in \calL(X_1, X_0)$, and $B\in \calL(X_1, \calL_2(\mathcal{U}, X_{1/2}))$. Additional assumptions on $A$ and $X_1$ are provided below. In Remark \ref{rem:comparison_comments_partI}, we discuss which of these conditions can be omitted or generalized.

\begin{assumption}\label{ass:FGcritical}
Let $X_0$ be a UMD space with type $2$, and suppose that there is a $\lambda_0\geq 0$ such that $\lambda_0+A$ is a sectorial operator on $X_0$ and set $X_1 = \Do(A)$.
Let $p\in [2, \infty)$, $\kappa\in [0,p/2-1)\cup\{0\}$.
The mappings
$$
F:X_1\to X_0\quad  \text{ and } \quad G:X_1\to \gamma(\mathcal{U},X_{\frac12})
$$
satisfy the following local Lipschitz condition: For each $n\geq 1$, there exists $L_n>0$ such that for all $u,v\in X_1$ with $\|u\|_{X_{1-\frac{1+\kappa}{p},p}}, \|v\|_{X_{1-\frac{1+\kappa}{p},p}}\leq n$,
\[\textstyle \|F(u) - F(v)\|_{X_0} + \|G(u) - G(v)\|_{\gamma(\mathcal{U},X_\frac12)}
\leq L_n \sum_{j=1}^{m} (1+\|u\|_{X_{\beta_j,1}}^{\rho_j} + \|v\|_{X_{\beta_j,1}}^{\rho_j}) \|u-v\|_{X_{\beta_j,1}},\]
where $\beta_j\in (1-\frac{1+\kappa}{p},1)$ and $\rho_j\geq 0$  satisfy
\begin{align}\label{eq:subcritical}
\frac{1+\kappa}{p}\leq \frac{(1+\rho_j)(1-\beta_j)}{\rho_j} \ \text{ for all } j\in \{1, \ldots, m\}.
\end{align}
\end{assumption}

The condition \eqref{eq:subcritical} imposes an upper bound on $\frac{1+\a}{p}$, which, in turn, determines a lower bound for the smoothness of the trace space $X_{1-\frac{1+\a}{p},p}$ where the initial data $u_0$ is taken. These bounds depend solely on the parameters of the local Lipschitz condition for $F$ and $G$.
Even in the deterministic case, \eqref{eq:subcritical} is known to be sharp for local existence and uniqueness (see \cite[Theorem 2.4]{CriticalQuasilinear}).

The conditions on the deterministic part $F$ and the stochastic part $G$ are very similar; however, $G$ is required to have more space regularity (it is $X_{1/2}$-valued) due to the reduced parabolic regularization of the noise term. To handle infinite-dimensional noise, we assume $G$ takes values in the space of $\gamma$-radonifying operators $\gamma(\mathcal{U},X_\frac12)$ (see Subsection \ref{subsec:gamma}).
Given the constraints on $\kappa$ and $p$, we always have $X_{1-\frac{1+\kappa}{p},p}\subseteq X_{\frac12}$. Illustrative examples that clarify the above are discussed in Subsection \ref{ss:tracecriticalF}.

\smallskip

We define the couple $(p,\a)$ or the setting $(X_0,X_1,p,\a)$ as \underline{critical} (resp., \underline{subcritical}) for \eqref{eq:SEE} if \eqref{eq:subcritical} holds with equality for some $j$ (respectively, with strict inequality for all $j$). Similar terminology extends naturally to the trace space $X_{1-\frac{1+\a}{p},p}$. To provide an intuitive understanding of  \eqref{eq:subcritical}, we first rewrite it as
\begin{equation}
\label{eq:subcritical2}
\text{(a)}\quad \rho_j \Big(\beta_j - 1 + \frac{1+\kappa}{p}\Big) + \beta_j \leq 1, \qquad \text{(b)}\quad \beta_j -\Big(1 - \frac{1+\a}{p}\Big) \leq \frac{1}{\rho_j+1}\frac{1+\a}{p}.
\end{equation}

Part $\text{(a)}$ of \eqref{eq:subcritical2} can be roughly interpreted as follows. Due to parabolic regularity theory, the smoothness of the solution is always one order higher than that of the inhomogeneity or nonlinearity. This additional regularity is leveraged to control the nonlinearities. The estimates for $F$ and $G$ consist of two components: a ``Lipschitz constant'' which grows as a power of $\rho_j$, and a ``difference'' part, both measured in the $X_{\beta_j,1}$-norm.
\begin{itemize}
\item
In the first part of $\text{(a)}$, the condition is derived from the distance of
$\beta_j$ to the smoothness parameter of the trace space $X_{1-\frac{1+\kappa}{p},p}$, modulated by  the power $\rho_j$.
\item In the second part of $\text{(a)}$, $\beta_j\in (0,1)$ appears with a coefficient of one due to the contribution of the ``difference part''.
\end{itemize}
The condition \eqref{eq:subcritical2} appeared in \cite{AV19_QSEE_1,CriticalQuasilinear} in a more general form. Specifically, $\beta_j-1+\frac{1+\a}{p}$ was replaced by $\varphi_j-1+\frac{1+\a}{p}$ where $\varphi_j\in [\beta_j,1)$. For the latter, the above interpretation of the critical condition carries over verbatim.

Formula $\text{(b)}$ says that the roughness of the nonlinearity, represented by $\beta_j$, can exceed the one of the trace space $1-\frac{1+\a}{p}$ by a factor $\frac{1}{\rho_j+1}<1$ appearing in front of the quantity $\frac{1+\a}{p}$.

\smallskip

In the following remark, we offer an alternative perspective on the criticality condition \eqref{eq:subcritical}.
\begin{remark}[Growth condition and criticality]\label{rem:growth}
Note that Assumption \ref{ass:FGcritical} (with $v=0$) implies the following growth condition: For all $n\geq 1$ and $u\in X_1$ satisfying $\|u\|_{X_{1-\frac{1+\kappa}{p},p}}\leq n$, one has
\begin{align}\label{eq:growthFcGc}
\|F(u)\|_{X_0} + \|G(u)\|_{\gamma(\mathcal{U},X_\frac12)} \lesssim_n \sum_{j=1}^m (1+\|u\|_{X_{\beta_j,1}}^{\rho_j+1}).
\end{align}
An important consequence of the above is that for all $T<\infty$ and $u\in L^p(0,T,w_{\kappa};X_1)$ that satisfy $\sup_{t\in [0,T]}\|u(t)\|_{X_{1-\frac{1+\kappa}{p},p}}\leq n$, $\|F(u)\|_{X_0}$ and $\|G(u)\|_{\gamma(\mathcal{U},X_{\frac12})}$ are in $L^p(0,T,w_{\kappa})$. We discuss this in some detail, as it reveals another interpretation of the (sub)criticality condition \eqref{eq:subcritical}. By Assumption \ref{ass:FGcritical},
\begin{align}
\label{eq:funnyXrole}
\|F(u)\|_{L^p(0,T,w_{\kappa};X_0)} + \|G(u)\|_{L^p(0,T,w_{\kappa};\gamma(\mathcal{U},X_{\frac12}))} &\lesssim_n \sum_{j=1}^m \Big\| \big(1+\|u\|_{X_{\beta_j,1}}^{\rho_j+1}\big)\Big\|_{L^p(0,T,w_{\kappa})}
\\
\nonumber
& \lesssim_n\sum_{j=1}^m (T^{\frac{1+\kappa}{p}} + \|u\|_{L^{p (\rho_j+1)}(0,T,w_{\kappa};X_{\beta_{j},1})}^{\rho_j+1}).
\end{align}
Assuming  $\frac{1+\kappa}{p}\leq  \frac{(1+\rho_j)(1-\beta_j)}{\rho_j}$ for a fixed $j$, Lemma \ref{lem:funnyXembedding} below shows
\begin{align}\label{eq:interpestfunnyX}
\|u\|_{L^{p (\rho_j+1)}(0,T,w_{\kappa};X_{\beta_j,1})}^{\rho_j+1}\lesssim T^{\varepsilon_j/p} \|u\|_{L^p(0,T,w_{\kappa};X_1)}^{\theta_j(\rho_j+1)}\|u\|_{L^\infty(0,T;X_{1-\frac{1+\kappa}{p},p})}^{(1-\theta_j)(\rho_j+1)},
\end{align}
where $\varepsilon_j\geq 0$  and $\theta_j\in (0,1)$ are such that $(1-\theta_j)(\rho_j+1)\leq 1$. Furthermore, when $(p,\a)$ are subcritical, then $\varepsilon_j>0$ and the exponent on the $L^p(0,T,w_{\kappa};X_1)$-norm of $u$ is \emph{strictly} less than $1$. In the critical case, this exponent equals $1$. In particular, if $(p,\a)$ are critical, the constant on the RHS\eqref{eq:interpestfunnyX} does not tend to $0$ as $T\downarrow 0$ and therefore the existence of solutions to \eqref{eq:SEE} based on fixed point methods is quite delicate, see \cite{AV19_QSEE_1,CriticalQuasilinear}.

The estimate \eqref{eq:interpestfunnyX} together with \cite[Proposition 2.10]{AV19_QSEE_1} can be used to avoid \cite[Lemma 4.9]{AV19_QSEE_1} and thus slightly improve the setting in \cite{AV19_QSEE_1}, where $X_{\beta_{j}}$ was used instead of $X_{\beta_{j},1}$ in the assumptions on $F$ and $G$. A similar improvement can be obtained in \cite{CriticalQuasilinear}, where the condition called (S) before Remark 1.1 can be omitted. This refinement is elaborated in detail in \cite[Section 18.2]{Analysis3}.
\end{remark}

\begin{lemma}[Critical interpolation estimate]\label{lem:funnyXembedding}
Suppose that $p\in [2, \infty)$ and $\kappa\in [0,p/2-1)\cup\{0\}$.
Let $\beta\in (1-\frac{1+\kappa}{p},1)$, $\rho\geq 0$ be such that $\frac{1+\kappa}{p}\leq  \frac{(1+\rho)(1-\beta)}{\rho}$. Then there are constants $C$ and $\varepsilon\geq 0$ independent of $T$ such that for all $u\in L^p(0,T,w_{\kappa};X_1)\cap L^\infty(0,T;X_{1-\frac{1+\kappa}{p},p})$ one has
\[\|u\|_{L^{p (\rho+1)}(0,T,w_{\kappa};X_{\beta,1})}^{\rho+1}\leq C T^{\varepsilon/p} \|u\|_{L^\infty(0,T;X_{1-\frac{1+\kappa}{p},p})}^{(1-\theta)(\rho+1)} \|u\|_{L^p(0,T,w_{\kappa};X_1)}^{\theta (\rho+1)},\]
where $\theta = 1-\frac{1-\beta}{\frac{1+\kappa}{p}}\in (0,1)$  satisfies
$\theta(\rho+1)\leq 1$. Finally, $[\varepsilon>0$ and $\theta(\rho+1)<1]$ if and only if $\frac{1+\kappa}{p}< \frac{(1+\rho)(1-\beta)}{\rho}$.
\end{lemma}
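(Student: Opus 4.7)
The plan is to reduce the space-time estimate to a pointwise (in $t$) interpolation inequality between the trace space and $X_1$, and then integrate using Hölder's inequality in the time variable.

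First, observe that $\beta = (1-\theta)\bigl(1-\tfrac{1+\kappa}{p}\bigr) + \theta\cdot 1$ is a direct consequence of the definition of $\theta$. The key ingredient is then the real-interpolation inequality, pointwise in $t$,
\begin{equation}
\label{eq:ptwint}
\|u(t)\|_{X_{\beta,1}} \leq C\,\|u(t)\|_{X_{1-\frac{1+\kappa}{p},p}}^{1-\theta}\,\|u(t)\|_{X_1}^{\theta},
\end{equation}
which I would prove directly from the $K$-method. Recalling that $\|v\|_{X_{\beta,1}}\eqsim \int_0^\infty t^{-\beta} K(t,v;X_0,X_1)\,\frac{dt}{t}$, split this integral at a threshold $\tau>0$, use $K(t,v)\leq t\|v\|_{X_1}$ on $(0,\tau)$ and $K(t,v)\lesssim t^{1-\frac{1+\kappa}{p}}\|v\|_{X_{1-\frac{1+\kappa}{p},\infty}}\leq C t^{1-\frac{1+\kappa}{p}}\|v\|_{X_{1-\frac{1+\kappa}{p},p}}$ on $(\tau,\infty)$, and optimize in $\tau$; this produces \eqref{eq:ptwint} with the correct exponent $\theta$ since $\theta = \frac{\beta - (1-\frac{1+\kappa}{p})}{1 - (1-\frac{1+\kappa}{p})}$. (Alternatively, \eqref{eq:ptwint} is a standard consequence of the general real-interpolation inequality; since $X_1\hookrightarrow X_{\eta,q}$ for every $\eta<1$, this is routine.)

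Next, raise \eqref{eq:ptwint} to the power $p(\rho+1)$, integrate against $w_{\kappa}(t)\,dt$, and pull out the $L^\infty$-norm in time to obtain
\begin{equation*}
\|u\|_{L^{p(\rho+1)}(0,T,w_{\kappa};X_{\beta,1})}^{p(\rho+1)} \leq C\,\|u\|_{L^\infty(0,T;X_{1-\frac{1+\kappa}{p},p})}^{(1-\theta)p(\rho+1)}\int_0^T \|u(t)\|_{X_1}^{\theta p(\rho+1)}\,w_{\kappa}(t)\,dt.
\end{equation*}
The elementary algebra $\theta(\rho+1)\leq 1 \iff \frac{1+\kappa}{p}\leq \frac{(1+\rho)(1-\beta)}{\rho}$ (with equality corresponding to equality) ensures that the exponent $r:=1/[\theta(\rho+1)]$ satisfies $r\geq 1$. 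Writing $\|u(t)\|_{X_1}^{\theta p(\rho+1)}\,w_{\kappa}(t) = \bigl[\|u(t)\|_{X_1}^p\,w_{\kappa}(t)\bigr]^{\theta(\rho+1)}\,w_{\kappa}(t)^{1-\theta(\rho+1)}$ and applying Hölder's inequality with exponents $r$ and $r/(r-1)$,
\begin{equation*}
\int_0^T \|u(t)\|_{X_1}^{\theta p(\rho+1)}\,w_{\kappa}(t)\,dt \leq \|u\|_{L^p(0,T,w_{\kappa};X_1)}^{\theta p(\rho+1)}\,\biggl(\int_0^T w_{\kappa}(t)\,dt\biggr)^{1-\theta(\rho+1)},
\end{equation*}
where the last integral equals $\tfrac{T^{1+\kappa}}{1+\kappa}$.

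Combining these two displays and taking the $p$-th root delivers the asserted inequality with $\varepsilon := (1+\kappa)\bigl(1-\theta(\rho+1)\bigr)\geq 0$. The condition $\theta\in(0,1)$ follows from $1-\tfrac{1+\kappa}{p}<\beta<1$, and the final equivalence (strict inequality in the hypothesis iff $\varepsilon>0$ iff $\theta(\rho+1)<1$) is immediate from the algebraic identity above. There is no real obstacle in the argument; the only delicate point is identifying that the correct interpolation couple is $(X_{1-\frac{1+\kappa}{p},p},X_1)$ with the specific exponent $\theta$, and that the Hölder step exactly converts the ``critical'' algebraic condition \eqref{eq:subcritical} into the $L^p_{w_\kappa}$–$L^\infty$ splitting needed for the application in Remark \ref{rem:growth}.
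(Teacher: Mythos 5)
Your proof is correct and takes essentially the same route as the paper's: a pointwise real-interpolation inequality $\|u(t)\|_{X_{\beta,1}}\lesssim\|u(t)\|_{X_{1-\frac{1+\kappa}{p},p}}^{1-\theta}\|u(t)\|_{X_1}^{\theta}$, obtained via the reiteration theorem applied to the couple $(X_{1-\frac{1+\kappa}{p},p},X_1)$ at exponent $\theta$, followed by integration and Hölder in time with exponent pair $\bigl(\tfrac{1}{\theta(\rho+1)},\tfrac{1}{1-\theta(\rho+1)}\bigr)$, yielding the same $\varepsilon=(1+\kappa)\bigl(1-\theta(\rho+1)\bigr)$. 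The only cosmetic difference is that you also give a self-contained $K$-functional derivation of the pointwise estimate (splitting the $K$-integral at a threshold and optimizing), where the paper simply invokes reiteration; this is a nice sanity check but not a different argument.
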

Actually, the proof below holds for any $p\in (1, \infty)$ and $\kappa\in [0,p-1)$.
\begin{proof}
The above estimate can be extracted from the proof in \cite[Lemma 18.2.7]{Analysis3}. We present a detailed proof here since it is important to see how the (sub)criticality condition enters.

The reiteration theorem for real interpolation (see \cite[(L.2) and Theorem L.3.1]{Analysis3}) gives that $X_{\beta,1} = (X_{1-\frac{1+\kappa}{p},p},X_1)_{\theta,1}$ with $\theta = 1-\frac{1-\beta}{\frac{1+\kappa}{p}}\in(0,1)$, and therefore
$\|x\|_{X_{\beta,1}}\leq C \|x\|^{1-\theta}_{X_{1-\frac{1+\kappa}{p},p}} \|x\|_{X_1}^{\theta}$ for $x\in X_1$.
Applying the latter with $x = u(t)$ and taking $L^{p(\rho+1)}(0,T,w_{\kappa})$-norms we obtain
\begin{align*}
\textstyle \|u\|_{L^{p (\rho+1)}(0,T,w_{\kappa};X_{\beta,1})}^{p(\rho+1)} &\leq \textstyle C^{p(\rho+1)} \|u\|_{L^\infty(0,T;X_{1-\frac{1+\kappa}{p},p})}^{(1-\theta)p(\rho+1)} \int_0^T \|u(t)\|_{X_1}^{\theta p(\rho+1)} t^{\kappa} \,\dd t \\ &\leq \textstyle C^{p(\rho+1)} \|u\|_{L^\infty(0,T;X_{1-\frac{1+\kappa}{p},p})}^{(1-\theta)p(\rho+1)}   T^{\varepsilon} \|u\|_{L^p(0,T,w_{\kappa};X_1)}^{\theta p(\rho+1)},
\end{align*}
where in the last step we used H\"older's inequality with $\theta(\rho+1) + \frac{1}{r} = 1$ and $\varepsilon = \frac{1+\kappa}{r}$. Note that thanks to $\frac{1+\kappa}{p}\leq \frac{(1+\rho)(1-\beta)}{\rho}$ one has $\theta(\rho+1)\leq 1$, so that H\"older's inequality can be applied. For the final assertion note that $\theta = 1-\frac{1-\beta}{\frac{1+\kappa}{p}}< \frac{1}{\rho+1}$.
\end{proof}

\begin{remark}[Comments and comparison with the assumptions in \cite{AV19_QSEE_1}]\
\label{rem:comparison_comments_partI}
\begin{itemize}
\item The assumptions on $A$ and $X_1$ in Assumption \ref{ass:FGcritical} can be removed in case there exists a sectorial operator $A_0$ on $X_0$ such that $\Do(A_0)=X_1$.
\item
As in \cite{AV19_QSEE_1}, the results in the current manuscript extend in case the mappings $A,B,F$ and $G$ are $(t,\omega)$-dependent in a progressively measurable way, provided the stochastic maximal $L^p$-regularity assumption holds and the estimates of Assumption \ref{ass:FGcritical} and \eqref{eq:growthFcGc} hold uniformly in $(t,\omega)$.
\item
In Assumption \ref{ass:FGcritical} we used the real interpolation space $X_{\beta_j,1}$ to formulate the estimate for $F$ and $G$. In contrast, in \cite{AV19_QSEE_1} uses the complex interpolation spaces $X_{\beta_j}$, which leads to a stronger condition because $X_{\beta_j,1}\hookrightarrow X_{\beta_j}$, see \eqref{eq:sandwichinterp}.
\item
In \cite{AV19_QSEE_1}, we assumed that $F=F_L+F_{{\rm c}}+ F_{\Tr}$ and $G=G_L+G_{{\rm c}}+ G_{\Tr}$, where $(F_c,G_c)$ are the ``critical parts'' as described in Assumption \ref{ass:FGcritical}, $(F_{\Tr},G_{\Tr})$ are the ``trace parts'' that are locally Lipschitz on the trace space $X_{1-\frac{1+\a}{p},p}\to X_0\times \g(\mathcal{U},X_{1/2})$, and $(F_L,G_L)$ are globally Lipschitz mapping $X_1\to  X_0\times \g(\mathcal{U},X_{1/2})$ (with relatively small Lipschitz constant).
In this manuscript, the trace parts are covered under the current assumptions, though distinguishing them may be conceptually useful. The global Lipschitz parts $(F_L,G_L)$ are omitted here, as they are typically unnecessary for applications.
\end{itemize}
\end{remark}

\subsection{Local existence and uniqueness}

\begin{definition}
Suppose Assumption \ref{ass:FGcritical} is satisfied for some $p\in [2, \infty)$ and $\kappa\in [0, p/2-1)\cup\{0\}$. A pair $(u,\sigma)$ is called an {\em $L^p_{\kappa}$-strong solution} of \eqref{eq:SEE} if $\sigma:\Omega\to [0,\infty)$ is a stopping time and $u:[0,\sigma]\to X_0$ is a strongly progressively measurable process such that
\[u\in L^p(0,\sigma,w_{\kappa};X_1)\cap C([0,\sigma];X_{1-\frac{1+\kappa}{p},p}),\]
and the following identity holds a.s. for all $t\in [0,\sigma]$:
\begin{equation}\label{eq:stronsol}
\textstyle u(t) - u_0 + \int_0^t A u(s) \,\dd s = \int_0^t F(u(s)) \,\dd s + \int_{0}^t \one_{[0,\sigma]}(s) [B u(s) + G(u(s))] \,\dd  W(s).
\end{equation}
\end{definition}
The integrals in \eqref{eq:stronsol} are well-defined. From Remark \ref{rem:growth} and the assumptions on $u$, we see that $F(u)\in L^p(0,\sigma,w_{\kappa};X_0)$ and $G(u) \in L^p(0,\sigma,w_{\kappa};\gamma(\mathcal{U},X_{1/2}))$ a.s. It remains to observe that due to the restrictions on $\kappa$ we have $L^p(0,\sigma,w_{\kappa})\hookrightarrow L^2(0,\sigma)$ by H\"older's inequality. Progressive measurability of $F(u)$ and $G(u)$ holds as well, and thus the Bochner integral and stochastic integral in \eqref{eq:stronsol} are well-defined (see Subsection \ref{subsec:stochint}).

\begin{definition}\label{def:localsol}
Suppose Assumption \ref{ass:FGcritical} is satisfied for some $p\in [2, \infty)$ and $\kappa\in [0, p/2-1)\cup\{0\}$.
\begin{enumerate}[{\rm(1)}]
\item\label{it1:localsol} A pair $(u,\sigma)$ is called an {\em $L^p_{\kappa}$-local solution} to \eqref{eq:SEE} if $\sigma:\Omega\to [0,\infty]$ is a stopping time and $u:[0,\sigma)\to X_0$ is a strongly progressively measurable process, and there exists an increasing sequence of stopping times $(\sigma_{n})_{n\geq 1}$ such that $\lim_{n\to \infty} \sigma_n = \sigma$ a.s., and $(u|_{[0,\sigma_n]}, \sigma)$ is a $L^p_{\kappa}$-strong solution to \eqref{eq:SEE}. The sequence $(\sigma_{n})_{n\geq 1}$  is called a {\em localizing sequence} for $(u, \sigma)$.
\item\label{it2:localsol} An $L^p_{\kappa}$-local solution $(u,\sigma)$ to \eqref{eq:SEE} is called {\em unique} if for every $L^p_{\kappa}$-local solution $(v,\tau)$ one has that a.s.\ $u=v$ on $[0,\sigma\wedge \tau)$.
\item\label{it3:localsol} An $L^p_{\kappa}$-local solution $(u,\sigma)$ to \eqref{eq:SEE} is called {\em $L^p_{\kappa}$-maximal} if for any other unique $L^p_{\kappa}$-local solution $(v, \tau)$ to \eqref{eq:SEE} one has that a.s.\ $\tau\leq \sigma$ and $u = v$ on $[0,\tau)$.
\item An $L^p_{\kappa}$-local solution $(u,\sigma)$ of \eqref{eq:SEE} is called {\em global} if $\sigma=\infty$ a.s.
\end{enumerate}
\end{definition}
If the word ``unique'' is left out in Definition \ref{def:localsol}\eqref{it3:localsol}, then one obtains an equivalent definition, see \cite[Remark 5.6]{AV19_QSEE_2}. As explained in the latter remark, in the quasilinear case (not considered in current manuscript) extra conditions are required.

We can now state the main local existence and uniqueness result.
\begin{theorem}[Local existence and uniqueness]\label{thm:localwellposed}
Let $p\in [2, \infty)$ and $\kappa\in [0,p/2-1)\cup \{0\}$ and suppose that Assumption \ref{ass:FGcritical} holds. Assume that $A\in \calL(X_1,X_0)$ and $B\in \calL(X_{1},\gamma(\mathcal{U},X_{1/2}))$ satisfy
\[(A,B)\in \mathcal{SMR}_{p,\kappa}^{\bullet}.\]
Then for every $u_0\in L^0_{\F_0}(\Omega;X_{1-\frac{1+\kappa}{p},p})$, there exists an $L^p_{\kappa}$-maximal solution $(u,\sigma)$ to \eqref{eq:SEE} with $\sigma>0$ a.s. Moreover, the following properties hold:
\begin{enumerate}[{\rm(1)}]
\item\label{it1:localwellposed} {\em (Regularity)} For each localizing sequence $(\sigma_n)_{n\geq 1}$ for $(u,\sigma)$ one has
\begin{itemize}
  \item if $p>2$ and $\kappa\in [0, p/2-1)$, then for all $n\geq 1$ and all $\theta\in [0,1/2)$
  \[u\in H^{\theta,p}(0,\sigma_n,w_{\kappa};X_{1-\theta})\cap C([0,\sigma_n];X_{1-\frac{1+\kappa}{p},p}) \ \ \text{a.s.},\]
  and one has the following instantaneous regularity
  \begin{align}\label{eq:instant}
  u\in H^{\theta,p}_{\rm loc}((0,\sigma);X_{1-\theta})\cap C((0,\sigma);X_{1-\frac{1}{p},p}) \ \ \ \text{a.s.}
  \end{align}
  \item if $p=2$, then for all $n\geq 1$,
  \[u\in L^2(0,\sigma_n;X_{1})\cap C([0,\sigma_n];X_{\frac12}) \ \ \text{a.s.}\]
\end{itemize}
\item\label{it2:localwellposed} {\em (Localization)} Let $v_0\in L^0_{\F_0}(\Omega;X_{1-\frac{1+\kappa}{p},p})$. If $(v,\tau)$ is the $L^p_{\kappa}$-maximal solution to \eqref{eq:SEE} with initial value $v_0$, then a.s.\ on the set $\{u_0=v_0\}$ one has $\tau=\sigma$ and $u = v$ on $[0,\sigma\wedge \tau)$.
\end{enumerate}
\end{theorem}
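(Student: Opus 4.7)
The strategy is a contraction mapping argument for the nonlinear equation, with stochastic maximal regularity playing the role of the ``linear solver'' and the critical interpolation estimate of Lemma \ref{lem:funnyXembedding} providing the nonlinear estimates. I first reduce to the case of \emph{deterministic, bounded} initial data. Using $\F_0$-measurability and Proposition \ref{prop:localizationSMR}, one can restrict to the event $\{\|u_0\|_{X_{1-\frac{1+\kappa}{p},p}}\leq R\}$ for each $R$, paste the resulting solutions together, and then subtract the linear solution $t\mapsto e^{-tA}u_0$ (which lies in the maximal regularity space by \eqref{eq:equivorbitreal} and \eqref{eq:interptrickHthetaMR}). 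This reduces the problem to finding a fixed point for zero initial data, with the price of paying a lower order perturbation in the nonlinearities.

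For local existence, fix $T>0$ (to be chosen later) and consider the space
\[
\mathcal{E}_T := L^p(\Omega;L^p(0,T,w_{\kappa};X_1))\cap L^p(\Omega;C([0,T];X_{1-\frac{1+\kappa}{p},p})),
\]
with closed ball $\mathcal{B}_R\subseteq \mathcal{E}_T$ of radius $R$. Define $\Phi:\mathcal{B}_R\to \mathcal{E}_T$ by letting $\Phi(u)$ be the strong solution of the linear problem $\dd v + Av\,\dd t = F(u)\,\dd t+(Bv+G(u))\,\dd W$ with $v(0)=0$; this is well defined since $(A,B)\in\mathcal{SMR}_{p,\kappa}^\bullet$, and by the $\bullet$-part of the definition one gets the required continuity into $X_{1-\frac{1+\kappa}{p},p}$. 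To show that $\Phi$ is a contraction on $\mathcal{B}_R$ for appropriate $R$ and $T$, apply stochastic maximal regularity to $\Phi(u)-\Phi(v)$ and then use Assumption \ref{ass:FGcritical} together with Lemma \ref{lem:funnyXembedding}, which yields, for each $j$,
\[
\|u\|_{L^{p(\rho_j+1)}(0,T,w_{\kappa};X_{\beta_j,1})}^{\rho_j+1}\lesssim T^{\varepsilon_j/p}\|u\|_{\mathcal{E}_T}^{\rho_j+1}.
\]
In the \emph{subcritical} case $\varepsilon_j>0$, so choosing $T$ small turns $\Phi$ into a strict contraction on any ball. In the \emph{critical} case $\varepsilon_j=0$, one instead works on a small ball $\mathcal{B}_R$ with $R$ small, localizing the initial data: write $u_0=e^{-\cdot A}u_0+\text{remainder}$ and note that on $[0,T]$ the linear part has arbitrarily small $\mathcal{E}_T$-norm as $T\downarrow 0$ (by dominated convergence and \eqref{eq:equivorbitreal}). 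The main technical obstacle is this critical case, where the absence of a vanishing prefactor must be replaced by smallness of $\|u\|_{\mathcal{E}_T}$ plus a stopping time argument to keep the $X_{1-\frac{1+\kappa}{p},p}$-norm bounded, exactly as in \cite[Theorem 4.5]{AV19_QSEE_1} and \cite[Chapter 18]{Analysis3}.

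Once local strong existence and uniqueness on $[0,\tau]$ for some stopping time $\tau>0$ is established, the maximal solution $(u,\sigma)$ is built by a Zorn/concatenation argument: given a solution up to $\tau$, restart from $u(\tau)$ (which lies in $X_{1-\frac{1+\kappa}{p},p}$ by construction) and glue. Uniqueness (and hence the well-definedness of maximality, cf.\ Definition \ref{def:localsol}) follows from a standard stopping time truncation: if $(u_1,\sigma_1)$ and $(u_2,\sigma_2)$ are two $L^p_\kappa$-local solutions, then defining $\tau_n:=\inf\{t: \|u_i(t)\|_{X_{1-\frac{1+\kappa}{p},p}}\geq n,\ i=1,2\}\wedge \sigma_1\wedge\sigma_2$ puts both solutions into a ball where the fixed point is unique, and sending $n\to\infty$ gives $u_1=u_2$ on $[0,\sigma_1\wedge\sigma_2)$.

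For the regularity assertion \eqref{it1:localwellposed}, the $H^{\theta,p}$-regularity on $[0,\sigma_n]$ is inherited directly from the $\bullet$-clause of $\mathcal{SMR}_{p,\kappa}^\bullet$ applied to each localizing stopping time, combined with Proposition \ref{prop:localizationSMR}; the traces into $X_{1-\frac{1+\kappa}{p},p}$ and $X_{1-\frac{1}{p},p}$ follow from Proposition \ref{prop:tracespace}\eqref{it:trace_with_weights_Xap}--\eqref{it:trace_without_weights_Xp}. For the instantaneous regularization \eqref{eq:instant}, fix $t_0\in (0,\sigma)$ (on the event $\{\sigma>t_0\}$) and note that $u(t_0)\in X_{1-\frac{1}{p},p}$ by the previous step; apply the already-proven local existence result with $\kappa=0$ and initial time $t_0$ to the \emph{same} equation — by uniqueness the two solutions agree, and the new solution lies in the unweighted space, yielding \eqref{eq:instant}. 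The case $p=2$ is analogous, with $X_{1/2}$ replacing the trace space and $L^2(0,\sigma_n;X_1)\cap C([0,\sigma_n];X_{1/2})$ coming from $(A,B)\in\mathcal{SMR}_{2,0}^\bullet$. Finally, the localization property \eqref{it2:localwellposed} is a direct consequence of the uniqueness statement applied to the stopping time $\tau_n$ restricted to the $\F_0$-set $\{u_0=v_0\}$, combined with the maximality of $\sigma$ and of the solution starting from $v_0$.
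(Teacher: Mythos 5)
Your high-level strategy -- a Banach fixed point argument with the stochastic maximal regularity estimate for $(A,B)$ as the linear solver, Lemma~\ref{lem:funnyXembedding} for the nonlinear estimates, truncation of the nonlinearity combined with stopping-time arguments to unfreeze it, and trace embeddings from Proposition~\ref{prop:tracespace} for the continuity assertions -- is the same as the one in \cite[Sections 4.3--4.5]{AV19_QSEE_1}, which is where the paper delegates the proof. However, two points need attention.

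First, your treatment of the critical case contains a false claim: the linear orbit $t\mapsto e^{-tA}u_0$ does \emph{not} have ``arbitrarily small $\mathcal{E}_T$-norm as $T\downarrow 0$.'' Since $\mathcal{E}_T$ contains the $L^\infty(0,T;X_{1-\frac{1+\kappa}{p},p})$-component and $e^{-\cdot A}u_0\in C([0,T];X_{1-\frac{1+\kappa}{p},p})$ with $e^{-0A}u_0 = u_0$, this norm is bounded below by $\|u_0\|_{X_{1-\frac{1+\kappa}{p},p}}$ for every $T>0$. Only the $L^p(0,T,w_\kappa;X_1)$-component of the orbit norm vanishes as $T\downarrow 0$ (by dominated convergence and \eqref{eq:equivorbitreal}). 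The contraction in the critical case closes nevertheless because of the precise exponent structure in Lemma~\ref{lem:funnyXembedding}: there the $L^\infty$-factor enters with exponent $(1-\theta)(\rho+1)\leq 1$ (so it contributes a bounded prefactor), while the $L^p(w_\kappa;X_1)$-factor carries exponent $\theta(\rho+1)=1$ in the critical case, which is exactly what vanishes. Making this work requires truncating $F$ and $G$ outside a ball in the $L^\infty(0,T;X_{1-\frac{1+\kappa}{p},p})$-metric and then removing the truncation by a stopping-time argument; you allude to this, but as written your smallness mechanism is wrong, and the distinction is precisely what makes the critical case harder than the subcritical one.

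Second, your derivation of the instantaneous regularization \eqref{eq:instant} by restarting with $\kappa=0$ is redundant and risks circularity: you first observe, correctly, that the trace assertions into $X_{1-\frac{1+\kappa}{p},p}$ and $X_{1-\frac{1}{p},p}$ come from Proposition~\ref{prop:tracespace}\eqref{it:trace_with_weights_Xap}--\eqref{it:trace_without_weights_Xp}, but the second item of that proposition already gives $u\in H^{\theta,p}_{\rm loc}((0,\sigma_n];X_{1-\theta})\subseteq C((0,\sigma_n];X_{1-\frac1p,p})$ once the weighted $H^{\theta,p}$-regularity is in hand, which is exactly \eqref{eq:instant}. Restarting from $u(t_0)$ with $\kappa=0$ also raises compatibility issues between the $L^p_0$- and $L^p_\kappa$-solution concepts that your sketch does not address; that comparison is essentially Corollary~\ref{cor:comp}, which is proved \emph{after} Theorem~\ref{thm:localwellposed} and itself uses the regularization results. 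The direct route via the trace embedding is the right one and requires no restarting.

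Minor remarks: the reduction should be to \emph{uniformly bounded} (not deterministic) initial data; and the step $\Phi$ must be checked to preserve progressive measurability, which is handled via Proposition~\ref{prop:localizationSMR} but is worth flagging.
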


The results mentioned above were established in \cite[Theorem 4.8]{AV19_QSEE_1}, which also addressed the quasilinear case and considered $(t,\omega)$-dependent coefficients. The proof relies on a Banach fixed point argument applied to a truncated version of the equation, combined with suitable stopping time techniques. For detailed proofs the reader is referred to \cite[Sections 4.3-4.5]{AV19_QSEE_1}.

Assumption \ref{ass:FGcritical} is slightly weaker than the corresponding conditions in \cite{AV19_QSEE_1}, owing to the use of the real interpolation spaces $X_{\beta_j,1}$. However, the arguments from the earlier work extend directly to this setting by leveraging Lemma \ref{lem:funnyXembedding}.
One of the strengths of Theorem \ref{thm:localwellposed} is that it is very general but still powerful/optimal in concrete situations. We will demonstrate its applicability to a wide range of parabolic SPDEs to obtain local existence and uniqueness.

One can already see some of the uses of the weight $w_{\kappa}(t) = t^{\kappa}$ in Theorem \ref{thm:localwellposed}. It can be used to enlarge the class of initial values one can consider by choosing $\kappa\in [0,p/2-1)$ large. Moreover, in the latter case, we see the following parabolic instantaneous regularization effect: If $\a>0$ and $X_1$ is strictly contained in $X_0$,
\begin{equation}
\label{eq:improved_regularity_small_times}
u(t)\in X_{1-\frac{1}{p},p}\subsetneq X_{1-\frac{1+\a}{p},p} \text{ a.s.\ on $0<t<\sigma$, even if } u_0\in X_{1-\frac{1+\a}{p},p}\text{ a.s.\ }
\end{equation}
Theorem \ref{thm:parabreg} further explores this regularization effect,
demonstrating that it can be significantly extended as a consequence of blow-up criteria. For a comprehensive discussion, see Section \ref{sec:blowup}.
\smallskip

Finally, we establish the continuity of solutions with respect to the initial data
$u_0$ in the path space defined in \eqref{eq:max_reg_space}. The proof follows directly from the arguments in \cite[Proposition 2.9]{AVreaction-local}. Together with Theorem \ref{thm:localwellposed}, this continuity result ensures the \emph{local well-posedness} of \eqref{eq:SEE} in $X_{1-\frac{1+\a}{p},p}$.

\begin{proposition}[Local continuity]
\label{prop:local_continuity_SEE}
Let the assumptions of Theorem \ref{thm:localwellposed} hold and assume $u_0\in L^p(\O;X_{1-\frac{1+\a}{p},p})$.
Let $(u,\sigma)$ be the maximal $L^p_\a$-solution to \eqref{eq:SEE}. There exist constants $C_0,T_0,\varepsilon_0>0$ and stopping times $\sigma_0,\sigma_1\in (0,\sigma]$ a.s.\ for which the following assertion holds:

For each $v_0\in L^p_{\F_0}(\O;X_{1-\frac{1+\a}{p},p})$ with
$\E\|u_0-v_0\|_{X_{1-\frac{1+\a}{p},p}}^p\leq \varepsilon_0$,
the maximal $L^p_\a$-solution $(v, \tau)$ to \eqref{eq:SEE} with initial data $v_0$ has the property that there exists a stopping time $\tau_0\in (0,\tau]$ a.s.\ such that for all $t\in [0,T_0]$ and $\gamma>0$, one has
\begin{align}
\label{eq:local_continuity_1}
\P\Big(\sup_{r\in [0,t]}\|u(r)-v(r)\|_{X_{1-\frac{1+\a}{p},p}}\geq \gamma, \  \sigma_0\wedge \tau_0>t\Big)
\leq \frac{C_0}{\gamma^p}
\E \|u_0-v_0\|_{X_{1-\frac{1+\a}{p},p}}^p,&\\
\label{eq:local_continuity_2}
\P\Big(\|u-v\|_{L^p(0,t,w_{\a};X_1)} \geq\gamma, \  \sigma_0\wedge \tau_0>t\Big)
\leq \frac{C_0}{\gamma^p}
\E \|u_0-v_0\|_{X_{1-\frac{1+\a}{p},p}}^p,&\\
\label{eq:local_continuity_3}
\P(\sigma_0\wedge \tau_0\leq t)
\leq C_0
\big[\E \|u_0-v_0\|_{X_{1-\frac{1+\a}{p},p}}^p+ \P(\sigma_1\leq t)\big].&
\end{align}
\end{proposition}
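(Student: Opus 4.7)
\medskip

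\noindent\textbf{Proof plan.} The strategy is to (i) introduce stopping times that confine both $u$ and $v$ within a large ball in the path space, (ii) apply stochastic maximal regularity to the linear SPDE solved by the difference $w=u-v$, (iii) pass from the $L^p$-bound to the probability estimates via Chebyshev, and (iv) control $\P(\tau_0\leq t)$ by a stopping time depending on $u$ alone.

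\smallskip

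Fix $R>2\|u_0\|_{L^p(\O;X_{1-\frac{1+\a}{p},p})}^{1/p}$ and $T_0\in(0,1]$ to be chosen below. For $R'\in\{R/2,R\}$, set
\[
\sigma^{R'}:=\inf\Big\{t\in[0,\sigma\wedge T_0]:\sup_{r\in[0,t]}\|u(r)\|_{X_{1-\frac{1+\a}{p},p}}+\|u\|_{L^p(0,t,w_{\a};X_1)}\geq R'\Big\},
\]
with $\inf\emptyset=\sigma\wedge T_0$, and put $\sigma_0:=\sigma^{R}$, $\sigma_1:=\sigma^{R/2}$. Define $\tau_0$ analogously for $v$ with radius $R$. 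By Theorem \ref{thm:localwellposed} and the absolute continuity of the weighted $L^p$-integral, $\sigma_0,\sigma_1,\tau_0>0$ a.s.

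\smallskip

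On $[0,\sigma_0\wedge\tau_0]$ the process $w=u-v$ is a strong solution to the linear SPDE
\[
\dd w + A w\,\dd t = [F(u)-F(v)]\,\dd t + \big(B w + [G(u)-G(v)]\big)\,\dd W,\qquad w(0)=u_0-v_0.
\]
The next step is to apply the stochastic maximal $L^p$-regularity estimate of Proposition \ref{prop:localizationSMR} on the stopped interval (using $(A,B)\in\MRta$), combined with the local Lipschitz bound of Assumption \ref{ass:FGcritical} to control the nonlinear terms by $\|w\|_{X_{\beta_j,1}}$ times factors $(1+\|u\|_{X_{\beta_j,1}}^{\rho_j}+\|v\|_{X_{\beta_j,1}}^{\rho_j})$. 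Lemma \ref{lem:funnyXembedding}, applied to $u,v,w$ on the stopped interval, converts the $L^{p(\rho_j+1)}(w_\a;X_{\beta_j,1})$-norms into products of $L^p(w_\a;X_1)$- and $C(X_{1-\frac{1+\a}{p},p})$-norms, all of which are bounded by $R$ on $[0,\sigma_0\wedge\tau_0]$. After applying H\"older, this yields
\[
\text{LHS}_t\;\leq\;K_R\,\E\|u_0-v_0\|_{X_{1-\frac{1+\a}{p},p}}^p \;+\; K_R\,\eta(T_0)\cdot\text{LHS}_t,
\]
where $\text{LHS}_t$ denotes $\E[\sup_{r\leq t\wedge\sigma_0\wedge\tau_0}\|w(r)\|_{X_{1-\frac{1+\a}{p},p}}^p+\|w\|_{L^p(0,t\wedge\sigma_0\wedge\tau_0,w_\a;X_1)}^p]$ and $\eta(T_0)\to 0$ as $T_0\downarrow 0$ in the subcritical case. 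Choosing $T_0$ small enough to absorb the second term gives $\text{LHS}_t\leq C_0\,\E\|u_0-v_0\|_{X_{1-\frac{1+\a}{p},p}}^p$ for $t\in[0,T_0]$, and Chebyshev's inequality then yields \eqref{eq:local_continuity_1} and \eqref{eq:local_continuity_2}.

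\smallskip

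For \eqref{eq:local_continuity_3}, observe that on $\{\sigma_1>t\}$ we have $\sup_{r\leq t}\|u(r)\|_{X_{1-\frac{1+\a}{p},p}}+\|u\|_{L^p(0,t,w_\a;X_1)}<R/2$; if moreover the analogous norm of $w$ is $<R/2$, then by the triangle inequality the same norm of $v$ is $<R$, so $\tau_0>t$ and hence also $\sigma_0>t$. This gives the inclusion
\[
\{\sigma_0\wedge\tau_0\leq t\}\subseteq\{\sigma_1\leq t\}\cup\Big\{\sup_{r\leq t}\|w(r)\|_{X_{1-\frac{1+\a}{p},p}}+\|w\|_{L^p(0,t,w_\a;X_1)}\geq R/2\Big\},
\]
and the probability of the second event is dominated by $(2/R)^p C_0\,\E\|u_0-v_0\|^p$ thanks to Step~3 and the fact that $u-v = w$ on $[0,\sigma_0\wedge\tau_0]$ while we need only integrate up to time $t$ on the event considered. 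Absorbing $(2/R)^p$ into the constant $C_0$ delivers \eqref{eq:local_continuity_3}.

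\smallskip

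The main obstacle is Step~3 in the \emph{critical} case, where the factor $\eta(T_0)$ in Lemma \ref{lem:funnyXembedding} does not vanish as $T_0\downarrow 0$. In that situation, absorption must be achieved not by smallness of $T_0$ but by smallness of the $L^p(w_\a;X_1)$-norm of $u,v$ on $[0,\sigma_0\wedge\tau_0]$; this forces one to redefine $\sigma_0,\sigma_1$ with the $L^p$-piece having a sufficiently small (rather than large) radius, while the trace-space piece is allowed to be of order $R$. Choosing these two radii independently and iterating the fixed-point/contraction argument from \cite[Sections 4.3--4.5]{AV19_QSEE_1} then closes the estimate and preserves the probabilistic form of the bounds.
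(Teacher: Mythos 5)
Your proposal is correct and follows the standard approach of \cite[Proposition 2.9]{AVreaction-local}, to which the paper defers for its proof: define stopping times confining the paths of $u$ and $v$ to a ball, apply stochastic maximal $L^p_\kappa$-regularity to the difference equation with the Lipschitz estimates of Assumption \ref{ass:FGcritical} and the interpolation estimate of Lemma \ref{lem:funnyXembedding}, convert to probability bounds by Chebyshev, and deduce \eqref{eq:local_continuity_3} by the triangle inequality against a stopping time $\sigma_1$ depending on $u$ alone. You also correctly identify the one genuine subtlety — that in the critical case the absorption cannot come from $T_0^{\varepsilon/p}\to 0$ and must instead come from a small $L^p(w_\kappa;X_1)$-radius in the stopping times, while the trace-space radius may stay of order $R$ — which is exactly how the closing estimate is made to work.

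Two small imprecisions worth noting (neither is a gap): in Step 4 the inclusion should be phrased with the norm of $w$ taken over $[0,t\wedge\sigma_0\wedge\tau_0]$ rather than $[0,t]$, since that is what your Chebyshev estimate controls; and the "hence also $\sigma_0>t$" is redundant, as $\sigma_0>t$ already follows from $\sigma_1>t$. Also, after applying Lemma \ref{lem:funnyXembedding} one obtains a factor $\|w\|_{L^\infty}^{1-\theta}\|w\|_{L^p(w_\kappa;X_1)}^{\theta}$, so a Young-type splitting is needed before absorption; the uniform deterministic bounds on $u,v$ enforced by the stopping times are precisely what lets you pull the small prefactor out of the expectation.
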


The stopping time $\tau_0$ depends on $(u_0, v_0)$. To some extent, the estimates in the above result  \eqref{eq:local_continuity_1}-\eqref{eq:local_continuity_2} show that $(u,\sigma)$
depends continuously on the initial data $u_0$. Meanwhile, \eqref{eq:local_continuity_3} provides a measure of the size of the length of the time interval on which the continuity estimates \eqref{eq:local_continuity_1}-\eqref{eq:local_continuity_2} hold.
A key point is that the right-hand side of \eqref{eq:local_continuity_3} depends on $v_0$, but not on $v$.
In particular, $\{\tau_0\leq t\}$ has small probability if $t\sim 0$ and $v_0$ is close to $u_0$.

For future reference, note that the observation in \cite[Remark 3.4]{AVreaction-local} extends trivially to stochastic evolution equations of the form \eqref{eq:SEE}.

\subsection{Criticality in polynomial nonlinearities}\label{ss:tracecriticalF}

Here we illustrate how to check Assumption \ref{ass:FGcritical} in a common scenario. Specifically, we consider the nonlinearity of Allen--Cahn-type.
This example will also serve as a basis for addressing the Allen--Cahn equation
in Subsections \ref{ss:AllenCahnLpLq}.

\subsubsection{Allen--Cahn-type nonlinearity}
\label{sss:tracecriticalF_AC}
In this example, we demonstrate how to verify Assumption \ref{ass:FGcritical} for the following second-order parabolic PDE, often referred to as \emph{Allen--Cahn} equation
 \begin{equation}
 \label{eq:AC_example_nonlinearity}
 \textstyle \dd u   =  \big( \Delta u +u-u^3\big) \, \dd t
 + \sum_{n\geq 1}  \big[(b_{n}\cdot \nabla) u+ g_{n}(\cdot,u)\big] \, \dd W^n_t \text{ on }\Dom, \quad u=0\text{ on }\partial\Dom,
\end{equation}
 where $\Dom$ is a bounded smooth domain in $\R^d$ with $d\geq 2$. A detailed investigation of this equation is provided in Subsection \ref{ss:AllenCahnLpLq}. The primary motivation for considering \eqref{eq:AC_example_nonlinearity} is that the deterministic Allen--Cahn equation with leading order nonlinearity,  $\partial_t u=\Delta u-u^3$, shares the same local scaling as the Navier--Stokes equations (see Subsection \ref{ss:scaling_intro} and in particular \eqref{eq:NS_scaling_map}). Based on the argument in Subsection \ref{ss:scaling_intro}, we expect critical spaces of the form $B^{d/q-1}_{q,p}$. Below, we verify that Theorem \ref{thm:localwellposed} identifies these spaces correctly.

To simplify the analysis, we consider the weak PDE setting: for $q\in [2, \infty)$, define
\begin{equation}
\label{eq:choice_X0_X1_allencahn}
X_0 = H^{-1,q}(\Dom)\quad \text{ and }\quad X_1=H^{1,q}_0(\Dom)=\{u\in H^{1,q}(\Dom)\,:\, u|_{\partial\Dom}=0\}.
\end{equation}
Here, $X_1$ captures the second-order nature of the SPDE, as it includes two more weak derivatives than $X_0$. The goal of this subsection is to investigate the criticality of the Allen--Cahn-type nonlinearity $F(u)=\pm u^3$ (variants with $F(u) = \pm u^n$ or other functions are possible as well).

To study the mapping property of $u\mapsto F(u)$, note the following inclusion
\begin{equation}
\label{eq:inclusion_AC_intermediate_space_example}
X_{\beta,1}\embed X_{\beta}=[X_0,X_1]_{\beta}\embed [H^{-1,q}(\Dom),H^{1,q}(\Dom)]_{\beta} = H^{-1+2\beta,q}(\Dom).
\end{equation}
Now, we perform the main estimate. Using Sobolev embeddings, we know $L^r(\Dom)\hookrightarrow H^{-1,q}(\Dom)$ for all $r\in (1,\infty)$ satisfying $- \frac{d}{r}\geq -1-\frac{d}{q}$. Since $|F(u) - F(v)|  \leq C (u^2+v^2) |u-v|$, H\"older's inequality implies
\begin{align*}
\|F(u) - F(v)\|_{X_0}\lesssim \|F(u) - F(v)\|_{L^r(\Dom)} &
\lesssim \big\|   (u^2+v^2) |u-v| \big\|_{L^r(\Dom)}
\\ & \lesssim\|u^2+v^2\|_{L^{3r/2}(\Dom)} \|u-v\|_{L^{3r}(\Dom)}
\\ & \lesssim(\|u\|_{L^{3r}(\Dom)}^2+\|v\|_{L^{3r}(\Dom)}^2) \|u-v\|_{L^{3r}(\Dom)}.
\end{align*}
Again, using Sobolev embeddings, $H^{-1+2\beta,q}(\Dom)\hookrightarrow L^{3r}(\Dom)$ holds for all $\beta\in (0,1)$ such that $-1+2\beta -\frac{d}{q}\geq -\frac{d}{3r}$. Combining this with \eqref{eq:inclusion_AC_intermediate_space_example}, we obtain
\[\|F(u) - F(v)\|_{X_0} \lesssim (\|u\|_{X_{\beta,1}}^2+\|v\|_{X_{\beta,1}}^2) \|u-v\|_{X_{\beta,1}}.\]
This matches the form required in Assumption \ref{ass:FGcritical} with $m=1$ and $\rho = \rho_1 = 2$.
However, the choice $\beta_1=\beta$ requires $\beta>1-\frac{1+\a}{p}$. Otherwise, if $\beta<1-\frac{1+\a}{p}$, we select $\beta_1=1-\delta \frac{1+\a}{p}$ with $\delta\in (\frac{\rho}{\rho+1},1)$, ensuring $\beta_1>1-\frac{1+\a}{p}$ and the condition \eqref{eq:subcritical} holds with the strict inequality (i.e.\ is subcritical), see \eqref{eq:subcritical2}$\text{(b)}$.

To analyze the criticality in \eqref{eq:subcritical}, we need to specify the values of $r$ and $\beta$ in the above construction.
To this end, we distinguish the following cases.
Moreover, as it turns out below, to ensure $\beta<1$, in all cases, we have to assume $q>\frac{d}{2}$.

\begin{enumerate}
\item \emph{Case I: $q>\frac{d}{2}$, and either $q\neq 2$ or $d\neq 2$}. In this case, we can choose $r:=\frac{dq}{q+d}$ corresponding to a \emph{sharp} Sobolev embedding $L^r\embed H^{-1,q}$, as the relation $-\frac{d}{r}=-1-\frac{d}{q}$ holds.
Next, for $\beta$, we observe that the sharp Sobolev embedding in $H^{-1+2\beta,q}\embed L^{3r}$ is achieved when
$\beta=\frac{1}{3}+\frac{d}{3q}$. Note that $\beta<1$ since $q>\frac{d}{2}$.
\item \emph{Case II: $q= 2$ and $d= 2$}. In this case, for any $\varepsilon>0$, we can choose $r=1+\varepsilon$ and correspondingly $\beta=\frac{2}{3}+\delta_\varepsilon$, where $\delta_\varepsilon>0$ and $\lim_{\varepsilon\to 0}\delta_\varepsilon=0$.
\end{enumerate}

Next, we examine the form of the condition \eqref{eq:subcritical} in these settings. Before proceeding, it is useful to recall from the scaling argument in Subsection \ref{ss:scaling_intro} that we expect the space for initial data in Theorem \ref{thm:localwellposed} to be given by a Besov-type space with smoothness $\frac{d}{q}-1$ and macroscopic integrability $q$. Note that the condition $q>\frac{d}{2}$ is natural because, when $q\leq \frac{d}{2}$, the smoothness of the critical space is $\frac{d}{q}-1\geq 1$ which equals or exceeds the smoothness of $X_1$ as in \eqref{eq:choice_X0_X1_allencahn}. Moreover, since
$
X_{1-\frac{1+\a}{p},p}
$
is a Besov space with smoothness $1-2\frac{1+\a}{p}$, the condition $\frac{1+\a}{p}<\frac{1}{2}$ implies that the smoothness of the trace space cannot be lower than $0$. Therefore, the expected smoothness for the trace space is $\frac{d}{q}-1$, and this leads to the restriction $q<d$. This formal reasoning can be made precise by focussing on Case I, where sharp Sobolev embeddings are applied (preserving the scaling).
In Case I, the condition \eqref{eq:subcritical} becomes
\begin{equation}
\label{eq:critical_condition_AC}
\frac{1+\kappa}{p}\leq \frac{3(1-\beta)}{2}.
\end{equation}
If equality holds in this condition, then automatically $\beta>1-\frac{1+\a}{p}$. We now examine when equality holds in \eqref{eq:critical_condition_AC} for $\beta$ of Case I. In this case, \eqref{eq:critical_condition_AC} becomes
\begin{equation}
\label{eq:critical_condition_AC_2}
\frac{1+\kappa}{p}= 1-\frac{d}{2q}.
\end{equation}
Since $\frac{1+\kappa}{p}<\frac{1}{2}$, this forces $q<d$, as anticipated. Therefore, in Case I, we find that if
\begin{equation}
\label{eq:condition_parameters_AC}
\tfrac{d}{2}<q<d,\quad\text{ and either $q>2$ or $d\neq 2$},
\end{equation}
then Theorem \ref{thm:localwellposed} ensures local well-posedness with initial data in
\begin{align*}
X_{1-\frac{1+\kappa}{p},p} = \BD^{1 - 2\frac{1+\kappa}{p}}_{q,p}(\Dom)
\stackrel{\eqref{eq:critical_condition_AC}}{=} \BD^{\frac{d}{q}-1}_{q,p}(\Dom),
\end{align*}
where $\BD$ is a Besov space with Dirichlet boundary conditions as discussed in Example \ref{ex:extrapolated_Laplace_dirichlet}. The above trace space has the correct scaling for the Allen--Cahn equation, as explained in Subsection \ref{ss:scaling_intro}. Moreover, by \eqref{eq:BD_identifications}, if we assume $q\in (d-1,d)$, then
$\BD^{\frac{d}{q}-1}_{q,p}(\Dom)=B^{\frac{d}{q}-1}_{q,p}(\Dom)$, i.e.\ no boundary conditions are required for the initial data $u_0$.

Local well-posedness can still be established using Theorem \ref{thm:localwellposed} for all the other cases not considered in \eqref{eq:condition_parameters_AC} but still within Cases I and II.
In these situations, however, the space for the initial data does not exhibit the natural scaling of the Allen--Cahn nonlinearity. As noted earlier, the choice \eqref{eq:condition_parameters_AC} gives the full set of critical spaces achievable within the weak PDE framework, i.e., with the choice in \eqref{eq:choice_X0_X1_allencahn}. As we will see in Subsection \ref{ss:AllenCahnLpLq}, a broader range of critical spaces can be obtained by choosing $X_j = \HD^{2j-\delta,q}(\Dom)$ for $\delta\in [1,2)$ and $j\in \{0, 1\}$.

\smallskip

Finally, let us turn our attention to the diffusion coefficients $g_n(\cdot,u)$. Until now, we have focused primarily on the deterministic nonlinearity. At this stage, it is useful to examine if there is a \emph{critical} growth of $g_{n}$ for which the diffusion term $g_n(\cdot,u)\,\dd W^n_t$ exhibits the same ``roughness'' as $-u^3\,\dd t$. More precisely, we seek $m>1$ such that if $g_n(\cdot,u)=|u|^m$ for some $n\geq 1$, then the term $g_n(\cdot,u)\,\dd W^n_t$ scales similarly to $u^3\,\dd t$. This question can be answered by recalling that the deterministic Allen--Cahn equation is invariant under the Navier--Stokes scaling in \eqref{eq:NS_scaling_map}, i.e.\ for $\lambda>0$,
\begin{equation*}
u_{\lambda}(t,x)=\lambda^{1/2}u(\lambda t,\lambda^{1/2} x),\qquad
 (t,x)\in \R_+\times\R^d,
\end{equation*}
where we (roughly) rescale a ball in the domain to the whole space. Arguing as in \eqref{eq:scaling_noise}, on $\R^d$, the above scaling gives:
\begin{equation}
\label{eq:matching_scaling_AllenCahn}
\begin{aligned}
\textstyle \int_0^{t/\lambda} (u_\lambda(s,x))^3\, \dd s
&= \textstyle \lambda^{1/2}\int_0^{t} (u(s,\lambda^{1/2}x))^3\, \dd s,\\
\textstyle \int_0^{t/\lambda} g_n(\cdot,u_{\lambda}(s,x)) \,\dd  \beta_{s,\lambda}^n
&= \textstyle \lambda^{(m-1)/2}
\int_0^{t} |u(s,\lambda^{1/2} x)|^m \,\dd  W^n_t,
\end{aligned}
\end{equation}
where $\beta_{n,\lambda}=\lambda^{-1/2}W_{\lambda t}^n$ is the rescaled Brownian motion. Hence, the critical growth for the diffusion is $m=2$. In case $(g_n(\cdot,u))_{n\geq1 }$ grows more than quadratically, the prefactor in the scaling for $g$ in \eqref{eq:matching_scaling_AllenCahn} will have a power larger than $\frac{1}{2}$. Therefore, in the latter case, as $\lambda\to \infty$, the stochastic part dominates the deterministic term, and the space of initial data for local well-posedness will only depend on the diffusion $g$. However, we expect that global well-posedness will be problematic in that case.

We leave it to the reader to check that if $g_n:\R\to\ell^2$ satisfies the quadratic type bound $\|(g_n(y)-g_n(y'))_{n\geq 1}\|_{\ell^2}\lesssim (1+|y|+|y'|)|y-y'|$, then $G(u)=(g_n(u))_{n\geq 1}$ satisfies the required estimate in Assumption \ref{ass:FGcritical}.
Further details can be found in Subsection \ref{ss:AllenCahnLpLq}.

\smallskip

The critical relationship between drift and diffusion growth is not limited to the Allen--Cahn situation.
Indeed, it extends to more general polynomial-type nonlinearities as outlined in \cite[Subsection 5.3.2]{AV19_QSEE_1}. In the context of reaction-diffusion equations, the optimal balance between drift and diffusion is captured in Assumption \ref{ass:reaction_diffusion_global}\eqref{it:growth_nonlinearities}.
Additionally, we note tay conservative terms of the form $F(u) = \div\, \Phi(u)$ where $\Phi$ is a vector field, can also be included. Specifically, for the Allen--Cahn equation \eqref{eq:AC_example_nonlinearity}, quadratic growth for $\Phi$ is critical. For other reaction-diffusion equations, this condition is discussed in \cite[Assumption 2.1(4)]{AVreaction-local}).
Conservative terms frequently arise in fluid dynamics.
For further details on these arguments, the reader is encouraged to explore Subsection \ref{ss:CahnHilliard}, Remark \ref{rem:gradientformburger}, and Subsection \ref{subsec:SNSRd}.

\section{Blow-up criteria and instantaneous parabolic regularization}\label{sec:blowup}

\subsection{Blow-up criteria}
\label{subsec:blow_up}
Theorem \ref{thm:localwellposed} provides a robust framework for proving the existence of $L^p_{\kappa}$-maximal (unique) solution $(u,\sigma)$ to \eqref{eq:SEE}. In applications to SPDEs, the question of whether global well-posedness holds  (i.e.\ $\sigma=\infty$ a.s.), is crucial. The occurrence of explosion or blow-up, where $\P(\sigma<\infty)>0$, can correspond to the ``unphysical'' behaviour of the underlying SPDE. Addressing this question is challenging, as our setting also includes complex problems like the 3D Navier--Stokes equations, for which global well-posedness remains largely open \cite{fefferman2000existence}.

Global well-posedness is often tied to subtle energy balances, which in the context of PDEs are expressed through a priori bounds on the lifetime $[0,\sigma)$ of the $L^p_{\kappa}$-maximal solution $(u,\sigma)$.
While abstract theory alone may not fully capture these intricate energy dynamics - often associated with ``sign'' or ``dissipative'' conditions - it is nevertheless effective for establishing general conditions under which a blow-up can occur. Combining such abstract results with energy estimates frequently leads to global well-posedness, see e.g.\ Section \ref{sec:Var} and \cite{Primitive1,Primitive2,AgrSau,AVvar, AVreaction-global}.

A blow-up criterion for $(u,\sigma)$ can be expressed as:
\begin{equation}
\label{eq:blow_up_criteria_X}
\P(\sigma<\infty,\, u\in \mathscr{E}_{\sigma}) = 0,
\end{equation}
where $\mathscr{E}_{\sigma}$ represents a specific property of $u$ or a function space to which $u$ might belong. The subscript $\sigma$ stresses that this property involves the entire lifetime $[0,\sigma)$.

According to Theorem \ref{thm:localwellposed}, the condition $u\in \mathscr{E}_{\sigma}$ essentially imposes constraints on $u(t)$ as $t\approx \sigma$.
From this criterion, one can deduce $\sigma=\infty$ a.s.\ if one can prove $u\in \mathscr{E}_{\sigma}$ a.s.\ on $\{\sigma<\infty\}$.
The latter requires some structure of the SPDE under consideration, which is often encoded in energy estimates.
The efficiency of a blow-up criterion depends on the choice of $\mathscr{E}$. In particular, a blow-up criterion is more practical when the condition $u\in \mathscr{E}_{\sigma}$ is less restrictive. For example, if $\mathscr{E}$ is a function space, it is desirable for $\mathscr{E}$ to be as large as possible.
However, the function space $\mathscr{E}$ cannot be too rough, as the assertion $u\in \mathscr{E}_{\sigma}$ has to prevent possible ill-behaviour of the solutions (e.g.\ explosion or loss of regularity in finite time). As one can show \cite[Subsection 2.2]{CriticalQuasilinear}, the critical setting introduced in Section \ref{sec:loc-well-posed} provides the ``optimal'' framework for establishing blow-up criteria, at least in an abstract sense.

This section is organized as follows.
Below we review the results from \cite[Section 4]{AV19_QSEE_2}, where blow-up criteria for quasilinear stochastic evolution equations were studied. Additionally, we present new results and provide some simplified proofs, which are given in Subsection \ref{ss:proofs_blow_up}. Finally, in Subsection \ref{subsec:reg}, we discuss how blow-up criteria can also be employed to improve the regularity of the $L^p_{\kappa}$-maximal solution $(u,\sigma)$ following the approach of \cite[Section 6]{AV19_QSEE_2}.

The main result of this subsection reads as follows.
\begin{theorem}[Critical case]\label{thm:criticalblowup}
Suppose that the conditions of Theorem \ref{thm:localwellposed} hold with $p\in [2, \infty)$ and $\kappa\in [0,p/2-1)\cup\{0\}$, and let $(u,\sigma)$ be the $L^p_{\kappa}$-maximal solution to \eqref{eq:SEE}.
Then
\begin{enumerate}[{\rm (1)}]
\item\label{it1:criticalblowup}
$\dps \P\big(\sigma<\infty,\, \lim_{t\uparrow \sigma} u(t) \ \text{exists in $X_{1-\frac{1+\kappa}{p},p}$}\big) = 0$;
\item\label{it2:criticalblowup} $\dps \P\big(\sigma<\infty,\, \sup_{t\in [0,\sigma)} \|u(t)\|_{X_{1-\frac{1+\kappa}{p},p}}+ \|u\|_{L^p(0,\sigma;X_{1-\frac{\kappa}{p}})} <\infty \big) = 0$.
\end{enumerate}
\end{theorem}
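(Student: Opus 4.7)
The plan is to prove both assertions by contradiction via a \emph{restart argument}: on the alleged positive-probability set one extends $u$ beyond $\sigma$, contradicting $L^p_\kappa$-maximality. Part \eqref{it1:criticalblowup} is the fundamental statement, and \eqref{it2:criticalblowup} will be deduced from \eqref{it1:criticalblowup} by showing its hypothesis forces the limit in \eqref{it1:criticalblowup} to exist.

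For \eqref{it1:criticalblowup}, suppose $\P(\Gamma) > 0$ with $\Gamma := \{\sigma < \infty\} \cap \{\lim_{t\uparrow\sigma} u(t) \text{ exists in } X_{1-\frac{1+\kappa}{p},p}\}$, and set $v_0 := \one_\Gamma \lim_{t\uparrow\sigma} u(t)$, which is $\F_\sigma$-measurable. The shifted process $\widetilde W(t) := W(\sigma + t) - W(\sigma)$ is a cylindrical Brownian motion on $\mathcal{U}$ with respect to $\widetilde \F_t := \F_{\sigma + t}$ and independent of $\widetilde\F_0 = \F_\sigma$. Because \eqref{eq:SEE} is autonomous, Theorem \ref{thm:localwellposed} applied on $(\Omega, (\widetilde\F_t)_{t\geq 0}, \P)$ with initial datum $v_0$ and noise $\widetilde W$ yields a maximal solution $(\tilde u, \tilde\sigma)$ with $\tilde\sigma > 0$ a.s. Concatenating $u$ on $[0,\sigma]$ with $\tilde u(\cdot - \sigma)$ on $[\sigma, \sigma + \tilde\sigma)$ on $\Gamma$ (and leaving $u$ unchanged off $\Gamma$) produces an $L^p_\kappa$-local solution whose lifetime strictly exceeds $\sigma$ on $\Gamma$, contradicting the maximality of $(u,\sigma)$.

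For \eqref{it2:criticalblowup}, let $\Gamma'$ denote the event in the statement. For each $n,T\in\N$ introduce the stopping time
\[
\tau_{n,T} := \inf\Big\{t < \sigma\wedge T : \sup_{s\leq t}\|u(s)\|_{X_{1-\frac{1+\kappa}{p},p}} + \|u\|_{L^p(0,t;X_{1-\kappa/p})} \geq n\Big\} \wedge \sigma \wedge T,
\]
so that $\Gamma' = \bigcup_{n,T} \Gamma' \cap \{\tau_{n,T} = \sigma \leq T\}$ up to null sets. The key analytic step is an interpolation estimate of the form
\[
\|w\|_{X_{\beta_j,1}} \lesssim \|w\|_{X_{1-\frac{1+\kappa}{p},p}}^{1-\theta'_j}\,\|w\|_{X_{1-\kappa/p}}^{\theta'_j},
\]
obtained from the reiteration theorem for real interpolation (analogous to Lemma \ref{lem:funnyXembedding} but with the upper endpoint $X_1$ replaced by the complex-interpolation space $X_{1-\kappa/p}$), with exponents $\theta'_j \in (0,1]$ dictated precisely by the criticality relation \eqref{eq:subcritical}. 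Combined with \eqref{eq:growthFcGc}, this produces pointwise-in-$\omega$ bounds
\[
\|F(u)\|_{L^p(0,\tau_{n,T};X_0)} + \|G(u)\|_{L^p(0,\tau_{n,T};\gamma(\mathcal{U},X_{1/2}))} \leq C_{n,T} < \infty \quad \text{a.s.}
\]
Now view $u|_{[0,\tau_{n,T}]}$ as the unique strong solution of the linear problem $\dd v + Av\,\dd t = F(u)\,\dd t + (Bv + G(u))\,\dd W$ with $v(0) = u_0$; Proposition \ref{prop:localizationSMR} together with $(A,B)\in\mathcal{SMR}_{p,\kappa}^{\bullet}$ yields $u \in C([0,\tau_{n,T}]; X_{1-\frac{1+\kappa}{p},p})$ a.s. Hence on $\Gamma' \cap \{\tau_{n,T} = \sigma \leq T\}$ the limit $\lim_{t\uparrow\sigma} u(t)$ exists in $X_{1-\frac{1+\kappa}{p},p}$, and part \eqref{it1:criticalblowup} forces this set to be null; summing over $n,T$ gives $\P(\Gamma') = 0$.

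The main obstacle is the critical interpolation identity: one must verify that the specific combination of the $L^\infty$-trace norm with the $L^p(X_{1-\kappa/p})$-norm is \emph{exactly} enough to control $(F(u),G(u))$ in the spaces demanded by stochastic maximal regularity. In the subcritical case \eqref{eq:subcritical} is strict and there is H\"older slack in the time variable; in the critical case the bound becomes scale-invariant, $\theta'_j$ must be chosen as $1/(\rho_j+1)$, and one must exploit the equality in \eqref{eq:subcritical} together with \eqref{eq:growthFcGc} without losing any power of $\tau_{n,T}$. The restart argument in \eqref{it1:criticalblowup} is more routine but requires measurability bookkeeping following from the autonomy of \eqref{eq:SEE}, the strong Markov property of Brownian motion, and the localization property of Theorem \ref{thm:localwellposed}\eqref{it2:localwellposed}.
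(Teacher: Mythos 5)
Your proposal takes a genuinely different route from the paper, and both the restart argument for \eqref{it1:criticalblowup} and the interpolation estimate for \eqref{it2:criticalblowup} have substantive gaps.

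\textbf{Part \eqref{it1:criticalblowup}.} The naive restart from $v_0 := \one_\Gamma\lim_{t\uparrow\sigma} u(t)$ fails because the concatenated process $u^*$ must itself be an $L^p_\kappa$-local solution, and Definition \ref{def:localsol} requires a localizing sequence $\sigma_n^*\uparrow\sigma^*$ with $u^*\in L^p(0,\sigma_n^*,w_\kappa;X_1)$. Since $\sigma_n^*>\sigma$ with positive probability for large $n$ by construction, you would need $u\in L^p(0,\sigma,w_\kappa;X_1)$ a.s.\ on $\Gamma$. But the hypothesis of \eqref{it1:criticalblowup} is only that the \emph{trace-space limit} exists — nothing guarantees the $L^p(w_\kappa;X_1)$-norm stays bounded up to $\sigma$ (in fact, in the subcritical case one can check via the intermediate criterion \eqref{eq:blow_up_intermediate} that this norm must blow up on $\{\sigma<\infty\}$). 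Closing this gap is precisely the content of the paper's Steps 1--3: the extension operator $\Ext$ is used to show that continuity of $u$ as $t\uparrow\sigma$ forces $\|u\|_{L^p(\lambda,\sigma,w_\kappa^\lambda;X_1)}<\infty$ for a suitable discrete-valued stopping time $\lambda<\sigma$ (Step 3 absorbs a small factor $\varepsilon_\star^\rho$ into the stochastic maximal regularity estimate), and only then does one invoke \eqref{eq:blow_up_intermediate}, whose proof does involve a restart but \emph{with} the $L^p(w_\kappa;X_1)$-bound already in hand. Note also that the paper deliberately avoids time-translation invariance precisely so that the argument survives $(t,\omega)$-dependent $(A,B)$, and that the discrete range of $\lambda$ sidesteps technicalities with the weight $(t-a)^\kappa$ at a non-discrete random initial time.

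\textbf{Part \eqref{it2:criticalblowup}.} Your two-factor interpolation estimate does not hold with the claimed exponent when $\kappa>0$. If $\beta_j$ lies strictly between $1-\frac{1+\kappa}{p}$ and $1-\frac{\kappa}{p}$, the reiteration theorem gives $\theta'_j = \big(\beta_j - 1 + \frac{1+\kappa}{p}\big)\cdot p$, and plugging in the critical value $\beta_j = 1-\frac{\rho_j}{\rho_j+1}\frac{1+\kappa}{p}$ yields $\theta'_j = \frac{1+\kappa}{\rho_j+1}$, not $\frac{1}{\rho_j+1}$. Hence $\theta'_j(\rho_j+1) = 1+\kappa>1$ for $\kappa>0$, and the pointwise-in-$\omega$ bound on $\|F(u)\|_{L^p(w_\kappa;X_0)}$ obtained by integrating the interpolated bound in time is not closed in terms of $\|u\|_{L^\infty(X_{1-\frac{1+\kappa}{p},p})}$ and $\|u\|_{L^p(X_{1-\kappa/p})}$ alone. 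Moreover, when $\rho_j<\kappa$ one has $\beta_j>1-\frac{\kappa}{p}$ and the two-space reiteration is simply unavailable. The paper's Lemma \ref{lem:interpolationineqMR0} handles this with a three-factor estimate $\|f\|_{L^{\zeta}(w_\kappa;X_{\psi,1})}\lesssim \|f\|_{L^\infty(X_{1-\frac{1+\kappa}{p},p})}^{1-\phi}\|f\|_{\Sz^{\theta,\kappa}}^{(1-\delta)\phi}\|f\|_{L^p(X_{1-\kappa/p})}^{\delta\phi}$, in which the stochastic-maximal-regularity norm $\|f\|_{\Sz^{\theta,\kappa}}$ absorbs the excess power that your two-factor estimate leaves dangling; the constraint \eqref{eq:identity1mindeltaphi} on $(1-\delta)\phi$ is exactly tuned so that the absorption succeeds. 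Reducing \eqref{it2:criticalblowup} to \eqref{it1:criticalblowup} by extracting the limit from an a priori bound on $F(u),G(u)$ is a plausible route, but it requires the paper's refined interpolation lemma rather than the simpler pointwise estimate you propose.
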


Note that the norms in \eqref{it2:criticalblowup} are well-defined since, by Theorem \ref{thm:localwellposed} and Sobolev embedding with weights (see \cite[Proposition 2.7]{AV19_QSEE_1}):
\begin{align*}
u\in H^{\frac{\kappa}{p},p}_{\rm loc}([0,\sigma), w_{\kappa};X_{1-\frac{\kappa}{p}})\subseteq L^p_{\rm loc}([0,\sigma);X_{1-\frac{\kappa}{p}})\text{ a.s.}
\end{align*}

Theorem \ref{thm:criticalblowup} should be compared with \cite[Theorem 4.10]{AV19_QSEE_2}. Note that \eqref{it1:criticalblowup} is new, and
\eqref{it2:criticalblowup} improves \cite[Theorem 4.10(3)]{AV19_QSEE_2}, as Assumption \ref{ass:FGcritical} is (slightly) weaker compared to the one used in \cite{AV19_QSEE_2}. The deterministic version of  \eqref{it1:criticalblowup} can be found in \cite[Corollary 2.3]{CriticalQuasilinear}. However, our proof differs from the one in \cite{CriticalQuasilinear}, as the approach used in the latter heavily relies on the invariance under time translation of the deterministic version of \eqref{eq:SEE}. Our proof also provides new insights in the case of time-dependent leading operators $(A,B)$. The latter is also new in the deterministic case. Finally, \eqref{it1:criticalblowup} also extends to the quasilinear setting of \cite[Theorem 4.9]{AV19_QSEE_2}. As the proof below shows, one needs an additional (but a rather mild) assumption, i.e.\ the existence of a suitable extension operator for the couple $(X_0,X_1)$. The reader is referred to Step 1 in the proof of  Theorem \ref{thm:criticalblowup}\eqref{it1:criticalblowup} for details.

The blow-up criterion \eqref{it1:criticalblowup} will be applied to 3D Navier--Stokes equations in Subsection \ref{subsec:SNSRd} leading to an ``endpoint'' version of the Serrin type criteria proven in \cite[Theorem 2.9]{AV20_NS}. Moreover, \eqref{it1:criticalblowup} can also be useful when proving bootstrapping results, see Subsection \ref{subsec:reg} and \cite[Section 6]{AV19_QSEE_2}.
Item \eqref{it2:criticalblowup} can be often checked in applications, as the $L^\infty$-norm is easier to check through energy estimates, see e.g.\ \cite{AVreaction-global}.

Next, let us discuss the sharpness of the conditions in Theorem \ref{thm:criticalblowup}. Note that \eqref{it1:criticalblowup} and \eqref{it2:criticalblowup} are equivalent to \eqref{eq:blow_up_criteria_X} with $\mathscr{E}_{t}=C([0,t];X_{1-\frac{1+\a}{p},p})$ and $\mathscr{E}_{t}=L^\infty(0,t;X_{1-\frac{1+\a}{p},p})\cap L^p(0,t;X_{1-\frac{\a}{p}})$, respectively. All the previously mentioned spaces have \emph{space-time Sobolev index}\footnote{The space-time Sobolev index of $L^p(0,t;X_{\theta,r})$ is $\theta-\frac{1}{p}$.} given by $1-\frac{1+\a}{p}$ and therefore they are sharp in the case that the critical condition \eqref{eq:subcritical} holds with equality for some $j$.

It is unclear to us if Theorem \ref{thm:criticalblowup}\eqref{it2:criticalblowup} holds with the $\sup$-norm or equivalently if Theorem \ref{thm:criticalblowup}\eqref{it2:criticalblowup} holds without the term $\|u\|_{L^p(0,\sigma;X_{1-\frac{\kappa}{p}})}$. On this point, comments are given below Problem \ref{prob:NS_blow_up} in the special case of the 3D Navier--Stokes equations.
In the subcritical case, this holds as the following result proven in \cite[Theorem 4.10(3)]{AV19_QSEE_2} shows. We provide a shorter proof in Subsection \ref{ss:proofs_blow_up} below.

\begin{theorem}[Subcritical case]\label{thm:subcriticalblowup}
Suppose that the conditions of Theorem \ref{thm:localwellposed} hold with $p\in [2, \infty)$ and $\kappa\in [0,p/2-1)\cup\{0\}$, and let $(u,\sigma)$ be the $L^p_{\kappa}$-maximal solution to \eqref{eq:SEE}. Then
\[\P\big(\sigma<\infty,\, \sup_{t\in [0,\sigma)} \|u(t)\|_{X_{1-\frac{1+\kappa}{p},p}} <\infty\big) = 0 \ \ \text{if $(p,\kappa)$ is noncritical}.\]
\end{theorem}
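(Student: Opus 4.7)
The plan is to reduce Theorem~\ref{thm:subcriticalblowup} to the critical blow-up criterion Theorem~\ref{thm:criticalblowup}\eqref{it2:criticalblowup}. For $M,T,R\in\mathbb{N}$ I would consider the event
\[
\Gamma_{M,T,R} := \{\sigma\leq T\}\cap\big\{\|u_0\|_{X_{1-\frac{1+\kappa}{p},p}}\leq R\big\}\cap\Big\{\sup_{t\in[0,\sigma)}\|u(t)\|_{X_{1-\frac{1+\kappa}{p},p}}\leq M\Big\},
\]
so that a countable union in $M,T,R$ reduces the theorem to showing $\P(\Gamma_{M,T,R})=0$ for each such triple. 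By Theorem~\ref{thm:criticalblowup}\eqref{it2:criticalblowup}, this follows once one proves the a.s.\ bound $\|u\|_{L^p(0,\sigma;X_{1-\kappa/p})}<\infty$ on $\Gamma_{M,T,R}$, since the trace norm is already bounded there by $M$.

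To establish this bound I would first control $u$ in $L^p(0,\sigma,w_\kappa;X_1)$ via stochastic maximal $L^p_\kappa$-regularity. Let $(\sigma_n)_{n\geq 1}$ be a localizing sequence for $(u,\sigma)$ and set
\[
\tau_n := \inf\Big\{t\in[0,\sigma_n]\,:\, \|u\|_{L^p(0,t,w_\kappa;X_1)}\geq n \ \text{ or }\ \|u(t)\|_{X_{1-\frac{1+\kappa}{p},p}}>M\Big\}\wedge T,
\]
with the usual convention $\inf\emptyset=\sigma_n\wedge T$, so that $\tau_n\uparrow\sigma$ a.s.\ on $\Gamma_{M,T,R}$. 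On $[0,\tau_n]$ the process $u$ satisfies the linear equation with deterministic forcings $\mathbf{1}_{[0,\tau_n]}F(u)$ and $\mathbf{1}_{[0,\tau_n]}G(u)$, so Proposition~\ref{prop:localizationSMR} yields
\[
\mathbb{E}\|u\|_{L^p(0,\tau_n,w_\kappa;X_1)}^p \lesssim_T R^p + \mathbb{E}\|\mathbf{1}_{[0,\tau_n]}F(u)\|_{L^p(0,T,w_\kappa;X_0)}^p + \mathbb{E}\|\mathbf{1}_{[0,\tau_n]}G(u)\|_{L^p(0,T,w_\kappa;\gamma(\mathcal{U},X_{1/2}))}^p.
\]
Since $\|u(t)\|_{X_{1-\frac{1+\kappa}{p},p}}\leq M$ on $[0,\tau_n]$, the growth bound \eqref{eq:growthFcGc} combined with the interpolation estimate of Lemma~\ref{lem:funnyXembedding} (applied pathwise and then in expectation) controls the right-hand side by
\[
C(M,T) + C(M)\,T^{\varepsilon}\sum_{j=1}^{m}\mathbb{E}\|u\|_{L^p(0,\tau_n,w_\kappa;X_1)}^{p\theta_j(\rho_j+1)}.
\]

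The crucial subcritical input is now the \emph{strict} inequality in \eqref{eq:subcritical}, which via Lemma~\ref{lem:funnyXembedding} forces $\theta_j(\rho_j+1)<1$ and $\varepsilon>0$ for every $j$. Young's inequality therefore allows absorbing the $L^p(X_1,w_\kappa)$-norm into the left-hand side, producing a bound $\mathbb{E}\|u\|_{L^p(0,\tau_n,w_\kappa;X_1)}^p\leq C(M,T,R)$ that is uniform in $n$; monotone convergence then gives $u\in L^p(0,\sigma,w_\kappa;X_1)$ a.s.\ on $\Gamma_{M,T,R}$. Combining with the additional time-regularity $u\in H^{\kappa/p,p}(0,\tau_n,w_\kappa;X_{1-\kappa/p})$ provided by Theorem~\ref{thm:localwellposed}\eqref{it1:localwellposed} (note that $\kappa/p<1/2$) and the Hardy-type weighted embedding $H^{\kappa/p,p}((0,T),w_\kappa;Y)\hookrightarrow L^p(0,T;Y)$, one obtains $u\in L^p(0,\sigma;X_{1-\kappa/p})$ a.s.\ on $\Gamma_{M,T,R}$, and Theorem~\ref{thm:criticalblowup}\eqref{it2:criticalblowup} closes the argument. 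The main obstacle is exactly this absorption step: strict subcriticality is essential, because in the critical case $\theta_j(\rho_j+1)$ can equal $1$, the Young trick fails, and this reflects the genuine necessity of the $L^p(X_{1-\kappa/p})$-term in Theorem~\ref{thm:criticalblowup}\eqref{it2:criticalblowup}. A minor subsidiary point is the need to localize in $\Omega$ by $\{\|u_0\|_{X_{\text{Tr}}}\leq R\}$ to apply the $L^p(\Omega)$-valued maximal regularity estimate, which is handled by the final union bound in $R$.
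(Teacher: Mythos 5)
Your proposal follows the same essential mechanism as the paper's proof — stochastic maximal $L^p_\kappa$-regularity, the interpolation estimate of Lemma~\ref{lem:funnyXembedding}, the key observation that strict subcriticality forces $\theta_j(\rho_j+1)<1$ and $\varepsilon>0$, and absorption via Young's inequality — but you target a different blow-up criterion, and this choice introduces a gap in your final step.

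The paper argues by contradiction directly against the intermediate criterion \eqref{eq:blow_up_intermediate}, namely $\P(\sigma<T,\, \sup_{t\in[0,\sigma)}\|u(t)\|_{X_{1-\frac{1+\a}{p},p}}+ \|u\|_{L^p(0,\sigma,w_{\a};X_1)}<\infty)=0$. This is the natural target because the $L^p(0,\sigma,w_\kappa;X_1)$-norm is precisely what the $\mathcal{SMR}_{p,\kappa}^{\bullet}$ estimate and your absorption argument control. Once you have that uniform bound you are immediately done, without any further conversion. You instead aim at Theorem~\ref{thm:criticalblowup}\eqref{it2:criticalblowup}, which requires the \emph{unweighted} norm $\|u\|_{L^p(0,\sigma;X_{1-\kappa/p})}$. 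This forces an extra passage from the weighted $L^p(w_\kappa;X_1)$-bound to the unweighted $L^p(X_{1-\kappa/p})$-bound.

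That passage is where the gap sits. You write that Theorem~\ref{thm:localwellposed}\eqref{it1:localwellposed} provides $u\in H^{\kappa/p,p}(0,\tau_n,w_\kappa;X_{1-\kappa/p})$ and then apply the weighted Sobolev embedding into $L^p$. But Theorem~\ref{thm:localwellposed}\eqref{it1:localwellposed} gives only \emph{membership} in that space on each $[0,\tau_n]$; it does not give you a bound on $\|u\|_{H^{\kappa/p,p}(0,\tau_n,w_\kappa;X_{1-\kappa/p})}$ that is uniform in $n$, which is what you need to let $n\to\infty$ and conclude $u\in L^p(0,\sigma;X_{1-\kappa/p})$ a.s.\ on $\Gamma_{M,T,R}$. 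Note also that no purely interpolative bridge exists here: a pointwise interpolation $\|u(t)\|_{X_{1-\kappa/p}}\lesssim \|u(t)\|_{X_{1-\frac{1+\kappa}{p},p}}^{\kappa/(1+\kappa)}\|u(t)\|_{X_1}^{1/(1+\kappa)}$ followed by H\"older produces a divergent factor $\int_0^\sigma t^{-1}\,\dd t$ because of the weight mismatch. The correct fix is to rerun the $\mathcal{SMR}_{p,\kappa}^{\bullet}$ estimate with the $H^{\theta,p}(w_\kappa;X_{1-\theta})$-norm on the left (Definition~\ref{def:SMRbullet} guarantees this), absorb exactly as before, and obtain a uniform $H^{\kappa/p,p}$-bound before invoking the embedding. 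Your proposal would then be correct, but it is strictly more work than the paper's route through \eqref{eq:blow_up_intermediate}, and the missing uniform-bound step is not implied by what you cite.
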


Although the above function space $\mathscr{E}_t=L^\infty(0,t;X_{1-\frac{1+\a}{p},p})$ has space-time Sobolev index $1-\frac{1+\a}{p}$, it is \underline{not} sharp since the couple $(p,\a)$ is noncritical.

\smallskip

Before going into the proofs, it is important to note that there are also other blow-up criteria, often referred to as Serrin-type criteria, due to their resemblance to the Serrin condition for the Navier--Stokes equations (see, e.g., \cite{LePi}). In these criteria, the supremum term in Theorem \ref{thm:criticalblowup}\eqref{it2:criticalblowup} can be omitted. For stochastic variants of this criterion, the reader is referred to \cite[Theorem 4.11]{AV19_QSEE_2}. However, as follows from the proof of that result, it is crucial to work with the complex interpolation spaces $X_{\beta_j}$ rather than $X_{\beta_j,1}$ in Assumption \ref{ass:FGcritical} unless $\kappa=0$ and $\rho_j\leq 1$ for all $j$.

\begin{remark}\label{rem:unifombddblow}
To check global well-posedness through any of the stated blow-up criteria, it suffices to consider uniformly bounded $u_0:\Omega\to X_{1-\frac{1+\kappa}{p},p}$. This follows from a localization argument based on Theorem \ref{thm:localwellposed}\eqref{it2:localwellposed}. The reader is referred to \cite[Proposition 4.13]{AV19_QSEE_2} for details.
One advantage of assuming integrable initial data
$u_0$ is that it allows for the application of moment estimates for the solutions.
\end{remark}

Finally, we recall a fundamental property of the time $\sigma$ which follows from any of the blow-up criteria (see \cite[Proposition 4.12]{AV19_QSEE_2}).
\begin{proposition}[Predictability of $\sigma$]\label{prop:predsigma}
Suppose that the conditions of Theorem \ref{thm:localwellposed} hold with $p\in [2, \infty)$ and $\kappa\in [0,p/2-1)\cup\{0\}$, and let $(u,\sigma)$ be the $L^p_{\kappa}$-maximal solution to \eqref{eq:SEE}. Then for any localizing sequence $(\sigma_n)_{n\geq 1}$ for $(u,\sigma)$, one has that for all $n\geq 1$,
\[\P(\sigma<\infty, \sigma_n = \sigma) = 0.\]
\end{proposition}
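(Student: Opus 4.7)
The plan is to deduce the statement as an almost immediate consequence of the sharp blow-up criterion in Theorem~\ref{thm:criticalblowup}\eqref{it1:criticalblowup}. Fix $n\geq 1$, set
\[
A_n := \{\sigma<\infty,\ \sigma_n = \sigma\},
\]
and consider also the ``exit-limit-exists'' event
\[
B := \Big\{\sigma<\infty,\ \lim_{t\uparrow\sigma} u(t) \text{ exists in } X_{1-\frac{1+\kappa}{p},p}\Big\}.
\]
The strategy is to show that $A_n \subseteq B$ up to a $\P$-null set, and then to invoke \eqref{it1:criticalblowup} to conclude $\P(A_n)\leq \P(B)=0$.

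The core step is therefore to argue that on $A_n$ the trajectory $t\mapsto u(t)$ extends continuously to the closed interval $[0,\sigma]$ with values in $X_{1-\frac{1+\kappa}{p},p}$. This is precisely what the localizing-sequence condition provides: by Definition~\ref{def:localsol}\eqref{it1:localsol}, for each fixed $n$ the pair $(u|_{[0,\sigma_n]},\sigma_n)$ is an $L^p_{\kappa}$-strong solution, so by the regularity clause in Theorem~\ref{thm:localwellposed}\eqref{it1:localwellposed} (which, via Proposition~\ref{prop:tracespace}, gives continuity of strong solutions into the trace space), one has
\[
u\in C\big([0,\sigma_n];\,X_{1-\frac{1+\kappa}{p},p}\big)\qquad \text{a.s.}
\]
On the subset $A_n$ of the underlying probability space, $\sigma_n=\sigma$, so this continuity is, for a.e.\ $\omega\in A_n$, continuity on $[0,\sigma(\omega)]$ in $X_{1-\frac{1+\kappa}{p},p}$. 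In particular $\lim_{t\uparrow \sigma(\omega)} u(t,\omega) = u(\sigma_n(\omega),\omega)$ exists in that space, showing $A_n\subseteq B$ modulo a null set.

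Combining this inclusion with Theorem~\ref{thm:criticalblowup}\eqref{it1:criticalblowup} yields $\P(A_n)=0$, as desired. There is no real obstacle here: the entire content of the proposition is the observation that a localizing stopping time hitting $\sigma$ would let the solution land in the trace space at the explosion time, contradicting the sharp blow-up criterion. As a sanity check (and an alternative proof not relying on the newly introduced criterion \eqref{it1:criticalblowup}), one can instead note that on $A_n$ one has $\sup_{t\in[0,\sigma)}\|u(t)\|_{X_{1-\frac{1+\kappa}{p},p}}<\infty$ and $\|u\|_{L^p(0,\sigma;X_{1-\frac{\kappa}{p}})}<\infty$ a.s.\ (both follow from the strong-solution regularity of $u|_{[0,\sigma_n]}$ together with the embedding mentioned just after the statement of Theorem~\ref{thm:criticalblowup}), and then apply \eqref{it2:criticalblowup} instead; this gives the same conclusion $\P(A_n)=0$ and is the route taken in \cite[Proposition 4.12]{AV19_QSEE_2}.
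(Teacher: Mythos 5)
Your proof is correct and follows essentially the approach the paper has in mind: the paper explicitly notes that Proposition~\ref{prop:predsigma} ``follows from any of the blow-up criteria'' and defers to \cite[Proposition 4.12]{AV19_QSEE_2}. The key observation, that on $\{\sigma_n=\sigma\}$ the strong-solution property of $(u|_{[0,\sigma_n]},\sigma_n)$ forces $u\in C([0,\sigma];X_{1-\frac{1+\kappa}{p},p})$ so that the blow-up criterion applies, is exactly what drives the argument. Two small remarks: the continuity of $u$ into the trace space on $[0,\sigma_n]$ is already part of the \emph{definition} of an $L^p_\kappa$-strong solution, so invoking Theorem~\ref{thm:localwellposed}\eqref{it1:localwellposed} and Proposition~\ref{prop:tracespace} for it is harmless but redundant. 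Also, routing through Theorem~\ref{thm:criticalblowup}\eqref{it1:criticalblowup} is a mild departure from the cited source, which predates that criterion; your alternative via \eqref{it2:criticalblowup} is closer to the original and you correctly note that the required $L^p(0,\sigma;X_{1-\kappa/p})$-finiteness on $\{\sigma_n=\sigma\}$ comes from the (weighted) $H^{\theta,p}$-regularity of the strong solution together with the embedding quoted right after Theorem~\ref{thm:criticalblowup}.
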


\subsection{Proof of Theorems \ref{thm:criticalblowup} and \ref{thm:subcriticalblowup}}
\label{ss:proofs_blow_up}
We begin by proving Theorem \ref{thm:subcriticalblowup}, which introduces some tools and techniques that will also be used in the proof of  Theorem \ref{thm:criticalblowup}.

To prove these results, we rely on the following simpler blow-up criterion:
\begin{equation}
\label{eq:blow_up_intermediate}
\P\big(\sigma<T,\, \sup_{t\in [0,\sigma)}\|u(t)\|_{X_{1-\frac{1+\a}{p},p}}+ \|u\|_{L^p(0,\sigma,w_{\a};X_1)}<\infty\big)=0.
\end{equation}
This criterion follows from \cite[Theorem 4.10(1)]{AV19_QSEE_2} and Lemma \ref{lem:funnyXembedding}. We briefly outline the proof of this result. Using stochastic maximal $L^p$-regularity, one can show that on the set appearing in \eqref{eq:blow_up_intermediate}, the limit $\lim_{t\uparrow \sigma} u(t)$ exists. The proof that the probability vanishes then proceeds by contradiction. By restarting the equation at the random time $\sigma$ on some nontrivial subset of $A\subseteq \Omega$, we can extend the lifetime of the solution $u$ on this set $A$ and this leads to a contradiction with the maximality of $\sigma$. The details can be found in \cite[Sections 5.1 and 5.2]{AV19_QSEE_2}.

\subsubsection{Proof of Theorem \ref{thm:subcriticalblowup}}
Before going into the proof, let us comment on the main observation behind the proof of Theorem \ref{thm:subcriticalblowup}.  As in \cite{AV19_QSEE_2}, we argue by contradiction and derive an a priori estimate which contradicts a stronger blow-up criterion (which was proven in \cite{AV19_QSEE_2} arguing by contradiction with the maximality of the $L^p_\a$-maximal solution $(u,\sigma)$ to \eqref{eq:SEE}).
For simplicity, we assume that Assumption \ref{ass:FGcritical} holds with $m=1$ and we set $\rho:=\rho_1$, $\beta:=\beta_1$.
In case of subcritical nonlinearities, by Remark \ref{rem:growth} and Lemma \ref{lem:funnyXembedding}, there exists $\delta\in (0,1)$ such that
\begin{align}
\label{eq:estimate_FG_utau_subcritical}
\|F(u)\|_{L^p(0,\sigma,w_\a;X_0)}
&+\|G(u)\|_{L^p(0,\sigma,w_\a;\g(\mathcal{U},X_{1/2}))}\\
\nonumber
&\leq C\big[1+\|u\|_{L^\infty(0,\sigma;X_{1-\frac{1+\a}{p},p})}^{\rho+1-\delta}\|u\|_{L^p(0,\sigma,w_{\a};X_1)}^{\delta}\big]\\
\nonumber
&\leq C_{\varepsilon}+C_{\varepsilon}\|u\|_{L^\infty(0,\sigma;X_{1-\frac{1+\a}{p},p})}^{(\rho+1-\delta)/(1-\delta)}+\varepsilon \|u\|_{L^p(0,\sigma,w_{\a};X_1)}.
\end{align}
In particular, if $\|u\|_{L^\infty(0,\sigma;X_{1-\frac{1+\a}{p},p})}$ is known to be bounded, then one can estimate $\|u\|_{L^p(0,\sigma,w_{\a};X_1)}$ by combining stochastic maximal $L^p$-regularity and the above with  $\varepsilon$ sufficiently small.

\begin{proof}[Proof of Theorem \ref{thm:subcriticalblowup}]
By Remark \ref{rem:unifombddblow} we may assume $u_0$ is uniformly bounded in $X_{1-\frac{1+\kappa}{p},p}$. It is enough to show that for any $T<\infty$,
\begin{equation}
\label{eq:blow_up_sup_T_sup}
\P\big(\sigma<T,\, \sup_{t\in [0,\sigma)} \|u(t)\|_{X_{1-\frac{1+\a}{p},p}}<\infty\big) = 0.
\end{equation}
For simplicity, we assume that Assumption \ref{ass:FGcritical} holds with $m=1$. The general case is analogous.

We prove \eqref{eq:blow_up_sup_T_sup} by contradiction. Thus, suppose that the LHS\eqref{eq:blow_up_sup_T_sup} has a positive probability. In particular, there exists a set $\wt{\O}$ of positive probability and $M\geq 1$ such that
$$
\sup_{\wt{\O}}\sup_{t\in [0,\sigma)} \|u(t)\|_{X_{1-\frac{1+\a}{p},p}}<M,
$$
where we may assume $M$ is so large that $\|u_0\|_{X_{1-\frac{1+\a}{p},p}}<M$ a.s.
Now, we perform the main estimate as described in \eqref{eq:estimate_FG_utau_subcritical}. Note that, a priori, $u$ might not be $L^p(w_\a)$-integrable up to random time $\sigma$. Therefore, we need to localize in time. For $\ell\geq 1$, set
$$
\tau_\ell:=\inf\Big\{t\in [0,\sigma)\,:\,\sup_{s\in [0,t)} \|u(s)\|_{X_{1-\frac{1+\a}{p},p}}\geq M,\ \ \text{or} \ \   \|u\|_{L^p(0,t,w_{\a};X_1)}\geq \ell\Big\},
$$
where $\inf\emptyset:=\sigma$. Note that $(\tau_\ell)_{\ell\geq 1}$ is an increasing sequence of stopping times and therefore $\tau=\lim_{\ell\to \infty}\tau_\ell$ is a stopping time as well. Moreover, it holds that $\P(\tau=\sigma)\geq \P(\wt{\O})>0$.
By the stochastic maximal $L^p$-regularity of $(A,B)$ and \cite[Proposition 3.10]{AV19_QSEE_1}, there exists $K\geq 1$ such that, for all $\ell\geq 1$ and $\varepsilon>0$,
\begin{align*}
&\|u\|_{L^p(\O; L^\infty(0,\tau_\ell;X_{1-\frac{1+\a}{p},p}))}+
\|u\|_{L^p(\O\times (0,\tau_\ell),w_{\a};X_1)}\\
&\leq K\big(
\|u_0\|_{L^p(\O;X_{1-\frac{1+\a}{p},p})}+
\|F(u)\|_{L^p(\O\times (0,\tau_\ell),w_\a;X_0)}
+\|G(u)\|_{L^p(\O\times (0,\tau_\ell),w_\a;\g(\mathcal{U},X_{1/2}))}\big)\\
&\leq K_{\varepsilon}M + KC_{\varepsilon} M^{(\rho+1-\delta)/(1-\delta)}
+ K\varepsilon  \|u\|_{L^p(\O\times (0,\tau_\ell),w_{\a};X_1)},
\end{align*}
where in the last step we used \eqref{eq:estimate_FG_utau_subcritical} and the definition of $\tau_\ell$. Choosing $\varepsilon=(2K)^{-1}$ and letting $\ell\to \infty$, Fatou's lemma gives
$
\|u\|_{L^p(\O; L^\infty(0,\tau;X_{1-\frac{1+\a}{p},p}))}+
\|u\|_{L^p(\O\times (0,\tau),w_{\a};X_1)}<\infty.
$
Since $\P(\tau=\sigma)\geq \P(\wt{\O})>0$ by construction, it follows that
$$
\P\big(\sigma<T,\, \sup_{t\in [0,\sigma)}\|u(t)\|_{X_{1-\frac{1+\a}{p},p}}+ \|u\|_{L^p(0,\sigma,w_{\a};X_1)}<\infty\big)
\geq \P(\wt{\O})>0.
$$
The above leads to the required contradiction with \eqref{eq:blow_up_intermediate}.
\end{proof}

\subsubsection{Proof of Theorem \ref{thm:criticalblowup}}
\label{sss:proof_blow_up_criteria_lim}
Next, we prove Theorem \ref{thm:criticalblowup}.
We begin with part \eqref{it1:criticalblowup}.
As in the previous subsection, we first outline the main idea behind the proof which again starts by examining Lemma \ref{lem:funnyXembedding}.
As above, we assume that Assumption \ref{ass:FGcritical} holds with $m=1$, and we let $\rho:=\rho_1$ and $\beta:=\beta_1$. Without loss of generality, we suppose that $\rho>0$. By contradiction,
if Theorem \ref{thm:criticalblowup}\eqref{it1:criticalblowup} does not hold, then
there exists a set of positive probability $\wt{\O}$ on which $u\in C([0,\sigma];X_{1-\frac{1+\a}{p},p})$. Now, for a random time $\lambda<\sigma$ and any mapping $u_\lambda$,
we can write (recall $w_{\kappa}^\lambda(t) = (t-\lambda)^{\kappa}$)
\begin{align}
\label{eq:estimate_FG_utau}
&\|F(u)\|_{L^p(\lambda,\sigma,w_\a^\lambda;X_0)}+\|G(u)\|_{L^p(\lambda,\sigma,w_\a^\lambda;\g(\mathcal{U},X_{1/2}))}\\
\nonumber
&\stackrel{(i)}{\leq} C (1+\|u\|_{L^{p (\rho+1)}(\lambda,\sigma,w_{\kappa}^\lambda;X_{\beta,1})}^{\rho+1}) \\
\nonumber
&\leq C (1+\|u-u_\lambda\|_{L^{p (\rho+1)}(\lambda,\sigma,w_{\kappa}^\lambda;X_{\beta,1})}^{\rho+1}
+\|u_\lambda\|_{L^{p (\rho+1)}(\lambda,\sigma,w_{\kappa}^\lambda;X_{\beta,1})}^{\rho+1})  \\
\nonumber
&\stackrel{(ii)}{\leq} C(1+\|u-u_\lambda\|_{L^\infty(\lambda,\sigma;X_{1-\frac{1+\a}{p},p})}^{\rho}\|u-u_\lambda\|_{L^p(\lambda,\sigma,w_{\a}^\lambda;X_1)} +
\|u_\lambda\|_{L^{p(\rho+1)}(\lambda,\sigma,w_{\a}^\lambda;X_{\beta,1})}^{\rho+1}),
\end{align}
where the constant $C$ varies from line to line. In $(i)$ we used Remark \ref{rem:growth} and in $(ii)$ Lemma \ref{lem:funnyXembedding}.
Now, mimicking the argument in \eqref{eq:estimate_FG_utau_subcritical}, on $\wt{\O}$, we want to choose $u_\lambda$ such that $\|u-u_\lambda\|^\rho_{L^\infty(\lambda,\sigma;X_{1-\frac{1+\a}{p},p})}$ is as small as needed. If this condition is satisfied, the quantity in front of $\|u-u_\lambda\|_{L^p(\lambda,\sigma,w_{\a}^\lambda;X_1)}$ can be made arbitrarily small, and this term can then be absorbed on the left-hand-side of a corresponding estimate as in the proof of Theorem \ref{thm:subcriticalblowup}. Since $u\in C([0,\sigma];X_{1-\frac{1+\a}{p},p})$ on $\wt{\O}$, one might intuitively choose $u_\lambda=u(\lambda,\om)\in X_{1-\frac{1+\a}{p},p}$ for $\lambda\approx \sigma(\om)$ and $\om\in \wt{\O}$. However, this approach fails because the last term on the RHS of the previous expression is ill-defined as $u(\lambda,\om)\not\in L^{p(\rho+1)}(\lambda,t,w_\a^\lambda;X_{\beta,1})$ for any $t>\lambda$. To resolve this issue, we instead work with suitable extensions of $u(\lambda,\om)$ with $\lambda\approx \sigma(\om)$, as outlined in Step 1 of the following proof.

\begin{proof}[Proof of Theorem \ref{thm:criticalblowup}\eqref{it1:criticalblowup}]
As in the proof of Theorem \ref{thm:subcriticalblowup}, we may assume $u_0$ is bounded. Moreover, it is enough to show that for any $T\in (0,\infty)$,
\begin{equation}
\label{eq:blow_up_sup_T}
\P\big(\sigma<T,\, \lim_{t\uparrow \sigma} u(t) \ \text{exists in $X_{1-\frac{1+\kappa}{p},p}$}\big) = 0.
\end{equation}
Moreover, as above, we assume that Assumption \ref{ass:FGcritical} holds with $m=1$ and we argue by contradiction with the blow-up criterion \eqref{eq:blow_up_intermediate}.
For the sake of clarity, we divide the proof into several steps. In the first step, we introduce and analyze the properties of the extension operator mentioned above.

\emph{Step 1: Let $0\leq \mu_1<\tau_1<\infty$ and $y\in C([\mu_1,\tau_1]; X_{1-\frac{1+\a}{p},p})$. For all $\tau\in [\mu_1,\tau_1)$, set
$$
y_\tau(t):= \Ext (y(\tau))(t-\tau), \ \  t\in [\tau,\infty),
$$
where $\Ext$ is the extension operator defined as
\begin{align*}
\Ext: X_{1-\frac{1+\a}{p},p} \to W^{1,p}(\R_+,w_\a;X_0)\cap L^p(\R_+,w_{\a};X_1), \qquad
 \Ext\, x(t):= (1+t(\lambda+A))^{-1}x.
\end{align*}
Then, it holds that}
$$
\lim_{\tau\to \tau_1} \sup_{\tau<t<\tau_1}\|y (t)-y_\tau (t)\|_{X_{1-\frac{1+\a}{p},p}}=0.
$$
Let us mention that the explicit form of the extension operator used here does not play any role below. However, this is the standard choice in case $\Do(A)=X_1$ for some sectorial operator $A$ on $X_0$.
Now, by Proposition \ref{prop:tracespace}, $\Ext \, x\in C([0,\infty);X_{1-\frac{1+\a}{p},p})$. Moreover, by continuity, for all $\tau<\tau_1$ there exists $\tau_0\in [\tau,\tau_1]$ such that
$$
\sup_{\tau<t<\tau_1}\|y (t)-y_\tau (t)\|_{X_{1-\frac{1+\a}{p},p}}= \|y (\tau_0)-y_\tau (\tau_0)\|_{X_{1-\frac{1+\a}{p},p}}.
$$
As $\Ext\,x (0)=x$, we have
\begin{align*}
y(\tau_0)-y_\tau (\tau_0)
=[y(\tau_0)-y(\tau_1)] &+ [\Ext (y(\tau_1))(0) - \Ext (y(\tau_1))(\tau_0-\tau)]\\
&+ [\Ext(y(\tau_1)- y(\tau))(\tau_0-\tau) ].
\end{align*}
Hence, the claim of Step 1 follows from the above, the continuity of $\Ext$ and
 $y\in C([\tau_0,\tau_1];X_{1-\frac{1+\a}{p},p})$.

\emph{Step 2: Setting up the proof by contradiction.}
By contradiction, we assume that LHS\eqref{eq:blow_up_sup_T} is positive. In particular, there exists a set of positive probability $\O_0\in \F_\sigma$ and a constant $M\geq 0$ such that on $\Omega_0$
\begin{equation*}
\sup_{t\in [0,\sigma)}\|u(t)\|_{X_{1-\frac{1+\a}{p},p}}<M
\quad \text{ and }\quad \lim_{n\to \infty}\sup_{\sigma_n<t<\sigma}\|u(t)-u(\sigma_n)\|_{X_{1-\frac{1+\a}{p},p}}=0,
\end{equation*}
where $(\sigma_n)_{n\geq 1}$ is a localizing sequence for the $L^p_\a$-maximal solution $(u,\sigma)$. Again we may assume $\|u_0\|_{X_{1-\frac{1+\a}{p},p}}<M$ a.s.
Next,
by combining \cite[Lemma 2.7]{AV19_QSEE_2} and Egorov's theorem, there exists a subset $\O_0' \subseteq \O_0$ of positive probability and a sequence of stopping times $(\sigma_n')_{n\geq 1}$ such that $\sigma_n'(\O)$ is a discrete set contained in $[0,T]$, $\sigma_n'<\sigma$ a.s.\ on $\O_0'$,  $\lim_{n\to \infty}\sigma_n'=\sigma$ a.s.\ on $\O_0'$ and
\begin{equation*}
\lim_{n\to \infty}\sup_{\O_0'}\sup_{\sigma_n'<t<\sigma}\|u(t)-u(\sigma_n')\|_{X_{1-\frac{1+\a}{p},p}}=0.
\end{equation*}
The reader is also referred to \cite[Step 1, Theorem 4.9(1)]{AV19_QSEE_2} for an analogous situation.

The discreteness of $\sigma_n'(\O)$ will be used in Step 3 when dealing with stochastic maximal $L^p$-regularity estimates. The use of discrete stopping times through \cite[Proposition 3.11]{AV19_QSEE_2}, allows us to avoid technical issues related to stochastic maximal $L^p$-regularity with time weights located at a (non-discrete) random time.

Coming back to the content of Step 2, note that the above and a final application of Egorov's theorem and Step 1 yield
\begin{align}
\label{eq:condition_1_blow_up}
\sup_{\wt{\O}}\sup_{t\in [0,\sigma)} \|u(t)\|_{X_{1-\frac{1+\a}{p},p}}&\leq M,
\ \ \text{and} \ \ \lim_{n\to \infty} \sup_{\wt{\O}}\sup_{t\in [\sigma_n',\sigma)} \|u(t)-u_{\sigma_n'}(t)\|_{X_{1-\frac{1+\a}{p},p}}=0,
\end{align}
where $\wt{\O}\subseteq \O_0'$ is a set of positive probability and
$u_{\sigma'_n}:=\one_{\{\sigma>\sigma_n'\}}\Ext(u(\sigma_n'))$.
Recall that $\{\sigma>\sigma_n'\}\in \F_{\sigma_n'}$ due to \cite[Lemmas 9.1 and 9.5]{Kal}. In particular, the process $\one_{[\sigma_n',\infty)}
u_{\sigma'_n}$ is progressively measurable.
Finally, by \eqref{eq:condition_1_blow_up} and the boundedness of $\Ext$,
\begin{equation}
\label{eq:usigmaprime_bounds}
\|u_{\sigma'_n}\|_{W^{1,p}(\sigma_n',\infty,w_{\a}^{\sigma_n'};X_0)\cap L^p(\sigma_n',\infty,w_{\a}^{\sigma_n'};X_1)}\lesssim_{A,p,\a} M.
\end{equation}

\emph{Step 3: Conclusion.} The idea is to prove that
\begin{equation}
\label{eq:blow_up_contradiction_1}
\P\big(\sigma<T,\,
\sup_{t\in [0,\sigma)}\|u(t)\|_{X_{1-\frac{1+\a}{p},p}}+ \|u\|_{L^p(0,\sigma,w_{\a};X_1)}<\infty\big)\geq \P(\wt{\O}).
\end{equation}
Thus, if $\P(\wt{\O})>0$, then the above contradicts \eqref{eq:blow_up_intermediate}. By the pathwise regularity in Theorem \ref{thm:localwellposed}, to check \eqref{eq:blow_up_contradiction_1} it suffices to show the existence of a stopping time $\lambda$ such that $\lambda<\sigma$ a.s.\ and
\begin{equation}
\label{eq:blow_up_contradiction_2}
\P\big(\sigma<T,\,
\sup_{t\in [\lambda,\sigma)}\|u(t)\|_{X_{1-\frac{1+\a}{p},p}}+ \|u\|_{L^p(\lambda,\sigma,w^\lambda_{\a};X_1)}<\infty\big)\geq \P(\wt{\O}).
\end{equation}
In the following, we focus on the proof of \eqref{eq:blow_up_contradiction_2}. From \eqref{eq:condition_1_blow_up} it follows that for each $\varepsilon>0$ there exists $N(\varepsilon)\geq 1$ for which
\begin{equation}
\label{eq:sigma'sigmaareclose}
\sup_{\wt{\O}}\sup_{t\in [\sigma_{N(\varepsilon)}',\sigma)} \|u(t)-u_{\sigma_{N(\varepsilon)}'}(t)\|_{X_{1-\frac{1+\a}{p},p}}
<\varepsilon.
\end{equation}
To simplify the notation, we set $\lambda_\varepsilon:=\sigma_{N(\varepsilon)}'$ and $\mathcal{V}_{\varepsilon}:=\{\sigma>\lambda_\varepsilon\}$.
Note that, for each $\varepsilon>0$, the process $v:=\one_{\mathcal{V}_\varepsilon} u|_{[\lambda_\varepsilon,\sigma)}$ is a local $L^p_\a$-solution to
\begin{equation}
\label{eq:SEE_blow_up_proof}
\left\{
\begin{aligned}
&\dd v +A v\, \dd t = \one_{\mathcal{V}_\varepsilon \times[\lambda_\varepsilon,\sigma)} F(u)\,\dd t + (Bv + \one_{\mathcal{V}_\varepsilon \times[\lambda_\varepsilon,\sigma)}G(u))\,\dd W,\\
&v(\lambda_\varepsilon)=\one_{\mathcal{V}_\varepsilon} u(\lambda_\varepsilon).
\end{aligned}
\right.
\end{equation}
Now, the idea is to apply stochastic maximal $L^p$-regularity and argue as in \eqref{eq:estimate_FG_utau} to control the nonlinearities $F(u)$ and $G(u)$. However, in principle, the $L^p(w_\a;X_1)$-norms are not finite up to the stopping time $\sigma$ even on $\wt{\O}$. Thus, another localization argument is needed. For each $\ell\geq 1$, set
\begin{align*}
\tau_{\varepsilon,\ell}:=\inf\Big\{t\in [\lambda_\varepsilon,\sigma)\,:\,
\sup_{t\in [\lambda_\varepsilon,\sigma)} \|u(t)-u_{\lambda_\varepsilon}(t)\|_{X_{1-\frac{1+\a}{p},p}}\geq  \varepsilon \ \text{or} \ \|u\|_{L^p(\lambda_\varepsilon,t,w_{\a}^{\lambda_\varepsilon};X_1)}\geq \ell\Big\} \wedge T,
\end{align*}
where $\inf\emptyset:=\sigma\wedge T$ on $\mathcal{V}_\varepsilon$, and $\tau_{\varepsilon,\ell}=\lambda_{\varepsilon}$ otherwise. Note that $\tau_{\varepsilon,\ell}$ are monotone in $\ell\geq 1$, and converge pointwise to a stopping time $\tau_\varepsilon$ as $\ell\to \infty$. Moreover, due to \eqref{eq:sigma'sigmaareclose} and $\wt{\O}\subseteq \O'_0\subseteq \bigcap_{n\geq 1}\{\sigma>\sigma_n'\}$ (the latter by construction), for all $\varepsilon>0$,
\begin{equation}
\label{eq:tauvarespilon_sigma_positive_probability}
\wt{\Omega} \subseteq \{\tau_\varepsilon=\sigma\wedge T\}\cap \mathcal{V}_\varepsilon.
\end{equation}
Now, by \eqref{eq:SEE_blow_up_proof}, the stochastic maximal $L^p_\a$-regularity of $(A,B)$ and
\cite[Proposition 3.11]{AV19_QSEE_2}, there exists a $K>0$ independent of $\ell\geq 1$ such that
\begin{align*}
&\|u\|_{L^p(\mathcal{V}_\varepsilon;L^\infty(\lambda_\varepsilon,\tau_{\varepsilon,\ell};X_{1-\frac{1+\a}{p},p}))}
+ \|u\|_{L^p (\mathcal{V}_\varepsilon\times (\lambda_{\varepsilon},\tau_{\varepsilon,\ell}),w_{\a}^{\lambda_{\varepsilon}};X_1)}\\
&\leq K\big( \| u(\lambda_\varepsilon)\|_{L^p(\mathcal{V}_\varepsilon;X_{1-\frac{1+\a}{p},p})}
+  \|F(u)\|_{L^p(\mathcal{V}_\varepsilon\times (\lambda_{\varepsilon},\tau_{\varepsilon,\ell}),w_\a^{\lambda_{\varepsilon}};X_0)}
+\|G(u)\|_{L^p(\mathcal{V}_\varepsilon\times(\lambda_{\varepsilon},\tau_{\varepsilon,\ell}),w_\a^{\lambda_{\varepsilon}};\g(\mathcal{U},X_{1/2}))} \big)\\
&\stackrel{(i)}{\leq} CK M
+ C K \big(\E \big[\one_{\mathcal{V}_\varepsilon}\|u-u_{\lambda_{\varepsilon}}\|_{L^\infty(\lambda_{\varepsilon},\tau_{\varepsilon,\ell};X_{1-\frac{1+\a}{p},p})}^{\rho p}\|u-u_{\lambda_{\varepsilon}}\|_{L^p(\lambda_{\varepsilon},\tau_{\varepsilon,\ell},w_\a^{\lambda_{\varepsilon}};X_1)}^p\big]\big)^{1/p} \\
&\stackrel{(ii)}{\leq} CK M
+ C K \varepsilon^\rho\|u-u_{\lambda_{\varepsilon}}\|_{L^p(\mathcal{V}_\varepsilon\times(\lambda_{\varepsilon},\tau_{\varepsilon,\ell}),w_\a^{\lambda_{\varepsilon}};X_1)},
\end{align*}
where in $(i)$ we argued as in \eqref{eq:estimate_FG_utau} and used \eqref{eq:usigmaprime_bounds} as well as Lemma \ref{lem:funnyXembedding}, while $(ii)$ follows from the definition of $\tau_{\varepsilon,\ell}$.
Now, as $\rho>0$ by assumption, choosing $\varepsilon=\varepsilon_\star:= (2CK)^{-1/\rho}$ leads to
\begin{align}
\label{eq:estimate_absod_1halfpart}
\|u\|_{L^p(\mathcal{V}_\varepsilon;L^\infty(\lambda_{\varepsilon_\star},\tau_{\varepsilon_\star,\ell};X_{1-\frac{1+\a}{p},p}))}
&+ \|u\|_{L^p (\mathcal{V}_\varepsilon\times (\lambda_{\varepsilon_\star},\tau_{\varepsilon_\star,\ell}),w_{\a}^{\lambda_{\varepsilon_\star}};X_1)}\\
\nonumber
&\leq CK M
+\tfrac{1}{2}\|u\|_{L^p(\mathcal{V}_\varepsilon\times (\lambda_{\varepsilon_\star},\tau_{\varepsilon_\star,\ell}),w_\a^{\lambda_{\varepsilon_\star}};X_1)},
\end{align}
where we used again the uniform control in $\varepsilon$ in \eqref{eq:usigmaprime_bounds} and Lemma \ref{lem:funnyXembedding}. From the definition of $\tau_{\varepsilon,\ell}$, it follows that $\|u\|_{L^p(\mathcal{V}_\varepsilon\times(\lambda_{\varepsilon},\tau_{\varepsilon,\ell}),w_\a^{\lambda_{\varepsilon}};X_1)}<\infty$. Hence, absorbing the last term on RHS\eqref{eq:estimate_absod_1halfpart} in the corresponding LHS and letting $\ell\to \infty$, Fatou's lemma implies
\begin{equation*}
\|u\|_{L^p(\mathcal{V}_\varepsilon;L^\infty(\lambda_{\varepsilon_\star},\tau_{\varepsilon_\star};X_{1-\frac{1+\a}{p},p}))}
+ \|u\|_{L^p (\mathcal{V}_\varepsilon\times (\lambda_{\varepsilon_\star},\tau_{\varepsilon_\star}),w_{\a}^{\lambda_{\varepsilon_\star}};X_1)}<\infty.
\end{equation*}
Thus, \eqref{eq:blow_up_contradiction_2} with $\lambda=\lambda_{\varepsilon_\star}$ follows from the above and \eqref{eq:tauvarespilon_sigma_positive_probability}. As explained at the beginning of Step 3, this concludes the proof of Theorem \ref{thm:criticalblowup}\eqref{it1:criticalblowup}.
\end{proof}

To prove Theorem \ref{thm:criticalblowup}\eqref{it2:criticalblowup} one can, in principle, follow the argument in \cite{AV19_QSEE_2}. However, since the conditions on $F$ and $G$ in Assumption \ref{ass:FGcritical} are weaker since they are formulated in terms of the real interpolation spaces $X_{\beta_j,1}$ instead of the complex interpolation spaces $X_{\beta_j}$, we need a variant of \cite[Lemma 5.11]{AV19_QSEE_2}, where on the left-hand side of the estimates real interpolation is used. For unexplained notation, the reader is referred to
\cite[Section 5]{AV19_QSEE_2}.

\begin{lemma}[Interpolation inequality]\label{lem:interpolationineqMR0}
Let $p\in (1, \infty)$, $\kappa\in [0,p-1)$, $\psi\in (1-\frac{1+\a}{p},1)$, and set $\zeta=(1+\a)/\big(\psi-1+\frac{1+\a}{p}\big)$. Then there exists a $\theta_0\in [0,\frac{1+\a}{p})$ such that for all $\theta\in [\theta_0, 1)$, there is a constant $C>0$ such that the following estimate holds for all $0\leq a<b\leq T$ and all $f\in \Sz^{\theta,\a}(a,b)\cap L^{\infty}(a,b;X_{1-\frac{1+\kappa}{p},p})\cap L^p(a,b;X_{1-\frac{\a}{p}})$,
\begin{equation}
\label{eq:interpolation_inequality_serrin_zeta}
\|f\|_{L^{\zeta}(a,b,w_{\a};X_{\psi,1})}\leq C \|f\|_{L^{\infty}(a,b;X_{1-\frac{1+\kappa}{p},p})}^{1-\phi}\|f\|_{\Sz^{\theta,\a}(a,b)}^{(1-\delta)\phi}\|f\|_{L^{p}(a,b;X_{1-\frac{\a}{p}})}^{\delta\phi},
\end{equation}
where we can take $\delta\in (0,1]$ and $\phi\in [0,1]$ such that
\begin{equation}\label{eq:identity1mindeltaphi}
(1-\delta)\phi \leq \frac{p}{1+\a}\Big(\psi-1+\frac{1+\a}{p}\Big).
\end{equation}
\end{lemma}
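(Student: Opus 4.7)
The inequality is of Gagliardo--Nirenberg type, and I would prove it in three logical steps, combining a pointwise spatial interpolation, a weighted integration in time, and a Sobolev-type embedding in time that trades the fractional smoothness encoded in $\Sz^{\theta,\alpha}$ for extra integrability. \emph{Step 1 (Spatial interpolation).} By the reiteration theorem for real interpolation together with the sandwich embedding $X_{\theta,1}\embed X_\theta\embed X_{\theta,\infty}$ from \eqref{eq:sandwichinterp}, one can write, for a suitable $\phi\in(0,1]$,
\[
X_{\psi,1} = (X_{1-\frac{1+\alpha}{p},p},\, Y)_{\phi,1},
\]
where $Y$ is a ``high-regularity'' reference space: when $\psi<1-\alpha/p$ one takes $Y=X_{1-\alpha/p}$ directly, and otherwise $Y$ is an intermediate space $X_{1-\theta}$ accessed via the extra time smoothness contained in $\Sz^{\theta,\alpha}$. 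This yields the pointwise bound
\[
\|f(t)\|_{X_{\psi,1}} \leq C\,\|f(t)\|_{X_{1-\frac{1+\alpha}{p},p}}^{1-\phi}\,\|f(t)\|_{Y}^{\phi}.
\]

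\emph{Steps 2 and 3 (Time integration and Sobolev embedding).} Raising this estimate to the $\zeta$-th power, multiplying by $t^\alpha$, and integrating over $(a,b)$ pulls out $\|f\|_{L^\infty(X_{1-\frac{1+\alpha}{p},p})}^{(1-\phi)\zeta}$ and leaves behind $\|f\|_{L^{\phi\zeta}(a,b,w_\alpha;Y)}^{\phi\zeta}$. The choice $\zeta=(1+\alpha)/(\psi-1+(1+\alpha)/p)$ is precisely the unique value that makes the parabolic scaling close, so that this weighted norm can be controlled in the next step. I would then estimate $\|f\|_{L^{\phi\zeta}(a,b,w_\alpha;Y)}$ by a weighted H\"older inequality combined with a Sobolev embedding in time, splitting between $L^p(a,b;X_{1-\alpha/p})$ and $\Sz^{\theta,\alpha}$ with exponents $\delta$ and $1-\delta$. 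The threshold $\theta_0<(1+\alpha)/p$ is the smallest value for which the weighted Sobolev embedding $H^{\theta,p}(w_\alpha;X_{1-\theta})\embed L^q(X_{1-\theta})$ with $q$ large enough is available, and $\theta\in[\theta_0,1)$ then supplies the required integrability in time.

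\emph{Main obstacle.} The technical core is the careful bookkeeping of the weight $w_\alpha$: the space $\Sz^{\theta,\alpha}$ is weighted in time while $L^p(a,b;X_{1-\alpha/p})$ is not, and the spatial norms involved differ accordingly. Condition \eqref{eq:identity1mindeltaphi} is precisely the constraint guaranteeing that the interpolation parameters $(\phi,\delta)$ lie in their admissible range and that all the embeddings used above are non-degenerate; the fact that equality is allowed---an improvement over the complex-interpolation variant \cite[Lemma 5.11]{AV19_QSEE_2}---relies on the use of the real interpolation space $X_{\psi,1}$ on the left-hand side, through the sharper reiteration available for real interpolation. Uniformity of the constant $C$ in $0\leq a<b\leq T$ can be ensured by working with the $\hz$-type subspaces from \cite[Propositions 2.5 and 2.10]{AV19_QSEE_1}, which admit extension operators whose norm is independent of the length of the interval.
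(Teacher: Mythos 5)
Your Step 1 is exactly the paper's opening move: use reiteration to write $X_{\psi,1}$ as a real interpolation space between the trace space $X_{1-\frac{1+\kappa}{p},p}$ and some ``higher-regularity'' reference space, obtain the pointwise bound $\|f(t)\|_{X_{\psi,1}}\leq C\|f(t)\|_{X_{1-\frac{1+\kappa}{p},p}}^{1-\phi}\|f(t)\|_{Y}^{\phi}$, and then integrate in time to pull out the $L^\infty$ factor. Your closing observation --- that the sharper reiteration available for the real interpolation target $X_{\psi,1}$ is what allows equality in \eqref{eq:identity1mindeltaphi}, unlike the complex-interpolation version of the prior lemma --- is also the point the paper is making.

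The difference is in how the remaining weighted norm $\|f\|_{L^{\phi\zeta}(a,b,w_{\a};Y)}$ is handled. The paper does not reprove it: it picks the reference space to be a \emph{complex} interpolation space $Y=X_{\mu}$ with $\mu>\max\{\psi,\,1-\frac{\kappa}{p}\}$, so that $\phi\zeta=(1+\a)/(\mu-1+\frac{1+\a}{p})$ and the term $\|f\|_{L^{\phi\zeta}(a,b,w_{\a};X_{\mu})}$ is \emph{exactly} the object estimated in \cite[Lemma 5.11(1)]{AV19_QSEE_2} (the $\phi=1$ case), whose constraint on $\delta$ then translates into \eqref{eq:identity1mindeltaphi} upon multiplying by $\phi=\lambda$. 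Your ``Steps 2 and 3'' instead sketch a direct argument by weighted H\"older plus a weighted-Sobolev embedding in time; this is in effect a reproof of that cited lemma, and the details (how to reconcile the unweighted $L^p(a,b;X_{1-\a/p})$ with the weighted target, choice of admissible $\theta$, uniformity of constants in the interval) are exactly the nontrivial part that the cited lemma already packages. Relatedly, your case-split on whether $\psi<1-\a/p$, with $Y=X_{1-\a/p}$ in the first case, does not match the paper's choice and is a potential stumbling block: the paper insists on $\mu>1-\frac{\kappa}{p}$ \emph{strictly}, precisely because this is what puts the remaining estimate inside the scope of \cite[Lemma 5.11(1)]{AV19_QSEE_2}; with $Y=X_{1-\a/p}$ one computes $\phi\zeta=p(1+\a)$, which forces $\delta\geq\a/(1+\a)>0$ whenever $\a>0$, and the accompanying weighted Sobolev/H\"older bookkeeping is then exactly the part your proposal leaves unproved. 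So the approach is correct in outline, but you are redoing work that the paper discharges in one citation, and the one place where your choice genuinely diverges from the paper's is also the place where the details are omitted.
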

\begin{proof}
As in \cite[Lemma 5.11]{AV19_QSEE_2}, by interpolation it suffices to consider $\theta=\theta_0$. Let $\mu$ be such that $\mu>\psi$ and $\mu\in (1-\frac{\kappa}{p}, 1)$. Then by the reiteration theorem $\|x\|_{X_{\psi,1}}\leq C \|x\|_{X_{1-\frac{1+\kappa}{p},p}}^{(1-\lambda)} \|x\|_{X_{\mu}}^{\lambda}$,
for all $x\in X_{1}$, where $\lambda\in (0,1)$ satisfies $\psi = (1-\lambda) ( 1-\frac{1+\kappa}{p}) + \lambda \mu$.  Therefore,
\begin{align*}
\|f\|_{L^{\zeta}(a,b,w_{\a};X_{\psi,1})} \leq C \|f\|_{L^\infty(a,b;X_{1-\frac{1+\kappa}{p},p})}^{1-\lambda} \|f\|_{L^{\lambda \zeta}(a,b,w_{\a};X_{\mu})}^{\lambda}.
\end{align*}
Note that $\lambda \zeta = (1+\a)/\big(\mu-1+\frac{1+\a}{p}\big)$.
Thus, we have reduced the problem to estimating $\|f\|_{L^{\lambda \zeta}(a,b,w_{\a};X_{\mu})}$. The latter case has already been considered in \cite[Lemma 5.11(1)]{AV19_QSEE_2}, where the estimate was proved (where $\phi=1$) and
\[(1-\delta) \leq \frac{p}{1+\a}\Big(\mu-1+\frac{1+\a}{p}\Big) = \frac{p}{\lambda \zeta},
\]
and thus we obtain \eqref{eq:interpolation_inequality_serrin_zeta} with $\phi = \lambda$  and thus \eqref{eq:identity1mindeltaphi} follows from the definition of $\zeta$.
\end{proof}

\subsection{Instantaneous regularization}\label{subsec:reg}

Instantaneous regularization phenomena are a well-known feature of deterministic parabolic PDEs. A classical example is the heat equation, whose solutions become smooth for any positive time $t>0$, regardless of the initial data's regularity. Similar results hold for many deterministic nonlinear PDEs.

For nonlinear (S)PDEs, a standard approach to obtaining regularity is through a bootstrapping argument. The idea is as follows: given $u$ with some initial regularity, one first determines the regularity of $f:=F(u)$ and $g:=G(u)$ and then applies parabolic smoothing to deduce a new regularity estimate for $u$.
If this yields an improvement, the process can be iterated to further enhance the regularity of $u$.

However, in critical settings, this classical bootstrapping method \emph{fails}. Specifically, if $u$ belongs to a scaling-invariant space, then the regularity of the inhomogeneities $f=F(u)$ and $g=G(u)$ is just sufficient to recover the initial regularity of $u$, but not to improve it. In other words, when the regularity is critical or scaling-invariant, bootstrapping does not enhance
the smoothness of $u$ (see Lemma \ref{lem:funnyXembedding}). Concrete instances of this breakdown can be found in \cite[Section 1.4]{AV19_QSEE_2}, \cite[Theorem 2.12]{AV20_NS}, and \cite[Section 7]{AVreaction-local}.

In the deterministic setting, a way to break the limitation of the classical bootstrapping method was introduced by Angenent in \cite{Angenent90Ann,Angenent90proc} (see also \cite[Chapter 5]{pruss2016moving} and \cite{GGHHK20}). This approach, often referred to as the ``parameter trick'', involves first proving \emph{high-order time} regularity, which is then transferred to spatial smoothness using elliptic regularity. Unfortunately, this technique \emph{cannot} be applied to SPDEs because high-order time regularity fails due to the temporal roughness of the noise.

In \cite[Section 6]{AV19_QSEE_2}, we developed a \emph{new} method for bootstrapping regularity in both time and space by exploiting the instantaneous regularization properties of \emph{weighted} function spaces (see Proposition \ref{prop:tracespace}\eqref{it:trace_without_weights_Xp} and \eqref{eq:improved_regularity_small_times}) and the blow-up criteria as in Subsection \ref{subsec:blow_up}. These results have been successfully applied to various equations, including the 2D Navier--Stokes equations \cite[Theorem 2.4]{AV20_NS}, reaction-diffusion equations \cite[Theorem 2.7]{AVreaction-local}, 3D primitive equations \cite[Theorem 3.7]{agresti2023primitive}, and thin-film equations \cite[Proposition 2.13]{AgrSau}. The key idea is to introduce a time weight $\alpha>0$ to balance the desired gain in integrability $r>p$. In critical settings, this adjustment does not disrupt the scaling (i.e.\ the space-time Sobolev index) of the maximal regularity space \eqref{eq:max_reg_space}. The gain in integrability $r>p$ corresponds to choosing $\alpha>0$ such that
\begin{equation}
\label{eq:pr_equal_sobolev_index}
\frac{1}{p}=\frac{1+\alpha}{r}.
\end{equation}
The usefulness of these weights arises from the fact that $w_{\alpha}(t)=t^\alpha$ only ``acts'' at $t=0$. Indeed, by Proposition \ref{prop:tracespace}\eqref{it:trace_without_weights_Xp} and H\"older's inequality,
$$
\textstyle \bigcap_{\theta\in [0,1/2)} H^{\theta,r}(0,t,w_{\alpha};X_{1-\theta}) \subseteq L^r(\varepsilon,t;X_{1-\theta})\cap C([\varepsilon,t];X_{1-\frac{1}{r},r})
$$
for all $\varepsilon\in (0,t)$.
Here, $L^r(\varepsilon,t;X_{1})\cap C([\varepsilon,t];X_{1-\frac{1}{r},r})$ has space-time Sobolev index $1-\frac{1}{r}>1-\frac{1}{p}$ as $r>p$. Thus, for positive times, this method yields a gain in integrability/smoothness.  After this, we can apply the above-mentioned classical bootstrap method. To make the above picture rigorous, one must ensure  ``compatibility'' between the settings $(p,0)$ and $(r,\alpha)$ through blow-up criteria such as the ones in Theorem \ref{thm:criticalblowup}. For the proofs, the reader is referred to Theorem \ref{thm:parabreg2} below or \cite[Proposition 6.8]{AV19_QSEE_2} for the general case.
Finally, let us mention that the important case $p=2$ - relevant for problems like the 2D Navier--Stokes equations  \cite[Theorem 2.4]{AV20_NS} and for 3D primitive equations \cite[Theorem 3.7]{agresti2023primitive} - is technically more challenging and it will not be fully discussed here. When $p=2$, \eqref{eq:pr_equal_sobolev_index} cannot be satisfied for any weight $\alpha\in [0,\frac{r}{2}-1)$ admissible in the stochastic maximal $L^r$-regularity (see Definitions \ref{def:SMR} and \ref{def:SMRbullet}). To overcome this difficulty, in  \cite[Proposition 6.8]{AV19_QSEE_2}, we compensate the increment of time Sobolev index $-\frac{1+\alpha}{r}>-\frac{1}{2}=-\frac{1}{p}$ by ``penalizing'' the spatial regularity $X_{1-\theta}$. This involves ``shifting'' the scale downwards by an amount $\delta:=\frac{1}{2}-\frac{1+\alpha}{r}>0$ to ensure that the space-time Sobolev index of new weighted spaces remains the same as the one of $L^2(0,T;X_1)\cap C([0,T];X_{1/2})$, i.e.\ $\frac{1}{2}$.

After bootstrapping time regularity, the problem transitions to a noncritical setting, where the classical bootstrap method can be applied to achieve spatial regularity. In \cite[Theorem 6.3]{AV19_QSEE_2} we presented an abstract result for this purpose. However, based on experience with its application, we now find it better to perform spatial bootstrapping ``manually''. Moreover, on unbounded domains, the abstract results often impose unnecessary restrictions. Below, we illustrate the classical bootstrapping argument for spatial regularity in a concrete problem, detailed in Proposition \ref{prop:QSregularity}, particularly in Steps 2 and 3. Moreover, the same method is applied to the Navier--Stokes equations on $\R^d$ in Theorem \ref{t:NS_Rd_local}.

\smallskip

Before diving into the above-mentioned unweighted situation, we first discuss the bootstrap in a critical regime where a time weight is already present. This allows us to introduce the basic technical ideas in a more elementary situation.
  The following result on instantaneous time regularization, is a special case of \cite[Corollary 6.5 and Proposition 6.8 with $\delta=0$]{AV19_QSEE_2}.

\begin{theorem}[Parabolic regularization in time for $\kappa>0$]\label{thm:parabreg}
Suppose that the conditions of Theorem \ref{thm:localwellposed} hold with $p\in (2, \infty)$ and $\kappa\in (0,p/2-1)$, and let $(u,\sigma)$ be the $L^p_{\kappa}$-maximal solution to \eqref{eq:SEE}. Suppose that $(A,B)\in \mathcal{SMR}_{r,\alpha}^{\bullet}$ for all $r\in (2, \infty)$ and $\alpha\in [0,r/2-1)$. Then the following path regularity holds a.s.\
\begin{align*}
u\in H^{\theta,r}_{\rm loc}((0,\sigma);&X_{1-\theta})\cap C^{\theta-\varepsilon}_{\rm loc}((0,\sigma);X_{1-\theta}), \ \ \theta\in [0,1/2), r\in [2, \infty), \varepsilon\in (0,\theta).
  \end{align*}
\end{theorem}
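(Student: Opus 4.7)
The strategy is the weighted-space bootstrap sketched immediately before the statement: starting from the $L^p_\kappa$-regularity supplied by Theorem~\ref{thm:localwellposed} with $\kappa>0$, I would upgrade to $L^r_\alpha$-regularity for arbitrary $r\in[p,\infty)$ by restarting the equation at positive times and choosing a new weight $\alpha := r/p - 1$ that preserves the trace-space index $\frac{1+\alpha}{r}=\frac{1}{p}$. The admissibility $\alpha\in[0,r/2-1)$ uses $r\geq p$ and, crucially, $p>2$; the case $r\in[2,p]$ requires no argument as it follows from H\"older's inequality on compact subintervals of $(0,\sigma)$.

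The first step is to verify the hypotheses of Theorem~\ref{thm:localwellposed} in the $(r,\alpha)$-setting. Assumption~\ref{ass:FGcritical} holds in the strictly subcritical regime, since
\[
\frac{1+\alpha}{r}=\frac{1}{p}<\frac{1+\kappa}{p}\leq \frac{(1+\rho_j)(1-\beta_j)}{\rho_j}.
\]
If some $\beta_j\leq 1-\tfrac{1}{p}$, it is replaced by a $\beta_j'\in(1-\tfrac{1}{p},\,1-\tfrac{\rho_j}{(1+\rho_j)p}]$: this interval is nonempty, and the Lipschitz estimate transfers because $X_{\beta_j',1}\hookrightarrow X_{\beta_j,1}$. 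The bound on the ``local ball'' in the Lipschitz condition also transfers through $X_{1-1/p,r}\hookrightarrow X_{1-(1+\kappa)/p,p}$ (which uses $\kappa>0$). Finally, $(A,B)\in\mathcal{SMR}_{r,\alpha}^{\bullet}$ is given by hypothesis.

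The crucial step is the restart and compatibility argument. For each deterministic $s\in(0,T)$, on $\{\sigma>s\}$ we have $u(s)\in X_{1-1/p,p}\hookrightarrow X_{1-1/p,r}=X_{1-(1+\alpha)/r,r}$ (real interpolation, $p\leq r$), so Theorem~\ref{thm:localwellposed} in the $(r,\alpha)$-setting produces a unique $L^r_\alpha$-maximal solution $\widetilde u$ with lifetime $\widetilde\sigma>0$ a.s.\ to the equation restarted at $s$. To identify $\widetilde u(t)$ with $u(s+t)$, I would show $\widetilde u$ is simultaneously an $L^p_\kappa$-local solution and invoke the uniqueness of Theorem~\ref{thm:localwellposed}\eqref{it2:localwellposed}. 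The key $L^p(w_\kappa;X_1)$-regularity follows from H\"older's inequality with exponents $r/p$ and $r/(r-p)$:
\[
\int_0^{T}\|\widetilde u(t)\|_{X_1}^{p}\,t^{\kappa}\,\dd t\,\leq\,\|\widetilde u\|_{L^r(0,T,w_\alpha;X_1)}^{p}\Big(\int_0^{T}t^{\kappa r/(r-p)-1}\,\dd t\Big)^{(r-p)/r},
\]
where the exponent simplification uses $\alpha p=r-p$; the remainder integral converges \emph{exactly because} $\kappa>0$. The other regularity $\widetilde u\in C([0,\,\cdot\,];X_{1-(1+\kappa)/p,p})$ follows from $\widetilde u\in C([0,\,\cdot\,];X_{1-1/p,r})\hookrightarrow C([0,\,\cdot\,];X_{1-(1+\kappa)/p,p})$, again using $\kappa>0$. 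The identification $\widetilde\sigma=\sigma-s$ a.s.\ on $\{\sigma>s\}$ then follows from the blow-up criterion of Theorem~\ref{thm:criticalblowup}\eqref{it1:criticalblowup} applied in the $(r,\alpha)$-setting, combined with the continuity of $u$ in $X_{1-1/p,p}$ on $(0,\sigma)$.

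To conclude, Theorem~\ref{thm:localwellposed}\eqref{it1:localwellposed} applied to $\widetilde u$ (with $r>2$ and $\alpha>0$) yields $\widetilde u\in H^{\theta,r}_{\rm loc}((0,\widetilde\sigma);X_{1-\theta})$ for every $\theta\in[0,1/2)$; transferring through the identification gives $u\in H^{\theta,r}_{\rm loc}((s,\sigma);X_{1-\theta})$, and letting $s=s_n\downarrow 0$ along a countable sequence covers all of $(0,\sigma)$. The H\"older statement follows from the Sobolev embedding $H^{\theta,r}\hookrightarrow C^{\theta-1/r}$ upon choosing $r>1/\varepsilon$. The main obstacle is the compatibility step: the H\"older-based transfer of regularity hinges on the marginal convergence of $\int_0^{T}t^{\kappa r/(r-p)-1}\,\dd t$, which the hypothesis $\kappa>0$ just barely affords; for $\kappa=0$ the weight exponent lands exactly at $-1$ and the bootstrap collapses logarithmically, which is precisely why the $\kappa=0$ case (Theorem~\ref{thm:parabreg2} below) requires a different and more delicate argument involving a downward shift of the regularity scale.
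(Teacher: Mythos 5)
Your proposal is correct and follows the same overall strategy as the paper's proof: restart the equation at a positive time $s$, re-solve in a higher-integrability $(r,\alpha)$-setting, identify the new solution with the old one, and transfer the improved regularity. The differences are minor but worth noting. You choose the weight so that $\frac{1+\alpha}{r}=\frac{1}{p}$ exactly (preserving the trace index), whereas the paper picks $\alpha$ with the strict interlacing $\frac{1}{p}<\frac{1+\alpha}{r}<\frac{1+\kappa}{p}$ (see \eqref{eq:regularization_pkappa}). Both choices are admissible for $r$ large; your choice puts the H\"older argument $L^r(w_\alpha)\hookrightarrow L^p(w_\kappa)$ exactly at the boundary where $\kappa>0$ is decisive, and you make this transparent, which the paper only cites via \cite[Proposition 2.1(3)]{AV19_QSEE_2}.

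There is, however, a gap in the compatibility step. You write that you would "show $\widetilde{u}$ is simultaneously an $L^p_\kappa$-local solution and invoke the uniqueness of Theorem~\ref{thm:localwellposed}\eqref{it2:localwellposed}." That item is the \emph{localization} property (comparing maximal solutions with different initial data on the coincidence set), not a uniqueness statement for a local solution against a maximal one; and more importantly, knowing that $\widetilde{u}$ is an $L^p_\kappa$-local solution and that $u|_{[s,\sigma)}$ is an $L^p_\kappa$-local solution does not by itself identify them, since neither is the $L^p_\kappa$-\emph{maximal} solution with initial time $s$. The paper handles this by introducing the intermediate object: the $L^p_\kappa$-maximal solution $(v,\tau)$ started at time $s$ with datum $\one_{\{\sigma>s\}}u(s)$, and then arguing in two steps (Step 1: $u=v$ and $\sigma=\tau$ via maximality plus Theorem~\ref{thm:criticalblowup}\eqref{it1:criticalblowup} in the $(p,\kappa)$-setting; Step 2: $v=\widehat{v}$ and $\tau=\widehat{\tau}$ via maximality plus Theorem~\ref{thm:criticalblowup}\eqref{it1:criticalblowup} in the $(r,\alpha)$-setting). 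Your sketch only invokes the blow-up criterion once, in the $(r,\alpha)$-setting, but a second application in the $(p,\kappa)$-setting is needed to rule out $\sigma<\tau$. This is fixable by inserting the intermediate maximal solution $(v,\tau)$ as the paper does, and does not change the conclusion.
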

\begin{proof}
The H\"older regularity follows from the $H^{\theta,r}$-regularity and Sobolev embedding into $C^{\theta-\frac1r}$ (see \cite[Corollary 14.4.27]{Analysis3}). So it remains to prove the $H^{\theta,r}$-regularity.

In the following, it suffices to consider $r\gg p$ large. Fix an arbitrary $\varepsilon>0$. Note that there exists $\alpha\in [0,r/2-1)$ such that
\begin{equation}
\label{eq:regularity_pkappa}
\frac{1}{p}<\frac{1+\alpha}{r}
<\frac{1+\a}{p}.
\end{equation}
Note that, by Theorem \ref{thm:localwellposed},
\begin{equation}
\label{eq:initial_data_adding_weights_lemma}
 \one_{\V} u(\varepsilon)\in L^0_{\F_{\varepsilon}}(\O;X_{1-\frac1p,p})
\subseteq
L^0_{\F_{\varepsilon}}(\O;X_{1-\frac{1+\alpha}{r},r}),
\end{equation}
where $\V:=\{\sigma>\varepsilon\}$. Again, by
Theorem \ref{thm:localwellposed}, there exists an $L_\a^{p}$-maximal (resp., $L^r_{\alpha}$-maximal) local solution $(v,\tau)$ (resp., $(\wh{v},\wh{\tau})$) to \eqref{eq:SEE} with initial data $\one_{\V}u(\varepsilon)$ at time $\varepsilon$. In particular, a.s.,
\begin{equation}
\label{eq:v_regularization_X_proof}
 \textstyle \wh{v}\in \bigcap_{\theta\in [0,1/2)}H^{\theta,r}(\varepsilon,\wh{\tau},w_{\alpha};X_{1-\theta})\subseteq
  \bigcap_{\theta\in [0,1/2)}
  H^{\theta,r}_{\rm loc}([\varepsilon,\wh{\tau});X_{1-\theta}),
\end{equation}
where the inclusion follows from \cite[Proposition 2.1(1)]{AV19_QSEE_2}.

By \eqref{eq:v_regularization_X_proof} and the arbitrariness of $\varepsilon>0$, it is enough to show that
$$
\sigma=\tau=\wh{\tau} \text{ a.s.\ on }\V\quad \text{ and }
\quad
u=v=\wh{v}\text{ a.e.\ on }[\varepsilon,\tau)\times \V.
$$
Now, we split the proof into two steps. Before doing so, let us comment on the role of $(v,\tau)$, while the one of $(\wh{v},\wh{\tau})$ is evident. The maximal $L^p_\a$-maximal solution $(v,\tau)$ is used because the restriction of $(u,\sigma)$ to times $t\geq \varepsilon$, i.e.\ $(\one_{\V}u,\sigma \one_{\V} + \varepsilon \one_{\V^{{\rm c}}})$, is not maximal anymore.
However, maximality is needed to connect the settings $(p,\a)$ and $(r,\alpha)$. Therefore, we need an intermediate maximal solution $(v,\tau)$ to adjust this.

\emph{Step 1:
$\sigma=\tau$ a.s.\ on $\V$ and $
u=v$ a.e.\ on $[\varepsilon,\tau)\times \V$.}
Note that
$(\one_{\V}u,\sigma \one_{\V} + \varepsilon \one_{\V^{{\rm c}}})$ is also an $L^p_{\kappa}$-local solution to \eqref{eq:SEE} with initial time $\varepsilon$. Thus, by maximality of $\tau$, we conclude that $\sigma \leq \tau$ and $u=v$ a.e.\ on $\V\times [\varepsilon, \sigma)$. On the other hand, on the set $\{\sigma>\tau\}$ by Theorem \ref{thm:localwellposed} we have that
$\lim_{t\uparrow \tau} v(t) = \lim_{t\uparrow \tau} u(t)$ exists in $X_{1-\frac1p,p}\hookrightarrow X_{1-\frac{1+\kappa}{p},p}$. Therefore, by Theorem \ref{thm:criticalblowup},
\begin{align*}
\P(\sigma>\tau) &=
\P\big(\sigma>\tau, \lim_{t\uparrow \tau} v(t) \ \text{exists in $X_{1-\frac{1+\kappa}{p},p}$}\big)
 \leq \P\big(\tau<\infty,\, \lim_{t\uparrow \tau} v(t) \ \text{exists in $X_{1-\frac{1+\kappa}{p},p}$}\big)  = 0.
\end{align*}
Hence, the claim of Step 1 is proved.

\emph{Step 2:
$\tau=\wh{\tau}$ a.s.\ on $\V$ and $
v=\wh{v}$ a.e.\ on $[\varepsilon,\tau)\times \V$.}
The proof of Step 2, is similar to the one of Step 1. Indeed, let us begin by noticing that the second inequality in \eqref{eq:regularity_pkappa} and \eqref{eq:v_regularization_X_proof} implies
\begin{equation}
\label{eq:regularity_v_adding_weight_lemma}
\widehat{v}\in C([\varepsilon,\widehat{\tau});X_{1-\frac{1+\alpha}{r},r})\subseteq C([\varepsilon,\widehat{\tau});X_{1-\frac{1+\a}{p},p})
\end{equation}
as $X_1\embed X_0$. Moreover, from H\"older's inequality \cite[Proposition 2.1(3)]{AV19_QSEE_2} and \eqref{eq:regularity_pkappa}, it follows that $(v,\tau)$ is an $L^p_\kappa$-local solution to \eqref{eq:SEE} with initial time $\varepsilon$.
Hence, from the maximality of $\tau$, it follows that
$
\widehat{\tau}\leq \tau$ a.s.\ on $\V$ and $
\wh{v}=v$ a.e.\ on $[\varepsilon,\widehat{\tau})\times \V.
$
As above, it remains to show $\P(\tau>\widehat{\tau})=0$. By the second inequality in \eqref{eq:regularity_pkappa} and \eqref{eq:instant} in Theorem \ref{thm:localwellposed}\eqref{it1:localwellposed}, it follows that
$\lim_{t\uparrow \tau} \widehat{v}(t) = \lim_{t\uparrow \tau} v(t)$ exists in $X_{1-\frac1p,p}\hookrightarrow X_{1-\frac{1+\alpha}{r},r}$ a.s.\ on $\{\tau>\widehat{\tau}\}$. Therefore, by Theorem \ref{thm:criticalblowup} applied to the $L^r_\alpha$-maximal solution $(v,\tau)$,
\begin{align*}
\P(\tau>\widehat{\tau})
&= \P\Big( \tau>\widehat{\tau}, \lim_{t\uparrow \tau} \wh{v}(t) \ \text{exists in $X_{1-\frac{1+\alpha}{r},r}$}\Big) \leq \P\Big(\tau<\infty,\, \lim_{t\uparrow \tau} \wh{v}(t) \ \text{exists in $X_{1-\frac{1+\alpha}{r},r}$}\Big)  = 0.
\end{align*}
This proves the claim of Step 2 and concludes the proof.
\end{proof}

As explained below \eqref{eq:pr_equal_sobolev_index}, the case $\kappa=0$ is more involved. Indeed, the instantaneous regularization \eqref{eq:instant} provides some extra room as was used in \eqref{eq:regularity_pkappa} and \eqref{eq:regularity_v_adding_weight_lemma}, and does not hold if $\kappa=0$. Under the slightly more restrictive additional condition \eqref{eq:condFcextrakap0}, we will still obtain the following result, which slightly extends \cite[Proposition 6.8 with $\delta=0$]{AV19_QSEE_2}.

\begin{theorem}[Parabolic regularization in time for $\kappa=0$]\label{thm:parabreg2}
Suppose that the conditions of Theorem \ref{thm:localwellposed} hold with $p\in (2, \infty)$ and $\kappa=0$, and let $(u,\sigma)$ be the $L^p_{0}$-maximal solution to \eqref{eq:SEE}. Suppose that
for each $n\geq 1$ there is a constant $C_{n}>0$ such that for all $\|x\|_{X_{1-\frac{1}{p},\infty}}\leq n$,
\begin{align}\label{eq:condFcextrakap0}
\textstyle  \|F(x)\|_{X_{0}} + \|G(x)\|_{\g(\mathcal{U},X_{1/2})}\leq C_{n} \sum_{j=1}^{m}(1+\|x\|_{X_{\beta_j,1}}^{\rho_j+1}),
\end{align}
where $\beta_j,\rho_j$, and $m$ are as in Assumption \ref{ass:FGcritical}.
Suppose that
$(A,B)\in \mathcal{SMR}_{r,\alpha}^{\bullet}$ for all $r\in (2, \infty)$ and $\alpha\in [0,r/2-1)$. Then the following path regularity holds a.s.\
\begin{align}
\label{eq:1r-instantp=2}
  u\in H^{\theta,r}_{\rm loc}((0,\sigma);&X_{1-\theta})\cap C^{\theta-\varepsilon}_{\rm loc}((0,\sigma);X_{1-\theta}), \ \ \theta\in [0,1/2), r\in [2, \infty), \varepsilon\in (0,\theta).
  \end{align}
\end{theorem}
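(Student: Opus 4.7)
The plan is to follow the strategy of Theorem \ref{thm:parabreg}, adapting the parameter choice to the boundary case $\kappa = 0$ and substituting the growth condition \eqref{eq:condFcextrakap0} for a Lipschitz estimate where the argument would otherwise break. Since Theorem \ref{thm:localwellposed}\eqref{it1:localwellposed} already yields \eqref{eq:1r-instantp=2} for $r = p$, and integrability for $r < p$ on compact subintervals follows by H\"older's inequality, it suffices to fix $r > p$ and set $\alpha := r/p - 1$. Since $p > 2$, one has $\alpha \in (0, r/2-1)$, and crucially $\frac{1+\alpha}{r} = \frac{1}{p}$. By hypothesis $(A, B) \in \mathcal{SMR}^\bullet_{r, \alpha}$, and the subcriticality condition \eqref{eq:subcritical} transfers unchanged from $(p, 0)$ to $(r, \alpha)$.

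Fix $\varepsilon > 0$ and set $\V := \{\sigma > \varepsilon\}$. By Theorem \ref{thm:localwellposed}\eqref{it1:localwellposed}, $u(\varepsilon) \in X_{1-\frac{1}{p}, p} \hookrightarrow X_{1-\frac{1}{p}, r} = X_{1-\frac{1+\alpha}{r}, r}$ a.s.\ on $\V$. First restart \eqref{eq:SEE} at time $\varepsilon$ with initial datum $\one_\V u(\varepsilon)$ in the $L^p_0$-setting via Theorem \ref{thm:localwellposed} to obtain an $L^p_0$-maximal solution $(v, \tau)$. Exactly as in Step 1 of the proof of Theorem \ref{thm:parabreg}, the blow-up criterion Theorem \ref{thm:criticalblowup}\eqref{it1:criticalblowup} combined with the continuity $u \in C((0, \sigma); X_{1-\frac{1}{p}, p})$ yields $\sigma = \tau$ on $\V$ and $v = u$ on $[\varepsilon, \sigma) \cap \V$.

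The new step is to upgrade $v$ from $L^p_0$- to $L^r_\alpha$-regularity. One cannot simply invoke Theorem \ref{thm:localwellposed} in the $(r, \alpha)$-setting, because Assumption \ref{ass:FGcritical} provides Lipschitz estimates only on $X_{1-\frac{1}{p}, p}$-bounded sets, which are strictly smaller than the $X_{1-\frac{1}{p}, r}$-bounded sets natural to the new trace space. The plan is instead to apply stochastic maximal $L^r_\alpha$-regularity of $(A, B)$ (via Proposition \ref{prop:localizationSMR}) to the \emph{linear} equation $\dd v + A v\,\dd t = F(v)\,\dd t + (Bv + G(v))\,\dd W$ on a localizing sequence of stopping times $\mu_n \uparrow \tau$ chosen to control both $\|v\|_{L^\infty([\varepsilon, \cdot]; X_{1-\frac{1}{p}, p})}$ and $\|v\|_{L^r(\varepsilon, \cdot, w_\alpha^\varepsilon; X_1)}$ by $n$. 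On $[\varepsilon, \mu_n]$, the embedding $X_{1-\frac{1}{p}, p} \hookrightarrow X_{1-\frac{1}{p}, \infty}$ activates \eqref{eq:condFcextrakap0}, and together with Lemma \ref{lem:funnyXembedding} applied at parameters $(r, \alpha)$ (valid since $\frac{1+\alpha}{r} = \frac{1}{p} \leq \frac{(1+\rho_j)(1-\beta_j)}{\rho_j}$) one obtains an estimate of the form
\begin{equation*}
\|v\|_{L^r(\varepsilon, \mu_n, w_\alpha^\varepsilon; X_1)} \leq K \|u(\varepsilon)\|_{X_{1-\frac{1}{p}, r}} + K C_n \sum_{j} \big(1 + T^{\epsilon_j/r} n^{(1-\theta_j)(\rho_j+1)} \|v\|_{L^r(\varepsilon, \mu_n, w_\alpha^\varepsilon; X_1)}^{\theta_j(\rho_j+1)}\big).
\end{equation*}
In the subcritical case ($\epsilon_j > 0$, $\theta_j(\rho_j+1) < 1$) Young's inequality closes the estimate uniformly in $n$ on a short random interval, and iterating yields $v \in L^r(\varepsilon, \tau, w_\alpha^\varepsilon; X_1)$ a.s. A second application of stochastic maximal $L^r_\alpha$-regularity then promotes this to $v \in H^{\theta, r}(\varepsilon, \tau, w_\alpha^\varepsilon; X_{1-\theta})$ for all $\theta \in [0, 1/2)$; Proposition \ref{prop:tracespace}\eqref{it:trace_without_weights_Xp} gives the localized regularity on $(0, \sigma)$, and Sobolev embedding in time delivers the H\"older statement.

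The main obstacle is the critical case, where $\theta_j(\rho_j+1) = 1$ and $\epsilon_j = 0$, so the nonlinear contribution no longer vanishes with the time-length and naive absorption fails. The plan is to follow the extension operator technique of Step 3 in the proof of Theorem \ref{thm:criticalblowup}\eqref{it1:criticalblowup}: subtract a suitable extension $\Ext(v(\varepsilon))$ of the initial datum and exploit the continuity of $v$ at $\varepsilon$ in $X_{1-\frac{1}{p}, r}$ (inherited from the $X_{1-\frac{1}{p}, p}$-continuity established in Step 2) to make $\|v - \Ext(v(\varepsilon))\|_{L^\infty([\varepsilon, \mu']; X_{1-\frac{1}{p}, r})}$ arbitrarily small on some random $\mu' > \varepsilon$. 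This reduces the coefficient multiplying the $L^r(w_\alpha^\varepsilon; X_1)$-norm on the right-hand side below $1$, permitting absorption; patching finitely many such intervals before $\tau$ completes the bootstrap.
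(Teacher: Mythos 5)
Your Step 1 (restart at $\varepsilon$ in the $L^p_0$-setting, identify $\sigma=\tau$ and $u=v$ via Theorem \ref{thm:criticalblowup}\eqref{it1:criticalblowup}) matches the paper. The divergence, and the gap, is in Step 2.

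You propose an \emph{upward} bootstrap: take the $L^p_0$-solution $v$ and promote it to $L^r_\alpha$-regularity by estimating $F(v),G(v)$ in $L^r(w_\alpha^\varepsilon)$, absorbing the nonlinear contribution, and localizing with stopping times $\mu_n$ that control $\|v\|_{L^r(\varepsilon,\,\cdot\,,w_\alpha^\varepsilon;X_1)}$ by $n$. This is circular. The quantity $\|v\|_{L^r(\varepsilon,t,w_\alpha^\varepsilon;X_1)}$ is precisely what you are trying to prove finite: a priori, $v$ is only known to be locally $L^p$-integrable with values in $X_1$, and since $r>p$ there is no implication from $L^p_{\rm loc}$ to $L^r_{\rm loc}$ (the weight $(s-\varepsilon)^\alpha$ helps only near $s=\varepsilon$, not globally). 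Consequently the $\mu_n$ you define need not increase to $\tau$; if $\|v\|_{L^r(\varepsilon,t,w_\alpha^\varepsilon;X_1)}=\infty$ for all $t>\varepsilon$, then $\mu_n=\varepsilon$ identically and the whole absorption argument is vacuous. The extension-operator trick for the critical case does not repair this, because it also requires the $L^r(w_\alpha^\varepsilon;X_1)$-norm of $v$ to be finite on some nontrivial random interval before one can subtract and absorb.

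The paper goes the other way. It first constructs the $L^r_\alpha$-maximal solution $(\widehat{v},\widehat{\tau})$ via Theorem \ref{thm:localwellposed} in the $(r,\alpha)$-setting, so $\widehat{v}\in\bigcap_{\theta<1/2}H^{\theta,r}(\varepsilon,\widehat{\tau},w_\alpha^\varepsilon;X_{1-\theta})$ a.s.\ holds \emph{by construction}. The growth condition \eqref{eq:condFcextrakap0} is then used in a \emph{downward} bootstrap: from the known $L^r_\alpha$-regularity of $\widehat{v}$, the trace continuity $\widehat{v}\in C([\varepsilon,\widehat{\tau});X_{1-\frac1p,r})\hookrightarrow C([\varepsilon,\widehat{\tau});X_{1-\frac1p,\infty})$ activates \eqref{eq:condFcextrakap0}, and a Sobolev-embedding/interpolation argument modelled on Lemma \ref{lem:funnyXembedding} (with $\kappa=0$) shows $\widehat{v}\in L^{p(\rho_j+1)}(\varepsilon,\widehat{\tau}_n;X_{\beta_j,1})$, hence $F(\widehat{v}),G(\widehat{v})\in L^p$ on compact subintervals. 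Maximal $L^p$-regularity (via Proposition \ref{prop:localizationSMR}) then gives $\widehat{v}\in L^p_{\rm loc}([\varepsilon,\widehat{\tau});X_1)\cap C([\varepsilon,\widehat{\tau});X_{1-\frac1p,p})$, i.e., $\widehat{v}$ is also an $L^p_0$-local solution of the nonlinear problem. At that point Step 2 of Theorem \ref{thm:parabreg} applies literally and gives $\widehat{\tau}=\tau$, $\widehat{v}=v$, from which $v$ inherits the $L^r_\alpha$-regularity of $\widehat{v}$. You have thus misidentified the role of \eqref{eq:condFcextrakap0}: it is not there to replace the Lipschitz estimate in constructing the $(r,\alpha)$-solution, but to descend from the $(r,\alpha)$-solution to an $L^p_0$-local solution so that the uniqueness/maximality argument can be invoked.
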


Note that \eqref{eq:condFcextrakap0} does not follow from Assumption \ref{ass:FGcritical} due to the second exponent in the real interpolation space $X_{1-\frac{1}{p},\infty}$ used in the condition $\|x\|_{X_{1-\frac{1}{p},\infty}}\leq n$.

\begin{proof}
Step 1 in the proof of Theorem \ref{thm:parabreg} can be repeated verbatim. As for the second step, let $r\gg p$ and suppose that $\alpha\in (0,\frac{r}{2}-1)$ satisfies \eqref{eq:pr_equal_sobolev_index}. Now, to repeat Step 2 of Theorem \ref{thm:parabreg}, we first need to check that $(\wh{v},\wh{\tau})$ is an $L^{p}_{0}$-local solution to \eqref{eq:SEE}. Note that \eqref{eq:v_regularization_X_proof} still holds in this case and by the trace embedding it follows that $\wh{v}\in C([\varepsilon,\tau);X_{1-\frac{1}{p},r})$ a.s., see Proposition \ref{prop:tracespace}.

We claim that one still has
\begin{equation}
\label{eq:pathwise_regularity_claimed_proof}
\wh{v}\in L^p_{\rm loc}([\varepsilon,\wh{\tau});X_1)\cap C([\varepsilon,\wh{\tau});X_{1-\frac1p,p}).
\end{equation}
As soon as we know this, the second part of the proof of Theorem \ref{thm:parabreg} can be repeated literally as
$
X_{1-\frac{1}{p},p}=X_{1-\frac{1+\alpha}{r},p}\subseteq X_{1-\frac{1+\alpha}{r},r}.
$
To prove \eqref{eq:pathwise_regularity_claimed_proof}, we use maximal $L^p$-regularity once more. Indeed, let $(\wh{\tau}_n)_{n\geq 1}$ be a localizing sequence for $\wh{\tau}$. By Proposition \ref{prop:localizationSMR} it suffices to check that $F(\wh{v})\in L^p(\varepsilon, \wh{\tau}_n; X_0)$ and $G(\wh{v})\in L^p(\varepsilon, \wh{\tau}_n; \gamma(\mathcal{U},X_{1/2}))$ a.s. By \eqref{eq:condFcextrakap0} and \eqref{eq:funnyXrole} it remains to check that $\wh{v}\in  L^{p (\rho_j+1)}(\varepsilon,\wh{\tau}_n;X_{\beta_{j},1})$ a.s.\ for every $j\in \{1, \ldots,m\}$. Fix $j$ and $\theta_j\in (0,1-\beta_j)$. Note that $\theta_j\in (0,\frac{1}{2})$, as $1-\beta_j<\frac1p = \frac{1+\alpha}{r}$ and $\alpha<\frac{r}{2}-1$. By \eqref{eq:v_regularization_X_proof}, a.s., for all $n\geq 1$,
\begin{align*}
\wh{v}\in H^{\theta_j,r}(\varepsilon,\wh{\tau}_n,w_{\alpha}^{\varepsilon};X_{1-\theta_j})\subseteq L^{r_j}(\varepsilon,\wh{\tau}_n;X_{1-\theta_j}),
\end{align*}
where by Sobolev embedding with $-\frac{1}{r_j}=\theta_j - \frac{1+\alpha}{r}$, see e.g.\ \cite[Proposition 2.7]{AV19_QSEE_1}. Thus, a.s.,
\begin{align*}
\wh{v}\in L^{r_j}(\varepsilon,\wh{\tau}_n;X_{1-\theta_j})\cap C([\varepsilon,\wh{\tau}_n];X_{1-\frac1p,r}) \subseteq L^{p (\rho_j+1)}(\varepsilon,\wh{\tau}_n;X_{\beta_{j},1}),
\end{align*}
where the latter can be proved similarly as in Lemma \ref{lem:funnyXembedding} using condition \eqref{eq:subcritical} for $\kappa=0$.
\end{proof}

After either Theorem \ref{thm:parabreg} or \ref{thm:parabreg2} has been applied, one can often apply the more classical procedure to bootstrap regularity in space, see the discussion at the beginning of the section. We will explain such a procedure in detail in Subsection \ref{ss:quasiGSLp} in a particular example.

\begin{remark}
\
\begin{enumerate}[{\rm (1)}]
\item We do not know whether Theorem \ref{thm:parabreg} holds in the important situation where $p=2$ and $\kappa=0$. However, as explained below \eqref{eq:pr_equal_sobolev_index}, this can be fixed by stepping $\delta:=\frac{1+\alpha}{r}-\frac{1}{2}>0$ downwards in the spatial smoothness $(X_0,X_1)$, see \cite[Proposition 6.8]{AV19_QSEE_2}.
\item In light of the new result of Theorem \ref{thm:criticalblowup}\eqref{it1:criticalblowup}, the non-criticality condition in \cite[Theorem 6.3(2)]{AV19_QSEE_2} can be removed.
\end{enumerate}
\end{remark}

In the subcritical regime, the standard bootstrap procedure can also be applied for $p=2$ and $\kappa=0$. This is the content of the following result.

\begin{proposition}[Parabolic regularization in time for $p=2$ and $\kappa=0$]\label{prop:parabreg3}
Suppose that the conditions of Theorem \ref{thm:localwellposed} hold with $p=2$, $\kappa=0$ and let $(u,\sigma)$ be the $L^2_0$-maximal solution to \eqref{eq:SEE}. Assume that there exists $\varphi\in (0,1)$ such that, for all $x\in X_1$ satisfying $\|x\|_{X_{1/2}}\leq n$,
\begin{equation}
\label{eq:subcriticality_p2_proof_boostrap}
\|F(x)\|_{X_0}+ \|G(x)\|_{\g(\mathcal{U},X_{1/2})}\lesssim_n 1+\|x\|_{X_1}^\varphi.
\end{equation}
Finally, suppose that $u_0\in L^0_{\F_0}(\O;X_{\frac12+\varepsilon})$ for some $\varepsilon>0$ and
$(A,B)\in \mathcal{SMR}_{r,\alpha}^{\bullet}$ for all $r\in (2, \infty)$ and $\alpha\in [0,r/2-1)$. Then the following path regularity holds a.s.\
\begin{align*}
u\in H^{\theta,r}_{\rm loc}((0,\sigma);&X_{1-\theta})\cap C^{\theta-\varepsilon}_{\rm loc}((0,\sigma);X_{1-\theta}), \ \ \theta\in [0,1/2), r\in [2, \infty), \varepsilon\in (0,\theta).
  \end{align*}
\end{proposition}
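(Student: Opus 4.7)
The strategy is to adapt the bootstrap scheme of Theorem \ref{thm:parabreg2} to the case $p=2$, $\kappa=0$, where no nontrivial time weight is admissible in the stochastic maximal $L^p$-regularity framework. The extra initial regularity $u_0\in X_{1/2+\varepsilon}$ supplies the room needed to initiate the iteration, and the strictly sublinear growth $\varphi<1$ in \eqref{eq:subcriticality_p2_proof_boostrap} delivers a strict gain in integrability at each step.

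First, fix $T<\infty$ and a localizing sequence $(\sigma_n)_{n\geq 1}$ for $(u,\sigma)$, so that by Theorem \ref{thm:localwellposed}, $u\in L^2(0,\sigma_n;X_1)\cap C([0,\sigma_n];X_{1/2})$ a.s. The growth condition \eqref{eq:subcriticality_p2_proof_boostrap} yields $\|F(u)\|_{X_0}^{2/\varphi}+\|G(u)\|_{\gamma(\mathcal{U},X_{1/2})}^{2/\varphi}\lesssim_n 1+\|u\|_{X_1}^{2}$, and therefore $F(u)\in L^{r_1}(0,\sigma_n;X_0)$ and $G(u)\in L^{r_1}(0,\sigma_n;\gamma(\mathcal{U},X_{1/2}))$ a.s.\ for $r_1:=2/\varphi>2$. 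Since $u_0\in X_{1/2+\varepsilon}$, the embedding $X_{1/2+\varepsilon}\hookrightarrow X_{1-(1+\alpha_1)/r_1,r_1}$ holds whenever $(1+\alpha_1)/r_1>1/2-\varepsilon$, i.e.\ whenever $\alpha_1\in \bigl((1-2\varepsilon-\varphi)/\varphi,\,(1-\varphi)/\varphi\bigr)\cap[0,\infty)$; this interval is nonempty precisely because $\varepsilon>0$, and every such $\alpha_1$ is admissible since $\alpha_1<r_1/2-1$. Applying Proposition \ref{prop:localizationSMR} to the linear equation with inhomogeneities $F(u),G(u)$ and initial datum $u_0$ furnishes a unique strong solution $v$ on $[0,\sigma_n]$ with
\begin{equation*}
v\in H^{\theta,r_1}(0,\sigma_n,w_{\alpha_1};X_{1-\theta})\text{ a.s.,\ for all }\theta\in[0,1/2).
\end{equation*}
H\"older's inequality gives $L^{r_1}(0,\sigma_n,w_{\alpha_1};X_1)\hookrightarrow L^2(0,\sigma_n;X_1)$ under $\alpha_1<r_1/2-1$, so $v$ is also an $L^2_0$-strong solution of the same linear equation; uniqueness of $L^2_0$-strong solutions (guaranteed by $(A,B)\in\mathcal{SMR}^\bullet_{2,0}\subseteq\mathcal{SMR}_{2,0}$, which is implicit in the hypotheses of Theorem \ref{thm:localwellposed}) forces $v=u$ on $[0,\sigma_n]$. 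Combining with Proposition \ref{prop:tracespace}\eqref{it:trace_without_weights_Xp} yields
\begin{equation*}
u\in H^{\theta,r_1}_{\rm loc}((0,\sigma);X_{1-\theta})\cap C_{\rm loc}((0,\sigma);X_{1-1/r_1,r_1})\text{ a.s.\ for all }\theta\in[0,1/2).
\end{equation*}

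Next, I would iterate. Set $r_{k+1}:=r_k/\varphi$ and fix $t_0\in(0,\sigma)$ arbitrary. By the previous step, $u(t_0)\in X_{1-1/r_k,r_k}$, and since $r_{k+1}>r_k$ implies $1-1/r_k>1-1/r_{k+1}$, the standard real-interpolation embedding gives $X_{1-1/r_k,r_k}\hookrightarrow X_{1-1/r_{k+1},r_{k+1}}$, placing $u(t_0)$ in the unweighted ($\alpha_{k+1}=0$) trace space at level $r_{k+1}$. The sublinear growth promotes $F(u),G(u)$ to $L^{r_{k+1}}(t_0,\sigma_n;\cdot)$ a.s., and the same Proposition \ref{prop:localizationSMR}-plus-uniqueness argument lifts $u$ to $H^{\theta,r_{k+1}}([t_0,\sigma_n];X_{1-\theta})$ for every $\theta\in[0,1/2)$. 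Letting $t_0\downarrow 0$ and $\sigma_n\uparrow\sigma$ delivers $u\in H^{\theta,r_{k+1}}_{\rm loc}((0,\sigma);X_{1-\theta})\cap C_{\rm loc}((0,\sigma);X_{1-1/r_{k+1},r_{k+1}})$ a.s. Since $r_k=2/\varphi^k\to\infty$, after finitely many steps every $r\in[2,\infty)$ is covered, proving the $H^{\theta,r}$ part of \eqref{eq:1r-instantp=2}. The H\"older regularity then follows from the vector-valued Sobolev embedding $H^{\theta,r}\hookrightarrow C^{\theta-1/r}$ in time, by picking $r$ large enough that $1/r<\varepsilon$.

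The main obstacle is to rigorously identify, at each stage of the iteration, the $L^{r_k}_{\alpha_k}$-regular solution produced by stochastic maximal regularity with the original $u$. Proposition \ref{prop:localizationSMR} grants uniqueness only within its own weighted space $L^{r_k}_{\alpha_k}$, whereas $u$ a priori lives only in $L^2_0$; the bridge is the H\"older inclusion $L^{r_k}(w_{\alpha_k})\hookrightarrow L^2$ on bounded intervals, which embeds the maximal-regularity solution into the $L^2_0$-class where uniqueness within $\mathcal{SMR}_{2,0}$ applies. A secondary delicate point concerns the admissibility of the weight $\alpha_1$ at the initial step: the interval of admissible weights has length $2\varepsilon/\varphi$ and disappears as $\varepsilon\downarrow 0$, which explains why the hypothesis $u_0\in X_{1/2+\varepsilon}$ with $\varepsilon>0$ is essential and cannot be relaxed to $u_0\in X_{1/2}$ without further argument. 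For the subsequent iterations, no weight is needed, since the trace regularity inherited from step $k$ automatically lies above the threshold $1-1/r_{k+1}$; this reflects the fact that a single bootstrap step moves the analysis out of the critical endpoint regime $(p,\kappa)=(2,0)$.
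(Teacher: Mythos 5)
Your proof is correct, but it takes a genuinely different route from the paper's. The paper performs a \emph{single} lift: it identifies the $L^{\wh{p}}_{\wh{\kappa}}$-solution $v_n$ of the frozen linear problem with $u$ via $\mathcal{SMR}_{2,0}$-uniqueness (just as you do), observes that the conditions of Theorem \ref{thm:localwellposed} still hold for the pair $(\wh{p},\wh{\kappa})$ because $\frac{1+\wh{\kappa}}{\wh{p}}<\frac12$, concludes that $(u,\sigma)$ \emph{is} the $L^{\wh{p}}_{\wh{\kappa}}$-maximal solution, and then simply invokes Theorem \ref{thm:parabreg} as a black box to supply the full range of exponents. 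You instead iterate the linear maximal-regularity step directly — $r_{k+1}=r_k/\varphi$, with a weight $\alpha_1>0$ only at the first step and no weight afterwards — never re-entering the nonlinear well-posedness theory. The trade-off is clear: your argument is more self-contained (it uses only Proposition \ref{prop:localizationSMR}, the growth bound \eqref{eq:subcriticality_p2_proof_boostrap}, and $\mathcal{SMR}_{2,0}$-uniqueness, and does not need to re-check Assumption \ref{ass:FGcritical} at each level), but requires explicit bookkeeping that Theorem \ref{thm:parabreg} already packages, and your analysis of the admissible weight interval $\bigl((1-2\varepsilon-\varphi)/\varphi,(1-\varphi)/\varphi\bigr)\cap[0,\infty)$ is in fact the same computation the paper does implicitly when choosing $\wh{\kappa}$. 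Two minor loose ends worth tightening: you should specify the localizing sequence so that $\sup_{t\le\sigma_n}\|u(t)\|_{X_{1/2}}\le n$ (the paper defines $\sigma_n$ precisely for this purpose, since the implicit constant in \eqref{eq:subcriticality_p2_proof_boostrap} depends on $n$), and the restart datum at time $t_0$ should be $\one_{\{\sigma>t_0\}}u(t_0)$ to be $\F_{t_0}$-measurable. Neither affects the validity of the scheme.
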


The condition \eqref{eq:subcriticality_p2_proof_boostrap} does not imply that either $F$ or $G$ is sublinear due to the $n$-dependent implicit constant. Moreover, arguing as in Lemma \ref{lem:funnyXembedding}, one can check that \eqref{eq:subcriticality_p2_proof_boostrap} holds in case the setting $(p,\a)=(2,0)$ is \emph{subcritical} for \eqref{eq:SEE}, i.e.\ \eqref{eq:subcritical} holds with the strict inequality for all $j$.
The argument below also works for $p>2$. However, we do not state this here, as Theorems \ref{thm:parabreg} and \ref{thm:parabreg2} cover the cases $p>2$ and also include the critical setting.

\begin{proof}
For each $n\geq 1$, let
$$
\sigma_n:=\inf\{t\in [0,\sigma)\,:\, \|u(t)\|_{X_{1/2}}+\|u\|_{L^2(0,t;X_1)}\geq n \}\wedge n,
$$
where $\inf\emptyset :=\sigma$. It is clear that $(\sigma_n)_{n\geq 1}$ is a localizing sequence of stopping times for $(u,\sigma)$. Let $f_n = \one_{[0,\sigma_n]}F(u)$ and $g_n =   \one_{[0,\sigma_n]}G(u)$. Then by \eqref{eq:subcriticality_p2_proof_boostrap}, a.s.,
\begin{align*}
\textstyle
\|f_n\|_{L^{2/\varphi}(0,\sigma_n;X_0)}^{2/\varphi}+
\|g_n\|_{L^{2/\varphi}(0,\sigma_n;\g(\mathcal{U},X_{1/2}))}^{2/\varphi}
\lesssim_n \int_0^{\sigma_n} \|u(t)\|^{2}_{X_1}\,\dd t \lesssim_n 1.
\end{align*}
Set $\wh{p} := 2/\varphi>2$ and let $\wh{\kappa}\in (0,\wh{p}/2-1)$ be such that $\frac{1+\wh{\kappa}}{\wh{p}}>\frac12-\varepsilon$. Thus, $u_0\in X_{1-\frac{1+\wh{\kappa}}{\wh{p}},\wh{p}}$ a.s. Now, let $v_n$ be the $L^{\wh{p}}_{\wh{\kappa}}$-solution to the following linear initial value problem:
\begin{align*}
\dd v + A v\,\dd t = f_n \,\dd t + (B v +g_n)\,\dd W,\qquad
v(0)=u_{0}.
\end{align*}
Clearly, $v_n$ is also an $L^{2}$-solution to the above problem. Since $u$ is also an $L^2$-solution on $[0,\sigma_n]$, it follows from $\mathcal{SMR}_{2,0}$ that $u=v_n$ a.e.\ on $\O\times [0,\sigma_n]$. Thus, $u$ is an $L^{\wh{p}}_{\wh{\kappa}}$-solution to \eqref{eq:SEE} on $[0,\sigma_n]$.
Since $n$ was arbitrary, $(u,\sigma)$ is a local $L^{\wh{p}}_{\wh{\kappa}}$-solution to \eqref{eq:SEE}. Moreover, it is maximal since $L^{\wh{p}}_{\wh{\kappa}}$-solutions are also $L^2$-solutions. As the conditions of Theorem \ref{thm:localwellposed} hold with $(p,\kappa)=(2,0)$ by assumption, they also hold with $(p,\kappa)$ replaced by $(\wh{p},\wh{\kappa})$ in \eqref{eq:subcritical} (use $\frac{1+\wh{\kappa}}{\wh{p}}<\frac12$). Hence, $(u,\sigma)$ is the unique $L^{\wh{p}}_{\wh{\kappa}}$-solution of \eqref{eq:SEE} given by Theorem \ref{thm:localwellposed}. The stated regularity assertions are now immediate from Theorem \ref{thm:parabreg}.
\end{proof}

Thanks to the regularization results of Theorems \ref{thm:parabreg} and \ref{thm:parabreg2}, or Proposition \ref{prop:parabreg3}, one can ``transfer'' blow-up criteria to a rougher setting. This, in particular, allows one to show the global well-posedness of SPDEs for very rough initial data in case it holds for smooth data. For instance, this procedure has been applied to 2D Navier--Stokes equations in \cite[Theorem 2.12]{AV20_NS} and to reaction-diffusion equations in \cite[Theorem 3.2 and 5.2]{AVreaction-global}.

\begin{corollary}[Transference of blow-up criteria]\label{cor:transfblowup}
Suppose that the conditions of Theorem \ref{thm:localwellposed} hold and let $(u,\sigma)$ be the $L^p_\a$-maximal local solution to \eqref{eq:SEE} provided there. Suppose that for some $r\geq p$, the solution satisfies
\begin{equation}
\label{eq:instantaneous_regularization_r}
u\in  H^{\theta,r}_{\loc}((0,\sigma);X_{1-\theta}) \text{ a.s.\ for all }\theta\in [0,1/2).
\end{equation}
Let $\alpha\in [0,r/2-1)$ be such that $\frac{1+\alpha}{r}\leq \frac{1+\kappa}{p}$.
Then the following assertions hold:
\begin{enumerate}[{\rm(1)}]
\item\label{it1:transfblowup} For all $s>0$,
\begin{align*}
&\P\big(\sigma<\infty,\, \lim_{t\uparrow \sigma} u(t) \ \text{exists in $X_{1-\frac{1+\alpha}{r},r}$}\big) = 0;\\
&\P\big(s<\sigma<\infty,\, \sup_{t\in [s,\sigma)} \|u(t)\|_{X_{1-\frac{1+\alpha}{r},r}} + \|u\|_{L^r(s,\sigma;X_{1-\frac{\alpha}{r}})} <\infty\big)= 0.
\end{align*}
\item\label{it2:transfblowup} If either $(p,\kappa)$ is critical and $\frac{1+\alpha}{r}<\frac{1+\kappa}{p}$, or $(p,\kappa)$ is noncritical, then for all $s>0$,
\[\P\big(s<\sigma<\infty,\,\sup_{t\in [s,\sigma)} \|u(t)\|_{X_{1-\frac{1+\alpha}{r},r}} <\infty\big) = 0.\]
\end{enumerate}
\end{corollary}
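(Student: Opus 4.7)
The plan is to use the instantaneous regularization hypothesis \eqref{eq:instantaneous_regularization_r} to restart \eqref{eq:SEE} at an arbitrary time $s>0$ with $X_{1-\frac{1+\alpha}{r},r}$-valued initial data, identify the resulting $L^r_\alpha$-maximal solution with $u|_{[s,\sigma)}$, and then invoke the blow-up criteria of Theorems \ref{thm:criticalblowup}--\ref{thm:subcriticalblowup} in the $(r,\alpha)$-setting. This follows the same philosophy as Step 2 in the proof of Theorem \ref{thm:parabreg}.

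Fix $s>0$ and set $\V:=\{\sigma>s\}\in \F_s$. By \eqref{eq:instantaneous_regularization_r} and Proposition \ref{prop:tracespace}\eqref{it:trace_without_weights_Xp}, $u\in C((0,\sigma);X_{1-\frac{1}{r},r})$ a.s., whence $\one_{\V}u(s)\in L^0_{\F_s}(\O;X_{1-\frac{1+\alpha}{r},r})$. Assumption \ref{ass:FGcritical} at $(r,\alpha)$ is inherited from the one at $(p,\kappa)$ through $\frac{1+\alpha}{r}\leq \frac{1+\kappa}{p}\leq \frac{(1+\rho_j)(1-\beta_j)}{\rho_j}$; together with the implicit hypothesis $(A,B)\in \mathcal{SMR}_{r,\alpha}^{\bullet}$ (without which the $(r,\alpha)$-blow-up criteria are meaningless), Theorem \ref{thm:localwellposed} produces an $L^r_\alpha$-maximal solution $(\wh{v},\wh{\tau})$ to \eqref{eq:SEE} starting from $\one_{\V}u(s)$ at time $s$.

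I now claim $\wh{\tau}=\sigma$ a.s.\ on $\V$ and $\wh{v}=u$ on $[s,\sigma)\cap\V$. Using \eqref{eq:instantaneous_regularization_r} and the boundedness of $w_\alpha^s$ on compact sub-intervals of $(s,\sigma)$, $u|_{[s,\cdot)}$ is an $L^r_\alpha$-local solution of \eqref{eq:SEE} from $u(s)$ at time $s$ with lifetime $\sigma$; by $L^r_\alpha$-maximality of $(\wh{v},\wh{\tau})$ this forces $\sigma\leq \wh{\tau}$ and $\wh{v}=u$ on $[s,\sigma)\cap\V$. For the reverse inequality, on $\{\sigma<\wh{\tau}\}\cap \V$ the instantaneous regularization in Theorem \ref{thm:localwellposed}\eqref{it1:localwellposed} applied to $(\wh{v},\wh{\tau})$ yields $\wh{v}\in C((s,\wh{\tau});X_{1-\frac{1}{r},r})$, so $\lim_{t\uparrow \sigma}\wh{v}(t)=\wh{v}(\sigma)$ exists in $X_{1-\frac{1}{r},r}$. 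Since $\wh{v}=u$ on $[s,\sigma)\cap \V$ and $\frac{1}{r}<\frac{1+\kappa}{p}$ (as $r\geq p$ and $\kappa\geq 0$) gives $X_{1-\frac{1}{r},r}\hookrightarrow X_{1-\frac{1+\kappa}{p},p}$, it follows that $\lim_{t\uparrow\sigma}u(t)$ exists in $X_{1-\frac{1+\kappa}{p},p}$ on $\{\sigma<\wh{\tau}\}\cap\V\subseteq\{\sigma<\infty\}$, contradicting Theorem \ref{thm:criticalblowup}\eqref{it1:criticalblowup} applied to $(u,\sigma)$. Hence $\wh{\tau}=\sigma$ a.s.\ on $\V$.

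Parts \eqref{it1:transfblowup} and \eqref{it2:transfblowup} now follow by applying Theorems \ref{thm:criticalblowup} and \ref{thm:subcriticalblowup}, respectively, to the $L^r_\alpha$-maximal solution $(\wh{v},\wh{\tau})$, using $\{s<\sigma<\infty\}\cap\V=\{\wh{\tau}<\infty\}\cap\V$ and $\wh{v}=u$ on $[s,\sigma)\cap \V$. For \eqref{it2:transfblowup} it remains to verify that the $(r,\alpha)$-setting is noncritical: if $(p,\kappa)$ is noncritical then $\frac{1+\alpha}{r}\leq \frac{1+\kappa}{p}<\frac{(1+\rho_j)(1-\beta_j)}{\rho_j}$ for every $j$, and if $(p,\kappa)$ is critical but $\frac{1+\alpha}{r}<\frac{1+\kappa}{p}$ the strict inequality $\frac{1+\alpha}{r}<\frac{(1+\rho_j)(1-\beta_j)}{\rho_j}$ again holds. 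The main technical obstacle is the reverse maximality $\wh{\tau}\leq \sigma$ above, which crucially relies both on the instantaneous regularization of $\wh{v}$ past the restart time $s$ and on the newly established ``limit exists'' blow-up criterion in Theorem \ref{thm:criticalblowup}\eqref{it1:criticalblowup}.
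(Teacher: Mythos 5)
Your proposal is correct and follows essentially the same route as the paper's proof (restart at time $s$ with $X_{1-\frac{1+\alpha}{r},r}$-valued data, identify the restarted $L^r_\alpha$-maximal solution with $u|_{[s,\sigma)}$ on $\V=\{\sigma>s\}$, and apply Theorems \ref{thm:criticalblowup}--\ref{thm:subcriticalblowup} in the $(r,\alpha)$-setting). One small point where your write-up is actually tighter than the paper's: to obtain $\sigma\leq\wh\tau$ on $\V$ from the maximality of the restarted solution, you correctly note that the instantaneous regularization \eqref{eq:instantaneous_regularization_r} makes $u|_{[s,\cdot)}$ an $L^r_\alpha$-local solution (with zero-weight replaced by $w_\alpha^s$ via boundedness of the weight on $[s,t]$ and $X_{1-\frac1r,r}\hookrightarrow X_{1-\frac{1+\alpha}{r},r}$), which is what the maximality of an $L^r_\alpha$-maximal solution requires; the paper's proof asserts this is an $L^p_\kappa$-local solution, which is a slip. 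Your handling of the reverse direction and the criticality bookkeeping for part \eqref{it2:transfblowup} matches the paper. The parenthetical justification ``$r\geq p$ and $\kappa\geq 0$'' only gives $\frac1r\leq\frac{1+\kappa}{p}$ rather than strict inequality, but the needed embedding $X_{1-\frac1r,r}\hookrightarrow X_{1-\frac{1+\kappa}{p},p}$ still holds in the borderline case $r=p$, $\kappa=0$, so no harm is done. You also correctly flag the implicit $(A,B)\in\mathcal{SMR}_{r,\alpha}^\bullet$ hypothesis, which the paper keeps tacit through its verification of \eqref{eq:instantaneous_regularization_r} via the regularization theorems.
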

The regularization condition \eqref{eq:instantaneous_regularization_r} can be verified through either
Theorem \ref{thm:parabreg}-\ref{thm:parabreg2} and Proposition \ref{prop:parabreg3}, or \cite[Proposition 6.8]{AV19_QSEE_2} in case $p=2,\a=0$, and the setting is critical.

In the above the parameter $s>0$ is necessary because $u_0$ may not belong to $X_{1-\frac{1+\a}{p},p}$, which would make the norms in expressions like \eqref{it2:transfblowup} ill-defined when $s=0$.
However, these norms are well-defined for positive times $s>0$ thanks to the assumed instantaneous regularization.
Note that in \eqref{it1:transfblowup} we automatically avoid $s=0$ since $\sigma>0$ a.s.\ by Theorem \ref{thm:localwellposed}.

The advantage of Corollary \ref{cor:transfblowup} over the blow-up criteria in Subsection \ref{subsec:blow_up} is that the initial pair $(p,\a)$ does not influence the explosion of the solution. In other words, the global well-posedness of \eqref{eq:SEE} is independent of the specific setting considered. Furthermore, one benefit of \eqref{it2:transfblowup} is that $(p, \kappa)$ could be critical, but only the supremum needs to be estimated when applying the result. This approach, for example, appears in \cite[Theorem 3.2]{AVreaction-global}.

In many situations, the argument used in the proof of Corollary \ref{cor:transfblowup} can be extended further. Indeed, for many SPDEs also high-order integrability/smoothness can be established (see references below \eqref{eq:pr_equal_sobolev_index}).
Additionally, one can prove that the blow-up criteria can be transferred between different settings, as demonstrated in \cite[Theorem 2.10]{AVreaction-local}, \cite[Theorem 2.9]{AV20_NS}, \cite[Theorem 3.9]{agresti2023primitive}, and \cite[Proposition 1.8]{AgrSau}.

\begin{proof}[Proof of Corollary \ref{cor:transfblowup}]
The argument below is a variation of the one given in \cite[Theorem 2.10]{AVreaction-local}. We only provide the details for the first statement in \eqref{it1:transfblowup}, the others follow similarly by applying Theorem \ref{thm:criticalblowup}\eqref{it2:criticalblowup} or Theorem \ref{thm:subcriticalblowup}  instead of Theorem \ref{thm:criticalblowup}\eqref{it1:criticalblowup}.

First in \eqref{it1:transfblowup}: Let us begin by recalling that, by Theorem \ref{thm:criticalblowup}\eqref{it1:criticalblowup},
\begin{align}\label{eq:keyblowup}
\P\big(\sigma<\infty,\, \lim_{t\uparrow \sigma} u(t) \ \text{exists in $X_{1-\frac{1+\kappa}{p},p}$}\big) = 0.
\end{align}
Next, we will use the $(r,\alpha)$-setting. Fix $s>0$ and let $\V:=\{\sigma>s\}$.
Note that from the instantaneous regularization assumption (i.e.\ either Theorem \ref{thm:parabreg} or \ref{thm:parabreg2} or Proposition \ref{prop:parabreg3} hold), $u\in C_{\rm loc}((0,\sigma);X_{1-\theta})$ for any $\theta\in (0,1/2)$. In particular, $\one_{\V}u(s)\in L^0_{\F_s}(\Omega;X_{1-\frac{1+\alpha}{r},r})$. By Theorem \ref{thm:localwellposed} (up to translation), we can find an $L^r_{\alpha}$-maximal solution $(v,\tau)$ to \eqref{eq:SEE} with initial value $\one_{\V}u(s)$ at initial time $s$. Again, by the instantaneous regularization assumption,
\begin{align}\label{eq:regvblow}
v\in H^{\theta,\ell}_{\rm loc}((s,\tau);X_{1-\theta})\cap C^{\theta-\varepsilon}_{\rm loc}((s,\tau);X_{1-\theta}) \ \text{a.s.} \ \text{for all $\theta\in [0,1/2)$, $\ell>2$, $\varepsilon\in (0,\theta)$}.
\end{align}
Moreover, due to Theorem \ref{thm:criticalblowup}\eqref{it1:criticalblowup},
\begin{align}\label{eq:blowupv}
\P\big(\tau<\infty,\, \lim_{t\uparrow \sigma} v(t) \ \text{exists in $X_{1-\frac{1+\alpha}{r},r}$}\big) = 0.
\end{align}
To conclude it suffices to show that
$\tau = \sigma$ on $\V$ and $u = v$ on $[s,\sigma)\times \V$.
To prove the latter, we argue as in the proofs of the instantaneous regularization results.
Indeed, note that $(u \one_{\V\times [s,\sigma)},\sigma \one_{\V}+ s\one_{\V^{{\rm c}}})$ is an $L^p_{\kappa}$-local solution to the same equation as $(v,\tau)$. Therefore, by maximality of $\tau$, we obtain $\sigma\leq \tau$ a.s.\ on $\V$ and $u = v$ on $[s,\sigma)\times \V$.
Therefore, it is enough to show $\P(s<\sigma<\tau) = 0$.
On the set $\{s<\sigma<\tau\}$, \eqref{eq:regvblow} gives that $u = v\in C((s,\sigma];X_{1-\theta})$ a.s.\ for all $\theta\in[0,1/2)$.
Thus,
\begin{align*}
\P(s<\sigma<\tau) & =
\P\big(s<\sigma<\tau,\lim_{t\uparrow \sigma} u(t) \ \text{exists in $X_{1-\frac{1+\a}{p},p}$}\big)
\\ &
\leq \P\big(\sigma<\infty,\,
\lim_{t\uparrow \sigma} u(t) \ \text{exists in $X_{1-\frac{1+\a}{p},p}$}\big) = 0,
\end{align*}
where in the last equality we used \eqref{eq:keyblowup}.
\end{proof}

We conclude this section by addressing the question of whether two solutions arising from two different choices of $(p,\a)$ actually lead to the same solution, i.e.\ whether different settings are \emph{compatible}. This is the right time to explore this question, as the instantaneous regularization results presented above suggest that different choices of the parameters $(p,\a)$ lead to the same regularity at least for positive times.

The result below is a version of \cite[Proposition 3.5]{AVreaction-local} for \eqref{eq:SEE}. Note that \cite[Proposition 3.5]{AVreaction-local} is not included in the following result, as in \cite{AVreaction-local} we also vary the spatial regularity (which here is encoded in $X_\theta$).
Of course, we could also cover \cite[Proposition 3.5]{AVreaction-local} by upgrading the level of generality and also varying the ground space $(X_0,X_1)$, and hence obtain compatibility in terms of all parameters $(p,\a,X_0,X_1)$. However, we prefer to present the following compatibility result in a simplified setting.

\begin{corollary}[Compatibility]\label{cor:comp}
Suppose that the conditions of Theorem \ref{thm:localwellposed} hold for two pairs of parameters $(p_1,\a_1)$ and $(p_2,\a_2)$, and let $(u_1,\sigma_1)$ and $(u_2,\sigma_2)$ be the corresponding maximal $L^{p_i}_{\a_i}$-solutions for $i=1$ and $i=2$. Assume that $p_1\leq p_2$ and that the solution $(u_1,\sigma_1)$ instantaneously regularizes to
\begin{equation}
\label{eq:instantaneous_regularization_p2}
u_1\in  H^{\theta,p_2}_{\loc}((0,\sigma_1);X_{1-\theta}) \text{ a.s.\ for all }\theta\in [0,1/2).
\end{equation}
Then $\sigma_1=\sigma_2$ a.s.\ and $u_1=u_2$ a.e.\ on $[0,\sigma_1)\times \O$.
\end{corollary}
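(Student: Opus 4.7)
The plan is to view $(u_1,\sigma_1)$ as an $L^{p_2}_{\alpha_2}$-maximal solution to \eqref{eq:SEE} via a restart argument at positive times, and then to conclude by the uniqueness statement of Theorem \ref{thm:localwellposed}\eqref{it2:localwellposed} within the single $(p_2,\alpha_2)$-setting. The overall structure closely parallels the proofs of Corollary \ref{cor:transfblowup} and Theorem \ref{thm:parabreg}. In what follows set $\beta_i:=1-\frac{1+\alpha_i}{p_i}$, so that $u_0\in X_{\beta_1,p_1}\cap X_{\beta_2,p_2}$ a.s.\ by the assumption that the hypotheses of Theorem \ref{thm:localwellposed} hold in both settings.

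Fix $s>0$ and let $\V_i:=\{\sigma_i>s\}$ for $i=1,2$. First I would apply Proposition \ref{prop:tracespace}\eqref{it:trace_without_weights_Xp} to the regularization assumption $u_1\in H^{\theta,p_2}_{\loc}((0,\sigma_1);X_{1-\theta})$ (for any $\theta\in(1/p_2,1/2)$) to obtain $u_1\in C((0,\sigma_1);X_{1-\frac{1}{p_2},p_2})$, and analogously $u_2\in C((0,\sigma_2);X_{1-\frac{1}{p_2},p_2})$ via Theorem \ref{thm:localwellposed}\eqref{it1:localwellposed}. Since $\alpha_2\geq 0$ yields the embedding $X_{1-\frac{1}{p_2},p_2}\embed X_{\beta_2,p_2}$, each restricted process $u_i|_{[s,\sigma_i)}$ is, on $\V_i$, an $L^{p_2}_{\alpha_2}$-local solution of \eqref{eq:SEE} starting at time $s$ with initial datum $\one_{\V_i}u_i(s)$; the weighted integrability on $[s,\cdot]$ follows from the unweighted $L^{p_2}$-integrability given by the regularization together with the boundedness of $w^s_{\alpha_2}$ on bounded intervals, while \eqref{eq:stronsol} on $[s,\cdot]$ is inherited from the strong-solution property in the respective original setting.

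Let $(v,\tau)$ be the $L^{p_2}_{\alpha_2}$-maximal solution with initial datum $\one_{\V_1}u_1(s)$ at time $s$, given by Theorem \ref{thm:localwellposed} after time translation. Arguing as in Corollary \ref{cor:transfblowup}, maximality of $\tau$ yields $\sigma_1\leq\tau$ on $\V_1$ and $u_1=v$ on $[s,\sigma_1)\times\V_1$. For the reverse inequality, on $\V_1\cap\{\sigma_1<\tau\}$ the $(p_2,\alpha_2)$-instantaneous regularization of $v$ from Theorem \ref{thm:localwellposed}\eqref{it1:localwellposed} gives $v\in C((s,\tau);X_{1-\frac{1}{p_2},p_2})$, so that $\lim_{t\uparrow\sigma_1}u_1(t)=v(\sigma_1)$ exists in $X_{1-\frac{1}{p_2},p_2}$. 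The embedding $X_{1-\frac{1}{p_2},p_2}\embed X_{\beta_1,p_1}$ (valid since $p_1\leq p_2$ and $\alpha_1\geq 0$ force $(1+\alpha_1)p_2\geq p_1$) then contradicts Theorem \ref{thm:criticalblowup}\eqref{it1:criticalblowup} applied to $(u_1,\sigma_1)$ in the $(p_1,\alpha_1)$-setting, so $\tau=\sigma_1$ on $\V_1$. The same argument applied to $u_2$ shows that the $L^{p_2}_{\alpha_2}$-maximal solution $(\wt{v},\wt{\tau})$ with initial datum $\one_{\V_2}u_2(s)$ at time $s$ satisfies $\wt{\tau}=\sigma_2$ on $\V_2$ and $u_2=\wt{v}$ on $[s,\sigma_2)\times\V_2$.

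The main remaining obstacle is to prove $u_1(s)=u_2(s)$ a.s.\ on $\V_1\cap\V_2$; once this is obtained, uniqueness of the $L^{p_2}_{\alpha_2}$-maximal solution in Theorem \ref{thm:localwellposed}\eqref{it2:localwellposed} forces $v=\wt{v}$ and $\tau=\wt{\tau}$, hence $\sigma_1=\sigma_2$ and $u_1=u_2$ on $[s,\sigma_1\wedge\sigma_2)\times(\V_1\cap\V_2)$, and the conclusion extends to $[0,\sigma_1\wedge\sigma_2)\times\Omega$ by letting $s\downarrow 0$ together with the continuity $\lim_{s\downarrow 0}u_i(s)=u_0$ in $X_{\beta_i,p_i}$. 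The equality $u_1(s)=u_2(s)$ for small $s$ would be established by a direct local uniqueness argument on the integral equation \eqref{eq:stronsol}: the embeddings $L^{p_i}(w_{\alpha_i})\embed L^2$ on bounded intervals place the difference $u_1-u_2$ in the common framework $L^2(0,s;X_1)\cap C([0,s];X_0)$, and, using the local Lipschitz estimates of Assumption \ref{ass:FGcritical} together with an It\^o/energy estimate on $\|u_1-u_2\|_{X_0}^2$ and Gronwall's inequality, one concludes $u_1=u_2$ on a small random initial interval, which is then propagated by the restart scheme above.
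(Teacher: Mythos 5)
The overall skeleton of your argument (restart at a positive time $s$, identify the restarted solutions with maximal $L^{p_2}_{\alpha_2}$-solutions, then compare the lifetimes via the blow-up criterion of Theorem \ref{thm:criticalblowup}\eqref{it1:criticalblowup}) is sound and close in spirit to the paper's proof. However, the step you acknowledge as the crux --- proving $u_1(s)=u_2(s)$ a.s.\ on $\V_1\cap\V_2$ for small $s$ --- contains a genuine gap, and the tool you propose to fill it would not work in the generality of the statement.

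Concretely, you propose to show that $u_1=u_2$ near $t=0$ by applying an It\^o/energy estimate to $\|u_1-u_2\|_{X_0}^2$ together with Assumption \ref{ass:FGcritical} and Gronwall. This fails for several reasons. First, $X_0$ is only a UMD Banach space of type $2$, so an It\^o formula for $\|\cdot\|_{X_0}^2$ is not available (and even if $X_0$ were a Hilbert space, pairing $u_1-u_2$ with $A(u_1-u_2)$ would require a coercivity estimate for $(A,B)$, which the framework does not assume --- only stochastic maximal $L^p$-regularity). Second, the local Lipschitz estimate in Assumption \ref{ass:FGcritical} controls $\|F(u)-F(v)\|_{X_0}$ by $\|u-v\|_{X_{\beta_j,1}}$ multiplied by a factor $1+\|u\|_{X_{\beta_j,1}}^{\rho_j}+\|v\|_{X_{\beta_j,1}}^{\rho_j}$; both quantities live close to the $X_1$-scale, so the product cannot be absorbed into a linear Gronwall inequality in the $X_0$-norm. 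Indeed, the setting explicitly allows nonlinearities that are not weakly monotone (this is emphasized at the start of Section \ref{sec:Var}), and for such $F$ a Gronwall argument on $\|u_1-u_2\|_{X_0}^2$ cannot give uniqueness; this is precisely what motivates the maximal-regularity approach. The paper's proof instead establishes the local equality $u_1=u_2$ on an interval $[0,\tau)$ with $\tau>0$ via the local continuity Proposition \ref{prop:local_continuity_SEE} and a fixed-point comparison of maximal $L^p_\kappa$-regularity schemes for the two settings (the argument of \cite[Proposition~3.5, Steps 1--2]{AVreaction-local}), which balances the nonlinear estimates in the weighted space-time norms that the framework actually controls. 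Once that local equality is in hand, the remaining identification you carry out (local solution of $(u_1,\sigma_1)$ in the $(p_2,\alpha_2)$-setting via \eqref{eq:instantaneous_regularization_p2}, maximality of $(u_2,\sigma_2)$, and the blow-up criterion) essentially coincides with the paper's; in fact the paper proceeds more directly, without the restart at $s>0$, since the $[0,\tau)$-equality already lets one promote $(u_1,\sigma_1)$ to a local $L^{p_2}_{\alpha_2}$-solution.
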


The assumption \eqref{eq:instantaneous_regularization_p2} is only for positive times and allows us to connect the $(p_1,\a_1)$-setting to the $(p_2,\a_2)$-one. We do not expect compatibility to hold without any sort of instantaneous regularization. As before the regularization can be checked through either
Theorem \ref{thm:parabreg}-\ref{thm:parabreg2}, and Proposition \ref{prop:parabreg3} or \cite[Proposition 6.8]{AV19_QSEE_2} in case $p=2,\a=0$, and the setting is critical. From the proof below, one can check that the condition \eqref{eq:instantaneous_regularization_p2} can be relaxed. It is only used to prove that $(u_1,\sigma_1)$ is a local $L^{p_2}_{\a_2}$-solution to \eqref{eq:SEE} in the $(p_2,\a_2)$-setting. Indeed, by Lemma \ref{lem:funnyXembedding}, the assumption \eqref{eq:instantaneous_regularization_p2} can be replaced by $u_1\in \bigcap_{i\in \{1,\dots,m\}} L^{p_2(\rho_i+1)}_{\loc}((0,\sigma_1);X_{\beta_i})$ a.s.

\begin{proof}
As the assumptions of Theorem \ref{thm:localwellposed} hold for $(p_i,\a_i)$ with $i\in \{1,2\}$, it follows that $u_{0}\in \cap_{i\in \{1,2\}} X_{1-\frac{1+\a_i}{p_i},p_i}$ a.s. By localization (i.e.\ Theorem \ref{thm:localwellposed}\eqref{it2:localwellposed}), it is enough to consider $u_0\in \cap_{i\in \{1,2\}} L^{p_i}(\O; X_{1-\frac{1+\a_i}{p_i},p_i})$.
We claim that there exists a stopping time $\tau\in (0,\sigma_1\wedge \sigma_2)$ a.s.\ and
\begin{equation}
\label{eq:equality_u1u2_local_time}
u_1=u_2 \text{ a.e.\ on }[0,\tau)\times \O.
\end{equation}
Due to Proposition \ref{prop:local_continuity_SEE} and comments below it, \cite[Remark 3.4]{AVreaction-local} extends to the stochastic evolution equation \eqref{eq:SEE}, the claim \eqref{eq:equality_u1u2_local_time} follows verbatim from the arguments in Steps 1 and 2 of \cite[Proposition 3.5]{AVreaction-local}.
Next, we show how \eqref{eq:equality_u1u2_local_time} leads to the claim of Corollary \ref{cor:comp}. By \eqref{eq:instantaneous_regularization_p2} and the claim \eqref{eq:equality_u1u2_local_time}, we obtain
$$
u_1 \in  H^{\theta,p_2}_{\rm loc}([0,\sigma_1),w_{\a_2};X_{1-\theta}) \text{ a.s.\ for all }\theta\in [0,1/2).
$$
In particular, $(u_1,\sigma_1)$ is a local $L^{p_2}_{\a_2}$-solution to \eqref{eq:SEE}. By the maximality of $(u_2,\sigma_2)$, we obtain that $\sigma_1\leq \sigma_2$ a.s.\ and $u_1=u_2$ a.e.\ on $[0,\sigma_1)\times \O$. It remains to show $\sigma_1=\sigma_2$. Note that, on $\{\sigma_1<\sigma_2\}$, $u_1=u_2\in C((0,\sigma_1];X_{1-\frac{1}{p_2},p_2})$. Since $p_2\geq p_1$, it follows that $\lim_{t\uparrow \sigma_1} u_1(t)$ exists in $X_{1-\frac{1}{p_1},p_1}=:Y$ on $\{\sigma_1<\sigma_2\}$, we find
\begin{align*}
\P(\sigma_1<\sigma_2)
&=
\P\big(\sigma_1<\sigma_2,  \, \lim_{t\uparrow \sigma_1} u_1(t) \text{ exists in $Y$ }\big)\leq \P\big(\sigma_1<\infty,  \, \lim_{t\uparrow \sigma_1} u_1(t) \text{ exists in $Y$}\big)=0,
\end{align*}
where in the last step we used Theorem \ref{thm:criticalblowup}\eqref{it1:criticalblowup}.
\end{proof}

\section{Critical variational setting}\label{sec:Var}
In this section, we focus on the variational setting, for which a collection of references can be found in the introduction. The strength of this approach comes from the fact that it immediately gives global well-posedness under a coercivity condition.

Traditionally, the variational setting relies on a monotonicity condition imposed on the coefficients. However, this condition is violated in many practical examples. To address this limitation, we present a variant of the critical variational framework introduced in \cite{AVvar}, which circumvents the need for monotonicity assumptions by instead imposing a suitable local Lipschitz condition.
For completeness, let us recall that $F:V\to V^*$ is {\em weakly monotone} if there exists a constant $C$ such that $\lb u-v,F(u) - F(v)\rb\leq C\|u-v\|_H$ for all $u,v\in V$. For example, it fails for the Allen--Cahn equation in the strong setting, the Cahn--Hilliard equation, and the 2D Navier--Stokes equations (see Section \ref{sec:appl}).

In Theorem \ref{thm:varloc}, we establish local well-posedness in the variational setting, while Theorem \ref{thm:varglobal} provides global well-posedness under a coercivity condition. We emphasize that we do not assume compactness of the embedding $V\hookrightarrow H$. As before, for simplicity, we restrict our presentation to the case where the problem is $(t,\omega)$-independent and focus exclusively on the semilinear setting.

Recent work in \cite{BGV} extends the results of \cite{AVvar} to the case of L\'evy noise, additionally allowing for singular drifts and introducing a more flexible local Lipschitz condition.

\subsection{Setting}\label{ss:settVHV}
Let $(V, H, V^*)$ be a Gelfand triple of real Hilbert spaces, i.e.\ $V\hookrightarrow H\hookrightarrow V^*$ are dense and continuous, and the duality pairing between $V$ and $V^*$ satisfies $\lb v, x\rb = (v,x)_H$ for all $v\in V$ and $x\in H$. Moreover, it holds that $[V^*, V]_{1/2}  =(V^*, V)_{1/2,2} = H$. For further details, see \cite{AVvar}, and  \cite[Section 2.1]{theewis2024large}.
In relation to the framework of Subsection \ref{ss:setting}, we set $X_0 = V^*$, $X_1 = V$, and consequently, $X_{1/2} = H$.

We adopt the notations $V_{\beta,1} = (V^*,V)_{\beta,1}$ and $V_{\beta} = [V^*,V]_{\beta}$ with the corresponding norms given by
\[\|x\|_{\beta,1} = \|x\|_{(V^*,V)_{\beta,1}} \ \ \text{and}\ \ \|x\|_{\beta} = \|x\|_{[V^*,V]_{\beta}}.
\]
\begin{assumption}\label{ass:varsetting}
Let $A\in \calL(V, V^*)$ and $B\in \calL(V, \calL_2(\mathcal{U},H))$. Suppose that the following hold:
\begin{enumerate}[{\rm (1)}]
\item\label{it1:varsetting} There exist $\theta>0$ and $M\geq 0$ such that for all $u\in V$,
\[\lb u, Au\rb -\tfrac12 \|B u \|^2_{\calL_2(\mathcal{U},H)}\geq \theta \|u\|_V^2 - M \|u\|_H^2;\]
\item\label{it2:varsetting} For each $n\geq 1$, there exists an $L_n\geq 0$ such that for all $u,v\in V$ with $\|u\|_{H}, \|v\|_{H}\leq n$
\[\textstyle \|F(u) - F(v)\|_{V^*} + \|G(u) - G(v)\|_{\calL_2(\mathcal{U},H)} \leq L_n \sum_{j=1}^{m} (1+\|u\|_{\beta_j,1}^{\rho_j} + \|v\|_{\beta_j,1}^{\rho_j}) \|u-v\|_{\beta_j,1},\]
where $\beta_j\in (1/2,1)$ and $\rho_j\geq 0$  satisfy
\begin{align}\label{eq:subcriticalvar}
(2\beta_j-1)(\rho_j+1)\leq 1, \ \ j\in \{1, \ldots, m\}.
\end{align}
\end{enumerate}
\end{assumption}

Part \eqref{it1:varsetting} above asserts that the {\em linear part} of \eqref{eq:SEE} is coercive, which in turn ensures that $(A,B)$ has stochastic maximal $L^2$-regularity (see Theorem \ref{thm:varlin}). Moreover, from the discussion following that theorem, it follows that $-A$ generates a strongly continuous analytic semigroup on $V^*$.
Part \eqref{it2:varsetting} is equivalent to the condition $2\beta_j  \leq \frac{2+\rho_j}{\rho_j+1}$,
and is a special case of Assumption \ref{ass:FGcritical} with $p=2$ and $\kappa=0$. We emphasize that no further growth bounds are assumed on $F$ and $G$.

\subsection{Main results}
Below we leave out the prefix $L^2_0$ in the solution concept, as we consistently assume $p=2$ and $\kappa=0$. Consequently, whenever we refer to a maximal solution in this section, we mean an $L^2_0$-maximal solution.

\subsubsection{Local existence, uniqueness and regularity}
The following theorem gives very general conditions for local existence and uniqueness, and provides three blow-up criteria for global existence.
\begin{theorem}[Local existence, uniqueness, and blow-up]\label{thm:varloc}
Suppose that Assumption \ref{ass:varsetting} holds. Then for each $u_0\in L^0_{\F_0}(\Omega;H)$ there exists a maximal solution $(u,\sigma)$ to \eqref{eq:SEE} with $\sigma>0$ a.s.\ and
\begin{align*}
\P\big(\sigma<\infty, \sup_{t\in [0,\sigma)}\|u(t)\|_H + \|u\|_{L^2(0,\sigma;V)}<\infty\big) &= 0;
 \\ \P\big(\sigma<\infty, \lim_{t\uparrow \sigma} u(t) \ \text{exists in $H$}\big) & = 0;
\\ \P\big(\sigma<\infty, \sup_{t\in [0,\sigma)}\|u(t)\|_H <\infty\big) &= 0 \ \text{if the setting is noncritical}.
\end{align*}
\end{theorem}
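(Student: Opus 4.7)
The plan is to derive Theorem \ref{thm:varloc} as a direct application of the abstract results in Sections \ref{sec:loc-well-posed} and \ref{sec:blowup}, applied with the choices $X_0 = V^*$, $X_1 = V$, $p=2$, and $\kappa=0$. The task thus reduces to verifying the hypotheses of Theorems \ref{thm:localwellposed}, \ref{thm:criticalblowup}, and \ref{thm:subcriticalblowup} in the variational setting.

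First I would identify the abstract spaces with the concrete ones. Since $(V^*, V)$ is a couple of Hilbert spaces, the identity \eqref{eq:sandwichinterpHS} yields $X_{1-\frac{1+\kappa}{p},p} = X_{1/2,2} = X_{1/2} = H$, so the natural space of initial data in Theorem \ref{thm:localwellposed} agrees with the hypothesis $u_0 \in L^0_{\F_0}(\Omega; H)$. Moreover $X_{1-\frac{\kappa}{p}} = X_1 = V$, and $\gamma(\mathcal{U}, H) = \calL_2(\mathcal{U}, H)$ since $H$ is Hilbert. Next I would check Assumption \ref{ass:FGcritical} for $F$ and $G$. The mapping and local Lipschitz properties match Assumption \ref{ass:varsetting}\eqref{it2:varsetting} verbatim, the range $\beta_j \in (1-\frac{1+\kappa}{p}, 1) = (1/2, 1)$ is the stipulated one, and the critical condition \eqref{eq:subcritical}, which here reads $\tfrac{1}{2} \leq \tfrac{(1+\rho_j)(1-\beta_j)}{\rho_j}$, is algebraically equivalent to $(2\beta_j-1)(\rho_j+1) \leq 1$, i.e., \eqref{eq:subcriticalvar}.

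The stochastic maximal regularity $(A,B) \in \mathcal{SMR}_{2,0}^{\bullet}$ is precisely the conclusion of Theorem \ref{thm:varlin} under Assumption \ref{ass:varsetting}\eqref{it1:varsetting}. The coercivity also implies, via a Lax--Milgram argument applied to $\lambda_0 + A$ for $\lambda_0 > M$, that $A$ is sectorial on $V^*$ (up to this harmless shift) with $\Do(A) = V$, which supplies the remaining structural hypothesis of Assumption \ref{ass:FGcritical}. Theorem \ref{thm:localwellposed} then produces the unique maximal solution $(u,\sigma)$ with $\sigma>0$ a.s., along with the path regularity $u \in L^2(0,\sigma_n;V) \cap C([0,\sigma_n];H)$ a.s.\ for every element of a localizing sequence.

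The three blow-up criteria then follow by direct specialization. The first matches Theorem \ref{thm:criticalblowup}\eqref{it2:criticalblowup} using the identifications $X_{1-(1+\kappa)/p,p} = H$ and $X_{1-\kappa/p} = V$. The second is Theorem \ref{thm:criticalblowup}\eqref{it1:criticalblowup} again with $X_{1/2,2} = H$. The third follows from Theorem \ref{thm:subcriticalblowup}, whose noncritical hypothesis here translates to the strict inequality $(2\beta_j-1)(\rho_j+1) < 1$ for every $j$. The whole argument is essentially a dictionary between the variational Hilbert framework and the general Banach framework; there is no single hard step, and the main care is in translating the critical exponent condition and in exploiting $X_{1/2,2} = H$, which is exactly where the Hilbert-space setting pays the dividend of allowing initial data in the optimal space $H$ without any further assumption.
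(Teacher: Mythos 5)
Your proposal is correct and takes essentially the same approach as the paper's proof, namely specializing Theorems \ref{thm:localwellposed}, \ref{thm:criticalblowup}, and \ref{thm:subcriticalblowup} to $X_0 = V^*$, $X_1 = V$, $p=2$, $\kappa=0$, using $X_{1/2,2}=X_{1/2}=H$ and $(A,B)\in\mathcal{SMR}_{2,0}^\bullet$ from Theorem \ref{thm:varlin}. You spell out the algebraic equivalence of \eqref{eq:subcritical} and \eqref{eq:subcriticalvar} and correctly route the noncritical blow-up through Theorem \ref{thm:subcriticalblowup}, which the paper's terse proof elides.
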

\begin{proof}
From Assumption \ref{ass:varsetting}\eqref{it1:varsetting} (and the discussion below it), it follows that $A$ is a sectorial operator of angle $<\pi/2$, and $(A,B)\in \mathcal{SMR}_{2,0}^{\bullet}$. Therefore, the local existence and uniqueness follow from Theorem \ref{thm:localwellposed} with $p=2$ and $\kappa=0$. The blow-up criteria are immediate from Theorem \ref{thm:criticalblowup}.
\end{proof}

The following result establishes regularization properties in the subcritical setting, assuming additional stochastic maximal regularity conditions on the linear part $(A,B)$. It follows directly from Proposition \ref{prop:parabreg3}.
\begin{corollary}[Regularity in the subcritical setting]\label{cor:strongvarsub}
Suppose that Assumption \ref{ass:varsetting} holds. Suppose that there is a $\varphi\in (0,1)$ such that for every $n\geq 1$ there is a constant $C_n\geq 0$ such that
\begin{align*}
\textstyle \|F(v)\|_{V^*} + \|G(v)\|_{\gamma(\mathcal{U},H)} \leq C_n (1+\|v\|_{V}^\varphi), \ \ \text{for all $v\in V$ with $\|v\|_{H}\leq n$}.
\end{align*}
Suppose there is an $\varepsilon\in (0,1)$ such that $u_0\in L^0_{\F_0}(\Omega;[H,V]_{\varepsilon})$ a.s.\ and $(A,B)\in \mathcal{SMR}_{r,\alpha}^{\bullet}$ for all $r\in (2, \infty)$ and $\alpha\in [0,r/2-1)$. Then the solution $(u,\sigma)$ provided by Theorem \ref{thm:varloc} satisfies
\begin{align*}
  u&\in H^{\theta,r}_{\rm loc}((0,\sigma);[V^*,V]_{1-\theta})\cap C^{\theta-\varepsilon}_{\rm loc}((0,\sigma);[V^*,V]_{1-\theta}), \ \ r\in (2, \infty), \theta\in (0,1/2), \varepsilon\in (0,\theta).
  \end{align*}
\end{corollary}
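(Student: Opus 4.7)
The plan is to apply Proposition \ref{prop:parabreg3} directly in the variational setting with $X_0=V^*$, $X_1=V$, and $X_{1/2}=[V^*,V]_{1/2}=H$, so that the target regularity spaces $X_{1-\theta}=[V^*,V]_{1-\theta}$ coincide with those appearing in the statement. The first task is to verify that the assumptions of Theorem \ref{thm:localwellposed} hold with $(p,\kappa)=(2,0)$ for this choice of spaces, so that the maximal solution given by Theorem \ref{thm:varloc} can be identified with an $L^2_0$-maximal solution of the abstract form \eqref{eq:SEE}. This identification is essentially already noted in the proof of Theorem \ref{thm:varloc}: Assumption \ref{ass:varsetting}\eqref{it1:varsetting} gives $(A,B)\in\mathcal{SMR}_{2,0}^{\bullet}$ via Theorem \ref{thm:varlin} and implies that $-A$ generates an analytic semigroup of angle $<\pi/2$, while Assumption \ref{ass:varsetting}\eqref{it2:varsetting} is the special case of Assumption \ref{ass:FGcritical} with $p=2$, $\kappa=0$ and trace space $X_{1-\frac{1}{2},2}=H$.

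Next, I would check the growth condition \eqref{eq:condFcextrakap0} required by Proposition \ref{prop:parabreg3}. With $p=2$, the trace space $X_{1-\frac{1}{p},\infty}$ equals $(V^*,V)_{1/2,\infty}$, which contains $H=[V^*,V]_{1/2}$ continuously by the sandwich embedding \eqref{eq:sandwichinterp}. Hence any $v\in V$ with $\|v\|_H\leq n$ also satisfies $\|v\|_{(V^*,V)_{1/2,\infty}}\leq C n$, and the hypothesis of the corollary yields
\[
\|F(v)\|_{V^*}+\|G(v)\|_{\gamma(\mathcal{U},H)}\lesssim_n 1+\|v\|_V^{\varphi}\lesssim_n 1+\|v\|_V^{\rho_j+1}
\]
for suitable $\rho_j$ with $\varphi\leq \rho_j+1$, so that \eqref{eq:condFcextrakap0} holds (the $X_{\beta_j,1}$ term in \eqref{eq:condFcextrakap0} is controlled by $\|v\|_V^{\rho_j+1}$ since $\beta_j<1$). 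One actually obtains the sharper sublinear form \eqref{eq:subcriticality_p2_proof_boostrap}, which is the precise hypothesis used in Proposition \ref{prop:parabreg3}.

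Then I would translate the initial-data hypothesis. Proposition \ref{prop:parabreg3} requires $u_0\in L^0_{\F_0}(\Omega;X_{\frac{1}{2}+\varepsilon'})$ for some $\varepsilon'>0$, with $X_{\frac{1}{2}+\varepsilon'}=[V^*,V]_{\frac{1}{2}+\varepsilon'}$. Using the reiteration theorem for complex interpolation together with $H=[V^*,V]_{1/2}$ and $V=[V^*,V]_{1}$, one has
\[
[H,V]_{\varepsilon}=\bigl[[V^*,V]_{1/2},[V^*,V]_{1}\bigr]_{\varepsilon}=[V^*,V]_{\frac{1}{2}+\frac{\varepsilon}{2}},
\]
so the assumed $u_0\in[H,V]_{\varepsilon}$ gives $u_0\in X_{\frac{1}{2}+\varepsilon/2}$ and thus meets the requirement with $\varepsilon'=\varepsilon/2$. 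The stochastic maximal regularity hypothesis $(A,B)\in\mathcal{SMR}_{r,\alpha}^{\bullet}$ for $r\in(2,\infty)$, $\alpha\in[0,r/2-1)$ is assumed directly.

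Finally, applying Proposition \ref{prop:parabreg3} yields the stated $H^{\theta,r}_{\mathrm{loc}}$-regularity, and the H\"older regularity follows from Sobolev embedding $H^{\theta,r}\hookrightarrow C^{\theta-1/r}$ by choosing $r$ large, exactly as in the first line of the proof of Theorem \ref{thm:parabreg}. I do not anticipate a genuine obstacle here: the work is purely bookkeeping, and the only mildly delicate point is keeping track of the interpolation identifications (in particular the reiteration step $[H,V]_\varepsilon=[V^*,V]_{1/2+\varepsilon/2}$) so as to match the precise hypotheses of Proposition \ref{prop:parabreg3}.
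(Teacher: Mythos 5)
Your proposal is correct and takes the same route as the paper, which simply invokes Proposition \ref{prop:parabreg3} directly; your added detail about the interpolation identification $[H,V]_\varepsilon=[V^*,V]_{1/2+\varepsilon/2}$ is the right bookkeeping. One small misstatement: you say you would ``check the growth condition \eqref{eq:condFcextrakap0} required by Proposition \ref{prop:parabreg3}'', but \eqref{eq:condFcextrakap0} is the hypothesis of Theorem \ref{thm:parabreg2}, not Proposition \ref{prop:parabreg3}; the latter requires \eqref{eq:subcriticality_p2_proof_boostrap}, which under the identification $X_0=V^*$, $X_{1/2}=H$, $X_1=V$ \emph{is} the corollary's hypothesis verbatim, so the detour through $(V^*,V)_{1/2,\infty}$ and the exponents $\rho_j$ is unnecessary (though you do eventually note the correct matching).
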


\subsubsection{Global well-posedness under a coercivity condition}
The following is the primary global well-posedness result in the variational framework. Under a coercivity condition, it provides global existence, uniqueness, and continuity with respect to the initial data. The result can be proved in the same way as \cite[Theorems 3.5 and 3.8]{AVvar}. While the coercivity condition holds for many equations, it also has limitations. In Sections \ref{sec:appl} and \ref{ss:LV}, we present both coercive and noncoercive examples for which we derive global well-posedness. For the noncoercive examples, we cannot rely on Theorem \ref{thm:varglobal}.
\begin{theorem}[Global well-posedness]\label{thm:varglobal}
Suppose that Assumption \ref{ass:varsetting} holds, and there exist $\theta'>0$, $M'\geq 0$ such that
\[\lb v,Av - F(v)\rb -  \tfrac{1}{2} \|B v + G(v)\|_{\calL_2(\mathcal{U},H)}^2 \geq \theta' \|v\|_V^2 - M'\|v\|^2_H-M', \ \ v\in V.\]
Then for every $u_0\in L^0_{\F_0}(\Omega;H)$ there exists a unique global solution $u\in L^2_{\rm loc}([0,\infty);V)\cap C([0,\infty);H)$ to \eqref{eq:SEE}. Moreover, for every $p\in (0,2)$ and $T>0$ there are constants $C_{T}$ and $C_{p,T}$ such that
\begin{align*}
\textstyle \sup_{t\in[0,T]}\E\|u(t)\|^2_{H} + \E\int_0^T \|u(t)\|^2_V \,\dd  t & \leq C_T(1+\E\|u_0\|_{H}^2),
\\ \textstyle  \E \sup_{t\in [0,T]}\|u(t)\|_{H}^{p} + \E\Big|\int_0^T \|u(t)\|^2_V \,\dd t\Big|^{p/2} & \leq C_{p,T} (1+\E\|u_0\|_{H}^{p}).
\end{align*}
Furthermore, the following continuous dependency on the initial data holds: if $u_0^n \in L^0_{\F_0}(\Omega;H)$ are such that $\|u_0-u_0^n\|_H\to 0$ in probability, then for every $T\in (0,\infty)$,
\[\|u - u^n\|_{L^2(0,T;V)} + \|u - u^n\|_{C([0,T];H)}\to 0 \ \ \text{in probability},\]
where $u^n$ is the unique global solution to \eqref{eq:SEE} with initial data $u_0^n$.
\end{theorem}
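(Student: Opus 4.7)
The plan is to apply Theorem \ref{thm:varloc} to obtain a maximal solution $(u,\sigma)$ and then use the coercivity hypothesis together with It\^o's formula in order to (i) rule out blow-up via the first criterion of Theorem \ref{thm:varloc}, and (ii) derive the quantitative moment bounds. Finally, continuity in the initial data will be shown by performing the same energy argument on the difference of two solutions.

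First I would handle the case $u_0 \in L^2(\O;H)$, since the general $L^0$-case can be recovered at the end by partitioning $\O$ into the measurable sets $A_n = \{n-1 \leq \|u_0\|_H < n\} \in \F_0$, applying the bounded-data result to each restricted initial datum $\one_{A_n} u_0$, and gluing the resulting solutions via Theorem \ref{thm:localwellposed}\eqref{it2:localwellposed}. So assume $u_0 \in L^2(\O;H)$. Let $(\sigma_k)_{k\geq 1}$ be a localizing sequence. Since $u \in L^2(0,\sigma_k;V) \cap C([0,\sigma_k];H)$ a.s.\ and solves \eqref{eq:SEE}, the It\^o formula for $\|\cdot\|_H^2$ in the Gelfand triple setting (see e.g.\ \cite[Theorem 4.2.5]{LR15}) yields
\begin{align*}
\|u(t\wedge \sigma_k)\|_H^2 &= \|u_0\|_H^2 + 2\int_0^{t\wedge \sigma_k} \lb u, -Au + F(u)\rb \,\dd s + \int_0^{t\wedge \sigma_k}\|Bu+G(u)\|_{\calL_2(\mathcal{U},H)}^2\,\dd s + M_{t\wedge\sigma_k},
\end{align*}
where $M$ is a local martingale. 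Applying the coercivity hypothesis pointwise in $s$ gives
\begin{align*}
\|u(t\wedge \sigma_k)\|_H^2 + 2\theta'\int_0^{t\wedge \sigma_k}\|u\|_V^2\,\dd s \leq \|u_0\|_H^2 + 2M'\int_0^{t\wedge \sigma_k} \|u\|_H^2\,\dd s + 2M' (t\wedge \sigma_k) + M_{t\wedge \sigma_k}.
\end{align*}
Taking expectations (the stochastic term vanishes after a further localization) and using Gr\"onwall's lemma on $[0,T]$ produces the first displayed bound, uniformly in $k$. Fatou's lemma gives the bound for $u$ on $[0,T\wedge \sigma)$, which in particular forces $\sup_{t<\sigma}\|u(t)\|_H + \|u\|_{L^2(0,\sigma;V)} < \infty$ a.s.\ on $\{\sigma < \infty\}$; the first blow-up criterion of Theorem \ref{thm:varloc} then implies $\sigma = \infty$ a.s.

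For the $p \in (0,2)$ moment bound, I would apply BDG to the martingale $M$ to obtain
\[
\E\sup_{s\in[0,t\wedge \sigma_k]} |M_s|^{p/2} \leq C_p \E\Big(\int_0^{t\wedge \sigma_k} \|u\|_H^2 \|Bu+G(u)\|_{\calL_2(\mathcal{U},H)}^2\,\dd s\Big)^{p/4},
\]
bound the integrand in the square root by $\sup_{s\leq t\wedge\sigma_k}\|u(s)\|_H^{p/2} \cdot (\int_0^{t\wedge \sigma_k}\|Bu+G(u)\|^2\,\dd s)^{p/4}$, use Young's inequality with exponents $4/p$ and $4/(4-p)$ (since $p/2 < 1$) to absorb the $\sup$ into the left-hand side, and use the coercivity hypothesis once more to estimate $\|Bu+G(u)\|^2 \leq 2\lb u,Au-F(u)\rb + 2M'\|u\|_H^2 + 2M' - 2\theta'\|u\|_V^2$, reducing everything to the already established $L^2$-in-expectation bound. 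The combination with the pathwise It\^o identity then yields the $L^p$-estimate after Gr\"onwall. Alternatively, one can apply It\^o to $(\epsilon + \|u\|_H^2)^{p/2}$ and send $\epsilon\downarrow 0$; either route works.

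The continuous dependence in probability will follow by the same mechanism applied to $w = u - u^n$, which satisfies the linear equation $\dd w + Aw\,\dd t = (F(u)-F(u^n))\,\dd t + (Bw + G(u)-G(u^n))\,\dd W$. Setting $\tau_R = \inf\{t:\|u(t)\|_H+\|u^n(t)\|_H > R\}\wedge T$, one applies It\^o to $\|w\|_H^2$ on $[0,\tau_R]$, uses the \emph{local} Lipschitz estimate of Assumption \ref{ass:varsetting}\eqref{it2:varsetting} together with the reiteration/interpolation inequality of Lemma \ref{lem:funnyXembedding} (which gives, on $\{t<\tau_R\}$, an estimate of the form $\|F(u)-F(u^n)\|_{V^*}\|w\|_H + \|G(u)-G(u^n)\|^2 \lesssim_R \|w\|_H^2 + \epsilon\|w\|_V^2$) so that the $V$-norm can be absorbed via the coercivity, and concludes by Gr\"onwall that $\E \sup_{t\leq \tau_R}\|w(t)\|_H^2 + \E\int_0^{\tau_R}\|w\|_V^2\,\dd s \lesssim_R \E\|u_0-u_0^n\|_H^2$. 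Letting first $n\to\infty$ and then $R\to\infty$ (using tightness of $\{\tau_R < T\}$ from the uniform moment bound just derived) yields convergence in probability in the stated norms.

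The main technical obstacle will be the $L^p$-estimate for $p\in(0,2)$: the absorption of the supremum via BDG is delicate and requires using the coercivity hypothesis a second time to avoid a dependence on $\|Bu+G(u)\|$ that cannot be closed by $\|u\|_H$ alone. Handling the reduction from $L^0$ to $L^2$ initial data, while conceptually straightforward by localization, must also be executed with care to ensure progressive measurability is preserved when patching the solutions on the sets $A_n$.
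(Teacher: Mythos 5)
Your overall architecture (local solution from Theorem \ref{thm:varloc}, It\^o's formula with coercivity, a Gronwall step, blow-up criterion, and a difference argument for continuous dependence) is correct, but the way you propose to obtain the $p\in(0,2)$ moment estimate does not close. There are two problems. First, the Young exponents $4/p$ and $4/(4-p)$ applied to $\sup\|u\|_H^{p/2}\cdot(\int\|Bu+G(u)\|^2)^{p/4}$ raise the first factor to $\sup\|u\|_H^2$, which cannot be absorbed into the target quantity $\E\sup\|u\|_H^p$; the exponents would have to be $(2,2)$. Second, and more fundamentally, even with the correct exponents you are left needing to control $\E\bigl(\int_0^T\|Bu+G(u)\|_{\calL_2(\mathcal{U},H)}^2\,\dd s\bigr)^{p/2}$, and your proposal to do this by ``reducing to the already established $L^2$-bound'' via the coercivity inequality is circular: rearranging coercivity gives $\|Bv+G(v)\|^2 \leq 2\lb v,Av-F(v)\rb + 2M'\|v\|_H^2+2M'$, and substituting the It\^o identity for $2\int\lb u,Au-F(u)\rb\,\dd s$ makes $\int\|Bu+G(u)\|^2\,\dd s$ cancel on both sides, leaving no information. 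The coercivity hypothesis only supplies a lower bound on $\lb v,Av-F(v)\rb$; an upper bound would require control of $\|F(v)\|_{V^*}$, but in the critical setting $F$ and $G$ may grow superlinearly in $\|v\|_V$ (through $\|v\|_{\beta,1}^{\rho+1}$ with $(2\beta-1)(\rho+1)\leq1$), so there is no estimate of the form $\lb v,Av-F(v)\rb\lesssim\|v\|_V^2+\|v\|_H^2$ available. A BDG-plus-Young argument would only close under a linear-growth hypothesis on $Bv+G(v)$, which is precisely what the critical variational framework avoids.

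The intended tool is the stochastic Gronwall Lemma \ref{lem:Gronwall}, and its role is essential, not cosmetic: it works directly from the pathwise inequality $X_t \leq H_t + \int_0^t X_s\,\dd A_s + M_t$ with $X_t = \|u(t)\|_H^2 + 2\theta'\int_0^t\|u\|_V^2\,\dd s$, $A_t=2M't$, $H_t=\|u_0\|_H^2+2M't$, and requires no control on $M$ or its quadratic variation at all. This circumvents the issue entirely, at the price of producing only $L^{p/2}(\Omega)$-moments for $p/2<1$, i.e.\ $p<2$; this is exactly why $p=2$ is excluded from the statement and is the content of Problem \ref{prob:p=2var}. The same lemma (or its tail bound) is also what lets you pass from $L^2(\Omega;H)$ initial data to $L^0(\Omega;H)$ and drives the continuous-dependence argument, where the naive BDG estimate you sketch on $[0,\tau_R]$ runs into the same obstruction because the Lipschitz bound on $G(u)-G(u^n)$ involves $\|u\|_\beta,\|u^n\|_\beta$ and hence $\|u\|_V,\|u^n\|_V$, which the stopping time $\tau_R$ does not control. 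I recommend you replace the BDG step throughout by an application of Lemma \ref{lem:Gronwall}, as in \cite[Theorems 3.5 and 3.8]{AVvar}.
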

It is unclear whether the above maximal estimate holds for $p=2$ (see Problem \ref{prob:p=2var}). However, Subsection \ref{sss:p=2} provides sufficient conditions under which this is possible.

\begin{remark}[Sufficient condition for coercivity]\label{rem:sufficientcoercivity}
Suppose that $\|G(v)\|_{\calL_2(\mathcal{U},H)}\leq C(1+\|v\|_{\beta})$ for some $\beta\in (0,1)$. To check the coercivity condition of Theorem \ref{thm:varglobal} it is enough to check
$\lb v, F(v)\rb \leq M''(\|v\|^2_H+1)$ for all $v\in V$. Indeed, for every $\varepsilon>0$ one has
\begin{align*}
\|B v + G(v)\|_{\calL_2(\mathcal{U},H)}^2 & \leq  (1+\varepsilon)\|B v\|^2_{\calL_2(\mathcal{U},H)} + C_{\varepsilon} \|G(v)\|_{\calL_2(\mathcal{U},H)}^2
\\ & \leq \|B v\|^2_{\calL_2(\mathcal{U},H)}  + \varepsilon\|B\|^2 \|v\|_V^2 + 2C_{\varepsilon} C^2(1+\|v\|_{{\beta}}^2).
\end{align*}
By Assumption \ref{ass:varsetting}\eqref{it1:varsetting} and interpolation estimates we obtain that for every $\delta>0$
\begin{align*}
\lb &v,Av - F(v)\rb -  \tfrac{1}{2} \|B v + G(v)\|_{\calL_2(\mathcal{U},H)}^2 \\ & \geq
\lb v,Av \rb - \tfrac12 \|B v\|^2_{\calL_2(\mathcal{U},H)}  - \tfrac{\varepsilon}{2}\|B\|^2 \|v\|_V^2 -\delta C_{\varepsilon}C^2 \|v\|^2_V - C_{\varepsilon}  C_{\delta} \|v\|_{H}^2 - M''(\|v\|^2_H+1)
\\ & \geq (\theta-\tfrac{\varepsilon}{2}\|B\|^2 + \delta C_{\varepsilon}C^2)\|v\|_V^2 - C_{\delta, \varepsilon}( \|v\|_H^2+1).
\end{align*}
It remains to choose $\varepsilon>0$ small enough, and after that $\delta>0$ small enough.
\end{remark}

\begin{remark}[Large deviations]
Recently, in the setting of Theorem \ref{thm:varglobal}, Theewis and the second named author established a large deviation principle in \cite{theewis2024large}. Notably, no additional conditions are imposed, allowing for a broad class of equations to be covered. This includes all applications from \cite[Section 4]{AVvar} as well as those in Section \ref{sec:appl}, except for part of Subsection \ref{ss:otherfluid}. In particular, the result extends to large deviations on unbounded domains with transport noise - a previously unknown case that is especially relevant in fluid dynamics \cite{SS06, ChueshovMillet, DuMi}.
\end{remark}

\subsection{Higher order moments}\label{ss:highermoments}
In this subsection, we explain how one can extend the $L^p$-moment bounds of Theorem \ref{thm:varglobal} to other values of $p\geq 2$.
\subsubsection{Second order moments}\label{sss:p=2}
One can take $p=2$ in Theorem \ref{thm:varglobal} in case the coercivity is strengthened as follows: there exists an $\eta>0$, $\theta'>0$, $M'\geq 0$ such that for all $v\in V$
\[\lb v,Av - F(v)\rb -  \big(\tfrac{1}{2}+\eta\big) \|B v + G(v)\|_{\calL_2(\mathcal{U},H)}^2 \geq \theta' \|v\|_V^2 - M'\|v\|^2_H - M'.\]
Such an $\eta$ can always be obtained from the usual coercivity condition in case
$\|G(v)\|_{\calL_2(\mathcal{U},H)}\leq C(1+\|v\|_{V})$. Indeed,
\begin{align*}
\|B v + G(v)\|_{\calL_2(\mathcal{U},H)}^2 &\leq 2\|B v\|^2_{\calL_2(\mathcal{U},H)} + 2\|G(v)\|_{\calL_2(\mathcal{U},H)}^2
 \leq (2\|B\|^2+2C^2)\|v\|_V^2 + 2C^2.
\end{align*}
Thus, taking $\eta$ small enough, the coercivity remains essentially unharmed.

\subsubsection{$L^p(\Omega)$-moments with $p>2$}\label{sss:p>2}
Estimates for higher-order moments with $p\in [2, \infty)$
can also be derived; however, they do not follow automatically, as explained in \cite{BrzVer11} for a specific class of examples.

Suppose that $B(v)^* v =0$ for all $v\in V$ and
$\lim_{\|v\|_V\to \infty} \frac{\|G(v)\|_{\calL_2(\mathcal{U},H)}}{\|v\|_V} = 0$. Then there exist constants $C$ and $C_T$ independent of $p\in [2, \infty)$ such that for all $p\in [2, \infty)$, $T\in (0, \infty)$
\begin{align}\label{eq:preciseestpind}
\Big\|\sup_{t\in [0,T]}\|u(t)\|_{H}\Big\|_{L^p(\Omega)} + p^{-1/2} \|u\|_{L^p(\Omega;L^2(0,T;V))} & \leq C(C_T+\|u_0\|_{L^p(\Omega;H)}).
\end{align}
Indeed, under this condition, for every $\varepsilon>0$ we can find $C_{\varepsilon}$ such that $\|G(v)\|_{\calL_2(\mathcal{U},H)}\leq \varepsilon \|v\|_V + C_{\varepsilon}$. This bound is sufficient to verify the required coercivity condition from \cite[Corollary 3.4]{GHV}, which then yields the desired result.

\subsubsection{Exponential moments}\label{sss:exp}
Exponential moments are useful for studying convergence rates in numerical schemes and this was used for stochastic Navier--Stokes equations in \cite{BeMil19,BesMil22}.  From \eqref{eq:preciseestpind} we can obtain exponential moment bounds in the abstract setting of Theorem \ref{thm:varglobal} under the additional hypothesis of Subsection \ref{sss:p>2}. Indeed, letting $p\to \infty$ in \eqref{eq:preciseestpind}, it follows that $\big\|\sup_{t\in [0,T]}\|u(t)\|_{H}\big\|_{L^\infty(\Omega)} \leq C(C_T+\|u_0\|_{L^\infty(\Omega;H)})$ if $u_0\in L^\infty(\Omega;H)$. Moreover, letting $M_{u_0}:=C(C_T+\|u_0\|_{L^\infty(\Omega;H)})$, for all $\varepsilon\in (0,(2M_{u_0}^{2} e)^{-1})$, we have
\begin{align*}
\E \exp\Big(\varepsilon\|u\|_{L^2(0,T;V)}^2\Big)& \leq \sum_{n=0}^\infty \frac{\varepsilon^{n} \E\|u\|_{L^2(0,T;V)}^{2n}}{n!}
 \leq \sum_{n=0}^\infty \frac{(2\varepsilon M_{u_0}^{2} n)^n}{n!}\leq
 \sum_{n=0}^\infty (2\varepsilon M_{u_0}^{2} e)^n = \frac{1}{1-2\varepsilon M_{u_0}^{2} e},
\end{align*}
where we applied the standard estimate $\frac{n^n}{n!}\leq e^n$.

\subsubsection{Regularization through a stronger setting}
In Theorems \ref{thm:parabreg}, \ref{thm:parabreg2} and Proposition \ref{prop:parabreg3} we have explored various regularization results. Here we present a different type of regularization result using a stronger Gelfand triple  $(\wt{V}, \wt{H}, \wt{V}^*)$ such that $\wt{H} = V$ and $\wt{V}^* = H$. When coercivity holds in the $(\wt{V}, \wt{H}, \wt{V}^*)$-setting, one can apply Theorem \ref{thm:varglobal} directly in this smoother framework (see e.g.\ the Allen--Cahn case in Subsection \ref{sss:AllenCahnstrong}). However, in many examples coercivity fails in the stronger setting - such as for Cahn--Hilliard (Subsection \ref{ss:CahnHilliard}), and in fluid dynamics (Subsection \ref{ss:Fluid}).

The bootstrap result will be formulated under the assumption that the solution exists globally in the $(V, H, V^*)$-setting. For example, this holds when the conditions of Theorem \ref{thm:varglobal} are satisfied. The current formulation has the advantage that it can be iterated.
\begin{theorem}[Regularity through the strong setting]\label{thm:strongvar}
Suppose that Assumption \ref{ass:varsetting} holds. Suppose the solution provided by Theorem \ref{thm:varloc} exists globally, i.e.\ $\sigma = \infty$ a.s. Assume that $A$ and $B$ also define bounded operators  $A\in \calL(\wt{V},H)$ and $B\in \calL(\wt{V}, \calL_2(\mathcal{U},V))$, and that $(A,B)$ and $(F, G)$ satisfy Assumption \ref{ass:varsetting} in the case $(V, H, V^*)$ is replaced by $(\wt{V}, \wt{H}, \wt{V}^*)$.
Suppose that there exist constants $\alpha_1, \alpha_2\in [0,1)$, $\gamma_1, \gamma_2\geq 0$, $C\geq 0$ such that
\begin{align*}
\|F(v)\|_{H}& \leq C (1+\|v\|_{\wt{V}})^{\alpha_1}(1+\|v\|_{V})^{2-2\alpha_1} (1+\|v\|_{H})^{\gamma_1}, &v \in V,
\\ \|G(v)\|_{\calL_2(\mathcal{U},V)}& \leq C(1+\|v\|_{\wt{V}})^{\alpha_2}(1+\|v\|_{V})^{2-2\alpha_2}(1+\|v\|_{H})^{\gamma_2}, & v\in V.
\end{align*}
Let $u_0\in L^0_{\F_0}(\Omega;V)$. Then $u\in L^2_{\rm loc}([0,\infty);\wt{V})\cap C([0,\infty);V)$ a.s.
\end{theorem}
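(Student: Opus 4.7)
The plan is to re-run the local well-posedness theory of Section \ref{sec:Var} directly in the strong Gelfand triple $(\wt V, V, H)$, and then match the resulting local maximal solution with the already-given global solution $u$ from the weak triple. First I would apply Theorem \ref{thm:varloc} in the strong triple — all of its hypotheses are exactly those we assume for $(A,B)$ and $(F,G)$ in the strong setting, and $u_0 \in V = \wt H$ a.s. — to obtain an $L^2_0$-maximal solution $(\wt u, \wt\sigma)$ in that triple with $\wt u \in L^2_{\rm loc}([0, \wt\sigma); \wt V) \cap C([0, \wt\sigma); V)$ a.s.\ and $\wt\sigma > 0$ a.s. The dense embeddings $\wt V \hookrightarrow V$ and $V \hookrightarrow H$ imply that $(\wt u|_{[0, \wt\sigma)}, \wt\sigma)$ is automatically a local $L^2_0$-solution in the weak triple $(V, H, V^*)$, so the uniqueness statement of Theorem \ref{thm:varloc} applied there to $\wt u$ and to the given global $u$ forces $\wt u = u$ on $[0, \wt\sigma)$ a.s. It therefore remains to prove $\wt\sigma = \infty$ a.s., at which point $u \in L^2_{\rm loc}([0, \infty); \wt V) \cap C([0, \infty); V)$ a.s.\ follows.

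To show $\wt\sigma = \infty$, I would invoke the first blow-up criterion of Theorem \ref{thm:varloc} (again in the strong triple),
\begin{align*}
\P\big(\wt\sigma < \infty, \ \sup\nolimits_{[0,\wt\sigma)}\|\wt u\|_V + \|\wt u\|_{L^2(0,\wt\sigma;\wt V)} < \infty\big) = 0,
\end{align*}
so that it suffices to produce, for every $T > 0$, an a.s.\ a priori bound on the same quantity over $[0, \wt\sigma \wedge T)$. Since $u = \wt u$ is globally continuous in $H$, the stopping times $\rho_M := \inf\{t \in [0, T] : \|u(t)\|_H \ge M\} \wedge T$ increase to $T$ a.s., and it is enough to show $\wt\sigma > \rho_M$ a.s.\ for each $M$. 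On a stopped interval $[0, t \wedge \rho_M \wedge \wt\sigma_n]$, with $(\wt\sigma_n)$ a localizing sequence for $\wt\sigma$, I would apply Itô's formula to $\|\wt u(\cdot)\|_V^2 = \|\wt u(\cdot)\|_{\wt H}^2$ in the strong triple and use the coercivity of $(A,B)$ (together with a small-parameter Young split of the cross term $2(B\wt u, G(\wt u))_V$ as in Remark \ref{rem:sufficientcoercivity}) to extract a $2\wt\theta\int \|\wt u\|_{\wt V}^2\,ds$ contribution on the left.

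The core nonlinear estimate combines the growth hypotheses on $F$ and $G$ with the Gelfand-triple interpolation inequality $\|\wt u\|_V \le C\|\wt u\|_{\wt V}^{1/2}\|\wt u\|_H^{1/2}$ (valid because $V = [\wt V, H]_{1/2,2}$ with equivalent norms) and Young's inequality with exponents $p_j = 2/(1+\alpha_j)$, $q_j = 2/(1-\alpha_j)$ applied to the factor $\|\wt u\|_{\wt V}^{1+\alpha_j}$ extracted from $2\|\wt u\|_{\wt V}\|F(\wt u)\|_H$ and from $\|G(\wt u)\|_{\calL_2(\mathcal{U},V)}^2$. The result, after absorbing a first $\varepsilon\|\wt u\|_{\wt V}^2$-term into the coercivity and then using $(1+\|\wt u\|_V)^4 \lesssim 1 + \|\wt u\|_{\wt V}^2\|\wt u\|_H^2$ to eliminate the residual $\|\wt u\|_V^4$-growth, has the form
\begin{align*}
\|\wt u(t\wedge\rho_M\wedge\wt\sigma_n)\|_V^2 + \wt\theta\int_0^{t\wedge\rho_M\wedge\wt\sigma_n}\|\wt u\|_{\wt V}^2\,ds \le \|u_0\|_V^2 + K_M T + K_M\int_0^{t\wedge\rho_M\wedge\wt\sigma_n}(1+\|\wt u\|_V^2)\,ds + \calM_{t\wedge\rho_M\wedge\wt\sigma_n},
\end{align*}
where $K_M$ depends only on $M$, $\wt\theta$ and the constants in the growth hypotheses, and $\calM$ is a local martingale. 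A stochastic Gronwall argument (for instance Geiß--Manthey--Scheutzow, combined with Burkholder--Davis--Gundy and a secondary stopping at $\|\wt u\|_V$-level $N$ followed by $N,n\to\infty$) then delivers the a.s.\ finiteness of $\sup_{[0, \rho_M\wedge\wt\sigma)}\|\wt u\|_V + \|\wt u\|_{L^2(0,\rho_M\wedge\wt\sigma;\wt V)}$, hence $\wt\sigma > \rho_M$ a.s.\ by the blow-up criterion; sending $M \to \infty$ and then $T \to \infty$ gives $\wt\sigma = \infty$.

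The main technical obstacle is the borderline nature of the nonlinear estimate: the growth exponents $\alpha_j + (2-2\alpha_j) = 2 - \alpha_j$ are calibrated so that $F$ and $G$ scale exactly like the coercive $\|\wt u\|_{\wt V}^2$-term after interpolation, leaving only an $\|\wt u\|_H$-dependent prefactor whose direct absorption into $\wt\theta$ fails as soon as $\|\wt u\|_H$ is not small. The rescue is precisely the global-in-$H$ existence of $u$ carried over from the weak triple, which allows the introduction of $\rho_M$ and turns the random $\|\wt u\|_H$-factor into the deterministic constant $K_M$ on $[0, \rho_M]$; only after this step can a standard stochastic Gronwall argument close the estimate, which is why the strong-setting bootstrap requires both the weak-setting global solution and the double interpolation through $V = [\wt V, H]_{1/2, 2}$.
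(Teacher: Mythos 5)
Your high-level scaffolding matches the paper's: apply Theorem \ref{thm:varloc} in the strong triple $(\wt V,V,H)$, identify $\wt u = u$ on $[0,\wt\sigma)$ by uniqueness in the weak triple, prove $\wt\sigma = \infty$ via the blow-up criterion by running It\^o's formula for $\|\cdot\|_V^2 = \|\cdot\|_{\wt H}^2$, and estimate $\langle \wt u,F(\wt u)\rangle$ and $\|G(\wt u)\|_{\calL_2(\mathcal U,V)}^2$ with the stated growth conditions, Young's inequality with exponents $2/(1\pm\alpha_j)$, and a stochastic Gronwall argument. That part is right.

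The gap is in the passage you gloss over with ``then using $(1+\|\wt u\|_V)^4 \lesssim 1 + \|\wt u\|_{\wt V}^2\|\wt u\|_H^2$ to eliminate the residual $\|\wt u\|_V^4$-growth'' leading to a displayed inequality with the \emph{deterministic} rate $K_M\int_0^t (1+\|\wt u\|_V^2)\,\dd s$. That inequality does not follow from the steps you describe. After Young's inequality, the remainder in the drift estimate is of the form $C_\varepsilon (1+\|\wt u\|_V)^4 (1+\|\wt u\|_H)^{\wt\gamma_1}$ (and similarly for $G$). On $[0,\rho_M]$ you may replace $(1+\|\wt u\|_H)^{\wt\gamma_1}$ by $(1+M)^{\wt\gamma_1}$, but the quartic factor $(1+\|\wt u\|_V)^4$ is still random. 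Applying your interpolation $\|\wt u\|_V^4 \le \|\wt u\|_{\wt V}^2\|\wt u\|_H^2 \le M^2\|\wt u\|_{\wt V}^2$ does not help: it reintroduces $\|\wt u\|_{\wt V}^2$ on the right-hand side with coefficient $C_\varepsilon(1+M)^{\wt\gamma_1}M^2$, which for $M$ of order one or larger cannot be absorbed by the coercivity constant $\wt\theta$ on the left, since $C_\varepsilon$ was already fixed by the first $\varepsilon$-absorption. So there is no way, by $H$-level stopping alone, to cast the Gronwall inequality with a deterministic constant $K_M$ in front of $\int (1+\|\wt u\|_V^2)\,\dd s$.

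The paper avoids this by keeping the quartic factor in the \emph{bilinear} Gronwall form: it writes $(1+\|u^k\|_V)^4(1+\|u^k\|_H)^{\wt\gamma} = \big[(1+\|u^k\|_V)^2(1+\|u^k\|_H)^{\wt\gamma}\big]\cdot(1+\|u^k\|_V)^2$, puts the bracketed factor into a \emph{random} rate process $a(s)$, and bounds the second factor by $2Z^k(s)$, the Gronwall unknown. The resulting inequality has the form $Z^k(t) \le \xi(t) + \int_0^t Z^k(s)\,a(s)\,\dd s + \mathcal M_t$, and the stochastic Gronwall Lemma \ref{lem:Gronwall} only requires $a(T)<\infty$ a.s.\ and $\xi(T)<\infty$ a.s.\ — both of which are furnished by the weak-triple global solution ($u\in L^2_{\rm loc}([0,\infty);V)\cap C([0,\infty);H)$ a.s.). No $H$-level stopping is needed; the paper's auxiliary stopping times bound $\|u(t)-u_0\|_H$ and $\int_0^t\|u\|_V^2\,\dd s$ simultaneously and serve only to justify It\^o's formula and Fatou. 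If you replace your claimed deterministic-rate inequality with the random-rate form $Z \le \xi + \int Z\,a\,\dd s + \mathcal M$, your argument closes and reduces to the paper's.
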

Note that the stated regularity can often be further improved by using Corollary \ref{cor:strongvarsub} in the $(\wt{V}, \wt{H}, \wt{V}^*)$-setting since the latter typically is subcritical.
\begin{proof}
It follows from Theorem \ref{thm:varloc} that there exists a maximal solution $(\wt{u}, \wt{\sigma})$ to \eqref{eq:SEE} in the $(\wt{V}, \wt{H})$-setting. Since it is also a local solution in the $(V, H,V^*)$-setting, by uniqueness we obtain $\wt{u} = u$ on $[0,\wt{\sigma})$. We claim that $\wt{\sigma} = \infty$ a.s. To show this we will use the blow-up criteria of Theorem \ref{thm:varloc} in the $(\wt{V}, \wt{H})$-setting.

Fix $T\in (0,\infty)$. Let $(\tau_k)_{k\geq 1}$ be a localizing sequence for $(\wt{u}, \wt{\sigma}\wedge T)$. Then, in particular, $u\in C([0,\tau_k];H)\cap L^2(0,\tau_k;V)$ a.s. Let
\[\textstyle
\sigma_{k} = \inf\Big\{t\in [0,\tau_k]: \|u(t)-u_0\|_{H}\geq k \ \text{or} \ \int_0^t \|u(s)\|_{V}^2 \,\dd  s\geq k \Big\}, \]
where we set $\inf\emptyset  = \tau_k$. Then $(\sigma_k)_{k\geq 1}$ is a localizing sequence for $(\wt{u}, \wt{\sigma})$ as well. Letting $u^k(t) = u(t\wedge \sigma_k)$ we have $u^k\in L^2(\Omega;C([0,T];H))\cap L^2(\Omega;L^2(0,\sigma_k;V))$ and a.s.\ for all $t\in [0,T]$,
\begin{align*}
\textstyle u^{k}(t) = u_0- \int_0^t \one_{[0,\sigma_k]}(s) [A u^k(s) - F(u^k(s))]   \,\dd s + \int_0^t \one_{[0,\sigma_k]}(s) [B u^k(s) + G(u^k(s))] \,\dd  W(s).
\end{align*}

By It\^o's formula (see Lemma \ref{lem:ItoHilbert}) applied to $\frac12\|\cdot\|^2_{V}= \frac12\|\cdot\|^2_{\wt{H}}$ we obtain
\begin{align*}
\textstyle \frac12\|u^{k}(t) \|_{V}^2 &  + \textstyle \int_0^t \one_{[0,\sigma_k]}(s)  \mathcal{E}_{u^k}(s)\,\dd s
 =\frac12\|u_0\|_{V}^2 + \int_0^t \one_{[0,\sigma_k]}(s) [Bu^k(s) + G(u^k(s))]^* u \,\dd W(s),
\end{align*}
where
\[\mathcal{E}_{u^k}(s) = \langle u^k(s), Au^k(s) -F(u^k(s)) \rangle_{\wt{V}, \wt{V}^*} - \frac12\|B u^k(s) + G(u^k(s))\|_{\calL_2(\mathcal{U},V)}^2.\]

By the conditions on $F$ and Young's inequality we obtain for any $\varepsilon>0$
\begin{align*}
\lb u^k, F(u^k)\rb_{\wt{V}, \wt{V}^*} & \leq \|u^k\|_{\wt{V}} \|F(u^k)\|_{\wt{V}^*}
\\ & \leq
C(1+\|u^k\|_{\wt{V}})^{\alpha_1+1} (1+\|u^k\|_{V})^{2-2\alpha_1} (1+\|u^k\|_{H})^{\gamma_1}
\\ & \leq \varepsilon(1+\|u^k\|_{\wt{V}}^{2}) + C_{1,\varepsilon}(1+\|u^k\|_{V})^4
(1+\|u^k\|_{H})^{\wt{\gamma_1}},
\end{align*}
where $\wt{\gamma}_1 = \frac{2\gamma_1}{1-\alpha_1}$. For the It\^o correction for any $\varepsilon>0$ we have
\begin{align*}
\frac12\|B u^k + G(u^k)\|_{\calL_2(\mathcal{U},V)}^2& \leq (\frac12+\varepsilon)\|B u^k\|_{\calL_2(\mathcal{U},V)}^2 + C_{2,\varepsilon}\|G(u^k)\|_{\calL_2(\mathcal{U},V)}^2.
\end{align*}
By the conditions on $G$ and Young's inequality again, we obtain for any $\delta>0$
\begin{align*}
\|G(u^k)\|_{\calL_2(\mathcal{U},V)}^2\leq \delta(1+\|u^k\|_{\wt{V}}^{2}) + C_{1,\delta}(1+\|u^k\|_{V})^4
(1+\|u^k\|_{H})^{\wt{\gamma}_2},
\end{align*}
where $\wt{\gamma}_2 = \frac{2\gamma_2}{1-\alpha_2}$.

Therefore, by the above first choosing $\varepsilon>0$ and then $\delta>0$ small enough, and by the coercivity condition on $(A,B)$ in the $(\wt{V}, \wt{H}, \wt{V}^*)$-setting (with constants $(\wt{\theta}, \wt{M})$), we find that
\begin{equation*}
\begin{aligned}
\textstyle \frac{1}{2} \|u^{k}(t) \|_{V}^2  & \textstyle + \frac{\wt{\theta}}{4} \int_0^t \one_{[0,\sigma_k]}(s)\|u^k(s)\|_{\wt{V}}^2 \,\dd s
\\ \leq & \textstyle \frac{1}{2}\|u_0\|_{H}^2 + C_{\wt{\theta}}\int_{0}^t (1+\|u^k(s)\|_{V})^4 (1+\|u^k(s)\|_{H})^{\wt{\gamma}} \,\dd s
+ M_{\wt{\theta}} \int_0^t (\|u^k(s)\|_{V}^2+1) \,\dd s \\ & \textstyle \qquad +\int_0^t \one_{[0,\sigma_k]}(s) [Bu^k(s) +G(u^k(s))] ^* u^k(s) \,\dd W(s).
\end{aligned}
\end{equation*}
Letting
\begin{align*}
a(t)&= \textstyle 2C_{\wt{\theta}}\int_0^t (1+\|u(s)\|_{V})^2 (1+\|u(s)\|_{H})^{\wt{\gamma}}\,\dd s,
\\ Z^k(t) &= \textstyle \frac{1}{2} (1+\|u^{k}(t) \|_{V}^2)   + \frac{\wt{\theta}}{4} \int_0^t \one_{[0,\sigma_k]}(s)\|u^k(s)\|_{\wt{V}}^2 \,\dd s,
\\ \xi(t)&= \textstyle \frac{1}{2}(\|u_0\|_{H}^2+1) + M_{\wt{\theta}} \int_0^t (\|u(s)\|_{V}^2+1) \,\dd s,
\\ \zeta(t)&=\textstyle  \frac12(1+\sup_{s<\wt{\sigma}\wedge t} \|u(s)\|_V^2) + \frac{\wt{\theta}}{4}\int_0^{t\wedge \sigma} \|u(s)\|_{\wt{V}}^2 \,\dd s,
\end{align*}
we obtain that a.s. for all $t\in [0,T]$,
\begin{align*}
\textstyle Z^k(t)\leq  \xi(t) + \int_0^t Z^k(s) a(s) \,\dd s
+ \int_0^t \one_{[0,\sigma_k]}(s) [Bu^k(s) +G(u^k(s))]^* u^k(s) \,\dd W(s).
\end{align*}
Since $u\in L^2(0,T;V)\cap C([0,T];H)$ a.s., the processes $a$ and $\xi$ are continuous, increasing, and finite a.s.
Applying the stochastic Gronwall Lemma \ref{lem:Gronwall}  we find that
\begin{align*}
\textstyle \P(\sup_{t\in [0,T]}Z^k(t)>\mu)\leq \frac{e^{R}}{\mu} \E(\xi(T)\wedge \lambda) + \P(\xi(T)\geq \lambda) + \P(a(T)>R),
\end{align*}
where $\mu,\lambda,R>0$ are arbitrary. Letting $k\to \infty$ we find that
\begin{align}\label{eq:gronwalltailbound}
\textstyle \P(\zeta(T)>\mu)\leq \frac{e^{R}}{\mu} \E(\xi(T)\wedge \lambda) + \P(\xi(T)\geq \lambda) + \P(a(T)>R).
\end{align}
Letting $\mu\to \infty$, $\lambda\to \infty$ and finally $R\to \infty$, it follows that $\zeta(T)<\infty$ a.s. Therefore, after letting $T\to \infty$, we can conclude that a.s.\ on the set $\{\wt{\sigma}<\infty\}$ we have $\sup_{t<\wt{\sigma}} \|u(t)\|_V^2 + \int_0^{\wt{\sigma}} \|u(s)\|_{\wt{V}}^2 ds<\infty$, and thus by Theorem \ref{thm:varloc}, $\wt{\sigma}=\infty$ a.s., which proves the claim.

Now, the desired regularity is immediate from the claim and the fact that $(\wt{u},\wt{\sigma})$ is a maximal solution in the $(\wt{V}, \wt{H}, \wt{V}^*)$-setting.
\end{proof}

\begin{remark}\label{rem:integstrong}
No $L^p(\Omega)$-integrability is stated in Theorem \ref{thm:strongvar}. In general, we do not expect this to hold. The tail bound in \eqref{eq:gronwalltailbound} can be used to obtain some integrability properties of $u$ in the $(\wt{V}, \wt{H}, \wt{V}^*)$-setting if additionally the conditions of Theorem \ref{thm:varglobal} hold. For simplicity, assume $u_0\in L^2(\Omega;H)$. Then $\xi(T)\in L^1(\Omega)$ and thus after letting $\lambda\to \infty$ in \eqref{eq:gronwalltailbound} we find that
\[\textstyle \P(\zeta(T)>\mu)\leq \frac{e^{R}}{\mu} \E(\xi(T)) + \P(a(T)>R).\]
This still leads to very poor integrability results due to the factor $e^R$ unless $a(T)$ has exponential integrability properties. The latter is the case if $u_0\in L^\infty(\Omega;H)$, $B(v)^*v = 0$ for all $v\in V$, and $\lim_{\|v\|_V\to \infty} \frac{\|G(v)\|_{\calL_2(\mathcal{U},H)}}{\|v\|_V} = 0$ by \eqref{eq:preciseestpind}. To see what integrability can be deduced for $\zeta(T)$, let $\varepsilon>0$ be such that $\E \exp(\varepsilon a(T))<\infty$ (therefore $\varepsilon$ might depend on $\|u_0\|_{L^\infty(\Omega;V)}$ and $T$, see Subsection \ref{sss:exp}). Then letting $R = \log(\mu^{b})$ with $b\in (0,1)$ we find that
\begin{align*}
\textstyle \P(\zeta(T)>\mu)
\leq \mu^{b-1} \E(\xi(T)) + \P(a(T)>\log(\mu^{b}))
\leq \mu^{b-1} \E(\xi(T)) + \frac{1}{\mu^{\varepsilon b}} \E \exp(\varepsilon a(T)). \end{align*}
Multiplying by $\mu^{p-1}$ with $p\in (0,1)$ and integrating over $[0, \infty)$ we obtain that
\begin{align*}
\textstyle \E\zeta(T)^p \leq 1+ \int_1^\infty \P(\zeta(T)>\mu) \,\dd \mu \leq 1
+ \int_1^\infty\big[ \mu^{p+b-2} \E(\xi(T)) + \mu^{p-1-\varepsilon b} \E \exp(\varepsilon a(T))\big] \,\dd \mu.
\end{align*}
The latter is finite if $p<\min\{1-b, \varepsilon b\}$. Letting $b = \frac{1}{\varepsilon+1}$ we obtain that, for all $p\in (0,\frac{\varepsilon}{\varepsilon+1})$,
\[\textstyle \E \sup_{t\in [0,T]}\|u(t)\|_{V}^{2p} + \E \Big(\int_0^T \|u(t)\|_{\wt{V}}^{2} \,\dd t\Big)^{p/2}<\infty.\]
\end{remark}

\section{Selected applications through $L^2$-theory}\label{sec:appl}

In this section, we will explain several examples which can be covered through the $L^2$-theory presented in Section \ref{sec:Var}.
Along the way, we will highlight some of the limitations of this setting, which will motivate our treatment of certain examples using stochastic maximal  $L^p$-regularity in an $L^q$ or $H^{s,q}$-space (see Section \ref{sec:LpLq}).

As discussed in Section \ref{sec:Var}, in the variational setting, a coercivity condition ensures global well-posedness (see Theorem \ref{thm:varglobal}). In Subsection \ref{ss:AllenCahn}, we apply this result to the stochastic Allen--Cahn equation. We first examine the equation in the analytical weak setting for $d=1$, and then for $d\leq 4$ in the strong setting, which fails to be weakly monotone (see the beginning of Section \ref{sec:Var}).

In Subsection \ref{ss:CahnHilliard}, we consider the stochastic Cahn--Hilliard equation for $d\in \{1, 2\}$. The latter also fails to be weakly monotone, and turns out to be critical for $d=2$.

In Subsection \ref{ss:Fluid}, we apply the results to an abstract fluid dynamics model and discuss consequences for 2D stochastic Navier--Stokes equations and Boussinesq equations, which are critical but not weakly monotone.

In some cases where the coercivity condition fails, the $L^2$-setting can still give local well-posedness. Obtaining global well-posedness for these problems can be quite challenging. We will indicate some of the ideas needed for the 3D stochastic primitive equation in Subsection \ref{ss:otherfluid}. Moreover, in Subsection \ref{ss:LV}, we study a Lotka--Volterra model in detail. The $L^p$-setting plays a key role in this context, as it is used to ensure positivity of the solution, which is crucial for global well-posedness.

\subsection{Stochastic Allen--Cahn equation with transport noise}\label{ss:AllenCahn}
In this subsection, we consider the following Allen--Cahn type equation on a domain $\Dom\subseteq \R^d$:
\begin{align}\label{eq:AllenCahn}
\dd  u  & =  \textstyle \big( \Delta u +u-u^3\big) \,\dd  t+ \sum_{n\geq 1}  \big[(b_{n}\cdot \nabla) u+ g_{n}(\cdot,u)\big] \, \,\dd W^n_t\text{ on }\Dom,\quad u=0 \text{ on }\partial\Dom,
\end{align}
and initial value $u(0,\cdot) = u_0$. In Subsection \ref{sss:AllenCahnweak} we consider Dirichlet boundary conditions, and in Subsection \ref{sss:AllenCahnstrong} periodic boundary conditions.
The $(W^n)_{n\geq 1}$ are independent standard Brownian motions.
The quadratic diffusion assumption is optimal from a scaling point of view, see Subsection \ref{sss:tracecriticalF_AC}.

\subsubsection{Weak setting}\label{sss:AllenCahnweak}

To analyze \eqref{eq:AllenCahn} in the so-called weak setting, we let $V = H^1_0(\Dom)$ and $H = L^2(\Dom)$. In this way $V^* = H^{-1}(\Dom)$, and we set
\begin{align*}
\lb v,A u\rb  = (\nabla u, \nabla v)_{L^2(\Dom)},   \ \ \text{and} \ \  \lb v, F(u)\rb = (v, u)_{L^2(\Dom)}-(v, u^3)_{L^2(\Dom)}.
\end{align*}
For the noise, let $\mathcal{U} = \ell^2$ and let $(e_n)_{n\geq 1}$ be its standard orthonormal basis. Then $W_\mathcal{U}(\one_{[0,t]} e_n) = w^n_t$ uniquely extends to a cylindrical Brownian motion on $\mathcal{U}$. Let
\begin{align}\label{eq:defBGAllenCahn}
\textstyle (B u) e_n= \sum_{j=1}^d b_{n}^j \partial_j u, \ \ \text{and} \ \  G(u)e_n = g_n(\cdot,u).
\end{align}
To check Assumption \ref{ass:varsetting} for $F$, note that as in Subsection \ref{sss:tracecriticalF_AC},
\begin{align*}
\|F(u) - F(v)\|_{V^*} &\lesssim \|u-v\|_{L^2(\Dom)} + (\|u\|_{L^{3r}(D)}^2+\|v\|_{L^{3r}(D)}^2) \|u-v\|_{L^{3r}(D)}
\\ & \lesssim (1+\|u\|_{\beta}^2+\|v\|_{\beta}^2) \|u-v\|_{\beta},
\end{align*}
where $-1 - \frac{d}{2} \leq -\frac{d}{r}$, with $r\in (1, 2)$, and where $2\beta \geq \frac{d}{2} + 1-\frac{d}{3r}$. Just based on the $F$-part we see that $\rho = \rho_1 = 2$ in Assumption \ref{ass:varsetting}. Therefore, the (sub)criticality condition \eqref{eq:subcriticalvar} becomes $\beta\leq 2/3$. Taking $\beta=2/3$, we see that for $d=1$ we can take $r\in (1, 2)$ arbitrarily. For $d=2$, the lower bound on $\beta$ implies $r\leq 1$, which is not admissible in the above Sobolev embedding. Recently, a variation of the above method was introduced that allows $d=2$ in the weak setting (see \cite{BGV}). In the case $d\geq 3$, the best choice is to define $r$ by $-1 - \frac{d}{2} = -\frac{d}{r}$. One can check that this gives $2\beta\geq \frac{2}{3} + \frac{d}{3}$, which is false for $d\geq 3$. Therefore, from now on we assume $d=1$.

Suppose that  $b\in L^\infty(\Dom;\ell^2)$ and that the parabolicity condition \eqref{eq:stochastic_parabolicity_example} holds, i.e.\ $\sup_{\Dom}\|b\|_{\ell^2}<\nu$ for some constant $\nu<2$. Finally, for simplicity, we assume that $g:\Dom\times \R\to \ell^2$ is globally Lipschitz in the second variable and $g(\cdot, 0)\in L^2(\Dom;\ell^2)$ (see Remark \ref{rem:AC_variational_quadratic_growth} below for weaker conditions).
These conditions imply that Assumption \ref{ass:varsetting}\eqref{it1:varsetting} holds, since
\begin{align*}
\lb u, Au\rb -\frac12 \|B u \|^2_{\calL_2(\mathcal{U},H)} = \|\nabla u\|_{L^2(\Dom)}^2 - \frac{1}{2} \|b \nabla u\|_{L^2(\Dom)}^2\geq \nu \|\nabla u\|_{L^2(\Dom)}^2\geq \nu \|u\|_{V}^2 - M_{\nu} \|u\|_H^2.
\end{align*}
Moreover, one can check that $G$ is Lipschitz from $H$ into $\calL_2(\mathcal{U},H)$ and thus Assumption \ref{ass:varsetting}\eqref{it2:varsetting} holds as well by the above discussion. Finally, since
\begin{equation}
\label{eq:dissipation_AC_1D}
\lb u, F(u)\rb = \|u\|_{L^2(\Dom)}^2-(u, u^3)_{L^2(\Dom)}\leq \|u\|_{H}^2,
\end{equation}
the coercivity condition of Theorem \ref{thm:varglobal} follows from Remark \ref{rem:sufficientcoercivity}.
Hence, Theorem \ref{thm:varglobal} yields

\begin{theorem}[Allen--Cahn in the weak setting]\label{thm:AllenCahnd=1}
Let $d=1$ and suppose that $\sup_{\Dom}\|b\|^2_{\ell^2}<2$, $g(\cdot, 0)\in L^2(\Dom;\ell^2)$ and there is a constant $L$ such that
\[\|g(x,y)-g(x,y')\|_{\ell^2}\leq L|y-y'|, \ \ \  x\in \Dom, y,y'\in \R.\]
Then for every $u_0\in L^0_{\F_0}(\Omega;L^2(\Dom))$, \eqref{eq:AllenCahn} has a unique global solution $u\in L^2_{\rm loc}([0,\infty);H^1_0(\Dom))\cap C([0,\infty);L^2(\Dom))$ a.s.
\end{theorem}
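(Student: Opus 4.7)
The plan is to simply verify the hypotheses of Theorem \ref{thm:varglobal} for the Gelfand triple $(V,H,V^*)=(H^1_0(\Dom),L^2(\Dom),H^{-1}(\Dom))$ with $A$, $B$, $F$, $G$ chosen as in \eqref{eq:defBGAllenCahn} and the paragraphs preceding the statement. Essentially all the ingredients have been collected already; the proof is an assembly step.

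First I would verify Assumption \ref{ass:varsetting}\eqref{it1:varsetting}. For $u\in H^1_0(\Dom)$,
\[
\lb u,Au\rb-\tfrac12\|Bu\|_{\calL_2(\mathcal{U},H)}^2=\|\nabla u\|_{L^2}^2-\tfrac12\textstyle\sum_{n\geq 1}\|b_n\cdot\nabla u\|_{L^2}^2\geq \big(1-\tfrac12\sup_{\Dom}\|b\|_{\ell^2}^2\big)\|\nabla u\|_{L^2}^2,
\]
and the assumption $\sup_{\Dom}\|b\|_{\ell^2}^2<2$ together with Poincar\'e gives coercivity in $V$ (up to the usual $\|u\|_H^2$ term). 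For Assumption \ref{ass:varsetting}\eqref{it2:varsetting}, the Sobolev embedding computation in the paragraph preceding the theorem shows
\[
\|F(u)-F(v)\|_{V^*}\lesssim \|u-v\|_H+(1+\|u\|_{\beta,1}^2+\|v\|_{\beta,1}^2)\|u-v\|_{\beta,1}
\]
with $\beta=2/3$ admissible in $d=1$, and $(2\beta-1)(\rho+1)=\tfrac13\cdot 3=1$, so \eqref{eq:subcriticalvar} holds (with equality, i.e.\ the critical case). The Lipschitz assumption on $g$ and $g(\cdot,0)\in L^2(\Dom;\ell^2)$ yield that $G:H\to\calL_2(\mathcal{U},H)$ is globally Lipschitz with $\|G(v)\|_{\calL_2(\mathcal{U},H)}\leq C(1+\|v\|_H)$; the required estimate for $G$ in \eqref{it2:varsetting} follows trivially, e.g.\ with $\beta=1/2$ and $\rho=0$.

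Next I would verify the global coercivity condition of Theorem \ref{thm:varglobal} via Remark \ref{rem:sufficientcoercivity}. Since $G:H\to\calL_2(\mathcal{U},H)$ satisfies the linear bound above (hence in particular $\|G(v)\|_{\calL_2(\mathcal{U},H)}\leq C(1+\|v\|_{\beta})$ for $\beta\in(0,1)$), the remark reduces the check to the pointwise inequality $\lb v,F(v)\rb\leq M''(1+\|v\|_H^2)$, which is exactly \eqref{eq:dissipation_AC_1D}: $\lb v,F(v)\rb=\|v\|_{L^2}^2-\|v\|_{L^4}^4\leq\|v\|_H^2$. Applying Theorem \ref{thm:varglobal} then produces the unique global solution with the stated path regularity.

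The only genuinely delicate step is the nonlinear estimate for $F$, which sits exactly at the critical boundary $(2\beta-1)(\rho+1)=1$; this is the reason the argument succeeds only in $d=1$, as the discussion after \eqref{eq:defBGAllenCahn} explains. The dissipative sign of $-u^3$ is essential for the final coercivity bound, since without it the cubic term cannot be absorbed into $\|v\|_H^2$. Everything else is a straightforward application of the abstract variational theory.
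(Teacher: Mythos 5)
Your proposal is correct and follows exactly the route the paper takes: verify Assumption \ref{ass:varsetting} (coercivity of $(A,B)$ from the parabolicity condition, the local Lipschitz bound for $F$ with $\beta=2/3$, $\rho=2$ at criticality, and $G$ globally Lipschitz from $H$ to $\calL_2(\mathcal{U},H)$), then invoke Remark \ref{rem:sufficientcoercivity} together with the dissipativity identity \eqref{eq:dissipation_AC_1D} to get the full coercivity of Theorem \ref{thm:varglobal}. The only cosmetic remark is that in checking the $G$-estimate of Assumption \ref{ass:varsetting}\eqref{it2:varsetting} one formally needs $\beta_j\in(1/2,1)$ rather than $\beta_j=1/2$, but since $X_{\beta,1}\hookrightarrow H$ for any such $\beta$ the global Lipschitz bound from $H$ immediately gives the required estimate.
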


Additionally, the estimates and continuity properties stated in Theorem \ref{thm:varglobal} hold, where $L^2$-moments are also included by Subsection \ref{sss:p=2}. Since the setting of Theorem \ref{thm:AllenCahnd=1} is subcritical, in some cases, we can obtain further regularity. Indeed, for instance, if $b=0$, then by Theorem \ref{thm:SMRHS} and the text below Theorem \ref{thm:varlin}, $(A,0)\in \mathcal{SMR}_{p,\kappa}^{\bullet}$ for all $p\in [2, \infty)$ and $\kappa\in [0,p/2-1)\cup\{0\}$.
Therefore, if $u_0\in H^{\delta}(\Dom)$ for some $\delta>0$, Proposition \ref{prop:parabreg3} and Sobolev embeddings show that
\begin{align*}
  u&\in L^{r}_{\rm loc}((0,\sigma);\HD^{1}(\Dom))\cap C^{\theta/2,\theta}_{\rm loc}((0,\sigma)\times \overline{\Dom}), \quad r\in [2, \infty), \theta\in (0,1).
 \end{align*}
To include higher dimensions, one could choose one of the following options:
\begin{enumerate}[{\rm (1)}]
\item\label{it1:AllenCahnstrong} consider the strong setting $V = H^{2}(\Dom)\cap H^1_0(\Dom)$, $H = H^{1}_0(\Dom)$ and $V^* = L^2(\Dom)$;
\item\label{it2:AllenCahnLpLq} use $L^p(L^q)$-theory with $p>2$ and $q\geq 2$.
\end{enumerate}

A disadvantage of \eqref{it1:AllenCahnstrong} is that it leads to compatibility conditions for $b$ and $g$ (for more details see the comments at the beginning of Subsection \ref{ss:AllenCahnLpLq}).
Option \eqref{it2:AllenCahnLpLq} will be the start of our discussion on $L^p(L^q)$-theory in Subsection \ref{ss:AllenCahnLpLq}.

\begin{remark}[Critical quadratic diffusion]
\label{rem:AC_variational_quadratic_growth}
As discussed at the end of Subsection \ref{sss:tracecriticalF_AC}, a scaling argument suggests that a quadratic growth for $g$ is natural. The reader can check that Theorem \ref{thm:AllenCahnd=1} also holds if $b=0$ and the measurable mapping $g:\Dom\times \R\to \ell^2$ satisfies $g(\cdot,0)\in L^\infty(\Dom;\ell^2)$, and moreover there exists $L>0$ such that, for all $x\in \Dom$ and $y,y'\in\R$,
\begin{align*}
\|g(x,y)-g(x,y')\|_{\ell^2}&\leq L (1+|y|+|y'|)|y-y'|,\\
\|g(x,y)\|_{\ell^2} &\leq L (1+|y|)+ \sqrt{2}|y|^2.
\end{align*}
Let us remark that the first in the above serves to check Assumption \ref{ass:varsetting} for $G$, while the second allows one to check the coercivity condition of Theorem \ref{thm:varglobal} (therefore using the dissipative effect of $-u^3$ which was not used in
\eqref{eq:dissipation_AC_1D}). The factor $\sqrt{2}$ is optimal for the coercivity to hold.

It is also possible to deal with transport noise and quadratic nonlinearities simultaneously. To check coercivity one can argue as in \cite[Lemma 3.3 and 3.4]{AVreaction-global}, see also Theorem 1.1 there.
\end{remark}

\subsubsection{Strong setting}\label{sss:AllenCahnstrong}
For simplicity, suppose that $\Dom$ is the $d$-dimensional torus $\T^d$. We explore what changes in the strong setting when considering the Allen--Cahn equation. Alternatively, one could consider $\R^d$ or a $d$-dimensional manifold without boundary.  A domain with boundary conditions could also be considered, provided additional compatibility conditions are met.

\begin{assumption}
\label{ass:AC}
Let $b^j=(b_n^j)_{n\geq 1}:\Tor^d\to \ell^2$ for $j\in \{1, \ldots,d\}$, and $g=(g_n)_{n\geq 1}:\Tor^d\times \R\to \ell^2$ be $\Borel(\Tor^d)$- and $\Borel(\Tor^d)\otimes\Borel( \R)$-measurable maps, respectively. Assume that
\begin{enumerate}[{\rm(1)}]
\item\label{it:AC_1} There exists $\nu\in (0,2)$ such that for all $x\in \Tor^d$,
\begin{equation*}
\textstyle \sum_{n\geq 1} \sum_{i,j=1}^d b_n^i(x) b_n^j(x) \xi_i \xi_j \leq \nu |\xi|^2 \ \ \text{ for all }\xi\in \R^d;
\end{equation*}
\item\label{it:AC_2} $\|b^j\|_{W^{1,d+\delta}(\Tor^d;\ell^2)}\leq M$ for every $j\in \{1,\dots,d\}$;

\item\label{it:AC_3} $g\in C^1(\T^d\times\R;\ell^2)$, and there exists a constant $L$ such that
    \[\|g(x,y)-g(x,y')\|_{\ell^2}+\|\partial_y g(x,y)-\partial_y g(x,y')\|_{\ell^2}\leq L|y-y'|, \ \ \  x\in \Dom, y,y'\in \R.\]
\end{enumerate}
\end{assumption}

As noted in Remark \ref{rem:AC_variational_quadratic_growth}, quadratic growth for the diffusion $g$ is possible. For further details see \cite[Assumption 5.16(5)]{AVvar} and the comments provided below it. For simplicity, we do not pursue this here.

\smallskip

Let $V = H^2(\T^d)$, $H = H^1(\T^d)$. It can be verified that $V^* = L^2(\T^d)$ with respect to the pairing of $H$. Thus, for $u\in V$ and $v\in V^*$ we set
$\lb u, v\rb = -(\Delta u, v)_{L^2(\T^d)} + (u, v)_{L^2(\T^d)}$. Let $A = -\Delta$ and $F(u ) =u-u^3$, and define $B$ and $G$ as in \eqref{eq:defBGAllenCahn}. Checking Theorem \ref{thm:varglobal} in this case is conceptually similar to the previous case. However, since the duality pairing and the space $H = H^1(\T^d)$ contain one more derivative, the calculation becomes more cumbersome. To check the coercivity condition, derivatives must be taken into account, and the function $G$ is no longer globally Lipschitz because of the chain rule.

We begin by examining the mapping properties of $F$. This becomes simpler since $V^* = L^2(\T^d)$. Using $|F(u) - F(v)|  \leq 2(u^2+v^2) |u-v|$ and applying H\"older's inequality we obtain
\begin{align*}
\|F(u) - F(v)\|_{V^*} &\lesssim \|u-v\|_{L^2(\T^d)} + (\|u\|_{L^{6}(\T^d)}^2+\|v\|_{L^{6}(\T^d)}^2) \|u-v\|_{L^{6}(\T^d)}
\\ & \lesssim (1+\|u\|_{H^{2\beta_1}(\T^d)}^2+\|v\|_{H^{2\beta_1}(\T^d)}^2) \|u-v\|_{H^{2\beta_1}(\T^d)},
\\ & \lesssim (1+\|u\|_{\beta_1}^2+\|v\|_{\beta_1}^2) \|u-v\|_{\beta_1},
\end{align*}
where we used $[V^*, V]_{\beta_1} = H^{2\beta_1}(\T^d)$  and Sobolev embedding with $2\beta_1-\frac{d}{2}\geq -\frac{d}{6}$ (see Subsection \ref{subsec:interp} for the definition of the fractional Sobolev spaces). On the other hand, we already saw that \eqref{eq:subcriticalvar} gives $\beta_1\leq \frac{2}{3}$ as (sub)criticality condition. Thus we find that $d\in \{1, \ldots, 4\}$ are admissible and subcritical for $F$.

The $G$-term was globally Lipschitz in the weak setting. This no longer holds in the strong setting, since we need to estimate the $H^1$-norm of $G$. By Assumption \ref{ass:AC}\eqref{it:AC_3} for $u,v\in V$  one has
\begin{align*}
\|&\nabla [G(u) - G(v)]\|_{L^2(\T^d;\ell^2)} = \|\partial_x g(\cdot, u) - \partial_x g(\cdot, v)\|_{L^2(\T^d;\ell^2)} + \|\partial_y g(\cdot, u) \nabla u - \partial_y g(\cdot, v) \nabla v]\|_{L^2(\T^d;\ell^2)}
\\ & \leq L\|u-v\|_{L^2(\T^d)} +\|\partial_y g(u)\|_{L^\infty(\T^d;\ell^2)} \|\nabla u - \nabla v\|_{L^2(\T^d)} + \|[\partial_y g(u)  - \partial_yg(v)] \nabla u\|_{L^2(\T^d;\ell^2)}
\\ & \leq L\|u-v\|_{L^2(\T^d)} + L \|\nabla u - \nabla v\|_{L^2(\T^d)} + L\||u-v|  |\nabla u|\|_{L^2(\T^d)}.
\end{align*}
By H\"older's inequality and Sobolev embedding (using again $d\leq 4$) we have
\begin{align*}
\||u-v|  |\nabla u|\|_{L^2(\T^d)}  & \leq \|u-v\|_{L^8(\T^d)}  \|\nabla u\|_{L^{\frac83}(\T^d)}
\lesssim  \|u-v\|_{H^{3/2}(\T^d)} \|u\|_{H^{3/2}(\T^d)}.
\end{align*}
The estimate for $\|G(u) - G(v)\|_{L^2(\T^d;\ell^2)}$ was already observed before. Therefore, it follows that
\begin{align*}
\|G(u) - G(v)\|_{\calL_2(\ell^2,H^{1}(\T^d))}  \leq (1+\|u\|_{\frac34}) \|u-v\|_{\frac34},
\end{align*}
and thus we can set $\rho_2=1$ and $\beta_2 = \frac{3}{4}$, which is critical for $G$ and thus for $(F,G)$ (if $d=4$).

Next, we check Assumption \ref{ass:varsetting}\eqref{it1:varsetting}. First, observe that
\[\textstyle \partial_k (B u) e_n = \partial_k [b_n \cdot \nabla u] = \sum_{j=1}^d b_{n}^j \partial_k \partial_j u + \sum_{j=1}^d \partial_k b_{n}^j \partial_j u.\]
To estimate $\frac12\|\nabla B u\|_{L^2(\T^d;\ell^2)}^2$ we split into the above two parts. By Assumption \ref{ass:AC}\eqref{it:AC_1}
\begin{align*}
\textstyle \sum_{k=1}^d\int_{\T^d} \sum_{n\geq 1} \Big|\sum_{j=1}^d  b_{n}^j \partial_j \partial_k u\Big|^2  \,\dd x
 \leq \nu \int_{\T^d} \sum_{k,j=1}^d |\partial_j \partial_k u|^2\,\dd x
 = \nu \|\Delta u\|^2_{L^2(\T^d)}.
\end{align*}
For the second term, one can check that
\begin{align*}
\textstyle \sum_{k=1}^d \int_{\T^d} \sum_{n\geq 1} \Big|\sum_{j=1}^d  \partial_k b_{n}^j \partial_j u\Big|^2  \,\dd x
 &\stackrel{(i)}{\leq}  \textstyle  \sum_{j=1}^d  \|\nabla b_{n}^j|_{L^{d+\delta}(\T^d;\ell^2)}^2  \|\nabla u\|^2_{L^{r}(\T^d)}
\\ &\stackrel{(ii)}{\leq} \textstyle \sum_{j=1}^d  \|b_{n}^j\|_{W^{1,d+\delta}(\T^d;\ell^2)}^2  \|\nabla u\|^2_{H^{\mu}(\T^d)},
\\ &\stackrel{(iii)}{\leq} \textstyle \alpha \|u\|_{H^2(\T^d)} +  C_{\alpha} \|u\|^2_{H^{1}(\T^d)},
\end{align*}
where in $(i)$ we applied Cauchy-Schwarz and H\"older's inequalities with $\frac{1}{d+\delta} + \frac{1}{r} = \frac12$,
and in $(ii)$ Sobolev embedding with $\mu-\frac{d}{2} \geq -\frac{d}{r}$ which is possible for some $\mu\in (0,1)$. Moreover, in $(iii)$ we used standard interpolation estimates and arbitrary $\alpha>0$. Using $(a+b)^2 \leq (1+\varepsilon)a^2 + C_{\varepsilon} b^2$, we can conclude that
\begin{align*}
\textstyle \frac12\|\nabla B u\|_{L^2(\T^d;\ell^2)}^2 \leq \frac{(1+\varepsilon)\nu -\alpha}{2} \|\Delta u\|^2_{L^2(\T^d)} + \frac{C_{\varepsilon}}{2} \sum_{j=1}^d \|b_{n}^j\|_{W^{1,d+\delta}(\T^d;\ell^2)}^2  \|u\|^2_{H^2(\T^d)}.
\end{align*}
Similarly, one can check that
$\frac12\|B u\|_{L^2(\T^d;\ell^2)}^2\leq \frac{\nu}{2} \|\nabla u\|_{L^2(\T^d)}^2$.
From these estimates one can deduce that $B\in \calL(V, \calL_2(\mathcal{U},H))$ and by taking $\varepsilon$ so small that $\frac{(1+\varepsilon)\nu-\alpha}{2} = (1-\theta)$ with $\theta>0$, and using
$\lb u, Au\rb = \|\Delta u\|^2_{L^2(\T^d)} + \|\nabla u\|_{L^2(\T^d)}^2$,
we find that
\begin{align*}
\textstyle
\lb u, Au\rb -\frac12 \|B u \|^2_{\calL_2(\mathcal{U},H)}\geq \theta \|\Delta u\|^2_{L^2(\T^d)} + \theta\|\nabla u\|_{L^2(\T^d)}^2 - \frac{C_{\alpha,\varepsilon} }{2} \|u\|^2_{H^1(\T^d)}
= \theta \|u\|_V^2 - \frac{C_{\alpha,\varepsilon}}{2} \|u\|_H^2
\end{align*}

To check the coercivity condition of Theorem \ref{thm:varglobal} note that the following subcritical growth estimate holds:
\begin{align*}
\|G(v)\|_{\calL_2(\ell^2,H)}^2 & = \big\| g(\cdot,v)\big\|_{L^2(\T^d;\ell^2)}^2+  \big\| \partial_x g(\cdot, v) + \partial_y g(\cdot, v) \nabla v \big\|_{L^2(\T^d;\ell^2)}^2
\\ & \leq  C(1+\|v\|_{L^2(\T^d)})^2 + L \|\nabla v\|_{L^2(\T^d)}^2
\leq C'(1+\|v\|_{H})^2.
\end{align*}
Therefore, by Remark \ref{rem:sufficientcoercivity} it suffices to prove that $\lb v, F(v)\rb \leq M''(\|v\|^2_H+1)$ for all $v\in C^2(\T^d)$.
For the linear part in $F$, this is obvious. For the cubic part, this follows from
\begin{align*}
\textstyle  \lb v, -v^3\rb = (\Delta v, v^3)_{L^2} - (v, v^3)_{L^2} = - 3\int_{\T^d} |\nabla v|^2 v^2 \,\dd x  - \int_{\T^4} v^4 \,\dd x\leq 0.
\end{align*}
Now, Theorem \ref{thm:varglobal} implies that
\begin{theorem}[Allen--Cahn in the strong setting]\label{thm:AllenCahnstrong}
Let $d\in \{2, 3, 4\}$ and suppose that the above conditions hold on $(b, g)$. Then for every $u_0\in L^0_{\F_0}(\Omega;H^1(\T^d))$ there exists a unique global solution $u$ such that $u\in L^2_{\rm loc}([0,\infty);H^2(\T^d))\cap C([0,\infty);H^1(\T^d))$ a.s.
\end{theorem}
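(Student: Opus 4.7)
The plan is simply to invoke Theorem~\ref{thm:varglobal}, as the calculations preceding the statement already verify essentially all of its hypotheses; what remains is to organize them and address a small consistency issue about interpolation norms. Let $V = H^2(\T^d)$, $H = H^1(\T^d)$, $V^* = L^2(\T^d)$, and let $A,B,F,G$ be as defined above. I would first check Assumption~\ref{ass:varsetting}\eqref{it2:varsetting}. For the drift, the H\"older/Sobolev computation $\|F(u)-F(v)\|_{L^2} \lesssim (1+\|u\|_{H^{2\beta_1}}^2 + \|v\|_{H^{2\beta_1}}^2)\|u-v\|_{H^{2\beta_1}}$ with $2\beta_1 - d/2 \geq -d/6$ allows $\beta_1 = 2/3$ for $d\leq 4$, giving $\rho_1=2$. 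For the diffusion, the chain-rule computation for $\|G(u)-G(v)\|_{\calL_2(\ell^2,H^1)}$ combined with the pointwise Lipschitz bounds on $g$ and $\partial_y g$ and the embedding $H^{3/2}(\T^d)\hookrightarrow L^8(\T^d)$ (valid for $d\leq 4$) yields $\beta_2 = 3/4$ and $\rho_2 = 1$, critical when $d=4$. Since $V_{\beta,1}\hookrightarrow V_\beta$, the estimates in the complex interpolation norms automatically transfer to the real interpolation formulation of Assumption~\ref{ass:varsetting}\eqref{it2:varsetting}.

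Second, I would record the linear coercivity of Assumption~\ref{ass:varsetting}\eqref{it1:varsetting}. The computation splits $\tfrac12\|\nabla B u\|_{L^2(\T^d;\ell^2)}^2$ into a principal part controlled by $\tfrac{\nu}{2}\|\Delta u\|_{L^2}^2$ via condition~\eqref{it:AC_1}, and a lower-order piece arising from $\nabla b_n^j$. The latter is handled by H\"older's inequality with exponent $d+\delta$, Sobolev embedding $H^\mu \hookrightarrow L^r$ with $\mu < 1$, and standard interpolation to absorb an arbitrarily small multiple of $\|u\|_{H^2}^2$ together with a constant multiple of $\|u\|_{H^1}^2$. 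Together with $\lb u,Au\rb = \|\Delta u\|_{L^2}^2 + \|\nabla u\|_{L^2}^2$, this gives $\lb u,Au\rb - \tfrac12\|Bu\|_{\calL_2(\mathcal{U},H)}^2 \geq \theta\|u\|_V^2 - M\|u\|_H^2$ for some $\theta>0$.

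Third, I would apply Remark~\ref{rem:sufficientcoercivity} to obtain the global coercivity of Theorem~\ref{thm:varglobal}. The subcritical growth bound $\|G(v)\|_{\calL_2(\ell^2,H)} \leq C(1+\|v\|_H)$ follows by expanding $\nabla[g(\cdot,v)] = \partial_x g(\cdot,v) + \partial_y g(\cdot,v)\nabla v$ and using the global Lipschitz bounds on $g$ and $\partial_y g$. For the drift, integration by parts gives the key sign-favorable identity
\begin{equation*}
\lb v, F(v)\rb = \|v\|_{L^2}^2 - 3\!\int_{\T^d}\! |\nabla v|^2 v^2\,\dd x - \int_{\T^d}\! v^4\,\dd x \leq \|v\|_H^2,
\end{equation*}
so $\lb v, F(v)\rb \leq M''(\|v\|_H^2 + 1)$. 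Remark~\ref{rem:sufficientcoercivity} then supplies the coercivity hypothesis of Theorem~\ref{thm:varglobal}, which directly yields the existence of a unique global solution $u \in L^2_{\loc}([0,\infty);H^2(\T^d))\cap C([0,\infty);H^1(\T^d))$ for every initial datum $u_0\in L^0_{\F_0}(\Omega;H^1(\T^d))$.

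There is no real obstacle in this argument: the criticality of the $G$-estimate when $d=4$ is allowed by Theorem~\ref{thm:varglobal}, and the cubic drift is dissipative after testing against $v$ in $H^1$. The only point requiring mild care is the handling of the first-order term $(\partial_k b_n^j)\partial_j u$ in the coercivity estimate, where the Sobolev threshold $W^{1,d+\delta}$ in Assumption~\ref{ass:AC}\eqref{it:AC_2} is used precisely to ensure that the perturbation can be absorbed without degrading the stochastic parabolicity constant below the admissible level.
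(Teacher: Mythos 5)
Your proposal is correct and essentially reproduces the paper's own argument: verify Assumption~\ref{ass:varsetting}\eqref{it1:varsetting} by splitting $\|\nabla Bu\|^2$ into principal and lower-order parts and absorbing the latter, verify Assumption~\ref{ass:varsetting}\eqref{it2:varsetting} for $F$ with $(\beta_1,\rho_1)=(2/3,2)$ and for $G$ with $(\beta_2,\rho_2)=(3/4,1)$ via Sobolev embedding (critical only at $d=4$), establish the sign identity $\lb v,F(v)\rb\leq\|v\|_H^2$ by integration by parts, and invoke Remark~\ref{rem:sufficientcoercivity} together with Theorem~\ref{thm:varglobal}. The side remark about the $V_{\beta,1}\hookrightarrow V_\beta$ transfer is also correctly in the useful direction (since $\|\cdot\|_{X_\beta}\lesssim\|\cdot\|_{X_{\beta,1}}$, an estimate with the complex interpolation norm on the right a fortiori gives the required estimate with the real interpolation norm).
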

Additionally, the estimates and continuity properties stated in Theorem \ref{thm:varglobal} hold, including $L^2$-moments as discussed in Subsection \ref{sss:p=2}. The case $d=1$ can also be considered by introducing a dummy variable. Since the growth estimates for $F$ and $G$ are subcritical when $d\leq 3$, Corollary \ref{cor:strongvarsub} further implies that for $\theta\in [0,1/2)$, $r\in [2, \infty)$, $\varepsilon\in (0,\theta)$,
\begin{align*}
  u&\in H^{\theta,r}_{\rm loc}((0,\infty);H^{2-2\theta}(\T^d))\cap C^{\theta-\varepsilon}_{\rm loc}((0,\infty);H^{2-2\theta}(\T^d)).
 \end{align*}
Here we used that $(A,B)\in \mathcal{SMR}_{p,\kappa}^{\bullet}$ for all $p\in [2, \infty)$ and $\kappa\in [0,p/2-1)\cup\{0\}$ (see \cite{AV21_SMR_torus}). Later, we will see that significantly more regularity can be achieved by applying Theorem \ref{thm:reactioncoerc}  (see Example \ref{ex:AllenCahnperiodicLpLq}).

\subsection{The stochastic Cahn--Hilliard equation}\label{ss:CahnHilliard}
A prominent case of an SPDE that is not weakly monotone (see the beginning of Section \ref{sec:Var}) is the Cahn--Hilliard equation, which was considered in many papers (see \cite{CarMil, DaPDeb} and references therein). For simplicity, we consider the equation on a bounded $C^2$-domain $\Dom\subseteq \R^d$. The arguments easily extend to unbounded domains (in case the elliptic result \eqref{eq:ellipCH} below holds).

On $\Dom$ consider the following equation with a trace class gradient noise term:
\begin{equation}
\label{eq:CahnHilliard}
\left\{
\begin{aligned}
\dd  u +\Delta^2 u \,\dd t &= \textstyle \Delta (f(u)) \,\dd t + \sum_{n\geq 1} g_n(\cdot, u, \nabla u)  \,\dd W^n_t, &\quad &\text{ on }\Dom,
\\ \nabla u \cdot n&=0 \quad \text{and} \quad \nabla (\Delta u) \cdot n=0,  &\quad &\text{ on }\partial \Dom,
\\ u(0)&=u_0, &\quad &\text{ on }\Dom.
\end{aligned}\right.
\end{equation}
Here, $n$ is the outer normal vector of $\Dom$, and $(W^n)_{n\geq 1}$ are independent Brownian motions on a given probability space $\O$.

Below, we closely follow the presentation of \cite[Section 5.1]{AVvar}.

We make the following assumptions on $f$ and $g$:
\begin{assumption}\label{ass:CH}
Let $d\geq 1$ and $\rho\in [0,\frac{4}{d}]$. Suppose that $f\in C^1(\R)$ and
\begin{align*}
 |f(y)-f(y')|
& \leq L (1+|y|^{\rho}+|y'|^{\rho})|y-y'|,
\\ f'(y)&\geq -C,
\end{align*}
and suppose that $g:\R\times \R\times \R^d\to \ell^2$ is such that $g(\cdot, 0,0)\in L^2(\Dom;\ell^2)$ and
\[\|g(x,y,z)-g(x,y',z')\|_{\ell^2}\leq L|y-y'| + L|z-z'|, \ \ \  x\in \Dom, y,y'\in \R, z,z'\in \R^d.\]
\end{assumption}
For example the classical double well potential $f(y) = y(y^2-1) = \partial_y [\frac14(1-y^2)^2]$ satisfies the above conditions for $d\in \{1, 2\}$.
Let $H = L^2(\Dom)$ and set
\[V:=H^2_N(\Dom) = \{u\in H^2(\Dom): \partial_n u|_{\partial\Dom}  =0\},\]
where $\partial_n u = \nabla u \cdot n$ and $n$ denotes the outer normal vector field on $\partial\Dom$.

We define $A\in \calL(V,V^*)$ and $F:V\to V^*$  by
\[\langle v, Au\rangle = (\Delta v, \Delta u)_{L^2}, \quad \text{and} \quad \langle v, F(u)\rangle = (\Delta v, f(u))_{L^2}.
\]
Of course, we need to ensure $f(u)\in L^2(\Dom)$ and this will be done below.
Let $B =0$, and let $G:V\to \calL_2(\mathcal{U},H)$ be defined by (well-definedness is checked below)
\[(G_n(u))(x) = g_n(x,u(x), \nabla u(x)).\]

To apply Theorem \ref{thm:varglobal} we first check Assumption \ref{ass:varsetting}.
By the smoothness of $\Dom$ and standard elliptic theory (see \cite[Theorem 8.8]{GT83}) there exist $\theta, M>0$ such that
\begin{align}\label{eq:ellipCH}\|u\|_{H^2(\Dom)}^2\leq \theta \|\Delta u\|_{L^2(\Dom)}^2 + M\|u\|_{L^2(\Dom)}^2, \ \ u\in V.
\end{align}
Hence, $\langle u, A u\rangle = \|\Delta u\|_{L^2(\Dom)}^2 \geq \theta\|u\|_{V}^2-M\|u\|_{H}^2$ for all $u\in V$, which gives the required coercivity condition for $A$.
Without loss of generality, we may assume $f(0) = 0$. For the local Lipschitz estimate, note that with $\rho_1=\rho$,
\begin{align*}
& \|F(u) - F(v)\|_{V^*} \lesssim \|f(\cdot, u) - f(\cdot, v)\|_{L^2(\Dom)}
\\ & \lesssim  \|(1+|u|^{\rho_1}+|v|^{\rho_1}) (u-v)\|_{L^{2}(\Dom)} & \text{(by Assumption \ref{ass:CH})}
\\ & \lesssim (1+\|u\|^{\rho_1}_{L^{2(\rho_1+1)}(\Dom)}+\|v\|^{\rho_1}_{L^{2(\rho_1+1)}(\Dom)})\|u-v\|_{L^{2(\rho_1+1)}(\Dom)} & \text{(by H\"older's inequality)}
\\ & \lesssim (1+\|u\|^{\rho_1}_{H^{4\beta_1-2}(\Dom)}+\|v\|^{\rho_1}_{H^{4\beta_1-2}(\Dom)})  \|u-v\|_{H^{4\beta_1-2}(\Dom)} & \text{(by Sobolev embedding).}
\end{align*}
In the Sobolev embedding we need $4\beta_1 -2- \frac{d}{2} \geq -\frac{d}{2(\rho_1+1)}$. Therefore, the condition \eqref{eq:subcriticalvar} leads to $\rho_1\leq \frac{4}{d}$.
Moreover, we can consider the critical case $2\beta_1 = 1+\frac{1}{\rho_1+1}$.
The function $G$ satisfies Assumption \ref{ass:varsetting} with $\rho_2 = 0$ and $\beta_2 =\frac34$. Indeed,
\begin{align*}
\|G(u) - G(v)\|_{L^2(\Dom;\ell^2)}   & \lesssim \|u - v\|_{L^2(\Dom)} + \|\nabla u - \nabla v\|_{L^2(\Dom)}
\lesssim \|(u-v)\|_{H^{1}(\Dom)}\eqsim \|u-v\|_{\frac34}.
\end{align*}
To check the coercivity condition of Theorem \ref{thm:varglobal}, integrating by parts it follows that for all $v\in V$,
\begin{align*}
\textstyle \langle F(v),v\rangle = -(\nabla v, \nabla (f(v)))_{L^2}
 \leq  -\int_{\Dom} f'(v) |\nabla v|^2 \,\dd  x
 \leq C \|\nabla v\|_{L^2(\Dom)}^2\leq \varepsilon \|v\|_{V}^2 + C_{\varepsilon} \|v\|_{H}^2,
\end{align*}
where we used standard interpolation estimates and where $\varepsilon>0$ is arbitrary. By Remark \ref{rem:sufficientcoercivity} this implies the required coercivity condition, and thus by Theorem \ref{thm:varglobal} we deduce the following.
\begin{theorem}[Global well-posedness]
\label{thm:CH}
Suppose that Assumption \ref{ass:CH} holds. Let $u_0\in L^0_{\F_0}(\Omega; L^2(\Dom))$.
Then \eqref{eq:CahnHilliard} has a unique global solution
\begin{equation*}
u\in C([0,\infty);L^2(\Dom))\cap L^2_{\rm loc}([0,\infty);H^2_N(\Dom)) \ a.s.
\end{equation*}
\end{theorem}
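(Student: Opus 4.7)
The plan is to apply Theorem \ref{thm:varglobal} directly in the variational framework with the Gelfand triple $(V,H,V^*) = (H^2_N(\Dom), L^2(\Dom), (H^2_N(\Dom))^*)$ and with $(A,B,F,G)$ as constructed in the discussion preceding the statement. The author has in fact already performed essentially all the nontrivial computations, so the proof is a matter of assembling them and invoking Theorem \ref{thm:varglobal}.

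First I would verify Assumption \ref{ass:varsetting}\eqref{it1:varsetting}. Since $B=0$, this reduces to showing coercivity of $A$, which is immediate from $\langle u, Au\rangle = \|\Delta u\|_{L^2(\Dom)}^2$ and the elliptic regularity estimate \eqref{eq:ellipCH}, giving $\langle u,Au\rangle \geq \theta^{-1}\|u\|_V^2 - M\theta^{-1}\|u\|_H^2$ up to renaming constants. Next, Assumption \ref{ass:varsetting}\eqref{it2:varsetting} is verified with two index pairs: for $F$ one takes $\rho_1 = \rho \in [0,4/d]$ and $2\beta_1 = 1+\frac{1}{\rho_1+1}$ (the chain of H\"older and Sobolev embeddings above the statement gives the required local Lipschitz bound, and the Sobolev exponent matches precisely when one is at criticality $(2\beta_1-1)(\rho_1+1)=1$); for $G$ one takes $\rho_2=0$ and $\beta_2=3/4$, which is subcritical, and the Lipschitz bound follows from Assumption \ref{ass:CH} and $\|u-v\|_{H^1(\Dom)} \eqsim \|u-v\|_{\beta_2}$.

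The heart of the argument is the coercivity hypothesis of Theorem \ref{thm:varglobal}, and here the only tool available is the gradient-form structure of the nonlinearity. For $v\in V$, using the Neumann boundary condition $\partial_n v|_{\partial\Dom}=0$ to integrate by parts, one has
\[
\langle v, F(v)\rangle = (\Delta v, f(v))_{L^2} = -(\nabla v, f'(v)\nabla v)_{L^2} = -\int_\Dom f'(v)|\nabla v|^2 \,\dd x \leq C\|\nabla v\|_{L^2(\Dom)}^2,
\]
by the one-sided bound $f'(y)\geq -C$. Interpolating $\|\nabla v\|_{L^2(\Dom)}^2 \leq \varepsilon \|v\|_V^2 + C_\varepsilon \|v\|_H^2$ and combining with the sublinear growth $\|G(v)\|_{\calL_2(\mathcal{U},H)} \lesssim 1+\|v\|_{3/4}$ (which embeds between $H$ and $V$), Remark \ref{rem:sufficientcoercivity} delivers the coercivity inequality
\[
\langle v,Av-F(v)\rangle - \tfrac{1}{2}\|G(v)\|_{\calL_2(\mathcal{U},H)}^2 \geq \theta'\|v\|_V^2 - M'\|v\|_H^2 - M'.
\]
An application of Theorem \ref{thm:varglobal} then produces the unique global solution with the stated path regularity.

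The main subtle point is that $F$ itself is neither coercive nor weakly monotone in the critical regime $\rho=4/d$, so one cannot read off dissipativity from $F$ alone. Rather, one must exploit the conservative structure $F(u) = \Delta(f(u))$, which converts the sign condition $f'\geq -C$ into a genuine $H^1$-level estimate after one integration by parts. All remaining ingredients (criticality check, Sobolev exponents, trace pairing for $V$) are routine verifications of Assumption \ref{ass:varsetting}.
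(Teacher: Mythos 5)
Your proposal is correct and follows the paper's own argument essentially step for step: same Gelfand triple, same index pairs $(\rho_1,\beta_1)$ and $(\rho_2,\beta_2)$ for Assumption \ref{ass:varsetting}, the same integration-by-parts reduction $\lb v,F(v)\rb = -(\nabla v,\nabla(f(v)))_{L^2} = -\int_\Dom f'(v)|\nabla v|^2\,\dd x$ exploiting the one-sided bound $f'\geq -C$, and then Remark \ref{rem:sufficientcoercivity} together with Theorem \ref{thm:varglobal}. The one detail you make explicit that the paper leaves implicit is the vanishing of the boundary term from the Neumann condition $\partial_n v|_{\partial\Dom}=0$, which is a welcome clarification rather than a divergence from the method.
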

Also, $L^2$-moment bounds hold by Subsection \ref{sss:p=2}, and the continuous dependency of Theorem \ref{thm:varglobal} holds in the above setting. Furthermore, if $\rho_1 <4/d$, then one can check that Proposition \ref{prop:parabreg3} gives that
\begin{align*}
  u&\in H^{\theta,r}_{\rm loc}((0,\sigma);H^{2-4\theta}(\Dom))\cap C^{\theta-\varepsilon}_{\rm loc}((0,\sigma);H^{2-4\theta}(\Dom)), \ \ \theta\in [0,1/2), r\in [2, \infty), \varepsilon\in (0,\theta).
 \end{align*}

\begin{remark}
In case $d\geq 3$, one can apply $L^p(L^q)$-theory to establish global well-posedness for \eqref{eq:CahnHilliard} in case of the classical double well potential; however, this will not be considered in this survey. Regularity can also be obtained through $L^p(L^q)$-theory. Alternatively, for $d\in \{1, 2\}$ one could apply Theorem \ref{thm:strongvar} and Proposition \ref{prop:parabreg3} to obtain regularity.
\end{remark}

\subsection{Fluid dynamics models via $L^2$-setting}\label{ss:Fluid}
In this subsection, we consider several models from fluid dynamics which fit into a unified abstract framework, mostly two-dimensional. In particular, this includes 2D Navier Stokes, quasigeostrophic, and 2D Boussinesq equations, all of which we examine in more detail. A similar setting was also considered in \cite{ChueshovMillet}, where it was shown that the following models are included: 2D magneto-hydrodynamic equations, 2D magnetic B\'enard problem, 3D Leray $\alpha$-model for Navier--Stokes equations, shell models of turbulence.
While we do not delve into these latter models, our framework accommodates them as well.
Models which do not fit into this the $L^2$-setting are considered in Subsection \ref{ss:quasiGSLp} and \ref{subsec:SNSRd}.

All of the above equations are typically treated using Galerkin approximation since they do not conform to the classical variational setting. However, in our theory, they do not present any additional difficulties and can be seamlessly incorporated. Notably, our approach does not require compactness of embeddings, allowing us to include unbounded domains. Finally, it is worth mentioning that most 2D fluid dynamics models are critical in the $L^2$-setting.

\subsubsection{Abstract formulation}
The general problem we consider has the form
\begin{equation}\label{eq:abstractfluid}
\begin{aligned}
\dd u + A u \,\dd t = \Phi(u, u) \,\dd t + (B u + G(u)) \,\dd W, \quad u(0) = u_0.
\end{aligned}
\end{equation}
Here, $\Phi:V_{\beta_1}\times V_{\beta_1}\to V^*$ is assumed to be bilinear and satisfies certain estimates (see below). A key condition will be that $\lb u, \Phi(u, u)\rb = 0$ for all $u\in V$. In many models, $\Phi$ is of the form $\Phi(u,v) = \div( u\otimes v)$ with $V$ a first-order Sobolev space.

\begin{assumption}\label{ass:bilinearFluid}
\
\begin{enumerate}[{\rm (1)}]
\item\label{it1:bilinearFluid} $(A,B)$ is coercive, i.e.\ there exist $\theta>0$ and $M\geq 0$ such that for all $v\in V$,
    \[\lb v, Av\rb  - \tfrac{1}{2} \|B v\|_{\calL_2(\mathcal{U};H)}^2 \geq \theta\|v\|_V^2- M \|v\|_H^2.\]

\item\label{it2:bilinearFluid} For some $\beta_1\in (1/2, 3/4]$,
$\Phi:V_{\beta_1}\times V_{\beta_1}\to V^*$ is bilinear and satisfies
\[\|\Phi(u, v)\|_{V^*}\leq C\|u\|_{\beta_1} \|v\|_{\beta_1}, \ \ \lb u,\Phi(u,u)\rb = 0, \ \ u,v\in V.\]

\item\label{it3:bilinearFluid} For some $\beta_2\in (1/2, 1)$, $G:V_{\beta_2}\to \calL_2(\mathcal{U},H)$ is globally Lipschitz.
\end{enumerate}
\end{assumption}

Let $F:V_{\beta}\to V^*$ be given by $F(u) = \Phi(u,u)$. Then $F$ satisfies Assumption \ref{ass:varsetting} with $\rho_1=1$ and $\beta_1$ as in Assumption \ref{ass:bilinearFluid}. Note that $\beta_1 = 3/4$ is the critical case of Assumption \ref{ass:varsetting}. In many applications with $d=2$, one is forced to take $\beta_1 = 3/4$ as will be explained in a simplified setting in the following remark.
\begin{remark}\label{rem:gradientformburger}
Let $d=2$. If $V = H^1$, $H = L^2$ and $V^* = H^{-1}$, then $V_{\beta} = H^{2\beta-1}$. Thus
\[\|\nabla(u^2) - \nabla(v^2)\|_{H^{-1}} \lesssim \|u^2-v^2\|_{L^2} \leq \|u+v\|_{L^4}\|u-v\|_{L^4}\leq C(\|u\|_{\beta}+\|v\|_{\beta})\|u-v\|_{\beta}\]
if $2\beta-1 -\frac{2}{2}\geq -\frac{2}{4}$. This leads to $\beta\geq \frac34$, which means criticality cannot be avoided. One can also check that the scaling $u(\lambda\cdot)$ by $\lambda>0$ of the nonlinearity in the $H^{-1}$-norm and $\|\cdot\|_{3/4} = \|\cdot\|_{H^{1/2}}$-norm both coincide with $\lambda^{-1/2}$ for $\lambda\downarrow 0$. The criticality in the above is one of the reasons that the more classical variational settings do not apply to many standard fluid dynamics models.

Another problem of the classical variational setting is that the nonlinearity $F$ is not weakly monotone (see the beginning of Section \ref{sec:Var}), which, for instance, follows from a scaling argument considering $u(\lambda\cdot)$ and let $\lambda\to \infty$.
\end{remark}

\begin{theorem}\label{thm:fluidabstract}
Suppose that Assumption \ref{ass:bilinearFluid} holds,
Then for every $u_0\in L^0_{\F_0}(\Omega;H)$ there exists a unique global solution $u\in L^2_{\rm loc}([0,\infty);V)\cap C([0,\infty);H)$ a.s.\ to \eqref{eq:abstractfluid}. Moreover, for all $T\in (0,\infty)$ and $p\in (0,2]$,
\begin{align*}
\textstyle \E \sup_{t\in [0,T]}\|u(t)\|_{H}^{p} + \E\Big|\int_0^T \|u(t)\|^2_V \,\dd t\Big|^{p/2} & \leq C_{T,p} (1+\E\|u_0\|_{H}^{p}).
\end{align*}
Furthermore, the following continuous dependency on the initial data holds: if $u_0^n \in L^0_{\F_0}(\Omega;H)$ are such that $\|u_0-u_0^n\|_H\to 0$ in probability, then for every $T\in (0,\infty)$,
\[\|u - u^n\|_{L^2(0,T;V)} + \|u - u^n\|_{C([0,T];H)}\to 0 \ \ \text{in probability},\]
where $u^n$ is the unique global solution to \eqref{eq:abstractfluid} with initial data $u_0^n$.

Finally, if $B(v)^* v = 0$ for all $v\in V$ and $\lim_{\|v\|_V\to \infty} \frac{\|G(v)\|_{\calL_2(\mathcal{U},H)}}{\|v\|_V} = 0$, then there are constants $C, C_T>0$ such that for all $p\in [2, \infty)$,
\begin{align}\label{eq:fluidpindep}
\textstyle\big\|\sup_{t\in [0,T]}\|u(t)\|_{H}\big\|_{L^p(\Omega)} + p^{-1/2} \|u\|_{L^p(\Omega;L^2(0,T;V))} & \leq C(C_T+\|u_0\|_{L^p(\Omega;H)}).
\end{align}
\end{theorem}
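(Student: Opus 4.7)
The plan is to verify that the hypotheses of Theorems \ref{thm:varloc} and \ref{thm:varglobal} hold and then to invoke the refinements of Subsections \ref{sss:p=2} and \ref{sss:p>2} for the moment bounds.

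First, I would check Assumption \ref{ass:varsetting} to apply Theorem \ref{thm:varloc}. The map $F(u) := \Phi(u,u)$ from $V$ into $V^*$ satisfies, by bilinearity and Assumption \ref{ass:bilinearFluid}\eqref{it2:bilinearFluid},
\begin{align*}
\|F(u)-F(v)\|_{V^*}
=\|\Phi(u-v,u)+\Phi(v,u-v)\|_{V^*}
\leq C(\|u\|_{\beta_1}+\|v\|_{\beta_1})\|u-v\|_{\beta_1},
\end{align*}
i.e.\ the local Lipschitz condition with $\rho_1=1$ and $\beta_1\in(1/2,3/4]$, for which the criticality constraint $(2\beta_1-1)(\rho_1+1)\leq 1$ is exactly $\beta_1\leq 3/4$. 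The Lipschitz condition for $G$ in Assumption \ref{ass:bilinearFluid}\eqref{it3:bilinearFluid} furnishes $\rho_2=0$, $\beta_2\in(1/2,1)$, which trivially satisfies \eqref{eq:subcriticalvar}. Combined with the coercivity of $(A,B)$ from Assumption \ref{ass:bilinearFluid}\eqref{it1:bilinearFluid}, Theorem \ref{thm:varloc} yields a maximal solution $(u,\sigma)$ with $\sigma>0$ a.s.

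Next, I would promote this to a global solution via Theorem \ref{thm:varglobal}. Here the key ingredient is the cancellation $\lb v,F(v)\rb =\lb v,\Phi(v,v)\rb =0$ for all $v\in V$. Since $G:V_{\beta_2}\to\calL_2(\mathcal{U},H)$ is globally Lipschitz with $\beta_2<1$, we have the sublinear bound $\|G(v)\|_{\calL_2(\mathcal{U},H)}\leq C(1+\|v\|_{\beta_2})$. The cancellation yields $\lb v,F(v)\rb \leq 0$, so the sufficient condition of Remark \ref{rem:sufficientcoercivity} applies. Consequently Theorem \ref{thm:varglobal} delivers global existence, uniqueness, the $L^p$-moment bounds for $p\in(0,2)$, and the stated continuous dependence on the initial data. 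For the endpoint $p=2$, I would invoke Subsection \ref{sss:p=2}: since $\|G(v)\|_{\calL_2(\mathcal{U},H)}\leq C(1+\|v\|_V)$, the standard coercivity can be reinforced with a factor $\tfrac12+\eta$ for small $\eta>0$ (as in the paragraph following Subsection \ref{sss:p=2}), producing the missing $p=2$ bound.

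Finally, under the additional hypotheses $B(v)^{*}v=0$ and $\|G(v)\|_{\calL_2(\mathcal{U},H)}/\|v\|_V\to 0$ as $\|v\|_V\to\infty$, the $p$-independent estimate \eqref{eq:fluidpindep} is precisely \eqref{eq:preciseestpind} from Subsection \ref{sss:p>2}. The sublinearity assumption ensures that for every $\varepsilon>0$ there is $C_\varepsilon$ with $\|G(v)\|_{\calL_2(\mathcal{U},H)}\leq\varepsilon\|v\|_V+C_\varepsilon$, which together with $B(v)^{*}v=0$ permits application of \cite[Corollary 3.4]{GHV}, giving the desired bound with the prefactors $1$ and $p^{-1/2}$ as claimed.

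I expect the only mildly delicate point to be the verification that the chain of implications in Remark \ref{rem:sufficientcoercivity} indeed goes through for our (potentially critical) $\beta_1=3/4$ case; this is not an issue because the coercivity argument there uses only $\lb v,F(v)\rb\leq M''(\|v\|_H^2+1)$ and the sublinear $\calL_2$-bound on $G$, both of which we have. The rest is a direct application of the general variational machinery, with no new ideas required beyond recognizing that the bilinear skew-symmetry $\lb v,\Phi(v,v)\rb=0$ precisely supplies the dissipation that the weak monotonicity hypothesis would normally demand.
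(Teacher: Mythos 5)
Your proposal is correct and mirrors the paper's proof almost verbatim: the bilinear decomposition of $F(u)-F(v)$ (you write $\Phi(u-v,u)+\Phi(v,u-v)$, the paper writes $\Phi(u,u-v)+\Phi(u-v,v)$, which are equivalent), the identification $\rho_1=1$, $\rho_2=0$, the use of the skew-symmetry $\lb v,\Phi(v,v)\rb =0$ together with Remark \ref{rem:sufficientcoercivity}, the extension to $p=2$ via Subsection \ref{sss:p=2}, and the appeal to \eqref{eq:preciseestpind} (hence to \cite[Corollary 3.4]{GHV}) for \eqref{eq:fluidpindep} are all exactly what the paper does. The only mild difference is cosmetic—you make explicit the observation that Remark \ref{rem:sufficientcoercivity} does not care whether $\beta_1=3/4$ is critical, since only the $G$-growth and the sign of $\lb v,F(v)\rb$ enter—which is a correct and harmless clarification.
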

In fluid dynamics, $B(v)^* v = 0$ (or equivalently $(B(v)h,v)_{H}=0$ for $h\in \mathcal{U}$) often follows from the fact that the coefficients appearing in $B$ are divergence-free.
\begin{proof}
It suffices to check the conditions of Theorem \ref{thm:varglobal} and its extension to $p=2$ of Subsection \ref{sss:p=2}.  Assumption \ref{ass:varsetting}\eqref{it1:varsetting} is immediate. To check Assumption \ref{ass:varsetting}\eqref{it2:varsetting} note
\begin{align*}
\|F(u)-F(v)\|_{V^*} &  = \|\Phi(u, u-v)+\Phi(u-v,v)\|_{V^*}
\leq C(\|u\|_{\beta_1} +\|v\|_{\beta_1})\|u-v\|_{\beta_1},
\end{align*}
which gives the desired estimate with $\rho_1=1$, which indeed satisfies \eqref{eq:subcriticalvar}.
Since $G$ is globally Lipschitz, we can take $\rho_2=0$.

The coercivity condition of Theorem \ref{thm:varglobal} follows from Remark \ref{rem:sufficientcoercivity} and the fact that
\begin{align}\label{eq:coercivityfluid}
\lb u, F(u)\rb = \lb u,\Phi(u,u)\rb=0.
\end{align}

Now, we are in the situation that Theorem \ref{thm:varglobal} applies. Note that the moment estimates extend to $\gamma=1$ by the conditions on $G$ and Subsection \ref{sss:p=2}. The final assertion on $p$-th moments follows from \eqref{eq:preciseestpind}.
\end{proof}

\begin{remark}
From \eqref{eq:coercivityfluid} and Remark \ref{rem:sufficientcoercivity} it is clear that the assumption $\lb u,\Phi(u,u)\rb = 0$ can be relaxed to: for all $\varepsilon>0$ there exists a constant $C_{\varepsilon}>0$ such that for all $u\in V$,
\[\lb u,\Phi(u,u)\rb \leq \varepsilon\|u\|_V^2 + C_{\varepsilon} \|u\|_H^2.\]
\end{remark}

\subsubsection{Regularity through a stronger setting}

Via Theorem \ref{thm:strongvar} we can upgrade the regularity of Theorem \ref{thm:fluidabstract} under suitable conditions.
\begin{theorem}[Regularity through the strong setting]\label{thm:fluidstrongvar}
Suppose that the conditions of Theorem \ref{thm:fluidabstract} hold. Suppose that $A$ and $B$ also define bounded operators  $A\in \calL(\wt{V},H)$ and $B\in \calL(\wt{V}, \calL_2(\mathcal{U},V))$.  Suppose that there exist constants $\wt{\theta}>0$ and $M\geq 0$ such that for all $u,v\in \wt{V}$
\begin{align*}
\lb v, Av\rb_{\wt{V},\wt{V}^*}  - \tfrac{1}{2} \|B v\|_{\calL_2(\mathcal{U};V)}^2 & \geq \wt{\theta}\|v\|_{\wt{V}}^2- \wt{M} \|v\|_V^2,
\\ \|\Phi(u,v)\|_{H}& \leq M \|u\|_{[H,\wt{V}]_{3/4}} \|v\|_{[H,\wt{V}]_{3/4}},
\\ \|G(u) - G(v)\|_{\calL_2(\mathcal{U},V)}& \leq M(1+\|u\|_{[H,\wt{V}]_{3/4}}+\|v\|_{[H,\wt{V}]_{3/4}})\|u-v\|_{[H,\wt{V}]_{3/4}},
\\ \|\Phi(v,v)\|_{H}& \leq M\|v\|_{\wt{V}}^{1/2} \|v\|_{V} \|v\|_{H}^{1/2},
\\ \|G(v)\|_{\calL_2(\mathcal{U},V)}& \leq C(1+\|v\|_{V}).
\end{align*}
Let $u_0\in L^0_{\F_0}(\Omega;V)$. Then the solution provided by Theorem \ref{thm:fluidabstract} satisfies $u\in L^2_{\rm loc}([0,\infty);\wt{V})\cap C([0,\infty);V)$ a.s. Moreover, if there is an $\varepsilon\in (0,1)$ with $u_0\in [V,\wt{V}]_{\varepsilon}$ a.s., there exist $\gamma\geq 0$,  $\delta\in (0,2]$
such that  \[\|\Phi(v,v)\|_{H} \leq M \|v\|_{[H,\wt{V}]_{3/4}}^{2-\delta} \|v\|_{V}^{\gamma}, \ \  v\in V,\]
and $(A,B)\in \mathcal{SMR}_{r,\alpha}^{\bullet}$ in the $(\wt{V}, V, H)$-setting for all $r\in (2, \infty)$ and $\alpha\in [0,r/2-1)$, then
\begin{align*}
  u&\in H^{\theta,r}_{\rm loc}((0,\infty);[H,\wt{V}]_{1-\theta}) \cap C^{\theta-\varepsilon}_{\rm loc}((0,\infty);[H,\wt{V}]_{1-\theta}), \ \ r\in (2, \infty), \theta\in [0,1/2), \varepsilon\in (0,\theta).
  \end{align*}
\end{theorem}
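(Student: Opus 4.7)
The plan is to apply Theorem \ref{thm:strongvar} with the enhanced Gelfand triple $(\wt V, V, H)$ playing the role of $(\wt V, \wt H, \wt V^{*})$ there, and then invoke Corollary \ref{cor:strongvarsub} in the same enhanced setting to obtain the finer time–space regularity. A key preliminary identity I would use throughout is the reiteration
\[
[H,\wt V]_{3/4} = [V,\wt V]_{1/2}, \qquad V = [H,\wt V]_{1/2},
\]
which gives the interpolation estimate $\|v\|_{[H,\wt V]_{3/4}} \lesssim \|v\|_V^{1/2}\,\|v\|_{\wt V}^{1/2}$ for $v\in\wt V$, to be used repeatedly.

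First, I would verify that $(A,B)$ and $(F,G)=(\Phi(\cdot,\cdot),G)$ satisfy Assumption \ref{ass:varsetting} in the $(\wt V,V,H)$-setting. The coercivity and the boundedness of $A,B$ are assumed directly. For $F$, bilinearity and the assumed bound $\|\Phi(u,v)\|_H\leq M\|u\|_{[H,\wt V]_{3/4}}\|v\|_{[H,\wt V]_{3/4}}$, combined with the real-into-complex embedding \eqref{eq:sandwichinterp}, yield
\[
\|F(u)-F(v)\|_H \leq 2M\,(\|u\|_{(H,\wt V)_{3/4,1}} + \|v\|_{(H,\wt V)_{3/4,1}})\,\|u-v\|_{(H,\wt V)_{3/4,1}},
\]
which is of the form in Assumption \ref{ass:varsetting}\eqref{it2:varsetting} with $\rho_1=1$, $\beta_1=3/4$, saturating the critical condition $(2\beta_1-1)(\rho_1+1)=1$; the analogous estimate for $G$ with values in $\calL_2(\mathcal{U},V)$ yields $\rho_2=1$, $\beta_2=3/4$. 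The growth hypotheses of Theorem \ref{thm:strongvar} hold with $\alpha_1=1/2$, $\gamma_1=1/2$ (from $\|\Phi(v,v)\|_H\leq M\|v\|_{\wt V}^{1/2}\|v\|_V\|v\|_H^{1/2}$) and $\alpha_2=0$, $\gamma_2=0$ (from $\|G(v)\|_{\calL_2(\mathcal{U},V)}\lesssim 1+\|v\|_V$). Since $\sigma=\infty$ a.s.\ by Theorem \ref{thm:fluidabstract}, Theorem \ref{thm:strongvar} then gives $u\in L^2_{\rm loc}([0,\infty);\wt V)\cap C([0,\infty);V)$ a.s.

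For the quantitative regularity statement, I would apply Corollary \ref{cor:strongvarsub} in the same $(\wt V,V,H)$-setting. The subcriticality hypothesis there demands a bound
\[
\|F(v)\|_H + \|G(v)\|_{\calL_2(\mathcal{U},V)} \lesssim_n 1 + \|v\|_{\wt V}^{\varphi}\qquad\text{for }\|v\|_V\leq n
\]
with some $\varphi\in(0,1)$. Plugging the reiteration/interpolation estimate into the additional hypothesis $\|\Phi(v,v)\|_H\leq M\|v\|_{[H,\wt V]_{3/4}}^{2-\delta}\|v\|_V^{\gamma}$ gives $\|\Phi(v,v)\|_H\lesssim \|v\|_V^{(2-\delta)/2+\gamma}\|v\|_{\wt V}^{(2-\delta)/2}$, so I can take $\varphi:=(2-\delta)/2<1$ thanks to $\delta>0$; the $G$-term is subsumed as it is at most linear in $\|v\|_V$. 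Together with the assumed $\mathcal{SMR}_{r,\alpha}^\bullet$ of $(A,B)$ and $u_0\in L^0_{\F_0}(\Omega;[V,\wt V]_{\varepsilon})$, Corollary \ref{cor:strongvarsub} then yields
\[
u\in H^{\theta,r}_{\rm loc}((0,\infty);[H,\wt V]_{1-\theta})\cap C^{\theta-\varepsilon}_{\rm loc}((0,\infty);[H,\wt V]_{1-\theta})
\]
for all admissible $r,\theta,\varepsilon$.

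The most delicate point will be the interpolation bookkeeping: namely, ensuring that the bilinear bound on $\Phi$ is translated into a critical local Lipschitz estimate in Assumption \ref{ass:varsetting}\eqref{it2:varsetting} for the new triple (which is where the real-versus-complex interpolation sandwich is invoked), and, for the second part, verifying that the reiteration identity produces an exponent of $\|v\|_{\wt V}$ strictly below $1$ — precisely the role played by the strict positivity of $\delta$.
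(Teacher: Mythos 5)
Your proposal is correct and follows exactly the same route as the paper: verify Assumption \ref{ass:varsetting} in the lifted triple $(\wt V, V, H)$, then invoke Theorem \ref{thm:strongvar} (whose growth hypotheses you correctly instantiate with $\alpha_1=1/2,\gamma_1=1/2,\alpha_2=0,\gamma_2=0$) for the first assertion, and Corollary \ref{cor:strongvarsub} with $\varphi=(2-\delta)/2$ for the second. The paper's proof is essentially a one-line reference to these same two results; you have simply filled in the interpolation bookkeeping (reiteration $[H,\wt V]_{3/4}=[V,\wt V]_{1/2}$, real-versus-complex comparison via \eqref{eq:sandwichinterp}) that the paper leaves implicit.
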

\begin{proof}
In the same way as in Theorem \ref{thm:fluidabstract} one can check that Assumption \ref{ass:varsetting} holds in the tilde setting.  Therefore, the first assertion is immediate from Theorem \ref{thm:strongvar}. The second assertion follows from Corollary \ref{cor:strongvarsub} applied in the tilde-setting.
\end{proof}

\subsubsection{Helmholtz decomposition}\label{sss:Helm}
Let $\Dom\subseteq \R^d$ be an open set, possibly unbounded. To introduce the Helmholtz decomposition, we begin by defining some useful spaces.
Let $\Ls^2(\Dom)$ denote the solenoidal subspace of $L^2$, i.e.\ the $L^2$-closure of all $u\in C^\infty_{c}(\Dom;\R^d)$ such that $\div u = 0$. Let $\Gs^2(\Dom)$ denote the space of weak gradients, i.e.
\[\Gs^2(\Dom)=\{\nabla p\in L^2(\Dom): p\in L^2_{\rm loc}(\Dom)\}\]
equipped with the norm $\|\nabla p\|_{L^2}$. Clearly,  $\Ls^2(\Dom)$ is a closed subspace of $L^2(\Dom;\R^d)$. From de Rham's theorem in differential geometry, one can deduce that also $\Gs^2(\Dom)$ is a closed subspace of $L^2(\Dom;\R^d)$ and $\Ls^2(\Dom)^{\bot} = \Gs^2(\Dom)$. Details can be found in \cite[p14]{Temam} (see also \cite[p81]{Sohr}).

Let $\pr:L^2(\Dom;\R^d)\to \Ls^2(\Dom)$ be the orthogonal projection. This projection is called the {\em Helmholtz projection}. The orthogonal decomposition $u = \pr u + \nabla p$ with $\nabla p = u-\pr u\in \Gs^2(\Dom)$ is referred to as the {\em Helmholtz decomposition}.

\subsubsection{Navier--Stokes with no-slip condition}\label{ss:SNS}
We consider the Navier--Stokes system on an arbitrary open set $\Dom\subseteq \R^d$ with $d=2$. We do not assume any specific regularity for $\Dom$ and it may also be unbounded. For simplicity, we focus on the no-slip boundary condition; however, the techniques presented are not limited to that setting. For example, the Navier boundary condition case can be handled with only minor adjustments. Additionally, periodic boundary conditions can also be considered, and they typically offer a simpler framework for analysis:

The problem we study on $\Dom$ is as follows.
\begin{equation}
\label{eq:Navier_Stokes}
\left\{
\begin{aligned}
\dd u &=\big[\nu \Delta u -(u\cdot \nabla)u -\nabla P  \big] \,\dd  t
+\textstyle{\sum_{n\geq 1}}\big[(\btwod_{n}\cdot\nabla) u +g_n(\cdot,u) -\nabla \wt{P}_n\big] \,\dd W_t^n,
\\
\div \,u&=0,
\\ u&=0 \ \text{on $\partial \Dom$},
\\ u(0,\cdot)&=u_0.
\end{aligned}\right.
\end{equation}
Here, $u:=(u^1,u^2):[0,\infty)\times \O\times \Dom\to \R^2$ denotes the unknown velocity field, $P,P_n:[0,\infty)\times \O\times \Dom\to \R$ the unknown pressures, $(W_t^n:t\geq 0)_{n\geq 1}$ a given sequence of independent standard Brownian motions and
\begin{equation*}
\textstyle (\btwod_{n}\cdot\nabla) u:=\Big(\sum_{j\in \{1,2\}} \btwod_n^j \partial_j u^k\Big)_{k=1,2},
\qquad (u\cdot \nabla ) u:=\Big(\sum_{j\in \{1,2\}} u^j \partial_j u^k\Big)_{k=1,2}.
\end{equation*}
One can also cover the Stratonovich formulation of the noise in \eqref{eq:Navier_Stokes}, but for simplicity, we will not do this here. The reader is referred to \cite[Appendix A]{AV20_NS} to see which additional terms need to be considered.

\begin{assumption}\label{ass:SNS}
Let $d=2$. Let $b^j = (b^j_{n})_{n\geq 1}:\Dom\to \ell^2$ be measurable and bounded and suppose that there exists a $\mu\in (0,\nu)$
such that for all $x\in \Dom$
\begin{align*}
\textstyle \frac{1}{2}\sum_{n\geq 1} \sum_{i,j\in \{1,2\}} b_n^i(x) b_n^j(x) \xi_i \xi_j \leq \mu |\xi|^2 \ \ \text{ for all }\xi\in \R^d.
\end{align*}
Moreover, $g:\Dom\times\R^2\to \ell^2$ satisfies $g(\cdot,0)\in L^2(\Dom;\ell^2)$ and
\begin{align*}
\|g(x,y) - g(x,y')\|_{\ell^2}\leq L_g |y-y'|, \ \ \ x\in \Dom, y,y'\in \R^2.
\end{align*}
\end{assumption}
In the above, $g_n:\R\to \R^2$, so $\ell^2$ is understood as an $\R^2$-valued sequence space.

To rewrite \eqref{eq:Navier_Stokes} as \eqref{eq:abstractfluid}, we will apply the theory of Subsection \ref{sss:Helm}. Using the notation introduced there let $\mathcal{U} = \ell^2$ with standard basis $(e_n)_{n\geq 1}$,
\[H = \Ls^2(\Dom), \ \  V = \Hs^1_0(\Dom) = H^1_0(\Dom;\R^2)\cap \Ls^2(\Dom)  \ \ \text{and} \ \   V^* := \Hs^{-1}(\Dom) = (\Hs^1_0(\Dom))^*.\]
Let $J:\Hs^1_0(\Dom)\to H^1_0(\Dom;\R^2)$ be the canonical embedding. Then $J^*:H^{-1}(\Dom;\R^2)\to V^*$. We claim that $J^* f = \pr f$ for all $f\in L^2(\Dom;\R^2)$. Indeed, for all $v\in V$, \[\lb v, J^*f\rb = (v, f)_{L^2(\Dom)} = (\pr v, f)_{L^2(\Dom)} = (v, \pr f)_{L^2(\Dom)} = \lb v, \pr f\rb.\]

By the divergence free condition $(u\cdot \nabla)u=\div(u\otimes u)$, where $u\otimes u$ is the matrix with components $u_{j} u_k$.
Assuming $u_0\in \Ls^2(\Dom)$, after applying the Helmholtz projection $\pr$ to \eqref{eq:Navier_Stokes} (using the same notation $\pr$ for $J^*$ as introduced in Subsection \ref{sss:Helm})
we can write \eqref{eq:Navier_Stokes} in the form \eqref{eq:abstractfluid} with \[A = -\nu \pr\Delta,\ \ \Phi(u,v) =- \pr\div[u\otimes v], \ (B u)e_n = \pr[(\btwod_{n}\cdot\nabla) u], \ \text{and} \  G(u) e_n = \pr g_n(\cdot, u).\]
We will say that $u$ is a solution to \eqref{eq:Navier_Stokes} if $u$ is a solution to \eqref{eq:abstractfluid} with the above choices. As it is known, if $u$ is sufficiently regular, then $\nabla p$ can be recovered from $u$, see Subsection \ref{sss:Helm}.

Below we check Assumption \ref{ass:bilinearFluid} for each of these mappings. Clearly, $A$ and $B$ have the required mapping properties. Indeed, for $A$ this follows from
\[|\lb v, A u\rb|  = \nu |(\nabla v, \nabla u)_{L^2(\Dom)}| \leq \nu \|v\|_{\Hs^1(\Dom)} \|u\|_{\Hs^1(\Dom)}\]
for all $u,v\in \Hs^1(\Dom)$. For $B$ this follows from
\[\textstyle \|Bu\|_{\calL_2(\mathcal{U},H)}^2 = \int_{\Dom}\sum_{n\geq 1} |\pr[(b_n \cdot \nabla) u]|^2 \,\dd x\leq \int_{\Dom} \|b\|^2_{\ell^2} |\nabla u|^2 \,\dd x\leq \|b\|^2_{L^\infty(\Dom;\ell^2)} \|u\|_{V}^2.\]
The coercivity in Assumption \ref{ass:bilinearFluid}\eqref{it1:bilinearFluid} follows from the ellipticity condition in Assumption \ref{ass:SNS} in a similar way as we have seen for the Allen--Cahn equation in Subsection \ref{sss:AllenCahnweak}.

Since $d=2$, by the Sobolev embedding $V_{3/4}\hookrightarrow L^4$ (see \cite[Lemma A.7]{AV20_NS}), it follows that for all $u,v,z\in V$,
\begin{align*}
|\lb z,\Phi(u,v)\rb| &= \textstyle \Big|\sum_{j,k=1}^2 (\partial_j z_k, u_j v_k)_{L^2(\Dom)}\Big| \lesssim  \|z\|_{\Hs^1(\Dom)} \|u\|_{L^4(\Dom;\R^2)} \|v\|_{L^4(\Dom)}\lesssim C_d' \|z\|_{\Hs^1(\Dom)} \|u\|_{\frac34} \|v\|_{\frac34},
\end{align*}
which proves the desired mapping property. The bilinearity is clear from the definition.

Note that for $u\in C^\infty_c(\Dom;\R^2)$ with $\div(u) =  0$ (and by density for $u \in V$) we can write
\begin{align*}
\textstyle \lb u,\Phi(u,u)\rb = \sum_{j,k\in \{1, 2\}} (\partial_j u_k, u_j u_k)_{L^2(\Dom)}
= -\frac12\sum_{j,k\in \{1, 2\}} \int_{\Dom} (\partial_j u_j) u_k^2 \,\dd x = 0.
\end{align*}

For $G$ we note that $G(0)\in \calL_2(\mathcal{U},H)$, and
\begin{align*}
\textstyle  \|G(u) - G(v)\|_{\calL_2(\mathcal{U},H)}^2
= \int_{\Dom}\sum_{n\geq 1}|g_n(\cdot, u) -  g_n(\cdot, v)|^2 \,\dd
x \leq L^2_g \int_{\Dom}|u -  v|^2 \,\dd x =  L_g^2 \|u-v\|_H^2.
\end{align*}
Thus we are in a position to apply Theorem \ref{thm:fluidabstract} to obtain the following result.

\begin{theorem}[Global well-posedness of 2D Navier--Stokes equations]\label{thm:SNS}
Let $d=2$. Suppose that Assumption \ref{ass:SNS} holds,
Then for every $u_0\in L^0_{\F_0}(\Omega;\Ls^2(\Dom))$ there exists a unique global solution $u\in L^2_{\rm loc}([0,\infty);\Hs^1_0(\Dom))\cap C([0,\infty);\Ls^2(\Dom))$ to \eqref{eq:Navier_Stokes}. Moreover, for all $T\in (0,\infty)$ and $p\in (0,2]$
\begin{align}\label{eq:LpNS}
\textstyle \E \sup_{t\in [0,T]}\|u(t)\|_{L^2(\Dom;\R^2)}^{p}
+ \E\Big|\int_0^T \|u(t)\|^2_{H^1(\Dom;\R^2)} \,\dd t\Big|^{p/2} & \leq C_{T,p} (1+\E\|u_0\|_{L^2(\Dom;\R^2)}^{p}).
\end{align}
\end{theorem}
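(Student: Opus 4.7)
The plan is to recast \eqref{eq:Navier_Stokes} into the abstract form \eqref{eq:abstractfluid} via the Helmholtz projection, and then to verify the hypotheses of Theorem \ref{thm:fluidabstract} in the setting $H = \Ls^2(\Dom)$, $V = \Hs^1_0(\Dom)$, $V^* = \Hs^{-1}(\Dom)$, with $A = -\nu\pr\Delta$, $\Phi(u,v) = -\pr\div(u\otimes v)$, $(Bu)e_n = \pr[(b_n\cdot\nabla)u]$ and $G(u)e_n = \pr g_n(\cdot,u)$. Since the preamble to the theorem essentially carries out each verification, the proof is a matter of assembling these into an application of Theorem \ref{thm:fluidabstract}.

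First I would verify Assumption \ref{ass:bilinearFluid}\eqref{it1:bilinearFluid}: the coercivity of $(A,B)$ follows from the stochastic parabolicity in Assumption \ref{ass:SNS} precisely as in Subsection \ref{sss:AllenCahnweak}, giving $\theta = \nu-\mu > 0$ and some $M \geq 0$ so that $\langle v, Av\rangle - \tfrac{1}{2}\|Bv\|_{\calL_2(\mathcal{U},H)}^2 \geq \theta\|v\|_V^2 - M\|v\|_H^2$. Next I would verify Assumption \ref{ass:bilinearFluid}\eqref{it2:bilinearFluid} with the critical choice $\beta_1 = 3/4$: bilinearity of $\Phi$ is immediate, and the bound $\|\Phi(u,v)\|_{V^*} \lesssim \|u\|_{3/4}\|v\|_{3/4}$ follows by duality together with integration by parts
\[
|\langle z, \Phi(u,v)\rangle| = \Big|\sum_{j,k=1}^2 (\partial_j z_k, u_j v_k)_{L^2(\Dom)}\Big| \lesssim \|z\|_V \|u\|_{L^4(\Dom;\R^2)}\|v\|_{L^4(\Dom;\R^2)},
\]
combined with the Sobolev embedding $V_{3/4} \hookrightarrow L^4(\Dom;\R^2)$ valid in $d=2$. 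The key algebraic identity $\langle u, \Phi(u,u)\rangle = 0$ is obtained, by density of $C_c^\infty$-solenoidal fields in $V$, from integration by parts and $\div u = 0$. Finally, Assumption \ref{ass:bilinearFluid}\eqref{it3:bilinearFluid} is immediate with $\beta_2 = 1/2$ (so that $V_{1/2} = H$) from the Lipschitz bound on $g$ and the contractivity of $\pr$ on $L^2$.

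With all hypotheses verified, Theorem \ref{thm:fluidabstract} yields the global existence, uniqueness, and pathwise regularity statement for every $u_0 \in L^0_{\F_0}(\Omega; H)$, together with the $L^p$-moment bound \eqref{eq:LpNS} for $p \in (0,2]$. The main ``obstacle'' is not really an obstacle at this stage: it is the fact that $\beta_1 = 3/4$ is the critical endpoint allowed by Assumption \ref{ass:bilinearFluid}\eqref{it2:bilinearFluid}, and thus by \eqref{eq:subcriticalvar}; this criticality is exactly what the abstract framework of Section \ref{sec:Var} was designed to accommodate, so once Theorem \ref{thm:fluidabstract} is available, no further work is needed. (If one additionally wished to obtain the higher-order moment bound \eqref{eq:fluidpindep} for $p > 2$, one would observe that for divergence-free $b_n$ one has $(Bv)^* v = 0$ and $\|G(v)\|_{\calL_2(\mathcal{U},H)} \leq L_g(\|v\|_H + \|g(\cdot,0)\|_{L^2(\Dom;\ell^2)})$, so the hypotheses of the final assertion of Theorem \ref{thm:fluidabstract} apply.)
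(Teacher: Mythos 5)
Your proposal matches the paper's proof essentially line for line: the same Gelfand triple, the same Helmholtz reduction to \eqref{eq:abstractfluid}, coercivity of $(A,B)$ from stochastic parabolicity, the bilinear estimate via $V_{3/4}\hookrightarrow L^4$ together with integration by parts and density of divergence-free test functions, the orthogonality $\langle u,\Phi(u,u)\rangle=0$, global Lipschitzness of $G$, and then an invocation of Theorem \ref{thm:fluidabstract}. The only small imprecision is the choice $\beta_2=1/2$ in verifying Assumption \ref{ass:bilinearFluid}\eqref{it3:bilinearFluid} (which requires $\beta_2\in(1/2,1)$), but since $V_{\beta_2}\hookrightarrow H$ this is harmless: any $\beta_2\in(1/2,1)$ works.
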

The continuous dependency as stated in Theorem \ref{thm:fluidabstract} holds as well.
Finally, note that for $u\in C^\infty_c(\Dom;\R^2)$,
\begin{align*}
\textstyle (B(u)e_n, u)_{H} = \sum_{j,k=1}^2 \int_{\Dom} u^k b^j_n \partial_j u^k  \,\dd x
= \frac12 \sum_{j,k=1}^2 \int_{\Dom} b^j_n \partial_j (u^k)^2  \,\dd x
= -\frac12 \sum_{k=1}^2 \lb (u^k)^2, \div (b_n)\rb_{\Di(\Dom)}.
\end{align*}
Therefore, if $\div(b_n) = 0$ in distributional sense, then $B(v)^* v =0$. The latter can be extended to all $v\in V$ by continuity. Moreover, if additionally $\lim_{|x|\to \infty}\frac{\|g(x)\|_{\ell^2}}{|x|} = 0$, then Theorem \ref{thm:fluidabstract} implies that \eqref{eq:fluidpindep} holds, and thus, in particular,  \eqref{eq:LpNS} holds for all $p\in (0, \infty)$. The estimate \eqref{eq:LpNS} also extends to all $p\in (0,\infty)$ if $b^j\in W^{1,\infty}_b(\Dom;\ell^2)$ by applying a refined version of the latter.

From Theorem \ref{thm:fluidstrongvar} we obtain the following regularity result for the strong setting. We formulate the result for special domains only, since we need some elliptic regularity theory in the proof.
\begin{theorem}[Strong regularity]\label{thm:SNSstrong}
Let $d=2$ and suppose that $\Dom= \R^2$ (or the two-dimensional torus). Suppose that Assumption \ref{ass:SNS} holds and that
$b^1, b^2\in W^{1,\infty}(\Dom;\ell^2)$ and $u_0\in L^0_{\F_0}(\Omega, \Hs^1(\Dom))$, and
\begin{align*}
\|\partial_x g(x,y) - \partial_x g(x,y')\|_{\ell^2}+ \|\partial_y g(x,y) - \partial_y g(x,y')\|_{\ell^2}\leq L_g |y-y'|, \ \ \ x\in \Dom, y,y'\in \R^2.
\end{align*}
Then the solution $u$ to \eqref{eq:Navier_Stokes} provided by Theorem \ref{thm:SNS} satisfies
$u\in L^2_{\rm loc}([0,\infty);\Hs^2(\Dom))\cap C([0,\infty);\Hs^1(\Dom))$ a.s.
Moreover, if $b^1=b^2=0$, then
\begin{align*}
  u&\in H^{\theta,r}_{\rm loc}((0,\infty);\Hs^{2-2\theta}(\Dom))\cap C^{\theta-\varepsilon}_{\rm loc}((0,\infty);\Hs^{2-2\theta}(\Dom)), \ \ r\in (2, \infty), \theta\in [0,1/2), \varepsilon\in (0,\theta).
  \end{align*}
\end{theorem}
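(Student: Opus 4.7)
The strategy is to apply Theorem \ref{thm:fluidstrongvar} to the abstract reformulation of \eqref{eq:Navier_Stokes} from Subsection \ref{ss:SNS}, with the stronger Gelfand triple $(\wt V,\wt H,\wt V^*)=(\Hs^2(\Dom),\Hs^1(\Dom),\Ls^2(\Dom))$; on $\R^2$ or $\T^2$ there is no boundary, so $\Hs^1_0=\Hs^1$ and the compatibility condition $\wt H=V$ holds. First I would check the mapping properties: $A=-\nu\pr\Delta\in\calL(\wt V,H)$ is immediate, while $b^j\in W^{1,\infty}(\Dom;\ell^2)$ combined with the product rule
\[
\partial_k[(b_n\cdot\nabla)u]=\sum_j b_n^j\,\partial_j\partial_k u+\sum_j(\partial_k b_n^j)\,\partial_j u
\]
gives $B\in\calL(\wt V,\calL_2(\mathcal{U},V))$. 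The main technical obstacle is verifying the coercivity of $(A,B)$ in the tilde triple; following the Allen--Cahn computation in Subsection \ref{sss:AllenCahnstrong}, the stochastic parabolicity condition in Assumption \ref{ass:SNS} controls the principal commutator via $\sum_{n,k}|\sum_j b_n^j\partial_j\partial_k u|^2\leq 2\mu|\nabla^2 u|^2$ pointwise, while the lower-order term $(\partial_k b_n^j)\partial_j u$ is absorbed by $(a+b)^2\leq(1+\varepsilon)a^2+C_\varepsilon b^2$ and Young's inequality, exploiting the gap $\mu<\nu$ to obtain $\wt\theta>0$.

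The remaining conditions of Theorem \ref{thm:fluidstrongvar} rely on 2D Sobolev embeddings and Ladyzhenskaya's inequality. For the bilinear bound, using the divergence-free structure $\Phi(u,v)=-\pr[(u\cdot\nabla)v]$ gives
\[
\|\Phi(u,v)\|_{\Ls^2}\leq\|u\|_{L^4}\|\nabla v\|_{L^4}\lesssim\|u\|_{\Hs^{3/2}}\|v\|_{\Hs^{3/2}},
\]
which matches $[H,\wt V]_{3/4}=\Hs^{3/2}$. The sharpened Ladyzhenskaya estimate
\[
\|\Phi(v,v)\|_{\Ls^2}\leq\|v\|_{L^4}\|\nabla v\|_{L^4}\lesssim\|v\|_{\Ls^2}^{1/2}\|\nabla v\|_{\Ls^2}\|\nabla^2 v\|_{\Ls^2}^{1/2}\eqsim\|v\|_H^{1/2}\|v\|_V\|v\|_{\wt V}^{1/2}
\]
yields the required bound. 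For the noise term, the product rule $\nabla[G(v)e_n]=\partial_x g_n(\cdot,v)+\partial_y g_n(\cdot,v)\nabla v$ together with the assumed Lipschitz bounds on $g,\partial_x g,\partial_y g$ gives the local-Lipschitz estimate; the critical cross term is controlled by
\[
\|(\partial_y g(u)-\partial_y g(v))\nabla u\|_{L^2(\ell^2)}\leq L\|u-v\|_{L^4}\|\nabla u\|_{L^4}\lesssim\|u-v\|_{\Hs^{3/2}}\|u\|_{\Hs^{3/2}},
\]
while the sublinear growth $\|G(v)\|_{\calL_2(\mathcal{U},V)}\lesssim 1+\|v\|_V$ is a direct Lipschitz computation. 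An application of Theorem \ref{thm:fluidstrongvar} then delivers the first assertion.

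For the second assertion, set $B=0$. On $\R^2$ or $\T^2$ the Stokes operator $A=-\nu\pr\Delta$ is diagonalised by Fourier multipliers and inherits the bounded $H^\infty$-calculus of $-\nu\Delta$ on $\Ls^2\subset L^2$ of angle $<\pi/2$, so Theorem \ref{thm:SMRHinfty} gives $(A,0)\in\mathcal{SMR}_{r,\alpha}^{\bullet}$ in the tilde setting for all $r\in(2,\infty)$, $\alpha\in[0,r/2-1)$. Using $\|v\|_H\leq\|v\|_V$ and the interpolation $\|v\|_V^{1/2}\|v\|_{\wt V}^{1/2}\lesssim\|v\|_{\Hs^{3/2}}=\|v\|_{[H,\wt V]_{3/4}}$, the sharpened Ladyzhenskaya bound rewrites as
\[
\|\Phi(v,v)\|_{\Ls^2}\lesssim\|v\|_V^{3/2}\|v\|_{\wt V}^{1/2}\lesssim\|v\|_{[H,\wt V]_{3/4}}\|v\|_V,
\]
fitting the form required in the ``moreover'' part of Theorem \ref{thm:fluidstrongvar} with $\delta=1$, $\gamma=1$. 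The only remaining issue is the initial-data condition $u_0\in[V,\wt V]_\varepsilon=\Hs^{1+\varepsilon}$, which is not granted; however, since the first assertion yields $u\in L^2_{\loc}([0,\infty);\Hs^2)$ a.s., a standard restart argument at a positive time $s$ (chosen so that $u(s)\in\Hs^2$ a.s., possible for a.e.\ $s>0$) makes the initial-data hypothesis hold on the shifted problem, and the ``moreover'' conclusion of Theorem \ref{thm:fluidstrongvar} gives the stated $H^{\theta,r}$ and H\"older regularity on $(s,\infty)$; letting $s\downarrow 0$ yields the result on $(0,\infty)$.
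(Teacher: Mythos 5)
Your overall strategy matches the paper exactly: rewrite \eqref{eq:Navier_Stokes} in the abstract form, take $(\wt V,\wt H,\wt V^*)=(\Hs^2(\Dom),\Hs^1(\Dom),\Ls^2(\Dom))$, and apply Theorem \ref{thm:fluidstrongvar}. Your verification of the mapping and coercivity properties of $(A,B)$ is more explicit than the paper's (which simply appeals to elliptic regularity), and your observation that the ``moreover'' clause of Theorem \ref{thm:fluidstrongvar} formally requires $u_0\in[V,\wt V]_\varepsilon$ — which the hypothesis $u_0\in\Hs^1$ does not grant — together with a restart at a positive time $s$ is a valid way to close that gap, consistent with the regularization machinery of Subsection \ref{subsec:reg}.

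There is, however, a concrete error in your derivation of the bound $\|\Phi(v,v)\|_{\Ls^2}\lesssim\|v\|_{[H,\wt V]_{3/4}}\|v\|_V$ used to activate the ``moreover'' assertion. You write $\|v\|_V^{1/2}\|v\|_{\wt V}^{1/2}\lesssim\|v\|_{\Hs^{3/2}}$, but interpolation gives the opposite direction: $\|v\|_{\Hs^{3/2}}=\|v\|_{[V,\wt V]_{1/2}}\lesssim\|v\|_V^{1/2}\|v\|_{\wt V}^{1/2}$. Since one cannot bound $\|v\|_{\wt V}^{1/2}$ by a power of $\|v\|_{\Hs^{3/2}}$ times a power of $\|v\|_V$, the Ladyzhenskaya product $\|v\|_V^{3/2}\|v\|_{\wt V}^{1/2}$ is a dead end for obtaining the required form $\|v\|_{[H,\wt V]_{3/4}}^{2-\delta}\|v\|_V^\gamma$ with $\delta>0$. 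The correct route — which is what the paper uses — is to keep the sharper Sobolev embedding in 2D in the first factor: $\|\Phi(u,v)\|_{\Ls^2}\leq\|u\|_{L^4}\|\nabla v\|_{L^4}\lesssim\|u\|_{\Hs^{1/2}}\|v\|_{\Hs^{3/2}}$, and then use the trivial (correct-direction) inequality $\|v\|_{\Hs^{1/2}}\leq\|v\|_{\Hs^1}=\|v\|_V$ to deduce $\|\Phi(v,v)\|_{\Ls^2}\lesssim\|v\|_V\|v\|_{\Hs^{3/2}}$, giving $\delta=\gamma=1$. (You already noted $L^4\hookleftarrow H^{1/2}$ implicitly for the cross term in the $G$-estimate; that same sharp embedding is what must be used here, rather than the cruder $\|u\|_{L^4}\lesssim\|u\|_{\Hs^{3/2}}$ you wrote in the bilinear step, which loses the factor you later try to recoup by a false interpolation.) With that correction the argument is sound.
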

Moreover, if $\div(b^j) = 0$ for $j\in \{1, 2\}$, then $(B v)^* v = 0$, and thus using Remark \ref{rem:integstrong} we see that if $u_0\in L^\infty(\Omega;\Hs^1(\Dom))$, then there exists an $r\in (0,1)$ depending on $\|u_0\|_{L^\infty(\Omega;\Hs^1(\Dom))}$ and $T$ such that
\[\E\|u\|_{C([0,T];\Hs^1(\Dom))}^{2r} + \E\|u\|_{L^2(0,T;\Hs^2(\Dom)))}^{2r}<\infty.\]
\begin{proof}
By elliptic regularity theory $\|\Delta v\|_{L^2(\Dom;\R^2)} + \|u\|_{L^2(\Dom;\R^2)}\eqsim \|v\|_{H^2(\Dom;\R^2)}$. The coercivity of $(A,B)$ in the strong setting can be deduced from this.

Most of the other conditions are straightforward to check, and we only comment on the mapping properties of $\Phi$ and $G$. One has that
\begin{align*}
\|\Phi(u,v)\|_{L^2(\Dom;\R^2)} &= \textstyle \Big(\int_\Dom |(u\cdot \nabla) v|^2 \,\dd x\Big)^{1/2}
\leq \|u\|_{L^4(\Dom;\R^2)} \|v\|_{H^{1,4}(\Dom;\R^2)}
\\ & \lesssim \|u\|_{H^{1/2}(\Dom;\R^2)} \|v\|_{H^{3/2}(\Dom;\R^2)}
 \lesssim \|u\|_{L^2(\Dom;\R^2)}^{1/2} \|v\|_{H^{1}(\Dom;\R^2)} \|u\|_{H^{2}(\Dom;\R^2)}^{1/2},
\end{align*}
where we applied Sobolev embedding and standard interpolation inequalities.
The penultimate estimate shows the required estimate for $\Phi(u,v)$ since $[\wt{V}^*, \wt{V}]_{3/4} \subseteq H^{3/2}(\Dom;\R^2)$.
The last two estimates also show the required two estimates for $\Phi(v,v)$ of Theorem \ref{thm:fluidstrongvar}.

For $G$ by the Lipschitz properties of $g$ and $\partial_x g$ one has
\begin{align*}
\|G(u) - G(v)\|_{\calL_2(\ell^2,H^1(\Dom;\R^2))} & \lesssim \|u-v\|_{L^2(\Dom)}  + \|\partial_y g(\cdot, u)\nabla u -  \partial_y g(\cdot, v)\nabla v\|_{L^2(\Dom;\ell^2)}
\end{align*}
The first term can be bounded as before. The second term can be bounded as
\begin{align*}
\|\partial_y g(\cdot, u)\nabla u -  \partial_y g(\cdot, v)\nabla v\|_{L^2(\Dom;\ell^2)}& \leq \|\partial_y g(\cdot, u)(\nabla u -  \nabla v)\|_{L^2(\Dom;\ell^2)} + \|(\partial_y g(\cdot, u) - \partial_y g(\cdot, v))\nabla v\|_{L^2(\Dom;\ell^2)}
\\ & \lesssim \|\nabla u - \nabla v\|_{L^2(\Dom;\R^2)}+\|(u-v)\nabla v\|_{L^2(\Dom;\R^2)}
\\ & \lesssim \|\nabla u - \nabla v\|_{L^2(\Dom;\R^2)}+\|u-v\|_{L^4(\Dom;\R^2)} \|\nabla v\|_{L^4(\Dom;\R^2)}
\\ & \lesssim (1+\|v\|_{H^{3/2}(\Dom;\R^2)})\|u-v\|_{H^{3/2}(\Dom;\R^2)}.
\end{align*}
Similarly, $\|G(v)\|_{\calL_2(\ell^2,H^1(\Dom;\R^2))}\lesssim C(1+\|v\|_{H^1(\Dom;\R^2)})$.
\end{proof}
After Theorem \ref{thm:SNSstrong} one can bootstrap further regularity. This is not immediately possible in the setting of Theorem \ref{thm:SNS} due to the fact that the nonlinearity $\Phi$ is critical.
In the case of periodic boundary conditions, the regularity conditions on $b^1, b^2$ can be relaxed considerably if one applies $L^p(L^q)$-theory. Moreover, at the same time much stronger regularity assertions can be proved. For details the reader is referred to \cite{AV20_NS}.

In Subsection \ref{subsec:SNSRd}, we will come back to the Navier--Stokes equations in the case the domain is $\R^d$ with $d\in \{2, 3\}$. There $L^p(L^q)$-theory will be used to derive (local) well-posedness and regularity.

\begin{remark}
Related result on the Navier--Stokes can be found in \cite{GlattZiane,goodair2024improved,G24_Goodair}.

Higher order regularity could be of use in numerical schemes for stochastic Navier--Stokes equations as considered in \cite{BP23, breit2023mean}.
\end{remark}

\subsubsection{Boussinesq equation}
The Boussinesq equation is an extension of the Navier--Stokes system in which the temperature $\theta$ is added as an unknown. It is widely studied in the deterministic setting. Moreover, in the stochastic setting it is for instance studied in \cite{ChueshovMillet,DuMi}.

In this section, we consider the following Boussinesq equation on an arbitrary open set $\Dom\subseteq \R^d$ with $d=2$:
\begin{equation}
\label{eq:Navier_Stokes-temperature}
\left\{
\begin{aligned}
\textstyle \dd  u &=\textstyle\big[\nu_1 \Delta u -(u\cdot \nabla)u -\nabla p  + \theta e_2\big] \,\dd  t
+\sum_{n\geq 1}\big[(\btwod_{n}\cdot\nabla) u +g_n(\cdot, u,\theta)-\nabla \wt{p}_n\big] \,\dd W_t^n,
\\ \textstyle
\dd\theta &= \textstyle\big[ \nu_2 \Delta \theta - u \cdot \nabla \theta\big]\,\dd t +\sum_{n\geq 1}\big[(\wt{b}_{n}\cdot\nabla) \theta +\wt{g}_n(\cdot, u,\theta)\big] \,\dd W_t^n,
\\ \div \,u&=0,
\\ u&=0 \  \text{and} \ \theta=0\ \text{on $\partial \Dom$,}
\\ u(0,\cdot)&=u_0.
\end{aligned}\right.
\end{equation}
Here, $e_2 = (0,1)$ is a standard unit vector in $\R^2$.
Again for simplicity, we only consider Dirichlet boundary conditions. In case the temperature vanishes (i.e.\ $\theta=0$), the above equation reduces to \eqref{eq:Navier_Stokes}. Below we show that \eqref{eq:Navier_Stokes-temperature} also fits in the setting of Theorem \ref{thm:fluidabstract}. Note that $(u\cdot \nabla)u = \div(u\otimes u)$ and $u \cdot \nabla \theta = \div(u \theta)$ by the condition $\div(u) = 0$.

\begin{assumption}\label{ass:SNStemp}
Suppose that $b^j:\Dom\to \ell^2$ satisfies Assumption \ref{ass:SNS} with $\nu$ replaced by $\nu_1$. Suppose the same holds with $(b,\nu_1)$ replaced by $(\wt{b},\nu_2)$. Suppose that $g,\wt{g}:\Dom\times\R^2\times\R\to \ell^2$ and $g(\cdot,0),\wt{g}(\cdot, 0)\in L^2(\Dom;\ell^2)$ and
\begin{align*}
\|g(x,y) - g(x,y')\|_{\ell^2}+\|\wt{g}(x,y) - \wt{g}(x,y')\|_{\ell^2}\leq L_g |y-y'|, \ \ \ x\in \Dom, y,y'\in \R^2.
\end{align*}
\end{assumption}

In the same way as in Subsection \ref{ss:SNS} we can derive a well-posedness result for \eqref{eq:Navier_Stokes-temperature} from Theorem \ref{thm:fluidabstract}.
Let $\mathcal{U} = \ell^2$ as before, and set
\[H = \Ls^2(\Dom)\times L^2(\Dom), \ \  V = \Hs^1_0(\Dom)\times H^1_0(\Dom), \ \ \text{and} \ \   V^* := \Hs^{-1}(\Dom)\times H^{-1}(\Dom).\]

Let $u_0\in \Ls^2(\Dom)\times L^2(\Dom)$. After applying the Helmholtz projection $\pr$ we can
write \eqref{eq:Navier_Stokes-temperature} in the form \eqref{eq:abstractfluid} with $A = (A_1, A_2)$, $\Phi = (\Phi_1, \Phi_2)$, $B= (B_1, B_2)$ and $G = (G_1, G_2)$, where
\begin{align*}
 A_1(u,\theta) &= -\nu_1 \pr\Delta u -\pr\theta_1 e_2, &  \Phi_1((u, \theta_1),(v, \theta_2)) &=-\pr\div[u\otimes v],
\\
 A_2(u,\theta) &= -\nu_2 \Delta \theta, & \Phi_2((u,\theta_1), (v, \theta_2)) &=- \div(v \theta_1),
\\
  B_1 (u,\theta))e_n &= \pr[(\btwod_{n}\cdot\nabla) u, &  G_1(u,\theta) e_n &= \pr g_n(\cdot, u,\theta),
\\
 (B_2(u,\theta))e_n &= (\wt{b}_{n}\cdot\nabla) \theta, &  G_2(u,\theta) e_n &= \wt{g}_n(\cdot, u, \theta).
\end{align*}
As before Assumption \ref{ass:bilinearFluid} holds for each of these mappings.
Thus we are in a position to apply Theorem \ref{thm:fluidabstract} to obtain the following result.

\begin{theorem}[Well-posedness of stochastic 2D Boussinesq equations]\label{thm:SNStemp}
Let $d=2$. Suppose that Assumption \ref{ass:SNS} holds,
Then for every $(u_0,\theta_0)\in L^0_{\F_0}(\Omega;\Ls^2(\Dom)\times L^2(\Dom))$ there exists a unique global solution $(u,\theta)\in L^2_{\rm loc}([0,\infty);\Hs^1_0(\Dom)\times H^1_0(\Dom))\cap C([0,\infty);\Ls^2(\Dom)\times L^2(\Dom))$ to \eqref{eq:Navier_Stokes}. Moreover, for all $T\in (0,\infty)$ and $p\in (0,2]$
\begin{equation}\label{eq:LpNStemp}
\begin{aligned}
\E  \textstyle  \sup_{t\in [0,T]}\|u(t)\|_{L^2(\Dom;\R^2)}^{p}+  \E\Big|\int_0^T \|u(t)\|^2_{H^1(\Dom;\R^2)} \,\dd t\Big|^{p/2} &
\\  + \textstyle  \E \sup_{t\in [0,T]}\|\theta(t)\|_{L^2(\Dom)}^{p} + \E\Big|\int_0^T \|\theta(t)\|^2_{H^1(\Dom)} \,\dd t\Big|^{p/2} &   \leq C_{T,p} (1+\E\|u_0\|_{L^2(\Dom;\R^2)}^{p} + \E\|\theta_0\|_{L^2(\Dom)}^{p}).
\end{aligned}
\end{equation}
\end{theorem}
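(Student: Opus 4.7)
The plan is to verify that the system \eqref{eq:Navier_Stokes-temperature}, written in the abstract form \eqref{eq:abstractfluid} with the choices of $A,\Phi,B,G$ listed above, satisfies Assumption \ref{ass:bilinearFluid} with $\beta_1=3/4$, and then to invoke Theorem \ref{thm:fluidabstract}. The argument essentially glues the velocity-only argument of Subsection \ref{ss:SNS} to an analogous argument for the temperature equation, with the buoyancy coupling treated as a bounded lower-order perturbation.

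\emph{Step 1: Coercivity of $(A,B)$.} With the Helmholtz projection in place, the pair $(-\nu_1\pr\Delta,B_1(\cdot,0))$ is coercive on $\Hs^1_0(\Dom)$ exactly as in Subsection \ref{ss:SNS}, using the stochastic parabolicity in Assumption \ref{ass:SNStemp} for $b$ with constant $\mu<\nu_1$. The same argument applied with $(\wt b,\nu_2)$ yields coercivity of $(-\nu_2\Delta,B_2(0,\cdot))$ on $H^1_0(\Dom)$. The cross-terms in $B$ vanish, so the estimate splits in the product space. The only additional term is the buoyancy $-\pr\theta\, e_2$ contributing to $A_1$; since it defines a bounded operator $L^2(\Dom)\to\Ls^2(\Dom)$, one has
\[
|\langle u,\pr\theta\,e_2\rangle|\leq \|u\|_{L^2}\|\theta\|_{L^2}\leq \tfrac12\big(\|u\|_H^2+\|\theta\|_H^2\big),
\]
which is absorbed in the $M\|(u,\theta)\|_H^2$ term of Assumption \ref{ass:bilinearFluid}\eqref{it1:bilinearFluid}.

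\emph{Step 2: Mapping properties of $\Phi$.} For $\Phi_1$ the estimate $\|\Phi_1((u,\theta_1),(v,\theta_2))\|_{\Hs^{-1}}\lesssim \|u\|_{3/4}\|v\|_{3/4}$ is exactly the bound proved for 2D Navier--Stokes in Subsection \ref{ss:SNS}, via the Sobolev embedding $H^{1/2}(\Dom)\hookrightarrow L^4(\Dom)$. For $\Phi_2$, since $\div\,v=0$ one has $v\cdot\nabla\theta_1=\div(v\theta_1)$, and
\[
\|\div(v\theta_1)\|_{H^{-1}(\Dom)}\leq \|v\theta_1\|_{L^2(\Dom)}\leq \|v\|_{L^4(\Dom;\R^2)}\|\theta_1\|_{L^4(\Dom)}\lesssim \|v\|_{3/4}\|\theta_1\|_{3/4},
\]
again by $H^{1/2}\hookrightarrow L^4$ in two dimensions. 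Bilinearity is obvious from the definitions, and these two estimates together give
$\|\Phi((u,\theta_1),(v,\theta_2))\|_{V^*}\lesssim \|(u,\theta_1)\|_{3/4}\|(v,\theta_2)\|_{3/4}$.

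\emph{Step 3: The cancellation $\langle \mathbf{u},\Phi(\mathbf{u},\mathbf{u})\rangle=0$.} Writing $\mathbf{u}=(u,\theta)$, this pairing splits as $\langle u,\Phi_1(\mathbf{u},\mathbf{u})\rangle+\langle\theta,\Phi_2(\mathbf{u},\mathbf{u})\rangle$. The first term is the standard Navier--Stokes cancellation recalled in Subsection \ref{ss:SNS}. For the second, approximating by $\theta\in C^\infty_c(\Dom)$ and using $\div\,u=0$,
\[
\langle\theta,-\div(u\theta)\rangle=\int_{\Dom} u\theta\cdot\nabla\theta\,\dd x=\tfrac12\int_{\Dom} u\cdot\nabla(\theta^2)\,\dd x=-\tfrac12\int_{\Dom}\div(u)\,\theta^2\,\dd x=0,
\]
and the identity extends to all $\mathbf{u}\in V$ by density and the continuity of $\Phi_2:V_{3/4}\times V_{3/4}\to V^*$.

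\emph{Step 4: Lipschitz property of $G$ and conclusion.} The Lipschitz bound on $g,\wt g$ in Assumption \ref{ass:SNStemp} together with boundedness of $\pr$ on $L^2(\Dom;\R^2)$ gives
\[
\|G(u,\theta)-G(u',\theta')\|_{\calL_2(\mathcal{U},H)}\leq \sqrt{2}\,L_g\|(u-u',\theta-\theta')\|_H,
\]
and $G(0,0)\in\calL_2(\mathcal U,H)$ from the $L^2$-assumption on $g(\cdot,0),\wt g(\cdot,0)$. This verifies Assumption \ref{ass:bilinearFluid}\eqref{it3:bilinearFluid} with $\beta_2=0$. All hypotheses of Theorem \ref{thm:fluidabstract} are satisfied, which immediately yields global existence, uniqueness of $(u,\theta)\in L^2_{\mathrm{loc}}([0,\infty);V)\cap C([0,\infty);H)$ and the moment estimate \eqref{eq:LpNStemp}.

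The only genuinely delicate point is Step 3: one has to be sure that the buoyancy coupling between $u$ and $\theta$, although changing the linear part, does not interfere with the \emph{nonlinear} energy cancellation, so that $\Phi$ retains the conservative structure required by Assumption \ref{ass:bilinearFluid}\eqref{it2:bilinearFluid}. Once this is checked, the remainder of the proof is a product-space repetition of the Navier--Stokes argument.
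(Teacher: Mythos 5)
Your proof is correct and follows the same route as the paper, which simply asserts that Assumption \ref{ass:bilinearFluid} holds for the product-space operators and invokes Theorem \ref{thm:fluidabstract}; you supply the verifications explicitly, and they are all sound. The only slip is cosmetic: in Step 4 you say Assumption \ref{ass:bilinearFluid}\eqref{it3:bilinearFluid} holds ``with $\beta_2=0$'', but that assumption requires $\beta_2\in(1/2,1)$; since you have shown $G$ is globally Lipschitz from $H=V_{1/2}$ to $\calL_2(\mathcal{U},H)$ and $V_{\beta_2}\hookrightarrow H$ for all $\beta_2>1/2$, the hypothesis is in fact satisfied for \emph{every} $\beta_2\in(1/2,1)$, so the statement should read ``for any $\beta_2\in(1/2,1)$'' rather than ``$\beta_2=0$''.
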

The continuous dependency as stated in Theorem \ref{thm:fluidabstract} holds as well. Moreover, extensions to $L^p$-moments (see below Theorem \ref{thm:SNS}) and higher regularity as in Theorem \ref{thm:SNSstrong} and the text below it hold as well.

\begin{remark}
In \eqref{eq:Navier_Stokes-temperature} one can replace $\theta e_2$ by a more complicated nonlinear function of $(u,\theta)$. However, in that case, one needs a variant of Theorem \ref{thm:fluidabstract}, where an additional $F$ is added to the equation. For this, one needs to check coercivity by hand.
\end{remark}

\subsubsection{Quasi-geostrophic systems with $\alpha=1$}\label{ss:quasiGS}
Quasi-geostrophic equations have a geophysical origin and have been proposed in \cite{held1995surface} as a two-dimensional incompressible model. On $\R^2$ they take the following form:
\begin{equation}
\label{eq:QG}
\left\{
\begin{aligned}
\textstyle \dd \theta &=\textstyle  -\big[ (-\Delta)^{\alpha} \theta + (u\cdot \nabla)\theta \big] \, \dd t
+\sum_{n\geq 1}\big[(\btwod_{n}\cdot\nabla) \theta +g_n(\cdot, \theta)\big] \, \dd W_t^n,
\\
u&=R^{\bot} \theta,
\\ \theta(0,\cdot)&=\theta_0.
\end{aligned}\right.
\end{equation}
Here, $\theta$ represents temperature again, and $R^{\bot} \theta:=(-R_2\theta, R_1 \theta)$, where $R_j = \partial_j(\Delta)^{-1/2}$ for $j\in \{1,2\}$, are the Riesz transforms. Note that $\wh{R_j \phi}(\xi) = \frac{i\xi}{|\xi|} \wh{\phi}(\xi)$ for $\xi\in \R$, where $\wh{\phi}$ denotes the Fourier transform of $\phi:\R^2\to \C$. The same model can be considered in the periodic setting, or on compact manifolds without boundary although some technical complications arise (see \cite{Prussgeo} for the deterministic case). For simplicity, we focus on the flat case, and since we apply $L^2$-theory in this section, we are restricted to $\alpha=1$. In Section \ref{ss:quasiGSLp}, we will show how $L^p(L^q)$-theory can be used to prove global existence, uniqueness, and regularity in the case $\alpha\in (1/2, 1)$.

In the deterministic case global well-posedness is obtained in \cite{ConWu99, Prussgeo} for $\alpha\in (1/2,1]$, and the critical case $\alpha=1/2$ was addressed in \cite{CafVas10,KisNazVol}, and the reader is referred to \cite{ConstQuasigeo} for recent progress on this case. In the stochastic case with periodic boundary conditions, global existence and uniqueness for \eqref{eq:QG} was considered in \cite{RZZ, walker2024surface} for $\alpha\in (1/2,1]$ using compactness methods,  though these methods are not applicable in our setting on the full space.

The proof below can also be extended to the two-dimensional torus or more general compact two-dimensional manifolds in dimension three.

\begin{theorem}[Well-posedness of stochastic quasi-geostrophic systems]\label{thm:QS2d}
Let $d=2$ and $\alpha=1$. Suppose that $b$ and $g$ satisfy Assumption \ref{ass:SNS} with $\Dom$ replaced by $\R^2$.
Then for every $\theta_0\in L^0_{\F_0}(\Omega;L^2(\R^2))$ there exists a unique global solution $\theta\in L^2_{\rm loc}([0,\infty);H^1(\R^2))\cap C([0,\infty);L^2(\R^2))$ to \eqref{eq:QG}. Moreover, for all $T\in (0,\infty)$ and $p\in (0,2]$
\begin{align}\label{eq:LpQS}
\textstyle \E \sup_{t\in [0,T]}\|\theta(t)\|_{L^2(\R^2)}^{p} + \E\Big|\int_0^T \|\theta(t)\|^2_{H^1(\R^2)}\, \dd t\Big|^{p/2} & \leq C_{T,p} (1+\E\|\theta_0\|_{L^2(\R^2)}^{p}).
\end{align}
\end{theorem}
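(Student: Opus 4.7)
My plan is to recognize Theorem \ref{thm:QS2d} as an instance of the abstract fluid framework of Theorem \ref{thm:fluidabstract}, in the same spirit as the 2D Navier--Stokes application of Subsection \ref{ss:SNS}. Set $\mathcal{U}=\ell^2$, $H=L^2(\R^2)$, $V=H^1(\R^2)$, and $V^*=H^{-1}(\R^2)$, so that the interpolation spaces are $V_\beta=H^{2\beta-1}(\R^2)$. Since $\alpha=1$, take $A=-\Delta\in\calL(V,V^*)$, $(Bu)e_n=(b_n\cdot\nabla)u$, $G(u)e_n=g_n(\cdot,u)$, and
\[
\Phi(\theta_1,\theta_2):=-(R^\perp\theta_1)\cdot\nabla\theta_2,
\]
which is bilinear $V\times V\to V^*$. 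Because $u=R^\perp\theta$ is divergence-free (the Fourier multipliers of $\partial_1 R_2$ and $\partial_2 R_1$ coincide), one may equivalently write $\Phi(\theta_1,\theta_2)=-\div(R^\perp\theta_1\,\theta_2)$, which is the form that enters the duality pairing.

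Next I would verify the three items of Assumption \ref{ass:bilinearFluid}. Coercivity of $(A,B)$ is immediate from the parabolicity bound on $b$ in Assumption \ref{ass:SNS}, exactly as in Subsection \ref{sss:AllenCahnweak}: $\langle v,Av\rangle-\tfrac12\|Bv\|^2_{\calL_2(\mathcal{U},H)}\geq (1-\mu)\|\nabla v\|_{L^2}^2$. Lipschitz continuity of $G\colon H\to\calL_2(\mathcal{U},H)$ is a direct consequence of the Lipschitz assumption on $g$. The essential point is the bilinear estimate for $\Phi$, which I would establish with $\beta_1=3/4$ (hence in the critical case, as in Remark \ref{rem:gradientformburger}) via
\[
|\langle z,\Phi(\theta_1,\theta_2)\rangle|
=\Big|\int_{\R^2}(\nabla z)\cdot R^\perp\theta_1\,\theta_2\,\dd x\Big|
\leq \|\nabla z\|_{L^2}\|R^\perp\theta_1\|_{L^4}\|\theta_2\|_{L^4}
\lesssim \|z\|_V\|\theta_1\|_{3/4}\|\theta_2\|_{3/4},
\]
using $L^4$-boundedness of the Riesz transforms on $\R^2$ together with the Sobolev embedding $H^{1/2}(\R^2)\hookrightarrow L^4(\R^2)$. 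The cancellation identity $\langle\theta,\Phi(\theta,\theta)\rangle=0$ is then obtained by integration by parts from $\div(R^\perp\theta)=0$, first for Schwartz functions and then extending by density using the bilinear estimate.

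With these ingredients in place, Theorem \ref{thm:fluidabstract} applied to this choice of $(A,B,\Phi,G)$ produces a unique global solution $\theta\in L^2_{\loc}([0,\infty);H^1(\R^2))\cap C([0,\infty);L^2(\R^2))$ together with the $p$-th moment estimate \eqref{eq:LpQS} for $p\in(0,2]$, finishing the proof. The main subtlety that I expect is the bilinear estimate: it is forced to land in the critical endpoint $\beta_1=3/4$, so care is needed to place the $\nabla$ on the test function $z$ (using the divergence-free form of $\Phi$) and then to exploit the $\R^2$-specific $L^4$-Sobolev embedding; this is precisely the feature that restricts the $L^2$-argument to $\alpha=1$ and necessitates the $L^p(L^q)$-machinery of Subsection \ref{ss:quasiGSLp} for $\alpha\in(1/2,1)$.
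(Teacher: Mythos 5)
Your proposal is correct and matches the paper's proof essentially line by line: the same Gelfand triple $(H^1,L^2,H^{-1})$, the same rewriting of $\Phi$ in divergence form to shift $\nabla$ onto the test function, the same use of $L^4$-boundedness of $R^{\bot}$ plus $H^{1/2}(\R^2)\hookrightarrow L^4$ to get $\beta_1=3/4$, the same density argument for the cancellation identity, and the same reduction to Theorem \ref{thm:fluidabstract}. No discrepancies.
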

The continuous dependency as stated in Theorem \ref{thm:fluidabstract} holds as well. As in the previous examples, similar extensions to $p$-th moments and on regularity can be made.
\begin{proof}
In order to write \eqref{eq:QG} for $\alpha=1$ in the form \eqref{eq:abstractfluid}, let $V = H^1(\R^2)$, $H = L^2(\R^2)$ and $\mathcal{U} = \ell^2$. Let
\begin{align*}
\lb u, A v\rb &=  \textstyle{\int_{\R^2}\nabla u\, \nabla v\,\dd x }, & \Phi(\theta_1,\theta_2) &= - (R^{\bot} \theta_1 \cdot \nabla) \theta_2
\\ (B \theta)e_n &= (b_n\cdot \nabla) \theta, &  G(\theta)e_n  &= g_n(\theta).
\end{align*}
The coercivity of Assumption \ref{ass:bilinearFluid}\eqref{it1:bilinearFluid} can be checked as in previous examples. Concerning the first part of Assumption \ref{ass:bilinearFluid}\eqref{it2:bilinearFluid}, it suffices to observe that $\Phi(\theta_1,\theta_2) = -\div(\theta_2 R^{\bot} \theta_1)$ since $\div(R^{\bot} \theta_1) = 0$. Since $R^{\bot}$ is bounded on $L^4$ (by Mikhlin's Fourier multiplier theorem \cite[Theorems 4.3.7 and 6.2.7]{Grafakos1}), as in the previous examples, it follows that $\Phi$ has the required mapping properties with $\beta_1=3/4$. Also, the mapping properties of $B$ and $G$ can be checked as before.

The second part of Assumption \ref{ass:bilinearFluid}\eqref{it3:bilinearFluid} follows from
\[\textstyle  \lb \theta, \Phi(\theta,\theta) \rb = (\theta R^{\bot} \theta, \nabla \theta) = \frac12\int_{\R^2} R^{\bot} \theta \cdot \nabla (\theta^2) \,\dd x = \frac12\int_{\R^2} \div(R^{\bot} \theta) \theta^2 \,\dd x = 0\]
which is clear for $\theta\in C^1_c(\R)$ and extends to $\theta\in V$ by density and continuity.
It remains to apply Theorem \ref{thm:fluidabstract}.
\end{proof}

\subsubsection{Other fluid dynamics models}\label{ss:otherfluid}

Besides the models already discussed, numerous other fluid dynamics models can be addressed within the critical variational setting. For example, the so-called tamed Navier--Stokes equation in the unbounded domain $\R^3$ can be treated, as detailed in \cite[Subsection 5.2]{AVvar}, where the strong setting is required. In contrast, a different method based on probabilistic weak solutions was used in \cite{RockZha}.

It is also noteworthy that certain simplified 2D liquid crystal models can be included in our framework. For instance, \cite{RSZ} considered such models on bounded domains, and the same can be done in our setting. There are several advantages of our setting, such as not requiring the domains to be bounded, and the ability to consider gradient noise under the optimal condition given in \eqref{eq:stochastic_parabolicity_example}.

A particularly important model for ocean dynamics are the 3D primitive equations. The well-posedness of the deterministic case remained an open problem for many years before being resolved in the landmark paper \cite{CaoTiti}. One of the challenges with the primitive equations is that they are not coercive in the sense of Theorem \ref{thm:varglobal}. However, the linear part is coercive, which makes it relatively straightforward to obtain local existence and uniqueness using Theorem \ref{thm:varloc}. Through our methods, global existence and uniqueness were recently established in \cite{Primitive1, Primitive2}, where various stochastic versions of the primitive equations with transport noise were considered. To verify the blow-up criteria for global existence, several sophisticated energy bounds were derived. Moreover, various stochastic Gronwall lemmas were employed, so moment bounds could not be obtained in this case. Finally, the case of rough transport noise and higher-order regularity was addressed in \cite{agresti2023primitive} using
$L^p(L^q)$-theory.

\section{Selected applications through  $L^p(L^q)$-theory}\label{sec:LpLq}
In this section, we will give several applications of the local well-posedness theory of Section \ref{sec:loc-well-posed} and the blow-up criteria discussed in Section \ref{sec:blowup}, specifically in situations where the $L^2$-theory turns out to be insufficient. Such limitations may arise due to factors like boundary conditions or rough (transport) noise. These scenarios are illustrated in Subsections \ref{ss:AllenCahnLpLq}, \ref{ss:quasiGSLp}, \ref{ss:reaction}, and \ref{subsec:SNSRd}, respectively. Notably, in the latter subsections, we demonstrate how rough noise and high-order polynomial-type nonlinearities necessitate the use of $L^p(L^q)$-techniques.
Additionally, in these cases, the distinction between $q$ and $p$ becomes crucial, as evidenced in Subsection \ref{subsec:SNSRd} and the deterministic analyses in \cite{G86_NS,PW18}.

In Subsection \ref{ss:AllenCahnLpLq}, we extend our investigation of the stochastic Allen--Cahn equation in a weak setting, building on the analysis initiated in Subsection \ref{ss:AllenCahn}. Recall that in the $L^2$ framework, the dimensional restriction limited us to $d=1$. In this subsection, we consider the Allen--Cahn equation on $\Dom\subseteq \R^3$ with Dirichlet boundary conditions, employing an even weaker setting. By allowing $q\neq 2$,
we eliminate the dimensional restriction for well-posedness. Following a general local well-posedness result, we provide a self-contained proof of global existence for specific parameter choices. Furthermore, we present several regularization results. Moreover, we indicate how some of the more advanced techniques can be used to obtain global well-posedness for rough initial data as well.

In Subsection \ref{ss:reaction}, we follow the works \cite{AVreaction-local, AVreaction-global} to study reaction-diffusion equations with periodic boundary conditions under rough transport noise. We establish general results on local well-posedness and derive a global well-posedness result for systems satisfying a specific coercivity condition. For these results, we outline the proofs, which are based on It\^o's formula for $\|\cdot\|^\zeta_{L^\zeta(\T^d)}$
with sufficiently large $\zeta\geq 2$.
Additionally, we include an application to a simple predator-prey system that lacks coercivity in the variational sense (see Theorem \ref{thm:varglobal}). Here, we provide a detailed proof of global well-posedness, showcasing a technique that could be applied to other non-coercive systems.

In Subsection \ref{ss:quasiGSLp}, we revisit the quasi-geostrophic equation on $\T^2$. Compared to Subsection \ref{ss:quasiGS}, the use of $L^p(L^q)$-techniques enables us to handle fractional regularity $\alpha$ up to the critical value $1/2$. For this model, we establish results on global well-posedness in critical spaces and regularity of the solution.

Finally, in Subsection \ref{subsec:SNSRd}, we address the Navier--Stokes equations with transport noise in the full space $\R^d$. We prove local well-posedness, Serrin’s blow-up criteria, and regularity results. The endpoint Serrin criterion in Theorem \ref{thm:serrin}\eqref{it2:serrin} seems a new result, even in the periodic case.

\subsection{3D Stochastic Allen--Cahn equation with quadratic diffusion}\label{ss:AllenCahnLpLq}
Consider the Allen--Cahn equation on a smooth and bounded domain $\Dom\subseteq \R^3$:
\begin{align}\label{eq:AllenCahnLpLq}
\dd u  & =  \textstyle   \big( \Delta u +u-u^3\big) \, \dd t+ \sum_{n\geq 1}  g_{n}(\cdot,u)\, \dd W^n_t, \qquad u=0 \text{ on }\partial\Dom,
\end{align}
with the initial value condition $u(0,\cdot) = u_0$. In Subsection \ref{ss:AllenCahn}, this equation was analyzed in an $L^2$-setting. In the weak setting, the analysis was constrained to $d=1$ due to the growth of the nonlinearity, while in the strong setting, it was limited to $d\leq 4$ with periodic boundary conditions to avoid compatibility conditions on $g$.

To see this, note that the strong setting for \eqref{eq:AllenCahnLpLq} is $X_1=H^{2,2}(\Dom)\cap H^{1,2}_0(\Dom)$ and $X_0=L^2(\Dom)$ as detailed in Example \ref{ex:extrapolated_Laplace_dirichlet}. Therefore, $X_{1/2}=H^{1,2}_0(\Dom)$ and verifying Assumption \ref{ass:FGcritical} requires $g_n(\cdot,0)|_{\partial\Dom}=0$, as $u|_{\partial\Dom}=0$. In general, such compatibility conditions are often unnatural and lead to various difficulties, which can be circumvented using $L^p(L^q)$-theory.

To keep the discussion as self-contained as possible, we do not consider gradient noise, as this introduces additional complications in the case of Dirichlet boundary conditions (see Subsection \ref{ss:reaction} for the periodic case). Finally, let us mention that the results below extend also to unbounded domains with slight modifications to the conditions on $g$.

\smallskip

Unlike the following subsections and our work \cite{AVreaction-global}, our goal here is to establish the global well-posedness of \eqref{eq:AllenCahnLpLq} in the simplest possible manner, as shown in Theorem \ref{thm:globalAllenCahn} below. While more sophisticated methods allow for deeper results, we focus on presenting a relatively straightforward case to familiarize the reader with the core arguments.

In particular, we limit the application of the instantaneous regularization results from Section \ref{subsec:reg} and instead rely on the subcritical blow-up criteria in Theorem \ref{thm:subcriticalblowup}, whose proof is comparatively simpler than that of Theorem \ref{thm:criticalblowup}. Later, in Theorem \ref{thm:global_extrapolation_AC}, we demonstrate how the global well-posedness of \eqref{eq:AllenCahnLpLq} can be extrapolated to a critical setting using instantaneous regularization.

As a first step, it is instructive to examine the scaling of \eqref{eq:AllenCahnLpLq}.

\subsubsection{Scaling, criticality and setting}
\label{sss:scaling_AC_3D_discussion}
The Allen--Cahn nonlinearity was previously analyzed in an $L^q$-weak setting in Subsection \ref{ss:tracecriticalF}, where it was treated as \eqref{eq:SEE} with the choices $X_{0}=H^{-1,q}(\Dom)$ and $X_1=H^{1,q}_0(\Dom)$. As noted in that subsection, the Allen--Cahn equation shares the same (local) scaling as the Navier--Stokes equations analyzed in Subsection \ref{ss:scaling_intro}. Therefore, in light of our theory, we expect local well-posedness and blow-up criteria in the critical space $B^{3/q-1}_{q,p}(\Dom)$, accommodating potential boundary conditions.

The analysis in Subsection \ref{ss:tracecriticalF} demonstrates that (see \eqref{eq:condition_parameters_AC} and the comments below it), by applying Theorem \ref{thm:localwellposed}, one can establish local well-posedness of \eqref{eq:AllenCahnLpLq} in the critical space $\BD^{3/q-1}_{q,p}(\Dom)$ for $\frac{3}{2}<q<3$ (see Example \ref{ex:extrapolated_Laplace_dirichlet} for the notation). In particular, the smoothness of the critical space satisfies $\frac{3}{q}-1>0$.
This smoothness condition is linked to restrictions on the time weight $\a\in [0,\frac{p}{2}-1)\cup\{0\}$, see \eqref{eq:critical_condition_AC}-\eqref{eq:critical_condition_AC_2}.
Positive smoothness of the critical space can impose significant constraints, especially in studying global well-posedness, where global bounds must be proven in a space with positive smoothness to verify blow-up criteria, as discussed in Subsection \ref{subsec:blow_up}.
Since this weight range is natural in the context of SPDEs (see Subsection \ref{ss:stoch_max_reg}), we adopt an even \emph{weaker} setting to address these limitations:
\begin{equation}
\label{eq:choice_X0X1_AllenCahn}
X_0 = \HD^{-\delta, q}(\Dom) \quad \text{ and }X_1 = \HD^{2-\delta, q}(\Dom),
\end{equation}
with $\delta\in [1,2)$. Here we avoid $\delta> 2$, as in this case, $X_1$ would become larger than $L^q(\Dom)$, resulting in distributions rather than functions (actually we will use an even smaller interval for $\delta$, due to restrictions related to Sobolev embeddings).
Regarding the diffusion coefficients $g_n$, the discussion at the end of Subsection \ref{ss:tracecriticalF} highlights that quadratic growth in
$g$ is natural, ensuring that the deterministic and stochastic components of \eqref{eq:AllenCahnLpLq} maintain the same scaling. Specifically, we assume that the measurable mapping $g=(g_n)_{n\geq 1}:\Dom\times \R\to \ell^2$ satisfies for all
$x\in \Dom$ and $y,y'\in \R$,
\begin{equation}
\label{eq:assumption_g_growth_1_AC3D}
g(\cdot, 0)\in L^\infty(\Dom;\ell^2) \quad \text{ and }\quad
\|g(x,y)-g(x,y')\|_{\ell^2} \lesssim (1+|y|+|y'|)|y-y'|.
\end{equation}

\subsubsection{Local regularity and regularity}
Before addressing the global well-posedness of \eqref{eq:AllenCahnLpLq}, our first objective is to establish its local well-posedness. To this end, we apply Theorem \ref{thm:localwellposed}. To begin, we reformulate \eqref{eq:AllenCahnLpLq} as a stochastic evolution equation \eqref{eq:SEE}. More precisely, we interpret \eqref{eq:AllenCahnLpLq} as \eqref{eq:SEE} with
$(X_0,X_1)$ given by \eqref{eq:choice_X0X1_AllenCahn}, $\mathcal{U}=\ell^2$, and for $u\in X_1$,
\begin{equation}
\begin{aligned}
\label{eq:choice_ABFG_AC3D}
A u =-\Delta_D u , \qquad  F(u)= u-u^3,
\qquad B u& = 0,\qquad G(u) = (g_n (\cdot,u))_{n\geq 1},
\end{aligned}
\end{equation}
using the notation introduced in Example \ref{ex:extrapolated_Laplace_dirichlet}.

We say that $(u,\sigma)$ is a (unique) $(p,\a,\s,q)$-solution to \eqref{eq:AllenCahnLpLq} if $(u,\sigma)$ is a $L^p_\a$-solution to \eqref{eq:AllenCahnLpLq} with the choices \eqref{eq:choice_X0X1_AllenCahn},  \eqref{eq:choice_ABFG_AC3D} and $\mathcal{U}=\ell^2$.

\smallskip

By Appendix \ref{appendix:extra} and Theorem \ref{thm:SMRHinfty}, $(-\Delta_D, 0)\in  \mathcal{SMR}_{p,\kappa}^{\bullet}$ for all $p>2$ and $\kappa\in [0,\frac{p}{2}-1)$.
To apply Theorem \ref{thm:localwellposed} for obtaining local well-posedness, it remains to check the local Lipschitz condition of Assumption \ref{ass:FGcritical} with $F$ and $G$ as above. Similar as in Subsection \ref{sss:tracecriticalF_AC}, for $u,v\in X_1=\HD^{2-\delta,q}(\Dom)$,
\begin{align*}
\|u^3-v^3\|_{X_0} &\leq \|u^3-v^3\|_{L^r(\Dom)}
\leq 2(\|u\|_{L^{3r}(\Dom)}^2+\|v\|_{L^{3r}(\Dom)}^2) \|u-v\|_{L^{3r}(\Dom)}.
\end{align*}
Here we used Sobolev embeddings with $-\frac{3}{r} = -\delta-\frac{3}{q}$, where we impose
$2\leq q<6/\delta$ and $\delta\in [1, 3/2)$ in order to guarantee $r\in (1, \infty)$. It remains to embed $X_{\beta_1} = \HD^{2\beta_1-\delta,q}\hookrightarrow L^{3r}(\Dom)$ for suitable $\beta_1\in (1/2, 1)$. Sobolev embedding gives the required embedding for $2\beta_1 - \delta-\frac{3}{q} = -\frac{3}{3r} = -\frac{\delta}{3}-\frac{1}{q}$. This implies $\beta_1 =  \frac{\delta}{3} + \frac{1}{q}$ which lies in $(\frac{1}{2}, 1)$ due to  $\delta\in [1,\frac{3}{2})$ and $2\leq q<\frac{6}{\delta}$. Thus, we conclude
\[\|F(u) - F(v)\|_{X_0}\leq C (1+\|u\|_{X_{\beta}}^2+\|v\|_{X_{\beta}}^2) \|u-v\|_{X_{\beta_1}}.\]
Since $\rho_1=2$, the criticality condition \eqref{eq:subcritical} becomes:
\begin{align}
\label{eq:criticality_condition_AC_3D}
\textstyle
\frac{1+\kappa}{p}\leq \frac{3}{2} (1-\frac{\delta}{3} -\frac{1}{q})= \frac{3}{2} - \frac12\big(\delta + \frac{3}{q}\big).
\end{align}
This illustrates the advantage of $L^p(L^q)$-theory, as this condition always holds for sufficiently large $p$ and $q$. Some admissible cases include $(\delta,q,p,\kappa)$ equal $(1,q,4,0)$ and $(1,3,p,\kappa)$, provided the restrictions $q\in [2, \frac{6}{\delta})$, $p\in [2, \infty)$ and $\kappa\in [0,p/2-1)\cup\{0\}$ are satisfied.

For $G$, by \eqref{eq:assumption_g_growth_1_AC3D}, it follows that
\begin{align}
\label{eq:computation_G_3D_AC}
\|G(u) - G(v)\|_{\gamma(\ell^2, X_\frac12)} &\leq \|g(\cdot, u) - g(\cdot,v)\|_{\gamma(\ell^2, L^r(\Dom))}
\\
\nonumber
& \eqsim \|g(\cdot, u) - g(\cdot, v)\|_{L^r(\Dom;\ell^2)}
\\
\nonumber
& \leq L \|(1+|u|+|v|)(u-v)\|_{L^r(\Dom)}
\\
\nonumber
& \leq \wt{L}(1+\|u\|_{L^{2r}(\Dom)}+\|v\|_{L^{2r}(\Dom)}) \|u-v\|_{L^{2r}(\Dom)}.
\end{align}
Here we used Sobolev embedding with $-\frac{3}{r} = 1-\delta-\frac{3}{q}$.  It remains to embed $X_{\beta_2} = \HD^{2\beta_2-\delta,q}\hookrightarrow L^{2r}(\Dom)$ for suitable $\beta_2\in (1/2, 1)$. Sobolev embedding gives the embedding for $2\beta_2 - \delta-\frac{3}{q} = -\frac{3}{2r} =
\frac{1}{2}-\frac{\delta}{2}-\frac{3}{2q}$. This implies $2\beta_2 = \frac{1}{2} + \frac{\delta}{2}+ \frac{3}{2q}$, which ensures $\beta_2\in (1/2,1)$. Since $\rho_1=1$, the criticality condition \eqref{eq:subcritical} gives the same inequality  \eqref{eq:criticality_condition_AC_3D}.
This proves the required local Lipschitz property for $G$. Since $g(\cdot,0)\in L^\infty(\Dom;\ell^2)$, $G$ also satisfies the necessary mapping property.

\smallskip

Theorem \ref{thm:localwellposed} and the above discussion yield
\begin{proposition}[Local well-posedness in critical spaces]
\label{prop:reaction_diffusion_global}
Suppose that $\delta\in [1, \frac{3}{2})$, $q\in [2, \infty)$
and that $p\in (2, \infty)$ and $\a\in [0,\frac{p}{2}-1)$ satisfy
\begin{align}
\label{eq:reaction_diffusion_globali}
\textstyle
q<\frac{6}{\delta} \ \  \ \text{and} \ \ \ \frac{1+\a}{p}+\frac{1}{2}(\reg+\frac{3}{q})\leq \frac{3}{2}.
\end{align}
Suppose $g:\Dom\times \R\to \ell^2$ is measurable and satisfies \eqref{eq:assumption_g_growth_1_AC3D}.
Then for any $u_0\in L^0_{\F_0}(\O;\BD^{2-\reg-2\frac{1+\a}{p}}_{q,p}(\Dom))$,
\eqref{eq:AllenCahnLpLq} has a (unique) $(p,\kappa,\delta,q)$-solution $(u,\sigma)$ satisfying a.s.\ $\sigma>0$ and
\begin{equation}
\label{eq:regularity_u_reaction_diffusion_critical_spaces_1_AC}
u\in H^{\theta,p}_{\rm loc}([0,\sigma),w_{\a};\HD^{2-\delta-2\theta,q}(\Dom))\cap C([0,\sigma);\BD^{2-\delta-2\frac{1+\a}{p}}_{q,p}(\Dom))\  \text{ a.s.\ for all }\theta\in [0,\tfrac{1}{2}).
\end{equation}
Furthermore, the setting is subcritical whenever strict inequality holds in \eqref{eq:reaction_diffusion_globali}.
\end{proposition}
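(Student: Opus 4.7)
The plan is to cast \eqref{eq:AllenCahnLpLq} as the abstract stochastic evolution equation \eqref{eq:SEE} using the choices in \eqref{eq:choice_X0X1_AllenCahn}--\eqref{eq:choice_ABFG_AC3D} and then apply Theorem \ref{thm:localwellposed}. To this end, I would first record the ambient structure: since $\Dom\subseteq \R^3$ is smooth and bounded, $X_0=\HD^{-\delta,q}(\Dom)$ is a UMD space of type $2$, $X_1=\HD^{2-\delta,q}(\Dom)\embed X_0$ densely, and $-\Delta_D$ is sectorial with $\Do(-\Delta_D)=X_1$ (see Example~\ref{ex:extrapolated_Laplace_dirichlet} and Appendix~\ref{appendix:extra}). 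Next, by the standard identification of real interpolation spaces with Besov spaces with Dirichlet boundary (Appendix~\ref{appendix:extra}), one has $X_{1-\frac{1+\a}{p},p}=\BD^{2-\delta-2\frac{1+\a}{p}}_{q,p}(\Dom)$, which is exactly the space of initial data appearing in the statement.

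The second step is to verify the stochastic maximal $L^p$-regularity assumption $(A,B)=(-\Delta_D,0)\in \mathcal{SMR}_{p,\kappa}^{\bullet}$. This follows from Theorem~\ref{thm:SMRHinfty}, since $\lambda-\Delta_D$ has a bounded $H^\infty$-calculus of angle strictly less than $\pi/2$ on $L^q(\Dom)$ for $q\in (1,\infty)$ and sufficiently large $\lambda\geq 0$, and the $H^\infty$-calculus on $L^q$ transfers to the extrapolated scale $\HD^{-\delta,q}(\Dom)$ by similarity (see Example~\ref{ex:extrapolated_Laplace_dirichlet}). Since $X_0$ is isomorphic to a closed subspace of an $L^q$-space, Theorem~\ref{thm:SMRHinfty} applies and yields $(-\Delta_D,0)\in \mathcal{SMR}_{p,\kappa}^{\bullet}$ for all $p\in (2,\infty)$ and $\kappa\in [0,p/2-1)$.

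The third and main step is the verification of Assumption~\ref{ass:FGcritical} for the pair $(F,G)$ defined in \eqref{eq:choice_ABFG_AC3D}, together with the criticality condition \eqref{eq:subcritical}. For the deterministic part, writing $|F(u)-F(v)|\lesssim (1+u^2+v^2)|u-v|$, H\"older's inequality and the Sobolev embedding $L^r(\Dom)\embed \HD^{-\delta,q}(\Dom)$ with $-\tfrac{3}{r}=-\delta-\tfrac{3}{q}$ (admissible for $q\in[2,6/\delta)$) combined with $\HD^{2\beta_1-\delta,q}(\Dom)\embed L^{3r}(\Dom)$ yields the local Lipschitz estimate of Assumption~\ref{ass:FGcritical} with $\rho_1=2$ and $\beta_1=\tfrac{\delta}{3}+\tfrac{1}{q}\in(\tfrac12,1)$. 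For the stochastic part, using $\gamma(\ell^2,L^r(\Dom))\eqsim L^r(\Dom;\ell^2)$ valid for $r\geq 2$ (see Subsection~\ref{subsec:gamma}), the quadratic-type estimate \eqref{eq:assumption_g_growth_1_AC3D} and the computation \eqref{eq:computation_G_3D_AC} give the required bound with $\rho_2=1$ and $2\beta_2=\tfrac12+\tfrac{\delta}{2}+\tfrac{3}{2q}\in(1,2)$. Substituting these values of $(\beta_j,\rho_j)$ into \eqref{eq:subcritical}, one finds that both conditions reduce to $\tfrac{1+\kappa}{p}\leq \tfrac32-\tfrac12(\delta+\tfrac{3}{q})$, which is precisely \eqref{eq:reaction_diffusion_globali}; moreover, strict inequality corresponds to the subcritical regime.

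Having checked all hypotheses of Theorem~\ref{thm:localwellposed}, I would then invoke that theorem to obtain a (unique) maximal $L^p_\kappa$-solution $(u,\sigma)$ with $\sigma>0$ a.s. The regularity \eqref{eq:regularity_u_reaction_diffusion_critical_spaces_1_AC} is then a direct transcription of Theorem~\ref{thm:localwellposed}\eqref{it1:localwellposed} (the $H^{\theta,p}$-part together with the trace embedding of Proposition~\ref{prop:tracespace}), using the identification of $X_{1-\theta}$ and $X_{1-\frac{1+\a}{p},p}$ with the stated Bessel potential and Besov spaces. The only mildly delicate point is bookkeeping the admissible ranges of parameters, namely ensuring $r\in (1,\infty)$ in both Sobolev embeddings and $\beta_1,\beta_2\in(1-\tfrac{1+\a}{p},1)$; the latter may require enlarging $\beta_j$ slightly, which is harmless since \eqref{eq:subcritical} is preserved under increasing $\beta_j$ at the price of strict inequality. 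Thus no genuine obstacle is expected, and the proof reduces to assembling the estimates already performed in Subsection~\ref{sss:scaling_AC_3D_discussion}.
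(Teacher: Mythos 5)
Your proposal is correct and follows essentially the same route as the paper: the proposition is obtained by casting \eqref{eq:AllenCahnLpLq} in the framework of Theorem~\ref{thm:localwellposed} with $(X_0,X_1)$ as in \eqref{eq:choice_X0X1_AllenCahn}, verifying $(-\Delta_D,0)\in\mathcal{SMR}^\bullet_{p,\kappa}$ via Theorem~\ref{thm:SMRHinfty} and Example~\ref{ex:extrapolated_Laplace_dirichlet}, and checking Assumption~\ref{ass:FGcritical} by the Sobolev/H\"older computations giving $(\rho_1,\beta_1)=(2,\tfrac{\delta}{3}+\tfrac{1}{q})$ and $(\rho_2,\beta_2)$ with $2\beta_2=\tfrac12+\tfrac{\delta}{2}+\tfrac{3}{2q}$, both reducing the criticality condition to \eqref{eq:reaction_diffusion_globali}. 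The small bookkeeping point you flag (possibly enlarging $\beta_j$ to land in $(1-\tfrac{1+\a}{p},1)$ at the cost of strictness) is the same observation the paper makes in Subsection~\ref{sss:tracecriticalF_AC}.
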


Before proceeding further, let us discuss the scenario where the setting is critical, i.e.\ \eqref{eq:reaction_diffusion_globali} holds with equality. When \eqref{eq:reaction_diffusion_globali} holds with equality, the condition $\frac{1+\a}{p}<\frac{1}{2}$ implies
$q<\frac{3}{2-\s}$. Combining this restriction with the first condition in \eqref{eq:reaction_diffusion_globali}, it follows that the optimal constraint is $q<\frac{3}{2-\s}$ for $\delta\in(1,\frac{4}{3}]$, and no new critical spaces emerge for $\delta>\frac{4}{3}$. Therefore, when $q<\frac{3}{2-\s}$ and $\delta\in(1,\frac{4}{3}]$, one can set $\a=\frac{p}{2}(3-\reg-\frac{3}{q})-1$ resulting in the space for the initial data:
$$
\BD_{q,p}^{2-\delta-2\frac{1+\a}{p}}(\Dom)= \BD_{q,p}^{\frac{3}{q}-1}(\Dom),
$$
which is critical for \eqref{eq:AllenCahnLpLq}, see Subsection \ref{sss:scaling_AC_3D_discussion}.
By choosing $\delta= \frac{4}{3}$ and letting $q\uparrow \frac{9}{2}$, one can reach critical spaces with smoothness up to $-\frac{1}{3}$. In particular, by choosing $q=3$, $p\geq 3$ and $\a=\frac{p}{2}(2-\reg)-1$, the above result yields local well-posedness of \eqref{eq:AllenCahnLpLq} in the critical space
$
L^3(\Dom)\subseteq \BD^0_{3,p}(\Dom).
$

\subsubsection{Global well-posedness}
As announced at the beginning of this subsection, we aim to establish a global well-posedness result for \eqref{eq:AllenCahnLpLq} under specific choices of the parameters $(p,\a,\s,q)$, employing the relatively simple blow-up criterion of Theorem \ref{thm:subcriticalblowup}. To motivate the parameter selection, let us recall some key considerations.

As we have seen in Subsection \ref{ss:AllenCahn}, the Allen--Cahn nonlinearity $u-u^3$ is dissipative, enabling an a priori $L^q$-bound that facilitates global well-posedness through a blow-up criterion. However, as noted earlier, the choice $q=2$ is insufficient for (even) the local well-posedness of \eqref{eq:AllenCahnLpLq} in three dimensions. Based on the comments below  \ref{prop:reaction_diffusion_global} and the requirement of a subcritical setting, we aim to establish a bound in $L^q$ with $q>3$.
Once an $L^q$-bound is achieved, to derive global well-posedness using Theorem \ref{thm:subcriticalblowup}, it is necessary to ensure that $L^q(\Dom)\embed \BD^{2-\s-2\frac{1+\a}{p}}_{q,p}(\Dom)$. Since $\frac{1+\a}{p}<\frac{1}{2}$, this embedding condition forces $\s>1$. Consequently, we choose $\s=\frac{4}{3}$. This choice is somehow arbitrary but motivated by the discussion on critical spaces following Proposition \ref{prop:reaction_diffusion_global}.

Finally, as in Subsection \ref{ss:AllenCahn} (see also Remark \ref{rem:AC_variational_quadratic_growth}), we require a condition ensuring that the dissipation of $-u^3$ counterbalances the energy production from $g(u)$. Therefore, we assume that
\begin{equation}
\label{eq:assumption_diffusion_AC_subquadratic_global}
\|g(x,y)\|^2_{\ell^2}\leq C(y^2+1)+ \gamma y^4, \ \ x\in \Dom, y\in \R,
\end{equation}
where $\g\in (0,1)$. As we will demonstrate in the proof below, the condition $\g<1$ is crucial for obtaining an $L^q$-bound with $q>3$ and managing the It\^o-correction. The case $\gamma=1$ can still be handled, but it requires a more sophisticated argument, see  \cite[Theorem 3.2]{AVreaction-global}).

\begin{theorem}[Global well-posedness]\label{thm:globalAllenCahn}
Let $g:\Dom\times \R\to \ell^2$ and suppose that \eqref{eq:assumption_g_growth_1_AC3D} and \eqref{eq:assumption_diffusion_AC_subquadratic_global} hold for some $\g<1$.
Let $q = 1+ \frac{2}{\gamma}>3$ and $\delta=\frac43$. Fix $p\in (2,3)$ and $\a>0$ such that $\frac{1+\a}{p} < \frac{5}{6} - \frac{3}{2q}$ and $\frac{1+\a}{p}>\frac{1}{3}$.
Then for every $u_0\in L^0_{\F_0}(\Omega;L^q(\Dom))$, \eqref{eq:AllenCahnLpLq} has a \emph{global (in time)} $(p,\a,\delta,q)$-solution $u$ such that \eqref{eq:regularity_u_reaction_diffusion_critical_spaces_1_AC} holds with $\sigma=\infty$.
\end{theorem}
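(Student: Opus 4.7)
The plan is to combine the local well-posedness of Proposition~\ref{prop:reaction_diffusion_global} with an a priori $L^q(\Dom)$-bound obtained via It\^o's formula for $\|\cdot\|_{L^q}^{q}$, and then invoke the subcritical blow-up criterion of Theorem~\ref{thm:subcriticalblowup}. First, I would verify that Proposition~\ref{prop:reaction_diffusion_global} applies: the constraints on $(p,\alpha,q,\delta)$ force strict inequality in \eqref{eq:reaction_diffusion_globali}, so the setting is subcritical. Since $s:=2-\delta-2\tfrac{1+\alpha}{p}=\tfrac23-2\tfrac{1+\alpha}{p}<0$ by the hypothesis $\tfrac{1+\alpha}{p}>\tfrac13$, the Besov embedding $L^q(\Dom)\hookrightarrow \BD^{s}_{q,p}(\Dom)$ holds, which in particular places $u_0$ in the required trace space. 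Proposition~\ref{prop:reaction_diffusion_global} then produces a unique maximal $(p,\alpha,\delta,q)$-solution $(u,\sigma)$. By Remark~\ref{rem:unifombddblow} it suffices to prove $\sigma=\infty$ a.s.\ under the extra assumption $u_0\in L^\infty(\Omega;L^q(\Dom))$, and by the embedding above it further suffices to establish the a.s.\ pointwise bound $\sup_{t<\sigma\wedge T}\|u(t)\|_{L^q(\Dom)}<\infty$ for every $T<\infty$, because Theorem~\ref{thm:subcriticalblowup} will then force $\sigma\ge T$ a.s.

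The heart of the argument is the a priori $L^q$-bound, obtained by applying It\^o's formula to the convex functional $v\mapsto\|v\|_{L^q(\Dom)}^{q}$ along the solution stopped at a localizing sequence $(\tau_n)$ for $(u,\sigma)$. The pointwise membership $u(t,\cdot)\in L^{q+2}(\Dom)$ needed for the deterministic terms is guaranteed for a.e.\ $t\in(0,\sigma)$ by \eqref{eq:regularity_u_reaction_diffusion_critical_spaces_1_AC} together with the Sobolev embedding $\HD^{2/3,q}(\Dom)\hookrightarrow L^{q+2}(\Dom)$, which is valid since $q<9/2$. The four contributions in the It\^o expansion read as follows: \emph{(i)} integration by parts with the Dirichlet condition gives $q\int_\Dom|u|^{q-2}u\,\Delta u\,dx=-q(q-1)\int_\Dom|u|^{q-2}|\nabla u|^2\,dx\le 0$; \emph{(ii)} the reaction $u-u^3$ contributes $q\|u\|_{L^q}^q-q\|u\|_{L^{q+2}}^{q+2}$; \emph{(iii)} by \eqref{eq:assumption_diffusion_AC_subquadratic_global} the It\^o correction is bounded by
\[\tfrac{q(q-1)}{2}\Big[C\int_\Dom|u|^{q-2}(u^2+1)\,dx+\gamma\int_\Dom|u|^{q+2}\,dx\Big];\]
\emph{(iv)} the stochastic integral is a local martingale that vanishes in expectation up to $\tau_n$. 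The specific choice $q=1+2/\gamma$ is the decisive ingredient: it gives $(q-1)\gamma=2$, so the coefficient of $\int|u|^{q+2}$ becomes $-q+\tfrac{q(q-1)\gamma}{2}=0$, and the only supercritical term cancels exactly. After absorbing $\int|u|^{q-2}$ into $\|u\|_{L^q}^q$ plus a constant via Young's inequality, taking expectations produces a deterministic Gronwall inequality which yields $\sup_n\sup_{t\in[0,T]}\mathbb{E}\|u(t\wedge\tau_n)\|_{L^q}^q\le C_T(1+\|u_0\|_{L^\infty(\Omega;L^q)}^q)$. Applying Burkholder--Davis--Gundy to the stochastic integral together with the stochastic Gronwall Lemma~\ref{lem:Gronwall} then upgrades this moment bound to the required pathwise statement $\sup_{t<\sigma\wedge T}\|u(t)\|_{L^q(\Dom)}<\infty$ a.s., from which Theorem~\ref{thm:subcriticalblowup} delivers $\sigma=\infty$ a.s.

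The main technical obstacle is the rigorous justification of It\^o's formula for $\|\cdot\|_{L^q}^{q}$ at low spatial regularity: for $p\in(2,3)$ the solution lies in a Besov space of negative smoothness near $t=0$, so pointwise $L^q$-values are not immediately available there. I plan to apply It\^o first on $[\varepsilon,\tau_n\wedge T]$ for $\varepsilon>0$, where the instantaneous regularization in Theorem~\ref{thm:localwellposed}\eqref{it1:localwellposed} guarantees $u(\varepsilon,\cdot)\in L^q(\Dom)$ and $u\in L^p(\varepsilon,\tau_n;\HD^{2/3,q}(\Dom))$, and then pass $\varepsilon\downarrow 0$ via a smooth convex approximation of $t\mapsto|t|^q$ (e.g.\ by $(t^2+\eta)^{q/2}$ with $\eta\downarrow 0$), using the continuity $u\in C([0,\sigma);\BD^{s}_{q,p})$ and the hypothesis $u_0\in L^q$ to identify the limit of the norm identity. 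Once this is in place, the computational cascade described above together with the subcritical blow-up criterion closes the argument, and the claimed path regularity with $\sigma=\infty$ is inherited directly from Proposition~\ref{prop:reaction_diffusion_global}.
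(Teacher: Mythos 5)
Your high‑level strategy is exactly the one the paper uses: local existence from Proposition~\ref{prop:reaction_diffusion_global}, reduction to bounded initial data via Remark~\ref{rem:unifombddblow}, an a priori $L^q$-bound via It\^o's formula exploiting the cancellation $(q-1)\gamma=2$, a stochastic Gronwall upgrade, and finally the subcritical blow-up criterion of Theorem~\ref{thm:subcriticalblowup}. The algebraic cancellation with $q=1+2/\gamma$ is correctly identified.

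However, there is a genuine gap in your justification of It\^o's formula. You locate the difficulty at $t=0$ (negative Besov smoothness of $u_0$) and propose to fix it by applying It\^o on $[\varepsilon,\tau_n\wedge T]$ and invoking instantaneous regularization, claiming $u\in L^p(\varepsilon,\tau_n;\HD^{2/3,q}(\Dom))$. But the difficulty is not only at $t=0$: with $\delta=\tfrac43$ the domain $X_1=\HD^{2-\delta,q}=\HD^{2/3,q}$ has spatial smoothness $\tfrac23<1$, so $\nabla u$ is \emph{never} better than a distribution on all of $(0,\sigma)$ in this setting. Theorem~\ref{thm:parabreg} regularizes in the \emph{time} variable but leaves the top spatial index at $2-\delta=\tfrac23$: for any $\theta\in[0,1/2)$, $X_{1-\theta}=\HD^{2/3-2\theta,q}$ is strictly below $H^{1,q}$. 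Hence the key term $\int|u|^{q-2}|\nabla u|^2\,\dd x$ in your step (i) is not defined, and smoothing $|\cdot|^q$ by $(t^2+\eta)^{q/2}$ does not help because the obstruction is spatial, not in the convexity of the functional. The paper's proof handles this with an extra step that you omit: using the regularized path $u\in C([s,\sigma_n];L^{2q})$ (from Sobolev embedding and $\delta=\tfrac43$, $q\ge 3$) to verify that $F(u)$ and $G(u)$ have enough integrability in the \emph{stronger} scale $\tilde X_0=\HD^{-1,q}$, then solving the associated \emph{linear} SPDE in the $L^{\tilde p}$-setting with $\tilde X_1=\HD^{1,q}$ to produce a process $v_n$ with $H^{1,q}$-paths, identifying $u=v_n$ on $[s,\sigma_n]\times\Gamma$ by stochastic maximal regularity, and only then applying It\^o to $\|v_n\|_{L^q}^q$. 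Without this intermediate maximal-regularity bootstrap to $\HD^{1,q}$, the argument does not go through.
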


The condition $\frac{1+\a}{p} < \frac{5}{6} - \frac{3}{2q}$ is used to enable $\s=\frac{4}{3}$ in \eqref{eq:reaction_diffusion_globali}. The condition on $(p,\kappa)$ ensures that $\frac{2}{3}<2\frac{1+\a}{p}$ and therefore the validity of the embedding:
\begin{equation}
\label{eq:Lq_embed_BD_Allen_cahn}
L^q(\Dom)\subseteq \BD^{\frac{2}{3}-2\frac{1+\a}{p}}_{q,p}(\Dom).
\end{equation}
We use a non-trivial weight $\a>0$ in order to apply the `easy' regularization result of Theorem \ref{thm:parabreg}. The need for regularization is explained in Step 1 below.

\begin{proof}
The existence of a $(p,\a,\s,q)$-solution to \eqref{eq:AllenCahnLpLq} with $(p,\a,\s,q)$ as in the statement of Theorem \ref{thm:globalAllenCahn} follows from Proposition \ref{prop:reaction_diffusion_global}. It remains to show $\sigma=\infty$ a.s.\

{\em Step 1: We prove the a priori bound \eqref{eq:apriori_bound_Lq_AC3D} below for $u$ up to the blow-up time $\sigma$ from It\^o's formula.} Note that if we merely know that  \eqref{eq:regularity_u_reaction_diffusion_critical_spaces_1_AC} holds, then $\nabla u$ is only defined as a distribution (recall that $\s=\frac{4}{3}$).
Thus, to apply the It\^o formula to compute $\|u\|_{L^q}^q$, we need more regularity. To this end, we employ the results in Subsection \ref{subsec:reg}.
 Let $0<s<T<\infty$ and $(\sigma_n)_{n\geq 1}$ be a localizing sequence and fix $n\geq 1$. Note that $\sigma_n<\sigma$ a.s.\ (see Proposition \ref{prop:predsigma}). By Theorem \ref{thm:parabreg} a.s.\ on $\{\sigma_n>s\}$,
\[u\in C([s,\sigma_n];\HD^{2-\delta-2\theta,q}(\Dom)),  \ \ \theta\in (0,1/2).
\]
In particular, $u(s)\in \BD^{1-2/\wt{p}}_{q,\wt{p}}(\Dom))$ a.s. on $\{\sigma_n>s\}$ where $\wt{p}\in (2, 6)$ is fixed. For It\^o's formula, we will need $C([s,T];L^q(\Dom))$-regularity of a suitable process. To obtain this, the random variable $\one_{\{\sigma>s\}}u(s)$ will be our initial value to a linear stochastic evolution equation in the $L^{\wt{p}}$-setting for $\wt{X}_0 = \HD^{-1,q}(\Dom)$.
We check that $F(u)$ and $G(u)$ have the required integrability and regularity to apply maximal $L^{\wt{p}}$-regularity. By Sobolev embedding using $\theta$ small enough, $\delta=\frac{4}{3}$ and $q\geq 3$ we find that a.s.
$
u\in C([s,\sigma_n];L^{2q}(\Dom)).
$
In particular, by Sobolev embedding, we find that
\[F(u)\in C([s,\sigma_n], L^{2q/3}(\Dom))\subseteq C([s,\sigma_n], \HD^{-1,q}(\Dom)).\]
Since $g$ grows at most quadratically, we find that $G(u)\in C([s,\sigma_n];L^{q}(\Dom;\ell^2))$.

Let $\Gamma \subseteq \{\sigma_n>s\}$ be $\F_s$-measurable.
Since $(-\Delta,0)\in \mathcal{SMR}_{\wt{p},0}^{\bullet}$ (see Definition \ref{def:SMRbullet}) on the space $\wt{X}_0 = \HD^{-1,q}(\Dom)$, we know that there is a unique $L^{\wt{p}}$-solution
\begin{equation}
\label{eq:vn_u_regularity_AC_3D_proof}
v_n\in L^{\wt{p}}_{\rm loc}([s,\infty);\HD^{1,q}(\Dom))\cap C([s,\infty);\BD^{1-\frac{2}{\wt{p}}}_{q,\wt{p}}(\Dom))
\end{equation}
to the linear problem
\begin{align}
\label{eq:eqvallencahn}
\begin{aligned}
\dd v  + A v \, \dd t = \one_{[s,\sigma_n)\times \Gamma}F(u)\, \dd t + \one_{[s,\sigma_n)\times \Gamma}G(u)\, \dd W,
\qquad
v(s) = \one_{\Gamma} u(s).
\end{aligned}
\end{align}
Clearly, $v_n$ is an $L^{\wt{p}\wedge p}$-solution on $X_0 = \HD^{-\delta,q}(\Dom)$ to this problem as well. Since $\wt{u} := \one_{\Gamma} u$ is an $L^{\wt{p}\wedge p}$-solution on $X_0$ to the same problem on $[s,\sigma_n\vee s]$, by $(\Delta, 0)\in \mathcal{SMR}_{p\wedge \wt{p},0}^{\bullet}$ on the space $X_0$ it follows that $\wt{u} = v_n$ on $[s,\sigma_n\vee s]$, and thus $u = v_n$ on $[s,\sigma_n]\times \Gamma$.

By It\^o's formula applied to $\|v_n\|^{q}_{L^{q}(\Dom)}$ we obtain that a.s.\ for all $t\geq s$
\begin{align*}
\|v_n(t)\|^{q}_{L^{q}(\Dom)} = & \textstyle\|\one_{\Gamma}u(s)\|^{q}_{L^{q}(\Dom)} - q\int_s^t \int_{\Dom} |v_n|^{q-2}  |\nabla v_n|^2 \,\dd x \,\dd r
\\ & \textstyle+ q\int_s^t \int_{\Dom} |v_n|^{q-2}  v_n \one_{[s,\sigma_n]} F(u) \,\dd x \,\dd r
\\ & \textstyle+ q\int_s^t \int_{\Dom} |v_n|^{q-2} v_n \one_{[s,\sigma_n]} G(u) \,\dd x \,\dd W
\\ & \textstyle+ \frac{q(q-1)}{2}\int_s^t \int_{\Dom} |v_n|^{q-2} \one_{[s,\sigma_n]} \|G(u)\|_{\ell^2}^2 \,\dd x \,\dd r.
\end{align*}
In the above, one can replace $F(u)$ and $G(u)$ by $F(v_n)$ and $G(v_n)$ since $u = v_n$ on $[s,\sigma_n]\times\{\sigma_n>s\}$. Let $f(y)=y-y^3$ be the Allen--Cahn nonlinearity. The assumption on $g$ and the choice of $q$ yields
\[\textstyle
y f(y) + \tfrac{q-1}{2}\|g(x,y)\|_{\ell^2}^2 \leq -y^4 + y^2 +  \tfrac{q-1}{2} C(y^2+1)+ \frac{q-1}{2} \gamma y^4 \leq C' (1+y^2), \ \ y\in \R,\]
where $C'$ depends only on $C$ and $\g$. Therefore, we can conclude that
\begin{align*}
\textstyle \|v_n(t)\|^{q}_{L^{q}(\Dom)} \leq
\|\one_{\Gamma}u(s)\|^{q}_{L^{q}(\Dom)}  +C'' \int_s^t (1+\|v_n(r)\|^{q}_{L^{q}(\Dom)}) \,\dd r  + M_t,
\end{align*}
where $C''$ depends on $q, C'$ and the Lipschitz constant of $G$, and $M$ is a continuous local martingale.
The stochastic Gronwall Lemma \ref{lem:Gronwall} applied to the process $t\mapsto \|v_n(t)\|^{q}_{L^{q}(\Dom)}$ implies that
\[\textstyle \E \sup_{t\in [0,T]} \|v_n(t)\|^{\lambda q}_{L^{q}(\Dom)}\leq C_{\lambda} (1+\E \|\one_{\Gamma}u(s)\|^{\lambda q}_{L^{q}(\Dom)}), \ \ \lambda\in (0,1).\]
Taking $\Gamma_{k,n} = \{\|u(s)\|_{L^{q}(\Dom)}\leq k\}\cap \{\sigma_n>s\}$ and letting $n\to \infty$, we can conclude
\[\textstyle \E \sup_{t\in [0,\sigma\wedge T)} \one_{\Gamma_k}\|u(t)\|^{\lambda q}_{L^{q}(\Dom)} \leq C_{\lambda} (1+\E \|\one_{\Gamma_k} u(s)\|^{\lambda q}_{L^{q}(\Dom)}),\]
where $\Gamma_k = \{\|u(s)\|_{L^{q}(\Dom)}\leq k\}\cap \{\sigma>s\}$.
In particular, $\sup_{t\in [s,\sigma\wedge T)}\|u(t)\|_{L^{q}(\Dom)}<\infty$ a.s. on $\Gamma_k$. Since $\Gamma_k$ increases to $\{\sigma>s\}$ a.s., it follows that
\begin{equation}
\label{eq:apriori_bound_Lq_AC3D}
\sup_{t\in [s,\sigma\wedge T)}\|u(t)\|_{L^{q}(\Dom)}<\infty \ \ \text{a.s. on $\{\sigma>s\}$.}
\end{equation}

{\em Step 2: Conclusion.}
From \eqref{eq:apriori_bound_Lq_AC3D} proven in Step 1 and \eqref{eq:Lq_embed_BD_Allen_cahn}, it follows that
\begin{align*}
\P(\sigma<\infty) = \lim_{s\downarrow 0, T\to \infty}\P(s<\sigma<T)
& = \lim_{s\downarrow 0,T\to \infty}\P\big(s<\sigma<T, \sup_{t\in [s,\sigma)} \|u(t)\|_{\BD^{\frac23-\frac{2}{p}}_{q,p}(\Dom)}<\infty\big)
\\ & =
\P\big(\sigma<\infty,\sup_{t\in [0,\sigma)} \|u(t)\|_{\BD^{\frac23-\frac{2}{p}}_{q,p}(\Dom)}<\infty\big) = 0,
\end{align*}
where in the last step we used Theorem \ref{thm:subcriticalblowup} and the fact that we are in the subcritical setting. Thus, $\sigma = \infty$ a.s.\ as desired.
\end{proof}

\subsubsection{Refining the global well-posedness and regularity}
\label{sss:extrapolation_AC_3D}
In Theorem \ref{thm:globalAllenCahn}, we established global well-posedness of \eqref{eq:AllenCahnLpLq} under the growth assumption \eqref{eq:assumption_diffusion_AC_subquadratic_global} and a very specific choice of the parameter $(p,\kappa,\s,q)$. Notably, we excluded the case of critical initial data. While the growth assumption on $g$ is natural and intrinsically tied to the energy dissipation properties of the SPDEs, the particular choice of parameters is somewhat restrictive.
This limitation can be addressed by \emph{extrapolating} the global existence through instantaneous regularization. This is connected to the fact that, if instantaneous regularization holds, the blow-up criteria become independent of the specific parameter setting. This result is formalized in Corollary \ref{cor:transfblowup} (see also Figure \ref{fig:diagram_abstract}).

The independence of the blow-up criteria from the choice of $(p,\kappa)$ implies that whenever a blow-up criterion guarantees global well-posedness in one parameter setting, the same conclusion holds across all other settings where local well-posedness can be established.
This is the content of the following result.

\begin{theorem}[Global well-posedness and regularization in critical spaces]
\label{thm:global_extrapolation_AC}
Let the assumptions of Proposition \ref{prop:reaction_diffusion_global} be satisfied and suppose that \eqref{eq:assumption_diffusion_AC_subquadratic_global} holds with $\gamma\in (0,1)$. Then for all $\delta\in [1, 3/2)$, $q\in [2, \infty)$, $p\in (2, \infty)$ and $\a\in [0,\frac{p}{2}-1)$ satisfying \eqref{eq:reaction_diffusion_globali}, there exists a global (unique) $(p,\a,\s,q)$-solution $u$ to \eqref{eq:AllenCahnLpLq} satisfying \eqref{eq:regularity_u_reaction_diffusion_critical_spaces_1_AC} with $\sigma=\infty$ and a.s.\
\begin{align}
\label{eq:reaction_diffusion_H_theta_AC_global_extrapolation}
u\in L^r_{\rm loc}((0,\infty);\HD^{1,r}(\Dom))
\cap C^{\theta/2,\theta}_{\rm loc}((0,\sigma)\times \overline{\Dom}), \ \ r\in (2,\infty), \theta\in (0,1).
\end{align}
\end{theorem}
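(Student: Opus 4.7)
The strategy is to reduce the general case to the specific parameter choice handled by Theorem \ref{thm:globalAllenCahn} (with $q_0=1+2/\g$ and $\delta_0=4/3$) via instantaneous regularization and the compatibility result of Corollary \ref{cor:comp}.

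\textbf{Setup and time regularization.} Let $(u,\sigma)$ denote the $(p,\a,\s,q)$-local solution furnished by Proposition \ref{prop:reaction_diffusion_global}. Since $-\Delta_D$ admits a bounded $H^\infty$-calculus of angle $0$ on $\HD^{-\s,q}(\Dom)$ (see Example \ref{ex:extrapolated_Laplace_dirichlet}), Theorem \ref{thm:SMRHinfty} yields $(-\Delta_D,0)\in\mathcal{SMR}^{\bullet}_{r,\alpha}$ for all $r\in(2,\infty)$ and $\alpha\in[0,r/2-1)$. If $\a>0$, Theorem \ref{thm:parabreg} directly gives
\[
u\in H^{\theta,r}_{\loc}((0,\sigma);\HD^{2-\s-2\theta,q}(\Dom))\cap C^{\theta-\varepsilon}_{\loc}((0,\sigma);\HD^{2-\s-2\theta,q}(\Dom))\quad\text{a.s.}
\]
for every $\theta\in[0,1/2)$, $r\in[2,\infty)$ and $\varepsilon\in(0,\theta)$. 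If $\a=0$, the growth condition \eqref{eq:condFcextrakap0} has to be verified with the slightly larger real-interpolation space $X_{1-1/p,\infty}$ in place of $X_{1-1/p,p}$; however, the estimates for $F(x)=x-x^3$ and $G(x)$ preceding Proposition \ref{prop:reaction_diffusion_global} only employ Sobolev embeddings from Besov-type spaces into Lebesgue spaces $L^{3r},L^{2r}$, and these embeddings are insensitive to the second exponent of real interpolation. Hence \eqref{eq:condFcextrakap0} holds and Theorem \ref{thm:parabreg2} produces the same regularity.

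\textbf{Global existence via compatibility.} Fix $s>0$ and set $\V_s:=\{\sigma>s\}$. Choosing $\theta>0$ small in the previous display and using Sobolev embedding $\HD^{2-\s-2\theta,q}(\Dom)\hookrightarrow L^{q_0}(\Dom)$ (possible because $\s=\s$ may be any value in $[1,3/2)$ for the purposes of \emph{embedding} after regularization, while we rely on the enhanced spatial integrability obtained), we obtain $u(s)\in L^{q_0}(\Dom)$ a.s.\ on $\V_s$. Using the autonomy of \eqref{eq:AllenCahnLpLq}, we apply Theorem \ref{thm:globalAllenCahn} in the shifted filtration $(\F_{t+s})_{t\ge0}$ with shifted Brownian motions $W^n_{\cdot+s}-W^n_s$ and $\F_s$-measurable initial datum $\one_{\V_s}u(s)$; this yields a \emph{global} $(p_0,\a_0,\tfrac{4}{3},q_0)$-solution $v$ on $[s,\infty)$. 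On the other hand, the restriction of $(u,\sigma)$ to $[s,\sigma)$ is a $(p,\a,\s,q)$-local solution starting at $s$ with the same initial value $\one_{\V_s}u(s)$ on $\V_s$. The hypothesis \eqref{eq:instantaneous_regularization_p2} of Corollary \ref{cor:comp} (with $p_2=p\vee p_0$) is furnished by the regularization of the previous paragraph, so Corollary \ref{cor:comp} (applied in the time-shifted setting) gives $\sigma=\infty$ on $\V_s$ and $u=v$ on $[s,\infty)\times\V_s$. Letting $s\downarrow 0$ concludes $\sigma=\infty$ a.s.

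\textbf{Higher integrability and parabolic H\"older regularity.} From the time regularization and $\sigma=\infty$, for every $r\in(2,\infty)$, $\theta\in(0,1/2)$ the solution satisfies $u\in C_{\loc}((0,\infty);\HD^{2-\s-2\theta,q}(\Dom))$. By Sobolev embedding, $u(t)$ lies in $L^{q_1}(\Dom)$ for some $q_1>q_0$; reapplying Proposition \ref{prop:reaction_diffusion_global} (starting at a positive time) together with Theorem \ref{thm:parabreg} in the rougher base space $\HD^{-\s,q_1}$ gives improved integrability. After finitely many such bootstrap iterations one reaches $u\in L^r_{\loc}((0,\infty);\HD^{1,r}(\Dom))$ for arbitrary $r\in(2,\infty)$. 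Spatial Sobolev embedding $\HD^{1,r}(\Dom)\hookrightarrow C^{1-3/r}(\overline{\Dom})$ together with the time H\"older regularity $C^{\theta-\varepsilon}_{\loc}$ in spaces of positive spatial smoothness yields $u\in C^{\theta/2,\theta}_{\loc}((0,\infty)\times\overline{\Dom})$ for every $\theta\in(0,1)$, proving \eqref{eq:reaction_diffusion_H_theta_AC_global_extrapolation}.

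\textbf{Main obstacle.} The delicate point is the application of Corollary \ref{cor:comp} in the \emph{critical} case where \eqref{eq:reaction_diffusion_globali} holds with equality and $\a=0$: the compatibility requires the instantaneous regularization hypothesis in the rougher setting, which in turn forces us to verify \eqref{eq:condFcextrakap0} for real-interpolation spaces with second exponent $\infty$. Once this verification is carried out - which as noted above reduces to scale-invariant Sobolev embeddings into Lebesgue spaces - the transfer of global existence from the specific $(p_0,\a_0,\tfrac43,q_0)$-setting to the general one is entirely mechanical.
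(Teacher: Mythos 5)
Your overall plan — obtain instantaneous regularization, restart the equation from a positive time in the specific setting of Theorem~\ref{thm:globalAllenCahn}, and use a compatibility result to transfer the global existence — is a legitimate alternative to the paper's proof, which instead proves a spatial bootstrap (in the style of Proposition~\ref{prop:QSregularity}) followed by the $L^{\wt q}$ a priori bound and the transference-of-blow-up criterion (Corollary~\ref{cor:transfblowup} / \cite[Theorem 2.10]{AVreaction-local}). However, as written your argument has a genuine gap: in Step 2 you assert the Sobolev embedding $\HD^{2-\s-2\theta,q}(\Dom)\hookrightarrow L^{q_0}(\Dom)$ with $q_0 = 1+2/\g>3$, but the time regularization of Theorem~\ref{thm:parabreg}/\ref{thm:parabreg2} keeps the \emph{spatial} scale $(\s,q)$ fixed, so the best you can conclude for positive times is $u(s)\in B^{2-\s-\varepsilon}_{q,r}(\Dom)$ for all $\varepsilon>0$, $r<\infty$. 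Taking, say, $q=2$ and $\s$ close to $3/2$ (which is allowed by \eqref{eq:reaction_diffusion_globali} for $p$ large enough), $B^{1/2-\varepsilon}_{2,r}(\Dom)\hookrightarrow L^{\zeta}(\Dom)$ requires $\zeta\le 3/(1+\varepsilon)<3<q_0$, so the embedding into $L^{q_0}$ \emph{fails}. The parenthetical remark ``possible because $\s$ may be any value in $[1,3/2)$'' does not resolve this, since time regularization does not change $\s$. To close the gap you must first carry out a spatial bootstrap on $(0,\sigma)$ — as the paper does to reach the regularity $u\in L^r_{\rm loc}((0,\sigma);\HD^{2-\s,r}(\Dom))$ — before you can embed $u(s)$ into $L^{q_0}$. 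This spatial bootstrap cannot be postponed to your last paragraph, because your conclusion $\sigma=\infty$ already rests on it.

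A second (fixable but real) issue is your invocation of Corollary~\ref{cor:comp}: that result, as stated, compares two maximal solutions with the \emph{same} pair $(X_0,X_1)$ but different time parameters $(p,\a)$. You need a version that also varies the spatial scale (different $\delta$ and $q$), since the auxiliary solution is a $(p_0,\a_0,\tfrac{4}{3},q_0)$-solution. The paper explicitly flags below Theorem~\ref{thm:global_extrapolation_AC} that such a generalization is possible (cf.\ Proposition~\ref{prop:coincideRD} for the periodic reaction-diffusion setting), but you should not cite Corollary~\ref{cor:comp} verbatim. Once the spatial bootstrap is done and the generalized compatibility is invoked, your time-shift / restart strategy does yield the same conclusion, trading the blow-up-criterion transference of the paper for a compatibility argument. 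Note also that the claim ``$u(t)$ lies in $L^{q_1}(\Dom)$ for some $q_1>q_0$'' in your final paragraph is false for the parameter ranges above; your $q_1$ can be strictly less than $3$, so the bootstrap must be phrased as raising integrability iteratively starting from the original scale, possibly decreasing $\delta$ along the way to stay within the constraint $q<6/\delta$ of Proposition~\ref{prop:reaction_diffusion_global}.
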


As noted above Theorem \ref{thm:globalAllenCahn}, the case $\gamma=1$ is also valid.
From the discussion below Proposition \ref{prop:reaction_diffusion_global}, it follows that Theorem \ref{thm:global_extrapolation_AC} establishes global well-posedness of \eqref{eq:AllenCahnLpLq} with initial data belonging to the critical spaces $\BD^{3/q-1}_{q,p}(\Dom)$ with smoothness up to $-\frac{1}{3}$.

\begin{proof}
Let $(p,\s,\a,q)$ be as in the statement of Theorem \ref{thm:globalAllenCahn} and let $(u,\sigma)$ be the  $(p,\s,\a,q)$-solution of \eqref{eq:AllenCahnLpLq}. Arguing as in Step 1 in the proof of Theorem \ref{thm:globalAllenCahn}  and employing Theorem \ref{thm:parabreg2} in case $\a=0$, one can readily show that
 $(u,\sigma)$ instantaneously regularizes in time: a.s.\
\begin{align}
\label{eq:reaction_diffusion_H_theta_AC_global_extrapolation_proof0}
u \in H^{\theta,r}_{\rm loc}((0,\sigma);\HD^{2-\s-2\theta,q}(\Dom)), \ \ r\in (2,\infty), \theta\in (0,1).
\end{align}
From the above smoothness and iterating the argument in Step 1, one also obtains (see the proof of Proposition \ref{prop:locQS} for a detailed argument for another equation)
\begin{align}
\label{eq:reaction_diffusion_H_theta_AC_global_extrapolation_proof}
u \in L^r_{\rm loc}((0,\sigma);\HD^{2-\s,r}(\Dom))
\cap C^{\theta/2,\theta}_{\rm loc}((0,\sigma)\times \overline{\Dom}), \ \ r\in (2,\infty), \theta\in (0,1).
\end{align}
Hence, it remains to prove $\sigma=\infty$ a.s.
Recall that \eqref{eq:assumption_diffusion_AC_subquadratic_global} holds with $\g<1$.
In light of \eqref{eq:reaction_diffusion_H_theta_AC_global_extrapolation_proof}, one can repeat almost verbatim the proof of \cite[Theorem 2.10]{AVreaction-local} or following Corollary \ref{cor:transfblowup} (with the minor difference of varying also the integrability parameter $q$) to show that, for all $s>0$,
\begin{equation}
\label{eq:blow_up_criteria_AC_3D_extrapolated}
\textstyle \P(s<\sigma<\infty,\, \sup_{t\in [s,\sigma)}\|u(t)\|_{L^{\wt{q}}(\Dom)}<\infty)=0,
\end{equation}
where $\wt{q}=1+ \frac{2}{\gamma}$ (as in Theorem \ref{thm:globalAllenCahn}). Now, by \eqref{eq:reaction_diffusion_H_theta_AC_global_extrapolation_proof}, one can repeat the proof of Theorem \ref{thm:globalAllenCahn} for $(u,\sigma)$ and obtain the a priori bound \eqref{eq:apriori_bound_Lq_AC3D} for the $(p,\a,\s,q)$-solution $(u,\sigma)$.  Arguing as in Step 2 of Theorem \ref{thm:globalAllenCahn}, the latter a priori bound and \eqref{eq:blow_up_criteria_AC_3D_extrapolated} prove the claim of Theorem \ref{thm:global_extrapolation_AC}.
\end{proof}

Let us also mention that using \eqref{eq:reaction_diffusion_H_theta_AC_global_extrapolation_proof} and the argument of Corollary \ref{cor:comp} (see also Proposition \ref{prop:coincideRD} below), one can show that solutions to \eqref{eq:AllenCahnLpLq} with different choices of the parameters $(p,\a,\s,q)$ are compatible.

In the next subsection, we discuss further improvements of Theorems \ref{thm:globalAllenCahn} and \ref{thm:global_extrapolation_AC}, but we only present this in the case of periodic boundary conditions (see also Example \ref{ex:AllenCahnperiodicLpLq}). In particular, we present higher order regularity, a priori bounds, the critical setting with $q=3$ and $\gamma=1$, and continuous dependence on the initial data.

\subsection{Reaction-diffusion equations}\label{ss:reaction}
Systems of reaction-diffusion equations appear everywhere in models coming from applied science, e.g.\ chemistry, physics, biology, etc. The Allen--Cahn equation of the previous subsection is an example of a scalar reaction-diffusion equation. During the last two decades, a lot of work has been done on stochastic reaction-diffusion equations \cite{C03,Cer05,CR05,DKZ19,F91,KN19,KvN12,M18,S21,S21_dissipative,S21_superlinear_no_sign}. Physical motivations for stochastic perturbations of transport type can be found in \cite[Subsection 1.3]{AVreaction-local}.

In the deterministic setting, global well-posedness is known to hold for a large class of problems, which only satisfy a weak version of a coercivity condition. The survey \cite{P10_survey} provides an overview of a certain class of models, where some of them can be analyzed by maximal $L^p$-regularity techniques and backward equations. For quadratic systems with mass control, see  \cite{FMT, Kanel}.
It seems a difficult problem to prove global well-posedness in the same generality as was recently done in the deterministic setting. In this context, two open problems are formulated in Problems \ref{prop:triangular_structure} and \ref{pro:react2} at the end of this manuscript.

Recently, in \cite{AVreaction-local} based on Theorem \ref{thm:localwellposed}, a general local well-posedness theory was developed for systems of reaction-diffusion equations. Moreover, higher-order regularity and blow-up criteria were obtained as well. In the follow-up paper \cite{AVreaction-global} global well-posedness was proved for some classes of equations, and also some concrete models such as the Allen--Cahn equation (see Subsection \ref{ss:AllenCahn}), predator-prey models (see Subsection \ref{ss:LV}), coagulation dynamics, Brusselator (e.g.\ Gray Scott model). Some of these models are only weakly coercive and new arguments were needed for the global well-posedness.

Many of the previous models require $L^p(L^q)$-techniques for $p,q>2$. This is because Sobolev embeddings improve if $p,q$ are large. In Subsection \ref{ss:abstractreaction}, we explain a local well-posedness theory for reaction-diffusion equations on $\T^d$. In Subsection \ref{ss:coercivereaction}, we present some global well-posedness results in the case the system is coercive in an appropriate sense. Without such coercivity, the problem of global well-posedness can be much harder. Such an example is given by a Lotka--Volterra model. For the latter, we prove global well-posedness in Subsection \ref{ss:LV}. Many of the results below hold on smooth domains with suitable boundary conditions. However, if there is a gradient/transport noise, then it is not known when the leading linear part $(A,B)$ has stochastic maximal $L^p$-regularity (except if $B=0$). For this reason, we only consider periodic boundary conditions for now. In the rest of this subsection, we take
\begin{equation}
\label{eq:choice_X0_X1_reaction_diffusion_Td}
X_0 = H^{-\s,q}(\Tor^d;\R^\ell), \ \ \text{and} \ \ X_1 = H^{2-\s,q}(\Tor^d;\R^\ell),  \ \ \text{where} \ \ \delta\in [1, 2) \ \text{is fixed}.
\end{equation}
Note that $X_{1-\frac{1+\kappa}{p},p} = B^{2-\s-2\frac{1+\a}{p}}_{q,p}(\Tor^d;\R^{\ell})$.
As in Subsection \ref{ss:AllenCahnLpLq}, the parameters $q$ and $p$ will be used for spatial and time integrability, respectively, and the parameter $\delta$ is used to decrease spatial smoothness. The parameter $\kappa$ is related to the critical weight $t^{\kappa}$ used for the time-integrability.

\subsubsection{Local well-posedness, regularity and positivity}\label{ss:abstractreaction}
The results of this subsection are a special case of \cite{AVreaction-local} (see Remark \ref{rem:extreaction} for the generalization). We will only consider $d\geq 2$. The case $d=1$ can either be considered by adding a dummy variable, or by separate treatment (See \cite[Section 6]{AVreaction-local}).

Consider the following system of stochastic reaction-diffusion equations:
\begin{equation}
\label{eq:reaction_diffusion_system}
\left\{
\begin{aligned}
& \textstyle  \dd u_i -\nu_i \Delta u_i \,\dd t = f_i(\cdot, u)\,\dd t + \sum_{n\geq 1}  \Big[(b_{n,i}\cdot \nabla) u_i+ \rnoise_{n,i}(\cdot,u) \Big]\,\dd W^n, & \text{ on }\Tor^d,\\
&u_i(0)=u_{0,i},  & \text{ on }\Tor^d,
\end{aligned}
\right.
\end{equation}
where $i\in \{1,\dots,\ell\}$ and $\ell\geq 1$ is an integer.
Here, $u=(u_i)_{i=1}^{\ell}:[0,\infty)\times \O\times \Tor^d\to \R^\ell$ is the unknown process, $(W^n)_{n\geq 1}$ is a sequence of standard independent Brownian motions on the above mentioned filtered probability space.
The notation $(b_{n,i}\cdot\nabla) u_i:=\sum_{j=1}^d b^j_{n,i} \partial_j u_i$ is used, where the coefficients $b_{n,i}^j$ model small scale turbulent effects. Note that the SPDEs \eqref{eq:reaction_diffusion_system} are coupled only through the nonlinearities $f$ and $g$, but there are no cross interactions in the diffusion terms $-\nu_i \Delta u_i$ and $(b_{n,i}\cdot \nabla) u_i$, which is a standard assumption in reaction-diffusion systems.

The following is our main assumption on the coefficients and nonlinearities.
\begin{assumption}
\label{ass:reaction_diffusion_global}
Let $d\geq 2, \ell\geq 1$ be integers. We say that Assumption \ref{ass:reaction_diffusion_global}$(p,\kappa,q,h,\s)$ holds if $q\in [2,\infty)$, $p\in [2,\infty)$, $h>1$, $\s\in [1, 2)$ and for all $i\in \{1,\dots,\ell \}$ the following hold:
\begin{enumerate}[{\rm(1)}]
\item\label{it:regularity_coefficients_reaction_diffusion}
There is an $\alpha>\delta-1$ such that for all $j\in \{1, \ldots, d\}$,
$(\bm^{j}_{n,i})_{n\geq 1} \in C^{\alpha}(\Tor^d;\ell^2)$.
\item\label{it:ellipticity_reaction_diffusion} There exists $\mu>0$ such that for all $x\in \Tor^d$ and $\xi\in \R^d$,
$$ \textstyle
\sum_{j,k=1}^d \sum_{n\geq 1} b^j_{n,i}(x)b^k_{n,i}(x)
 \xi_j \xi_k
\geq (2\nu_i-\mu) |\xi|^2.
$$
\item\label{it:growth_nonlinearities} The maps
$f_i:\Tor^d\times \R\to \R$ and
$g_i:=(g_{n,i})_{n\geq 1}:\Tor^d\times \R\to \ell^2$,
are $\Borel(\Tor^d)\otimes \Borel(\R)$-measurable. Suppose that $f_i(\cdot, 0)\in L^\infty(\T^d)$, $g_i(\cdot,0)\in L^{\infty}(\Tor^d;\ell^2)$,
and for all $x\in \Tor^d$ and $y,y'\in\R$,
\begin{align*}
|f_i(x,y)-f_i(x,y')|
&\lesssim (1+|y|^{h-1}+|y'|^{h-1})|y-y'|,
\\ \|g_i(x,y)-g_i(x,y')\|_{\ell^2}
&\lesssim (1+|y|^{\frac{h-1}{2}}+|y'|^{\frac{h-1}{2}})|y-y'|.
\end{align*}
\item\label{it:reaction_diffusion_global_parameters_D} One of the following cases holds:
\begin{enumerate}[(i)]
\item\label{it:admissibleexp1} $p\in (2,\infty)$, $q\in [2,\infty)$, and $\s\in [1,\frac{h+1}{h})$,  satisfy
\begin{align*}
\frac{1+\kappa}{p}+\frac{1}{2}\Big(\reg+\frac{d}{q}\Big) = \frac{h}{h-1}, \ \text{and} \
\frac{d}{d-\reg} <q<\frac{d(h-1)}{h+1-\reg(h-1)};
\end{align*}
\item\label{it:admissibleexp2} $p=q=2$, $\kappa=0$, $\delta=1$ and $h\leq \frac{4+d}{d}$ with the additional restriction $h<3$ if $d=2$.
\end{enumerate}
\end{enumerate}
\end{assumption}

As commented at the end of Subsection \ref{ss:tracecriticalF} or in \cite[Subsection 5.3.4]{AV19_QSEE_1}, the relation between the growth of the drift and diffusion is optimal from a scaling point of view.

It is clear that if Assumption \ref{ass:reaction_diffusion_global}\eqref{it:regularity_coefficients_reaction_diffusion}-\eqref{it:growth_nonlinearities} hold for $\delta=1$ and $h$, then it also holds for \emph{some} $\delta>1$ and \emph{all} $\wt{h}>h$. However, the validity of Assumption \ref{ass:reaction_diffusion_global}\eqref{it:reaction_diffusion_global_parameters_D} depends on the specific choice of $(\delta,h)$ for which conditions \eqref{it:ellipticity_reaction_diffusion}-\eqref{it:growth_nonlinearities} are verified. In particular, if $h>1+\frac{4}{d}$, then Assumption \ref{ass:reaction_diffusion_global}\eqref{it:admissibleexp1} holds with $q=\frac{d(h-1)}{2}>2$, $\delta>1$ and \emph{any} $p\geq \frac{2}{2-\delta}$ (note that $\a=p(1-\frac{\delta}{2})-1\in [0,\frac{p}{2}-1)$). The integrability $q=\frac{d(h-1)}{2}$ is particularly relevant because the Lebesgue space $L^{d(h-1)/2}$ is invariant for the reaction-diffusion equations \eqref{eq:reaction_diffusion_system}. The reader is referred to \cite[Subsection 1.4]{AVreaction-local} for the scaling analysis of \eqref{eq:reaction_diffusion_system}. Although one can always increase the parameter $h$ in Assumption \ref{ass:reaction_diffusion_global}, this comes at the cost of obtaining smaller critical spaces. Moreover, the lower bound $h > 1 + \frac{4}{d}$ is not optimal for ensuring that Assumption \ref{ass:reaction_diffusion_global}\eqref{it:admissibleexp1} holds. Indeed, as shown in \cite[Lemma 2.5]{AVreaction-local} for $d = 3$, Assumption \ref{ass:reaction_diffusion_global}\eqref{it:admissibleexp1} holds for all $h > 2$ within a suitable range of parameters $(p, \kappa, q, \delta)$. The same suboptimality is observed for dimensions $d > 3$. For further discussion and a comparison with Fujita’s critical exponent introduced in \cite{F66}, we refer the reader to \cite[Remark 2.6]{AVreaction-local}.
Finally, by adding a dummy variable, one can increase the dimension and our discussion also includes the case $d=1$.

\smallskip

As usual, we view \eqref{eq:reaction_diffusion_system} as a stochastic evolution equation on $X_0$ given in \eqref{eq:choice_X0_X1_reaction_diffusion_Td}. More precisely, we say that $(u,\sigma)$ is a (unique) $(p,\a,q,\s)$-solution to \eqref{eq:reaction_diffusion_system} if $(u,\sigma)$ is a $L^p_\a$-solution to \eqref{eq:SEE} with $(X_0,X_1)$ as in \eqref{eq:choice_X0_X1_reaction_diffusion_Td}, $\mathcal{U}=\ell^2$ and
\begin{equation*}
\begin{aligned}
(A u)_i &=-\nu_i \Delta u_i , &  \qquad (B u)_i& = ((b_{n,i}\cdot\nabla)u_i)_{n\geq 1},\\
 (F(u))_i&=f_i(u),
 & \qquad  (G(u))_i &= (g_{n,i} (\cdot,u))_{n\geq 1}.
\end{aligned}
\end{equation*}
Here we stress the dependence on $(p,\a,q,\s)$ in the definition of solutions. However, in Proposition \ref{prop:coincideRD} we will see that the solutions to \eqref{eq:reaction_diffusion_system} for different choices of the parameters actually coincide.
In the following, we compare the above definition of solutions with the one used in \cite{AVreaction-local}.

\begin{remark}
Using  Assumption \ref{ass:reaction_diffusion_global}$(p,\kappa,q,h,\s)$ and \cite[Lemma 3.2]{AVreaction-local}, one can check that the above defined $(p,\a,q,\s)$-solution to \eqref{eq:reaction_diffusion_system} gives, for all $i\in\{1,\dots,\ell\}$,
\begin{equation*}
\begin{aligned}
f_i(\cdot, u)\in L^p_{\rm loc}([0,\sigma),w_{\a};H^{-\s,q}(\Tor^d)), \ \ \text{and} \ \
(g_{n,i}(\cdot,u))_{n\geq 1}\in L^p_{\rm loc}([0,\sigma),w_{\a};H^{1-\s,q}(\Tor^d;\ell^2)).
\end{aligned}
\end{equation*}
These were used to define a solution in \cite{AVreaction-local}, but are equivalent by stochastic maximal regularity.
\end{remark}

To simplify the notation in the sequel from now on, we write
\[H^{s,q} = H^{s,q}(\Tor^d;\R^{\ell}) \ \  \text{and} \ \ B^{s}_{q,p} = B^{s}_{q,p}(\Tor^d;\R^{\ell}).\]

The main result on local well-posedness reads as follows (see \cite[Theorem 2.6]{AVreaction-local}).

\begin{theorem}[Local existence and regularization in critical spaces]
\label{t:reaction_diffusion_global_critical_spaces}
Let Assumptions \ref{ass:reaction_diffusion_global}$(p,\kappa,q,h,\s)$ be satisfied.
Then for any $u_0\in  L^0_{\F_0}(\O;B^{\frac{d}{q}-\frac{2}{h-1}}_{q,p})$,
the problem \eqref{eq:reaction_diffusion_system} has a (unique) $(p,\a,\s,q)$-solution $(u,\sigma)$ such that a.s.\
 $\sigma>0$ and
\begin{equation}
\label{eq:regularity_u_reaction_diffusion_critical_spaces}
\begin{aligned}
u\in L^{p}_{{\rm loc}}([0,\sigma),w_{\a};H^{2-\s,q}))\cap C([0,\sigma);B^{\frac{d}{q}-\frac{2}{h-1}}_{q,p}).
\end{aligned}
\end{equation}
Moreover, $u$ instantaneously regularizes in space and time: a.s.,
\begin{align}
\label{eq:reaction_diffusion_H_theta}
u&\in L^r_{\rm loc}((0,\sigma);H^{1,r})
\cap C^{\theta/2,\theta}_{\rm loc}((0,\sigma)\times \Tor^d;\R^\ell), \ \ r\in (2,\infty), \theta\in (0,1).
\end{align}
\end{theorem}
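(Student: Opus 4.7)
The plan is to cast \eqref{eq:reaction_diffusion_system} as an instance of \eqref{eq:SEE} on the scale \eqref{eq:choice_X0_X1_reaction_diffusion_Td} and then invoke Theorem \ref{thm:localwellposed} together with the regularization results of Subsection \ref{subsec:reg}. The main technical steps are as follows.

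First, I would verify that the linear part $(A,B)$ with $(Au)_i = -\nu_i \Delta u_i$ and $(Bu)_i = ((b_{n,i}\cdot \nabla) u_i)_{n\geq 1}$ satisfies $(A,B) \in \MRta$ for the parameters $(p,\kappa)$ given by Assumption \ref{ass:reaction_diffusion_global}. The ellipticity in \eqref{it:ellipticity_reaction_diffusion} is precisely the stochastic parabolicity condition \eqref{eq:stochpara}, and the H\"older regularity of the coefficients in \eqref{it:regularity_coefficients_reaction_diffusion} (with $\alpha > \delta-1$) matches what is needed for the stochastic maximal $L^p_\kappa$-regularity result on $\T^d$ obtained in \cite{AV21_SMR_torus}, discussed in Subsection \ref{ss:secondorderconcrete}. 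Since the $\ell$ equations are uncoupled at the level of the linear operator, this reduces to the scalar case treated there. In the $L^2$-case \eqref{it:admissibleexp2}, stochastic maximal regularity follows from Theorem \ref{thm:varlin}.

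Second, I would check the critical Lipschitz condition of Assumption \ref{ass:FGcritical} for $F(u) = (f_i(\cdot,u))_i$ and $G(u) = ((g_{n,i}(\cdot,u))_n)_i$ with a single index $j$, taking $\rho_1 = h-1$ for the drift and $\rho_2 = (h-1)/2$ for the diffusion. By the Sobolev embedding $L^r \hookrightarrow H^{-\s,q}$ with $-d/r = -\s - d/q$, H\"older's inequality gives
\[
\|F(u)-F(v)\|_{X_0} \lesssim \bigl(1 + \|u\|_{L^{hr}}^{h-1} + \|v\|_{L^{hr}}^{h-1}\bigr)\|u-v\|_{L^{hr}},
\]
and an analogous estimate with exponent $(h+1)/2$ is obtained for $G$ using $L^r \hookrightarrow H^{1-\s,q}$. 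A further Sobolev embedding $X_{\beta_1} \hookrightarrow L^{hr}$ is available provided $2\beta_1 \geq \frac{h-1}{h}(\s + d/q)$, and substituting the equality constraint from Assumption \ref{ass:reaction_diffusion_global}\eqref{it:admissibleexp1} shows that this requirement is precisely the critical scaling equality $\frac{1+\kappa}{p} = \frac{h(1-\beta_1)}{h-1}$, i.e.\ equality in \eqref{eq:subcritical}. The restrictions $q > \frac{d}{d-\s}$ and $q < \frac{d(h-1)}{h+1-\s(h-1)}$ guarantee that the relevant exponents $r$ are admissible and that $\beta_1 \in (1 - \tfrac{1+\kappa}{p},1)$; an analogous verification works for $\beta_2$ in the $G$-estimate.

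Third, having checked the conditions, Theorem \ref{thm:localwellposed} yields the unique maximal $(p,\kappa,\s,q)$-solution $(u,\sigma)$ with $\sigma > 0$ a.s.\ and the regularity \eqref{eq:regularity_u_reaction_diffusion_critical_spaces}. The trace space is $B^{2-\s-2\frac{1+\kappa}{p}}_{q,p}$, which under the equality constraint equals $B^{\frac{d}{q}-\frac{2}{h-1}}_{q,p}$ as claimed.

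For the instantaneous regularization \eqref{eq:reaction_diffusion_H_theta}, I would apply Theorem \ref{thm:parabreg} if $\kappa > 0$, and in the $\kappa = 0$ case I would check the mild extra growth condition \eqref{eq:condFcextrakap0} (which is immediate from Assumption \ref{ass:reaction_diffusion_global}\eqref{it:growth_nonlinearities} by the same interpolation/Sobolev chain) and apply Theorem \ref{thm:parabreg2}; in case \eqref{it:admissibleexp2} with $p=2$, one uses Proposition \ref{prop:parabreg3}. This delivers $u \in H^{\theta,r}_{\rm loc}((0,\sigma); H^{2-\s-2\theta,q})$ a.s.\ for all $\theta \in (0,1/2)$ and $r \in (2,\infty)$. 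To upgrade this to $L^r_{\rm loc}((0,\sigma); H^{1,r})$, I would bootstrap spatially: the time regularization places $u(t)$ into $H^{2-\s-\varepsilon,q}$ for $t > 0$, so after restarting at any $s \in (0,\sigma)$ the initial data is smooth enough to be viewed as an initial condition in a higher-integrability scale $H^{-\tilde\s,\tilde q}$ with $\tilde q > q$; iterating a finite number of times, using Sobolev embedding $H^{2-\s-2\theta,q} \hookrightarrow H^{-\tilde\s,\tilde q}$ at each step, one lands in a subcritical setting where $H^{1,r}$-regularity for arbitrarily large $r$ follows, and then Sobolev embedding into $C^{\theta/2,\theta}_{\rm loc}$ closes the argument via Proposition \ref{prop:tracespace}. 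Compatibility of the solutions across different parameter settings (needed to combine the bootstrap steps) is ensured by Corollary \ref{cor:comp}.

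The main obstacle is the sharp choice of parameters in the Sobolev/interpolation chain for the nonlinearity estimate: the polynomial growth with exponent $h$ forces equality in \eqref{eq:subcritical}, so each embedding must be taken at its endpoint exponent, and one has to verify that the constraints on $q$ in \eqref{it:admissibleexp1} are exactly what makes all intermediate exponents lie in admissible ranges $(1,\infty)$ and $\beta_1,\beta_2 \in (1-\frac{1+\kappa}{p},1)$.
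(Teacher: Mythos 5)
Your plan follows exactly the route the paper itself sketches: verify $\mathcal{SMR}_{p,\kappa}^{\bullet}$ for $(A,B)$ via \cite{AV21_SMR_torus} (or Theorem \ref{thm:varlin} when $p=q=2$), check Assumption \ref{ass:FGcritical} for $(F,G)$ through the Sobolev--H\"older chain of Subsection \ref{ss:tracecriticalF} with $\rho_1 = h-1$, $\rho_2 = \frac{h-1}{2}$, invoke Theorem \ref{thm:localwellposed}, then bootstrap time regularity via Theorems \ref{thm:parabreg}/\ref{thm:parabreg2} or Proposition \ref{prop:parabreg3} and space regularity via the restart-in-a-better-scale argument illustrated in Proposition \ref{prop:QSregularity}. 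Your exponent bookkeeping (including that the two constraints on $q$ in Assumption \ref{ass:reaction_diffusion_global}\eqref{it:admissibleexp1} ensure $r>1$ in the first embedding and $\beta_j \in (1-\frac{1+\kappa}{p},1)$ respectively) is correct and matches what the paper delegates to the cited references.
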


As in Subsection \ref{ss:scaling_intro} or in \cite[Subsection 1.4]{AVreaction-local}, a scaling argument shows that the spaces $B^{\frac{d}{q}-\frac{2}{h-1}}_{q,p}$ are locally scaling-invariant and they are critical in the PDE sense for \eqref{eq:reaction_diffusion_system}.
It is interesting to note that the smoothness of $B^{\frac{d}{q}-\frac{2}{h-1}}_{q,p}$ does not depend on $(p,\kappa,\delta)$.
Moreover, we can take $q$ close to its upper bound $\frac{d(h-1)}{h+1-\reg(h-1)}$. Due to the Sobolev embedding, this gives the largest class of initial data which we can consider. In the limit for $q$ to its upper bound, this gives smoothness $1-\delta$. Taking $\delta$ close to its upper bound $\frac{h+1}{h}$, we almost reach smoothness $-\frac{1}{h}$. Of course, the latter is only possible if the regularity exponent of Assumption \ref{ass:reaction_diffusion_global} satisfies $\alpha\geq \frac{h+1}{h}-1 = \frac{1}{h}$.
As discussed below Assumption \ref{ass:reaction_diffusion_global}, if $h>1+\frac{4}{d}$, then Theorem \ref{t:reaction_diffusion_global_critical_spaces} can always be applied with $\delta>1$, $q=\frac{d(h-1)}{2}$ and any $p\geq \frac{2}{2-\delta}$. This choice results in the critical space $B^{\frac{d}{q}-\frac{2}{h-1}}_{q,p}= B^{0}_{q,p}$ with zero smoothness.
Thus, due to the elementary embedding
$$
L^{\frac{d(h-1)}{2}}\subseteq B_{\frac{d(h-1)}{2},p}^{0} \ \ \text{ if } \  p> \tfrac{d(h-1)}{2},
$$
applying the above with $\delta>1$, $q=\frac{d(h-1)}{2}$ and $p\geq \max\{q,\frac{2}{2-\delta}\}$ ensures local well-posedness of \eqref{eq:reaction_diffusion_system} for initial data in the critical Lebesgue space $L^{\frac{d(h-1)}{2}}$. See also \cite[Remark 2.8(c)]{AVreaction-local} for related discussion.
The regularity assertion \eqref{eq:regularity_u_reaction_diffusion_critical_spaces} can be further improved if the data $(b_{n,i}^j,f_i,g_i)$ have further smoothness in the $x$-variable. In this way, one can even obtain $C^\infty$-regularity in space if these coefficients are $C^\infty$ in space. The latter heavily relies on the periodic boundary conditions. For different boundary conditions compatibility conditions are required.

The proof of Theorem \ref{t:reaction_diffusion_global_critical_spaces} given in \cite{AVreaction-local} relies on Theorem \ref{thm:localwellposed}. Here, the stochastic maximal regularity is nontrivial due to the $b$-term. A detailed proof can be found in \cite{AV21_SMR_torus}. The estimates for $F(u) = f(u)$ and $G(u) = g(\cdot,u)$ are relatively straightforward extensions of what we have already seen in Subsection \ref{ss:tracecriticalF} (see \cite[Lemma 3.2]{AVreaction-local}). The time-regularity in \eqref{eq:reaction_diffusion_H_theta} follows from Theorems \ref{thm:parabreg} and \ref{thm:parabreg2}. After that, a classical bootstrap argument can be used to increase the regularity in space. Details in the case of the quasi-geostrophic equation are given in the proof of Proposition \ref{prop:locQS} below.

\begin{remark}[The need for $L^p(L^q)$-theory for rough Kraichnan model]
\label{rem:rough_kraichnan_LpLq_need}
In the case of a reaction-diffusion advected by a turbulent fluid, under a time-scale separation assumption, the fluid's influence can be modelled through transport noise as in \eqref{eq:reaction_diffusion_system}, see \cite[Subsection 1.3]{AVreaction-local}. In this context, a natural choice of $(b_n)_{n\geq 1}$ is given by the Kraichnan model \cite{K68}. In a turbulent regime, the regularity parameter $\alpha>0$ in Assumption \ref{ass:reaction_diffusion_global}\eqref{it:regularity_coefficients_reaction_diffusion}, which reflects the spatial correlation of the turbulent fluid, see e.g.\ \cite[Proposition 2.1]{agresti2023primitive}, is typically small. Therefore, the reader can readily check that if $\alpha$ is small and $h$ is large, then Assumption \ref{ass:reaction_diffusion_global}\eqref{it:reaction_diffusion_global_parameters_D} (which is equivalent to that local well-posedness of \eqref{eq:reaction_diffusion_system}) can only hold if the exponents $q $ and $p$ are sufficiently large.
\end{remark}

Although it seems an academic question, it can be important and nontrivial to show that solutions obtained from different settings coincide.
The following result is taken from \cite[Proposition 3.5]{AVreaction-local}, and is an extended version of Corollary \ref{cor:comp} where also the integrability $q$ and the growth $h$ are allowed to vary.

\begin{proposition}[Compatibility]\label{prop:coincideRD}
Suppose that Assumptions \ref{ass:reaction_diffusion_global} holds for two sets of parameters $(p,\kappa,q,h,\s)$ and $(p_0,\kappa_0,q_0,h_0,\s_0)$. Let $u_0\in L^0_{\F}(\Omega;B^{\frac{d}{q}-\frac{2}{h-1}}_{q,p}\cap B^{\frac{d}{q_0}-\frac{2}{h_0-1}}_{q_0,p_0})$. Let $(u,\sigma)$ and $(v, \tau)$ be $(p,\kappa,q,\s)$- and $(p_0,\kappa_0,q_0,\s_0)$-solutions to \eqref{eq:reaction_diffusion_system} respectively. Then $\sigma = \tau$ and $u = v$.
\end{proposition}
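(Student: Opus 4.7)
The plan is to adapt the strategy of Corollary \ref{cor:comp} to the situation where not only $(p,\a)$ but also the ground spaces $(X_0,X_1)$ and the nonlinear estimates (controlled by $\s,q,h$) differ between the two parameter choices. By the localization property Theorem \ref{thm:localwellposed}\eqref{it2:localwellposed} applied in each setting, I may reduce to the case where $u_0\in L^\infty_{\F_0}(\O;B^{\frac{d}{q}-\frac{2}{h-1}}_{q,p}\cap B^{\frac{d}{q_0}-\frac{2}{h_0-1}}_{q_0,p_0})$ and is uniformly bounded.

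The first step is to establish \emph{local coincidence}: I claim there is a stopping time $\tau_\star\in(0,\sigma\wedge\tau)$ a.s.\ such that $u=v$ on $[0,\tau_\star)\times\O$. For this I would follow the blueprint of \cite[Proposition 3.5]{AVreaction-local}. Choose smooth $\F_0$-measurable approximations $u_0^n$ converging to $u_0$ simultaneously in the two Besov spaces above (and in $L^p$ of $\O$), and let $(u^n,\sigma^n)$, $(v^n,\tau^n)$ be the corresponding maximal $(p,\a,\s,q)$- and $(p_0,\a_0,\s_0,q_0)$-solutions. Since $u_0^n$ is smooth, both $u^n$ and $v^n$ lie in $C^{\theta/2,\theta}_{\rm loc}([0,\sigma^n\wedge\tau^n)\times\T^d;\R^\ell)$ by \eqref{eq:reaction_diffusion_H_theta} (and the comment following it on propagation of spatial smoothness from smooth data), and they solve the same classical equation \eqref{eq:reaction_diffusion_system} pathwise. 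A uniqueness argument in any one common sufficiently smooth setting, such as $X_0=L^r$, $X_1=H^{2,r}$ with $r$ large (which accommodates both nonlinearity exponents $h,h_0$), then forces $u^n=v^n$ on $[0,\sigma^n\wedge\tau^n)$. Now I would pass to the limit $n\to\infty$ using Proposition \ref{prop:local_continuity_SEE} \emph{in both settings simultaneously}: estimates \eqref{eq:local_continuity_1}--\eqref{eq:local_continuity_3} yield stopping times $\tau_0^{(p,\a,\s,q)}$ and $\tau_0^{(p_0,\a_0,\s_0,q_0)}$, both positive a.s., on which $u^n\to u$ and $v^n\to v$ in the respective maximal regularity norms. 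Since $u^n=v^n$ on the intersection, taking $n\to\infty$ along a subsequence gives $u=v$ on $[0,\tau_\star)$ with $\tau_\star:=\tau_0^{(p,\a,\s,q)}\wedge\tau_0^{(p_0,\a_0,\s_0,q_0)}>0$ a.s.

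The second step is to \emph{upgrade local coincidence to the entire lifetime}, following the pattern of Corollary \ref{cor:comp}. On $[0,\tau_\star)$ the processes agree, and by \eqref{eq:reaction_diffusion_H_theta} each of $u,v$ enjoys $L^r_{\rm loc}((0,\sigma);H^{1,r})\cap C^{\theta/2,\theta}_{\rm loc}((0,\sigma)\times\T^d;\R^\ell)$ for every $r<\infty$ and $\theta\in(0,1)$. This regularity in particular embeds into $L^p_{\rm loc}([0,\sigma),w_\a;H^{2-\s_0,q_0})$ and into the analogous space for the primed parameters, so $(u,\sigma)$ qualifies as a local $(p_0,\a_0,\s_0,q_0)$-solution and $(v,\tau)$ as a local $(p,\a,\s,q)$-solution of \eqref{eq:reaction_diffusion_system}. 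Maximality of $\tau$ and $\sigma$ then gives $\sigma\leq\tau$ and $u=v$ on $[0,\sigma)$, and symmetrically $\tau\leq\sigma$. To rule out $\P(\sigma<\tau)>0$, I would copy the blow-up argument from the proof of Corollary \ref{cor:comp}: on $\{\sigma<\tau\}$ the common regularity forces $\lim_{t\uparrow\sigma} u(t)=\lim_{t\uparrow\sigma}v(t)$ to exist in $B^{\frac{d}{q}-\frac{2}{h-1}}_{q,p}$, contradicting Theorem \ref{thm:criticalblowup}\eqref{it1:criticalblowup} applied to $(u,\sigma)$.

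The main obstacle will be the first step. Within a single setting, Corollary \ref{cor:comp} only had to transport regularity \emph{upwards} in $(p,\a)$, whereas here the very notion of being a solution depends on the chosen functional framework. The delicate point is thus the approximation: one must verify that for smooth $u_0^n$ the two approximate solutions not only exist in their respective frameworks but really are the same classical process, and that the continuity estimates of Proposition \ref{prop:local_continuity_SEE} in the two frameworks control $u^n-u$ and $v^n-v$ on a \emph{common} stopping time $\tau_\star>0$. The latter reduces to choosing the auxiliary stopping times $\sigma_0,\sigma_1$ in \eqref{eq:local_continuity_3} as the minimum of the ones produced in each setting, which is permissible as long as both maximal solutions exist past $\tau_\star$; this is ensured by Theorem \ref{t:reaction_diffusion_global_critical_spaces}.
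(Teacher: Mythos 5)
Your two-step strategy — establish local coincidence on a positive stopping time, then bootstrap to the full lifetime using instantaneous regularization together with maximality or blow-up — is the same plan the paper follows (the paper simply cites [Proposition~3.5, AVreaction-local], of which Corollary~\ref{cor:comp} is the simplified version where the ground space is held fixed). Your Step 2 closely parallels the proof of Corollary~\ref{cor:comp}, and your reduction to bounded $u_0$ via localization is standard.

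The substantive gap is in Step 1. You write that, for smooth data $u_0^n$, ``a uniqueness argument in any one common sufficiently smooth setting, such as $X_0=L^r$, $X_1=H^{2,r}$ with $r$ large, then forces $u^n=v^n$.'' But invoking uniqueness in a third, common $L^r$-setting presupposes that the maximal $(p,\a,\s,q)$-solution $u^n$ \emph{is} an $L^r_{\wt{\kappa}}$-solution of the same equation, and likewise for $v^n$ — and that is precisely a compatibility statement of the type you are trying to prove. The circularity is breakable: with $u_0^n$ smooth one is in the subcritical regime in every admissible setting, so the initial regularity at $t=0$ can be upgraded (via an argument like Proposition~\ref{prop:parabreg3} or a direct application of maximal regularity for the linear problem with frozen nonlinearities, as in the second half of the proof of Theorem~\ref{thm:parabreg2}) to show $u^n$ is a local solution in the common $L^r$-setting. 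But as written, this is asserted, not argued. Spelling this out — in particular checking that the nonlinearities $F(u^n),G(u^n)$ land in the $L^r$-spaces with the right weight near $t=0$ — is the heart of the step you acknowledge as the ``main obstacle.''

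A smaller issue is the logic in Step 2. You claim ``$\sigma\leq\tau$ and \dots\ symmetrically $\tau\leq\sigma$'' from maximality, which already yields $\sigma=\tau$, and then additionally invoke the blow-up criterion to ``rule out $\P(\sigma<\tau)>0$'' — one of the two is redundant. The paper's Corollary~\ref{cor:comp} is deliberately asymmetric: it derives only $\sigma_1\leq\sigma_2$ from maximality (because there one only assumes $(u_1,\sigma_1)$ regularizes in the $p_2$-sense, not vice versa) and closes with the blow-up criterion. In the present concrete setting your symmetric argument can in fact be made to work, since \eqref{eq:reaction_diffusion_H_theta} gives $r$-independent interior regularization for \emph{both} $(u,\sigma)$ and $(v,\tau)$; to make ``symmetrically $\tau\leq\sigma$'' honest you would need to verify explicitly that $(v,\tau)$ is a local $(p,\a,\s,q)$-solution, combining its $(p_0,\a_0,\s_0,q_0)$-regularity near $t=0$ on $[0,\tau_\star)$ (where it coincides with $u$) with its interior regularity on compact subsets of $(0,\tau)$ (which embeds into the $(p,\a,\s,q)$-scale by Sobolev embedding for large $r$). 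Either argue the symmetric maximality carefully, or drop the symmetry claim and keep the blow-up step as in Corollary~\ref{cor:comp}; including both as you have done is misleading.
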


Via regularization arguments and the blow-up criteria of Theorems \ref{thm:criticalblowup}\eqref{it2:criticalblowup} and \ref{thm:subcriticalblowup} one can obtain the following.

\begin{theorem}[Blow-up criteria]\label{thm:blow_up_criteria}
Let the assumptions of Theorem \ref{t:reaction_diffusion_global_critical_spaces} be satisfied and let $(u,\sigma)$ be the $(p,\a,q,\s)$-solution to \eqref{eq:reaction_diffusion_system}.
Let $h_0\geq 1+\frac{4}{d}$. Suppose that $p_0\in (2,\infty)$, $h_0\geq h$, $\s_{0}\in (1,2)$  are such that Assumption \ref{ass:reaction_diffusion_global}$(p_0,\kappa_0,q_0,h_0,\s_0)$ holds.
Let $\zeta_0 = \frac{d}{2}(h_0-1)$. The following hold for all $0<s<T<\infty$:
\begin{enumerate}[{\rm(1)}]
\setcounter{enumi}{\value{nameOfYourChoice}}
\item\label{it:blow_up_not_sharp_L}
If $q_0 = \zeta_0$, then for all $\zeta_1>q_0$
\[\P\big(s<\sigma<T,\, \sup_{t\in [s,\sigma)}\|u(t)\|_{L^{\zeta_1}(\Tor^d;\R^{\ell})} <\infty\big)=0.
\]
\item\label{it:blow_up_sharp_L}
If $q_0>\zeta_0$, $p_0\in \big(\frac{2}{\s_0-1},\infty\big)$, $p_0\geq q_0$, and $\frac{d}{q_0}+\frac{2}{p_0}=\frac{2}{h_0-1}$, then
\[
\P\big(s<\sigma<T,\, \sup_{t\in [s,\sigma)}\|u(t)\|_{L^{\zeta_0}}+
\|u\|_{L^{p_0}(s,\sigma;L^{q_0})} <\infty\big)=0.
\]
\item\label{it:blow_up_sharp_2} If $p_0=q_0=2$, $\delta_0=1$, then
\[\P\big(s<\sigma<T,\, \sup_{t\in [s,\sigma)}\|u(t)\|_{L^{2}}+
\|u\|_{L^{2}(s,\sigma;H^{1,2})} <\infty\big)=0.
\]
\end{enumerate}
\end{theorem}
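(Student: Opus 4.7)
My strategy is to combine the instantaneous regularization \eqref{eq:reaction_diffusion_H_theta} with the compatibility Proposition \ref{prop:coincideRD} and the abstract blow-up criteria of Section \ref{sec:blowup}, applied in a suitably chosen auxiliary parameter setting. Fix $0<s<T$. By \eqref{eq:reaction_diffusion_H_theta}, $\one_{\{\sigma>s\}} u(s)$ is $L^\infty(\T^d;\R^\ell)$-valued and $\F_s$-measurable, so in particular it belongs to the Besov trace space of any auxiliary setting $(p_1,\kappa_1,q_1,h_1,\s_1)$ satisfying Assumption \ref{ass:reaction_diffusion_global}. Restarting \eqref{eq:reaction_diffusion_system} at time $s$ with this initial value yields a $(p_1,\kappa_1,q_1,\s_1)$-solution $(v,\tau)$ via Theorem \ref{t:reaction_diffusion_global_critical_spaces}, and a shifted analogue of Proposition \ref{prop:coincideRD} (identical proof with initial time $s$ in place of $0$) identifies $\sigma=\tau$ and $u=v$ on $[s,\sigma)\cap\{\sigma>s\}$. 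Hence every abstract blow-up criterion applied to $v$ transfers automatically to $u$, and for each claim it is enough to choose the auxiliary parameters so that the abstract criterion rephrases as the stated one.

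For claim \eqref{it:blow_up_sharp_L} I take $(p_1,\kappa_1,q_1,h_1,\s_1)=(p_0,\kappa_0,q_0,h_0,\s_0)$. The defining equality in Assumption \ref{ass:reaction_diffusion_global}\eqref{it:admissibleexp1} yields the identifications
\[
X_{1-\frac{1+\kappa_0}{p_0},p_0}=B^{d/q_0-2/(h_0-1)}_{q_0,p_0}\qquad\text{and}\qquad X_{1-\frac{\kappa_0}{p_0}}=H^{\frac{d}{q_0}-\frac{2}{h_0-1}+\frac{2}{p_0},\,q_0},
\]
so the Serrin hypothesis $d/q_0+2/p_0=2/(h_0-1)$ collapses the interior space to $L^{q_0}$, while the Sobolev--Triebel--Lizorkin embedding $L^{\zeta_0}\embed B^{d/q_0-2/(h_0-1)}_{q_0,p_0}$ (valid because $p_0\geq q_0>\zeta_0$, via the chain $L^{\zeta_0}=F^0_{\zeta_0,2}\embed F^{d/q_0-d/\zeta_0}_{q_0,2}\embed B^{d/q_0-d/\zeta_0}_{q_0,q_0}\embed B^{d/q_0-d/\zeta_0}_{q_0,p_0}$) controls the trace-space sup. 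Theorem \ref{thm:criticalblowup}\eqref{it2:criticalblowup} then gives the claim. Claim \eqref{it:blow_up_sharp_2} follows verbatim with $(p_1,\kappa_1,q_1,h_1,\s_1)=(2,0,2,h_0,1)$, where the identifications become $X_{1/2,2}=L^2$ and $X_{1/2}=H^{1,2}$.

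The hardest case is \eqref{it:blow_up_not_sharp_L}, in which $q_0=\zeta_0$ places the original setting on the critical manifold but the hypothesis furnishes the strictly stronger quantity $\sup_t\|u\|_{L^{\zeta_1}}$ with $\zeta_1>\zeta_0$. To exploit this extra integrability I enlarge the growth exponent: pick $h_1>h_0$ with $\zeta_1=d(h_1-1)/2$, and choose $(p_1,\kappa_1,\s_1)$ satisfying Assumption \ref{ass:reaction_diffusion_global}\eqref{it:admissibleexp1} with $(q_1,h_1)=(\zeta_1,h_1)$. Since Assumption \ref{ass:reaction_diffusion_global}\eqref{it:growth_nonlinearities} is stable under enlarging $h$, the nonlinearities of \eqref{eq:reaction_diffusion_system} still satisfy the required growth with exponent $h_1$; but because $h/(h-1)>h_1/(h_1-1)$, the criticality condition \eqref{eq:subcritical} now holds with strict inequality, i.e.\ the auxiliary setting is subcritical for our nonlinearity. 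Tuning $(\s_1,\kappa_1/p_1)$ so that $(1+\kappa_1)/p_1+\s_1/2\geq 1$ makes the trace-space smoothness $s_1:=2-\s_1-2(1+\kappa_1)/p_1\leq 0$, and the embedding $L^{\zeta_1}\embed B^0_{\zeta_1,\infty}\embed B^{s_1}_{\zeta_1,p_1}$ combined with Theorem \ref{thm:subcriticalblowup} concludes. The main technical obstacle throughout is the shifted compatibility statement, which follows by replaying the proof of Proposition \ref{prop:coincideRD} on $[s,\infty)$ using the $\F_s$-measurability of $\one_{\{\sigma>s\}} u(s)$.
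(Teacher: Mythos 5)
Your overall strategy — instantaneous regularization at time $s>0$ to get $L^\infty$-valued data, restart in an auxiliary $(p_1,\kappa_1,q_1,h_1,\s_1)$-setting, identify solutions by a shifted compatibility argument (in the spirit of Corollary \ref{cor:transfblowup}), and apply Theorems \ref{thm:criticalblowup}\eqref{it2:criticalblowup} and \ref{thm:subcriticalblowup} — is exactly what the paper sketches, and cases \eqref{it:blow_up_sharp_L} and \eqref{it:blow_up_sharp_2} go through cleanly.

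The issue is in case \eqref{it:blow_up_not_sharp_L}. Under Assumption \ref{ass:reaction_diffusion_global}\eqref{it:admissibleexp1} with $q_1=\zeta_1=d(h_1-1)/2$, the equality in the criticality constraint forces
\begin{equation*}
\frac{1+\kappa_1}{p_1}+\frac{\s_1}{2}+\frac{1}{h_1-1}=\frac{h_1}{h_1-1}, \qquad\text{i.e.}\qquad \frac{1+\kappa_1}{p_1}+\frac{\s_1}{2}=1,
\end{equation*}
so the trace smoothness is $s_1=2-\s_1-2\tfrac{1+\kappa_1}{p_1}=0$ \emph{exactly}; there is no freedom to "tune" it to be strictly negative while keeping $q_1=\zeta_1$. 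Consequently the last step of your chain $L^{\zeta_1}\embed B^0_{\zeta_1,\infty}\embed B^{s_1}_{\zeta_1,p_1}$ fails, since $B^0_{\zeta_1,\infty}\not\embed B^0_{\zeta_1,p_1}$. The fix is cheap: since nothing in the construction constrains $p_1$ from above, take $p_1\geq\max\{\zeta_1,\tfrac{2}{2-\s_1}\}$ (and $\s_1$ close enough to $1$ so that $\frac{d}{d-\s_1}<\zeta_1$ holds) and use the direct embedding $L^{\zeta_1}\embed B^0_{\zeta_1,p_1}$ for $p_1\geq\zeta_1$. Alternatively, take $q_1>\zeta_1$ so that $s_1=d/q_1-2/(h_1-1)<0$ and argue by Sobolev embedding as in your case \eqref{it:blow_up_sharp_L}. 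Your identification of the key mechanism — replacing $h_0$ by $h_1>h_0$ so that the actual nonlinearity of growth $h\leq h_0<h_1$ becomes strictly subcritical, allowing Theorem \ref{thm:subcriticalblowup} to be invoked with only a $\sup$-bound — is correct.
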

The parameters $(p,\kappa,q,\delta)$ can be different from the parameters for which we could provide energy bounds, i.e.\ $(\zeta_0, \zeta_1, p_0, q_0)$. This makes the above blow-up criteria very flexible in applications. Due to \eqref{it:blow_up_sharp_2} in some cases it can even suffice to prove $L^2$-bounds. The parameter $s$ is very useful in applications. It allows to take parabolic regularization of the solution into account in the formulation of blow-up criteria. As we have seen in Theorem \ref{thm:global_extrapolation_AC}, this allows us to obtain global well-posedness also for rough initial data.

Via the blow-up criteria, regularity and a maximum principle of \cite{Kry13}, the following result on positivity (in a distributional sense) was proved in \cite{AVreaction-local} through  $L^p(L^q)$-theory.

\begin{proposition}[Positivity]
\label{prop:positivity}
Let the assumptions of Theorem \ref{t:reaction_diffusion_global_critical_spaces} be satisfied.
Let $(u,\sigma)$ be the $(p,\a,\s,q)$-solution to \eqref{eq:reaction_diffusion_system} provided in Theorem \ref{t:reaction_diffusion_global_critical_spaces}. Suppose that
$u_0\in [0,\infty)^\ell$ a.s.,
and that for all $i\in \{1,\dots,\ell\}$, $n\geq 1$, $y=(y_{i})_{i=1}^{\ell}\in [0,\infty)^{\ell}$ and
$x\in \T^d$
\begin{align*}
f_i(x,y_1,\dots,y_{i-1},0,y_{i+1},\dots,y_{\ell})&\geq 0,\\
g_{n,i}(x,y_1,\dots,y_{i-1},0,y_{i+1},\dots,y_{\ell})&=0.
\end{align*}
Then a.s.\ for all  $x\in \Tor^d$ and $t\in [0,\sigma)$,
$u(t,x)\geq 0$.
\end{proposition}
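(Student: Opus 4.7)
The plan is to deduce positivity from a sign-preserving modification of the nonlinearities combined with a component-wise maximum principle, and then identify the modified solution with the original one by uniqueness.

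First I would introduce extended nonlinearities on the whole of $\R^\ell$ by
\begin{equation*}
\wt{f}_i(x,y) := f_i(x,y_1\vee 0,\dots,y_\ell\vee 0),\qquad \wt{g}_{n,i}(x,y) := g_{n,i}(x,y_1\vee 0,\dots,y_\ell\vee 0),
\end{equation*}
for $y=(y_1,\dots,y_\ell)\in\R^\ell$. Since $y\mapsto y\vee 0$ is $1$-Lipschitz, $\wt{f}_i$ and $\wt{g}_{n,i}$ satisfy Assumption \ref{ass:reaction_diffusion_global}\eqref{it:growth_nonlinearities} with the same exponent $h$ and the same (or larger) constants, and they trivially inherit the boundedness at $y=0$. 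Hence Theorem \ref{t:reaction_diffusion_global_critical_spaces} yields a unique $(p,\a,\s,q)$-solution $(\wt{u},\wt{\sigma})$ of the corresponding modified system starting from the same $u_0$, with the regularity \eqref{eq:regularity_u_reaction_diffusion_critical_spaces} and the instantaneous space--time regularization \eqref{eq:reaction_diffusion_H_theta}. The hypotheses on $f_i,g_{n,i}$ on the boundary of the positive cone translate into the key sign conditions $\wt{f}_i(x,y)\geq 0$ and $\wt{g}_{n,i}(x,y)=0$ whenever $y_i\leq 0$.

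Next I would fix $i\in\{1,\dots,\ell\}$ and decouple: setting $F_i(t,x):=\wt{f}_i(x,\wt{u}(t,x))$ and $G_{n,i}(t,x):=\wt{g}_{n,i}(x,\wt{u}(t,x))$, I would view these as given progressively measurable inputs, so that $\wt{u}_i$ solves the scalar linear SPDE
\begin{equation*}
\dd\wt{u}_i = \big(\nu_i\Delta\wt{u}_i+F_i\big)\,\dd t+\sum_{n\geq 1}\big[(b_{n,i}\cdot\nabla)\wt{u}_i+G_{n,i}\big]\,\dd W^n.
\end{equation*}
By the stochastic parabolicity in Assumption \ref{ass:reaction_diffusion_global}\eqref{it:ellipticity_reaction_diffusion} and the sign conditions above, $F_i\geq 0$ and $G_{n,i}=0$ a.e.\ on $\{(t,x):\wt{u}_i(t,x)\leq 0\}$. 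This is exactly the framework of Krylov's maximum principle from \cite{Kry13} for scalar SPDEs with gradient noise, which, applied on a localizing sequence $(\tau_m)_{m\geq 1}$ for $(\wt{u},\wt{\sigma})$, gives $\wt{u}_i\geq 0$ a.s.\ on $[0,\wt{\sigma})$.

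As an alternative to invoking \cite{Kry13} as a black box, one can establish the same conclusion by applying It\^o's formula to $\tfrac{1}{\zeta}\|\wt{u}_i^-(t)\|_{L^\zeta(\Tor^d)}^\zeta$ for sufficiently large $\zeta$, following the scheme of Step 1 in the proof of Theorem \ref{thm:globalAllenCahn}: one first uses \eqref{eq:reaction_diffusion_H_theta} to get the pathwise regularity needed to apply It\^o's formula, observes that $\int F_i\,|\wt{u}_i^-|^{\zeta-1}\,\dd x\leq 0$ and that all stochastic integrands involving $G_{n,i}$ vanish on $\{\wt{u}_i\leq 0\}$, and absorbs the It\^o correction arising from $(b_{n,i}\cdot\nabla)\wt{u}_i$ into the dissipative contribution $-\nu_i(\zeta-1)\int (\wt{u}_i^-)^{\zeta-2}|\nabla\wt{u}_i^-|^2\,\dd x$ thanks to Assumption \ref{ass:reaction_diffusion_global}\eqref{it:ellipticity_reaction_diffusion}. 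A stochastic Gronwall argument then yields $\|\wt{u}_i^-(t)\|_{L^\zeta(\Tor^d)}\equiv 0$. Once $\wt{u}\geq 0$ is established, one has $\wt{f}_i(x,\wt{u})=f_i(x,\wt{u})$ and $\wt{g}_{n,i}(x,\wt{u})=g_{n,i}(x,\wt{u})$, so $(\wt{u},\wt{\sigma})$ is also a solution of the original system; uniqueness in Theorem \ref{t:reaction_diffusion_global_critical_spaces} forces $u=\wt{u}$ on $[0,\sigma\wedge\wt{\sigma})$ and therefore $u\geq 0$ a.s. The main obstacle is the maximum-principle step: Krylov's theorem (or the It\^o-based variant) requires sufficient pathwise regularity on the random level set $\{\wt{u}_i\leq 0\}$, which is precisely why the instantaneous regularization \eqref{eq:reaction_diffusion_H_theta} provided by the $L^p(L^q)$-framework is essential here.
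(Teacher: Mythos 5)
Your approach matches the one the paper points to: extend $f_i, g_{n,i}$ by precomposing with componentwise $y\mapsto y\vee 0$ to obtain a modified system whose nonlinearities satisfy the clean sign conditions $\wt{f}_i(x,y)\geq 0$, $\wt{g}_{n,i}(x,y)=0$ on $\{y_i\leq 0\}$, observe the modification preserves Assumption \ref{ass:reaction_diffusion_global}\eqref{it:growth_nonlinearities}, invoke Theorem \ref{t:reaction_diffusion_global_critical_spaces} for the modified system, and deduce positivity of $\wt{u}_i$ by a componentwise maximum principle (Krylov \cite{Kry13} or the $\|\wt{u}_i^-\|_{L^\zeta}^\zeta$ It\^o/Gronwall argument, using the instantaneous regularization \eqref{eq:reaction_diffusion_H_theta} to justify the formal computation). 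That is precisely the path the paper indicates just above the statement (``blow-up criteria, regularity and a maximum principle of \cite{Kry13}'').

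Two minor points. First, your final step concludes only $u=\wt{u}\geq 0$ on $[0,\sigma\wedge\wt{\sigma})$ and then asserts the conclusion, but the proposition requires positivity on all of $[0,\sigma)$. You still need $\wt{\sigma}=\sigma$ a.s.\ Once $\wt{u}\geq 0$ on $[0,\wt{\sigma})$, $\wt{u}$ is a local $(p,\a,\s,q)$-solution of the \emph{original} system, so maximality gives $\wt{\sigma}\leq\sigma$; the reverse inequality follows from the blow-up criterion of Theorem \ref{thm:criticalblowup}\eqref{it1:criticalblowup}: on $\{\wt{\sigma}<\sigma\}$ the trajectory $t\mapsto\wt{u}(t)=u(t)$ has a limit in $X_{1-\frac{1+\a}{p},p}$ as $t\uparrow\wt{\sigma}$ because $u\in C([0,\sigma);X_{1-\frac{1+\a}{p},p})$, contradicting the blow-up criterion for $\wt{\sigma}$. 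This is the role of the ``blow-up criteria'' step the paper alludes to. Second, a small sign slip: you wrote $\int F_i|\wt{u}_i^-|^{\zeta-1}\,\dd x\leq 0$, but the integrand is nonnegative; what you want is that the corresponding drift contribution in $\dd\|\wt{u}_i^-\|_{L^\zeta}^\zeta$ carries a minus sign and is therefore $\leq 0$.
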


\begin{example}
In the case of the Allen--Cahn equation $\ell=1$, and $f(y) = -y^3+y$. The latter satisfies the above condition. Therefore, to obtain the positivity of the solution of the stochastic Allen--Cahn equation with periodic boundary conditions, we need to consider a $g_n$ such that $g_n(x,0) = 0$. Besides some regularity, no conditions on the coefficients $b_{n}^j$ are needed.
\end{example}

\begin{remark}\label{rem:extreaction}
In \cite{AVreaction-local}  the following more general setting is considered. All coefficients are allowed to depend on $(t,\omega)$ as well. The terms $\nu_i \Delta u_i$ are replaced by a more general operator in divergence form to
model inhomogeneous conductivity and may also take into account the It\^o correction in the case of Stratonovich noise. An additional conservative term $\div [\Psi(u)]\, \dd t$ is added to the equation. The coefficients $a$ and $b$ can be assumed to have Sobolev regularity $H^{\alpha,r}(\Tor^d;\ell^2)$ for some $r\geq 2$ such that $\alpha-\frac{d}{r}>0$.
\end{remark}

\subsubsection{Global well-posedness for coercive systems}\label{ss:coercivereaction}

In \cite{AVreaction-global} we introduced a so-called $L^\zeta$-coercivity estimate, which gives sufficient conditions for the global existence of the solution in Theorem \ref{t:reaction_diffusion_global_critical_spaces}. If $\zeta=2$ and $\alpha_i=1$, the condition reduces to the classical coercivity estimates such as the one in the variational setting in Theorem \ref{thm:varglobal}.

In simplified form, the assumption is as follows.
\begin{assumption}[$L^\zeta$-coercivity]
\label{ass:dissipation_general}
Let $\zeta\in [ 2,\infty)$. We say that Assumption \ref{ass:dissipation_general}$(\zeta)$ holds if there exist constants $\theta,M,C,\alpha_1,\dots,\alpha_{\ell}>0$ such that a.e.\ in $[0,\infty)\times \Omega$ for all $v=(v_i)_{i=1}^{\ell}\in C^1(\mathbb{T}^d;\R^{\ell})$,
\begin{align*}
\textstyle \sum_{i=1}^{\ell} \alpha_i\int_{\T^d} |v_i|^{\zeta-2} \Big(\nu_i \nabla v_i\cdot \nabla v_i   - \frac{v_i f_i(\cdot, v)}{\zeta-1} -\frac12 \sum_{n\geq 1} \big[(b_{n,i} \cdot \nabla) v + g_{n,i}(\cdot, v) \big]^2 \Big) \, \dd x &
\\
\textstyle  \geq  \theta
\sum_{i=1}^{\ell}  \int_{\T^d} |v_i |^{\zeta-2}|\nabla v_i |^2 \,\dd x -  M
\sum_{i=1}^{\ell} \int_{\T^d}|v_i |^{\zeta} \, \dd x -C&.
\end{align*}
\end{assumption}
Moreover, if a local solution $u$ is known to take values in a subset $S$ of $\R^{\ell}$, then it is enough to consider $v$ which takes values in $S$ in the above.

To get a better intuition of what Assumption \ref{ass:dissipation_general} means, it is helpful to state a sufficient condition for it in the special case where $b=0$. More general conditions can be found in
\cite[Lemmas 3.3 and 3.5]{AVreaction-global}.

\begin{lemma}[Pointwise $L^\zeta$--coercivity if $b=0$]
\label{lem:dissipationI}
Let $\zeta\in [ 2,\infty)$. Suppose $b=0$. If there exist constants $\alpha_1, \ldots, \alpha_{\ell}, M>0$ such that for all $y\in \R^\ell$
\[  \textstyle  \sum_{i=1}^\ell \alpha_i |y_i|^{\zeta-2}  \Big[ \frac{y_i f_i(\cdot, y)}{\zeta-1} +\frac12 \|(g_{n,i}(\cdot, y))_{n\geq 1}\|^2_{\ell^2} \Big] \leq M(|y|^\zeta+1),\]
then Assumption \ref{ass:dissipation_general}$(\zeta)$ holds.
\end{lemma}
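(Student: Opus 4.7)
The plan is to reduce Assumption \ref{ass:dissipation_general}$(\zeta)$, which is an integrated estimate, directly to the pointwise hypothesis by plugging in $y=v(x)$ and integrating. Since $b=0$, the bracket on the left-hand side of Assumption \ref{ass:dissipation_general}$(\zeta)$ simplifies to
\[
\nu_i |\nabla v_i|^2 - \frac{v_i f_i(\cdot,v)}{\zeta-1} - \tfrac{1}{2}\sum_{n\geq 1} g_{n,i}(\cdot,v)^2,
\]
so no cross terms between $\nabla v$ and $g$ arise, which is what would normally complicate matters.

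First I would isolate the gradient contribution: setting $\theta := \min_{1\leq i \leq \ell}\alpha_i \nu_i >0$, the pointwise identity $\alpha_i \nu_i \geq \theta$ gives
\[
\sum_{i=1}^{\ell}\alpha_i \int_{\T^d} \nu_i |v_i|^{\zeta-2}|\nabla v_i|^2\,\dd x \geq \theta \sum_{i=1}^{\ell}\int_{\T^d}|v_i|^{\zeta-2}|\nabla v_i|^2\,\dd x,
\]
which is exactly the term on the right-hand side of Assumption \ref{ass:dissipation_general}$(\zeta)$ with the required constant.

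Next I would handle the reaction and diffusion terms together. Applying the pointwise hypothesis with $y=v(x)\in\R^{\ell}$ and integrating over $\T^d$ yields
\[
\sum_{i=1}^{\ell}\alpha_i \int_{\T^d} |v_i|^{\zeta-2}\Big[\frac{v_i f_i(\cdot,v)}{\zeta-1} + \tfrac{1}{2}\|(g_{n,i}(\cdot,v))_{n\geq 1}\|_{\ell^2}^2\Big]\dd x \leq M\int_{\T^d}(|v|^{\zeta}+1)\,\dd x.
\]
Moving this to the other side produces the term $-M\int_{\T^d}|v|^{\zeta}\dd x - M|\T^d|$. Finally, using the elementary equivalence $|v|^{\zeta}=(\sum_i v_i^2)^{\zeta/2}\leq C_{\ell,\zeta}\sum_{i=1}^{\ell}|v_i|^{\zeta}$, we obtain the required bound with constants $\wt{M}:=M C_{\ell,\zeta}$ and $\wt{C}:=M|\T^d|$. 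There is no real obstacle here: the lemma essentially says that for $b=0$, the abstract coercivity estimate is nothing more than the pointwise dissipation condition integrated in $x$. The only subtlety is bookkeeping the equivalence between $|v|^{\zeta}$ and $\sum_i|v_i|^{\zeta}$, which is harmless since $\ell$ is fixed.
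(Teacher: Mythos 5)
Your proof is correct and is the expected direct verification: with $b=0$ there are no cross terms between $\nabla v$ and $g$, so the integrated estimate reduces to the pointwise hypothesis plus the trivial bound $\alpha_i\nu_i\geq\theta:=\min_j\alpha_j\nu_j>0$ on the gradient part, and the final bookkeeping $|v|^\zeta\leq C_{\ell,\zeta}\sum_i|v_i|^\zeta$ to match the stated form of Assumption \ref{ass:dissipation_general}$(\zeta)$. The paper itself omits a proof of this lemma and points to \cite[Lemmas 3.3 and 3.5]{AVreaction-global} for more general statements; your argument is exactly the straightforward reasoning the omission implicitly assumes.
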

If $\ell=1$, the latter is equivalent to $\frac{y f(\cdot, y)}{\zeta-1} +\frac12 \|(g_{n}(\cdot, y))_{n\geq 1}\|^2_{\ell^2}\leq M(y^2+1)$. For instance, if $f(y) = -y^3$ this implies $g$ can grow at most quadratically, and the condition becomes more restrictive if $\zeta$ increases. Similar results can be checked for other classes of nonlinearities.

If Assumption \ref{ass:dissipation_general}$(\zeta)$ holds with sufficiently large $\zeta$, then one can obtain global existence using the following method as done in \cite[Theorems 3.2]{AVreaction-global}:
\begin{itemize}
\item apply It\^o's formula to rewrite $\|u_i\|_{L^{\zeta}(\T^d)}^{\zeta}$;
\item calculate $\sum_{i}\alpha_i \|u_i\|_{L^{\zeta}(\T^d)}^{\zeta}$;
\item estimate the latter through Assumption \ref{ass:dissipation_general};
\item apply the stochastic Gronwall Lemma \ref{lem:Gronwall} to find an a priori estimate for $u$;
\item apply the blow-up criteria of Theorem \ref{thm:blow_up_criteria} \eqref{it:blow_up_sharp_L} to get $\sigma=\infty$.
\end{itemize}

We summarize the result one obtains in the next theorem.

\begin{theorem}[Global well-posedness]\label{thm:reactioncoerc}
Suppose that Assumptions \ref{ass:reaction_diffusion_global}$(p,\kappa,q,h,\s)$  and \ref{ass:dissipation_general}$(\zeta)$ hold with $\zeta \geq \frac{d(h-1)}{2}\vee 2$.
Then for any $u_0\in  L^0_{\F_0}(\O;B^{\frac{d}{q}-\frac{2}{h-1}}_{q,p})$, there exists a unique global \emph{$(p,\a,\s,q)$-solution} $u:[0,\infty)\times\Omega\to \R^{\ell}$.
Moreover, the following additional assertions hold:
\begin{enumerate}[{\rm (1)}]
\item\label{it1:reactioncoerc} The regularity assertions \eqref{eq:regularity_u_reaction_diffusion_critical_spaces} and \eqref{eq:reaction_diffusion_H_theta} hold with $\sigma$ replaced by $\infty$.
\item\label{it2:reactioncoerc} For all $\lambda\in (0,1)$, there exist $N_{0,\lambda}>0$ such that for any $T<\infty$, $i\in \{1,\dots,\ell\}$,
\begin{align}
\label{eq:aprioribounds2}
\textstyle \E \sup_{t\in [0,T]} \|u_i(t)\|_{L^{\zeta}}^{\zeta \lambda} + \E \Big|\int_{0}^{T}\int_{\Tor^d}\one_{\Gamma}  |u_i|^{\zeta-2} |\nabla u_i|^{2}\,\dd x\,\dd t\Big|^{\lambda}
& \leq \textstyle N_{0,\lambda}\Big(1+\E\|u_0\|_{L^{\zeta}}^{\zeta \lambda}\Big).
\end{align}
\item\label{it3:reactioncoerc} Suppose that $p\geq q\geq \frac{d(h-1)}{2}$. The solution depends continuously on the initial data in the following sense:
    If $u_0^n\to u_0$ in $B^{\frac{d}{q}-\frac{2}{h-1}}_{q,p}$ in probability, then for all $T<\infty$, $u^{(n)}\to u$ in $C([0,T];B^{\frac{d}{q}-\frac{2}{h-1}}_{q,p})$ in probability.
\end{enumerate}
\end{theorem}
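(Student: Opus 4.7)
The plan is to combine the local well-posedness from Theorem~\ref{t:reaction_diffusion_global_critical_spaces} with an energy a priori bound obtained via It\^o's formula under the $L^\zeta$-coercivity Assumption~\ref{ass:dissipation_general}$(\zeta)$, and then close the argument with the blow-up criterion of Theorem~\ref{thm:blow_up_criteria}. Let $(u,\sigma)$ denote the $(p,\a,\s,q)$-solution of Theorem~\ref{t:reaction_diffusion_global_critical_spaces}, so that \eqref{eq:regularity_u_reaction_diffusion_critical_spaces} and the instantaneous regularization \eqref{eq:reaction_diffusion_H_theta} hold. Part \eqref{it1:reactioncoerc} is essentially contained in Theorem~\ref{t:reaction_diffusion_global_critical_spaces} once $\sigma=\infty$ is established.

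For the energy bound of part \eqref{it2:reactioncoerc}, fix $s>0$ and $T<\infty$ and, using \eqref{eq:reaction_diffusion_H_theta}, introduce a localizing sequence of stopping times $(\tau_n)$ with $\tau_n\uparrow \sigma\wedge T$ on which $u$ lies in $C([s,\tau_n];H^{1,r})$ for $r$ large enough to legitimately apply It\^o's formula in $L^\zeta(\T^d)$ to the function $v\mapsto \sum_i \alpha_i \|v_i\|_{L^\zeta}^\zeta$ (as in Step~1 of the proof of Theorem~\ref{thm:globalAllenCahn}, localizing via a linearized $L^{\tilde p}$-problem on $[s,\tau_n]$ if one wishes to avoid formal differentiation). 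The It\^o expansion produces precisely the integrand appearing in Assumption~\ref{ass:dissipation_general}$(\zeta)$, so invoking that hypothesis yields, for some $\theta,M,C>0$ and a continuous local martingale $\calM$,
\[
\textstyle \sum_i \alpha_i\|u_i(t)\|_{L^\zeta}^\zeta + \theta'\sum_i\int_s^t\!\!\int_{\T^d}|u_i|^{\zeta-2}|\nabla u_i|^2\,\dd x\,\dd r \leq \sum_i \alpha_i\|u_i(s)\|_{L^\zeta}^\zeta + M'\!\!\int_s^t\!\!\sum_i\|u_i\|_{L^\zeta}^\zeta\,\dd r + C't+\calM_t.
\]
The stochastic Gronwall Lemma~\ref{lem:Gronwall} then gives, for each $\lambda\in(0,1)$, an a priori bound on $\E\sup_{t\in[s,\sigma\wedge T)}\|u_i(t)\|_{L^\zeta}^{\zeta\lambda}$ and on the corresponding gradient term in terms of $\E\|u(s)\|_{L^\zeta}^{\zeta\lambda}$. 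Letting $s\downarrow 0$, the continuity $u\in C([0,\sigma);B^{d/q-2/(h-1)}_{q,p})$ together with the embedding $B^{d/q-2/(h-1)}_{q,p}\hookrightarrow L^{d(h-1)/2}\subseteq L^\zeta$ (which holds because $\zeta\geq d(h-1)/2\vee 2$ and, after a localization argument as in Remark~\ref{rem:unifombddblow}, we can assume $u_0\in L^\infty(\O;L^\zeta)$) yields \eqref{eq:aprioribounds2}.

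Global existence then follows by inserting the a priori bound into the appropriate blow-up criterion in Theorem~\ref{thm:blow_up_criteria}. Since Assumption~\ref{ass:reaction_diffusion_global} holds and $\zeta\geq\frac{d(h-1)}{2}\vee 2$, we choose $h_0\geq 1+4/d$ and parameters $(p_0,\kappa_0,q_0,\s_0)$ such that $\zeta_0=\frac{d(h_0-1)}{2}\leq \zeta$; the a priori bound controls $\|u(t)\|_{L^{\zeta_0}}$ on $[s,\sigma\wedge T)$ and, together with the gradient term, also controls $\|u\|_{L^{p_0}(s,\sigma;L^{q_0})}$ by interpolation and Sobolev embedding applied to $|u|^{\zeta/2}\in L^2(s,\sigma;H^1)$. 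Applying Theorem~\ref{thm:blow_up_criteria}\eqref{it:blow_up_not_sharp_L} or \eqref{it:blow_up_sharp_L} with this choice and letting $s\downarrow 0$, $T\uparrow\infty$, gives $\P(\sigma<\infty)=0$ and hence \eqref{it1:reactioncoerc}.

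For the continuous dependence in \eqref{it3:reactioncoerc}, the starting point is the local continuity in probability provided by Proposition~\ref{prop:local_continuity_SEE} (applied to the $(p,\a,\s,q)$-solution with $p\geq q\geq d(h-1)/2$). The standard strategy is to iterate this local estimate on a deterministic grid of $[0,T]$: the uniform energy bound \eqref{eq:aprioribounds2} applied to both $u$ and the approximating $u^{(n)}$ ensures tightness of the norms on each subinterval, so one can restart the continuity estimate and propagate closeness from $0$ to $T$. The convergence in probability in the path space $C([0,T];B^{d/q-2/(h-1)}_{q,p})$ then follows; I would adapt the argument of \cite[Proposition~2.9]{AVreaction-local} for this step. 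The principal obstacle in the whole proof is the justification of It\^o's formula in $L^\zeta$ for a process whose initial regularity is only $B^{d/q-2/(h-1)}_{q,p}$; this is circumvented by the instantaneous regularization \eqref{eq:reaction_diffusion_H_theta} and a truncation/localization-through-a-linear-auxiliary-problem argument as already illustrated in the Allen--Cahn proof.
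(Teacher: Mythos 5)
Your proposal reproduces the five-step scheme the paper itself outlines immediately above the theorem (It\^o's formula for the $L^\zeta$-norm, summation against the weights $\alpha_i$, coercivity Assumption~\ref{ass:dissipation_general}$(\zeta)$, stochastic Gronwall, and the blow-up criterion of Theorem~\ref{thm:blow_up_criteria}); the paper gives only this sketch and defers the actual proof to \cite[Theorem 3.2]{AVreaction-global}, explicitly stating that parts \eqref{it2:reactioncoerc} and \eqref{it3:reactioncoerc} ``require more delicate arguments which we cannot explain here''. So your overall route is the intended one, but a few points deserve comment.

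First, in the borderline case $\zeta=\frac{d(h-1)}{2}\vee 2$ one must invoke Theorem~\ref{thm:blow_up_criteria}\eqref{it:blow_up_sharp_L} and not \eqref{it:blow_up_not_sharp_L}: criterion \eqref{it:blow_up_not_sharp_L} requires $\sup_{t}\|u(t)\|_{L^{\zeta_1}}<\infty$ for some $\zeta_1>q_0=\zeta_0$, and since $\zeta_0=\tfrac{d}{2}(h_0-1)\geq \tfrac{d(h-1)}{2}\vee 2=\zeta$ for any admissible $h_0$, your sup-bound in $L^\zeta$ never reaches $\zeta_1>\zeta_0$. Only \eqref{it:blow_up_sharp_L} lets you exploit the gradient term, via the observation that $|u_i|^{\zeta-2}|\nabla u_i|^2\eqsim |\nabla(|u_i|^{\zeta/2})|^2$ and Gagliardo--Nirenberg/Sobolev for $|u_i|^{\zeta/2}\in L^\infty(L^2)\cap L^2(H^1)$, to produce the required $L^{p_0}(L^{q_0})$ control with $\frac{d}{q_0}+\frac{2}{p_0}=\frac{2}{h_0-1}$. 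Your hedging ``(1) or (2)'' is therefore imprecise; the paper singles out \eqref{it:blow_up_sharp_L} for good reason.

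Second, your $s\downarrow 0$ step contains a slip: on the bounded domain $\T^d$ the inclusion between Lebesgue spaces runs the other way, $L^\zeta\hookrightarrow L^{d(h-1)/2}$, and moreover $B^{d/q-2/(h-1)}_{q,p}$ does not embed into $L^\zeta$ in general (the smoothness $d/q-2/(h-1)$ may be $\leq 0$, and even at zero smoothness the embedding into $L^{d(h-1)/2}$ needs a condition on the fine index $p$). For global existence this is harmless: one runs the energy estimate from $s>0$, where instantaneous regularization gives $u(s)\in L^\zeta$ on $\{\sigma>s\}$, applies the blow-up criterion with fixed $s$, and only then lets $s\downarrow 0$, $T\uparrow\infty$; no embedding of the critical Besov space into $L^\zeta$ is needed. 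The a priori bound \eqref{eq:aprioribounds2}, however, has $\E\|u_0\|_{L^\zeta}^{\zeta\lambda}$ on the right, so it is only nontrivial when $u_0\in L^\zeta$, and pushing $s\downarrow 0$ in that bound is precisely the ``more delicate'' point the survey does not prove. The same applies to your sketch of \eqref{it3:reactioncoerc}, which correctly identifies Proposition~\ref{prop:local_continuity_SEE} as the starting point but otherwise gestures at the iteration; the paper indeed refers this to \cite[Theorem 5.2 and Corollary 5.4]{AVreaction-global}.
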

Parts \eqref{it2:reactioncoerc} and \eqref{it3:reactioncoerc} follow from \cite[Theorem 5.2 and Corollary 5.4]{AVreaction-global}, and require more delicate arguments which we cannot explain here.

Note that the condition $q\geq \frac{d(h-1)}{2}$ in \eqref{it3:reactioncoerc} implies that $\frac{d}{q}-\frac{2}{h-1}\leq 0$. Thus, the continuous dependency is formulated in a space of distributions. An exception is $\zeta=p=q=2$ if $d(h-1)\leq 4$, in which case the result is formulated for the space $L^2(\T^d;\R^\ell)$. However, in the latter case, it is better to use Theorem \ref{thm:varglobal} since it gives additional information.

The above result for instance applies to the stochastic Allen--Cahn equation in any dimension and with gradient noise, but also to many systems.
To make a comparison with Theorem \ref{thm:globalAllenCahn} we present some of the details in the case $b=0$ and $d=3$.

\begin{example}[Allen--Cahn with periodic boundary conditions for $d=3$ for $b=0$]\label{ex:AllenCahnperiodicLpLq}
Let $b=0$. For the Allen--Cahn equation one has $\ell=1$ and $f(y) = -y^3+y$ and $h=3$. Let $\zeta = q=p =3$, $\delta \in (1, \frac43)$, and $\kappa=2-\frac{3\delta}{2}$.  Suppose that $g(\cdot, 0)\in L^\infty(\T^d)$,
\begin{align*}
\|g(x,y)-g(x,y')\|_{\ell^2} \leq L(1+|y|+|y'|)|y-y'| \  \ \text{and}\  \
\|(g_{n}(\cdot, y))_{n\geq 1}\|^2_{\ell^2}\leq M(y^2+1) + y^4.
\end{align*}
It follows that Assumption \ref{ass:reaction_diffusion_global} holds. Moreover, from Lemma \ref{lem:dissipationI} we see that Assumption \ref{ass:dissipation_general}$(\zeta)$ holds. Therefore, the conditions of Theorem \ref{thm:reactioncoerc} are satisfied, and we obtain global well-posedness for any $u_0\in L^0_{\F_0}(\O;B^{0}_{3,3}(\T^3))$. In particular, $u_0\in L^0_{\F_0}(\O;L^3(\T^3))$ is allowed. Taking a larger $q$ and $\delta$, one can even go to spaces of initial data with negative smoothness down to $-1/3$.

The above condition on $g$ is slightly weaker than what we have encountered in Theorem \ref{thm:globalAllenCahn} in the case of Dirichlet boundary conditions. The difference is that with the help of the more powerful general theory presented here, we are allowed to consider a critical setting.
\end{example}

\subsubsection{Lotka--Volterra}\label{ss:LV}

As an example, we present a stochastic predator-prey model with diffusivity. For the deterministic theory, the reader is referred to \cite{CL84,GL94}. Stochastic perturbations can model uncertainties in the determination of the external forces and/or parameters, see e.g.\ \cite{CE89,NY21} and the references therein. Moreover, transport noise can model the `small-scale' effect of migration phenomena of the species, but for simplicity of the presentation, we leave it out of the discussion here. As a special case, our model below also includes the SIR model (susceptible-infected-removed). In the periodic stochastic setting global well-posedness was obtained for these models for the first time in \cite[Section 4.2]{AVreaction-global}.

Predator-prey models typically do not satisfy the coercivity estimates of Theorems \ref{thm:varglobal} and \ref{thm:reactioncoerc}. This is due to the system structure of the equation and because the diffusion equation for the ``predator`` is typically not dissipative. Fortunately, one can prove some control using the stochastic Gronwall Lemma \ref{lem:Gronwall} as soon as one can provide an a priori bound for the ``prey''.
Finally, we mention that since the positivity of the solution is required in predator-prey models, the noise has to be multiplicative.

Consider the following system on the $d$-dimensional torus $\T^d$ with $d\in \{1, 2, 3, 4\}$:
\begin{equation}
\label{eq:PPmodel}
\left\{
\begin{aligned}
\dd u -\nu \Delta u \, \dd  t&= \textstyle\big[ \lambda_1 u-\chi_{1,1}u^2 -\chi_{1,2} uv \big]\, \dd t+ \sum_{n\geq 1}  g_{n,1}(\cdot,u,v) \,\dd W^n,
\\ \dd v -\mu \Delta v \, \dd  t&= \textstyle  \big[\lambda_2 v-\chi_{2,2}v^2 +\chi_{2,1} uv \big]\, \dd t+ \sum_{n\geq 1}  g_{n,2}(\cdot,u,v) \,\dd W^n,
\end{aligned}\right.
\end{equation}
and initial data $u(0,\cdot)=u_0$ and $v(0,\cdot)=v_0$ .
The unknowns $u,v:[0,\infty)\times \O\times \T^d \to \R$ model the population of the prey and predator, respectively.

The main assumptions on $g_1 = (g_{n,1})_{n\geq 1}$ and $g_2 = (g_{n,2})_{n\geq 1}$ are as follows.
\begin{assumption}
\label{ass:lotka_volterra}
Let $1\leq d\leq 4$ and $\lambda_i\in \R, \chi_{i,j}\geq 0$ for $i,j\in \{1, 2\}$.    Let $g_1, g_2:\R^2\to \ell^2$ and suppose that there exists an $M>0$ such that for all $x\in \T^d$, $y,y',z,z'\geq 0$,
\begin{align}
\label{eq:PPpos} g_{1}(x,y,0)& =g_{2}(x,0,z)=0
\\ \label{eq:PPloc} \textstyle \sum_{i=1}^2\|g_i(x,y,z)-g_i(x,y',z')\|_{\ell^2} &\leq C(1+|y|^{1/2}+|y'|)^{1/2}|y-y'|,
\\ \label{eq:PPgrowth1} \frac12\|g_1(x,y,z)\|_{\ell^2}^2&\leq  C \big(1+y^2 \big)+ \chi_{1,1} y^3+\chi_{1,2} y^2 z,
\\
\label{eq:PPgrowth2}
\frac12\|g_2(x,y,z)\|_{\ell^2}^2&\leq C \big[1 +  (1+ y)z^{2} + y^2 z+ y^3 \big] +\chi_{2,2} z^3.
\end{align}
\end{assumption}
The condition \eqref{eq:PPpos} is to ensure the positivity of $u$ and $v$. Condition \eqref{eq:PPloc} is a local Lipschitz condition. The conditions \eqref{eq:PPgrowth1} and \eqref{eq:PPgrowth2} are certain growth conditions of which we do not know the optimality. The above problem  fits into the setting of Theorem \ref{t:reaction_diffusion_global_critical_spaces} with $p=q=2$, $\delta=1$, $h=2$, and $\kappa=0$. Thus, local well-posedness and regularity are immediate. Note that to include $d=1$ one can add a dummy variable.

Although the above model does not satisfy the usual coercivity conditions, we can still obtain global well-posedness.
\begin{theorem}[Global well-posedness -- Lotka--Volterra]
\label{t:Lotka_Volterra}
Suppose that Assumption \ref{ass:lotka_volterra} holds.
Then for every $u_{0},v_0\in L^0_{\F_0}(\Omega;L^2(\T^d))$ with $u_{0},v_0\geq 0$ a.s., there exists a (unique) global solution $(u,v)\in L^2_{\rm loc}([0,\infty);H^1(\T^d;\R^2))\cap C([0,\infty);L^2(\T^d;\R^2))$ a.s. of  \eqref{eq:PPmodel} and $u, v\geq 0$ a.s.

Moreover, $u$ and $v$ have the following regularity for all $r\in [1, \infty)$ and $\theta\in (0,1)$
\begin{align*}
u,v&\in L^r_{\rm loc}((0,\infty);H^{1,r}(\T^d))
 \ \text{and} \   u,v\in C^{\theta/2,\theta}_{\rm loc}((0,\infty)\times \Tor^d).
\end{align*}
Furthermore, if $u_0^n,v_0^n \in L^0_{\F_0}(\Omega;L^2(\T^d))$ with $u_{0}^n,v_0^n\geq 0$ a.s.\ are such that $u_0^n\to u_0$ and $v_0^n\to v_0$ in $L^2(\T^d)$ in probability, then $u^n\to u$ and $v^n\to v$ in $C([0,T];L^2(\T^d;\R^2))\cap L^2(0,T;H^1(\T^d;\R^2))$ in probability for every $T<\infty$, where $(u^n, v^n)$ are the solutions to \eqref{eq:PPmodel} with initial data $(u_0^n,v_0^n)$.
\end{theorem}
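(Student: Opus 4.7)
The plan is to combine the local well-posedness of Theorem \ref{t:reaction_diffusion_global_critical_spaces} with the blow-up criterion of Theorem \ref{thm:blow_up_criteria}, exploiting the one-sided coercivity of the prey equation to bootstrap bounds on $v$ from bounds on $u$. Concretely, I would work with $p=q=2$, $\kappa=0$, $\delta=1$, $h=2$, which falls into case \eqref{it:admissibleexp2} of Assumption \ref{ass:reaction_diffusion_global}\eqref{it:reaction_diffusion_global_parameters_D} thanks to $d\leq 4$; Assumption \ref{ass:reaction_diffusion_global}\eqref{it:growth_nonlinearities} is then immediate since the $f_i$ are quadratic and \eqref{eq:PPloc} matches the required $(h-1)/2=1/2$ growth of $G$. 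Theorem \ref{t:reaction_diffusion_global_critical_spaces} yields the maximal $(2,0,1,2)$-solution $(u,v,\sigma)$ with the instantaneous regularization \eqref{eq:reaction_diffusion_H_theta}, and Proposition \ref{prop:positivity}, whose hypotheses follow from \eqref{eq:PPpos} and the explicit sign structure of $f_1,f_2$, gives $u,v\geq 0$ a.s.

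To obtain the a priori bound I would first apply It\^o's formula to $\|u(t)\|_{L^2}^2$. The low regularity at the level of the trace space is handled by the approximation procedure of Step~1 in the proof of Theorem \ref{thm:globalAllenCahn}: on a localizing sequence $(\sigma_n)$ and an $\F_s$-measurable set, use Theorem \ref{thm:parabreg2} to obtain sufficient regularity of $u,v$ for $t\geq s>0$, then identify $(u,v)$ with the solution of the corresponding linear problem in a smoother $L^{\widetilde p}$-setting via stochastic maximal regularity. The key observation is that, thanks to the precise form of \eqref{eq:PPgrowth1}, the cubic contributions cancel exactly: the drift produces $-2\chi_{1,1}u^{3}-2\chi_{1,2}u^{2}v$ while the It\^o correction $\|g_1\|_{\ell^2}^2$ is bounded by $2C(1+u^2)+2\chi_{1,1}u^{3}+2\chi_{1,2}u^{2}v$, so that after integration over $\T^d$ one is left with the linear inequality
\begin{equation*}
d\|u\|_{L^2}^2 + 2\nu\|\nabla u\|_{L^2}^2\,dt \;\leq\; C_1(1+\|u\|_{L^2}^2)\,dt + dM^u_t.
\end{equation*}
The stochastic Gronwall Lemma \ref{lem:Gronwall} then gives $u\in L^{\infty}([0,\sigma\wedge T);L^2)\cap L^2(0,\sigma\wedge T; H^1)$ a.s.

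The same strategy applied to $\|v(t)\|_{L^2}^2$ with \eqref{eq:PPgrowth2} cancels the $v^3$ terms but leaves the cross contributions $\int uv^2\,dx$, $\int u^2v\,dx$ and $\int u^3\,dx$. Interpolating the bound on $u$ via Gagliardo--Nirenberg places $u$ in $L^{2+4/d}_{t,x}$ for $d\leq 4$, which controls $\int\!\!\int u^3\,dx\,dt$. The remaining two terms are split by Young's inequality and a Gagliardo--Nirenberg bound for $\|v\|_{L^4}$ (or $\|v\|_{L^3}$ when $d=4$) into $\varepsilon\|\nabla v\|_{L^2}^2 + C_{\varepsilon}A(t)(1+\|v\|_{L^2}^2)+C_{\varepsilon}B(t)$ with $A,B\in L^1_{\rm loc}([0,\sigma))$ a.s.; absorbing the gradient contribution into $2\mu\|\nabla v\|_{L^2}^2$ and applying Lemma \ref{lem:Gronwall} once more yields $v\in L^{\infty}([0,\sigma\wedge T);L^2)\cap L^2(0,\sigma\wedge T;H^1)$ a.s.

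With these two bounds in hand, Theorem \ref{thm:blow_up_criteria}\eqref{it:blow_up_sharp_2} (applied with $h_0=1+4/d\geq h$, valid when $d\in\{3,4\}$) immediately excludes explosion; the remaining cases $d\in\{1,2\}$ are handled by one further bootstrap step, producing a Serrin-type bound $u,v\in L^{p_0}(L^{q_0})$ via Gagliardo--Nirenberg so that Theorem \ref{thm:blow_up_criteria}\eqref{it:blow_up_sharp_L} applies. Thus $\sigma=\infty$ a.s. The claimed pathwise regularity is then exactly \eqref{eq:reaction_diffusion_H_theta} of Theorem \ref{t:reaction_diffusion_global_critical_spaces}, and continuous dependence on initial data follows by combining the local continuity Proposition \ref{prop:local_continuity_SEE} with the uniform-in-$(u_0,v_0)$ form of the $L^2$-bounds above and a gluing argument in the spirit of Corollary \ref{cor:comp}. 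The main obstacle is closing the $v$-estimate: the positive feedback $+\chi_{2,1}uv$ in the predator drift removes any direct sink for $v$, forcing one to absorb the cross terms entirely into the parabolic diffusion and into the previously established integrability of $u$, which is precisely where the asymmetric structure of Lotka--Volterra places the problem outside the symmetric coercive framework of Theorem \ref{thm:reactioncoerc}.
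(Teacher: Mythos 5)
Your proposal follows the paper's proof almost step for step: local well-posedness and instantaneous regularization from Theorem~\ref{t:reaction_diffusion_global_critical_spaces} in the setting $p=q=2$, $\kappa=0$, $\delta=1$, $h=2$; positivity from Proposition~\ref{prop:positivity}; It\^o on $\|u\|_{L^2}^2$ with the cancellation of the cubic terms between drift and It\^o correction and stochastic Gronwall for the prey; It\^o on $\|v\|_{L^2}^2$ with the cross terms controlled by interpolation/Gagliardo--Nirenberg using the already-established bound on $u$ and absorbed into the diffusion term, then stochastic Gronwall again for the predator; and finally a blow-up criterion. This is exactly the route the paper takes.

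Two places where your plan is more involved than necessary and one small inaccuracy are worth noting. First, the paper never needs the approximation-of-$v_n$ argument borrowed from Step~1 of Theorem~\ref{thm:globalAllenCahn}: in the purely $L^2$ variational setting, the solution already lies in $C([0,\sigma_n];L^2)\cap L^2(0,\sigma_n;H^1)$ a.s.\ with drift in $L^2(0,\sigma_n;H^{-1})$ and diffusion in $L^2(0,\sigma_n;\calL_2(\ell^2,L^2))$, so Lemma~\ref{lem:ItoHilbert} applies directly after stopping; moreover, Theorem~\ref{thm:parabreg2}, which you invoke, requires $p>2$ and so is unavailable here (the relevant $p=2$ regularization result is Proposition~\ref{prop:parabreg3}). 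Second, to close the argument the paper simply applies the variational blow-up criterion of Theorem~\ref{thm:varloc}, which holds uniformly for $d\in\{1,\dots,4\}$, rather than Theorem~\ref{thm:blow_up_criteria}\eqref{it:blow_up_sharp_2} with $h_0=1+4/d$; the latter runs into the constraint $h_0<3$ for $d=2$, which is precisely why you are forced into the dimension split and the extra Serrin-type bootstrap. Your proposed workaround could be carried out but adds avoidable complexity. Neither of these affects the overall correctness of the argument, which is essentially identical to the paper's.
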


From the formulation of the above result, it almost seems that the above result can be proved through $L^2$-theory, and on more general domains with more general boundary conditions. This is indeed true if one only wants to show local well-posedness, which also follows from Theorem \ref{thm:varloc}. However, the global well-posedness is much harder to show. In the proof, we need positivity of the solution, and this follows from Proposition \ref{prop:positivity}, which requires $L^p(L^q)$-theory.
We explain in detail how the positivity is used to obtain the global existence. For this, we will use the blow-up criteria of Theorems \ref{thm:varloc} or \ref{thm:blow_up_criteria}. For simplicity we assume $\chi_{1,1} = \chi_{2,2} = \lambda_2 = 0$, as this does not change the argument much.  Suppose that $(u,v)$ is a maximal solution with maximal time $\sigma$.

Applying It\^o's formula to $\|u\|_{L^2(\T^d)}^2$ we find that
\begin{align*}
 \|u(t)\|_{L^2(\T^d)}^2  =  & \textstyle \|u_0\|_{L^2(\T^d)}^2 - \int_0^t \int_{\T^d} 2\nu|\nabla u(s)|^2 \,\dd x \,\dd s \\ & + \textstyle 2\int_0^t \int_{\T^d} \lambda_1 u(s)^2 -\chi_{1,2} u(s)^2v(s) \,\dd x \,\dd s+ \int_0^t \|g_{1}(\cdot,u(s),v(s))\|^2_{\ell^2} \,\dd s + M_t,
\end{align*}
where $M$ is a continuous local martingale. Using the positivity of $u$ and $v$ we know the signs in the above terms and thus we can apply the assumption \eqref{eq:PPgrowth1} to obtain the bound
\[\|u(t)\|_{L^2(\T^d)}^2 + 2\nu \|\nabla u\|^2_{L^2(0,t;L^2(\T^d))} \leq \|u_0\|^2_{L^2(\T^d)} + 2C t + \|u\|^2_{L^2(0,t;L^2(\T^d))} + M_t. \]
Now, one could take expectations and apply a classical Gronwall argument. Alternatively, one can apply the stochastic Gronwall Lemma \ref{lem:Gronwall} and immediately deduce that a.s.\ for all $T<\infty$
\begin{align}\label{eq:PPstochgronwallappli}
\sup_{t\in [0,\sigma\wedge T)} \|u(t)\|_{L^2(\T^d)}<\infty \ \ \text{and} \ \ \|\nabla u\|^2_{L^2(0,\sigma\wedge T;L^2(\T^d))}<\infty.
\end{align}
The above argument cannot directly be translated to $v$. We can still apply It\^o's formula to get
\begin{align*}
\|v(t)\|_{L^2(\T^d)}^2  =&  \textstyle \|v_0\|_{L^2(\T^d)}^2 - \int_0^t \int_{\T^d} 2\mu|\nabla v(s)|^2 \,\dd x \,\dd s \\ & + \textstyle 2\int_0^t \int_{\T^d} \chi_{2,1} u(s)v(s)^2 \,\dd x\,\dd s + \int_0^t \int_{\T^d}\|g_{2}(\cdot,u(s),v(s))\|^2_{\ell^2} \,\dd x\,\dd s + \wt{M}_t,
\end{align*}
where $\wt{M}$ is a continuous local martingale again. Now, one can repeat the application of the bound \eqref{eq:PPgrowth2} to obtain
\begin{align*}
&\|v(t)\|_{L^2(\T^d)}^2 + \textstyle 2\mu\int_0^t \|\nabla v(s)\|^2_{L^2(\T^d)} \,\dd s  \\ &\leq  \textstyle \|v_0\|_{L^2(\T^d)}^2  + 2\int_0^t \int_{\T^d} \big[ C(1+v(s)^2+u(s)^2v(s)+u(s)^3) + (C+\chi_{2,1}) u(s)v(s)^2 \big]\,\dd x\,\dd s  + \wt{M}_t
\\ &\leq \textstyle  \|v_0\|_{L^2(\T^d)}^2  + 2\wt{C}\int_0^t \int_{\T^d} \big(1+v(s)^2+u(s)^3 + u(s)v(s)^2 \big)\,\dd x\,\dd s  + \wt{M}_t,
\end{align*}
where in the last step we used $2u^2v = 2u^{3/2} (u^{1/2} v)\leq u^3 + uv^2$.
To bound the $L^2$-norm of $v$ we will use the stochastic Gronwall Lemma \ref{lem:Gronwall}. This time we need its full power. We claim that
\[\textstyle \int_{\T^d} \big(v(s)^2+u(s)^3 + u(s)v(s)^2\big) \,\dd x \leq N_{\varepsilon,u}(s) [1+\|v(s)\|_{L^2(\T^d)}^2]+ \varepsilon \|v\|^2_{H^{1,2}(\T^d)},\]
where $\varepsilon>0$ is arbitrary, and $N_{\varepsilon,u}\in L^1(0,\sigma\wedge T)$ a.s.\ and $N_{\varepsilon,u}(t)$ and depends on $u|_{[0,t]}$.
To prove this we use interpolation estimates, and consider each term separately.  For the mixed term, by Sobolev embedding with $d\leq 4$,
\begin{align*}
\textstyle \int_{\T^d}u(s)v(s)^2 \,\dd x & \leq \|u(s)\|_{L^4(\T^d)} \|v(s)\|_{L^{8/3}(\T^d)}^2
\\ & \leq \|u(s)\|_{H^{1,2}(\T^d)} \|v(s)\|_{H^{1/2,2}}^2
\\ & \leq \|u(s)\|_{H^{1,2}(\T^d)} \|v(s)\|_{L^2(\T^d)} \|v(s)\|_{H^1(\T^d)}
\\ & \leq C_{\varepsilon} \|u(s)\|_{H^{1,2}(\T^d)}^2 \|v(s)\|_{L^2(\T^d)}^2 + \varepsilon \|v(s)\|_{H^1(\T^d)}^2,
\end{align*}
which is of the required form due to $u\in L^2(0,\sigma\wedge T;H^1(\T^d))$. Similarly,
\begin{align*}
\textstyle \int_{\T^d}u(s)^3  \,\dd x & \leq \|u(s)\|_{H^{2/3,2}(\T^d)}^3 \leq \|u(s)\|_{L^2(\T^d)} \|u(s)\|_{H^{1,2}(\T^d)}^{2},
\end{align*}
which is also of the required form due to $u\in L^2(0,\sigma\wedge T;H^1(\T^d))\cap L^\infty(0,\sigma\wedge T;L^2(\T^d))$.

Choosing $\varepsilon>0$ small enough we can thus conclude
\begin{align*}
&\textstyle \|v(t)\|_{L^2(\T^d)}^2 + \mu\int_0^t \|\nabla v(s)\|^2_{L^2(\T^d)} \,\dd s \leq C \|v_0\|_{L^2(\T^d)}^2  + \int_0^t N_{\varepsilon,u}(s)[1+\|v(s)\|_{L^2(\T^d)}^2] + \wt{M}_t.
\end{align*}
By the stochastic Gronwall Lemma \ref{lem:Gronwall} this implies \eqref{eq:PPstochgronwallappli} with $u$ replaced by $v$. Thus, Theorem \ref{thm:varloc} implies $\sigma=\infty$ as required.

The final assertion on the continuous dependency in the norm of $C([0,T];L^2)$ follows from \cite[Theorem 5.2]{AVreaction-global}. However, it can be seen from the proof that the convergence holds in $L^2(0,T;H^1)$ as well. An alternative way would be to extend the proof of \cite[Theorem 3.8]{AVvar} to the above setting. For this, one needs to use the tail estimates for the above solutions which are provided by the stochastic Gronwall lemma.

\subsection{Quasi-geostrophic equations with $\alpha\in (1/2,1)$}\label{ss:quasiGSLp}
In Subsection \ref{ss:quasiGS}, we considered quasi-geostrophic systems on $\R^2$ but only in the case $\alpha=1$.
This is because $L^2$ is critical for the corresponding SPDE. In the case $\alpha<1$, critical spaces change accordingly. In particular, more regularity from the critical space is needed in this context, and thus $L^p(L^q)$-techniques come naturally. For details, the reader is referred to Subsection \ref{sss:scaling_quasigeostrophic} below.
Let us anticipate that $L^p(L^q)$-techniques are also handy in this context, as for quasi-geostrophic equations $L^\zeta$-bounds with $\zeta<\infty$ are typically available, see Lemma \ref{lem:a_priori_estimate_quasigeostrophic}.

Here we consider the quasi-geostrophic equation on the two-dimensional torus $\T^2$:
\begin{equation}
\label{eq:QGTalpha}
\left\{
\begin{aligned}
\dd \theta &=\textstyle -\big[ (-\Delta)^{\alpha} \theta + (u\cdot \nabla)\theta \big] \,\dd t
+\sum_{n\geq 1} g_n(\cdot, \theta)\,\dd W_t^n,
\\
u&=R^{\bot} \theta,
\\ \theta(0,\cdot)&=\theta_0.
\end{aligned}\right.
\end{equation}
In the following, we focus on the case $\alpha\in (1/2,1)$, as the situation where $\alpha=1$ is an exceptional case and it can be dealt with in the same way as in Subsection \ref{ss:quasiGS}. Proving regularity in the $\alpha=1$ setting is a bit more work but is certainly possible and can be done in a similar way as in \cite{AV20_NS} for $p=q=2$.

As before $R^{\bot} \theta = (-R_2 \theta, R_1 \theta) =  (-\partial_2 \Delta^{-1/2} \theta, \partial_1 \Delta^{-1/2} \theta)$ with $\theta$ a periodic distribution. Note that $\wh{R_j \theta}(n_1, n_2) = \frac{n_j}{|n|}\wh{\theta}(n)$ with $n\in \Z^2$, where we set $0/0 = 0$. The definition of $(-\Delta)^{\alpha}$ on the space of periodic distributions is given through the Fourier series as $\wh{(-\Delta)^{\alpha}\theta} (n)= |2\pi n|^{2\alpha} \wh{\theta}(n)$ for $n\in \Z^2$.

\subsubsection{Scaling}
\label{sss:scaling_quasigeostrophic}
Here, as in Subsections \ref{ss:scaling_intro} and \ref{ss:tracecriticalF}, we discuss the scaling of \eqref{eq:QGTalpha} by mainly looking at the deterministic part of the SPDE. We will afterwards make assumptions on the diffusion coefficients $g_n$ so that the nonlinear diffusion $(g_n(\cdot,u))_{n\geq 1}$ becomes as critical as the deterministic part, see the text below \eqref{eq:matching_scaling_AllenCahn}.
Of course, this is not a limitation of our framework. Indeed, the latter can be applied even if the diffusion dominates the scaling, but in this situation, one obtains critical spaces which differ from the one in the deterministic setting. For simplicity, we do not pursue this here.

Consider the deterministic quasi-geostrophic equation on $\R^2$:
\begin{equation}
\label{eq:deterministic_QG_scaling}
\partial_t \theta =- (-\Delta)^\alpha \theta + ( R^\bot\theta \cdot \nabla) \theta.
\end{equation}
One can easily check that global solutions to the above are invariant under the rescaling:
$$
\theta\mapsto \theta_{\lambda}\qquad \text{ where }\qquad
\theta_\lambda(t,x):= \lambda^{1-\frac{1}{2\alpha}} \theta(\lambda t,\lambda^{\frac{1}{2\alpha}} x) \ \text{ for }(t,x)\in \R_+\times \R^2,
$$
where $\lambda>0$. Thus, spaces which are invariant under the induced map on the initial data $\theta_0\mapsto \theta_{0,\lambda} :=\lambda^{1-\frac{1}{2\alpha}}  \theta_{0}(\lambda^{\frac{1}{2\alpha}} \cdot )$ are given by $L^{2/(2\alpha-1)}(\R^2)$ and $B^{1-2\alpha+2/q}_{q,p}(\R^2)$. Indeed, the corresponding homogeneous variants of these spaces satisfy
\[
\|\theta_{0,\lambda}\|_{\dot{B}^{1-2\alpha+2/q}_{q,p}(\R^2)}\eqsim \|\theta_0\|_{\dot{B}^{1-2\alpha+2/q}_{q,p}(\R^2)},
 \quad \text{ and }\quad
\|\theta_{0,\lambda}\|_{L^{2/(2\alpha-1)}(\R^2)}\eqsim \|\theta_0\|_{L^{2/(2\alpha-1)}(\R^2)}.
\]
The previous can be expressed by saying the Sobolev indexes of the spaces $L^{2/(2\alpha-1)}(\R^2)$ and $B^{1-2\alpha+2/q}_{q,p}(\R^2)$  are both equal to $1-2\alpha$.

Next, we aim to study the optimal scaling of the diffusion so that the stochastic equation \eqref{eq:QGTalpha} has the same scaling (and, consequently, critical spaces) as the deterministic counterpart \eqref{eq:deterministic_QG_scaling}. To this end, we argue as in the case of the Allen-Cahn equation \eqref{eq:matching_scaling_AllenCahn}: If $g_n(\cdot,\theta)=|\theta|^{m}$ for some $m\geq 1$, then
\begin{equation}
\label{eq:matching_scaling_quasigeostrophic}
\begin{aligned}
\textstyle \int_0^{t/\lambda} (R^\bot \theta_\lambda(s,x)\cdot \nabla)\theta_{\lambda}(s,x)\, \dd s
&= \textstyle \lambda^{1-\frac{1}{2\alpha}}\int_0^{t} (R^\bot \theta(s,\lambda^{\frac{1}{2\alpha}}x)\cdot \nabla)\theta(s,\lambda^{\frac{1}{2\alpha}}x)\, \dd s,\\
\textstyle \int_0^{t/\lambda} g_n(\cdot,\theta_{\lambda}(s,x)) \,\dd  \beta_{s,\lambda}^n
&= \textstyle \lambda^{m (1-\frac{1}{2\alpha})-\frac{1}{2}}
\int_0^{t} |\theta(s,\lambda^{\frac{1}{2\alpha}} x)|^m \,\dd  W^n_t.
\end{aligned}
\end{equation}
Thus, choosing $m=\frac{3\alpha-1}{2\alpha-1}$, we obtain that \eqref{eq:QGTalpha} has the same local scaling of the deterministic counterpart \eqref{eq:deterministic_QG_scaling} and therefore the same critical spaces.

\subsubsection{Local well-posedness and regularity}
In order to analyze \eqref{eq:QGTalpha} for all $\alpha\in (1/2, 1)$, it is not enough to use $L^2$-theory. Indeed, in the weak setting $\alpha=1$ already led to the critical setting in Subsection \ref{ss:quasiGS}, and in the strong setting, coercivity fails. Another disadvantage of the strong setting is that regularity in space is needed for the diffusion nonlinearity $(g_n(\cdot,u))_{n\geq 1}$. Motivated by the scaling argument in the previous subsection, we assume that  $g:=(g_n)_{n\geq1 }:\T^2\times \R\to \ell^2$ is a measurable mapping satisfying
$g(\cdot, 0)\in L^\infty(\T^2;\ell^2)$ and
\begin{align}\label{eq:Lipschitzggeo_local}
\|g(x,y) - g(x,y')\|_{\ell^2}\lesssim(1+|y|^{m_\alpha-1}+|y'|^{m_\alpha-1}) |y-y'|, \ \ \ x\in \T^2, y,y'\in \R.
\end{align}
where $m_\alpha:=\frac{3\alpha-1}{2\alpha-1}$. Of course, we could allow for higher growth of $g$. However, the latter will cause critical spaces for \eqref{eq:QGTalpha} to differ from the ones of \eqref{eq:deterministic_QG_scaling}. We leave the details to the interested reader.
Here we will consider \eqref{eq:QGTalpha} in weak PDE setting:
\begin{equation}
\label{eq:choice_X0X1_quasigeostrophic_LpLq}
X_0 = H^{-1,q}(\T^2) \quad \text{and} \quad X_1 = H^{-1+2\alpha,q}(\T^2).
\end{equation}
Note that $X_{\beta} = H^{-1+2\alpha\beta,q}(\T^2)$. As usual, we see \eqref{eq:QGTalpha} in the form \eqref{eq:QGTalpha} with $\mathcal{U}=\ell^2$, the above choice of $X_0$ and $X_1$, and
\begin{equation}
\label{eq:choice_ABFG_quasi_LpLq}
A \theta =  (-\Delta)^{\alpha} \theta,  \quad F(\theta) = \div (R^\bot \theta \,\theta ),\quad B\theta=0,  \quad G(\theta) = (g_n(\cdot, \theta))_{n\geq 1}.
\end{equation}
Restrictions on $q$ to make the above mappings well-defined are given below.
In the above, we also used that, at least formally,
$\div (R^\bot \theta \,\theta )=(R^\bot \theta\cdot\nabla) \theta$, as $\div\,R^\bot \theta=0$.
As is well-known, the conservative form of $F$ in \eqref{eq:choice_ABFG_quasi_LpLq} is handier for the weak analytic formulation of SPDEs.

As usual, we say that $(\theta,\sigma)$ is a (unique) $(p,\a,q)$-solution to \eqref{eq:QGTalpha} if $(\theta,\sigma)$ is a maximal $L^p_\a$-solution to \eqref{eq:SEE} with the above choice of $X_j$, $\mathcal{U}$ and $(A,B,F,G)$.

\smallskip

Next, we turn to check Assumption \ref{ass:FGcritical} in the current situation. Let $\Phi(\theta_1,\theta_2):=\div (R^\bot \theta_1 \,\theta_2) $. By H\"older's inequality and the boundedness of the Riesz transform, one has
\begin{align*}
\|\Phi(\theta_1, \theta_2)\|_{X_0}\lesssim \|\theta_2 R^\bot \theta_1\|_{L^q(\T^2)}\lesssim \|\theta_1\|_{L^{2q}(\T^2)} \|\theta_2\|_{L^{2q}(\T^2)}\lesssim \|\theta_1\|_{X_{\beta_1}}\|\theta_2\|_{X_{\beta_1}},
\end{align*}
where in the last step we used Sobolev embedding with $-1+2\alpha\beta_1 - \frac{2}{q}\geq -\frac{2}{2q}$. Thus we may choose $\beta_1 = \frac{q+1}{2\alpha q}$. Since we need $\beta_1<1$ this gives the restriction $q>\frac{1}{2\alpha-1}$.
We obtain
\[\|F(\theta_1) - F(\theta_2)\|_{X_0}\leq \|\Phi(\theta_1, \theta_1-\theta_2)\|_{X_0} + \|\Phi(\theta_1-\theta_2, \theta_2)\|_{X_0}\lesssim (\|\theta_1\|_{X_{\beta_1}} +\|\theta_2\|_{X_{\beta_1}})\|\theta_1-\theta_2\|_{X_{\beta_1}}.\]
On the other hand, since the nonlinearity is quadratic, the criticality condition \eqref{eq:subcritical} is $\frac{1+\kappa}{p}\leq 2(1-\beta_1)$. We can find $p>2$ and $\kappa\in[0,p/2-1)$ such that equality holds if and only if $2(1-\beta_1)<1/2$, which is equivalent to $\beta_1>3/4$. Using the above value of $\beta_1$ this leads to the condition $\frac{1}{q}>\frac{3\alpha}{2} - 1$.

Thus, if $\alpha\leq 2/3$ we choose any $q>\frac{1}{2\alpha-1}$, and if $\alpha\in(2/3,1)$ we choose $q\in (\frac{1}{2\alpha-1}, \frac{2}{3\alpha-2})$. Of course, we always have the restriction $q\geq 2$. The parameters $(p,\kappa)$ are then chosen such that
\begin{align}\label{eq:critcondquasigeo}
\frac{1+\kappa}{p} +\frac{1}{\alpha q} +\frac{1}{\alpha}= 2.
\end{align}
Next, let us turn our attention to $G$.
Due to \eqref{eq:Lipschitzggeo_local} and the H\"older inequality, for $\theta_1,\theta_2\in X_1$,
\begin{align*}
\|G(\theta_1)-G(\theta_2)\|_{\g(\ell^2,H^{-1+\alpha,q})}
&\stackrel{(i)}{\lesssim} \|g(\theta_1)-g(\theta_2)\|_{\g(\ell^2,L^{\zeta})}\\
&\lesssim (1+\|\theta_1\|_{L^{m_\alpha \zeta}}^{m_\alpha-1}+\|\theta_2\|_{L^{m_\alpha \zeta}}^{m_\alpha-1})
\|\theta_1-\theta_2\|_{L^{m_\alpha \zeta}}\\
&\stackrel{(ii)}{\lesssim} (1+\|\theta_1\|_{X_{\beta_2}}^{m_\alpha-1}+\|\theta_2\|_{X_{\beta_2}}^{m_\alpha-1})
\|\theta_1-\theta_2\|_{X_{\beta_2}}
\end{align*}
where in $(i)$ we used that $L^\zeta(\T^2)\embed H^{-1+\alpha,q}(\T^2)$ with $-\frac{2}{r}=-1+\alpha-\frac{2}{q}$ (note that $r\in (1,q)$, as we are assuming $q>\frac{1}{2\alpha-1}$, and thus $q>\frac{2}{1+\alpha}$) and in $(ii)$ we used $X_{\beta_2}=H^{\beta_2,q}(\T^2)\embed L^{m_\alpha \zeta}(\T^2)$ with $\beta_2=\frac{1}{2\alpha}+\frac{\alpha-1}{2\alpha m_\alpha}+\frac{1}{\alpha q}$. With standard computations, the reader can check that the (sub)criticality condition \eqref{eq:subcritical} with $\beta_2$ as above and $\rho_2=m_\alpha$ takes the form \eqref{eq:critcondquasigeo} in the critical case (this was indeed expected due to the scaling argument in \eqref{eq:matching_scaling_quasigeostrophic}).

In the case $(p,\kappa)$ satisfies the equality \eqref{eq:critcondquasigeo}, the critical space of initial values becomes
\[(X_0, X_1)_{1-\frac{1+\kappa}{p},p} = B^{-1+2\alpha -2\alpha\frac{1+\kappa}{p}}_{q,p}(\T^2) = B^{1-2\alpha +\frac{2}{q}}_{q,p}(\T^2),\]
which is also critical from a PDE point of view, see Subsection \ref{sss:scaling_quasigeostrophic}. Moreover, for $\alpha\in (1/2,2/3]$ we can allow smoothness down to $1-2\alpha$. If $\alpha\in (2/3, 1)$, we can allow smoothness down to $\alpha-1$. Interestingly, in all situations, the limiting smoothness is negative. As in Subsections \ref{ss:AllenCahnLpLq} and \ref{ss:reaction}, the above picture can be refined by choosing $X_j=H^{-\delta+2j\alpha,q}(\T^2)$ with $\delta\in (1,2]$ instead of the choice in \eqref{eq:choice_X0X1_quasigeostrophic_LpLq}.

Since $1+A$ can be shown to have an $H^\infty$-calculus by a periodic analogue of the argument in \cite[Theorem 10.2.25]{Analysis2}, it follows from Theorem \ref{thm:SMRHinfty} that $(A,0)\in \mathcal{SMR}_{p,\kappa}^{\bullet}$ for all $p\in (2, \infty)$ and $\kappa\in [0,p/2-1)$.
Now, Theorem \ref{thm:localwellposed} implies the following and is our first step in showing the existence and uniqueness of a global solution.  Here we say that $(u, \sigma)$ is a $(p,\kappa,q)$-(maximal) solution if it is an $L^p_{\kappa}$-(maximal) solution with spaces $X_j = H^{-1+2\alpha j, q}(\T^2)$ for $j\in \{0,1\}$.
\begin{proposition}[Local well-posedness in critical spaces]\label{prop:locQS}
Let $\alpha\in (1/2, 1)$. Let $q\geq 2$ be such that $q>\frac{1}{2\alpha-1}$, and such that $q\in (\frac{1}{2\alpha-1}, \frac{2}{3\alpha-2})$ if $\alpha>2/3$. Let $p>2$ and $\kappa\in [0,\frac{p}{2}-1)$ be as in \eqref{eq:critcondquasigeo}.
Let $g$ be as in \eqref{eq:Lipschitzggeo_local} and
$$
\theta_0\in L^0_{\F_0}(\Omega;B^{1-2\alpha +\frac{2}{q}}_{q,p}(\T^2)).
$$
Then \eqref{eq:QGTalpha} has an $(p,\kappa,q)$-solution $(\theta,\sigma)$. Moreover, a.s.
\begin{align*}
\theta& \in H^{\lambda,p}_{\rm loc}([0,\sigma), w_{\kappa};H^{-1+2\alpha-2\alpha\lambda,q}(\T^2)), \ \ \ \lambda\in [0,1/2),
\\ \theta&\in C([0,\sigma);B^{1-2\alpha +\frac{2}{q}}_{q,p}(\T^2)) \cap C((0,\sigma);B^{-1+2\alpha(1 -\frac{1}{p})}_{q,p}(\T^2)).
\end{align*}
\end{proposition}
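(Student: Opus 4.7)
The plan is to apply Theorem \ref{thm:localwellposed} directly; the bulk of its hypotheses have already been verified in the discussion preceding the statement. On the nonlinear side, the bilinear bound for $F=\div(R^\bot\theta\,\theta)$, obtained via H\"older's inequality, boundedness of the Riesz transforms on $L^{2q}(\T^2)$ and Sobolev embedding, produces the local Lipschitz estimate of Assumption \ref{ass:FGcritical} with $(\rho_1,\beta_1)=(1,\tfrac{q+1}{2\alpha q})$. The quadratic-type bound \eqref{eq:Lipschitzggeo_local} on $g$ combined with an appropriate Sobolev embedding $X_{\beta_2}\embed L^{m_\alpha\zeta}(\T^2)$ yields the corresponding estimate for $G$ with $(\rho_2,\beta_2)=(m_\alpha-1,\tfrac{1}{2\alpha}+\tfrac{\alpha-1}{2\alpha m_\alpha}+\tfrac{1}{\alpha q})$. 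The constraints on $q$ ensure $\beta_1,\beta_2\in(1/2,1)$, and a direct computation shows that both subcriticality inequalities \eqref{eq:subcritical} hold with equality exactly under the relation \eqref{eq:critcondquasigeo}; moreover $\beta_j>1/2\geq 1-\tfrac{1+\kappa}{p}$ is automatic. The hypothesis $g(\cdot,0)\in L^\infty(\T^2;\ell^2)$ together with $F(0)=0$ guarantees the required mapping properties of $F$ into $X_0$ and of $G$ into $\gamma(\ell^2,X_{1/2})$.

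The remaining ingredient is $(A,0)\in\mathcal{SMR}^{\bullet}_{p,\kappa}$ for all admissible $(p,\kappa)$. Since $X_0=H^{-1,q}(\T^2)$ is isomorphic to a closed subspace of $L^q(\T^2)$ via the Bessel-potential lift $(1-\Delta)^{-1/2}$, Theorem \ref{thm:SMRHinfty} reduces the task to showing that $\mu+A=\mu+(-\Delta)^\alpha$ admits a bounded $H^\infty$-calculus of angle $<\pi/2$ on $X_0$ for some $\mu\geq 0$. The operator $1-\Delta$ on $H^{s,q}(\T^2)$ has a bounded $H^\infty$-calculus of angle zero for every $s\in\R$ and $q\in(1,\infty)$, as follows from a periodic analogue of the whole-space result \cite[Theorem 10.2.25]{Analysis2} combined with transference through the isomorphism $(1-\Delta)^{s/2}:H^{s,q}(\T^2)\to L^q(\T^2)$; its fractional power $(1-\Delta)^\alpha$ then inherits a bounded $H^\infty$-calculus of angle $<\pi/2$, and this property persists under the bounded perturbation needed to pass from $(1-\Delta)^\alpha$ to $\mu+(-\Delta)^\alpha$ on $X_0$. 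This verification of the $H^\infty$-calculus on negative-order periodic Sobolev spaces is the only step not directly covered by the machinery already developed.

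With both ingredients in place, Theorem \ref{thm:localwellposed} produces a unique $(p,\kappa,q)$-maximal solution $(\theta,\sigma)$ with $\sigma>0$ almost surely, the weighted path regularity $\theta\in H^{\lambda,p}(0,\sigma_n,w_\kappa;X_{1-\lambda})$ for $\lambda\in[0,1/2)$ on any localizing sequence $(\sigma_n)_{n\geq 1}$, and the instantaneous regularization $\theta\in C((0,\sigma);X_{1-1/p,p})$. The stated function-space identifications follow from the complex-interpolation identity $X_{1-\lambda}=[X_0,X_1]_{1-\lambda}=H^{-1+2\alpha(1-\lambda),q}(\T^2)$ and the real-interpolation identity $(X_0,X_1)_{\theta,p}=B^{-1+2\alpha\theta}_{q,p}(\T^2)$, specialised to $\theta=1-\tfrac{1+\kappa}{p}$ and $\theta=1-\tfrac{1}{p}$. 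Simplifying the first of these using \eqref{eq:critcondquasigeo} yields the critical trace space $B^{1-2\alpha+2/q}_{q,p}(\T^2)$, which completes the verification.
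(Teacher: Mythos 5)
Your proposal follows the same route as the paper: verify Assumption \ref{ass:FGcritical} for $F$ and $G$, obtain stochastic maximal $L^p_\kappa$-regularity of $(A,0)$ on $X_0 = H^{-1,q}(\T^2)$ via Theorem \ref{thm:SMRHinfty}, and invoke Theorem \ref{thm:localwellposed}. Your $H^\infty$-calculus verification through the isomorphism $H^{-1,q}(\T^2)\cong L^q(\T^2)$ plus transference is a legitimate, slightly more spelled-out variant of what the paper sketches.

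Two points in your write-up need fixing, both small but genuine. First, your claim that ``$\beta_j>1/2\geq 1-\tfrac{1+\kappa}{p}$ is automatic'' has the inequality backwards: since $\kappa<\tfrac{p}{2}-1$ forces $\tfrac{1+\kappa}{p}<\tfrac12$, one has $1-\tfrac{1+\kappa}{p}>\tfrac12$, so $\beta_j>\tfrac12$ does \emph{not} imply $\beta_j>1-\tfrac{1+\kappa}{p}$. The correct argument is that when \eqref{eq:subcritical} holds with equality (which is precisely the hypothesis \eqref{eq:critcondquasigeo}), one has $1-\tfrac{1+\kappa}{p}=\tfrac{-1+(1+\rho_j)\beta_j}{\rho_j}<\beta_j$ because $\beta_j<1$; so the requirement of Assumption \ref{ass:FGcritical} is met, but for a different reason than the one you give. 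Second, the stated $\beta_2=\tfrac{1}{2\alpha}+\tfrac{\alpha-1}{2\alpha m_\alpha}+\tfrac{1}{\alpha q}$ (which reproduces a typo in the text) does not yield \eqref{eq:critcondquasigeo} as the critical condition; the sharp Sobolev embedding $H^{-1+2\alpha\beta_2,q}\embed L^{m_\alpha\zeta}$ with $\tfrac{2}{\zeta}=1-\alpha+\tfrac{2}{q}$ gives $\beta_2=\tfrac{\alpha}{3\alpha-1}+\tfrac{1}{(3\alpha-1)q}$, and with this value one does recover $\tfrac{m_\alpha(1-\beta_2)}{m_\alpha-1}=\tfrac{2\alpha-1}{\alpha}-\tfrac{1}{\alpha q}$, matching \eqref{eq:critcondquasigeo}. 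Finally, the assertion that the $H^\infty$-calculus ``persists under the bounded perturbation'' to pass from $(1-\Delta)^\alpha$ to $\mu+(-\Delta)^\alpha$ should be qualified: the $H^\infty$-calculus is not stable under arbitrary bounded perturbations; what saves the argument here is that the perturbing operator is of strictly lower order (a Fourier multiplier whose symbol tends to a constant), so one can invoke lower-order perturbation results such as \cite[Theorem 16.3.1]{Analysis3}, or more simply verify the calculus for $1+(-\Delta)^\alpha$ directly as a periodic Fourier multiplier.
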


From the scaling argument in Subsection \ref{sss:scaling_quasigeostrophic}, the above yields local well-posedness in critical spaces of Besov-type for \eqref{eq:QGTalpha}. Noticing that
$$
L^{2/(2\alpha-1)}(\T^2)\subseteq B^{0}_{2/(2\alpha-1),p}(\T^2)
$$
for $p\geq \frac{2}{2\alpha-1}$, the above also implies local well-posedness in the Lebesgue critical space $L^{2/(2\alpha-1)}(\T^2)$.

Next, we would like to show that the above solution exists globally, i.e.\ $\sigma = \infty$. To show this, we will use one of the blow-up criteria. However, to check the required bounds, we need to apply It\^o calculus to bound $\|\theta\|_{L^q(\T^2)}^q$, but for this, we need that $\theta$ is $H^1(\T^2)$-valued. To show that, we first bootstrap the regularity of $\theta$ locally in time on $(0,\sigma)$.

\begin{proposition}[Instantaneous regularization]\label{prop:QSregularity}
Let $\alpha\in (1/2, 1)$. Let $q\geq 2$ be such that $q>\frac{1}{2\alpha-1}$, and such that $q\in (\frac{1}{2\alpha-1}, \frac{2}{3\alpha-2})$ if $\alpha>2/3$. Let $p>2$ and $\kappa\in [0,\frac{p}{2}-1)$ be as in \eqref{eq:critcondquasigeo}.
Let $g$ be as in \eqref{eq:Lipschitzggeo_local} and $\theta_0\in L^0_{\F_0}(\Omega;B^{1-2\alpha +\frac{2}{q}}_{q,p}(\T^2))$.
Let $(\theta,\sigma)$ be the $(p,\kappa,q)$-solution of \eqref{eq:QGTalpha} given by Proposition \ref{prop:locQS}.
Then a.s.
\begin{equation}
\label{eq:high_order_regularity_quasigeostrophic_eq}
\theta\in L^{r}_{\loc}((0,\sigma);H^{\alpha,\zeta}(\T^2))\cap C_{\loc}^{\lambda/2,\alpha\lambda}((0,\sigma)\times \T^2) \ \ r,\zeta\in (2, \infty), \lambda\in (0,1).
\end{equation}
\end{proposition}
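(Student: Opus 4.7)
}
The plan is to combine the time-regularization result of Theorem \ref{thm:parabreg2} (or \ref{thm:parabreg} when $\kappa>0$) with a classical spatial bootstrap, in the same spirit as the proof of Theorem \ref{thm:global_extrapolation_AC}. Since $1+(-\Delta)^{\alpha}$ has a bounded $H^{\infty}$-calculus of angle $<\pi/2$ on $H^{s,q}(\T^2)$ for every $s\in\R$ and $q\in(1,\infty)$ (by a periodic variant of \cite[Theorem 10.2.25]{Analysis2}), Theorem \ref{thm:SMRHinfty} gives $(A,0)\in \mathcal{SMR}^{\bullet}_{r,\alpha'}$ for all $r\in(2,\infty)$ and $\alpha'\in[0,r/2-1)$. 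For the polynomial-type $(F,G)$ in \eqref{eq:choice_ABFG_quasi_LpLq}, the growth bound \eqref{eq:condFcextrakap0} (needed when $\kappa=0$) follows from the same H\"older and Sobolev estimates already carried out before Proposition \ref{prop:locQS}, with $X_{1-1/p,p}$ harmlessly replaced by $X_{1-1/p,\infty}$. Hence Theorem \ref{thm:parabreg2} applies and yields
\begin{equation*}
\theta\in H^{\mu,r}_{\loc}((0,\sigma); H^{-1+2\alpha(1-\mu),q}(\T^2))\cap C^{\mu-\varepsilon}_{\loc}((0,\sigma); H^{-1+2\alpha(1-\mu),q}(\T^2))\ \text{a.s.}
\end{equation*}
for all $r\in(2,\infty)$, $\mu\in(0,1/2)$, $\varepsilon\in(0,\mu)$. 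By the trace embedding of Proposition \ref{prop:tracespace}\eqref{it:trace_without_weights_Xp} this also gives $\theta\in C_{\loc}((0,\sigma); B^{-1+2\alpha(1-1/r)}_{q,r}(\T^2))$ for all $r>2$.

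The next step is a classical spatial bootstrap. Fix $s_0\in(0,\sigma)$. Combining Step~1 with Sobolev embedding shows that $\theta(s_0)$ lies a.s.\ in $B^{\gamma_1}_{q_1,p_1}(\T^2)$ for any $\gamma_1<-1+2\alpha$ and for exponents $q_1,p_1$ that can be chosen as large as desired, as long as the spatial Sobolev index $\gamma_1-2/q_1$ stays below $-1+2\alpha(1-1/r)-2/q$. Restart \eqref{eq:QGTalpha} at time $s_0$ with $\theta(s_0)$ as initial datum in the new ground scale $\widetilde{X}_0=H^{s_1,q_1}(\T^2)$, $\widetilde{X}_1=H^{s_1+2\alpha,q_1}(\T^2)$, with $s_1>-1$ chosen so that (i) $\theta(s_0)$ lies in the new trace space $B^{s_1+2\alpha-2\alpha(1+\kappa_1)/p_1}_{q_1,p_1}(\T^2)$ and (ii) the criticality condition \eqref{eq:subcritical} for $(F,G)$ in the new setting is satisfied (subcritically, since the additional spatial smoothness gained from Step~1 is strictly positive). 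Theorem \ref{thm:localwellposed} then produces a local solution in the new setting, which by the compatibility argument of Corollary \ref{cor:comp} must coincide with $\theta$ on the common interval of existence. This promotes $\theta$ to $L^{r_1}_{\loc}((s_0,\sigma); H^{s_1+2\alpha,q_1}(\T^2))$.

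Iterating this procedure finitely many times, with $s_1$ increased by (almost) $2\alpha$ at every step and $q_1$ adjusted accordingly, one reaches $\theta\in L^{r}_{\loc}((0,\sigma); H^{\alpha,\zeta}(\T^2))$ for all $r,\zeta\in(2,\infty)$, after which $s_0>0$ may be sent to zero by countable exhaustion. The parabolic H\"older regularity follows by a final application of the mixed Sobolev embedding
\begin{equation*}
H^{\mu,r}_{\loc}((0,\sigma); H^{\alpha,\zeta}(\T^2))\cap L^{r}_{\loc}((0,\sigma); H^{3\alpha,\zeta}(\T^2))\hookrightarrow C^{\lambda/2,\alpha\lambda}_{\loc}((0,\sigma)\times\T^2),
\end{equation*}
valid for $r,\zeta$ large and $\mu$ sufficiently close to $1/2$, combined with Morrey's embedding (see \cite[Corollary 14.4.27]{Analysis3}).

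The main technical obstacle is the spatial bootstrap: at every iteration one has to verify that $\theta(s_0)$ belongs to the trace space of the new setting, that the new setting is (sub)critical for the quadratic nonlinearity and the polynomial diffusion \eqref{eq:Lipschitzggeo_local}, and that the new local solution can be identified with $\theta$ via a compatibility result. These steps are standard but rather delicate; detailed templates can be found in \cite[Theorem 2.7]{AVreaction-local} and \cite[Theorem 2.9]{AV20_NS}.
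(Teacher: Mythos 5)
Your Step~1 (time regularization via Theorem \ref{thm:parabreg2} and the $H^\infty$-calculus of $1+(-\Delta)^\alpha$) matches the paper exactly. The gap is in the spatial bootstrap. The paper's proof contains an intermediate step that you have omitted and that cannot be skipped: before increasing the \emph{smoothness} index, the paper first bootstraps the spatial \emph{integrability} $\zeta$ while keeping the smoothness fixed at $2\alpha-1$, yielding $\theta\in C_{\loc}((0,\sigma);H^{-1+2\alpha-\varepsilon,\zeta}(\T^2))$ for \emph{all} $\zeta<\infty$. Taking $\zeta$ large then gives the crucial Sobolev embedding $H^{-1+2\alpha-\varepsilon,\zeta}\subseteq L^\infty$, and it is precisely this $L^\infty$-bound that makes the product estimate $\|\theta R^\bot\theta\|_{H^{1-\alpha,\zeta}}\lesssim \|\theta\|^2_{H^{1-\alpha,\zeta}\cap L^\infty}$ close. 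In your bootstrap you restart the nonlinear problem in a ground scale $\widetilde X_0 = H^{s_1,q_1}$ with $s_1>-1$, so $F(\theta)=\div(R^\bot\theta\,\theta)$ must be estimated in $H^{s_1+1,q_1}$ with $s_1+1>0$; that is a product in a positive-smoothness Sobolev space, and verifying Assumption \ref{ass:FGcritical} there requires exactly the $L^\infty$-control you have not established. Your Step~1 alone only gives $\theta(s_0)\in B^{\gamma_1}_{q_1,p_1}$ with Sobolev index at most $-1+2\alpha-2/q$, which does not embed into $L^\infty$, so the nonlinear estimates in the higher-smoothness scale do not come for free as you assert.

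A second, separate issue: you invoke Corollary \ref{cor:comp} to identify the restarted solution with $\theta$, but that corollary only applies when the ground pair $(X_0,X_1)$ is unchanged and only $(p,\kappa)$ vary; you change $(X_0,X_1)$ itself, which would need a new compatibility argument (compare Proposition \ref{prop:coincideRD}, which the paper proves specifically for reaction-diffusion systems, not for the present equation). The paper sidesteps this entirely: once $F(\theta),G(\theta)$ are shown to have the required integrability on stochastic intervals, one applies the \emph{linear} maximal regularity result (Proposition \ref{prop:localizationSMR}) to the \emph{existing} solution $\theta$ with these inhomogeneities, so no new nonlinear solution needs to be constructed or identified. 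Restructuring your proof in that way, and inserting the missing $\zeta$-bootstrap, would repair the argument.
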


In case $g\in C^n(\T^2\times \R)$ for some $n\geq 1$, an iteration of the argument below shows more spatial smoothness for the solution of \eqref{eq:QGTalpha}. In particular, if $g\in C^\infty(\T^2\times \R)$, then $\theta\in H^{\lambda,r}_{\rm loc}((0,\sigma);C^{n}_b(\T^2))$ a.s.\ for any $n\in \N$ and $\lambda\in [0,1/2)$. All these higher-order regularity results appear to be new, cf.\ \cite[Theorem 2.7]{AV20_NS} and \cite[Theorem 4.2]{AV20_NS}.
Moreover, Proposition \ref{prop:QSregularity} also holds in the case $\alpha=1$. However, the proof of time regularization (see Step 1 in the proof below) requires more care because $L^2$-data is critical for the corresponding SPDEs, see Subsection \ref{sss:scaling_quasigeostrophic}. However, in the latter case, the time regularization follows almost verbatim from the argument in \cite[Proof of Theorem 4.1 part C]{AV20_NS}.
After that, one can use the argument of Proposition \ref{prop:QSregularity} again.

\begin{proof}
To improve the regularity of $(\theta,\sigma)$, we use a bootstrap technique. We first bootstrap the integrability in time, and then we use the latter to improve the integrability in space. After that, we can also improve the Sobolev regularity in space.
It is convenient to start by bootstrapping time regularity, as it can be done using abstract results such as the one in Section \ref{subsec:reg}.

In the proof below, for notational brevity, we write
\[
\theta\in {\rm S}^{r,\zeta,\gamma} \text{ a.s.} \quad \text{ if }\quad
\theta\in \textstyle{\bigcap_{\lambda\in [0,1/2)}} H^{\lambda,r}_{\rm loc}((0,\sigma);H^{\gamma-2\alpha\lambda,\zeta}(\T^2)) \text{ a.s.\ }
\]
Note that by Proposition \ref{prop:locQS} we know that $\theta\in {\rm S}^{p,q,2\alpha-1}$ a.s.
Below, $(\sigma_n)_{n\geq 1}$ is a localizing sequence for $(\theta,\sigma)$.

{\em Step 1: Integrability in time.} Since $p>2$ we can use Proposition \ref{prop:locQS} and Theorems \ref{thm:parabreg} and \ref{thm:parabreg2} to obtain that $\theta\in \bigcap_{r\in (2, \infty)}{\rm S}^{r,q,2\alpha-1}$ a.s.

{\em Step 2: Integrability in space.}
We will show that $\theta\in \bigcap_{r,\zeta\in (2, \infty)} {\rm S}^{r,\zeta,2\alpha-1}$ a.s.
For this, it is enough to show that there exists an $\delta>0$ such that for all $\zeta\in [q, \infty)$,
\[\textstyle \theta\in \bigcap_{r\in (2, \infty)} {\rm S}^{r,\zeta,2\alpha-1} \ \text{a.s.} \quad \Rightarrow\quad \theta\in \bigcap_{r\in (2, \infty)} {\rm S}^{r,\zeta+\delta,2\alpha-1} \ \text{a.s.}\]
To prove this let $\zeta\geq q$ be given. Let $s\in (0,\infty)$. Then on the set $\{\sigma_n>s\}\times (s, \sigma_n)$
\begin{align*}
\|F(\theta)\|_{H^{-1,\zeta+\delta}(\T^2)} &\lesssim  \|\theta R^\bot \theta\|_{L^{\zeta+\delta}(\T^2)}\lesssim \|\theta\|_{L^{2\zeta+2\delta}(\T^2)}^2
\lesssim \|\theta\|_{H^{2\alpha-1,\zeta}(\T^2)}^2,
\end{align*}
where the last step holds if we can show $2\alpha-1 - \frac{2}{\zeta} \geq -\frac{2}{2\zeta+2\delta} = -\frac{1}{\zeta+\delta}$ for a suitable $\delta$ independent of $\zeta$. For this, let $\delta = \min\{q^2(2\alpha-1 - \frac{1}{q}),1/8\}>0$ and note that
\[2\alpha-1 - \frac{1}{\zeta} \geq 2\alpha-1 - \frac{1}{q}\geq \frac{\delta}{q^2}\geq \frac{\delta}{\zeta(\zeta+\delta)}.\]
The latter implies the desired estimate.
By \eqref{eq:Lipschitzggeo_local}, for $G$ a similar argument works.

For the initial value, by the trace result of Proposition \ref{prop:tracespace} we have a.s.
\[\one_{\sigma>s} \theta(s)\in B^{-1+2\alpha(1 -\frac{1}{r})}_{\zeta,r}(\T^2)\subseteq B^{-1+2\alpha(1 -\frac{1+\wt{\kappa}}{r})}_{\zeta+\delta,r}(\T^2)\]
by Sobolev embedding if $\frac{2\alpha \wt{\kappa}}{r}-\frac{2}{\zeta}\geq -\frac{2}{\zeta+\delta}$. The latter holds with $\wt{\kappa} = r/4$ by the choice of $\delta$. Indeed,
\[\frac{2\alpha \wt{\kappa}}{r}\geq \frac{\wt{\kappa}}{r} = \frac14 \geq 2\delta\geq \frac{2}{\zeta}-\frac{2}{\zeta+\delta}.\]

Since $(A, 0)\in \mathcal{SMR}_{r,\wt{\kappa}}$ for $Y_0 = H^{-1,\zeta+\delta}(\T^2)$ and $Y_1 = H^{2\alpha-1,\zeta+\delta}(\T^2)$, the desired implication follows from stochastic maximal $L^r_{\wt{\kappa}}$-regularity from Proposition \ref{prop:localizationSMR} with deterministic and stochastic inhomogeneities $F(\theta)$ and $G(\theta)$ and initial value $\one_{\sigma>s} \theta(s)$.

{\em Step 3: Sobolev smoothness in space.} Here we prove \eqref{eq:high_order_regularity_quasigeostrophic_eq}.
For simplicity, here we assume $\alpha>\frac{2}{3}$. In the case $\alpha\in (\frac{1}{2},\frac{2}{3}]$, it is enough to apply the argument below twice.

Fix $s>0$ and $n\geq 1$. From Step 2, we know that, a.s.\ on $\{\sigma_n>s\}$,
\begin{equation}
\label{eq:theta_regularity_bootstrap_final_step}
\theta\in C([s,\sigma_n];H^{-1+2\alpha-\varepsilon,\zeta}(\T^2))
\end{equation}
for all $\varepsilon>0$ and $\zeta<\infty$. Next, we would like to estimate $F(\theta)=\div(R^\bot \theta\,\theta)$ and $G(\theta)$ on the stochastic interval $\{\sigma_n>s\}\times \O$ by knowing  \eqref{eq:theta_regularity_bootstrap_final_step}. To this end, we fix
$\varepsilon>0$ small and $\zeta<\infty$ large so that $-1+2\alpha-\varepsilon>\frac{2}{\zeta}$ and $-1+2\alpha-\varepsilon>1-\alpha$. As for the first condition, one can check that a choice of the parameters is always possible, while for the second one needs $\alpha>\frac{2}{3}$.
With $\varepsilon$ and $\zeta$ as before, we have the following embedding at our disposal:
\begin{equation}
\label{eq:theta_regularity_bootstrap_final_step_embeddings}
H^{-1+2\alpha-\varepsilon,\zeta}(\T^2)\subseteq L^\infty(\T^2)\quad \text{ and }\quad
H^{-1+2\alpha-\varepsilon,\zeta}(\T^2)\subseteq H^{1-\alpha,\zeta}(\T^2),
\end{equation}
where the first follows from Sobolev embeddings. Thus, \eqref{eq:theta_regularity_bootstrap_final_step} implies $G(\theta)\in{ L^\infty(s,\sigma_n;L^{\zeta}(\ell^2))}$ a.s.\ on $\{\sigma_n>s\}$. Moreover,
\begin{align*}
\|F(\theta)\|_{H^{-\alpha,\zeta}(\T^2)}\leq \|\theta R^\bot \theta\|_{H^{{1-\alpha},\zeta}(\T^2)}
\lesssim \|\theta\|_{H^{{1-\alpha},\zeta}(\T^2)\cap L^\infty(\T^2)}^2,
\end{align*}
where in the last step, we used standard multiplication results for functions with fractional Sobolev smoothness, see e.g.\ \cite[Proposition 2.1.1]{ToolsPDEs}. Thus, $F(\theta)\in { L^\infty(s,\sigma_n;H^{-\alpha,\zeta}(\T^2))}$ a.s.\ on $\{\sigma_n>s\}$.
As in Step 2, for the initial value we have
\[\one_{\{\sigma>s\}} \theta(s)\in H^{-1+2\alpha-\varepsilon,\zeta}(\T^2)
\subseteq B^{\alpha(1-2\frac{1+\wt{\a}}{r})}_{\zeta,r}(\T^2)\]
for any $r<\infty$ provided $\wt{\a}\in [0,\frac{r}{2}-1)$ is sufficiently large.

As $r<\infty$ and $\zeta<\infty$ can be chosen arbitrarily large, we can conclude the desired {Sobolev} regularity from $(A, 0)\in \mathcal{SMR}_{r,\wt{\kappa}}$ for $Y_0 = H^{-\alpha,\zeta}(\T^2)$ and $Y_1 = H^{\alpha,\zeta}(\T^2)$. {To establish the claimed H\"older regularity in \eqref{eq:high_order_regularity_quasigeostrophic_eq}, observe that for all $\lambda\in (0,\frac{1}{2})$ and sufficiently large $r,\zeta\in (2,\infty)$,  
$$
H^{\lambda,r}_{\loc}(0,\sigma;H^{\alpha(1-2\lambda),\zeta}(\T^2))\subseteq C^{\lambda-1/r}_{\loc}((0,\sigma);C(\T^2))
\cap C_{\loc}((0,\sigma);C^{\alpha(1-2\lambda)-2/\zeta}(\T^2)).
$$ The claimed regularity follows from the arbitrariness of $\lambda\in (0,1)$.}
\end{proof}

\subsubsection{Global existence and uniqueness}

After these preparations, we now have enough regularity of the local solution $(\theta, \sigma)$, to obtain an a priori bound by It\^o calculus in a similar way as presented in \cite[Lemma 3.12]{AVreaction-global}.
\begin{lemma}[A priori estimate]
\label{lem:a_priori_estimate_quasigeostrophic}
Let $\alpha\in (1/2, 1)$. Let $q\geq 2$ be such that $q>\frac{1}{2\alpha-1}$, and such that $q\in (\frac{1}{2\alpha-1}, \frac{2}{3\alpha-2})$ if $\alpha>2/3$. Let $p>2$ and $\kappa\in (0,p/2-1)$ be as in \eqref{eq:critcondquasigeo}. Suppose that $\theta_0\in L^0_{\F_0}(\Omega;B^{1-2\alpha +\frac{2}{q}}_{q,p}(\T^2))$ and let $g:\T^2\times \R\to \ell^2$ be as in \eqref{eq:Lipschitzggeo_local} and also of linear growth, i.e.\ for some $L_g\geq 0$,
$$
\|g(x,y)\|_{\ell^2} \leq L_g (1+|y|) \ \ \text{ for all } x\in\T^2 , \ y\in\R.
$$
Let $(\theta,\sigma)$ be the $(p,\kappa,q)$-maximal solution of \eqref{eq:QGTalpha} given by Proposition \ref{prop:locQS}. Then for all $T>s>0$ and $\zeta\geq 2$ one has
\begin{align}\label{eq:sboundas}
\sup_{t\in [s, \sigma\wedge T)}\one_{\sigma>s}\|\theta(t)\|_{L^\zeta(\T^2)}<\infty \ \ a.s.
\end{align}
Moreover,
\begin{align}\label{eq:sbound}
\E\sup_{r\in [s,\sigma\wedge T]}\one_{\Gamma_{k,s}}\|\theta(r)\|_{L^{\zeta}(\T^2)}^{\zeta}\leq C_T\big(1+ \E\one_{\Gamma_{k,s}}\|\theta(s)\|_{L^{\zeta}(\T^2)}^{\zeta}\big),
\end{align}
where $\Gamma_{{k,s}}= \{\sigma>s, \|{\theta}(s)\|_{L^{\zeta}(\T^2)}\leq k\}$. Furthermore, if there exists an $\varepsilon>0$ such that ${-1+2\alpha(1-\frac{1+\kappa}{p})>\varepsilon}$ and $\theta_0\in L^q(\Omega;H^{\varepsilon,q}(\T^2))$, then
\begin{align}\label{eq:sboundzero}
\E\sup_{r\in [0,\sigma\wedge T)}\|\theta(r)\|_{L^{q}(\T^2)}^{q}\leq C_T\big(1+ \E\|\theta_0\|_{L^{q}(\T^2)}^{q}\big).
\end{align}
\end{lemma}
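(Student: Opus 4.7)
The plan is to obtain the $L^\zeta$ bounds by applying an It\^o formula to $\|\theta(t)\|_{L^\zeta(\T^2)}^\zeta$ on a suitable stopped version of \eqref{eq:QGTalpha}, exploiting two cancellations: the fact that $R^\bot\theta$ is divergence-free makes the transport part vanish in the $L^\zeta$-balance, and the fractional Laplacian $(-\Delta)^\alpha$ contributes a nonnegative term. The linear growth of $g$ then gives an It\^o correction that is controlled by $1+\|\theta\|_{L^\zeta}^\zeta$, and the stochastic Gronwall Lemma~\ref{lem:Gronwall} yields \eqref{eq:sboundas}--\eqref{eq:sbound}. The initial-time bound \eqref{eq:sboundzero} is extracted by a small adjustment that uses the embedding $H^{\varepsilon,q}(\T^2)\embed L^q(\T^2)$ to make the $L^\zeta$ energy well-defined already at $t=0$.

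To justify the It\^o formula, I would argue as in Step~1 of the proof of Theorem~\ref{thm:globalAllenCahn}. Fix $s>0$, $n\geq 1$, and let $(\sigma_n)_{n\geq 1}$ be a localizing sequence for $(\theta,\sigma)$. By Proposition~\ref{prop:QSregularity}, a.s.\ on $\{\sigma_n>s\}$ the solution satisfies $\theta\in C([s,\sigma_n];H^{\alpha,\zeta'}(\T^2))$ for arbitrarily large $\zeta'<\infty$, which through Sobolev embedding yields $\theta\in C([s,\sigma_n];L^\infty(\T^2))$ on $\{\sigma_n>s\}$. For $\F_s$-measurable $\Gamma\subseteq\{\sigma_n>s\}$ I then introduce the linear problem
\begin{equation*}
\dd v+(-\Delta)^\alpha v\,\dd t=\one_{[s,\sigma_n]\times\Gamma}F(\theta)\,\dd t+\one_{[s,\sigma_n]\times\Gamma}G(\theta)\,\dd W,\qquad v(s)=\one_\Gamma\theta(s),
\end{equation*}
and solve it in the $H^{-1,\wt p}$-setting with $\wt p\in(2,\infty)$ large, using stochastic maximal regularity of $(-\Delta)^\alpha$ (Theorem~\ref{thm:SMRHinfty}). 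Uniqueness of strong solutions on $X_0=H^{-1,q}$ gives $v=\one_\Gamma\theta$ on $[s,\sigma_n]$, and by choosing $\wt p$ large enough together with the gain in integrability and Sobolev embedding one arranges $v\in C([s,\sigma_n];L^\zeta(\T^2))$ with $(-\Delta)^\alpha v\in L^2((s,\sigma_n);L^\zeta(\T^2))$, so that It\^o's formula for $\|\cdot\|_{L^\zeta}^\zeta$ (see Lemma~\ref{lem:ItoHilbert} and the standard extension to $L^\zeta$) applies to $v$.

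The main computation goes as follows. Using $\div(R^\bot v)=0$ and integration by parts,
\begin{equation*}
\zeta\int_{\T^2}|v|^{\zeta-2}v\,(R^\bot v\cdot\nabla)v\,\dd x=\int_{\T^2}R^\bot v\cdot\nabla(|v|^\zeta)\,\dd x=-\int_{\T^2}(\div R^\bot v)\,|v|^\zeta\,\dd x=0,
\end{equation*}
so the transport term disappears. The fractional Laplacian contributes $\zeta\int_{\T^2}|v|^{\zeta-2}v\,(-\Delta)^\alpha v\,\dd x\geq 0$ by the Stroock--Varopoulos inequality. For the It\^o correction, the linear growth of $g$ gives $\sum_n g_n(x,v)^2\leq 2L_g^2(1+|v|^2)$ and hence
\begin{equation*}
\frac{\zeta(\zeta-1)}{2}\int_{\T^2}|v|^{\zeta-2}\sum_n g_n(\cdot,v)^2\,\dd x\leq C_\zeta(1+\|v\|_{L^\zeta(\T^2)}^\zeta).
\end{equation*}
Combining these estimates one obtains, a.s.\ for all $t\in[s,\sigma_n]$,
\begin{equation*}
\|v(t)\|_{L^\zeta}^\zeta\leq \|\one_\Gamma\theta(s)\|_{L^\zeta}^\zeta+C_\zeta\int_s^t\bigl(1+\|v(r)\|_{L^\zeta}^\zeta\bigr)\dd r+M_t,
\end{equation*}
with $M$ a continuous local martingale. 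The stochastic Gronwall Lemma~\ref{lem:Gronwall} applied with $\Gamma=\Gamma_{k,s}$ and let $n\to\infty$ yields \eqref{eq:sbound} (after a monotone limit in $k$ it also produces \eqref{eq:sboundas}).

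For \eqref{eq:sboundzero} the additional assumption $-1+2\alpha(1-\tfrac{1+\kappa}{p})>\varepsilon$ together with Proposition~\ref{prop:locQS} guarantees $\theta\in C([0,\sigma);H^{\varepsilon,q}(\T^2))\embed C([0,\sigma);L^q(\T^2))$, so the energy $\|\theta(0)\|_{L^q}^q=\|\theta_0\|_{L^q}^q$ is finite and the construction above can be started at $s=0$ with $\zeta=q$; taking expectations (no truncation $\Gamma_{k,s}$ is needed because $\theta_0\in L^q(\Omega;H^{\varepsilon,q})$) and invoking the stochastic Gronwall lemma gives the stated estimate. The main technical obstacle is the first paragraph: producing enough path-regularity to make It\^o's formula for $\|\cdot\|_{L^\zeta}^\zeta$ rigorous up to the stopping time $\sigma_n$, and to ensure that the critical transport term really integrates to zero. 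Once that is in place, the rest is a bookkeeping exercise with the stochastic Gronwall lemma.
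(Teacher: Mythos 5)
There are two genuine gaps in your proposal, both concerning the core of the argument; the structure (divergence-free cancellation, Stroock--Varopoulos nonnegativity, control of the It\^o correction by linear growth, Gronwall) is correct, but the tools you invoke at the two critical junctures are not strong enough.

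First, the stochastic Gronwall Lemma~\ref{lem:Gronwall} cannot give \eqref{eq:sbound}. That lemma yields a weak-type tail estimate and $L^p(\Omega)$-moment bounds only for $p\in(0,1)$; even when the process $A$ in the Gronwall inequality is deterministic (as here, $A_t=C_\zeta(t-s)$), the bound $\|e^{-A_T}X_T^*\|_{L^p}\lesssim\|H_T\|_{L^p}$ is restricted to $p<1$, and the argument cannot be pushed to $p=1$. Since \eqref{eq:sbound} is a genuine $L^1(\Omega)$-moment estimate of $\sup_r\|\theta(r)\|_{L^\zeta}^\zeta$, you must obtain it differently. The paper's proof takes the pathwise inequality, takes the supremum in time and expectations, controls the martingale term by Burkholder--Davis--Gundy and Young's inequality (absorbing a small part of $\sup_r\|\theta^{(j)}(r)\|_{L^\zeta}^\zeta$), and then applies the \emph{deterministic} Gronwall inequality to $y(t)=\E[\sup_{r\in[s,t]}\|\theta^{(j)}(r)\|_{L^\zeta}^\zeta+1]$. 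The stochastic Gronwall lemma is used in Theorem~\ref{thm:globalAllenCahn}, but precisely because there only sub-unit moments $\lambda q$ with $\lambda\in(0,1)$ are claimed; here the full $\zeta$-moment is needed and the route is different.

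Second, your justification of It\^o's formula overclaims regularity. From stochastic maximal $L^{\wt p}$-regularity in the $H^{-1,\wt p}$-setting one gets $v\in L^{\wt p}(s,\sigma_n;H^{-1+2\alpha,\wt p})$, i.e.\ spatial smoothness $-1+2\alpha<2\alpha$; no choice of $\wt p$ raises the smoothness to $2\alpha$, so the assertion $(-\Delta)^\alpha v\in L^2((s,\sigma_n);L^\zeta)$ is not available, and Proposition~\ref{prop:QSregularity} likewise gives only $H^{\alpha,\zeta}$-regularity in space, not $H^{2\alpha,\zeta}$. The paper avoids this by applying the extended It\^o formula of \cite[Proposition A.1]{DHV16} to a cutoff $\xi\in C^2_b(\R)$ matching $|y|^\zeta$ on a bounded range (the stopping time $\tau_j$ keeps $\|\theta\|_{L^\infty}$ bounded), and then handling the dissipation and convection terms by a separate approximation argument that \emph{splits $(-\Delta)^\alpha$ evenly over the two factors of the pairing} and passes to the limit by weak compactness in $H^{\alpha,\eta}$. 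This needs only $\theta\in L^2_{\rm loc}((s,\sigma);H^{\alpha,\eta})$, which Proposition~\ref{prop:QSregularity} provides; your route would need twice the spatial smoothness. Relatedly, the ``standard extension'' of the $L^\zeta$-It\^o formula you cite (Lemma~\ref{lem:Ito_generalized}) is formulated for $v\in L^2(0,T;H^{1,q}_0)$ and does not cover the fractional Laplacian without exactly the kind of approximation step the paper carries out. Your treatment of \eqref{eq:sboundzero} is broadly compatible with the paper's (the paper's detour through the auxiliary weight $\wt\kappa$ and Corollary~\ref{cor:comp} is for robustness), but the two issues above would have to be repaired before the proof is complete.
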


\begin{proof}
By Proposition \ref{prop:QSregularity}, $\theta$ is locally regular on $(0,\sigma)$ and, in particular, $\theta\in C((0,\sigma)\times\T^2)\cap L^2_{\rm loc}((0,\sigma);H^{{\alpha,\eta}}(\T^2))$ a.s.\ {for any $\eta<\infty$. Now fix $\eta$ such that $\eta>2/\alpha$ and $\eta\geq \zeta$. Then $H^{\alpha,\eta}(\T^2)\subseteq L^\infty(\T^2)$ by Sobolev embedding}.

Let $(\sigma_{j})_{j\geq 1}$ be a localizing sequence for $(\theta,\sigma\wedge T)$. Let
\begin{align*}
\tau_j = \inf\{t\in [s, \sigma_j]: \|\theta(t)-\theta(s)\|_{C(\T^2)} + \|\theta\|_{L^2(s,t;H^{{\alpha,\eta}}(\T^2))}\geq j\}
\end{align*}
on the set $\{\sigma>s\}\cap \{\|{\theta}(s)\|\leq j-1\}$, and $\tau_j = s$ otherwise. The convention is that $\inf\emptyset =\sigma_j$. Let $\Gamma\in \F_s$ be such that $\Gamma \subseteq \{\sigma>s\}$. Let $\theta^{(j)}(t) = \one_{\Gamma} \theta(t\wedge \tau_j)$. Since $(\theta, \sigma)$ is a $(p,\kappa,q)$-maximal solution of \eqref{eq:QGTalpha}, it follows that
\begin{align*}
\textstyle \theta^{(j)}(t) - \theta^{(j)}(s) &=\textstyle -\int_0^t \one_{[s,\tau_j]\times \Gamma} \big[ (-\Delta)^{\alpha} \theta + F(\theta) \big] \,\dd  r
+\sum_{n\geq 1}  \int_s^{t} \one_{[s,\tau_j]\times \Gamma} g_n(\theta)\,\dd W^n.
\end{align*}
From the definition of $\tau_j$ we see that $\|{\theta}^{(j)}\|_{L^\infty((s,\tau_j)\times\T^2)}\leq 2j-1$ on the set $\{\tau_j>s\}$. Let $\xi\in C^2_b(\R)$ be such that $\xi(y) = |y|^{\zeta}$ for $|y|\leq 2j-1$. By an extended version of It\^o's formula (see \cite[Proposition A.1]{DHV16}) we obtain that a.s.\ for all $t\in [s,T]$,
\begin{align*}
\|\theta^{(j)}(t)\|_{L^{\zeta}(\T^2)}^{\zeta} = \|\theta^{(j)}(s)\|_{L^{\zeta}(\T^2)}^{\zeta} + \zeta \mathcal{D}(t) + \zeta\mathcal{S}(t),
\end{align*}
where
\begin{align*}
\mathcal{D}(t) &= \textstyle  \int_s^t \int_{\T^2}\one_{[s,\tau_j]\times \Gamma} \Big[ -  \theta |\theta|^{\zeta-2}  (-\Delta)^{\alpha}\theta - \theta |\theta|^{\zeta-2} R^{\bot} \theta \cdot \nabla \theta + \frac12 (\zeta-1) |\theta|^{\zeta-2} \|g(\theta)\|_{\ell^2}^2 \Big]\, \dd x \, \dd r,
\\ \mathcal{S}(t) &= \textstyle  \sum_{n\geq 1}\int_s^t \int_{\T^2}\one_{[s,\tau_j]\times \Gamma} \theta|\theta|^{\zeta-2} g_n(\theta) \,\dd x \, \dd W^n.
\end{align*}
Here, the terms $\int_{\T^2} |\theta|^{\zeta-2}\theta (-\Delta)^\alpha \theta\,\dd x  $ and $\int_{\T^2} \theta|\theta|^{\zeta-2} R^{\bot} \theta\cdot \nabla \theta\,\dd x $ are understood in the distributional sense. Indeed, $\theta |\theta|^{\zeta-2} \in H^{\alpha,\eta}(\T^2)$ by \cite[Corollary 10.5]{TayPDE3}. Moreover, as $\eta>2/\alpha$,
$H^{\alpha,\eta}(\T^2)$ is a Banach algebra by \cite[Proposition 10.2]{TayPDE3}. The distributional pairing is well defined by dividing $(-\Delta)^{\alpha}$ equally over both sides. For the convective term, the first derivative can be divided over the terms since $\alpha>1/2$. To prove the It\^o formula, one uses a mollifier argument combined with the above pairing.

Next, we need another approximation argument to estimate the first two spatial integrals in $\mathcal{D}(t)$, and then obtain the claimed energy estimate. Fix $\theta\in H^{\alpha,\eta}(\T^2)$ and let $(\theta_n)_{n\geq 1}\subseteq C^\infty(\T^2)$ be a sequence such that $\theta_n\to \theta$ in $H^{\alpha,\eta}(\T^2)$. Then, using \cite{TayPDE3} as before, one can check that $\sup_{n\geq 1}\||\theta_n|^{\zeta-2}\theta_n\|_{H^{\alpha,\eta}(\T^2)}<\infty$. Choosing a weakly convergent subsequence of the latter, the limit can be identified as $|\theta|^{\zeta-2}\theta$. Therefore, since $\theta_n\to \theta$ in $H^{\alpha,\eta}(\T^2)$ strongly, we obtain
\[\textstyle \int_{\T^2} |\theta|^{\zeta-2}  \theta (-\Delta)^{\alpha}\theta \, \dd x=\lim_{n\to \infty} \int_{\T^2} |\theta_n|^{\zeta-2}  \theta_n(-\Delta)^{\alpha}\theta_n \, \dd x\geq 0,\]
where the estimate follows from \cite[Proposition II.3.24 and Example II.3.26]{EN} and \cite[Lemma 2.1]{Prussgeo}. 

Next, for the convective term in $\mathcal{D}(t)$, one see that $\sup_{n\geq 1}\|\theta_n |\theta_n|^{\zeta-2} R^{\bot} \theta_n\|_{H^{\alpha,\eta}(\T^2)}<\infty$, and again by choosing a weakly convergent subsequence, we deduce that
\begin{align*}
\textstyle
\int_{\T^2} \theta |\theta|^{\zeta-2} R^{\bot} \theta \cdot \nabla \theta \,\dd x
&=
\textstyle \lim_{n\to \infty}\int_{\T^2} \theta_n |\theta_n|^{\zeta-2} R^{\bot} \theta_n \cdot \nabla \theta_n \,\dd x. \end{align*}
Integrating by parts, we get
\begin{align*}
\int_{\T^2} \theta_n |\theta_n|^{\zeta-2} R^{\bot} \theta_n \cdot \nabla \theta_n \,\dd x
\textstyle
=\frac{1}{\zeta}\int_{\T^2} R^{\bot} \theta_n \cdot \nabla |\theta_n|^{\zeta} \,\dd x =  -\frac{1}{\zeta}\int_{\T^2} \div(R^{\bot} \theta_n) |\theta_n|^{\zeta} \,\dd x = 0,
\end{align*}
where we also used that $\div(R^{\bot} \theta_n) = 0$. 

Therefore, using the linear growth of $g$ we can conclude that
\begin{align*}
\textstyle \|\theta^{(j)}(t)\|_{L^{\zeta}(\T^2)}^{\zeta} \leq \|\theta^{(j)}(s)\|_{L^{\zeta}(\T^2)}^{\zeta} + C\int_s^t \int_{\T^2} \one_{[s,\tau_j]\times \Gamma}  (|\theta|^{\zeta}+1)  \, \dd x \, \dd r + \zeta\mathcal{S}(t).
\end{align*}
Taking the supremum over time and then expectations, after applying the Burkholder-Davis-Gundy inequality, we obtain that
\begin{align*}
\textstyle  \E\sup_{r\in [s,t]}\|\theta^{(j)}(r)\|_{L^{\zeta}(\T^2)}^{\zeta}&\leq \E\|\theta^{(j)}(s)\|_{L^{\zeta}(\T^2)}^{\zeta} + C\E \int_s^t \one_{[s,\tau_j]\times \Gamma}  (\|\theta\|^{\zeta}_{L^{\zeta}(\T^2)}+1)\, \dd r \\ & + C' \E \Big(\sum_{n\geq 1}\int_s^t \one_{[s,\tau_j]\times \Gamma}  \Big| \int_{\T^2} |\theta|^{\zeta-1} |g_n(\theta)| \,\dd x\Big|^2 \, \dd r\Big)^{1/2}.
\end{align*}
Applying Minkowski's inequality, the linear growth of $g$ once more, and $ab\leq \varepsilon a^2 +C_{\varepsilon}b^2$ we obtain
\begin{align*}
\textstyle  \Big(\sum_{n\geq 1}\int_s^t \one_{[s,\tau_j]\times \Gamma}  & \textstyle  \Big| \int_{\T^2} |\theta|^{\zeta-1} |g_n(\theta)| \, \dd x\Big|^2 \, \dd r\Big)^{1/2} \leq
\Big(\int_s^t \one_{[s,\tau_j]\times \Gamma} \Big| \int_{\T^2}  |\theta|^{\zeta-1}  |g(\theta)\|_{\ell^2} \, \dd x\Big|^2 \, \dd r\Big)^{1/2}
\\ & \textstyle  \leq \Big(2\int_s^t \one_{[s,\tau_j]\times \Gamma}[ \|\theta\|_{L^{\zeta}(\T^2)}^{2\zeta}  +1  ]\, \dd r\Big)^{1/2}
\\ &\textstyle  \leq \Big( [2\sup_{r\in [s,t]}\|\theta^j\|_{L^{\zeta}(\T^2)}^{\zeta}+1] \int_s^t \one_{[s,\tau_j]\times \Gamma}[ \|\theta\|_{L^{\zeta}(\T^2)}^{\zeta} +1 \, \dd r]\Big)^{1/2}
\\ &\textstyle  \leq \varepsilon [\sup_{r\in [s,t]} \|\theta^j\|_{L^{\zeta}(\T^2)}^{\zeta}+1] + C_{\varepsilon} \int_s^t \one_{[s,\tau_j]\times \Gamma} [ \|\theta\|_{L^{\zeta}(\T^2)}^{\zeta} +1] \, \dd r.
\end{align*}
Taking expectation, combining the estimates, and choosing $\varepsilon>0$ small enough we can conclude
\begin{align*}
\textstyle \E\Big[\sup_{r\in [s,t]}\|\theta^{(j)}(r)\|_{L^{\zeta}(\T^2)}^{\zeta}+1\Big]&\leq \textstyle 2\E\Big[\|\theta^{(j)}(s)\|_{L^{\zeta}(\T^2)}^{\zeta}+1\Big] + C'_{\varepsilon}\E \int_s^t (\|\theta^j\|^{\zeta}_{L^{\zeta}(\T^2)}+1) \, \dd r
\end{align*}

Therefore, Gronwall's inequality applied to $y(t) = \E\Big[\sup_{r\in [s,t]}\|\theta^{(j)}(r)\|_{L^{\zeta}(\T^2)}^{\zeta}+1\Big]$ gives that
\begin{align*}
\textstyle \E\Big[\sup_{r\in [s,T]}\|\theta^{(j)}(r)\|_{L^{\zeta}(\T^2)}^{\zeta}+1\Big]\leq 2\E\Big[\|\theta^{(j)}(s)\|_{L^{\zeta}(\T^2)}^{\zeta}+1\Big] e^{(T-s)C'_{\varepsilon}}.
\end{align*}
Letting $j\to \infty$ we obtain that
\begin{align*}
\textstyle \E\Big[\sup_{r\in [s,T]}\one_{\Gamma}\|\theta(r)\|_{L^{\zeta}(\T^2)}^{\zeta}+1\Big]\leq 2\E\one_{\Gamma} \Big[\|\theta(s)\|_{L^{\zeta}(\T^2)}^{\zeta}+1\Big] e^{(T-s)C'_{\varepsilon}},
\end{align*}
which proves the required bound \eqref{eq:sbound}.
Taking $\Gamma_{k,s} = \{\|\theta(s)\|_{L^{\zeta}(\T^2)}\leq k\}\cap \{\sigma>s\}$, and noting that $\Gamma_{k,s}\uparrow \{\sigma>s\}$ it follows that a.s.\ on $\{\sigma>s\}$ one has $\sup_{r\in [s,T]}\|\theta(r)\|_{L^{\zeta}(\T^2)}<\infty$, which is \eqref{eq:sboundas}.

To derive the bound \eqref{eq:sboundzero} if $\theta_0\in L^q(\Omega;H^{\varepsilon,q}(\T^2))$, note that we can take $X_0 = H^{-1,q}(\T^2)$ and $X_1 = H^{-1+2\alpha,q}(\T^2)$ and let $\wt{\kappa}\geq \kappa$ be such that $0<-1+2\alpha(1-\frac{\wt{\kappa}+1}{p})<\varepsilon$. Then the trace space satisfies $H^{\varepsilon,q}(\T^2)\hookrightarrow B^{-1+2\alpha(1-\frac{\wt{\kappa}+1}{p})}_{q,p}(\T^2)\hookrightarrow L^{q}(\T^2)$. As in the proof of Proposition \ref{prop:locQS} one can see that there is a maximal $(p,\wt{\kappa}, q)$-solution $(\wt{\theta}, \wt{\sigma})$ to \eqref{eq:QGTalpha}. By Corollary \ref{cor:comp} $\wt{\sigma} = \wt{\sigma}$ and $\wt{\theta} = \theta$. Now, it remains to observe that a.s. $\theta=\wt{\theta}\in C([0,\sigma);B^{-1+2\alpha(1-\frac{\wt{\kappa}+1}{p})}_{q,p}(\T^2))\subseteq C([0,\sigma);L^{q}(\T^2))$.
Now, we can let $k\to \infty$ and $s\downarrow 0$ in \eqref{eq:sbound} for $\zeta=q$.
\end{proof}

Next, the energy bound will be used to derive $\sigma=\infty$ through blow-up criteria.
\begin{theorem}[Global existence and uniqueness]\label{thm:quasigeoglobal}
Let $\alpha\in (1/2, 1)$. Let $q\geq 2$ be such that $q>\frac{1}{2\alpha-1}$, and such that $q\in (\frac{1}{2\alpha-1}, \frac{2}{3\alpha-2})$ if $\alpha>2/3$. Let $p>2$ and $\kappa\in (0,p/2-1)$ be such that $\frac{1+\kappa}{p} + \frac{q+1}{\alpha q}= 2$.
Suppose $\theta_0\in L^0_{\F_0}(\Omega;B^{1-2\alpha +\frac{2}{q}}_{q,p}(\T^2))$ and $g:\T^2\times\R\to \R$ satisfies
\begin{align*}
g(\cdot, 0)\in L^\infty(\T^2;\ell^2), \ \ \text{and} \ \
\|g(x,y) - g(x,y')\|_{\ell^2}\leq L_g |y-y'|, \ \ \ x\in \T^2, y,y'\in \R.
\end{align*}
Then there exists a unique $(p,\kappa,q)$-solution $\theta$ of \eqref{eq:QGTalpha}. Moreover,
a.s.
\begin{align*}
\theta& \in H^{\lambda,p}_{\rm loc}([0,\infty), w_{\kappa};H^{-1+2\alpha-2\alpha\lambda,q}(\T^2))\cap C([0,\infty);B^{1-2\alpha +\frac{2}{q}}_{q,p}(\T^2)), & \lambda\in [0,1/2),
\\ \theta&\in H^{\lambda,r}_{\rm loc}((0,\infty);H^{1+\alpha-2\lambda, \zeta}(\T^2)), & r,\zeta\in (2, \infty), \lambda\in [0,1/2).
\end{align*}
Moreover, each of the bounds \eqref{eq:sboundas}, \eqref{eq:sbound}, and \eqref{eq:sboundzero} hold with $\sigma=\infty$.
\end{theorem}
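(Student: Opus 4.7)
The plan is to combine the three preparatory results in this subsection. Local existence of a unique maximal $(p,\kappa,q)$-solution $(\theta,\sigma)$ with the stated regularity on $[0,\sigma)$ is provided by Proposition \ref{prop:locQS}, and the higher-order regularity on $(0,\sigma)$ by Proposition \ref{prop:QSregularity}. Since the linear growth of $g$ is in particular \eqref{eq:Lipschitzggeo_local}, Lemma \ref{lem:a_priori_estimate_quasigeostrophic} applies and yields the bounds \eqref{eq:sboundas}--\eqref{eq:sboundzero}; in particular, for every $\zeta\in[2,\infty)$, $s>0$ and $T<\infty$,
\[
\sup_{t\in[s,\sigma\wedge T)}\|\theta(t)\|_{L^\zeta(\T^2)}<\infty\quad\text{a.s.\ on }\{\sigma>s\}.
\]
All that remains is to prove $\sigma=\infty$ a.s.; once this is established, the asserted regularity on $[0,\infty)$ and the bounds with $\sigma=\infty$ follow by revisiting Lemma \ref{lem:a_priori_estimate_quasigeostrophic} and Proposition \ref{prop:QSregularity} on the infinite time interval.

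To rule out blow-up, I would apply the transference blow-up criterion of Corollary \ref{cor:transfblowup}\eqref{it2:transfblowup} in an auxiliary $(r,\tilde\kappa)$-setting with the same $X_0,X_1$. First, the regularity of Proposition \ref{prop:QSregularity}, combined with the Sobolev embedding $H^{\alpha(1-2\theta),\zeta}\hookrightarrow H^{-1+2\alpha(1-\theta),q}$ (valid for $\alpha\leq 1$ and $\zeta\geq q$), verifies the instantaneous-regularization hypothesis \eqref{eq:instantaneous_regularization_r} of the corollary for every $r\in[p,\infty)$. Next, I would choose $r>2$ and $\tilde\kappa\in[0,r/2-1)$ such that $(1+\tilde\kappa)/r$ lies strictly in the open interval
\[
\Big(\tfrac{2\alpha-1}{2\alpha},\ \tfrac{1+\kappa}{p}\Big),
\]
which, using the critical identity \eqref{eq:critcondquasigeo}, has length $(q(2\alpha-1)-2)/(2\alpha q)$ and is therefore non-empty whenever $q>2/(2\alpha-1)$. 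With such a choice the auxiliary trace space $X_{1-(1+\tilde\kappa)/r,r}=B^{s_0}_{q,r}(\T^2)$ has strictly negative smoothness $s_0=-1+2\alpha(1-(1+\tilde\kappa)/r)$, so that $L^q(\T^2)\hookrightarrow B^0_{q,\infty}(\T^2)\hookrightarrow B^{s_0}_{q,r}(\T^2)$. The $L^q$-bound above then upgrades to $\sup_{t\in[s,\sigma)}\|\theta(t)\|_{X_{1-(1+\tilde\kappa)/r,r}}<\infty$ a.s.\ on $\{s<\sigma<\infty\}$, and Corollary \ref{cor:transfblowup}\eqref{it2:transfblowup} (with the required strict inequality $(1+\tilde\kappa)/r<(1+\kappa)/p$) gives $\P(s<\sigma<\infty)=0$ for every $s>0$. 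Since $\sigma>0$ a.s., letting $s\downarrow 0$ yields $\sigma=\infty$ a.s.

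The main technical obstacle is the complementary regime $q\leq 2/(2\alpha-1)$, in which the window above is empty and the critical trace smoothness $1-2\alpha+2/q$ is non-negative, so the $L^\zeta$-bound cannot be directly embedded into an accessible auxiliary trace space on the same spatial scale $q$. To handle this, I would argue as in Subsection \ref{sss:extrapolation_AC_3D} and extend the abstract scheme to the enlarged family of spaces $\widetilde{X}_0=H^{-\delta,\tilde q}(\T^2)$, $\widetilde{X}_1=H^{2\alpha-\delta,\tilde q}(\T^2)$ with $\delta\in(1,2)$ and $\tilde q\geq q$ large. In this enlarged setting the negativity threshold becomes $(2\alpha-\delta)/(2\alpha)$, which can be pushed below zero, so a non-empty window is restored. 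A compatibility statement in the spirit of Corollary \ref{cor:comp} and Proposition \ref{prop:coincideRD}, but now varying the spatial parameters $(\delta,\tilde q)$ as well, identifies the enlarged-setting maximal solution with $(\theta,\sigma)$, and the argument of the preceding paragraph applied in the enlarged setting transfers back to give $\sigma=\infty$ in all cases. This parameter juggling, together with the verification of stochastic maximal regularity for the leading operator in the enlarged spaces, is where the real work lies; after that, every other claim of the theorem is obtained by a final application of Propositions \ref{prop:locQS}, \ref{prop:QSregularity} and Lemma \ref{lem:a_priori_estimate_quasigeostrophic} on $[0,\infty)$.
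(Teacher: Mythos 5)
Your plan for the regime $q>\tfrac{2}{2\alpha-1}$ is correct: the window for $(1+\tilde\kappa)/r$ is indeed nonempty there, the $L^q$-bound from Lemma~\ref{lem:a_priori_estimate_quasigeostrophic} embeds into the negative-smoothness trace space $B^{s_0}_{q,r}$, and Corollary~\ref{cor:transfblowup}\eqref{it2:transfblowup} closes the argument. But this covers only part of the theorem: the hypothesis is $q>\tfrac{1}{2\alpha-1}$, and for $q\in(\tfrac{1}{2\alpha-1},\,\tfrac{2}{2\alpha-1}]$ the trace-space smoothness $1-2\alpha+\tfrac{2}{q}$ is nonnegative, your window is empty, and the embedding $L^q\hookrightarrow B^{s_0}_{q,r}$ with $s_0\le 0$ is unavailable. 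You correctly flag this, but the proposed remedy — enlarging to spaces $H^{-\delta,\tilde q}$ with $\delta\in(1,2)$, re-running the local theory there, and proving compatibility across the new parameters — is a large programme that you do not carry out; as written, this regime of the theorem is not proved.

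The paper avoids the enlargement entirely by using the limit-type blow-up criterion, Theorem~\ref{thm:criticalblowup}\eqref{it1:criticalblowup}, rather than the sup-type criterion you invoke. The key mechanism is that the $L^\zeta$ bound is not fed directly into a trace-space embedding; instead it is used to show that $F(\theta)\in L^r(s,\sigma;H^{-1,q})$ and $G(\theta)\in L^r(s,\sigma;H^{-1+\alpha,q})$ for arbitrary $r$, and then stochastic maximal $L^r$-regularity (via Proposition~\ref{prop:localizationSMR}) yields $\theta\in C([s,\sigma];B^{-1+2\alpha(1-1/r)}_{q,r})$. The parabolic smoothing thus \emph{upgrades} the $L^q$ information to Besov regularity with smoothness close to $-1+2\alpha$, which exceeds $1-2\alpha+\tfrac2q$ precisely under the theorem's hypothesis $q>\tfrac{1}{2\alpha-1}$. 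This gives $\lim_{t\uparrow\sigma}\theta(t)$ in the critical trace space on $\{s<\sigma<\infty\}$, and the limit criterion then forces $\sigma=\infty$. The moral is that when the critical trace space has nonnegative smoothness, a raw $L^q$ bound is not enough for the sup-criterion, but one extra pass through linear maximal regularity converts it into just enough smoothness for the limit-criterion. You should replace your transference/embedding step with this maximal-regularity upgrade; then the dichotomy on $q$ disappears and the argument covers the full range without any enlargement of the space scale.
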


\begin{proof}
On the set $\{s<\sigma<\infty\}$, for all $\zeta\geq 2$ one has a.s.\ $\sup_{t\in [s, \sigma\wedge T)}\one_{\sigma>s}\|\theta(t)\|_{L^\zeta(\T^2)}<\infty$. Therefore, on $\{s<\sigma<\infty\}$ a.s.
\begin{align*}
\|F(\theta)\|_{L^r(s,\sigma;H^{-1,q}(\T^2))}\lesssim \|\theta R^{\bot}\theta\|_{L^{q}(\T^2)}\leq \|\theta\|_{L^{2r}(s,\sigma;L^{2q}(\T^d))}^2<\infty
\end{align*}
and
\[\|g(\theta)\|_{L^r(s,\sigma;H^{-1+\alpha,q}(\T^2))}\lesssim \big\|(1+\|\theta\|_{L^{q}(\T^2)})\big\|_{L^r(s,\sigma)}<\infty.\]
Since $\one_{\sigma>s}\theta(s)\in B^{-1+2\alpha(1 -\frac{1}{r})}_{q,r}(\T^2)$ (see Proposition \ref{prop:QSregularity}), it follows from $(A,0) \in \mathcal{SMR}_{r,0}^{\bullet}$ and Proposition \ref{prop:localizationSMR} that on $\{s<\sigma<\infty\}$ a.s.
\[\theta\in C([s,\sigma];B^{-1+2\alpha(1 -\frac{1}{r})}_{q,r}(\T^2)).\]
In particular, choosing sufficiently large $r>2$, we find that $\lim_{t\uparrow \sigma} \theta(t)$ exists in $B^{-1+2\alpha(1 -\frac{1}{r})}_{q,r}(\T^2)\hookrightarrow B^{1-2\alpha+\frac2q}_{q,p}(\T^2) = X_{1-\frac{1+\kappa}{p},p}$, where we used that $-1+2\alpha>1-2\alpha+\frac2q$ due to $q>\frac{1}{2\alpha-1}$.
From Theorem \ref{thm:criticalblowup}\eqref{it1:criticalblowup} it follows that
\[\P(s<\sigma<\infty) = \P\big(s<\sigma<\infty,\lim_{t\uparrow \sigma} \theta(t) \ \text{exists in $X_{1-\frac{1+\kappa}{p},p}$} \big) = 0.\]
Since $\sigma>0$ a.s.\ we can conclude that $\P(\sigma<\infty) = \lim_{s\downarrow 0} \P(s<\sigma<\infty) = 0$.

The regularity assertions follow from Propositions \ref{prop:locQS} and \ref{prop:QSregularity}.
\end{proof}

\begin{remark}\
\begin{itemize}
\item
A priori estimates for  $\|\theta\|_{L^2(\Omega;L^2(s,\sigma;H^{\alpha}(\T^2)))}$ and $\|\theta\|_{L^p(\Omega;L^\infty(0,T;L^p(\T^2)))}$ were derived in \cite[(3.6) and Theorem 3.3]{RZZ} under the condition that $\theta_0\in L^p(\T^2)$. In the latter, the unique strong solution was obtained through probabilistic weak solutions and pathwise uniqueness.
\item
Theorem \ref{thm:quasigeoglobal} excludes the borderline case $\alpha=1/2$, which is considered hyperbolic instead of parabolic. In the deterministic setting, global well-posedness and regularity have been obtained in \cite{CafVas10,KisNazVol,ConstQuasigeo} on various domains. In the stochastic case, the global existence and uniqueness for \eqref{eq:QGTalpha} for $\alpha=1/2$ seems completely open.
\end{itemize}
\end{remark}

\subsection{Stochastic Navier--Stokes equations on the whole space}\label{subsec:SNSRd}
In this subsection, we analyze the following stochastic Navier--Stokes equations on $\R^d$ with $d\geq 2$:
\begin{equation}
\label{eq:Navier_StokesRd}
\left\{
\begin{aligned}
&\textstyle \dd u =\big[\nu \Delta u -(u\cdot \nabla)u -\nabla P  \big] \,\dd t
+\sum_{n\geq 1}\big[(\btwod_{n}\cdot\nabla) u +g_n(\cdot,u)-\nabla \wt{P}_n\big] \, \dd W_t^n,\\
&\div \,u=0,\\
&u(0,\cdot)=u_0.
\end{aligned}
\right.
\end{equation}
Here, $u$ denotes the unknown velocity field, $P$ the deterministic pressure and $\wt{P}_n$ the turbulent pressure.
Physical motivations for the model \eqref{eq:Navier_StokesRd} have been already discussed in Subsection \ref{ss:scaling_intro}.
Here, the noise is understood in the It\^o sense, $\nu>0$ and the mappings $b_n$ and $g_n$ will be specified below. However, the arguments below also extend to transport noise in Stratonovich formulation (see \cite{AV20_NS} for a similar situation) and variations of the Navier--Stokes equations such as the Boussinesq and magneto-hydrodynamic equations.

Stochastic Navier--Stokes equations on $\R^3$ have been recently investigated, see e.g.\  \cite{DZ20_critical_NS,kim2010strong,KWX24_SPDE}. The results below appear to be the first results in the case of a \emph{non-small} transport noise. Below, we show how some of the arguments in \cite{AV20_NS} to the whole space case. Nevertheless, we also obtain an endpoint Serrin-type blow-up criterion (see Theorem \ref{thm:serrin}\eqref{it2:serrin} below), which is \emph{new} even in the periodic case and complements \cite[Theorem 2.9]{AV20_NS} (the proof in the periodic case is analogous).  Moreover, we prove that the solution instantaneously regularizes in time and space as we previously did in the periodic case. However, the argument to prove space regularity on $\R^d$ differs from the one in our above-mentioned paper, since the latter used the boundedness of the underlying domain.
For completeness, let us mention that the `small data implies global' result of \cite[Theorem 2.11]{AV20_NS} also extends to the current situation. For brevity, we do not state this here.

Next, we collect the assumptions on coefficients $b=(b_n)_{n\geq 1}$ and the nonlinearity $g=(g_n)_{n\geq 1}$.

\begin{assumption}
\label{ass:NSD_LqLp}
We say that Assumption \ref{ass:NSD_LqLp}$(p,\s,q,\varepsilon)$ holds if $\s\in (-1,0]$, $\varepsilon\geq 0$, and either $\big[p\in (2,\infty),q\in [2,\infty) \big]$ or $\big[p=q=2\big]$, and the following are satisfied:
\begin{enumerate}[{\rm(1)}]
\item\label{it1:NSD_LqLp} There exists an $\alpha>-\delta$ such that $b\in C^{\alpha}(\R^d;\ell^2)$.
\item\label{it2:NSD_LqLp} $b$ satisfies the parabolicity condition, i.e., there exists $\nu_0\in (0,\nu)$ such that
$$
\textstyle \sum_{n\geq 1} (b_n\cdot\xi)^2 \leq 2\nu_0|\xi|^2\text{ for all }\xi\in\R^d.
$$
\item\label{it21:NSD_LqLp} $b$ is nearly constant at infinity, i.e., there exists $b_\infty\in \ell^2$ such that
$$
\textstyle \lim_{|x|\to \infty} \|b(x)-b_{\infty}\|_{\ell^2}\leq \varepsilon.
$$
\item\label{it3:NSD_LqLp} The mapping $g$ decomposes as $g=\sum_{i=1}^{\ell }g_i $ where $g_i:\R^d\times \R^d\to \ell^2$ satisfy $g_i(0,\cdot) \in( L^1\cap L^\infty)(\R^d;\ell^2)$ for all $i\in \{1,\dots,\ell\}$. Moreover, for all $i\in \{1,\dots,\ell\}$ there exists $\eta_i\in [0,1]$ such that and for all $x\in\R^d$ and $y,y'\in \R^d$,
$$
\|g_i(x,y)-g_i(x,y')\|_{\ell^2}\lesssim (|y|^{\eta_i}+|y'|^{\eta_i})|y-y'|.
$$
\end{enumerate}
\end{assumption}

Note that $\eta_i=1$ is allowed, and therefore $g_i$ can grow quadratically. In particular, scaling-invariant nonlinearities in the diffusion part are allowed, see \cite[Subsection 1.1]{AV20_NS} and Subsection \ref{ss:scaling_intro}. As we will see later in the proof, the decomposition of the diffusion $g$ is used in checking Assumption \ref{ass:FGcritical}. It is unclear to us whether this can be removed.

A prototype choice of the transport noise coefficients $b$ is the Kraichnan model on $\R^d$, see e.g.\ \cite[Section 2 and Appendix C]{GL23_well_posedness}. Note that the regularity assumption \eqref{it1:NSD_LqLp} holds for sufficiently high correlation, see \cite[Proposition 2.6 and Remark 2.7]{GL23_well_posedness}.
On the other hand, the condition \eqref{it21:NSD_LqLp} is artificial and does not hold for the Kraichnan noise.
However, one can choose a finite collection of divergence-free vector fields $b_n\in \S(\R^d;\R^d)$ for which the corresponding vector $b=(b_n)_{n\in \{1,\dots,N\}}$ is an approximation of the Kraichnan noise. The parameter $\varepsilon$ allows us to cover also the case of small transport noise which is sometimes employed, cf.\ \cite{BS21_primitive,kim2010strong,LLW24_small_transport}.
For later use, we gather some comments on the Assumption \ref{ass:NSD_LqLp}\eqref{it21:NSD_LqLp} in the following.

\begin{remark}[On Assumption \ref{ass:NSD_LqLp}\eqref{it21:NSD_LqLp}]
Condition \eqref{it21:NSD_LqLp} is only used to obtain stochastic maximal $L^p$-regularity for the couple $(A,B)$ in \eqref{eq:choice_ABFG_NS3D} below.
At the moment, it can be avoided in some cases. Indeed, it follows from \cite{M02} that \eqref{it21:NSD_LqLp} can be removed if $p=q$ (under possibly additional assumptions on $b$). Note that the results in \cite{M02} readily extend to the time-weighted case by employing Theorem \ref{thm:extrapol} at the beginning of his proof. Hence, the reader can check that some of the results below are also valid without \eqref{it21:NSD_LqLp} (for instance, the well-posedness in the critical space $L^d$, see Theorem \ref{t:NS_Rd_local} and \eqref{eq:Ld_well_posedness}). We choose not to state such results, as the unnatural restriction $q=p$ gives limitations on the application of the nonlinear theory developed in Subsections \ref{sec:loc-well-posed} and \ref{subsec:blow_up}, which we believe can be removed by a further investigation of the linear theory.
\end{remark}

Before stating the main results of this section, we define suitable function spaces in which we study \eqref{eq:Navier_StokesRd}. Let $\pr$ be the Helmholtz projection on $\S'(\R^d;\R^d)$:
$$
\textstyle (\widehat{\pr f})^k(\xi)=  \widehat{f^k}(\xi)-\sum_{j=1}^d \frac{ \xi^j\xi^k}{|\xi|^2} \widehat{f^j}(\xi),
$$
where $k\in \{1,\dots,d\}$, $f=(f^k)_{k=1}^d\in \S'(\R^d;\R^d)$ and $\widehat{\cdot}$ denotes the Fourier transform on $\R^d$.
By standard Fourier methods, it is clear that the $\pr$ restricts to a bounded linear operator on Bessel-potential and Besov spaces in the reflexive range. Therefore, we can define:
\begin{equation}
\Hs^{s,q}(\R^d):= \pr(H^{s,q}(\R^d;\R^d))
\quad \text{ and }\quad
\Bs^{s}_{q,p}(\R^d):= \pr(B_{q,p}^{s}(\R^d;\R^d))
\end{equation}
for all $s\in \R^d$ and $1<q,p<\infty$. By applying the Helmholtz projection to the first line in \eqref{eq:Navier_StokesRd}, one can readily check that \eqref{eq:Navier_StokesRd} is (formally) in the form of \eqref{eq:SEE} with the choice:
\begin{equation}
\begin{aligned}
\label{eq:choice_ABFG_NS3D}
A u &=-\nu \Delta u , &  \qquad B u& = (\pr [(b_n\cdot\nabla)u])_{n\geq 1},\\
 F(u)&= \pr[\div (u\otimes u)],
 & \qquad  G(u) &= (\pr [g_n (\cdot,u)])_{n\geq 1},
\end{aligned}
\end{equation}
where we used the conservative form of the Navier--Stokes nonlinearity $\div (u\otimes u)=(u\cdot\nabla)u$, as $\div\, u=0$. The latter choice allows one to accommodate for weaker settings in the spatial variables.

Next, we introduce the solution concepts for \eqref{eq:Navier_StokesRd}. Let Assumption \ref{ass:NSD_LqLp}$(p,\s,q,\varepsilon)$ be satisfied and let $\a\in [0,\frac{p}{2}-1)\cup\{0\}$. Then we say that $(u,\sigma)$ is a (unique) \emph{$(p,\s,\a,q)$-solution} to \eqref{eq:Navier_StokesRd} if $(u,\sigma)$ is a $L^p_\a$-maximal solution to \eqref{eq:SEE} with the above choice of the linear operators $(A,B)$ and the nonlinear mappings $(F,G)$ as in \eqref{eq:choice_ABFG_NS3D},
$\mathcal{U}=\ell^2$ and $X_j=\Hs^{-1+\s+2j,q}(\R^d)$ with $j\in \{0,1\}$. Note that compared to Subsections \ref{ss:AllenCahnLpLq} and \ref{ss:reaction}, we replaced $\s$ by $-\s$ in our definition of the spaces $X_0$ and $X_1$, to be coherent with the notation in \cite{AV20_NS}.

\begin{theorem}[Local well-posedness and regularization in critical spaces]
\label{t:NS_Rd_local}
Let Assumption \ref{ass:NSD_LqLp}$(p,\s,q,\varepsilon)$ be satisfied. Assume that $(p,\s,q)$ satisfies one of the following conditions:
\begin{itemize}
\item $\delta\in [-\frac{1}{2},0]$, $\frac{d}{2+\s} <q<\frac{d}{1+\s}$, and $\frac{2}{p}+\frac{d}{q}\leq 2+\delta$;
\item $\delta=0$, and $p =q =d =2$.
\end{itemize}
Then, there exists $\varepsilon>0$ for which the following assertions hold. Letting $\a:=-1+\frac{p}{2}( 2+\delta-\frac{d}{q})$, for all $u_0\in L^0_{\F_0}(\O;\Bs_{q,p}^{d/q-1}(\R^d))$ there exists a (unique) $(p,\s,\a,q)$-solution $(u,\sigma)$ to \eqref{eq:Navier_StokesRd} satisfying $\sigma>0$ a.s.\ and
\begin{align}
\label{eq:NSD_local1}
u&\in H^{\theta,p}_{\loc}([0,\sigma),w_{\a};\Hs^{1+\s-2\theta,q}) \text{ a.s. for all } \theta\in [0,1/2),\\
\label{eq:NSD_local2}
u&\in C([0,\sigma);\Bs^{d/q-1}_{q,p})\cap C([0,\sigma);\Bs^{1-2/p}_{q,p})\text{ a.s.}
\end{align}
Finally, if $\varepsilon=0$, then $(u,\sigma)$ instantaneously regularize in time and space: a.s.\
\begin{equation}
\label{eq:regularization_in_time}
u\in L^r_{\loc} ((0,\sigma);\Hs^{1,\zeta})\cap C^{\theta-\varepsilon}_{\rm loc}((0,\sigma);\Hs^{1-2\theta,\zeta}), \ \ \theta\in [0,1/2), \varepsilon\in (0,\theta), r<\infty, \zeta\in [q, \infty).
\end{equation}
\end{theorem}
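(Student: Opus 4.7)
The plan is to reduce the statement to an application of Theorem~\ref{thm:localwellposed} (for existence/uniqueness and \eqref{eq:NSD_local1}--\eqref{eq:NSD_local2}), followed by Theorems~\ref{thm:parabreg}--\ref{thm:parabreg2} and a spatial bootstrap (for the instantaneous regularization \eqref{eq:regularization_in_time}). I would first verify that the trace space matches. By the identification $X_j=\Hs^{-1+\s+2j,q}$ we have $X_{1-\frac{1+\a}{p},p}=\Bs^{1+\s-2(1+\a)/p}_{q,p}$; substituting $\a=-1+\tfrac{p}{2}(2+\s-\tfrac{d}{q})$ collapses the smoothness exponent to $\tfrac{d}{q}-1$, as required, and the hypothesis $\tfrac{2}{p}+\tfrac{d}{q}\le 2+\s$ is exactly $\a\in[0,\tfrac{p}{2}-1)\cup\{0\}$. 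Moreover $\tfrac{d}{2+\s}<q<\tfrac{d}{1+\s}$ ensures the trace-space smoothness lies in $(0,1)$, so Sobolev embeddings into $L^{2q}$ are available and no compatibility conditions for the Helmholtz projection appear.

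The first main ingredient is stochastic maximal $L^p$-regularity: $(A,B)\in \mathcal{SMR}^{\bullet}_{p,\a}$ for the pair in~\eqref{eq:choice_ABFG_NS3D}. Following the roadmap of Subsection~\ref{ss:sufficient_B_non_zero}, this is obtained by localization, freezing coefficients, and perturbation from a constant-coefficient Stokes-with-transport problem. Here Assumption~\ref{ass:NSD_LqLp}\eqref{it1:NSD_LqLp}--\eqref{it21:NSD_LqLp} plays the role of the hypotheses used in \cite{AV21_SMR_torus,AV20_NS}: parabolicity gives the a priori estimate, the $C^{\alpha}$-regularity of $b$ with $\alpha>-\s$ makes the freezing argument work in the $\Hs^{-1+\s+2j,q}$-scale, and the near-constancy of $b$ at infinity supplies the smallness required to absorb the perturbation from a reference constant-coefficient model on $\R^d$, where stochastic maximal regularity is proved by Fourier/Doss--Sussmann methods. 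The starting-point operator $(-\nu\pr\Delta,0)$ has a bounded $H^\infty$-calculus of angle zero on $\Hs^{-1+\s,q}(\R^d)$ (by standard Fourier multiplier theory), so Theorem~\ref{thm:SMRHinfty} together with Proposition~\ref{prop:transferenceSMRbullet} promotes $\mathcal{SMR}$ to $\mathcal{SMR}^{\bullet}$ once it is known. The quantitative smallness $\varepsilon>0$ in Assumption~\ref{ass:NSD_LqLp}\eqref{it21:NSD_LqLp} is chosen at this step.

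The second main ingredient is Assumption~\ref{ass:FGcritical} for the conservative $F(u)=\pr\div(u\otimes u)$ and the stochastic term $G$. For $F$ I would write
\[
\|F(u)-F(v)\|_{\Hs^{-1+\s,q}}\lesssim \|u\otimes u-v\otimes v\|_{H^{\s,q}}\lesssim(\|u\|_{L^{2q/(1-\s)}}+\|v\|_{L^{2q/(1-\s)}})\|u-v\|_{L^{2q/(1-\s)}},
\]
using $H^{\s,q}\supseteq L^{r}$ with $-\tfrac{d}{r}=\s-\tfrac{d}{q}$, and then embed $X_{\beta_1,1}\subseteq H^{-1+\s+2\beta_1,q}\hookrightarrow L^{2q/(1-\s)}$ by a sharp Sobolev embedding; equating the two gives $\beta_1=\tfrac{1}{2}+\tfrac{1}{2\s}(\s-\tfrac{d}{q}+\tfrac{d}{2q}(1-\s))$, or more transparently: the criticality condition \eqref{eq:subcritical} with $\rho_1=1$ becomes exactly $\tfrac{1+\a}{p}\le 2(1-\beta_1)$, which is the PDE-scaling relation $\tfrac{2}{p}+\tfrac{d}{q}\le 2+\s$. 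For $G$, the decomposition $g=\sum_{i=1}^\ell g_i$ with growth $\eta_i\in[0,1]$ lets one estimate each piece separately in the $\gamma(\ell^2,\Hs^{\s+1,q})$-norm using the $\gamma$-$L^r$-isomorphism and the same Sobolev pipeline; summing over $i$ yields finitely many $(\beta_i,\rho_i)=(\beta_i,\eta_i)$ satisfying \eqref{eq:subcritical}. The $L^1\cap L^\infty$-condition on $g_i(0,\cdot)$ ensures the affine part lies in $\gamma(\ell^2,\Hs^{\s,q})$.

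With these verified, Theorem~\ref{thm:localwellposed} yields the $(p,\s,\a,q)$-solution together with \eqref{eq:NSD_local1}--\eqref{eq:NSD_local2}. For the last part, when $\varepsilon=0$ and the corresponding $\mathcal{SMR}^\bullet_{r,\alpha}$ holds for all admissible $(r,\alpha)$ on the scale $\Hs^{-1+\s+2j,\zeta}(\R^d)$, I would apply Theorem~\ref{thm:parabreg} (if $\a>0$) or Theorem~\ref{thm:parabreg2} (if $\a=0$, using that the polynomial-type estimates on $F,G$ satisfy \eqref{eq:condFcextrakap0}) to upgrade time integrability, and then iterate a Sobolev bootstrap exactly as in Steps~2--3 of the proof of Proposition~\ref{prop:QSregularity}: once $u\in L^r_{\loc}((0,\sigma);\Hs^{1,\zeta_0})$ is known for some $\zeta_0\ge q$, the nonlinear estimates bound $F(u),G(u)$ in a strictly better $\Hs^{\s,\zeta_0+\delta_0}$-space, and Proposition~\ref{prop:localizationSMR} applied with $\wt\kappa\in[0,\wt p/2-1)$ chosen so that $u(s)\in B^{-1+\s+2-2(1+\wt\kappa)/\wt p}_{\zeta_0+\delta_0,\wt p}$ yields the improved Sobolev regularity; since $\zeta$ and $r$ can be chosen arbitrarily large, \eqref{eq:regularization_in_time} follows, with the H\"older estimate coming from the embedding $H^{\theta,r}\hookrightarrow C^{\theta-1/r}$. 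The hardest step is arguably the $\mathcal{SMR}^\bullet_{p,\a}$ claim for the Stokes--transport pair on $\R^d$, where the interaction of $\pr$ with the $x$-dependent $(b_n\cdot\nabla)$ terms must be controlled uniformly; the parameter $\varepsilon$ is precisely what makes the perturbation argument close.
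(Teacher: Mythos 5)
Your proposal follows essentially the same route as the paper: check the trace-space identification, establish $\mathcal{SMR}^\bullet_{p,\a}$ for the Stokes–transport pair (via localization/freezing/perturbation, using the near-constancy of $b$ at infinity to close the perturbation), verify Assumption~\ref{ass:FGcritical} for $(F,G)$ using sharp Sobolev embeddings in the scale $\Hs^{-1+\s+2j,q}$, invoke Theorem~\ref{thm:localwellposed}, and then upgrade regularity via Theorem~\ref{thm:parabreg}/\ref{thm:parabreg2} plus a spatial bootstrap à la Proposition~\ref{prop:QSregularity}. Two points deserve attention.

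First, a computational slip: in your $F$-estimate you write the intermediate Lebesgue exponent as $2q/(1-\s)$, but after the embedding $L^{r}\hookrightarrow H^{\s,q}$ with $-d/r=\s-d/q$ the natural Hölder split puts $u,v$ in $L^{2r}$ with $r=dq/(d-\s q)$; these two exponents coincide only when $q=d$ or $\s=0$. For $\s<0$ and $q<d$ (which is allowed, since $q<\tfrac{d}{1+\s}<d$) the exponent $2q/(1-\s)$ is \emph{larger} than $2r$, and the intended embedding then fails. The correct choice is $2r=2dq/(d-\s q)$.

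Second, and more substantively, there is a genuine gap in the regularization step for the case $p=q=d=2$, $\s=0$, $\a=0$ (the second bullet in the hypotheses). Here Theorem~\ref{thm:parabreg2} does \emph{not} apply, since it is stated for $p\in(2,\infty)$; the technique of Theorem~\ref{thm:parabreg2} relies on the gap $\tfrac{1}{p}<\tfrac{1}{2}$ in \eqref{eq:pr_equal_sobolev_index}, which vanishes when $p=2$. In this critical $L^2$-setting a separate argument is needed --- the paper imports the argument from Part (C) of the proof of Theorem 4.1 in \cite{AV20_NS}, which in turn is based on \cite[Proposition~6.8]{AV19_QSEE_2}: one compensates the missing Sobolev gap by a small downward shift $\delta$ in the \emph{spatial} scale before applying the weighted regularization, so that the space-time Sobolev index of the new weighted spaces is preserved. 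Your plan of ``apply Theorem~\ref{thm:parabreg2} if $\a=0$'' does not cover this case. (For the first bullet, $q<\tfrac{d}{1+\s}$ implies strictly that $p>2$, so Theorems~\ref{thm:parabreg}--\ref{thm:parabreg2} do apply there.) Also note that in the $p>2$, $\a=0$ subcase one must verify the uniformity hypothesis \eqref{eq:condFcextrakap0} of Theorem~\ref{thm:parabreg2}; here this holds because the constant in the nonlinear estimate is independent of $v,v'$ (bilinearity), a fact worth making explicit.
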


Recall from Subsection \ref{ss:scaling_intro} that the Besov space $B^{d/q-1}_{q,p}(\R^d;\R^d)$ is scaling-invariant (or critical) for \eqref{eq:Navier_StokesRd}.
In case one can take $\delta=-\frac{1}{2}$ (the latter depending on the smoothness of $b$, see Assumption \ref{ass:NSD_LqLp}\eqref{it1:NSD_LqLp}), in Theorem \ref{t:NS_Rd_local} one can choose $q\sim \frac{d}{1+\s}\sim 2d$ and therefore the above establishes local well-posedness in critical spaces of smoothness $>-\frac{1}{2}$. It is unclear whether the previous threshold is optimal. For further discussion, the reader is referred to Problem \ref{prob:largest_invariant_space}.
Before going further, let us also note that the invariant space $L^d(\R^d;\R^d)$ is always included in Theorem \ref{t:NS_Rd_local}. Indeed, the case $d=2$ is clear while for $d\geq 3$ it is enough to apply Theorem \ref{t:NS_Rd_local} with $\delta\in (-[\alpha\wedge \frac{1}{2}],0)$, $q=d$ and $p\geq d$ and note that
\begin{equation}
\label{eq:Ld_well_posedness}
\Ls^d(\R^d)\subseteq \Bs^{0}_{d,p}(\R^d).
\end{equation}
The instantaneous regularization result of \eqref{eq:regularization_in_time} is new even in the well-known case $\s=0$ and $q=p=d=2$. The main difficulty in this case is related to the criticality of the energy space $L^2$ in two dimensions (the reader is referred to the comments below \cite[Theorem 2.4]{AV20_NS} for details).
As the proofs below show, the assumption $\varepsilon=0$ can be removed for \eqref{eq:regularization_in_time} to hold for a fixed but large $r$, however, $\varepsilon$ would also depend on such parameter $r$. Due to the unboundedness of the domain, the argument used in \cite[Theorem 2.4]{AV20_NS} to prove instantaneous regularization does not extend to the current situation. Instead, here we use the one employed in Theorem \ref{thm:quasigeoglobal}.
With the aid of the latter argument, the reader can check that higher-order regularity results as in \cite[Theorem 2.7]{AV20_NS} can also be obtained. Another issue related to the unboundedness of the domain is that in \eqref{eq:regularization_in_time} we need $\zeta\in [q, \infty)$.

\smallskip

The following shows that if $\sigma<\infty$ on a set of positive probability, then critical norms have to blow up a.s.\ on the same set.
\begin{theorem}[Stochastic Serrin-type criteria]
\label{thm:serrin}
Let the assumptions of Theorem \ref{t:NS_Rd_local} be satisfied with $\varepsilon=0$. Assume that $(p_0,\s_0)$ and $q_0\in [q, \infty)$ satisfy one of the following conditions:
\begin{itemize}
\item $\delta_0\in [-\frac{1}{2},0]$, $\frac{d}{2+\s_0} <q_0<\frac{d}{1+\s_0}$, and $\frac{2}{p_0}+\frac{d}{q_0}\leq 2+\delta_0$;
\item $\delta_0=0$, and $p_0 =q_0 =d =2$.
\end{itemize}
Then, for all $s>0$, the following blow-up criteria hold:
\begin{enumerate}[{\rm(1)}]
\item\label{it1:serrin} $\displaystyle{\P\big(s<\sigma<\infty,\, \|u\|_{L^{p}(s,\sigma;H^{\g_0,q_0})} <\infty\big)=0}$ where $\displaystyle{\g_0:=\frac{2}{p_0}+\frac{d}{q_0}-1}$ {\rm(Serrin-type criterion)};
\vspace{0.1cm}
\item\label{it2:serrin} $\displaystyle{\P\big(\sigma<\infty,\, \lim_{t\uparrow \sigma}u(t)\text{ exists in }B^{d/q_0-1}_{q_0,p_0}\big)=0}$
{\rm(endpoint Serrin-type criterion)}.
\end{enumerate}
\end{theorem}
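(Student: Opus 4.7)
The strategy is to recast the problem in the $(p_0,\s_0,\a_0,q_0)$-setting with the critical weight
\[
\a_0 := -1+\tfrac{p_0}{2}\bigl(2+\s_0-\tfrac{d}{q_0}\bigr),
\]
in which case the hypothesis $\frac{2}{p_0}+\frac{d}{q_0}\leq 2+\s_0$ ensures $\a_0\in [0,\tfrac{p_0}{2}-1)\cup\{0\}$. With the alternative spaces $\wt{X}_j:=\Hs^{-1+\s_0+2j,q_0}(\R^d)$, the identification of Bessel-potential and Besov spaces with complex and real interpolation scales yields the key identities
\[
\wt{X}_{1-\frac{1+\a_0}{p_0},p_0}=\Bs^{d/q_0-1}_{q_0,p_0}(\R^d), \qquad \wt{X}_{1-\frac{\a_0}{p_0}}=\Hs^{\gamma_0,q_0}(\R^d),\quad \gamma_0=\tfrac{2}{p_0}+\tfrac{d}{q_0}-1.
\]
Hence \eqref{it2:serrin} and \eqref{it1:serrin} are exactly the blow-up criteria predicted by Theorem \ref{thm:criticalblowup}\eqref{it1:criticalblowup}--\eqref{it2:criticalblowup} and by the Serrin-type refinement of \cite[Theorem 4.11]{AV19_QSEE_2} when formulated in the alternative $(p_0,\a_0,q_0)$-setting.

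For \eqref{it2:serrin}, I would first use the instantaneous regularization \eqref{eq:regularization_in_time} (available thanks to $\varepsilon=0$) together with Sobolev embedding in the spatial variable to show that $u\in H^{\theta,p_0}_{\loc}((0,\sigma);\Hs^{1+\s_0-2\theta,q_0})$ a.s.\ for every $\theta\in[0,1/2)$, which is exactly the hypothesis \eqref{eq:instantaneous_regularization_r} of Corollary \ref{cor:transfblowup} with $r=p_0$, $\alpha=\a_0$ and $(\wt{X}_0,\wt{X}_1)$. Granted that $(A,B)\in \mathcal{SMR}_{p_0,\a_0}^{\bullet}$ in the alternative setting (which underlies Theorem \ref{t:NS_Rd_local} for the parameters $(p_0,\s_0,q_0)$), the first statement of Corollary \ref{cor:transfblowup}\eqref{it1:transfblowup} then immediately delivers \eqref{it2:serrin}.

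For \eqref{it1:serrin}, the hypothesis only controls $\|u\|_{L^{p_0}(s,\sigma;\Hs^{\gamma_0,q_0})}$ and not the supremum norm, so the second statement of Corollary \ref{cor:transfblowup}\eqref{it1:transfblowup} is not enough. Instead, I would mimic the proof of the Serrin-type criterion in \cite[Theorem 4.11]{AV19_QSEE_2}: restart \eqref{eq:Navier_StokesRd} at time $s$ with initial value $\one_{\{\sigma>s\}}u(s)\in L^0_{\F_s}(\O;\Bs^{d/q_0-1}_{q_0,p_0}(\R^d))$ (available from \eqref{eq:regularization_in_time}), and combine stochastic maximal $L^{p_0}_{\a_0}$-regularity with the critical bilinear estimate
\[
\|F(u)-F(v)\|_{\wt{X}_0}\lesssim \bigl(\|u\|_{\Hs^{\gamma_0,q_0}}+\|v\|_{\Hs^{\gamma_0,q_0}}\bigr)\|u-v\|_{\Hs^{\gamma_0,q_0}},
\]
derived from a fractional paraproduct estimate for $\pr\div(u\otimes v)$, together with the corresponding estimate for $G$ coming from Assumption \ref{ass:NSD_LqLp}\eqref{it3:NSD_LqLp}. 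A stopping-time decomposition of $[s,\sigma)$ into intervals on which $\|u\|_{L^{p_0}(\cdot;\Hs^{\gamma_0,q_0})}$ is small, combined with the interpolation inequality of Lemma \ref{lem:interpolationineqMR0}, allows one to absorb the nonlinear contribution on the left-hand side of the SMR estimate and to reconstruct the missing supremum control on $\|u(t)\|_{\Bs^{d/q_0-1}_{q_0,p_0}}$. After that, Theorem \ref{thm:criticalblowup}\eqref{it1:criticalblowup} applied in the alternative setting concludes the proof. The main obstacle is the stochastic nonlinearity $G=\sum_{i=1}^{\ell}G_i$: since each $G_i$ has a distinct growth $\eta_i\in[0,1]$, the estimates for $G$ mix critical and subcritical contributions, and these must be tracked carefully through the interpolation and absorption steps--especially at the weakest endpoint $\s_0=-1/2$, where the fractional product estimates for $F$ are borderline.
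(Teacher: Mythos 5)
Your overall strategy matches the paper's: reduce to the base parameters via transference of blow-up criteria (using the instantaneous regularization guaranteed by $\varepsilon=0$), identify the relevant trace and intermediate spaces as $\Bs^{d/q_0-1}_{q_0,p_0}$ and $\Hs^{\gamma_0,q_0}$ via the interpolation scales, and then quote the abstract blow-up criteria --- Theorem~\ref{thm:criticalblowup}\eqref{it1:criticalblowup} for \eqref{it2:serrin}, and the Serrin-type criterion of \cite[Theorem~4.11]{AV19_QSEE_2} for \eqref{it1:serrin}.

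Two remarks. First, your closing worry about the stochastic nonlinearity $G=\sum_i G_i$ ``mixing critical and subcritical contributions'' across the $\eta_i$ is not actually an obstacle: Lemma~\ref{lem:estimate_nonlinear_NS3D} already absorbs all the $G_i$ into one estimate of the same form as $F$, i.e.\ with $m=1$, $\rho_1=1$ and a \emph{uniform} constant $C$ (independent of $v,v'$), precisely because $\eta_i\le 1$ and the Sobolev exponents $\lambda_i\ge\lambda$ can all be embedded into a common $H^{\theta,q}$. This uniformity (together with $\rho_1=1$) is exactly what makes \cite[Theorem~4.11]{AV19_QSEE_2} applicable, and the paper singles out this point as the only thing to verify beyond citing the earlier criterion. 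Second, your invocation of Corollary~\ref{cor:transfblowup} is slightly imprecise, since that corollary as stated transfers criteria across $(r,\alpha)$ for a \emph{fixed} couple $(X_0,X_1)$, whereas here one also changes $(\delta_0,q_0)$ and hence the base spaces. The extra ingredient is the spatial bootstrap / compatibility step (cf.\ Corollary~\ref{cor:comp} and Proposition~\ref{prop:coincideRD}, and the spatial regularization in Step~3 of the proof of Theorem~\ref{t:NS_Rd_local}), which identifies the $(p,\s,\a,q)$-solution with the $(p_0,\s_0,\a_0,q_0)$-solution for positive times; the paper flags this by saying the reduction uses ``Corollary~\ref{cor:transfblowup} \emph{and} the proof of Theorem~\ref{t:NS_Rd_local}''. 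You gesture at the regularization but should make explicit that a compatibility-of-solutions argument is needed before the transference corollary applies.
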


As the proof below shows, if $\varepsilon>0$ is small, then \eqref{it1:serrin} and \eqref{it2:serrin} still hold with $(q_0,p_0,\s_0)=(q,p,\s)$.

The criteria \eqref{it1:serrin}-\eqref{it2:serrin} are \emph{sharp} as both the space $L^{p_0}(0,T;H^{\g_0,q_0})$ and $C([0,T];B_{q_0,p_0}^{d/q_0-1})$ respect the scaling of the Navier--Stokes equations. Indeed, their space-time Sobolev indexes coincide with $-1$ which is the Sobolev index of critical spaces $B^{d/q_0-1}_{q_0,p_0}$ as
\begin{equation}
\label{eq:criticality_indexes_NS3D}
\frac{2}{p_0}+\g_0-\frac{2}{q_0}=-1 \qquad \text{ and }\qquad \frac{2}{\infty}+\frac{d}{q_0}-1 -\frac{d}{q_0}=-1.
\end{equation}
Here, the factor `2' for the time integrability comes from the parabolic scaling.
The reader is referred to the comments below Theorem \ref{thm:subcriticalblowup} for comments on Sobolev indexes.
Let us stress that the endpoint criteria \eqref{it2:serrin} also holds in the periodic case analyzed in \cite{AV20_NS} and follows immediately from the results in the latter references and Theorem \ref{thm:criticalblowup}\eqref{it1:criticalblowup}.

Concerning the terminology, the criterion \eqref{it2:serrin} is a natural extension to the stochastic setting of Serrin-type criteria, see e.g.\ \cite[Theorem 11.2]{LePi}. Indeed, in the special case where $q_0>d$ and $p_0= 2/(1-\frac{d}{q_0})$, we have $\g_0=0$ and therefore \eqref{it1:serrin} coincides with
$$
\P\big(s<\sigma<\infty,\, \|u\|_{L^{p_0}(s,\sigma;L^{q_0})} <\infty\big)=0,
$$
which is a stochastic version of \cite[Theorem 11.2]{LePi}.
The blow-up criterion \eqref{it1:serrin} is therefore a ``endpoint'' version of \eqref{it2:serrin}, i.e., taking $p=\infty$ in the latter.
In the deterministic setting, there are available more refined versions of the endpoint Serrin-type criterion.
An extension of these results is the content of Problem \ref{prob:NS_blow_up}, to which the reader is referred for more details.

\smallskip

In Theorem \ref{thm:SNS} we have seen that for $p=q=d=2$, $\delta=0$, \eqref{eq:Navier_StokesRd}
has a unique global solution $u\in L^2_{\rm loc}([0,\infty);\Hs^1)\cap C([0,\infty);\Ls^2)$ a.s.
Combining this with Theorem \ref{thm:serrin}, we obtain higher-order regularity of this global solution.

\begin{corollary}[2D Global well-posedness and regularity]
\label{cor:2D_NS_global}
Let the assumptions of Theorem \ref{t:NS_Rd_local} be satisfied with $p=q=d=2$, $\varepsilon=0$, and $\eta_i\equiv 0$.  Then the global $(2,0,0,2)$-solution $u$ satisfies
\[u\in L^r_{\loc} ((0,\infty);\Hs^{1,\zeta})\cap C^{\theta-\varepsilon}_{\rm loc}((0,\infty);\Hs^{1-2\theta,\zeta}) \ \text{a.s.}, \ \ \theta\in [0,1/2), \varepsilon\in (0,\theta), r<\infty, \zeta\in [2, \infty).
\]
\end{corollary}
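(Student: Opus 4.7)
The plan is to combine the global $L^2$-well-posedness from Theorem \ref{thm:SNS} with the local $(2,0,0,2)$-theory of Theorem \ref{t:NS_Rd_local} and the Serrin-type criterion of Theorem \ref{thm:serrin}\eqref{it1:serrin}.

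First, I would verify that Assumption \ref{ass:SNS} on $\Dom=\R^2$ is implied by Assumption \ref{ass:NSD_LqLp}$(2,0,2,0)$ together with $\eta_i\equiv 0$. The parabolicity condition in Assumption \ref{ass:NSD_LqLp}\eqref{it2:NSD_LqLp}, namely $\sum_{n\geq 1}(b_n\cdot\xi)^2\leq 2\nu_0|\xi|^2$ with $\nu_0<\nu$, is just a rewriting of the ellipticity bound demanded by Assumption \ref{ass:SNS}. The choice $\eta_i=0$ in Assumption \ref{ass:NSD_LqLp}\eqref{it3:NSD_LqLp} yields the uniform Lipschitz estimate for $g=\sum_i g_i$, while $g_i(0,\cdot)\in L^1\cap L^\infty\hookrightarrow L^2(\R^2;\ell^2)$ gives $g(\cdot,0)\in L^2(\R^2;\ell^2)$. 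Thus Theorem \ref{thm:SNS} produces, for every $u_0\in L^0_{\F_0}(\O;\Ls^2(\R^2))$, a unique global solution
\[
\wt u \in L^2_{\rm loc}([0,\infty);\Hs^1(\R^2))\cap C([0,\infty);\Ls^2(\R^2)) \ \text{ a.s.}
\]

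Next, Theorem \ref{t:NS_Rd_local} applied in the second admissible case $p=q=d=2$, $\delta=0$ (together with $\varepsilon=0$), provides a unique $(2,0,0,2)$-maximal solution $(u,\sigma)$ to \eqref{eq:Navier_StokesRd} with, by \eqref{eq:NSD_local1}--\eqref{eq:NSD_local2},
\[
u \in L^2_{\rm loc}([0,\sigma);\Hs^1(\R^2))\cap C([0,\sigma);\Ls^2(\R^2))\ \text{ a.s.}
\]
Both $u|_{[0,\sigma_n]}$ and $\wt u|_{[0,\sigma_n]}$ are strong solutions of \eqref{eq:Navier_StokesRd} on a common localising sequence $(\sigma_n)$ for $(u,\sigma)$, and both belong to the $L^2$-pathspace of the variational framework, so the uniqueness statement already used in Theorem \ref{thm:SNS} (alternatively, Theorem \ref{thm:varloc}) gives $u=\wt u$ on $[0,\sigma)$ a.s. Consequently, for every $0<s<T<\infty$,
\[
\|u\|_{L^2(s,\sigma\wedge T;\Hs^1)}
\;\leq\; \|\wt u\|_{L^2(0,T;\Hs^1)} \;<\;\infty \ \text{ a.s.}
\]
Now Theorem \ref{thm:serrin}\eqref{it1:serrin} applied with $(p_0,q_0,\delta_0)=(2,2,0)$ (so that $\gamma_0=\tfrac{2}{p_0}+\tfrac{d}{q_0}-1=1$ and the critical norm is exactly $\|u\|_{L^2(s,\sigma;\Hs^{1,2})}$) yields $\P(s<\sigma<T)=0$ for every $0<s<T<\infty$. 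Since $\sigma>0$ a.s., letting $s\downarrow 0$ and $T\to\infty$ gives $\sigma=\infty$ a.s.

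Finally, the instantaneous regularization \eqref{eq:regularization_in_time} of Theorem \ref{t:NS_Rd_local}, available since $\varepsilon=0$, gives precisely
\[
u \in L^r_{\rm loc}((0,\sigma);\Hs^{1,\zeta})\cap C^{\theta-\varepsilon}_{\rm loc}((0,\sigma);\Hs^{1-2\theta,\zeta}) \ \text{ a.s.}
\]
for all admissible $r,\zeta\in[2,\infty)$, $\theta\in[0,1/2)$, $\varepsilon\in(0,\theta)$. Substituting $\sigma=\infty$ gives the statement of the corollary. The only genuinely nontrivial step is the identification $u=\wt u$ on $[0,\sigma)$, which is needed to transfer the variational $L^2_t(\Hs^1)$-estimate into the $(2,0,0,2)$-framework where the Serrin criterion lives; since both frameworks share the same path space and the same pointwise-in-time formulation of \eqref{eq:Navier_StokesRd} via the Helmholtz projection, this follows from $L^2$-uniqueness, and no further analytic input is required.
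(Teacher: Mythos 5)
Your proof is correct and follows essentially the same route the paper indicates (the paper compresses it to one sentence: obtain the global $L^2$-solution from Theorem~\ref{thm:SNS}, then use Theorem~\ref{thm:serrin} to conclude $\sigma=\infty$ for the maximal $(2,0,0,2)$-solution, so that \eqref{eq:regularization_in_time} yields the stated regularity). Your fleshing-out of the implicit steps — that Assumption~\ref{ass:NSD_LqLp}$(2,0,2,0)$ with $\eta_i\equiv 0$ implies Assumption~\ref{ass:SNS}, the identification $u=\wt u$ on $[0,\sigma)$ via $L^2$-uniqueness (noting that $\Hs^1_0(\R^2)=\Hs^1(\R^2)$ so the two path spaces coincide), and the choice $\gamma_0=1$ so that the critical Serrin norm is exactly $\|u\|_{L^2(s,\sigma;\Hs^{1,2})}$ — is exactly what is needed to make the paper's one-line sketch rigorous.
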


\subsubsection{Proof of Theorems \ref{t:NS_Rd_local} and \ref{thm:serrin}}
To prove the stated results, we employ the abstract results of Sections \ref{sec:loc-well-posed} and \ref{sec:blowup}.
The following is the $\R^d$-analogue of \cite[Lemma 4.2]{AV20_NS}, which serves to check Assumption \ref{ass:FGcritical} for the nonlinearities. Below, we employ the shorthand notation $X_j:= \Hs^{-1+\s+2j,q}:=\Hs^{-1+\s+2j,q}(\R^d)$ for $j\in \{0,1\}$ and
$$
X_{\theta}:=[X_0,X_1]_{\theta}=\Hs^{-1+\s+2\theta,q} \quad\text{ and }\quad X_{\theta,p}:=(X_0,X_1)_{\theta,p}=\Bs^{-1+\s+2\theta}_{q,p},
$$
 for $ \theta\in (0,1)$ and $1<q,p<\infty$.

\begin{lemma}
\label{lem:estimate_nonlinear_NS3D}
Assume that Assumption \ref{ass:NSD_LqLp}$(p,\a,q,\varepsilon)$ holds and
$
\frac{d}{2+\s}<q<\frac{d}{-\s}.
$
Let $F$ and $G$ be as in \eqref{eq:choice_ABFG_NS3D}.
Then $\beta:= \frac{1}{2}(1-\frac{\s}{2}+\frac{d}{2q})\in (0,1)$ and there exists a $C>0$ such that for all $v,v'\in X_1$,
\begin{align*}
\|F(v)-F(v')\|_{X_0} + \|G(v)-G(v')\|_{\g(\ell^2;X_{1/2})} &\leq C(\|v\|_{X_\beta}+\|v'\|_{X_\beta})\|v-v'\|_{X_{\beta}}.
\end{align*}
\end{lemma}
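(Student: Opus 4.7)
First I would verify $\beta \in (0,1)$: since $\s \in (-1, 0]$ one has $\beta > 0$ immediately, while $\beta < 1$ is equivalent to $q > d/(2+\s)$, which is part of the hypothesis. The key algebraic identity driving everything is
\[
-1 + \s + 2\beta = \tfrac{\s}{2} + \tfrac{d}{2q},
\]
so $X_\beta = \Hs^{\s/2 + d/(2q),q}(\R^d)$. Setting $r := \tfrac{2dq}{d - \s q}$, one then reads off two sharp Sobolev embeddings (the upper bound $q < d/(-\s)$ ensures $r < \infty$, and $\s \leq 0$ is needed for the second):
\[
X_\beta \hookrightarrow L^r(\R^d;\R^d), \qquad L^{r/2}(\R^d) \hookrightarrow H^{\s,q}(\R^d).
\]

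For the deterministic part I would use bilinearity of the tensor product to split
\[
F(v) - F(v') = \pr\div\bigl[(v-v')\otimes v\bigr] + \pr\div\bigl[v' \otimes (v-v')\bigr],
\]
then exploit the boundedness of $\pr$ on $H^{s,q}(\R^d;\R^d)$ for every $s\in\R$ (a Mikhlin Fourier-multiplier statement) together with $\div : H^{\s,q}\to H^{-1+\s,q}$ to reduce matters to estimating tensor products in $H^{\s,q}$. Combining the dual Sobolev embedding above with Hölder's inequality $\|fg\|_{L^{r/2}} \leq \|f\|_{L^r}\|g\|_{L^r}$ yields
\[
\|(v-v')\otimes v\|_{H^{\s,q}} \lesssim \|v-v'\|_{L^r} \|v\|_{L^r} \lesssim \|v-v'\|_{X_\beta}\|v\|_{X_\beta},
\]
and the symmetric term is handled identically.

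For the stochastic part I would invoke the canonical identification $\gamma(\ell^2, H^{\s,q}(\R^d)) \cong H^{\s,q}(\R^d;\ell^2)$, which is valid since $H^{\s,q}$ is isomorphic to a closed subspace of an $L^q$-space with $q \geq 2$ and therefore has type $2$ (see Subsection~\ref{subsec:gamma}); boundedness of $\pr$ on the $\ell^2$-valued space is then automatic. After the decomposition $g = \sum_i g_i$ and the pointwise estimate of Assumption~\ref{ass:NSD_LqLp}\eqref{it3:NSD_LqLp}, the scaling-critical case $\eta_i = 1$ is treated by the exact same embedding-plus-Hölder scheme used for $F$; the subcritical cases $\eta_i \in [0,1)$ follow by using $|y|^{\eta_i} \leq 1 + |y|$ to reduce to the critical pattern up to lower-order terms that are absorbed using the integrability $g_i(\cdot,0) \in L^1 \cap L^\infty$.

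The main obstacle is essentially bookkeeping: checking that all four sharp embeddings remain consistent under the numerical constraints on $(p,\s,q)$, and handling the $\ell^2$-valued setup for $G$ correctly. Beyond the identification of $\gamma$-spaces with $\ell^2$-valued Bessel potential spaces, the Mikhlin boundedness of $\pr$, and the sharp Sobolev embedding theorem, no new analytic tool is needed; the proof is a careful implementation of Hölder and sharp Sobolev embeddings.
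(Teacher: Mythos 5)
Your proof of the $F$-estimate and of the $G$-estimate in the critical case $\eta_i=1$ is essentially the paper's argument (with $r=2\lambda$ in their notation, and with $X_\beta$ identified with $H^{\s/2+d/(2q),q}$ directly rather than via $\theta$); that part is fine.

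The treatment of $G$ when $\eta_i<1$ has a genuine gap, and precisely in the regime $\delta<0$ that makes this lemma worth proving on $\R^d$. The reduction $|y|^{\eta_i}\leq 1+|y|$ produces the pointwise bound $\|g_i(\cdot,v)-g_i(\cdot,v')\|_{\ell^2}\lesssim (1+|v|+|v'|)|v-v'|$, and then, since you have already committed to the target $L^{r/2}\hookrightarrow H^{\s,q}$, the ``$1$''-term leaves you with $\|v-v'\|_{L^{r/2}}$. When $\delta<0$ one has $r/2=\lambda=\frac{dq}{d-\delta q}<q$, and on $\R^d$ there is no Sobolev embedding $X_\beta=H^{\theta,q}(\R^d)\hookrightarrow L^{m}(\R^d)$ with $m<q$ (this only works on bounded domains), so the lower-order term cannot be closed. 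Also, the claim that this term is ``absorbed using the integrability $g_i(\cdot,0)\in L^1\cap L^\infty$'' is a misattribution: that hypothesis is used to establish the growth bound $\|G_i(v)\|\lesssim 1+\|v\|_{X_\beta}^{\eta_i+1}$ from the Lipschitz estimate at $v'=0$, not to control a Lipschitz difference term in a Lebesgue space below $L^q$. The paper avoids the issue entirely by \emph{not} reducing to the quadratic case: for each $i$ it picks an intermediate exponent $\lambda_i\in[\lambda,q]$ with $\lambda_i(1+\eta_i)\geq q$ (so $\lambda_i=q$ when $\eta_i=0$, $\lambda_i=\lambda$ when $\eta_i=1$), uses $L^{\lambda_i}\hookrightarrow H^{\s,q}$, and then invokes $X_\beta\hookrightarrow L^{\lambda_i(1+\eta_i)}$ where the exponent is at least $q$, so the Sobolev embedding is valid on $\R^d$. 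You should replace your $|y|^{\eta_i}\leq 1+|y|$ step by this $\eta_i$-dependent choice of Lebesgue exponent.

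Two smaller inaccuracies: finiteness of $r$ does not depend on $q<d/(-\delta)$ (for $\delta\leq 0$ one has $d-\delta q>0$ automatically); what that bound actually buys you is $r>q$, which is exactly what lets the embedding $X_\beta\hookrightarrow L^r$ go through on the full space. And the identification $\gamma(\ell^2,H^{\s,q})\cong H^{\s,q}(\ell^2)$ does not come from type $2$; it is the square-function identity $\gamma(\ell^2,L^q)=L^q(\ell^2)$ for $q\in(1,\infty)$ (valid for all such $q$, not only $q\geq 2$), transported along the lifting $(1-\Delta)^{\s/2}$.
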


\begin{proof}
We start by showing the estimate for the $F$-part. As $F(0)=0$, it is enough to prove the local Lipschitz estimate. Now, note that for all $v,v'\in X_1$,
\begin{align}
\label{eq:estimate_F_NS3D}
\|F(v)-F(v')\|_{X_0}
&\lesssim \|v\otimes v - v'\otimes v'\|_{H^{\s,q}}\\
\nonumber
&\stackrel{(i)}{\lesssim} \|v\otimes v - v'\otimes v'\|_{L^\lambda}\\
\nonumber
&\lesssim  (\|v\|_{L^{2\lambda}}+\|v'\|_{L^{2\lambda}})\|v-v'\|_{L^{2\lambda}}\\
\nonumber
&\stackrel{(ii)}{\lesssim}  (\|v\|_{H^{\theta,q}}+\|v'\|_{H^{\theta,q}})\|v-v'\|_{H^{\theta,q}}\\
\nonumber
&\eqsim  (\|v\|_{X_\beta}+\|v'\|_{X_\beta})\|v-v'\|_{X_\beta},
\end{align}
where $\lambda:=\frac{dq}{d-\s q}$, $\theta:=\frac{d}{q}-\frac{d}{2\lambda}$, and
 in $(i)$ and $(ii)$ we used the Sobolev embedding $L^\lambda \embed H^{\s,q}$ and $H^{\theta,q}\embed L^{2\lambda}$.
Note that $q<\frac{d}{-\s}$ implies that $\lambda\in (\frac{q}{2},q]$ and therefore $\theta>0$. Moreover, $q>\frac{d}{2+\s}$ implies $\theta<1+\s$ and $\beta<1$.

Next, we discuss the $G$-parts. By triangular inequality, it is enough to show the corresponding estimate for a fixed $g_i$ with $i\in \{1,\dots,\ell\}$. Moreover, we only discuss the local Lipschitz estimate, as the growth assumption follows from $g_i(\cdot,0)\in (L^1\cap L^\infty)(\R^d;\R^d)$ and the argument below, see Assumption \ref{ass:NSD_LqLp}\eqref{it3:NSD_LqLp}. Thus, for all $v,v'\in X_1$,  we have
\begin{align*}
\|G_i(v)-G_i(v')\|_{\g(\ell^2,H^{\s,q})}
&\ \leq
\|\pr [g_i(\cdot,v)-g_i(\cdot,v')]\|_{\g(\ell^2,H^{\s,q})} \\
&\stackrel{(iii)}{\lesssim}
\|g_i(\cdot,v)-g_i(\cdot,v')\|_{\g(\ell^2,L^{\lambda_i})}\\
&\ \eqsim \|g_i(\cdot,v)-g_i(\cdot,v')\|_{L^{\lambda_i}(\ell^2)} \\
&\ \lesssim \big\|(|v|^{\eta_i}+|v'|^{\eta_i})|v-v'|\big\|_{L^{\lambda_i}}\\
&\ \lesssim (\|v\|_{L^{(\eta_i+1)\lambda_i}}^{\eta_i}+\|v'\|_{L^{(\eta_i+1)\lambda_i}}^{\eta_i})\|v-v'\|_{L^{(\eta_i+1)\lambda_i}},
\end{align*}
where in $(iii)$ we used $L^{\lambda_i}\embed H^{\s,q}$ where $\lambda_i\in [\lambda,q]$ satisfies $\lambda_i(1+\eta_i)\geq q$ and $\lambda$ is as below \eqref{eq:estimate_F_NS3D}.
Note that the choice of $\lambda_i$ is always possible as $\eta_i\in [0,1]$.
Now, since $\lambda_i\leq \lambda$ and $\eta_i\leq 1$, the claimed estimate for $G_i$ follows from the Sobolev embeddings $H^{\theta,q}\embed L^{\lambda_i(1+\eta_i)}$ as in \eqref{eq:estimate_F_NS3D}.
\end{proof}

With this preparation, we are now ready to prove Theorems \ref{t:NS_Rd_local} and \ref{thm:serrin}.

\begin{proof}[Proof of Theorem \ref{t:NS_Rd_local}]
For the reader's convenience, we divide the proof into three steps.

\emph{Step 1: The existence of a (unique) $(p,\a,\s,q)$-solution $(u,\sigma)$ to \eqref{eq:Navier_StokesRd}, where $\a=-1+\frac{p}{2}( 2+\delta-\frac{d}{q})$}. To prove Step 1, we employ Theorem \ref{thm:localwellposed}. Let us first note that the stochastic $L^p$-maximal regularity for $(A,B)$ follows from \cite[Remark 3.3]{AV20_NS} with $\varepsilon=0$, and in case $\varepsilon>0$, then it follows from the $\varepsilon=0$ case and the perturbation result of \cite[Theorem 3.2]{AV21_SMR_torus}. Thus, it remains to check Assumption \ref{ass:FGcritical}. From Lemma \ref{lem:estimate_nonlinear_NS3D}, it follows that \eqref{eq:subcritical} is satisfied with $m=1$, $\rho_1=1$ and $\beta_1=\frac{1}{2}(1-\frac{\s}{2}+\frac{d}{2q})$ provided $\a$ and $p$ satisfy
\begin{equation}
\label{eq:criticality_condition_NS_nonlinear}
\frac{1+\a}{p}\leq \frac{(1+\rho_1)}{\rho_1}(1-\beta_1)= 1-\frac{d}{2q}+\frac{\s}{2}.
\end{equation}
Note that the application of Lemma \ref{lem:estimate_nonlinear_NS3D} is legitimate, as for $\s\geq -\frac{1}{2}$ we have $\frac{d}{1+\s}\leq \frac{d}{-\s}$. It is clear that if $p=2$ and $\a=0$, then the above can only hold if $d=q=2$ and $\s=0$.
Next, we focus on the case $p>2$. Note that, due to the choice $\a=-1+\frac{p}{2}( 2+\delta-\frac{d}{q})$, our assumptions immediately give $\a\in [0,\frac{p}{2}-1)$ and that the condition  \eqref{eq:criticality_condition_NS_nonlinear} is satisfied. Moreover, let us point out that the restrictions $\delta\geq -\frac{1}{2}$  and $q<\frac{d}{1+\s}$ come from enforcing an equality in \eqref{eq:criticality_condition_NS_nonlinear}. Indeed, to allow the equality \eqref{eq:criticality_condition_NS_nonlinear}, as $\frac{1+\a}{p}<\frac{1}{2}$ and $p>2$, it is necessary that $1-\frac{d}{2q}+\frac{\s}{2}<\frac{1}{2}$ which implies $q<\frac{d}{1+\s}$.
Thus, the restriction $\s\geq -\frac{1}{2}$ is a consequence of the restriction $q<\frac{d}{-\s}$ in Lemma \ref{lem:estimate_nonlinear_NS3D}: $\frac{d}{1+\s}\leq \frac{d}{-\s}$ if and only if $\s\geq -\frac{1}{2}$.

Therefore, the assumptions of Theorem \ref{thm:localwellposed} are satisfied and therefore it ensures the existence of a (unique) $(p,\a,\s,q)$-solution to \eqref{eq:Navier_StokesRd} with space of initial data given by
\begin{equation}
\label{eq:trace_equal_critical_NS}
X_{1-\frac{1+\a}{p},p}= \Bs^{1+\s-2\frac{1+\a}{p}}_{q,p} =\Bs^{\frac{d}{q}-1}_{q,p};
\end{equation}
where the last equality follows from the choice of the weight $\a=-1+\frac{p}{2}( 2+\delta-\frac{d}{q})$.

\emph{Step 2: (Instantaneous time regularization) If $\varepsilon=0$, then }
\begin{equation}
\label{eq:regularization_in_time_step_2}
u\in H^{\theta,r}_{\loc} ((0,\sigma);\Hs^{1+\s-2\theta,r}) \text{ a.s.\ for all }\theta\in [0,1/2),\ r<\infty.
\end{equation}
Arguing as in Step 1, if $\varepsilon=0$, then the stochastic maximal $L^p$-regularity for $(A,B)$ follows from \cite[Remark 3.3]{AV20_NS}.
To prove \eqref{eq:regularization_in_time}, we distinguish three cases.
\begin{itemize}
\item If $p>2$ and $ \frac{2}{p}+\frac{d}{q}\leq  2+\delta$, then $\a>0$ and \eqref{eq:regularization_in_time_step_2} follows from Theorem \ref{thm:parabreg}.
\item If $p>2$ and $ \frac{2}{p}+\frac{d}{q}=  2+\delta$, then $\a=0$ and \eqref{eq:regularization_in_time_step_2} follows from Theorem \ref{thm:parabreg2} where we used that the constant $C$ in Lemma \ref{lem:estimate_nonlinear_NS3D} is independent of $v,v'$.
\item If $p=q=2$, then $d=2$, $\s=0$ and $\a=0$. In this case, \eqref{eq:regularization_in_time_step_2} follows
almost verbatim from the arguments in \cite[Part (C) of Theorem 4.1, p.\ 43-44]{AV20_NS} by using \cite[Proposition 6.8]{AV19_QSEE_2} and Lemma \ref{lem:estimate_nonlinear_NS3D}.
\end{itemize}

\emph{Step 3:  If $\varepsilon=0$, then \eqref{eq:regularization_in_time} holds}. From Step 2, we know that the solution instantaneously lies in a subcritical space. Hence, one can apply the classical bootstrap argument (via Proposition \ref{prop:localizationSMR}) to prove space regularity as we did in the proof of Theorem \ref{thm:quasigeoglobal}. For brevity, we omit the details.
\end{proof}

\begin{proof}[Proof of Theorem \ref{thm:serrin}]
By Corollary \ref{cor:transfblowup} and the proof of Theorem \ref{t:NS_Rd_local}, it is enough to show the claim of Theorem \ref{thm:serrin} with $(p_0,\s_0,q_0)=(p,\s,q)$. In the latter situation,
\eqref{it2:serrin} follows from Theorem \ref{thm:criticalblowup}\eqref{it2:criticalblowup} and \eqref{eq:trace_equal_critical_NS}. While for \eqref{it1:serrin}, note that
$
\frac{\a}{p}
= 1+\frac{\delta}{2}
-\frac{1}{p} -\frac{d}{2q}
$
and
$$
X_{1-\frac{\a}{p}}
=\Hs^{1+\s-2\frac{\a}{p},q}
=\Hs^{\frac{2}{p}-\frac{d}{q}-1,q}.
$$
Therefore, \eqref{it1:serrin} with $(p_0,\s_0,q_0)=(p,\s,q)$ is a consequence of \cite[Theorem 4.11]{AV19_QSEE_2} and the fact that the constant $C$ in Lemma \ref{lem:estimate_nonlinear_NS3D} is independent of $v,v'$.
\end{proof}

\appendix

\section{Technical results}

\subsection{Extrapolation spaces}\label{appendix:extra}

In this appendix, we present results on interpolation-extrapolation scales for sectorial operators, which we use at several places. For a more detailed presentation, we refer to \cite[Chapter 5]{Am} and \cite[Section 6.3]{Haase:2}.

Let $\Aop$ be a sectorial operator on a Banach space $X$ such that $0\in \rho(\Aop)$. The latter implies that $(X,\|\Aop^{-1}\cdot\|_{X})$ is a normed space. Define the extrapolated space $X_{-1,\Aop}$ as the completion of $(X,\|\Aop^{-1}\cdot\|_{X})$, i.e.\
\begin{equation*}
X_{-1,\Aop}:=(X,\|\Aop^{-1}\cdot\|_{X})^{\sim},
\end{equation*}
where $\sim$ denotes the completion of the space.
Clearly, $X\hookrightarrow X_{-1,\Aop}$ and if $x\in X$, then we have
$$
\|x\|_{X_{-1,\Aop}}=\|\Aop^{-1}x\|_{X}\leq \|\Aop^{-1}\|_{\calL(X)}\|x\|_{X}.
$$
Since $\overline{\Do(\Aop)}=X$, this also shows that $\Do(\Aop)\ni x\mapsto \Aop x\in X$ extends to a linear isometric isomorphism from $X$ onto $X_{-1,\Aop}$. The extension of this map will be denoted by $T_{-1,\Aop}$ or simply $T_{-1}$ if no confusion seems likely.

To proceed further, let us note that $\Aop$ induces a closed linear operator $\Aop_{-1}$ on $X_{-1,\Aop}$ given by
\begin{equation}
\label{eq:A_first_extrapolation}
\Aop_{-1}:=T_{-1}\Aop T_{-1}^{-1}.
\end{equation}
One can check that $\Aop_{-1}|_{X}=\Aop$. By \eqref{eq:A_first_extrapolation}, $\Aop$ is similar to $\Aop_{-1}$. These simple observations lead to the following.
\begin{proposition}
\label{prop:A_first_extrapolation}
Let $\Aop$ be a sectorial operator on $X$ such that  $0\in \rho(\Aop)$. Then $\Aop_{-1}$ is the closure of $\Aop$ in $X_{-1,\Aop}$ with $\Do(\Aop_{-1})=X$.
Moreover, the following hold:
\begin{enumerate}[{\rm(1)}]
\item $\Aop_{-1}$ is a sectorial operator on $X_{-1,\Aop}$ and $\om(\Aop_{-1})=\om(\Aop)$;
\item\label{it:A_extrapolated_BIP} If $\Aop\in \BIP(X)$, then $\Aop_{-1}\in \BIP(X_{-1,\Aop })$;
\item If $\Aop$ has a bounded $H^{\infty}$-calculus on $X$, so does $\Aop_{-1}$ and $\angH(\Aop_{-1})=\angH(\Aop)$.
\end{enumerate}
\end{proposition}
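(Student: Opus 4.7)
The main idea is that $\Aop_{-1}:=T_{-1}\Aop T_{-1}^{-1}$ is similar to $\Aop$ via the isometric isomorphism $T_{-1}\colon X\to X_{-1,\Aop}$, so that all spectral and functional-calculus properties transfer by conjugation while norms are preserved. I would organize the proof in two stages: first identifying $\Aop_{-1}$ concretely (including that it is the closure of $\Aop$ in $X_{-1,\Aop}$ with $\Do(\Aop_{-1})=X$), and then deducing (1)--(3) from the similarity and the isometry.

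For the identification stage, I would note that $T_{-1}$ agrees with $\Aop$ on $\Do(\Aop)$ and that $\Aop(\Do(\Aop))=X$ since $0\in\rho(\Aop)$. Hence $T_{-1}(\Do(\Aop))=X$, so $T_{-1}^{-1}(X)=\Do(\Aop)$, and the definition of $\Aop_{-1}$ gives $\Do(\Aop_{-1})=T_{-1}(\Do(\Aop))=X$ with $\Aop_{-1}x=T_{-1}x$ for every $x\in X$; in particular $\Aop_{-1}|_{\Do(\Aop)}=\Aop$. Closedness of $\Aop_{-1}$ follows because, for $x_n\in X$ with $x_n\to x$ and $T_{-1}x_n\to y$ in $X_{-1,\Aop}$, the isometry of $T_{-1}$ forces $(x_n)$ to be Cauchy in $X$ and to converge to $T_{-1}^{-1}y\in X$, which then coincides with $x$ via the embedding $X\hookrightarrow X_{-1,\Aop}$. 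That $\Do(\Aop)$ is a core is immediate from the density of $\Do(\Aop)$ in $X$ together with continuity of $T_{-1}\colon X\to X_{-1,\Aop}$: if $x_n\in\Do(\Aop)$ with $x_n\to x$ in $X$, then simultaneously $x_n\to x$ in $X_{-1,\Aop}$ and $\Aop x_n=T_{-1}x_n\to T_{-1}x=\Aop_{-1}x$ in $X_{-1,\Aop}$.

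For (1)--(3) the similarity $\Aop_{-1}=T_{-1}\Aop T_{-1}^{-1}$ combined with the isometry of $T_{-1}$ gives $\sigma(\Aop_{-1})=\sigma(\Aop)$ together with the identity
\begin{equation*}
|\lambda|\,\|(\lambda-\Aop_{-1})^{-1}\|_{\calL(X_{-1,\Aop})}=|\lambda|\,\|(\lambda-\Aop)^{-1}\|_{\calL(X)},\qquad \lambda\in\rho(\Aop),
\end{equation*}
so sectoriality transfers with $\om(\Aop_{-1})=\om(\Aop)$. The same pattern then applies to $\Aop^{it}$ for $t\in\R$ (yielding (2)) and to the contour integral $f(\Aop)=\frac{1}{2\pi i}\int_{\Gamma}f(z)(z-\Aop)^{-1}\,dz$ for $f\in H^{\infty}_0(\Sigma_\phi)$ with $\phi>\om(\Aop)$: inserting the resolvent similarity under the integral,
\begin{equation*}
f(\Aop_{-1})=T_{-1}\,f(\Aop)\,T_{-1}^{-1},\qquad \|f(\Aop_{-1})\|_{\calL(X_{-1,\Aop})}=\|f(\Aop)\|_{\calL(X)},
\end{equation*}
which establishes the bounded $H^{\infty}$-calculus of $\Aop_{-1}$ and the equality $\angH(\Aop_{-1})=\angH(\Aop)$.

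The only slightly delicate point is justifying the interchange of the contour integral with the conjugation $S\mapsto T_{-1}ST_{-1}^{-1}$, which reduces to commuting a Bochner integral with a bounded linear map. Once this is in place, parts (1)--(3) are immediate corollaries of the general fact that isometric isomorphisms transport sectoriality, $\BIP$ and bounded $H^{\infty}$-calculus.
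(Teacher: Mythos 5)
The paper states Proposition \ref{prop:A_first_extrapolation} without proof, deferring to \cite[Chapter 5]{Am} and \cite[Section 6.3]{Haase:2}, so there is no in-text argument to compare against. Your proof via the similarity $\Aop_{-1}=T_{-1}\Aop T_{-1}^{-1}$ and the isometry of $T_{-1}$ is correct, complete, and is precisely the standard argument behind the cited references: the identification $\Do(\Aop_{-1})=X$ and $\Aop_{-1}|_{X}=T_{-1}$, the closedness and core arguments, and the transport of resolvent estimates, imaginary powers, and the Dunford contour calculus under an isometric conjugation are all handled cleanly; the one point you flag (interchanging the Bochner integral with the bounded conjugation) is indeed the only analytic detail and is standard.
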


Recall that $\BIP$ was introduced in Subsection \ref{ss:Hinfty} (see also \cite{Analysis3,Haase:2}).

The previous proposition shows that if $\Aop_{-1}$ is sectorial, then the fractional powers $(\Aop_{-1})^{\alpha}$ for $\alpha>0$ are well-defined closed linear operators on $X_{-1,\Aop}$. Let us denote by $X_{\alpha-1,\Aop}$ the domain of $(\Aop_{-1})^{\alpha}$,
\begin{equation}
\label{eq:complete_scale}
X_{-1+\alpha,\Aop}:=\big(\Do((\Aop_{-1})^{\alpha}),\|(\Aop_{-1})^{\alpha}\cdot\|_{X}\big),\qquad \alpha\geq 0.
\end{equation}
By Proposition \ref{prop:A_first_extrapolation} one has $\Do(\Aop_{-1})=X$ and thus $X_{0,\Aop}=X$.

Let $\alpha\geq -1$ and let $\Aop_{\alpha}$ be the realization of $\Aop_{-1}$ on $X_{\alpha,\Aop}$, i.e.\
\begin{align*}
\Do(\Aop_{\alpha})&:=\{x\in X_{\alpha,\Aop}\,:\,\Aop_{-1}x \in X_{\alpha,\Aop} \},\\
\Aop_{\alpha}x&:=\Aop_{-1}x,\text{ if }x\in \Do(\Aop_{\alpha}).
\end{align*}
Note that $\Aop_{0}=\Aop$ and $\Aop_{\alpha}=\Aop_{-1}$ if $\alpha=-1$. Under suitable assumptions, $(X_{\alpha,\Aop})_{\alpha\geq -1}$ becomes an interpolation scale with respect to complex interpolation (see \cite[Theorem 6.6.9]{Haase:2}).
\begin{proposition}
\label{prop:interpolation_extrapolation_scale}
Let $\Aop\in \BIP(X)$ be such that $0\in \rho(\Aop)$. Let $(X_{\alpha,\Aop})_{\alpha\geq -1}$ be as above. Then
\begin{equation*}
X_{\alpha(1-\theta)+\beta\theta,\Aop}=[X_{\alpha,\Aop},X_{\beta,\Aop}]_{\theta}, \quad \alpha,\beta\geq -1,\;\theta\in (0,1).
\end{equation*}
isomorphically.
\end{proposition}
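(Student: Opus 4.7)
The plan is to reduce the claim to a known complex-interpolation identity for fractional-power domains of the operator $A_{-1}$ on $X_{-1,A}$. As a first step, I would note that since $0\in \rho(A)$ and $A_{-1}=T_{-1}AT_{-1}^{-1}$ is similar to $A$, we also have $0\in \rho(A_{-1})$; consequently, the fractional powers $(A_{-1})^{s}$ for $s\geq 0$ are closed, injective, boundedly invertible operators on $X_{-1,A}$. Using the definition \eqref{eq:complete_scale}, for every $\gamma\geq -1$ the space $X_{\gamma,A}$ can be identified with $\Do((A_{-1})^{1+\gamma})$ equipped with a norm equivalent to the graph norm of $(A_{-1})^{1+\gamma}$ (the equivalence is free from $0\in \rho(A_{-1})$). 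In particular, $X_{-1,A}=\Do((A_{-1})^{0})$ and $X_{0,A}=\Do(A_{-1})=X$, in agreement with Proposition \ref{prop:A_first_extrapolation}.

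Next, by Proposition \ref{prop:A_first_extrapolation}\eqref{it:A_extrapolated_BIP}, $A_{-1}\in \BIP(X_{-1,A})$ with the same angle as $A$. I would then invoke the identification of fractional-power domains of BIP operators with complex interpolation spaces, recalled in Subsection \ref{ss:interp} and discussed in detail in \cite{Haase:2,Analysis3,Tri95}: for any $s,t\geq 0$ and $\theta\in (0,1)$,
\begin{equation*}
\big[\Do((A_{-1})^{s}),\Do((A_{-1})^{t})\big]_{\theta}=\Do\big((A_{-1})^{(1-\theta)s+\theta t}\big),
\end{equation*}
with equivalence of norms. Setting $s=1+\alpha$ and $t=1+\beta$ with $\alpha,\beta\geq -1$, and using the identification from the first step together with $(1-\theta)(1+\alpha)+\theta(1+\beta)=1+(1-\theta)\alpha+\theta\beta$, one obtains
\begin{equation*}
\big[X_{\alpha,A},X_{\beta,A}\big]_{\theta}=X_{(1-\theta)\alpha+\theta\beta,A},
\end{equation*}
as claimed.

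The main technical obstacle I anticipate is the two-sided version of the domain/interpolation identity above: Subsection \ref{ss:interp} only records the one-sided version $[X,\Do(A^{\alpha})]_{\theta}=\Do(A^{\alpha\theta})$. The two-sided statement can either be quoted directly from \cite[Theorem 6.6.9]{Haase:2} (which is the reference pointed to by the excerpt), or bootstrapped from the one-sided version by reiteration applied to the BIP operator $(A_{-1})^{\min(s,t)}$ on $X_{-1,A}$. A secondary point that merits care is the identification $X_{\gamma,A}\cong \Do((A_{-1})^{1+\gamma})$ including the equivalence of norms: this is immediate on a dense set from the definition of the extrapolation space and of the fractional powers, and extends to the whole space by completion, using that the norm on the right-hand side is a genuine Banach-space norm because $A_{-1}$ is boundedly invertible on $X_{-1,A}$.
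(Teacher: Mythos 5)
Your proof is correct and follows the same route as the paper, which simply cites \cite[Theorem 6.6.9]{Haase:2} right before stating the proposition. You correctly identify that the reduction rests on three facts: $A_{-1}$ is similar to $A$ hence $0\in\rho(A_{-1})$ and the graph norm of $(A_{-1})^{1+\gamma}$ is equivalent to the homogeneous norm defining $X_{\gamma,\Aop}$; $A_{-1}\in\BIP(X_{-1,\Aop})$ by Proposition \ref{prop:A_first_extrapolation}\eqref{it:A_extrapolated_BIP}; and the two-sided complex-interpolation identity for domains of fractional powers of a BIP operator, applied to $A_{-1}$ on the ground space $X_{-1,\Aop}$. Your observation that the one-sided identity $[X,\Do(A^{\alpha})]_{\theta}=\Do(A^{\alpha\theta})$ recorded in Subsection \ref{ss:interp} can be upgraded to the two-sided version by Calder\'on reiteration is a valid alternative to quoting Haase's theorem directly; since $A_{-1}$ is densely defined with dense range the density hypothesis in the reiteration theorem is satisfied, so that detour also closes.
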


In the above setting $(X_{\alpha,\Aop},\Aop_{\alpha})_{\alpha\geq -1}$ is called the interpolated-extrapolated scale of $\Aop$.

The following alternative characterization of the spaces $X_{-\alpha,\Aop}$ for $\alpha\in (0,1)$ is given in \cite[Section 5.1.4]{Am}, where $A^*$ denotes the adjoint of $A$.
\begin{theorem}
\label{t:duality}
Let $\Aop$ be a sectorial operator on a reflexive Banach space $X$ such that $0\in \rho(\Aop)$. Then for each $\vartheta\in [0,1]$, $X_{-\vartheta,\Aop}$ is isomorphic to the dual of the space
$$X_{\vartheta,\Aop^*}=(\Do((\Aop^*)^{\vartheta}),\|(\Aop^*)^{\vartheta}\cdot\|_{X^*})
$$ with the duality pairing induced by the $(X,X^*)$-pairing. More concisely,
$
X_{-\vartheta,\Aop}=(X_{\vartheta,\Aop^*})^*.
$
\end{theorem}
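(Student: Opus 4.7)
\textbf{Proof plan for Theorem \ref{t:duality}.}

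The plan is to realize the duality explicitly via an isomorphism built from fractional powers. The key observation is that for a sectorial operator $A$ on a reflexive Banach space $X$ with $0\in\rho(A)$, the bounded operator $A^{-\vartheta}:X\to X$ extends by the construction of $X_{-\vartheta,A}$ (the completion of $X$ in the norm $\|A^{-\vartheta}\cdot\|_X$) to an isometric isomorphism $\widebar{A^{-\vartheta}}:X_{-\vartheta,A}\to X$. Dually, since $\Do((A^*)^\vartheta)=X_{\vartheta,A^*}$ carries the norm $\|(A^*)^\vartheta\cdot\|_{X^*}$, the operator $(A^*)^\vartheta:X_{\vartheta,A^*}\to X^*$ is also an isometric isomorphism. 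Together with the functional-calculus duality $(A^{-\vartheta})^*=(A^*)^{-\vartheta}$, valid on reflexive $X$ (see e.g.\ \cite[Proposition 3.1.12]{Haase:2}), these two isomorphisms will form the backbone of the argument.

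First I would define a bilinear pairing $\langle\cdot,\cdot\rangle_{\vartheta}:X_{-\vartheta,A}\times X_{\vartheta,A^*}\to\C$ by
\[
\langle x,y\rangle_{\vartheta}:=\big\langle\widebar{A^{-\vartheta}}x,\,(A^*)^\vartheta y\big\rangle_{X,X^*}.
\]
Using $(A^{-\vartheta})^*=(A^*)^{-\vartheta}$, one checks that for $x\in X\subseteq X_{-\vartheta,A}$ the new pairing reduces to the original $(X,X^*)$-duality, so $\langle\cdot,\cdot\rangle_\vartheta$ is indeed the continuous extension of $\langle\cdot,\cdot\rangle_{X,X^*}$ by density of $X$ in $X_{-\vartheta,A}$. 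Cauchy--Schwarz gives $|\langle x,y\rangle_\vartheta|\leq\|x\|_{X_{-\vartheta,A}}\|y\|_{X_{\vartheta,A^*}}$, so the induced linear map
\[
J:X_{-\vartheta,A}\to (X_{\vartheta,A^*})^*,\qquad J(x)(y):=\langle x,y\rangle_\vartheta,
\]
is bounded with $\|J\|\leq 1$.

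Next I would verify that $J$ is an isometry and surjective. Isometry follows from the chain
\[
\|J(x)\|_{(X_{\vartheta,A^*})^*}=\sup_{\|y\|_{X_{\vartheta,A^*}}\leq 1}\big|\big\langle\widebar{A^{-\vartheta}}x,(A^*)^\vartheta y\big\rangle\big|=\sup_{\|z^*\|_{X^*}\leq 1}\big|\big\langle\widebar{A^{-\vartheta}}x,z^*\big\rangle\big|=\|\widebar{A^{-\vartheta}}x\|_X=\|x\|_{X_{-\vartheta,A}},
\]
where the second equality uses that $(A^*)^\vartheta:X_{\vartheta,A^*}\to X^*$ is an isometric isomorphism. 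For surjectivity, given $\phi\in(X_{\vartheta,A^*})^*$, the functional $\psi:=\phi\circ(A^*)^{-\vartheta}$ lies in $X^{**}$; reflexivity of $X$ yields $z\in X$ representing $\psi$, i.e.\ $\phi((A^*)^{-\vartheta}z^*)=\langle z,z^*\rangle_{X,X^*}$ for all $z^*\in X^*$. Setting $x:=(\widebar{A^{-\vartheta}})^{-1}z\in X_{-\vartheta,A}$ and writing an arbitrary $y\in X_{\vartheta,A^*}$ as $y=(A^*)^{-\vartheta}[(A^*)^\vartheta y]$, we obtain $J(x)(y)=\langle z,(A^*)^\vartheta y\rangle=\phi(y)$, so $J(x)=\phi$.

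The endpoints $\vartheta=0$ (which is just reflexivity $X=(X^*)^*$) and $\vartheta=1$ (handled with $A^{-1}$, $(A^*)^1$ directly) fit into the same scheme without complication. The main technical hurdle I expect is the justification of the adjoint identity $(A^{-\vartheta})^*=(A^*)^{-\vartheta}$ at the level of the extended functional calculus and the verification that the completion $X_{-\vartheta,A}$ indeed realizes $\widebar{A^{-\vartheta}}$ as a surjective isometry onto $X$ (this uses density of $\Do(A^\vartheta)=X_{\vartheta,A}$ in $X$, itself a consequence of sectoriality); once these ingredients are in place, the duality identification follows from the above chain of isomorphisms.
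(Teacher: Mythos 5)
Your proposal is correct and follows essentially the same route as the reference Amann cites for this statement (\cite[Section 5.1.4]{Am}): use that $\overline{A^{-\vartheta}}\colon X_{-\vartheta,A}\to X$ and $(A^*)^{\vartheta}\colon X_{\vartheta,A^*}\to X^*$ are isometric isomorphisms, invoke the adjoint identity $(A^{-\vartheta})^*=(A^*)^{-\vartheta}$ from the functional calculus on reflexive spaces, and transport the $(X,X^*)$-duality through these isomorphisms, with reflexivity furnishing surjectivity. The only cosmetic remarks: the inequality you attribute to Cauchy--Schwarz is simply the boundedness of the Banach-space duality pairing, and surjectivity of $\overline{A^{-\vartheta}}$ deserves a one-line justification (an isometry has closed range, and its range contains the dense set $\Do(A^{\vartheta})$), which you do flag implicitly in your final paragraph.
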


In the following examples, we look at operators which will be used later.
\begin{example}[Dirichlet Laplacian]
\label{ex:extrapolated_Laplace_dirichlet}
In this example we specialize the above construction to the strong Dirichlet Laplacian $\Dd_{q}$ where $q\in (1,\infty)$. In this example, we assume that $\Dom\subseteq \R^d$ is a bounded $C^2$-domain. However, many of the results below also hold on domains with less smoothness and unbounded domains. The results for $\Dom = \R^d$ or $\Dom = \T^d$ are much simpler and can be presented for all smoothness parameters. Let $W^{1,q}_0(\Dom):=\{u\in W^{1,q}(\Dom)\,:\,u|_{\partial \Dom}=0\}$. The strong Dirichlet Laplacian is defined as
\begin{equation}
\label{eq:strong_Dirichlet_Laplacian}
\Dd_q:W^{2,q}(\Dom)\cap W^{1,q}_0(\Dom)\subseteq L^q(\Dom) \to L^q(\Dom),\qquad \Dd_q f:=\Delta f.
\end{equation}
By \cite{DDHPV}, $A_q:=-\Dd_q$ has a bounded $H^{\infty}$-calculus on $L^q(\Dom)$ with angle $\angH(A_q)=0$. Thus, $A_q$ generates an extrapolated-interpolated scale, which will be denoted by
$$\big(\HD^{2\alpha,q}(\Dom),A_{2\alpha,q}\big)_{\alpha\in [-1,\infty)}.$$
Therefore, $\HD^{2,q}(\Dom)=W^{2,q}(\Dom)\cap W^{1,q}_0(\Dom)$ and $\HD^{0,q}(\Dom)=L^q(\Dom)$.
By Proposition \ref{prop:interpolation_extrapolation_scale}, for all $ -2\leq s_1<s_2<\infty$ one has
\begin{equation}
\label{eq:HD_complex_interpolation}
\HD^{s,q}(\Dom)=[\HD^{s_1,q}(\Dom),\HD^{s_2,q}(\Dom)]_{\theta},\qquad \vartheta\in (0,1),s:=(1-\theta) s_1+\theta s_2.
\end{equation}
Moreover, by Theorem \ref{t:duality}, $\HD^{-s,q}(\Dom)=(\HD^{s,q'}(\Dom))^*$ for $s\in(0,2)$.
We define the extrapolated Dirichlet Laplacian as
$\Dd_{s,q}:=- A_{s,q}$ with $ s\geq -2$. Note that $\Dd_{0,q}=\Dd_{q}$.

By \cite{Se} and \eqref{eq:HD_complex_interpolation} one has the following identification:
\begin{equation}
\label{eq:HD_identification}
\HD^{s,q}(\Dom)=
\begin{cases}
H^{s,q}(\Dom)\qquad &\text{ if }s\in (0,1/q),\\
\{H^{s,q}(\Dom)\,:\,u|_{\partial\Dom}=0\}\qquad &\text{ if }s\in (1/q,2).
\end{cases}
\end{equation}
Here, $H^{s,q}(\Dom)$ denotes the Bessel potential spaces on domains (see \cite[Section 4.3.1]{Tri95}).  We avoided $s=1/q$, as in this case, the description is more complicated. The identification \eqref{eq:HD_identification} implies $\HD^{1,q}(\Dom)=W^{1,q}_0(\Dom)$, and by the above duality one has
$$
\HD^{-1,q}(\Dom)=(W^{1,q'}_0(\Dom))^*=:W^{-1,q}(\Dom).
$$
The above identities and integration by parts arguments show that $\Dd_{-1,q}$ is the `weak Dirichlet Laplacian', i.e.\ $\Dd_{-1,q}:W^{1,q}_0(\Dom)\subseteq W^{-1,q}(\Dom)\to W^{-1,q}(\Dom)$ and (see \cite[Example 3]{CriticalQuasilinear} for details)
\begin{equation}
\label{eq:weak_dirichlet_laplacian}
\textstyle -\l g,\Dd_{-1,q}f\r=\int_{\Dom}\nabla g\cdot \nabla f\,\dd x,\quad  f\in W^{1,q}_0(\Dom),g\in W^{1,q'}_0(\Dom).
\end{equation}
The same integration by parts arguments allows us to consider the divergence operator $\div:=\sum_{j=1}^d \partial_j$ as a map $\div: L^q(\Dom;\R^d)\to \HD^{-1,q}(\Dom)$ defined by
\begin{equation}
\label{eq:weak_divergence_operator}
\textstyle - \l g,\div F\r:=\int_{\Dom} F\cdot \nabla g\,\dd x, \qquad \forall F\in L^q(\Dom;\R^d),g\in W^{1,q'}_0(\Dom).
\end{equation}

Finally, we identify the real interpolation spaces as Besov spaces. For any $\theta\in (0,1)$ and $q,p\in (1,\infty)$ we define
\begin{equation*}
\BD^{-2+4\theta}_{q,p}(\Dom):=(\HD^{-2,q}(\Dom),\HD^{2,q}(\Dom))_{\theta,p}.
\end{equation*}
By \cite[Theorem 4.7.2]{BeLo} and \eqref{eq:HD_complex_interpolation} for $-2\leq s_0<s_1$ and $\theta\in (0,1)$ one has
\begin{equation}
\label{eq:def_BD}
\BD^{s}_{q,p}(\Dom)=(\HD^{s_0,q}(\Dom),\HD^{s_1,q}(\Dom))_{\theta,p},\qquad \text{where }s:=(1-\theta)s_0+\theta s_1.
\end{equation}
The notation $\BD$ is motivated by the following identification (see \cite{Grisvard}):
\begin{equation}
\label{eq:BD_identifications}
\BD^{s}_{q,p}(\Dom)=
\begin{cases}
B^{s}_{q,p}(\Dom),&\qquad s\in(0,1/q),\\
\{u\in B^{s}_{q,p}(\Dom)\,:\,u|_{\partial\Dom}=0\},&\qquad s\in(1/q,2).
\end{cases}
\end{equation}
Here, $B^{s}_{q,p}(\Dom)$ denotes the usual Besov spaces on domains (see \cite[Section 4.3.1]{Tri95}).
For $s_0,s_1\geq -2$ the following embedding results will be frequently used without further reference:
\begin{align*}
\HD^{s_0,q_0}(\Dom)& \hookrightarrow \HD^{s_1,q_1}(\Dom),  \ \ \ 1<q_0\leq q_1<\infty, s_0-d/q_0\geq s_1-d/q_1,
\\ \BD^{s_0}_{q_0,p_0}(\Dom)&\hookrightarrow \BD^{s_1}_{q_0,p_1}(\Dom), \ \ \ 1<q_0\leq q_1<\infty, s_0-d/q_0\geq s_1-d/q_1, 1\leq p_0\leq p_1\leq \infty,
\\ \BD^{s_0+\varepsilon}_{q,p_0}(\Dom)&\hookrightarrow \BD^{s}_{q,p_1}(\Dom), \ \ \varepsilon>0, q\in (1, \infty), p_0,p_1\in [1, \infty],
\\ \BD^{s_0+\varepsilon}_{q,\infty}(\Dom)&\hookrightarrow \BD^{s}_{q,p_1}(\Dom), \ \ \varepsilon>0, q\in (1, \infty), p_1\in [1, \infty],
\\ \BD^{s_0}_{q,1}(\Dom)&\hookrightarrow \HD^{s_0,q}(\Dom)\hookrightarrow \BD^{s_0}_{q,\infty}(\Dom), \ \ q\in (1, \infty).
\end{align*}
\end{example}

\subsection{It\^o's formula}

The following infinite-dimensional version of It\^o's formula for $\|\cdot\|^2_H$ is completely standard, and can be traced back to the early work on the variational setting (see \cite{KR79, Par2}), where also more general versions can be found. A shorter proof in a special case was obtained in \cite{Kry13}. For an overview, the reader is referred to the recent paper \cite{gyongy2024once}.
\begin{lemma}[It\^o's formula in Hilbert spaces]\label{lem:ItoHilbert}
Let $(V, H, V^*)$  be a Gelfand triple of Hilbert spaces as in Section \ref{sec:Var}.
Let $u_0 \in L^0_{\F_0}(\Omega;H)$. Consider the strongly progressively measurable processes $\Phi\in L^{2}(0,T;V^*)$ a.s., and $\Psi\in L^{2}(0,T;\calL_2(U,H))$ a.s. Let $u \in C([0,T], H) \cap L^2(0,T; V)$ a.s.\ be a progressively measurable process such that, a.s.\ for all $t\in [0,T]$,
\begin{align}
\textstyle u(t) = u_0 + \int_0^t \Phi(s) \,\dd s + \int_0^t \Psi(s) \,\dd  W(s).
\end{align}
Then, a.s.\ for all $t\in [0,T]$,
\begin{align*}
\textstyle\| u(t) \|_H^2 = \| u_0 \|_H^2 + 2 \int_0^t \lb \Phi(s), u(s) \rb \,\dd s + 2 \int_0^t \Psi(s)^* u(s) \,\dd W(s)  + \int_{0}^t |\tr(\Psi(s))|^2 \,\dd s.
\end{align*}
\end{lemma}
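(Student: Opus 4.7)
I would proceed along the classical Krylov--Rozovskii/Pardoux route: localize in $\omega$, smooth the equation using a resolvent family coming from the Gelfand triple, apply It\^o's formula in the smoothed setting (where everything is genuinely $H$-valued), and pass to the limit using the continuous embeddings $V \hookrightarrow H \hookrightarrow V^*$.

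\emph{Step 1 (localization).} For each $n\geq 1$ define
\[
\tau_n := \inf\Big\{t\in[0,T]\,:\, \|u(t)\|_H + \|u\|_{L^2(0,t;V)} + \|\Phi\|_{L^2(0,t;V^*)} + \|\Psi\|_{L^2(0,t;\calL_2(U,H))} \geq n\Big\}\wedge T,
\]
with $\inf\emptyset := T$. Since $u\in C([0,T];H)$ a.s.\ and the remaining norms are a.s.\ finite, $\tau_n\uparrow T$ a.s. Both sides of the claimed identity are a.s.\ continuous in $t$, so it is enough to prove the formula on $[0,\tau_n]$. After stopping, we may assume that all the relevant norms have finite second moments in $\omega$.

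\emph{Step 2 (smoothing via the Gelfand triple).} Let $J:V\to V^*$ be the Riesz isomorphism and set $J_n := (I+n^{-1}J)^{-1}$, interpreted consistently as an operator on each of $V$, $H$, $V^*$. Standard resolvent estimates give $\|J_n\|_{\calL(X)}\leq 1$ and $J_n\to I$ strongly on each of these spaces, and $J_n$ is self-adjoint on $H$. Applying $J_n$ to the equation, a.s.\ for all $t\in[0,T]$,
\[
J_n u(t) = J_n u_0 + \int_0^t J_n\Phi(s)\,\dd s + \int_0^t J_n\Psi(s)\,\dd W(s),
\]
where now $J_n\Phi\in L^2(0,T;H)$ and $J_n\Psi\in L^2(0,T;\calL_2(U,H))$ a.s. Since $J_n u$ is a genuine $H$-valued It\^o process with $H$-valued drift and $\calL_2(U,H)$-valued diffusion, the classical It\^o formula for $\|\cdot\|_H^2$ (e.g.\ as in \cite[Theorem 4.17]{DPZ}, which here only requires the Hilbert-space framework) applies and gives
\begin{align*}
\|J_n u(t)\|_H^2 &= \|J_n u_0\|_H^2 + 2\int_0^t (J_n\Phi(s), J_n u(s))_H\,\dd s \\
&\quad + 2\int_0^t (J_n\Psi(s))^* J_n u(s)\,\dd W(s) + \int_0^t \|J_n\Psi(s)\|_{\calL_2(U,H)}^2\,\dd s.
\end{align*}

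\emph{Step 3 (passage to the limit).} The crucial identity is $(J_n\Phi,J_n u)_H = \langle J_n^2\Phi, u\rangle$, obtained by recognizing the $H$-inner product as the $V^*$--$V$ pairing (which is legitimate because $J_n\Phi\in H$ and $J_n u\in V$) and using self-adjointness of $J_n$ in $H$. Since $J_n^2\Phi \to \Phi$ in $L^2(0,T;V^*)$ a.s.\ with uniform $V^*$-bound, dominated convergence yields $\int_0^\cdot \langle J_n\Phi, J_n u\rangle\,\dd s \to \int_0^\cdot \langle \Phi, u\rangle\,\dd s$. An analogous argument handles $\|J_n\Psi\|_{\calL_2(U,H)}^2 \to \|\Psi\|_{\calL_2(U,H)}^2$. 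For the stochastic integral, the It\^o isometry combined with $(J_n\Psi)^* J_n u \to \Psi^* u$ in $L^2(\Omega\times(0,\tau_n);U)$ (using $\Psi^*:H\to U$ and the uniform bounds) gives convergence in probability, uniformly in $t$. The left-hand side converges pointwise in $t$ by strong continuity of $J_n$ on $H$. Sending $n\to\infty$ in the stopped identity and then removing the localization via $\tau_n\uparrow T$ finishes the proof.

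The main technical obstacle is the drift cross-term: $u$ itself need not lie in $V$ for fixed $t$, so the pairing $\langle \Phi(s), u(s)\rangle$ makes sense only as an $L^1(0,T)$-integrand, not pointwise in $t$. The introduction of the Gelfand-triple-compatible smoothers $J_n$ is designed exactly to circumvent this: at the approximate level $J_n u(s)\in V$ for all $s$, and the self-adjointness of $J_n$ allows us to shift the smoothing from $u$ onto $\Phi$, where it is harmless.
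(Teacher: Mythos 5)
Your proof is correct, but it takes a genuinely different route from the one the paper sketches. The paper points to the orthonormal basis expansion $\|u\|_H^2 = \sum_n |(u,h_n)_H|^2$ with a basis $(h_n)\subset V$ of $H$ and a reduction to the scalar It\^o formula, while you instead use Yosida-type smoothing $J_n=(I+n^{-1}J)^{-1}$ and the $H$-valued It\^o formula --- the classical Pardoux/Krylov--Rozovskii approach. Both resolve the same obstacle you identify, namely that $u(t)\notin V$ in general so $\lb\Phi(t),u(t)\rb$ lacks pointwise meaning: the basis expansion fixes smooth test vectors $h_n\in V$, while you regularize the whole process. Your route is arguably more robust: summing the ONB expansion requires care in the drift term $\sum_n (u,h_n)_H\lb\Phi,h_n\rb$, since the $H$-orthogonal partial sums of $u$ need not converge to $u$ in $V$ (so one must either choose the basis specially, e.g.\ eigenvectors when $V\hookrightarrow H$ is compact, or interchange limits more carefully), whereas the $J_n$ machinery handles this uniformly without extra structural assumptions. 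One small imprecision in your Step 3: the identity $\lb J_n f,v\rb=\lb f,J_n v\rb$ for $f\in V^*$ and $v\in V$ is not literally an instance of ``self-adjointness of $J_n$ on $H$'', since $\Phi(s)$ in general lies in $V^*\setminus H$; instead, observe that both sides are continuous in $f\in V^*$, agree for $f$ in the dense subspace $H$ by $H$-self-adjointness, and hence agree everywhere. With that patch your passage to $\lb J_n^2\Phi,u\rb$ is legitimate, and the rest of the limiting argument (dominated convergence for the Lebesgue integrals, It\^o isometry for the stochastic one) goes through as you describe.
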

The above form can be proved through the orthonormal basis expansion $\|u\|_H^2 = \sum_{n\geq 1} |(u, h_n)|^2$ by reducing to the scalar-valued It\^o formula. Here it helps to take $h_n\in V$.

Next we state a relatively standard It\^o formula for $\|u\|_{L^q(\Dom)}^q$. It is also stated in  \cite[Lemma 3.3]{DG15_boundedness} under slightly different conditions, and we follow their proofs below.
\begin{lemma}\label{lem:Ito_generalized}
Suppose that $\Dom\subseteq \R^d$ is an open set. Let $q\in [2, \infty)$. Let $v,\phi, \Phi_j, \psi_n:[0,T]\times\Omega\to L^q(\Dom)$ be progressively measurable for $j\in \{1, \ldots, d\}$ and $n\geq 1$, and suppose that
\begin{itemize}
\item $v_0\in L^0_{\F_0}(\Omega;L^q(\Dom))$;
\item $v\in L^2(0,T;H^{1,q}_0(\Dom))\cap C([0,T];L^q(\Dom))$ a.s.;
\item $\phi\in L^1(0,T;L^q(\Dom))$ a.s.;
\item $\Phi\in L^2(0,T;L^q(\Dom;\R^d))$ a.s.;
\item $(\psi_n)_{n\geq 1}\in L^2(0,T;L^q(\Dom;\ell^2))$ a.s.;
\end{itemize}
and a.s.\ for all $t\in [0,T]$, the following identity holds in $H^{-1,q}(\Dom)$:
\begin{equation*}
\textstyle v(t) = v_0 + \int_0^t \phi(s)\,\dd s + \int_0^t \div (\Phi(s))\,\dd s +\sum_{n\geq 1}  \int_0^t \psi_n(s)\,\dd W^n_s.
\end{equation*}
Then, a.s.\ for all $t\in [0,T]$,
\begin{align*}
\|v(t)\|_{L^q(\Dom)}^q
=& \textstyle \|v_0\|_{L^q(\Dom)}^q + q\int_{0}^t \int_{\Dom} |v(s)|^{q-2} \big[v(s)\phi(s) - (q-1) \nabla v(s) \cdot \Phi(s)  \big] \,\dd x \,\dd s\\
& + \textstyle q\sum_{n\geq 1}\int_{0}^t \int_{\Dom}  |v(s)|^{q-2} v(s) \psi_n(s) \,\dd x \,\dd  W^n_s
\\ & \textstyle +\frac{q(q-1)}{2} \int_{0}^t \int_{\Dom} |v(s)|^{q-2} \|\psi(s)\|^2_{\ell^2} \,\dd x \,\dd s.
\end{align*}
The same formula holds if $\Dom$ is replaced by the $d$-dimensional torus $\T^d$.
\end{lemma}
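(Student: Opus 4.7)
The plan is the classical two-scale regularization of Krylov \cite{Kry13}: smooth the process $v$ in the space variable by mollification, regularize $y\mapsto|y|^q$ by the $C^{\infty}$ function $f_{\delta}(y):=(y^2+\delta)^{q/2}$ for $\delta>0$, apply the real-valued It\^o formula pointwise in $x$, integrate against $\,\dd x$, integrate by parts in the $\div\Phi$-term, and then pass to the limits $\varepsilon\downarrow 0$ and $\delta\downarrow 0$. Before anything else, I would localize with the standard stopping times
\begin{equation*}
\tau_n:=\inf\bigl\{t:\|v\|_{C([0,t];L^q)}+\|v\|_{L^2(0,t;H^{1,q}_0)}+\|\phi\|_{L^1(0,t;L^q)}+\|\Phi\|_{L^2(0,t;L^q)}+\|\psi\|_{L^2(0,t;L^q(\ell^2))}\geq n\bigr\}\wedge T,
\end{equation*}
together with $\|v_0\|_{L^q}\leq n$, so that after stopping every relevant quantity is in $L^{\infty}(\Omega;\cdot)$; by monotone convergence it suffices to prove the identity on $[0,\tau_n]$.

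For a genuine domain $\Dom\subsetneq\R^d$, extend $v,\phi,\Phi,\psi$ by zero outside $\Dom$. Since $v\in H^{1,q}_0(\Dom)$ a.s.\ a.e., the zero-extension lies in $H^{1,q}(\R^d)$ and its distributional gradient is the zero-extension of $\nabla v$ with no surface contribution; the remaining three processes sit in the corresponding $L^q(\R^d)$-spaces. Let $\eta_\varepsilon$ be a standard mollifier and write $v^\varepsilon:=v\ast\eta_\varepsilon$, and analogously for $\phi^\varepsilon,\Phi^\varepsilon,\psi^\varepsilon$. Testing the original equation against $\eta_\varepsilon(x-\cdot)$ and invoking \eqref{eq:weak_divergence_operator} yields, for every $x\in\R^d$, the scalar continuous semimartingale identity
\begin{equation*}
v^\varepsilon(t,x)=v^\varepsilon_0(x)+\int_0^t\bigl[\phi^\varepsilon+\div\Phi^\varepsilon\bigr](s,x)\,\dd s+\sum_{n\geq 1}\int_0^t\psi^\varepsilon_n(s,x)\,\dd W^n(s),
\end{equation*}
where $\div$ is now the classical divergence of the smooth $\Phi^\varepsilon$; the $\T^d$ case is identical with no extension.

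Applying the scalar It\^o formula with $f_\delta$ at each $x$ and integrating against $\,\dd x$ (deterministic and stochastic Fubini are both available under the localization), then integrating by parts on $\R^d$ turns $\int_{\R^d}f_\delta'(v^\varepsilon)\div\Phi^\varepsilon\,\dd x$ into $-\int_{\R^d}f_\delta''(v^\varepsilon)\nabla v^\varepsilon\cdot\Phi^\varepsilon\,\dd x$ with no boundary contribution. With the convergences $v^\varepsilon\to v$ in $C([0,\tau_n];L^q(\R^d))\cap L^2(0,\tau_n;H^{1,q}(\R^d))$ and analogously for $\phi^\varepsilon,\Phi^\varepsilon,\psi^\varepsilon$, combined with the elementary majorants $|f_\delta'(y)|\lesssim|y|^{q-1}+\delta^{(q-1)/2}$ and $|f_\delta''(y)|\lesssim|y|^{q-2}+\delta^{(q-2)/2}$ (for which $q\geq 2$ is exactly what is needed), dominated convergence handles every deterministic integral and the It\^o isometry the martingale term as $\varepsilon\downarrow 0$; since the extensions vanish off $\Dom$, each $\R^d$-integral collapses to an integral over $\Dom$. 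Letting $\delta\downarrow 0$ then gives $f_\delta(y)\to|y|^q$, $f_\delta'(y)\to q|y|^{q-2}y$, and $f_\delta''(y)\to q(q-1)|y|^{q-2}$, all controlled pointwise, and a final dominated convergence produces the claimed identity on $[0,\tau_n]$; monotone passage $n\to\infty$ concludes.

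The main difficulty will be making the pointwise semimartingale identity for $v^\varepsilon$ rigorous in the genuine-domain case: the test function $\eta_\varepsilon(x-\cdot)$ restricts to an element of $H^{1,q'}_0(\Dom)$, and is therefore admissible for \eqref{eq:weak_divergence_operator}, only when $\mathrm{dist}(x,\partial\Dom)>\varepsilon$, while points in the $\varepsilon$-neighborhood of $\partial\Dom$ would in principle pick up a $\Dom^{\mathrm{c}}$-contribution from $\Phi$. The assumption $v\in H^{1,q}_0(\Dom)$ is used precisely here: the zero-extension has no singular part of its gradient, the boundary layer of width $\varepsilon$ has vanishing $L^q$-mass of every relevant process (trivially so on $\T^d$), and the resulting defect in the identity vanishes as $\varepsilon\downarrow 0$ by the uniform bounds from the localization step.
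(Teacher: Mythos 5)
Your proposal takes a genuinely different route from the paper's. The paper first reduces to bounded domains by cut-off, then works \emph{directly} with the $L^q$-norm functional by regularizing only the scalar function $|\cdot|^q$ by a $C^2$ truncation $\Theta_n$ with $|\Theta_n(z)|\lesssim|z|^q$, $|\Theta_n'(z)|\lesssim|z|^{q-1}$, $|\Theta_n''(z)|\lesssim|z|^{q-2}$, and then invokes the infinite-dimensional (variational) It\^o formula of Pardoux as organized in \cite[Lemma 3.3]{DG15_boundedness}; the limit $n\to\infty$ is handled by dominated convergence. There is no spatial mollification at all, and in particular no pointwise semimartingale identity is needed. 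Your approach is the Krylov-style double regularization: mollify in $x$, apply the scalar It\^o formula pointwise, integrate over $\Dom$, integrate by parts, and take $\varepsilon\downarrow0$ followed by $\delta\downarrow0$. That route is more elementary in the sense that it does not rely on the variational It\^o formula as a black box, but it is also more delicate at the boundary, and that delicacy is exactly where your argument has a gap.

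Concretely, the pointwise identity
\[
v^\varepsilon(t,x)=v^\varepsilon_0(x)+\int_0^t\bigl[\phi^\varepsilon+\div\Phi^\varepsilon\bigr](s,x)\,\dd s+\sum_{n\geq 1}\int_0^t\psi^\varepsilon_n(s,x)\,\dd W^n(s)
\]
is obtained by pairing the equation in $H^{-1,q}(\Dom)$ against the test function $\eta_\varepsilon(x-\cdot)$, and this is only legitimate when $\eta_\varepsilon(x-\cdot)|_{\Dom}\in W^{1,q'}_0(\Dom)$, i.e.\ when $\mathrm{dist}(x,\partial\Dom)>\varepsilon$. You identify this, but then assert that the defect on the boundary layer ``vanishes as $\varepsilon\downarrow0$ by the uniform bounds from the localization step.'' That assertion needs an argument: $v\in H^{1,q}_0(\Dom)$ controls the $L^q$-mass of $v$ near $\partial\Dom$, but $\phi$, $\Phi$, $\psi$ are merely in $L^q(\Dom)$, so their mass on the $\varepsilon$-layer does not tend to zero uniformly in $s$. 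One needs to track how the defect in the semimartingale identity propagates through the It\^o step: the error in $v^\varepsilon(t,x)$ is integrated against $f_\delta'(v^\varepsilon(t,x))$, and you would have to exploit that $f_\delta'(v^\varepsilon)$ is small precisely where the defect is large (because $v$ vanishes at the boundary in the $H^{1,q}_0$-sense). This is plausible, but as written it is not a proof; to make it rigorous one would typically insert an interior cutoff $\chi_\varepsilon$, apply It\^o to $\chi_\varepsilon v^\varepsilon$, and control the commutator terms. The paper's choice to avoid pointwise arguments altogether and use Pardoux's formula is precisely designed to sidestep this. (A minor additional imprecision: after mollification, $v^\varepsilon$ does not vanish off $\Dom$; it spreads by $\varepsilon$, so ``each $\R^d$-integral collapses to an integral over $\Dom$'' holds only in the limit $\varepsilon\downarrow0$, not identically.)
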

The integrability of each of the terms follows from H\"older's inequality. In particular, the stochastic integral term defines a continuous local martingale $(M_t)_{t\in [0,T]}$ where the quadratic variation satisfies
\begin{align*}
\textstyle [M]_t = \sum_{n\geq 1}\int_{0}^t \Big|\int_{\Dom}  |v(s)|^{q-2} v(s) \psi_n(s) \,\dd x\Big|^2 \,\dd s
\leq \int_{0}^t \|v(s)\|_{L^q(\Dom)}^{2q-2}  \|\psi(s)\|_{L^q(\Dom;\ell^2)}^2 \,\dd s.
\end{align*}

\begin{proof}
By a cut-off argument, it suffices to consider the case of bounded domains.
Let $\Theta_n$ be the approximation of $|\cdot|^q$ as in \cite[Lemma 3.3]{DG15_boundedness} using the It\^o formula of \cite[p.\ 62]{Par75}. Then, as explained there, one has
\begin{align*}
\textstyle \int_{\Dom}\Theta_n(v(t))\,\dd x
=& \textstyle \int_{\Dom}\Theta_n(v_0) \,\dd x + \int_{0}^t \int_{\Dom} \Theta_n'(v(s)) \phi(s)  \,\dd x \,\dd s
\\ & \textstyle- \int_{0}^t \int_{\Dom}  \Theta_n''(v(s)) \nabla v(s)\cdot \Phi(s) \,\dd x \,\dd s
\\ & \textstyle+ \sum_{n\geq 1}\int_{0}^t \int_{\Dom}  \Theta_n'(v(s))  \psi_n(s) \,\dd x \,\dd  W^n_s
\\ & \textstyle +\frac{1}{2} \int_{0}^t \int_{\Dom} \Theta_n''(v(s))  \|\psi(s)\|^2_{\ell^2} \,\dd x \,\dd s.
\end{align*}
It remains to let $n\to \infty$. All terms converge by the dominated convergence theorem and the pointwise convergence properties of $\Theta_n$ and its derivatives. To obtain a dominating function it suffices to use H\"older's inequality and the bounds $|\Theta_n(z)|\leq C|z|^q$, $|\Theta_n'(z)|\leq C|z|^{q-1}$ and $|\Theta_n''(z)|\leq C|z|^{q-2}$, where $C$ is independent of $n$ and $z\in \R$.
\end{proof}

\subsection{Stochastic Gronwall lemma}

When analyzing S(P)DEs one often has to rely on Gronwall arguments. In the case of nonlinear equations, it can happen that the integrability of moments (even in $L^1$) does not hold, and more sophisticated results are required. Stochastic Gronwall lemmas can provide such results, and be independently traced back to \cite{GlattZiane} and \cite{vonRenSch}.

Below we present a special case of the version of the stochastic Gronwall lemma of \cite[Corollary 5.4]{geiss2024sharp}. We should mention that in the latter work only right-continuity is assumed. A surprising feature is that $M$ does not appear on the right-hand sides of the concluded estimates. For $Z:[0,\infty)\times\Omega\to [0,\infty)$ we will write $Z^*_t := \sup_{s\in [0,t]} Z_s$ below. Moreover, $Z$ is called {\em increasing} if a.s.\ for all $s\leq t$ one has $Z_s\leq Z_t$.
\begin{lemma}\label{lem:Gronwall}
Suppose that the following conditions hold:
\begin{itemize}
\item $X:[0,\infty)\times\Omega\to [0,\infty)$ is a.s.\ continuous and adapted;
\item $A:[0,\infty)\times\Omega\to [0,\infty)$ is a.s.\ continuous, increasing, adapted, and $A_0 = 0$;
\item $M:[0,\infty)\times\Omega\to \R$ is a continuous local martingale with $M_0=0$;
\item $H:[0,\infty)\times\Omega\to [0,\infty)$ is continuous, increasing and adapted;
\item For all $t\geq 0$, a.s.,
\begin{equation}
\label{eq:estimate_grownall_claim}
\textstyle
X_t \leq \int_0^t X_s \,\dd A_s + M_t + H_t.
\end{equation}
\end{itemize}
Then, for all $T,u,w,R>0$,
\[\textstyle \P(X_T^*>u)\leq \frac{e^R}{u} \E(H_T\wedge w) + \P(H_T\geq w) +  \P(A_T>R).\]
Moreover, the following $L^p$-estimate holds for all $p\in (0,1)$ and $T>0$,
\[\textstyle \big\|e^{-A_T}X_T^*\big\|_{L^p(\Omega)}\leq (1-p)^{-1/p} p^{-1} \|H_T\|_{L^p(\Omega)}.\]
\end{lemma}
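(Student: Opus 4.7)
The strategy is to introduce the exponentially damped processes
\[\tilde X_t := e^{-A_t} X_t, \qquad \tilde M_t := \int_0^t e^{-A_s}\,\dd M_s, \qquad \tilde H_t := H_0+\int_0^t e^{-A_s}\,\dd H_s,\]
and to establish the \emph{pathwise} inequality $\tilde X_t \leq \tilde M_t + \tilde H_t$ for $t\in[0,T]$ a.s. I would derive this by applying the integration-by-parts formula to $e^{-A_t} Z_t$, where $Z_t := \int_0^t X_s\,\dd A_s + M_t + H_t$ is the continuous semimartingale majorizing $X$ in \eqref{eq:estimate_grownall_claim}. Since $A$ has continuous paths of locally bounded variation there is no covariation with $Z$; a direct computation yields
\[e^{-A_t} Z_t = H_0 + \int_0^t e^{-A_s}(X_s - Z_s)\,\dd A_s + \tilde M_t + \big(\tilde H_t - H_0\big),\]
and the first integral is non-positive since $X_s\leq Z_s$ and $\dd A_s\geq 0$. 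Combining with $\tilde X_t\leq e^{-A_t}Z_t$ gives $\tilde X_t\leq \tilde M_t+\tilde H_t$; moreover $\tilde H_t\leq H_t\leq H_T$ because $e^{-A_s}\leq 1$.

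Given this pathwise inequality, the first claim follows by splitting on $\{A_T\leq R\}$ and exploiting the size of $\tilde H_T$. Since $A$ is increasing, $e^{-A_t}\geq e^{-R}$ for $t\leq T$ on $\{A_T\leq R\}$, hence $X_T^*\leq e^R\tilde X_T^*$ there, and
\[\P(X_T^* > u)\leq \P\bigl(\tilde X_T^*>ue^{-R}\bigr)+\P(A_T>R).\]
To control $\tilde X_T^*$, I would introduce the stopping time $\sigma_w := \inf\{t\geq 0:\tilde H_t\geq w\}$; by continuity and monotonicity of $\tilde H$ one has $\sigma_w>T$ exactly on $\{\tilde H_T<w\}$. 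After further localization by a sequence $(\tau_n)$ making $\tilde M^{\tau_n}$ a bounded martingale, the process $\tilde M^{\sigma_w\wedge\tau_n}+\tilde H^{\sigma_w\wedge\tau_n}$ is a nonnegative submartingale---nonnegative because it dominates $\tilde X_{\cdot\wedge\sigma_w\wedge\tau_n}\geq 0$ via the pathwise bound, and a submartingale as the sum of a martingale and an increasing adapted process---whose expectation at time $T$ is at most $\E(\tilde H_{T\wedge\sigma_w})\leq \E(H_T\wedge w)$. Doob's maximal inequality for nonnegative submartingales, combined with $\tau_n\uparrow\infty$ and monotone convergence of the stopped suprema, yields
\[\P\bigl(\tilde X_T^*>v\bigr)\leq \frac{\E(H_T\wedge w)}{v}+\P(H_T\geq w).\]
Setting $v=ue^{-R}$ delivers the first inequality.

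For the $L^p$-estimate with $p\in(0,1)$, I would insert the tail estimate above with the \emph{optimized} choice $w=pv$ into the layer-cake representation:
\[\E (\tilde X_T^*)^p = p\int_0^\infty v^{p-1}\,\P(\tilde X_T^*>v)\,\dd v \leq p\int_0^\infty v^{p-1}\biggl[\frac{\E(H_T\wedge pv)}{v}+\P(H_T\geq pv)\biggr]\dd v.\]
After the substitution $u=pv$ and Fubini, each summand reduces to a constant multiple of $\E H_T^p$; a short calculation confirms that $w=pv$ minimizes the combined prefactor over the family $w=cv$, producing the sharp bound $\E (\tilde X_T^*)^p\leq p^{-p}(1-p)^{-1}\E H_T^p$, which on extracting $p$-th roots becomes exactly $\|e^{-A_T}X_T^*\|_{L^p(\Omega)}\leq(1-p)^{-1/p}p^{-1}\|H_T\|_{L^p(\Omega)}$.

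The main technical obstacle I anticipate is the localization and limit passage in the tail estimate: one must verify that the submartingale property persists under the combined stopping $\sigma_w\wedge\tau_n$ so that Doob's inequality applies uniformly in $n$, and that both $\sup_{t\leq T}$ and the bound $\E(H_T\wedge w)/v$ survive the limit $n\to\infty$. The integration-by-parts step is subordinate but worth noting, as it relies on $A$ contributing no quadratic variation so that $\dd(e^{-A_t}Z_t)$ does not pick up an extra cross term. Everything else---Fubini, Markov, and the calculus minimization over $c$---is routine.
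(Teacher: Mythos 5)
Your proof is correct and follows essentially the same strategy as the paper: exponential damping of $X$ by $e^{-A_t}$ via integration by parts (noting the finite-variation of $A$ kills the cross term), a tail bound for the damped process via a hitting-time stopping argument, and then a layer-cake computation with the choice $w$ proportional to the level, optimized at $w=pv$. The only presentational difference is in the middle step: you stop at $\sigma_w$ and then invoke Doob's maximal inequality for nonnegative submartingales, whereas the paper introduces an additional stopping time $\sigma=\inf\{t:Y_t\geq u\}$ and reproves the maximal bound by hand (including a short case split on $\{H_0<w\}$ versus $\{H_0\geq w\}$); since that hand computation is just a self-contained derivation of the Doob estimate in this specific setting, the two routes are mathematically the same and your invocation of Doob is a clean shortcut.
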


The above result also holds if the processes are only defined on a random time interval $[0,\tau]$ and follows by extending them constantly on $(\tau, \infty)$. The tail estimate can be seen as a weak $L^1$-estimate. The moment estimate does not extend to $p=1$.

In applications, we take $A_t = \int_0^t a_s \,\dd s$, where $a\in L^1_{\rm}([0,\infty))$ is a nonnegative progressively measurable process, and thus $\int_0^t X_s \,\dd A_s = \int_0^t a_s X_s\,\dd s$.
\begin{proof}
For completeness, we include the short proof of \cite{geiss2024sharp} in the special case of continuous processes. By continuity, we may assume the estimate \eqref{eq:estimate_grownall_claim} holds a.s.\ for all $t\geq 0$. Let $Y_t := X_t e^{-A_t}$, and let $\wt{X}_t$ denote the right-hand side of \eqref{eq:estimate_grownall_claim}.
By It\^o's formula applied to $\wt{Y}_t = \wt{X}_t e^{-A_t}$ we find that
\[\textstyle \wt{Y}_t = \int_0^t e^{-A_s} \,\dd \wt{X}_s - \int_0^t \wt{X}_s e^{-A_s}   \,\dd A_s = \int_0^t e^{-A_s} X_s \,\dd A_s + N_t + \wt{H}_t - \int_0^t \wt{X}_s e^{-A_s}   \,\dd A_s \leq N_t + \wt{H}_t,\]
where $N_t = \int_0^t \exp(-A_s) \,\dd M_s$ and $\wt{H}_t = \int_0^t \exp(-A_s) \,\dd H_s\leq H_t$. Therefore, we find that, a.s. for all $t\geq 0$, $Y_t\leq \wt{Y}_t \leq N_t + H_t$.

Next we will show that
\begin{align}\label{eq:StochGronY}
\textstyle \P(Y_T^*>u)\leq \frac{1}{u}\E(H_T\wedge w) + \P(H_T\geq w).
\end{align}
By a localization argument, we may suppose that $N$ is a martingale.

Let $\tau = \inf\{t\geq 0: H_t\geq w\}$  and $\sigma = \inf\{t\geq 0: Y_t\geq u\}$.
Note that $u\one_{\{\sup_{s\in [0,t]} Y_s>u\}} \leq Y_{t\wedge \sigma}\wedge u$.
On the set $\{H_0<w\}$ one has (due to the nonnegativity of $X$ and thus $Y$)
\begin{align*}
Y_{t\wedge \sigma}\wedge u &= Y_{t\wedge \tau\wedge \sigma}\wedge u + (Y_{t\wedge \sigma}\wedge u-Y_{t\wedge \tau\wedge \sigma}\wedge u
\\ & \leq N_{t\wedge \tau\wedge \sigma} + H_{t\wedge \tau\wedge \sigma} \wedge u + u \one_{\{\tau<t\}}
\\ & \leq N_{t\wedge \tau\wedge \sigma} + H_{t}\wedge w + u \one_{\{H_{t}\geq w\}},
\end{align*}
where we used that $H$ is increasing. On the set $\{H_0\geq w\}$ one has $Y_{t\wedge \sigma}\wedge u\leq u \one_{\{H_t\geq w\}}$ again since $H$ is increasing. Below we write $\E(\xi;A) = \int_{A} \xi \dd \P$. From the above, it follows that
\begin{align*}
\E(u\one_{\{\sup_{s\in [0,t]} Y_s>u\}}) &\leq  \E(Y_{t\wedge \sigma}\wedge u)
\\  &\leq \E(Y_{t\wedge \sigma}\wedge u;H_0<w) + \E(Y_{t\wedge \sigma}\wedge u;H_0\geq w)
\\ &\leq \E(H_{t}\wedge w;H_0<w) + \E(u \one_{H_{t}\geq w};H_0<w) + \E(u \one_{H_t\geq w};H_0\geq w)
\\ &\leq \E(H_{t}\wedge w) + u \P(H_{t}\geq w).
\end{align*}
Here we used that $\E(N_{t\wedge \tau\wedge \sigma};H_0<w) = \E(N_0;H_0<w) = 0$ since $N$ and thus its stopped version are martingales starting at zero. Taking $t=T$, the above estimate gives \eqref{eq:StochGronY}.

The estimate \eqref{eq:StochGronY} applied with $u$ replaced by $e^{-R} u$ implies
\begin{align*}
\P(X_T^*>u)& \leq \P(X_T^*>u, A_T\leq R) + \P(A_T>R)
\\ & = \P(e^{-A_T}X_T^*>e^{-A_T}u, A_T\leq R) + \P(A_T>R)
\\ & \leq \P(Y_T^*>e^{-R}u) + \P(A_T>R)
\\ & \leq \textstyle \frac{e^R}{u}\E(H_T\wedge w) + \P(H_T\geq w)  + \P(A_T>R).
\end{align*}

It remains to prove the $L^p$ bound for $p\in (0,1)$. Taking $w = \lambda u$ with $\lambda>0$ to be determined, then \eqref{eq:StochGronY} and Fubini's theorem imply that
\begin{align*}
\E\|Y_T^*\|^p &= \textstyle\int_0^\infty p\, u^{p-1} \P(Y_T^*>u) \, \dd u
\\ & \leq \textstyle\int_0^\infty p \, u^{p-2} \E[H_T\wedge (\lambda u)]  \, \dd u + \int_0^\infty p\, u^{p-1} \P(H_T>\lambda u) \, \dd u
\\ & = \textstyle \frac{\lambda^{1-p}}{1-p}\E\|H_T\|^p + \lambda^{-p} \E\|H_T\|^p.
\end{align*}
Minimization leads to  $\lambda = p$, and thus we find that
\[\textstyle \E\|e^{-A_T} X_T^*\|^p \leq \|Y_T^*\|_p^p \leq \Big(\frac{p^{1-p}}{1-p} + p^{-p}\Big)\E\|H_T\|^p = \frac{p^{-p}}{1-p}\E\|H_T\|^p,\]
as desired.
\end{proof}

\setcounter{section}{15}
\section*{Open problems}

Below we include a list of open problems. It is a selection of problems that we believe could furhter advance the theory and understanding. It is in no way exhaustive, as there are many more open problems even for linear SPDEs.

\subsubsection*{Sufficient conditions for stochastic maximal $L^p$-regularity for nonzero $B$}

In Subsection \ref{ss:furtherSMR} and before, we have seen several concrete cases in which $(A,B)$ satisfies stochastic maximal $L^p$-regularity. However, there is no satisfactory operator-theoretic condition which covers most of these cases. It is obvious from a fixed point argument that if $(A,0)$ has stochastic maximal $L^p$-regularity and $\|B\|_{\calL(X_1, \gamma(\mathcal{U},X_{1/2}))}<\varepsilon_{A,p}$ with $\varepsilon_{A,p}$ depending on the maximal regularity constant,
then $(A,B)$ has stochastic maximal $L^p$-regularity. However, this usually leads to an unsatisfactory theory.

\begin{problem}[Sufficient conditions for stochastic maximal regularity]
Find reasonable sufficient abstract conditions for stochastic maximal $L^p$-regularity of $(A,B)$.
\end{problem}

Although the case $B = 0$ is well-studied, the following also remains open.
\begin{problem}[Optimal condition for stochastic maximal regularity]
Let $X_0 = L^q$ with $q\in (2, \infty)$. Let $A$ be a sectorial operator of angle $<\pi/2$. Find a necessary and sufficient condition for stochastic maximal $L^p$-regularity of $(A,0)$.
\end{problem}
A sufficient condition is given in terms of the $H^\infty$-calculus (see Theorem \ref{thm:SMRHinfty}). However, for $q=2$ no conditions on $A$ are needed (see Theorem \ref{thm:SMRHS}).

\subsubsection*{Extrapolation of stochastic maximal regularity}
The extrapolation Theorem  \ref{thm:extrapol} shows that if $B=0$, then stochastic maximal $L^p$-regularity can be extrapolated to all $r\in (2, \infty)$. In the case $B$ is not small the following is open:
\begin{problem}[Extrapolation of stochastic maximal regularity]
Suppose that $(A,B)\in \mathcal{SMR}_{p,0}^{\bullet}$ for some $p\in [2, \infty)$.
Does one have $(A,B)\in \mathcal{SMR}_{r,0}^{\bullet}$ for all $r\in (2, \infty)$ (or at least some large range for $r$)?
\end{problem}
An extrapolation result in a linear variational setting for $(A,B)$ with each $A$ self-adjoint, was recently obtained in \cite{BecVer}, where also the $(t,\omega)$-dependent case is covered. However, the extrapolation only gives $\mathcal{SMR}_{r,0}^{\bullet}$ for all $r\in [2, 2+\varepsilon)$, where $\varepsilon>0$ is small. We do not know whether the selfadjointness can be omitted. An alternative approach can be found in \cite[Appendix A]{agresti2024anomalous}, where a small improvement in space integrability was obtained.

\subsubsection*{$L^2$-moments in the variational setting}
In Theorem \ref{thm:varglobal} we obtained a bound for $\E \sup_{t\in [0,T]}\|u(t)\|_{H}^{p}$ for $p\in (0,2)$, but $p=2$ is excluded in general. In Section \ref{sss:p=2}, conditions are discussed under which one can extend this bound to $p=2$. However, we do not know whether this condition can be omitted or not.
\begin{problem}[Energy bounds for critical variational SPDEs]
\label{prob:p=2var}
In the variational setting of Theorem \ref{thm:varglobal}, is
there a constant $C_T$ such that for all $u_0\in L^2_{\F_0}(\Omega;H)$ one has
\[\E \sup_{t\in [0,T]}\|u(t)\|_{H}^{2} \leq C_T(1+\E\|u_0\|_{H}^{2})?\]
\end{problem}
A model example for which we do not know whether the $L^2$-bound holds is the Allen--Cahn equation on the one-dimensional torus $\T$ with a one-dimensional quadratic noise:
\[\dd u  =  \big( \Delta u +u-u^3\big) \,\dd t+ \gamma u^2 \,\dd W.\]
For $\gamma^2<2$, Section \ref{sss:p=2} gives an $L^2(\Omega)$-bound. But for $\gamma^2=2$ we do not know whether it holds.

In Theorem \ref{thm:varglobal} we stated a result on the continuous dependency on the initial data in terms of convergence in probability. In \cite[Theorem 3.8]{AVvar} it is also explained that the following holds as well: if $u_0^n\to u_0$ in $L^2(\Omega;H)$, then $u^n\to u$ in $L^p(\Omega;L^2(0,T;V)\cap C([0,T];H))$ for any $p\in (0,2)$. It is natural to ask when convergence takes place for $p=2$.
\begin{problem}[Continuity in the energy space for critical variational SPDEs]
Let the assumptions of Theorem \ref{thm:varglobal} be satisfied. Let $u_0^n, u_0\in L^2_{\F_0}(\Omega;H)$ be such that $u_0^n\to u_0$ in $L^2(\Omega;H)$. Does one have $u^n\to u$ in $L^2(\Omega;L^2(0,T;V)\cap C([0,T];H))$, where $u^n$ and $u$ are the solutions to \eqref{eq:SEE} corresponding to $u_0^n$ and $u_0$, respectively?
\end{problem}

The above seems open even if Assumption \ref{ass:varsetting} holds, $B = 0$, $G:V\to \calL_2(\mathcal{U},H)$ and $\lb v, F(v)\rb \lesssim 1+ \|v\|^2_H$.
Note that, in the latter case, one always has $L^2$-moments of the solution if $u_0\in L^2(\Omega;H)$ (see Subsection \ref{sss:p=2}).

\subsubsection*{Bootstrapping regularity in the critical case for $p=2$}
Theorems \ref{thm:parabreg} and \ref{thm:parabreg2} provide a way to bootstrap regularity for parabolic SPDEs, but only for $p>2$. The case $p=2$ can be included in the case that the nonlinearities are subcritical as we have seen in Proposition \ref{prop:parabreg3}. Under further restrictions, a regularization for $p=2$ was obtained in \cite[Proposition 6.8]{AV19_QSEE_2} which covers the critical case.
\begin{problem}[Time regularization in Hilbert spaces]
\label{pro:regularization}
Let $X_0$ be a Hilbert space. Does the regularization result of Theorem \ref{thm:parabreg2} extend to $p=2$ in the critical case (i.e.\ equality in \eqref{eq:subcriticalvar} holds for some $j$)?
\end{problem}

\subsubsection*{Stochastic reaction-diffusion equations}

In Subsections \ref{ss:reaction}  we discussed the global well-posedness of some coercive systems. We also included a non-coercive case in Subsection \ref{ss:LV}, where a predator-prey model was discussed. Some triangular models we are able to treat as well (see \cite{AVreaction-global} for the Brusselator and Gray-Scott model both with cubic $f$).
Unfortunately, we are quite far from the general theory in the deterministic setting (see  \cite{P10_survey} for triangular systems and \cite{FMT, Kanel} for (super)-quadratic models).

\begin{problem}[Reaction-diffusion with triangular structure]
\label{prop:triangular_structure}
Let $\Dom$ be $\T^d$ or $\R^d$ and $\ell\geq 2$. Consider the reaction-diffusion system \eqref{eq:reaction_diffusion_system} with $(b_{n,i} )_{n\geq 1}\in C^\gamma(\Dom;\ell^2)$ for some $\g>0$ and with possibly additional boundary conditions in case $\Dom\neq \T^d$.
Does global well-posedness hold for the reaction-diffusion system \eqref{eq:reaction_diffusion_system} under the following conditions for a fixed $h>1$ and each $i\in \{1, \ldots, \ell\}$:
 \begin{enumerate}[{\rm (1)}]
 \item $u_{0}\in [0,\infty)^{\ell}$;
 \item $g_i:\R^{\ell}\to \ell^2$ is globally Lipschitz and $g_i(u) = 0$ for all $u\in [0,\infty)^{\ell}$ with $u_i=0$;
 \item $f_i(u)\geq 0$ for all $u\in [0,\infty)^{\ell}$ with $u_i=0$;
 \item $|f_i(u)-f_i(u')|\lesssim (1+|u|^{h-1}+|u'|^{h-1})|u-u'|$ for all $u,u'\in [0,\infty)^\ell$ (polynomial growth);
 \item $\exists K_1, K_2\geq 0$ such that $\sum_{j=1}^{\ell} f_j(u)\leq K_0 + K_1 \sum_{j=1}^{\ell} u_j$ for all $u\in [0,\infty)^{\ell}$ (mass conservation);
\item There exist $r \in\R^\ell$ and a lower triangular invertible $\ell\times \ell$ matrix $R$ with nonnegative entries such that
$
R f(y)\leq [1+\sum_{i=1}^\ell y_i] r
$
componentwise for all $y\in [0,\infty)^\ell$
(triangular structure).
 \end{enumerate}
Does the global well-posedness hold with $g_i$ having superlinear growth? Is it possible to allow for optimal growth of $g$, i.e.\ Assumption \ref{ass:reaction_diffusion_global}\eqref{it:growth_nonlinearities}?
\end{problem}

In the deterministic case, the above result is due to \cite{Morgan1,Morgan2} (see also \cite[Theorem 3.5]{P10_survey}), and it is based on $L^p$-estimates and a duality argument. A stochastic version of the former fact is highly non-trivial in the presence of (non-small) noise coefficients $b_{n,i}$, even if the smoothness parameter $\g$ is large.

\smallskip

Another situation in which global well-posedness is known in the deterministic setting is the case of quadratic nonlinearity. A model example of the latter is the Lotka--Volterra type nonlinearity $f_i(u) = -u_i\tau_i  +  u_i \sum_{j=1}^\ell c_{ij} u_j$, where $c_{ij} = -c_{ji}$ (see \cite[p.\ 287]{FMT}).

\begin{problem}[Reaction-diffusion with quadratic growth]\label{pro:react2}
Let $\Dom$ be $\T^d$ or $\R^d$ and $\ell\geq 2$. Consider the reaction-diffusion system \eqref{eq:reaction_diffusion_system} with $(b_{n,i} )_{n\geq 1}\in C^\gamma(\Dom;\ell^2)$ for some $\g>0$ and with possibly additional boundary conditions in case $\Dom\neq \T^d$.
Does global well-posedness hold for the reaction-diffusion system \eqref{eq:reaction_diffusion_system} under the following conditions for each $i\in \{1, \ldots, \ell\}$:
 \begin{enumerate}[{\rm (1)}]
 \item $u_{0}\in [0,\infty)^{\ell}$;
 \item $g_i:\R^{\ell}\to \ell^2$ is globally Lipschitz and $g_i(u) = 0$ for all $u\in [0,\infty)^{\ell}$ with $u_i=0$;
 \item $f_i(u)\geq 0$ for all $u\in [0,\infty)^{\ell}$ with $u_i=0$;
 \item $\exists K_1, K_2\geq 0$ such that $\sum_{j=1}^{\ell} f_j(u)\leq K_0 + K_1 \sum_{j=1}^{\ell} u_j$ for all $u\in [0,\infty)^{\ell}$ (mass conservation);
 \item   $| f_i(u)-f_i(u')|\lesssim (1+|u|+|u'|) |u-u'|$ for all $u,u'\in [0,\infty)^{\ell}$
(quadratic growth).
 \end{enumerate}
Does the global well-posedness hold with $g_i$ having superlinear growth? Is it possible to allow for optimal growth of $g$, i.e.\ with $h=\frac{3}{2}$ in Assumption \ref{ass:reaction_diffusion_global}\eqref{it:growth_nonlinearities}?
\end{problem}

The proof of global well-posedness for quadratic nonlinearities in the deterministic setting given in \cite{FMT} relies on several non-trivial facts from PDE theory with bounded and measurable coefficients, cf.\ \cite[Appendix A]{FMT}. A stochastic version of the latter is at the moment not available even in the case of small noise coefficients $b_{n,i}$ (here we mean that $b_{n,i}\neq 0$ but $\|(b_{n,i})_{n\geq 1}\|_{L^\infty(\Dom;\ell^2)}\leq \varepsilon$ for some small $\varepsilon>0$).

\subsubsection*{Navier--Stokes equations with transport noise.}
Navier--Stokes equations with transport noise appeared in several instances of the present manuscript, see Subsection \ref{ss:scaling_intro}, \ref{ss:Fluid} and \ref{subsec:SNSRd}. At the moment the local well-posedness theory of Navier--Stokes equations with transport noise is not as mature as the corresponding theory in the deterministic case. Here we propose three problems concerning the extensions of deterministic results to the stochastic setting, which we believe are important steps in understanding stochastic fluids and connect well with the framework discussed in this manuscript.

In the study of fluid flows, it is well known that boundaries have important effects on the overall dynamics. In particular, the so-called \emph{no-slip boundary condition} $u|_{\partial\Dom}=0$, where $u$ denotes the velocity field, is one of the most relevant ones and plays an important role.
However, in contrast to the 2D case (see Subsection \ref{ss:Fluid}), the (local) well-posedness of the 3D Navier--Stokes equations with transport noise is unknown.

\begin{problem}[Stochastic Navier--Stokes equations with no-slip boundary conditions]
Let $\Dom$ be a bounded and sufficiently regular domain of $\R^3$. Is it possible to prove local well-posedness of the 3D Navier--Stokes equations on $\Dom$ (see either \eqref{eq:Navier_Stokes} or \eqref{eq:Navier_StokesRd}) with no-slip boundary conditions and \underline{non-small} transport noise? Or, in other words, does Theorem \ref{t:NS_Rd_local} still hold for the 3D Navier--Stokes equations on domains with no-slip boundary conditions for a certain choice of the parameters $(p,\a,\s,q)$?
\end{problem}

In the above, by {\em non-small transport noise}, we mean that the transport noise satisfies the natural condition \eqref{eq:AB_basic_example} and not $\|(b_n)_{n\geq 1}\|_{L^\infty(\Dom;\ell^2)}\leq \varepsilon$ for some small $\varepsilon>0$. Indeed, from the arguments in Subsection \ref{subsec:SNSRd}, it is clear that the above problems are only concerned with checking the stochastic maximal $L^p$-regularity used in Theorem \ref{thm:localwellposed}. In case of \emph{small} transport noise, the latter can be easily checked for appropriate values of $(p,\a,\s,q)$ (say $q>d$ and $\delta$ small) by combining Theorem \ref{thm:SMRHinfty} for the Stokes operator with no-slip boundary conditions and subsequently a perturbation argument \cite[Theorem 3.2]{AV21_SMR_torus} to include a small transport noise.
Let us mention that the $H^\infty$-calculus for the Stokes operators with no-slip boundary conditions on a smooth domain is well-known, see e.g.\ \cite[Theorem 9.17]{KKW}.

\smallskip

Next, we turn our attention to problems without boundaries. For convenience, we focus on the three-dimensional case. As commented below Theorem \ref{t:NS_Rd_local} (see also \cite[Theorem 2.4]{AV20_NS} for the periodic case), under suitable assumptions on the transport noise coefficients $b$, our framework provides well-posedness in the critical spaces $B^{3/q-1}_{q,p}$ with smoothness $>-\frac{1}{2}$ (corresponding to integrability $q<6$). However, in the deterministic setting, the celebrated Koch-Tataru result \cite{KT01} shows that even the endpoint smoothness $-1$ can be reached.
Therefore, it is natural to ask if such a threshold is natural in the stochastic case or is a matter of technique.

\begin{problem}[Largest critical space for stochastic Navier--Stokes equations]
\label{prob:largest_invariant_space}
Let $\Dom$ be either $\T^3$ or $\R^3$. Let $(b_n)_{n\geq 1}\in C^{\infty}(\Dom;\ell^2)$ be non-zero, and with possibly additional conditions at infinity if $\Dom=\R^3$. Can one prove local well-posedness for such stochastic Navier--Stokes equations with transport noise $b$ (see \eqref{eq:Navier_StokesRd} for $\Dom=\R^3$) in a critical space with smoothness less or equal to $-\frac{1}{2}$? What is the optimal smoothness threshold?
\end{problem}

In the above, by {\em scaling-invariant space}, we mean a function space with Sobolev index equal to $-1$, as it is for $B^{3/q-1}_{q,p}$ or $H^{3/q-1,q}$. Hence, the above question is not limited to Besov spaces. Possibly, other types of maximal regularity could be employed as discussed in Subsection \ref{sss:SMRdiff}.

\smallskip

Our final open problem concerns a refinement of the endpoint Serrin-type criterion for stochastic Navier--Stokes equations proved in Theorem \ref{thm:serrin}\eqref{it2:serrin} (see also the comments below \eqref{eq:criticality_indexes_NS3D} for the periodic case).

\begin{problem}[Refined endpoint Serrin-type criterion]
\label{prob:NS_blow_up}
Let $\Dom$ be either $\T^3$ or $\R^3$ and correspondingly let $(u,\sigma)$ be the maximal $(p,\s,\a,q)$-solution to the 3D Navier--Stokes equations with (non-trivial) transport noise $b$ on $\Dom$ (see \eqref{eq:Navier_StokesRd} for $\Dom=\R^3$) as provided by Theorem \ref{t:NS_Rd_local} for $\Dom=\R^3$ and \cite[Theorem 2.4]{AV20_NS} for $\Dom=\T^3$; with the appropriate regularity assumptions on the transport noise coefficients and the parameters $(p,\a,\s,q)$. Prove or disprove the following blow-up criterion:
$$
\textstyle
\P( \sigma<\infty,\, \sup_{t\in [0,\sigma)}\|u(t)\|_{B^{3/q-1}_{q,p}(\Dom;\R^3)}<\infty)=0.
$$
\end{problem}

The above problem is motivated by \cite{NS_sup_blowup1} where the deterministic case of the above is proven for the critical space $L^3$. The Besov space case was later obtained in \cite{MR3475661}. Improvements can be found in \cite{MR2551795,NS_sup_blowup2,NS_sup_blowup3}. A quantitative version was recently obtained by Tao \cite{NS_sup_blowup4}.

Besides the intrinsic interest in the above problem, we hope that a possible solution to Problem \ref{prob:NS_blow_up} could also suggest under which assumptions the blow-up criterion of Theorem \ref{thm:criticalblowup}\eqref{it1:criticalblowup} holds with the $\lim$-condition replaced by a $\sup$-one.

\bibliographystyle{alpha-sort}

\bibliography{literature}

\end{document}